
\documentclass{tac}

\usepackage[utf8]{inputenc} 
\usepackage[T1]{fontenc}
\usepackage{amsfonts, amssymb, amsmath}

\usepackage{enumitem}

\usepackage{mathtools} 
\usepackage{tensor} 
\usepackage{scalerel} 

\usepackage{perpage}
\MakePerPage{footnote}

\usepackage{chngcntr}
\counterwithin{table}{section}

\usepackage[ngerman, english]{babel}
\useshorthands{"}
\makeatletter
 \defineshorthand[ngerman]{"/}{\mbox{-}\bbl@allowhyphens}
\makeatother
\addto\extrasenglish{\languageshorthands{ngerman}}


\newcommand{\myprenote}{References to results and sections with their numbering prefixed with the capital letter `A', such as ``\auglemref{8.1}'' and ``\augsecref 7'', refer to results and sections of the prequel to the present article \cite{Koudenburg20}.\addcontentsline{toc}{section}{\protect\numberline{}References}}

\makeatletter
\renewenvironment{thebibliography}[1]
     {\section*{\refname}%
      \@mkboth{\MakeUppercase\refname}{\MakeUppercase\refname}%
      \myprenote
      \list{\@biblabel{\@arabic\c@enumiv}}%
           {\settowidth\labelwidth{\@biblabel{#1}}%
            \leftmargin\labelwidth
            \advance\leftmargin\labelsep
            \@openbib@code
            \usecounter{enumiv}%
            \let\p@enumiv\@empty
            \renewcommand\theenumiv{\@arabic\c@enumiv}}%
      \sloppy
      \clubpenalty4000
      \@clubpenalty \clubpenalty
      \widowpenalty4000%
      \sfcode`\.\@m}
     {\def\@noitemerr
       {\@latex@warning{Empty `thebibliography' environment}}%
      \endlist}
\makeatother

\hypersetup{
	pdfauthor={Seerp Roald Koudenburg},
	pdftitle={Formal category theory in augmented virtual double categories},
  pdfkeywords={formal category theory, Kan extension, Yoneda embedding, Yoneda structure, exactness, totality, free cocompletion, augmented virtual double category},
  allcolors=[rgb]{0.1,0.1,0.4}
}

\usepackage{tikz} 
\usetikzlibrary{decorations.markings, matrix, arrows, fit, calc}


\tikzstyle{map} = [->, font=\scriptsize]
\tikzstyle{linj} = [left hook->, font=\scriptsize]
\tikzstyle{rinj} = [right hook->, font=\scriptsize]
\tikzstyle{mono} = [>->, font=\scriptsize]
\tikzstyle{epi} = [->>, font=\scriptsize]
\tikzstyle{cell} = [double,double equal sign distance,-implies, shorten >= 3.75pt, shorten <= 3.75pt, font=\scriptsize]
\tikzstyle{eq} = [double,double equal sign distance]
\tikzstyle{ps} = [shorten >= 2pt]

\tikzstyle{iso} = [above, sloped, inner sep=1.5pt]
\tikzstyle{nat} = [above, sloped, inner sep=2pt]
\tikzstyle{desc} = [fill=white, inner sep=2pt]
\tikzstyle{dots} = [black, font=]

\tikzstyle{small} = [font=\scriptsize]

\tikzstyle{textbaseline} = [baseline=-3.2pt]

\tikzstyle{barred} = [decoration={markings, mark=at position 0.5 with {\draw[-] (0,-1.5pt) -- (0,1.5pt);}}, postaction ={decorate}]

\tikzstyle{math35} = [matrix of math nodes, row sep={3.25em,between origins}, column sep={3.5em,between origins}, text height=1.5ex, text depth=0.25ex, nodes in empty cells]
\tikzstyle{minimath} = [matrix of math nodes, row sep={3em,between origins}, column sep={3.25em,between origins}, font=\scriptsize, text height=1ex, text depth=0.25ex, nodes in empty cells]
\tikzstyle{scheme} = [textbaseline, x=1.6em, y=1.6em, yshift=-2.4em, font=\scriptsize, text depth=0ex, every node/.style={overlay}, execute at end picture = { \useasboundingbox ($(current bounding box.north west) + (0,0.4em)$) rectangle ($(current bounding box.south east) - (0,0.4em)$); }]

\makeatletter
\def\slashedarrowfill@#1#2#3#4#5{%
  $\m@th\thickmuskip0mu\medmuskip\thickmuskip\thinmuskip\thickmuskip
   \relax#5#1\mkern-7mu%
   \cleaders\hbox{$#5\mkern-2mu#2\mkern-2mu$}\hfill
   \mathclap{#3}\mathclap{#2}%
   \cleaders\hbox{$#5\mkern-2mu#2\mkern-2mu$}\hfill
   \mkern-7mu#4$%
}
\def\rightslashedarrowfill@{%
  \slashedarrowfill@\relbar\relbar\mapstochar\rightarrow}
\newcommand\xslashedrightarrow[2][]{%
  \ext@arrow 0055{\rightslashedarrowfill@}{#1}{#2}}
\makeatother

\def\slashedrightarrow{\xslashedrightarrow{}}

\newcommand{\conc}{%
  \mathbin{
    \mathchoice
    {\raisebox{1ex}{\scalebox{.7}{$\frown$}}}
    {\raisebox{1ex}{\scalebox{.7}{$\frown$}}}
    {\raisebox{.7ex}{\scalebox{.5}{$\frown$}}}
    {\raisebox{.7ex}{\scalebox{.5}{$\frown$}}}
  }
}






\providecommand{\cororef}[1]{Corollary~\ref{#1}}
\providecommand{\corosref}[2]{Corollaries~\ref{#1} and~\ref{#2}}
\providecommand{\defref}[1]{Definition~\ref{#1}}
\providecommand{\defsref}[2]{Definitions~\ref{#1} and~\ref{#2}}
\providecommand{\defthreeref}[3]{Definitions~\ref{#1}, \ref{#2} and~\ref{#3}}
\providecommand{\exref}[1]{Example~\ref{#1}}
\providecommand{\exrref}[2]{Examples~\ref{#1}--\ref{#2}}
\providecommand{\exsref}[2]{Examples~\ref{#1} and~\ref{#2}}

\providecommand{\lemref}[1]{Lemma~\ref{#1}}

\providecommand{\propref}[1]{Proposition~\ref{#1}}
\providecommand{\propsref}[2]{Propositions~\ref{#1} and~\ref{#2}}
\providecommand{\remref}[1]{Remark~\ref{#1}}
\providecommand{\thmref}[1]{Theorem~\ref{#1}}
\providecommand{\thmsref}[2]{Theorems~\ref{#1} and~\ref{#2}}
\providecommand{\tableref}[1]{Table~\ref{#1}}


\providecommand{\secref}[1]{Section~\ref{#1}}
\providecommand{\secrref}[2]{Sections~\ref{#1}--\ref{#2}}


\providecommand{\introref}{\hyperref[Introduction]{Introduction}}
\providecommand{\overref}{\hyperref[Overview]{Overview}}

\providecommand{\augcororef}[1]{Corollary~A#1}
\providecommand{\augdefref}[1]{Definition~A#1}

\providecommand{\augexref}[1]{Example~A#1}
\providecommand{\augexsref}[2]{Examples~A#1 and A#2}

\providecommand{\augthreeexref}[3]{Examples~A#1, A#2 and~A#3}

\providecommand{\auglemref}[1]{Lemma~A#1}
\providecommand{\auglemsref}[2]{Lemmas~A#1 and A#2}
\providecommand{\augthreelemref}[3]{Lemmas~A#1, A#2 and~A#3}
\providecommand{\augpropref}[1]{Proposition~A#1}

\providecommand{\augthmref}[1]{Theorem~A#1}

\providecommand{\augsecref}[1]{Section~A#1}

\newcommand\defeq{\mathrel{\vcentcolon\Leftrightarrow}}


\providecommand{\dfn}{\coloneqq}
\providecommand{\nfd}{\eqqcolon}

\providecommand{\of}{\circ}
\providecommand{\iso}{\cong}
\providecommand{\eq}{\simeq}

\providecommand{\brar}{\slashedrightarrow}
\providecommand{\xrar}{\xrightarrow}
\providecommand{\xlar}{\xleftarrow}
\providecommand{\xbrar}{\xslashedrightarrow}
\providecommand{\Rar}{\Rightarrow}
\providecommand{\xRar}{\xRightarrow}

\providecommand{\into}{\hookrightarrow}

\providecommand{\eps}{\varepsilon}

\DeclareMathOperator{\dash}{\makebox[1.3ex]{\text{--}}}

\providecommand{\tens}{\otimes}

\providecommand{\ul}[1]{\underline{#1}{}}

\providecommand{\brcs}[1]{\lbrace #1 \rbrace}
\providecommand{\bigbrcs}[1]{\bigl\lbrace #1 \bigr\rbrace}
\providecommand{\brks}[1]{\lbrack #1 \rbrack}
\providecommand{\bigbrks}[1]{\bigl\lbrack #1 \bigr\rbrack}

\providecommand{\pars}[1]{\left(#1\right)}
\providecommand{\bigpars}[1]{\bigl(#1\bigr)}

\providecommand{\lns}[1]{\lvert#1\rvert}

\providecommand{\angles}[1]{\langle#1\rangle}

\providecommand{\gen}[1]{\angles{#1}}


\providecommand{\set}[1]{\brcs{#1}}

\providecommand{\isect}{\cap}

\providecommand{\djunion}{\sqcup}



\providecommand{\downset}{\mathord\downarrow}
\providecommand{\upset}{\mathord\uparrow}

\DeclareMathOperator{\Dn}{Dn}
\DeclareMathOperator{\Dnp}{Dn^+}
\DeclareMathOperator{\Cl}{Cl}

\DeclareMathOperator{\gso}{\top} 

\providecommand{\natarrow}{\Rightarrow}

\providecommand{\map}[3]{#1\colon#2\to#3}
\providecommand{\mono}[3]{#1\colon#2\rightarrowtail#3}

\providecommand{\emb}[3]{#1\colon#2\into#3}

\providecommand{\nat}[3]{#1\colon#2\natarrow#3}
\providecommand{\cell}[3]{#1\colon#2\Rightarrow#3}

\providecommand{\hmap}[3]{#1\colon#2\slashedrightarrow#3}

\providecommand{\inv}[1]{{#1}^{-1}}

\providecommand{\rev}[1]{#1^\circ}

\DeclareMathOperator{\term}{!}

\DeclareMathOperator{\ob}{ob}
\newcommand{\id}{\mathrm{id}}
\newcommand{\yon}{\mathrm{y}}
\providecommand{\A}{\mathcal A}

\providecommand{\ladj}{\dashv}

\providecommand{\intl}{\int\limits}

\providecommand{\op}[1]{#1^\textup{op}}
\providecommand{\co}[1]{#1^\textup{co}}

\providecommand{\ps}[1]{\widehat{#1}}
\providecommand{\psps}[1]{\ps {\ps{#1\mspace{0 mu}}}}

\providecommand{\catvar}[1]{\mathcal{#1}}

\providecommand{\2}{\mathsf 2}
\providecommand{\A}{\catvar A}

\providecommand{\D}{\catvar D}
\providecommand{\E}{\catvar E}

\providecommand{\K}{\catvar K}
\renewcommand{\L}{\catvar L}
\providecommand{\M}{\catvar M}

\providecommand{\V}{\catvar V}

\providecommand{\Set}{\mathsf{Set}}
\providecommand{\PreOrd}{\mathsf{PreOrd}}

\providecommand{\Cat}{\mathsf{Cat}} 
\providecommand{\enCat}[1]{#1\text-\Cat}
\providecommand{\inCat}[1]{\Cat(#1)}
\providecommand{\twoCat}{2\text{-}\Cat}

\providecommand{\und}[1]{#1_0} 

\providecommand{\DblCat}{\mathsf{DblCat}}

\providecommand{\nlDblCat}{\DblCat_\textup{nl}}

\providecommand{\AugVirtDblCat}{\mathsf{AugVirtDblCat}}
\providecommand{\VirtDblCat}{\mathsf{VirtDblCat}}


\providecommand{\as}{\mathfrak a}
\providecommand{\lu}{\mathfrak l}
\providecommand{\ru}{\mathfrak r}
\providecommand{\sm}{\mathfrak s}
\providecommand{\inhom}[1]{\brks{#1}}
\providecommand{\dl}[1]{#1^\circ}
\providecommand{\flad}[1]{#1^\flat}
\providecommand{\shad}[1]{#1^\sharp}
\providecommand{\iotahom}{\inhom{\dl A, \ps I}_\iota}
\DeclareMathOperator{\ev}{ev}
\DeclareMathOperator{\coev}{coev}

\providecommand{\Rel}{\mathsf{Rel}}
\providecommand{\Span}[1]{\mathsf{Span}(#1)}

\providecommand{\Mod}{\mathsf{Mod}}
\providecommand{\ModRel}{\mathsf{ModRel}}
\providecommand{\Prof}{\mathsf{Prof}}

\providecommand{\enProf}[1]{#1\text-\Prof}
\providecommand{\ensProf}[1]{#1\text-\mathsf{sProf}}
\providecommand{\inProf}[1]{\Prof(#1)}
\providecommand{\spFib}[1]{\mathsf{spFib}(#1)}
\providecommand{\dFib}[1]{\mathsf{dFib}(#1)}
\providecommand{\ClModRel}{\mathsf{ClModRel}}
\providecommand{\ClOrdCls}{\mathsf{ClOrdCls}}
\providecommand{\CptClModRel}{\mathsf{CptClModRel}}

\providecommand{\wAlg}[3]{#3\text-\mathsf{Alg}_{(#1,\, #2)}}
\providecommand{\lbcwAlg}[2]{#2\text-\mathsf{Alg}_{(#1,\, \textup{ps},\, \textup{lbc})}}
\providecommand{\clx}{\textup c}
\providecommand{\lax}{\textup l}

\providecommand{\hc}{\odot}

\DeclareMathOperator{\vs}{\slash_\textup v}
\DeclareMathOperator{\hs}{\slash_\textup h}

\newcommand{\cocart}{\mathrm{cocart}}
\newcommand{\cart}{\mathrm{cart}}

\providecommand{\tab}[1]{\gen{#1}}

\providecommand{\cur}[1]{#1^{\scriptscriptstyle\lambda}}
\providecommand{\curp}[1]{#1^{\scriptscriptstyle{\lambda'}}}


\author{Seerp Roald Koudenburg}

\thanks{Parts of this article were written during visits of the author to Macquarie University, in September--November 2015, and Dalhousie University, in August 2016. I am grateful to the Macquarie University Research Centre and the @CAT-group for their funding of these visits. I would like to thank Ram\'on Abud Alcal\'a, Richard Garner, Mark Weber, and especially Bob Par\'e for helpful discussions. I thank the anonymous referee for their suggestions, which have led to several improvements in the readability of this work.}

\address{Mathematics Research and Teaching Group\\Middle East Technical University\\Northern Cyprus Campus\\ 99738 Kalkanl\i, G\"uzelyurt\\Turkish Republic of Northern Cyprus\\via Mersin 10, T\"urkiye}
\eaddress{roaldkoudenburg@gmail.com}

\title[Formal category theory in augmented virtual double categories]{Formal category theory in\\ augmented virtual double categories}

\copyrightyear{2024}

\keywords{formal category theory, Kan extension, Yoneda embedding, Yoneda structure, exactness, totality, free cocompletion, augmented virtual double category}
\amsclass{18D65, 18D70, 18N10}

\begin{document}
	\maketitle
	\begin{abstract}
		In this article we develop formal category theory within augmented virtual double categories. Notably we formalise the classical notions of Kan extension, Yoneda embedding $\map{\yon_A}A{\ps A}$, exact square, total category and `small' cocompletion; the latter in an appropriate sense. Throughout we compare our formalisations to their corresponding $2$-categorical counterparts. Our approach has several advantages. For instance, the structure of augmented virtual double categories naturally allows us to isolate conditions that ensure small cocompleteness of formal presheaf objects $\ps A$.

		Given a monoidal augmented virtual double category $\K$ with a Yoneda embedding $\map{\yon_I}I{\ps I}$ for its monoidal unit $I$ we prove that, for any `unital' object $A$ in $\K$ that has a `horizontal dual' $\dl A$, the Yoneda embedding $\map{\yon_A}A{\ps A}$ exists if and only if the `inner hom' $\inhom{\dl A, \ps I}$ exists. This result is a special case of a more general result that, given a functor $\map F\K\L$ of augmented virtual double categories, allows a Yoneda embedding in $\L$ to be ``lifted'', along a pair of `universal morphisms' in $\L$, to a Yoneda embedding in $\K$.
	\end{abstract}
	
	\tableofcontents
	
	\section*{Introduction} \label{Introduction} \addcontentsline{toc}{section}{\protect\numberline{}Introduction}
	In this work we take a ``double"/dimensional'' approach to formal category theory by taking augmented virtual double categories, which have been recently introduced in \cite{Koudenburg20}, as a setting. The author's motivation for doing so is twofold. Firstly he considers double categorical structures to be a natural setting for the formalisation of classical categorical results that involve both profunctors and Yoneda embeddings. Consider for instance the classical result by Day (\cite{Day70}) asserting that any promonoidal category $A$ embeds into a monoidal category $P$. Denoting by $T$ the `free strict monoidal category' $2$"/monad, the promonoidal structure on $A$ can be regarded as given by a profunctor $\hmap\alpha A{TA}$ satisfying certain conditions, while the Yoneda embedding $\map\yon A{\Set^{\op A} \nfd P}$ underlies the promonoidal embedding $A \hookrightarrow P$, with the monoidal structure on $P$ given by `Day convolution'\footnote{For a formalisation, in augmented virtual double categories, of Day convolution when restricted to structure morphisms $A \brar TA$ that are representable, see Section~8 of \cite{Koudenburg15b}.} with respect to $\alpha$. Formalisation of Day's result potentially allows us to apply it to other category"/like objects, such as posets, double categories and double $2$"/categories (\cite{Cruttwell-Lambert-Pronk-Szyld22}), that are equipped with promonoidal"/like structures. Similarly it allows for generalisations to other $2$"/monads $T$, such as the ultrafilter monad on the $2$"/category of posets. In more detail, the latter generalisation isolates conditions on any `modular topological space' $A$ (\cite{Tholen09}), analogous to those satisfied by the profunctor $\alpha$, ensuring that $A$ embeds into an ordered compact Hausdorff space (\cite{Tholen09}).
	
	Another relevant classical categorical result is Ad\'amek and Rosick\'y's Theorem~2.6 of \cite{Adamek-Rosicky01}. Given a copresheaf $\map dA\Set$ one of its assertions is that the left Kan extension $\map{\textup{lan}_\yon\, d}P\Set$ of $d$ along the Yoneda embedding $\map\yon AP$ preserves finite products if and only if the category of elements $\int d$ is `cosifted'. Writing $S$ for the extension of the `free category with finite products' $2$"/monad to profunctors, with unit transformation $\cell\iota\id S$, and by regarding $d$ as a profunctor $\hmap D1A$, the cosiftedness of $\int d$ can be equivalently expressed as a `Beck"/Chevalley'"/like condition on the transformation of profunctors $\cell{\iota_D}D{SD}$. In \cite{Koudenburg14b} this observation is used to formalise Ad\'amek and Rosick\'y's result in terms of any  `double monad', acting on some double category, whose vertical part is a colax"/idempotent $2$"/monad; the latter in the sense of e.g.\ \cite{Kelly-Lack97}.
	
	Before describing the second part of the author's motivation we pause to describe the main difference between our formal notion of Yoneda embedding (\defref{yoneda embedding} below) and the $2$"/categorical approach taken by Street and Walters in \cite{Street-Walters78}. To do so we partly recall their notion of \emph{Yoneda structure} on a $2$"/category $\mathcal C$, which consists of a `right ideal' $\A$ of \emph{admissible} morphisms in $\mathcal C$ and, for each admissible object $A$ (that is $\id_A \in \A$), a formal Yoneda embedding $\map{yA}A{\mathcal PA}$ that is itself admissible. The collection of these formal Yoneda embeddings is required to satisfy three axioms. We recall only Axiom~2 here: for each admissible morphism $\map fAB$ a cell $\chi^f$ as on the left below, which exhibits $f$ as the absolute left lifting of $yA$ through $B(f, 1)$ in $\mathcal C$, is required to exist. For the prototypical example of the classical Yoneda embeddings $\map{yA}A{\Set^{\op A}}$, one for each locally small category $A$, take admissible functors $\map fAB$ to be those with all hom"/sets $B(fa, b)$ small and set $B(f, 1)(b) \dfn B(f\dash, b)$.
	\begin{displaymath}
  	\begin{tikzpicture}[baseline]
			\matrix(m)[math35, column sep={1.75em,between origins}]{A & & B \\ & \mathcal PA & \\};
			\path[map]	(m-1-1) edge node[above] {$f$} (m-1-3)
													edge[transform canvas={xshift=-1pt}] node[left] {$yA$} (m-2-2)
									(m-1-3) edge[transform canvas={xshift=1pt}] node[right] {$B(f, 1)$} (m-2-2);
			\path[transform canvas={xshift=0.2em,yshift=-0.1em}]	($(m-1-1)!0.5!(m-2-2)$) edge[cell] node[above left, inner sep=0pt] {$\chi^f$} (m-1-3);
		\end{tikzpicture} \qquad \qquad \qquad \begin{tikzpicture}[baseline]
			\matrix(m)[math35]{A & B \\ \ps A & \ps A \\};
			\path[map]	(m-1-1) edge[barred] node[above] {$J$} (m-1-2)
													edge[ps] node[left] {$\yon$} (m-2-1)
									(m-1-2) edge[ps] node[right] {$\cur J$} (m-2-2)
									(m-2-1) edge[barred] node[below] {$I_{\ps A}$} (m-2-2);
			\draw				($(m-1-1)!0.5!(m-2-2)$) node[font=\scriptsize] {$\cart$};
		\end{tikzpicture} \qquad \qquad \qquad \begin{tikzpicture}[baseline]
			\matrix(m)[math35, column sep={1.75em,between origins}]{A & & B \\ & \ps A & \\};
			\path[map]	(m-1-1) edge[barred] node[above] {$J$} (m-1-3)
													edge[transform canvas={xshift=-2pt}] node[left] {$\yon$} (m-2-2)
									(m-1-3) edge[transform canvas={xshift=2pt}] node[right] {$\cur J$} (m-2-2);
			\draw				([yshift=0.333em]$(m-1-2)!0.5!(m-2-2)$) node[font=\scriptsize] {$\cart$};
		\end{tikzpicture}
	\end{displaymath}
	
	In our double"/dimensional approach we take vertical morphisms $\map fAC$ to represent abstract functors and horizontal morphisms $\hmap JAB$ to represent abstract profunctors. In contrast to Street and Walters' approach our formalisation of the Yoneda lemma does not require a notion of admissibility: instead we consider \emph{all} horizontal morphisms to be admissible. For instance, to recover the classical Yoneda embeddings $\map\yon A{\Set^{\op A} \nfd \ps A}$, with $A$ locally small and $\ps A$ large in general (see \cite{Freyd-Street95}), we take the horizontal morphisms $\hmap JAB$ to be $\Set$"/profunctors $\map J{\op A \times B}\Set$: similar to the assignment $f \mapsto B(f, 1)$ for admissible functors $f$ above, \emph{every} such profunctor $J$ induces a functor $\map{\cur J}B{\ps A}$ given by $\cur J(b) \dfn J(\dash, b)$. Notice that a functor $\map fAB$ is admissible precisely if it induces a representable $\Set$"/profunctor $\hmap{f_*}AB$ (the \emph{companion} of $f$), and in that case $\cur{(f_*)}$ recovers $B(f, 1)$ above. 
	
	Regarding all horizontal morphisms as being admissible is the main feature of our approach, and we consider next the requirements that this imposes on our double"/categorical setting. First notice that in the prototypical example, of $\Set$"/profunctors between large categories (such as $\ps A$), we are not able to compose $\Set$"/profunctors in general. Thus, in general, we cannot require the vertical and horizontal morphisms to combine into a pseudo double category (see e.g.\ \cite{Grandis-Pare99}), which is equipped with composition for both vertical and horizontal morphisms. Instead we require them to form a virtual double category (see e.g.\ \cite{Cruttwell-Shulman10} or the `$\textbf{fc}$"/multicategories' of \cite{Leinster04}). This is a weaker structure that does not require horizontal composition; instead its cells are `multicells' $\cell\phi{(J_1, \dotsc, J_n)}K$, as on the left below, that have (possibly empty) paths as horizontal sources. Writing $\Set'$ for the category of large sets, the prototypical example is the virtual double category $\enProf{(\Set, \Set')}$ of $\Set$"/profunctors between large categories (i.e.\ categories internal in $\Set'$).
	
	Secondly notice that the classical Yoneda lemma supplies, for each $\Set$"/profunctor $\hmap JAB$, natural isomorphisms $J(a, b) \iso \ps A(\yon a, \cur Jb)$. In the pseudo double category $\enProf{\Set'}$ of $\Set'$"/profunctors between large categories these isomorphisms combine into a \emph{cartesian} cell of the form as in the middle above, where $I_{\ps A}$ denotes the \emph{horizontal unit} profunctor given by the hom"/sets of $\ps A$; for the universal properties of cartesian cells and horizontal units see e.g.\ Section~4 of \cite{Koudenburg20} or \defref{cartesian cells} below. It is natural to axiomatise the Yoneda lemma as the requirement that this cartesian cell exists for every horizontal morphism $\hmap JAB$. However, since the unit profunctor $I_{\ps A}$ is not a $\Set$"/profunctor in general, we cannot do so in the prototypical virtual double category $\enProf{(\Set, \Set')}$. We are thus led to the notion of augmented virtual double category \cite{Koudenburg20}, which extends that of virtual double category by adding in \emph{nullary} cells $\cell\psi{(J_1, \dotsc, J_n)}C$ of the form as on the right below, with empty horizontal targets. The virtual double category $\enProf{(\Set, \Set')}$ naturally extends to an augmented virtual double category whose nullary cells $\psi$ below are transformations that map into the (possibly large) hom"/sets of $C$. In particular we can, in the augmented virtual double category $\enProf{(\Set, \Set')}$, consider nullary cartesian cells as on the right above. Let now $\map\yon A{\ps A}$ be any morphism in any augmented virtual double category: the \emph{Yoneda axiom} of \defref{yoneda embedding} below requires that, for every horizontal morphism $\hmap JAB$, there exists a vertical morphism $\map{\cur J}B{\ps A}$ equipped with a nullary cartesian cell as on the right above.
	\begin{displaymath}
		\begin{tikzpicture}[baseline]
			\matrix(m)[math35, column sep={1.75em,between origins}]{& A_0 & \\ C & & D \\};
			\path[map]	(m-1-2) edge[transform canvas={xshift=-1pt}] node[left] {$f$} (m-2-1)
													edge[transform canvas={xshift=1pt}] node[right] {$g$} (m-2-3)
									(m-2-1) edge[barred] node[below] {$K$} (m-2-3);
			\path				(m-1-2) edge[cell, transform canvas={yshift=-0.25em}] node[right, inner sep=2.5pt] {$\phi$} (m-2-2);
		\end{tikzpicture} \mspace{24mu} \begin{tikzpicture}[baseline]
			\matrix(m)[math35, column sep={3.25em,between origins}]{A_0 & A_1 & A_{n-1} & A_n \\ C & & & D \\};
			\path[map]	(m-1-1) edge[barred] node[above] {$J_1$} (m-1-2)
													edge node[left] {$f$} (m-2-1)
									(m-1-3) edge[barred] node[above] {$J_n$} (m-1-4)
									(m-1-4) edge node[right] {$g$} (m-2-4)
									(m-2-1) edge[barred] node[below] {$K$} (m-2-4);
			\path[transform canvas={xshift=1.625em}]	(m-1-2) edge[cell] node[right] {$\phi$} (m-2-2);
			\draw				($(m-1-2)!0.45!(m-1-3)$) node {$\dotsb$};
		\end{tikzpicture} \mspace{24mu} \begin{tikzpicture}[baseline]
			\matrix(m)[math35]{A_0 \\ C \\};
			\path[map]	(m-1-1) edge[bend right=45] node[left] {$f$} (m-2-1)
													edge[bend left=45] node[right] {$g$} (m-2-1);
			\path				(m-1-1) edge[cell] node[right] {$\psi$} (m-2-1);
		\end{tikzpicture} \mspace{24mu} \begin{tikzpicture}[baseline]
			\matrix(m)[math35, column sep={1.625em,between origins}]
				{A_0 & & A_1 & & A_{n-1} & & A_n \\ & & & C & & & \\};
			\path[map]	(m-1-1) edge[barred] node[above] {$J_1$} (m-1-3)
													edge node[below left] {$f$} (m-2-4)
									(m-1-5) edge[barred] node[above] {$J_n$} (m-1-7)
									(m-1-7) edge node[below right] {$g$} (m-2-4);
			\path				(m-1-4) edge[cell] node[right] {$\psi$} (m-2-4);
			\draw				($(m-1-1)!0.48!(m-1-7)$) node {$\dotsb$};
		\end{tikzpicture}
	\end{displaymath}
	
	Returning to the author's motivation for this article, its second part is to contribute to formal approaches to higher dimensional category theory, as follows. In Chapter~9 of \cite{Riehl-Verity22} Riehl and Verity employ formal category theory in virtual double categories to define pointwise Kan extensions of functors between $\infty$"/categories; in fact their definition is recovered by one of the notions of Kan extension that we consider (see \exref{pointwise right extension for functors between infty-categories} below). Their formal approach does not however include a formal notion of Yoneda embedding, and the author believes that the theory of the present paper is likely to be of help towards obtaining such a notion, as is explained shortly. In the case of double categories, Grandis and Par\'e in \cite{Grandis-Pare07} introduce pointwise Kan extensions of lax double functors between pseudo double categories, as an instance of their formal notion of pointwise Kan extension in pseudo double categories \cite{Grandis-Pare08}. It is currently unclear to the author whether the former notion can be reconciled with the notions considered in the present article. The author is aware of three approaches to a notion of Yoneda embedding for double categories: the original approach by Par\'e in \cite{Pare11}; Street's formal approach for strict double categories \cite{Street17}, which uses the main result of \cite{Weber07}; and a formal approach using ``generalised Day convolution'', by applying the formalisation of Day's result, as described previously, to the `free strict double category' $2$"/monad. The precise relationship between these three approaches is currently unclear to the author; in particular he does not know if Par\'e's Yoneda embeddings satisfy any of the formal notions of Yoneda embedding.
	
	With the main aim of this work being the formalisation of category theory, notably that of the notions of Kan extension and Yoneda embedding, in augmented virtual double categories, our second aim is to provide necessary and sufficient conditions for the existence of formal Yoneda embeddings. This gives us a handle on the points raised in the preceding paragraph: the sufficent condition allows us to obtain formal Yoneda embeddings (such as for $\infty$"/categories) while, given a family of ``ad hoc'' Yoneda embeddings $\yon_A$ (such as Par\'e's Yoneda embeddings for double categories), the necessary condition facilitates constructing an augmented virtual double category in which the $\yon_A$ satisfy our formal notion of Yoneda embedding. The aforementioned conditions generalise as well as recover the following fact for finitely complete categories $\E$ with subobject classifier $\Omega$: $\E$ has power objects if and only if $\Omega$ is exponentiable; see e.g.\ Section~A2.1 of \cite{Johnstone02}. In some more detail, they apply to a monoidal augmented virtual double category $(\K, \tens, I)$ whose monoidal unit $I$ admits a Yoneda embedding $\map{\yon_I}I{\ps I}$, as follows. Given any \emph{unital} object $A$ in $\K$, i.e.\ $A$ admits a horizontal unit (\defref{cartesian cells}), we prove in \thmref{presheaf object equivalent to iota-small internal hom} below that, under mild conditions, the Yoneda embedding $\map{\yon_A}A{\ps A}$ exists if and only if the `inner hom' $\inhom{\dl A, \ps I}$ does, with $\dl A$ the unital `horizontal dual' of $A$ (formalising the notion of dual category), and in that case $\ps A \iso \inhom{\dl A, \ps I}$.
	
	The horizontal dual $\dl A$ here is defined by a `horizontal copairing' $\hmap\iota I{\dl A \tens A}$ (\defref{horizontal dual}), which induces an assignment that maps every horizontal morphism $\hmap JAB$ to its `adjunct' $\hmap{\flad J}I{\dl A \tens B}$. \thmref{presheaf object equivalent to iota-small internal hom} also applies in the cases where the assignment $J \mapsto \flad J$ is not essentially surjective onto the collection of morphisms of the form $I \brar \dl A \tens B$, e.g.\ in the case of \emph{small profunctors} between large categories in the sense of \cite{Day-Lack07}; see also Example~2.8 of \cite{Koudenburg20}. In such cases the universal property of the inner"/hom $\inhom{\dl A, \ps I}$ is restricted to morphisms $\dl A \tens B \to \ps I$ in the essential image of the composite below, where $J \mapsto \cur J$ is given by the formal Yoneda axiom as described previously; see \defref{internal iota-small hom} for the details.
	\begin{displaymath}
		\set{A \brar B} \quad \xrar{\flad{(\dash)}} \quad \set{I \brar \dl A \tens B} \quad \xrar{\cur{(\dash)}} \quad \set{\dl A \tens B \to \ps I}
	\end{displaymath}
	Moreover \thmref{presheaf object equivalent to iota-small internal hom} is obtained as a corollary of the following more general result, which combines the main results of \secref{yoneda embeddings in a monoidal augmented virtual double category}: \thmsref{universal morphism from a yoneda embedding}{yoneda embedding from universal morphisms theorem}. Given a functor $\map F\K\L$ of augmented virtual double categories and, in $\L$, a Yoneda embedding $\map{\yon_A}AP$ and a `universal morphism' $\hmap\iota A{FA'}$ (generalising the notion of copairing), under mild conditions these theorems show that a Yoneda embedding $\map{\yon_{A'}}{A'}{P'}$ exists in $\K$ if and only if there exists a `universal morphism' $\map\eps{FP'}P$ whose universal property is ``restricted'' like that of $\inhom{\dl A, \ps I}$ above. The author believes that these results will be useful in obtaining formal Yoneda embeddings.
	
	The third aim of this work is to compare, throughout, its double"/dimensional approach to formal category to the classical $2$"/categorical approaches that use Yoneda structures, as taken by Street and Walters in \cite{Street-Walters78} and by Weber in \cite{Weber07}. We close this introduction by outlining one of the advantages of our approach: it allows for isolating conditions that ensure the `small cocompleteness' of formal presheaf objects $\ps A$, as follows. Weber in Definition~3.17 of \cite{Weber07} defines an object $C$, of a $2$"/category equipped with a `good Yoneda structure', to be `cocomplete' whenever it admits pointwise left Kan extensions of all diagrams $C \xlar d X \xrar h Z$ such that $X$, the presheaf object $\ps X$, $Z$ and $h$ are admissible. All of our notions of left Kan extension, including the pointwise variant, that are defined in \secref{Kan extension section} below and used throughout this work, are along a (path of) horizontal morphism(s); see also the \overref\ below. Thus, as a consequence of our viewpoint of all horizontal morphisms being `admissible', we regard our notions of left Kan extension as being along `admissible' morphisms only. Analogous to Weber's definition we define an object $M$ of an augmented virtual double category to be \emph{cocomplete} (\defref{cocompletion}) whenever it admits pointwise Kan extensions (\defref{pointwise left Kan extension}) of all (or some pre"/specified class of) diagrams $M \xlar d A \xbrar J B$.
	
	Consider a formal Yoneda embedding $\map\yon M{\ps M}$ in an augmented virtual double category $\K$. An advantage of our approach is that the existence of pointwise Kan extensions into $\ps M$ is related to the existence of `pointwise horizontal composites' in $\K$, and this can be used to obtain a condition that ensures cocompleteness of $\ps M$. In some more detail, \emph{pointwise composites} of horizontal morphisms are defined by `pointwise cocartesian cells' (see Definition~9.1 of \cite{Koudenburg20} or \remref{right pointwise cocartesian and pointwise right cocartesian comparison} below), and in \defref{pointwise right cocartesian path} below the latter notion is weakened in two ways, resulting in that of `pointwise right unary"/cocartesian cell'. Given a diagram $\ps M \xlar d A \xbrar J B$ it is shown in \cororef{left Kan extension, exact cells, cocartesian cells} below that the pointwise left Kan extension of $d$ along $J$ exists if and only if there exists a pointwise right unary"/cocartesian cell of the form
	\begin{displaymath}
		\begin{tikzpicture}
			\matrix(m)[math35]{M & A & B \\ M & & B, \\};
			\path[map]	(m-1-1) edge[barred] node[above] {$\ps M(\yon, d)$} (m-1-2)
									(m-1-2) edge[barred] node[above] {$J$} (m-1-3)
									(m-2-1) edge[barred] node[below] {$K$} (m-2-3);
			\path				(m-1-1) edge[eq] (m-2-1)
									(m-1-3) edge[eq] (m-2-3)
									(m-1-2) edge[cell] (m-2-2);
		\end{tikzpicture}
	\end{displaymath}
	where $\ps M(\yon, d)$ denotes the \emph{restriction} of $\ps M$ along $\yon$ and $d$ (\defref{cartesian cells}) and where $K$ is any horizontal morphism. Using this result \thmref{presheaf objects as free cocompletions}, the main theorem of \secref{cocompleteness section}, isolates conditions that ensure that $\map\yon M{\ps M}$ defines $\ps M$ as the `free cocompletion' of $M$, in the sense of \defref{cocompletion} (see also the \overref\ below); in particular they ensure that $\ps M$ is cocomplete. In contrast, consider the analogous Theorem~3.20 of \cite{Weber07} which concerns a Yoneda embedding $\map yC{\ps C}$ of a good Yoneda structure. Like our \thmref{presheaf objects as free cocompletions} it proves that $y$ defines $\ps C$ as the free cocompletion of $C$, but it does not isolate conditions ensuring cocompleteness of $\ps C$; it instead assumes cocompleteness of $\ps C$. Further differences between Weber's result and our \thmref{presheaf objects as free cocompletions} are described in \remref{differences to Weber's result}. Given a monoidal augmented virtual double category $(\K, \tens, I)$, \thmref{free cocompletion of the monoidal unit} uses \thmref{presheaf objects as free cocompletions} to describe conditions ensuring that the Yoneda embedding $\map\yon I{\ps I}$ defines $\ps I$ as the `free cocompletion' of the monoidal unit $I$.
	
	\section*{Overview} \label{Overview} \addcontentsline{toc}{section}{\protect\numberline{}Overview}
	We start in \secref{Kan extension section} by introducing four notions of left Kan extension in an augmented virtual double category $\K$: a notion of \emph{weak Kan extension} (\defref{weak left Kan extension}); a notion of \emph{Kan extension} (\defref{left Kan extension}), which formalises enriched Kan extension (\exref{enriched left Kan extension}); a notion of \emph{pointwise weak Kan extension} (\defref{pointwise left Kan extension}), which is reminiscent of Street's $2$"/categorical notion of pointwise Kan extension \cite{Street74b}; and a notion of \emph{pointwise Kan extension} (\defref{pointwise left Kan extension}) which combines the latter two notions. Each of these defines the extension of a vertical morphism $\map d{A_0}M$ along a path $\hmap{\ul J}{A_0}{A_n}$ of horizontal morphisms, with the resulting left Kan extension $\map l{A_n}M$ being exhibited by a nullary cell
	\begin{displaymath}
		\begin{tikzpicture}[textbaseline]
			\matrix(m)[math35]{A_0 & A_1 & A_{n-1} & A_n \\};
			\draw	([yshift=-3.25em]$(m-1-1)!0.5!(m-1-4)$) node (M) {$M;$};
			\path[map]	(m-1-1) edge[barred] node[above] {$J_1$} (m-1-2)
													edge[transform canvas={yshift=-2pt}] node[below left] {$d$} (M)
									(m-1-3) edge[barred] node[above] {$J_n$} (m-1-4)
									(m-1-4) edge[transform canvas={yshift=-2pt}] node[below right] {$l$} (M);
			\path				($(m-1-2.south)!0.5!(m-1-3.south)$) edge[cell] node[right] {$\eta$} (M);
			\draw				($(m-1-2)!0.45!(m-1-3)$) node {$\dotsb$};				
		\end{tikzpicture}
	\end{displaymath}
	such cells we call \emph{(pointwise) (weak) left Kan}. Weakly left Kan extending along companions in $\K$ recovers the classical $2$"/categorical notion of left Kan extension in the vertical $2$"/category $V(\K)$ of objects, vertical morphisms and `vertical cells' of $\K$ (\propref{weak left Kan extensions along companions}). If $\K$ admits all horizontal units and restrictions on the right then the notions of left Kan extension and pointwise left Kan extension coincide (\remref{pointwise left Kan extension in the presence of horizontal units and restrictions on the right}). Using the results of \cite{Koudenburg18}, in \exref{continuous left Kan extensions} we construct pointwise left Kan extensions of morphisms of modular closure spaces \cite{Tholen09}. Given a functor $\map F\K\L$ of augmented virtual double categories the notion of a universal morphism \mbox{$\map\eps{FC'}C$}, from $F$ to an object $C \in \L$, is defined in \defref{universal vertical morphism}, analogously to the classical notion. \propref{taking adjuncts preserves left Kan cells} shows that if a nullary cell of the form $\cell\phi{F\ul J}C$ defines a left Kan extension in $\L$ then so does its ``$\eps$"/adjunct'' $\cell{\shad\phi}{\ul J}{C'}$ in $\K$.
	
	In \secref{pasting lemmas section} we prove two pasting lemmas for left Kan cells that are used throughout this work. The horizontal pasting lemma (\lemref{horizontal pasting lemma}) concerns the horizontal composite of two left Kan cells. It recovers the classical result for enriched iterated Kan extensions, and it forms the main reason for our choice of Kan extending along \emph{paths} of horizontal morphisms: without doing so the horizontal pasting lemma cannot be stated (\remref{extension along paths}). The vertical pasting lemma (\lemref{vertical pasting lemma}) concerns the vertical composite $\eta \of \ul\phi$ of a left Kan cell $\eta$ and a `cocartesian path of cells' $\ul\phi$. In fact considering the weakest requirements on the path $\ul\phi$ such that, for each left Kan cell $\eta$ composable with $\ul\phi$, the composite $\eta \of \ul\phi$ is again left Kan, leads to the weakened notion of \emph{right nullary"/cocartesian} path (\defref{cocartesian path}). The vertical pasting lemma for this weakened notion of cocartesian path is used throughout. The remainder of \secref{pasting lemmas section} consists of consequences of the pasting lemmas. Given a path $\hmap{(J_1, \dotsc, J_n)}{A_0}{A_n}$ and a `full and faithful morphism' $\map f{A_n}B$, \propref{pointwise left Kan extension along full and faithful map} for instance shows that if $\map lBM$ is the pointwise left Kan extension of some $\map d{A_0}M$ along the concatenation $\hmap{(J_1, \dotsc, J_n, f_*)}{A_0}B$, where  $\hmap{f_*}{A_n}B$ is the companion of $f$, then $l \of f$ forms the left Kan extension of $d$ along $\ul J$; this generalises the classical result on (enriched) left Kan extending along a full and faithful functor.
	
	The main theorem of \secref{pointwise Kan extensions section}, \thmref{pointwise Kan extensions in terms of pointwise weak Kan extensions}, shows that the notions of pointwise weak left Kan extension and pointwise left Kan extension coincide in augmented virtual double categories $\K$ that have all restrictions on the right as well as all `cocartesian tabulations'. The notion of tabulation (\defref{tabulation}) formalises that of graph of a functor. \propref{pointwise left Kan extensions along companions} then shows that pointwise left Kan extension along companions in such $\K$ coincides with pointwise left Kan extension in the vertical $2$"/category $V(\K)$, the latter in the classical sense of \cite{Street74b}.
	
	Using the notion of (weak) left Kan extension \secref{yoneda embeddings section} starts by introducing the notions of \emph{density} and \emph{weak density} for vertical morphisms (\defref{density definition}). \defref{yoneda embedding} then defines a \emph{(weak) Yoneda morphism} $\map\yon A{\ps A}$ to be a (weakly) dense morphism that satisfies the Yoneda axiom, as described in the \introref\ above. These conditions on $\yon$ do not imply that it is full and faithful, which instead is a consequence of the existence of the horizontal unit $\hmap{I_A}AA$ (\lemref{full and faithful yoneda embedding}); a full and faithful $\yon$ is called a \emph{(weak) Yoneda embedding}. Several of our results do not depend on the full and faithfulness of Yoneda morphisms (\remref{dependence on full and faithfulness}). A Yoneda morphism $\yon$ such that all restrictions $\ps A(\yon, f)$ exist, for any $\map fB{\ps A}$, induces, for every object $B$, an equivalence between the category of horizontal morphisms $A \brar B$ and that of vertical morphisms $B \to \ps A$ (\propref{equivalence from yoneda embedding}). Our notion of Yoneda embedding recovers that of enriched Yoneda embedding, that of enriched Yoneda embedding for small enriched presheaves in the sense of \cite{Day-Lack07}, that of power object in a finitely complete category, in the sense of Section~A2.1 of \cite{Johnstone02}, and that of upper Vietoris space of downsets in a closed"/ordered closure space, the latter in the sense of \cite{Tholen09}; see \exrref{yoneda embedding for unit V-category}{yoneda embeddings for internal preorders} and \exref{yoneda embedding for small enriched profunctors}. The Yoneda embeddings of the good Yoneda structure associated to a 2-topos \cite{Weber07} are instances of our notion too (\exref{yoneda embeddings in 2-topoi}). Given an augmented virtual double category $\K$, with vertical $2$"/category $V(\K)$, in \thmref{yoneda structures} we compare our notion of Yoneda embedding in $\K$ to the notion of Yoneda structure on $V(\K)$ (\cite{Street-Walters78}) and to the notion of good Yoneda structure on $V(\K)$ (\cite{Weber07}).
	
	In \secref{exact cells} the classical notion of exact square of functors, as considered by e.g.\ Guitart \cite{Guitart80}, is formalised as follows. Given a cell $\phi$ with horizontal target $\hmap KCD$ and a morphism $\map dCM$ we call $\phi$ \emph{left $d$"/exact} if, for every left Kan cell $\eta$ defining the left Kan extension of $d$ along $K$, the composite $\eta \of \phi$ is again left Kan (\defref{left exact}). In the presence of a Yoneda morphism $\map\yon C{\ps C}$ this notion relates to that of cocartesianness and that of cocompleteness of $\ps C$ as follows. If $\phi$ has the identity morphism $\id_D$ as vertical target then it is left $\yon$"/exact if and only if it is right unary"/cocartesian (\propref{left exactness and right unary-cocartesianness}). Moreover the left Kan extension of any morphism $\map d{A_0}{\ps C}$ along a path $\hmap{(J_1, \dotsc, J_n)}{A_0}{A_n}$ exists if and only if there exists a horizontal left $\yon$"/exact cell with the concatenation $\hmap{(\ps C(\yon, d), J_1, \dotsc, J_n)}C{A_n}$ as horizontal source (\propref{left Kan extensions along a yoneda embedding in terms of left y-exact cells}). \thmref{left exactness and Beck-Chevalley} describes left exactness in terms of a \emph{Beck"/Chevalley condition} (\defref{left Beck-Chevalley condition}); the latter condition, in turn, is used in \thmref{left Beck-Chevalley condition and absolute left Kan extensions} to characterise \emph{absolute} left Kan extensions (\defref{absolutely left Kan}).
	
	In \defref{totality} a morphism $\map fMN$ is defined to be \emph{total} if the pointwise left Kan extension of $f$ along every $\hmap JMB$ exists. An object $M$ is total if its identity morphism $\id_M$ is total; this recovers the classical notion of totality for enriched categories (\cite{Day-Street86} and \cite{Kelly86}). Given a Yoneda morphism $\map\yon M{\ps M}$ consider the morphism $\map{\cur{f_*}}N{\ps M}$ corresponding to the companion $\hmap{f_*}MN$, as given by the Yoneda axiom: \thmref{total morphisms} shows that the totality of $f$ is equivalent to the existence of a left adjoint to $\cur{f_*}$. The latter condition is analogous to that of the classical $2$"/categorical definition of totality introduced in \cite{Street-Walters78} (\exref{weak totality in an augmented virtual equipment with yoneda embeddings}). Any presheaf object $\ps M$ is total whenever the companion $\hmap{\yon_*}M{\ps M}$ of its Yoneda morphism exists (\exref{presheaf objects are total}) (the analogous result Corollary~14 of \cite{Street-Walters78} requires both $M$ and $\ps M$ to be admissible). Under mild conditions any morphism $\map fAC$ induces a morphism $\map{\ps f}{\ps C}{\ps A}$ of presheaf objects (\defref{restriction}); this formalises the classical functor $\ps f$ given by restricting presheaves along $f$. \propref{restriction and curry} describes the relation between the assignments $f \mapsto \ps f$ and $J \mapsto \cur J$, the latter given by the Yoneda axiom; in \cororef{uniqueness of yoneda embeddings} this is used to describe the uniqueness of Yoneda embeddings. Using the notion of totality, \corosref{right adjoint to ps f}{left adjoint to ps f} describe the right and left adjoints of $\ps f$.
	
	The aim of \secref{cocompleteness section} is to isolate conditions ensuring that a Yoneda embedding $\map\yon M{\ps M}$ defines $\ps M$ as the free `small' cocompletion of $M$. The appropriate notion of `smallness' here depends on the augmented virtual double category under consideration: while small cocompleteness in $\enProf{(\Set, \Set')}$ most naturally means ``admits all pointwise left Kan extensions along $\Set$"/profunctors $\hmap JAB$ with $A$ a small category'' (\exref{enriched free cocompletion}), in the pseudo double category $\ensProf{(\Set, \Set')}$ of small $\Set$"/profunctors between large categories the notion of ``admitting pointwise left Kan extensions along \emph{all} small $\Set$"/profunctors'' turns out to be more appropriate (\exref{small enriched free cocompletion}). This is why in \defref{cocompletion} we assume specified an `ideal' $\mathcal S$ of left diagrams $(d, J)$, consisting of pairs of morphisms $M \xlar d A \xbrar J B$, and then define an object $N$ to be \emph{$\mathcal S$"/cocomplete} whenever, for every $(d, J) \in \mathcal S$ such that $d$ has $N$ as target, the pointwise left Kan extension of $d$ along $J$ exists. Given an ideal $\mathcal S$ of left diagrams, the main result \thmref{presheaf objects as free cocompletions} uses the notion of pointwise right unary"/cocartesian cell to give conditions that ensure that a Yoneda embedding $\map\yon M{\ps M}$ defines $\ps M$ as the free $\mathcal S$"/cocompletion of $M$, as described previously at the end of the \introref.

	The main results of the final section (\secref{yoneda embeddings in a monoidal augmented virtual double category}) have already been described in the \introref. Its \thmref{presheaf object equivalent to iota-small internal hom} is used in obtaining some of the examples of Yoneda embedding in \secref{yoneda embeddings section}. \secref{yoneda embeddings in a monoidal augmented virtual double category} depends on \secrref{Kan extension section}{yoneda embeddings section} only, except for \thmref{free cocompletion of the monoidal unit} which depends on \secref{cocompleteness section}.
	
	\subsection*{References to the prequel.} This work is a sequel to the paper \cite{Koudenburg20}, which introduces the notion of augmented virtual double category. The results of the latter are used throughout this work. To save space we, when referring to such results, do not cite \cite{Koudenburg20} but instead refer to them by prefixing their numbering with the capital letter `A'; e.g.\ ``Definition~A1.2'' and ``Lemma~A8.1'' in this text refer to Definition~1.2 and Lemma~8.1 of \cite{Koudenburg20}. References to sections of the prequel use the same prefix, e.g.\ ``\augsecref 7'' refers to Section~7 of \cite{Koudenburg20}.
	
	Like the prequel this work is based on parts of the draft \cite{Koudenburg15b}, specifically its Sections~4 and~5. The material presented here is significantly more streamlined and expanded in many ways; in particular the material of the present Section~5 is new. The author encourages readers to consult the present article rather than the corresponding draft material of \cite{Koudenburg15b}.
	
	\subsection*{Notation} \label{Notation} \addcontentsline{toc}{section}{\protect\numberline{}Notation}
	We continue using the notation of \cite{Koudenburg20}. In particular:
	\begin{enumerate}[label=-]
		\item for any integer $n \geq 1$ we write $n' \dfn n - 1$ for its predecessor;
		\item given composable paths $\ul J = (J_1, \dotsc, J_n)$ and $\ul H = (H_1, \dotsc, H_m)$ of horizontal morphisms we write $\ul J \conc \ul H \dfn (J_1, \dotsc, J_n, H_1, \dotsc, H_m)$ for their concatenation;
		\item for a path $\ul J = (J_1, \dotsc, J_n)$ of horizontal morphisms we write $\id_{\ul J} \dfn (\id_{J_1}, \dotsc, \id_{J_n})$ for the corresponding path of identity cells;
		\item most cartesian and cocartesian cells (\augdefref{4.1} and \augsecref 7; see also \defref{cartesian cells} below) are left unnamed, and instead denoted by ``$\cart$'' and ``$\cocart$'';
		\item we assume fixed a category $\Set'$ of large sets and a subcategory $\Set \subsetneqq \Set'$ of small sets, such that the collection of morphisms of $\Set$ forms an object in $\Set'$.
	\end{enumerate}
	
	\section{Left Kan extension}\label{Kan extension section}
	We begin by introducing four notions of left Kan extension in augmented virtual double categories. The first of these, defined below, is that of `weak left Kan extension'. In \propref{weak left Kan extensions along companions} we will see that weak left Kan extension along companions (see \augdefref{5.1} or \defref{companion and conjoint} below) in an augmented virtual double category $\K$ corresponds to left Kan extension in the vertical 2"/category $V(\K)$ contained in $\K$ (\augexref{1.5}), the latter in the classical sense. The stronger notion of `left Kan extension', introduced in \defref{left Kan extension} below, recovers the classical notions of `weighted colimit' and `enriched left Kan extension', as introduced by Borceux and Kelly in \cite{Borceux-Kelly75} (see also Sections~3 and 4 of \cite{Kelly82}), as we will see in \exsref{weighted colimits}{enriched left Kan extension}. On the other hand the notion of `pointwise weak left Kan extension' of \defref{pointwise left Kan extension} is reminiscent of that of pointwise left Kan extension in a $2$"/category, as introduced by Street in \cite{Street74b}. The same definition also introduces the notion of `pointwise left Kan extension', which combines the latter two strengthenings. In \secref{pointwise Kan extensions section} we will see that the notions of `pointwise weak left Kan extension' and `pointwise left Kan extension' coincide in augmented virtual double categories $\K$ that have `cocartesian tabulations' as well as restrictions on the right (\thmref{pointwise Kan extensions in terms of pointwise weak Kan extensions}); moreover in that case they recover Street's $2$"/categorical notion of pointwise left Kan extension in $V(\K)$ (\propref{pointwise left Kan extensions along companions}).
	
	\subsection{Weak left Kan extension}
	The notion of weak left Kan extension below generalises the notion of `Kan extension' in a double category that was introduced in Definition~3.1 of \cite{Koudenburg14a}, by allowing extensions of a vertical morphism $\map dAM$ along a path of horizontal morphisms $\hmap{J_1}A{A_1}$, \dots, $\hmap{J_n}{A_{n'}}{A_n}$ instead of a single morphism $\hmap JAB$. The latter notion in turn specialises that of Kan extension given in Section~2 of \cite{Grandis-Pare08}, which allows extension of $\map dAM$ both at its source, along a morphism $\hmap JAB$, as well as at its target, along some $\hmap KMN$.
	
	Recall from \auglemref{1.3} the notion of horizontal composition $\phi \hc \psi$ of horizontally composable cells $\phi$ and $\psi$ in an augmented virtual double category, which is defined whenever $\phi$ or $\psi$ is nullary.
	
	\begin{definition} \label{weak left Kan extension}
		Consider the nullary cell $\eta$ in the composite on the right-hand side below. It is said to define $\map l{A_n}M$ as the \emph{weak left Kan extension} of $\map d{A_0}M$ along the (possibly empty) path $\ul J = (J_1, \dotsc, J_n)$ if any nullary cell $\phi$, as on the left"/hand side, factors uniquely through $\eta$ as a vertical cell $\phi'$ as shown. In that case $\eta$ is called \emph{weakly left Kan}.
		\begin{displaymath}
			\begin{tikzpicture}[textbaseline]
				\matrix(m)[math35, yshift=1.625em]{A_0 & A_1 & A_{n'} & A_n \\};
				\draw	([yshift=-3.25em]$(m-1-1)!0.5!(m-1-4)$) node (M) {$M$};
				\path[map]	(m-1-1) edge[barred] node[above] {$J_1$} (m-1-2)
														edge[transform canvas={yshift=-2pt}] node[below left] {$d$} (M)
										(m-1-3) edge[barred] node[above] {$J_n$} (m-1-4)
										(m-1-4) edge[transform canvas={yshift=-2pt}] node[below right] {$k$} (M);
				\path				($(m-1-2.south)!0.5!(m-1-3.south)$) edge[cell] node[right] {$\phi$} (M);
				\draw				($(m-1-2)!0.5!(m-1-3)$) node {$\dotsb$};
			\end{tikzpicture} \quad = \quad \begin{tikzpicture}[textbaseline]
				\matrix(m)[math35]{A_0 & A_1 & A_{n'} & A_n \\ & & & M \\};
				\path[map]	(m-1-1) edge[barred] node[above] {$J_1$} (m-1-2)
														edge[transform canvas={shift={(-2pt,-2pt)}}] node[below left] {$d$} (m-2-4)
										(m-1-3) edge[barred] node[above] {$J_n$} (m-1-4)
										(m-1-4) edge[bend right=45] node[left] {$l$} (m-2-4)
														edge[bend left=45] node[right] {$k$} (m-2-4);
				\path				($(m-1-1.south)!0.5!(m-1-4.south)$) edge[transform canvas={shift={(0.3em, 0.5em)}}, cell] node[right] {$\eta$} ($(m-2-1.north)!0.5!(m-2-4.north)$)
										(m-1-4) edge[cell, transform canvas={xshift=-0.25em}] node[right] {$\phi'$} (m-2-4);
				\draw				($(m-1-2)!0.5!(m-1-3)$) node {$\dotsb$};
			\end{tikzpicture}
		\end{displaymath}
	\end{definition}
	As usual any two nullary cells defining the same weak left Kan extension factor through each other as invertible vertical cells. In \exref{recursive left Kan extension} we will see that (weak) left Kan extensions along a path $\ul J = (J_1, \dotsc, J_n)$ can be obtained by extending along each of the $J_1$, \dots, $J_n$ recursively.
	\begin{example} \label{vertical cells defining left Kan extensions}
		A vertical cell is weakly left Kan if and only if it is invertible; in fact in that case it defines a left Kan extension in the sense of \defref{left Kan extension}.
	\end{example}
		
	\begin{example} \label{Kan extensions in quintets}
		In $Q(\mathcal C)$, the double category of quintets in a $2$"/category $\mathcal C$ (see \augexref{6.3}), the notion of weak Kan extension coincides with the usual $2$"/categorical notion of Kan extension in $\mathcal C$, as given in Section~2 of \cite{Street72}.
	\end{example}
	
	\begin{remark}
		The definition above induces a notion of weak left Kan extension for unital virtual double categories, that is virtual double categories $\K$ that admit all horizontal units (see \augsecref 4 or \defref{cartesian cells} below), as follows. Recall from \augexref{1.7} and \augsecref{10} that any such $\K$ induces an augmented virtual double category $N(\K)$ which has the same objects and morphisms as $\K$ while its nullary cells $\ul J \Rar M$ are precisely the unary cells $\ul J \Rar I_M$ of $\K$, where \mbox{$\hmap{I_M}MM$} is a chosen horizontal unit for $M$. Regarding the definition above for $N(\K)$ in terms of $\K$ we obtain a notion of weak left Kan extension for $\K$, defined by universal unary cells \mbox{$\cell\eta{\ul J}{I_M}$} whose horizontal targets are horizontal units. Likewise all definitions and results of this article can be applied to unital virtual double categories.
	\end{remark}
	
	\begin{remark}
		Recall from \augdefref{1.8} that every augmented virtual double category $\K$ has a horizontal dual $\co\K$. Horizontally dual to the definition above, a nullary cell $\cell\eps{(J_1, \dotsc, J_n)}M$ of $\K$ is called \emph{weakly right Kan}, thus defining a weak right Kan extension, whenever the corresponding cell $\cell{\co\eps}{(\co J_n, \dotsc, \co J_1)}M$ of $\co\K$ is weakly left Kan. Horizontal duals of the notions of `left Kan cell' and `pointwise left Kan cell', introduced in \defsref{left Kan extension}{pointwise left Kan extension} below, are obtained analogously. This article only concerns left Kan extensions.
	\end{remark}
	
	Remember from \augexref{1.5} that any augmented virtual double category $\K$ contains a $2$-category $V(\K)$ consisting of its objects, vertical morphisms and vertical cells. Weak left Kan extension along companions (see \augdefref{5.1} or \defref{companion and conjoint} below) in $\K$ corresponds to left Kan extension in $V(\K)$ as in the following proposition. In \propref{reducing to Kan extensions along a companion} we will see that (weak) left Kan extension along a path $\ul J$ of horizontal morphisms reduces to (weak) left Kan extension along a single companion morphism whenever the `cocartesian path of $(0,1)$"/ary cells for $\ul J$' exists. In the right"/hand side below ``cocart'' denotes the cocartesian cell that defines the companion $j_*$ of $j$; see \augsecref 5 or \defref{companion and conjoint} below.
	\begin{proposition} \label{weak left Kan extensions along companions}
		In an augmented virtual double category $\K$ consider a vertical cell $\eta$, as on the left-hand side below, and its factorisation $\eta'$ through the companion $j_*$, as shown.
		\begin{displaymath}
			\begin{tikzpicture}[textbaseline]
				\matrix(m)[math35, column sep={1.75em,between origins}]{& A & \\ & & B \\ & M & \\};
				\path[map]	(m-1-2) edge[bend left = 18] node[right] {$j$} (m-2-3)
														edge[bend right = 45] node[left] {$d$} (m-3-2)
										(m-2-3) edge[bend left = 18] node[right] {$l$} (m-3-2);
				\path[transform canvas={yshift=-1.625em}]	(m-1-2) edge[cell] node[right] {$\eta$} (m-2-2);
			\end{tikzpicture} \quad = \quad \begin{tikzpicture}[textbaseline]
    		\matrix(m)[math35, column sep={1.75em,between origins}]{& A & \\ A & & B \\ & M & \\};
    		\path[map]	(m-1-2) edge[transform canvas={xshift=2pt}] node[right] {$j$} (m-2-3)
    								(m-2-1) edge[barred] node[below, inner sep=2pt] {$j_*$} (m-2-3)
    												edge[transform canvas={xshift=-2pt}] node[left] {$d$} (m-3-2)
    								(m-2-3)	edge[transform canvas={xshift=2pt}] node[right] {$l$} (m-3-2);
    		\path				(m-1-2) edge[eq, transform canvas={xshift=-2pt}] (m-2-1);
    		\path				(m-2-2) edge[cell, transform canvas={yshift=0.1em}]	node[right, inner sep=3pt] {$\eta'$} (m-3-2);
    		\draw				([yshift=-0.5em]$(m-1-2)!0.5!(m-2-2)$) node[font=\scriptsize] {$\cocart$};
  		\end{tikzpicture}
  	\end{displaymath}
		The factorisation $\eta'$ is weakly left Kan in $\K$ if and only if $\eta$ defines $l$ as the left Kan extension of $d$ along $j$ in $V(\K)$, in the sense of Section~2 of \cite{Street72}.
	\end{proposition}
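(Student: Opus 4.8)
The plan is to derive the equivalence directly from the universal property of the cocartesian cell ``$\cocart$'' that, by \augsecref 5, exhibits $j_*$ as the companion of $j$; this is the $(0,1)$"/ary cell with empty horizontal source on $A$, vertical left leg $\id_A$, vertical right leg $j$ and horizontal target $j_*$. The first step is to record the correspondence it induces. By the defining universal property of cocartesian cells, precomposition with $\cocart$ is, for each object $M$, a bijection between the nullary cells $\cell\phi{(j_*)}M$ and the nullary cells with empty horizontal source carrying the same vertical legs except that the right leg is postcomposed with $j$; the latter cells are, by the definition of $V(\K)$ (\augexref{1.5}), precisely the vertical cells of $\K$, so we obtain for each vertical morphism $\map kBM$ a bijection
\[ \bigset{\cell\phi{(j_*)}M \text{ with left leg } d \text{ and right leg } k} \;\xrar{\iso}\; V(\K)(d, k \of j), \qquad \phi \mapsto \phi \of \cocart. \]
By the displayed identity in the statement, this bijection sends $\eta'$ to $\eta$.

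The second step is to check that this correspondence intertwines the factorisation of \defref{weak left Kan extension} with the one in Street's definition: for any vertical cell $\cell{\phi'}lk$ one has
\[ (\phi' \of \eta') \of \cocart = (\phi' \of j) \of \eta, \]
where $\phi' \of j$ denotes the whiskering of $\phi'$ by $j$, that is the vertical cell $l \of j \Rar k \of j$ of $V(\K)$. Granting this, the property ``every nullary $\cell\phi{(j_*)}M$ with left leg $d$ factors uniquely through $\eta'$ as a vertical cell $\phi'$'', i.e.\ that $\eta'$ is weakly left Kan, transports verbatim along the bijection above into the property ``every vertical cell $d \Rar k \of j$ factors uniquely through $\eta$ as the whiskering $\phi' \of j$ of a vertical cell $\cell{\phi'}lk$'', i.e.\ that $\eta$ exhibits $l$ as the left Kan extension $\textup{lan}_j d$ in $V(\K)$ in the sense of Section~2 of \cite{Street72}. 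Since this holds for every $\map kBM$ and the correspondence is a bijection, the two universal properties are equivalent, which proves the proposition.

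The step that requires genuine care is the compatibility identity of the second step. I expect its proof to be a short diagram chase with the associativity and interchange axioms of augmented virtual double categories: composing $\cocart$ into the horizontal source of $\eta'$ has the effect of postcomposing the right vertical leg $l$ of $\eta'$ with $j$, and it is this that forces the cell $\phi'$ --- originally pasted below $\eta'$ along $l$ --- to reappear after reassociation as the whiskering $\phi' \of j$ along $l \of j$. Alternatively this compatibility can be extracted from the basic properties of companions collected in \augsecref 5. Everything else in the argument is a formal transport of structure along $\phi \mapsto \phi \of \cocart$.
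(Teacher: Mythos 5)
Your proposal is correct and follows essentially the same route as the paper, whose proof is itself only a one-sentence sketch: factorise through the cocartesian cell defining $j_*$ and observe that the two universal properties correspond under the resulting bijection $\phi \mapsto \phi \of \cocart$. The compatibility identity you isolate in your second step is indeed the only point needing verification, and it follows from the interchange axioms (\auglemref{1.3}) exactly as you anticipate.
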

	\begin{proof}[(Sketch)]
		It is straightforward to show that, by factorising through the cocartesian cell defining $j_*$ (see \augsecref 5 or \defref{companion and conjoint} below), the universal property of $\eta$ in $\K$ is equivalent to that of $\eta'$ in $V(\K)$.
	\end{proof}
	
	\subsection{Left Kan extension}
	\defref{weak left Kan extension} strengthens to give a notion of left Kan extension in augmented virtual double categories as follows. This generalises the corresponding notion for double categories, that was given in Definition~3.10 of \cite{Koudenburg14a} under the name `pointwise left Kan extension'; see \exref{left Kan extensions when all composites exist} below.
	\begin{definition} \label{left Kan extension}
		Consider the nullary cell $\eta$ in the composite on the right-hand side below, where $\ul J = (J_1, \dotsc, J_n)$ is possibly empty. It is said to define $\map l{A_n}M$ as the \emph{left Kan extension} of $\map d{A_0}M$ along $\ul J$ if any nullary cell $\phi$ as on the left-hand side below, where $\ul H = (H_1, \dotsc, H_m)$ is any (possibly empty) path, factors uniquely through $\eta$ as a nullary cell $\phi'$, as shown. In that case $\eta$ is called \emph{left Kan}.
		\begin{displaymath}
			\begin{tikzpicture}[textbaseline]
				\matrix(m)[math35, column sep={0.7em}]
					{	A_0 & A_1 & A_{n'} & A_n & B_1 & B_{m'} & B_m \\
						\phantom C & & & M & & & \phantom C \\};
				\path[map]	(m-1-1) edge[barred] node[above] {$J_1$} (m-1-2)
														edge[transform canvas={yshift=-2pt}] node[below left] {$d$} (m-2-4)
										(m-1-3) edge[barred] node[above] {$J_n$} (m-1-4)
										(m-1-4) edge[barred] node[above] {$H_1$} (m-1-5)
										(m-1-6) edge[barred] node[above] {$H_m$} (m-1-7)
										(m-1-7) edge[transform canvas={yshift=-2pt}] node[below right] {$k$} (m-2-4);
				\draw[transform canvas={xshift=-0.5pt}]	($(m-1-2)!0.5!(m-1-3)$) node {$\dotsb$}
										($(m-1-5)!0.5!(m-1-6)$) node {$\dotsb$};
				\path				(m-1-4) edge[cell] node[right] {$\phi$} (m-2-4);
			\end{tikzpicture} \mspace{-6mu} = \mspace{-6mu} \begin{tikzpicture}[textbaseline]
				\matrix(m)[math35, column sep={0.7em}]
					{	A_0 & A_1 & A_{n'} & A_n & B_1 & B_{m'} & B_m \\
						\phantom C & & & M & & & \phantom C \\};
				\path[map]	(m-1-1) edge[barred] node[above] {$J_1$} (m-1-2)
														edge[transform canvas={yshift=-2pt}] node[below left] {$d$} (m-2-4)
										(m-1-3) edge[barred] node[above] {$J_n$} (m-1-4)
										(m-1-4) edge[barred] node[above] {$H_1$} (m-1-5)
														edge node[right] {$l$} (m-2-4)
										(m-1-6) edge[barred] node[above] {$H_m$} (m-1-7)
										(m-1-7) edge[transform canvas={yshift=-2pt}] node[below right] {$k$} (m-2-4);
				\draw[transform canvas={xshift=-0.5pt}]	($(m-1-2)!0.5!(m-1-3)$) node {$\dotsb$}
										($(m-1-5)!0.5!(m-1-6)$) node {$\dotsb$};
				\path				(m-1-3)	edge[cell, transform canvas={yshift=0.25em}] node[right] {$\eta$} (m-2-3)
										(m-1-5) edge[cell, transform canvas={yshift=0.25em}] node[right] {$\phi'$} (m-2-5);
			\end{tikzpicture}
		\end{displaymath}
	\end{definition}
	Clearly every left Kan extension is a weak left Kan extension, by restricting the universal property above to cells $\phi$ with $\ul H = (A_n)$ empty.
	
	\begin{example} \label{pointwise right extension for functors between infty-categories}
		In Section~8 of \cite{Riehl-Verity22} Riehl and Verity introduce the unital virtual equipment (see \defref{augmented virtual equipment} below) $\mathbb M\textup{od}(\K)$ of `modules in an $\infty$"/cosmos $\K$'. In $\mathbb M\textup{od}(\K)$ consider a factorisation $\eta = \eta' \of \cocart$ as in \propref{weak left Kan extensions along companions}. The vertical cell $\cell\eta d{l \of j}$ corresponds to a `$\infty$"/natural transformation' $l \of j \Rar d$ in the `homotopy $2$"/category associated to $\K$'; see Proposition~8.4.11 of \cite{Riehl-Verity22}. Using Theorem~8.4.4 and Definition~9.1.2 of the latter it is straightforward to see that this transformation defines $l$ as a `pointwise right extension', in the sense of its Theorem~9.3.3(iii), precisely if $\eta'$ is left Kan in $\mathbb M\textup{od}(\K)$, in our sense above.
	\end{example}
	
	\subsection{Weighted colimits and enriched left Kan extension}
	The notion of left Kan extension specialises to the classical notions of weighted colimit and enriched left Kan extension as follows. As noted in the introduction to \cite{Street74b} recall that the $2$"/categorical notion of pointwise left Kan extension, as introduced therein, is too strong to recover the notion of enriched Kan extension; see \exref{pointwise is stronger than enriched} below.
	\begin{example} \label{weighted colimits}
		Let $\V = (\V, \tens, I)$ be a monoidal category and $\eta$ a cell in the unital virtual equipment $\enProf\V$ of $\V$"/profunctors (\augexsref{2.4}{4.2}) that is of the form as in the composite on the right"/hand side below. Here $I$ denotes the unit $\V$"/category with single object $*$ and hom"/object $I(*, *) = I$; we identify $\V$"/functors \mbox{$\map fIM$} with objects in $M$ and $\V$"/profunctors $\hmap HII$ with $\V$"/objects.
		\begin{displaymath}
			\begin{tikzpicture}[textbaseline]
				\matrix(m)[math35, yshift=1.625em]{A_0 & A_1 & A_{n'} & I & I \\};
				\draw				([yshift=-3.25em]$(m-1-1)!0.5!(m-1-5)$) node (M) {$M$};
				\path[map]	(m-1-1) edge[barred] node[above] {$J_1$} (m-1-2)
														edge[transform canvas={yshift=-2pt}] node[below left] {$d$} (M)
										(m-1-3) edge[barred] node[above] {$J_n$} (m-1-4)
										(m-1-4) edge[barred] node[above] {$H$} (m-1-5)
										(m-1-5) edge[transform canvas={yshift=-2pt}] node[below right] {$k$} (M);
				\path				(m-1-3) edge[cell] node[right] {$\phi$} (M);
				\draw				($(m-1-2)!0.5!(m-1-3)$) node {$\dotsb$};
			\end{tikzpicture} \quad = \quad \begin{tikzpicture}[textbaseline]
				\matrix(m)[math35]{A_0 & A_1 & A_{n'} & I & I \\ & & & M & \\};
				\path[map]	(m-1-1) edge[barred] node[above] {$J_1$} (m-1-2)
														edge[transform canvas={yshift=-2pt}] node[below left] {$d$} (m-2-4)
										(m-1-3) edge[barred] node[above] {$J_n$} (m-1-4)
										(m-1-4) edge[barred] node[above] {$H$} (m-1-5)
														edge node[left] {$l$} (m-2-4)
										(m-1-5) edge[transform canvas={yshift=-2pt}] node[below right] {$k$} (m-2-4);
				\path				(m-1-2) edge[cell, transform canvas={shift={(3em,0.333em)}}] node[right] {$\eta$} (m-2-2)
										(m-1-4) edge[cell, transform canvas={shift={(1em,0.333em)}}] node[right] {$\phi'$} (m-2-4);
				\draw				($(m-1-2)!0.5!(m-1-3)$) node {$\dotsb$};
			\end{tikzpicture}
  	\end{displaymath}
  	One checks that the universal property defining $\eta$ as a left Kan cell in $\enProf\V$ reduces to the unique factorisations through $\eta$ of the cells $\phi$ of the form as on the left"/hand side above; see Proposition~2.24 of \cite{Koudenburg15a} for the horizontally dual result in the case that $\V$ has large colimits that are preserved by $\tens$ on both sides, so that $\enProf\V$ is a pseudo double category (\augexref{9.2}).
  	
  	Unpacking the reduced universal property above for a $(1,0)$"/ary cell $\cell\eta{J_1}{M}$ we recover the notion of a `couniversal $\V$"/natural pair' $(l, \eta)$ that defines $l \in M$ as the \emph{tensor product of $J_1$ with $d$} in the sense of Definition~3.5 of \cite{Lindner81}. Next consider the $(n,0)$"/ary cell $\cell\eta{\ul J}M$ above in a unital virtual equipment $\enProf{\V'}$ where $\V'$ is a closed symmetric monoidal category, so that each $\V'$"/profunctor $J_i$ can be regarded as a $\V'$"/functor \mbox{$\map{J_i}{\op A_{i'} \tens A_i}\V'$}. It is straightforward to check that $\eta$ satisfies the reduced universal property above if and only if the adjuncts of the composites
  	\begin{displaymath}
  		J_1(x_0, x_1) \tens' \dotsb \tens' J_n(x_{n'}, *) \tens' M(l, k) \xrar{\phi \tens' \id} M(dx_0, l) \tens' M(l, k) \xrar{\bar M} M(dx_0, k)
  	\end{displaymath}
  	define $M(l, k)$ as the iterated end
  	\begin{displaymath}
  		\intl_{x_0 \in A_0} \dotsb \intl_{x_{n'} \in A_{n'}} \brks{J_1(x_0, x_1) \tens' \dotsb \tens' J_n(x_{n'}, *), M(dx_0, k)}'.
  	\end{displaymath}
  	If $n = 1$, so that the end reduces to the $\V'$"/object $\inhom{\op A_0, \V'}\pars{J_1, M(d\dash, k)}$ of $\V'$"/functors $J_1 \to M(d\dash, k)$, this recovers the notion of $\eta$ defining $l$ as the \emph{$J_1$"/weighted colimit of $d$}, in the usual sense of equation~(3.5) of \cite{Kelly82} (where the colimit is said to be `indexed by $J_1$') and as originally introduced in \cite{Borceux-Kelly75}.
  	
  	In light of the previous we call paths $\hmap{\ul J}{A_0}I$ of $\V$"/profunctors \emph{$\V$"/weights} and say that the left Kan cell $\cell\eta{\ul J}M$ above defines $l \in M$ as the \emph{$\ul J$"/weighted colimit of $d$}. We also use the term $\ul J$"/weighted colimit $d$ for left Kan cells \mbox{$\ul J \Rar M$} as above in the unital virtual double category $\ensProf\V$ of small $\V$"/profunctors (\augexsref{2.8}{4.7}). Notice that the reduced universal property above, for $\eta \in \ensProf\V$, is the same whether considered in $\ensProf\V$ or in $\enProf\V$, since all $\V$"/profunctors of the form $\hmap HII$ are small. Together with \lemref{full and faithful functors reflect and preserve weakly left Kan cells} we conclude that the embedding $\ensProf\V \hookrightarrow \enProf\V$ both preserves and reflects cells $\eta$ defining weighted colimits.
  	
  	Next let $\V \subset \V'$ be a \emph{universe enlargement} in the sense of Section~3.12 of \cite{Kelly82} (see also \augexref{2.7}), that is a monoidal, limit-preserving and full embedding of $\V$ into a closed monoidal and locally large category $\V'$ that is both large complete and large cocomplete. Consider the sub-augmented virtual equipment $\enProf{(\V, \V')} \subseteq \enProf{\V'}$ of \emph{$\V$"/profunctors} $\hmap JAB$ between $\V'$"/categories, with $J(x, y) \in \V$ for all $x \in A$, $y \in B$; see \augexsref{2.7}{4.6}. Applying \lemref{full and faithful functors reflect and preserve weakly left Kan cells} we find that a cell $\cell\eta{\ul J}M$ in $\enProf{(\V, \V')}$, of the form as in the right"/hand side above, is left Kan in $\enProf{(\V, \V')}$ whenever it defines $l$ as the $\ul J$"/weighted colimit of $d$ in $\enProf{\V'}$. Notice however that the reduced universal property for $\eta$ above is in general weaker when considered in $\enProf{(\V, \V')}$ (where $H \in \V$) than when considered in $\enProf{\V'}$ (where $H \in \V'$). One checks however that the two properties coincide if the iterated end above, which is known to exist in the large complete $\V'$, is (isomorphic to) a $\V$"/object. Recalling that $\V \subset \V'$ preserves limits, to ensure the latter it suffices that $\V$ is closed symmetric monoidal and small complete, $\V \subset \V'$ is a closed symmetric monoidal functor, and all of the $A_i$ are small $\V'$"/categories.
	\end{example}
	
	\begin{example} \label{enriched left Kan extension}
		Let $\V = (\V, \tens, I)$ be a monoidal category and let $\K$ denote either the unital virtual double category $\enProf\V$ of $\V$"/profunctors or that of small $\V$"/profunctors $\ensProf\V$. Notice that in either case $\K$ has all horizontal units and restrictions on the right (\augexsref{4.2}{4.7}). Given a path $\hmap{\ul J}{A_0}{A_n}$ of (small) $\V$"/profunctors and a $\V$"/functor $\map d{A_0}M$, assume that for each $x \in A_n$ the $\bigpars{J_1, \dotsc, J_n(\id, x)}$"/weighted colimit of $d$ exists in $\K$, in the sense of the previous example. Here $J_n(\id, x)$ is the restriction (see \augdefref{4.1} or \defref{cartesian cells} below) of $J_n$ along $\map xI{A_n}$, the $\V$"/functor that picks out $x \in A_n$. Denote each of these colimits by $l_x$ and its defining cell, of the form as on the left below, by $\eta_x$. It is straightforward to show that the universal property of the $\eta_x$ ensures that the objects $l_x$ and cells $\eta_x$ uniquely combine into a $\V$"/functor $\map l{A_n}M$ and a left Kan cell $\eta$ such that the equation on the left below is satisfied in $\K$ for each $x \in A_n$; here ``cart'' denotes the cartesian cell that defines the restriction $J_n(\id, x)$.
		
		Using \cororef{left Kan extensions with unital sources} below we conclude that a cell $\eta$ in $\K$, of the form as in the right"/hand side on the left below, is left Kan precisely if for each $x \in A_n$ the composite $\eta_x$ on the left below defines $l_x = lx$ as the $\bigpars{J_1, \dotsc, J_n(\id, x)}$"/weighted colimit of $d$ in $\K$. In particular it follows that $\ensProf\V \hookrightarrow \enProf\V$ both preserves and reflects left Kan cells.
		
		Given a universe enlargement $\V \subset \V'$ (\exref{weighted colimits}) recall that the augmented virtual equipment $\enProf{(\V, \V')}$ of $\V$"/profunctors between $\V'$"/categories need not have all horizontal units (\augexref{4.6}). It follows that we cannot apply \cororef{left Kan extensions with unital sources} to $\enProf{(\V, \V')}$ in general, so that its left Kan cells may not be ``pointwise''; see \exref{left Kan extensions that are not pointwise}. Analogous to the above however any family of cells $\eta_x$ in $\enProf{(\V, \V')}$ as on the left below, defining weighted colimits $l_x$ in $\enProf{\V'}$, uniquely combine into a left Kan cell $\eta$ in $\enProf{(\V, \V')}$ that satisfies the equation on the left below for each $x \in A_n$; see also the last paragraph of the previous example. In that case applying \lemref{full and faithful cartesian cell preserving functors reflect pointwise left Kan cells} below to $\eta$ we find that it is pointwise left Kan in $\enProf{(\V, \V')}$, in the sense of \defref{pointwise left Kan extension} below.
		\begin{displaymath}
			\begin{tikzpicture}[textbaseline]
				\matrix(m)[math35, yshift=1.625em]{A_0 & A_1 & A_{n'} & I \\};
				\draw	([yshift=-3.25em]$(m-1-1)!0.5!(m-1-4)$) node (M) {$M$};
				\path[map]	(m-1-1) edge[barred] node[above] {$J_1$} (m-1-2)
														edge[transform canvas={xshift=-2pt}] node[below left] {$d$} (M)
										(m-1-3) edge[barred] node[above, xshift=-3pt] {$J_n(\id, x)$} (m-1-4)
										(m-1-4) edge[transform canvas={xshift=2pt}] node[below right] {$l_x$} (M);
				\path				($(m-1-2.south)!0.5!(m-1-3.south)$) edge[cell] node[right] {$\eta_x$} (M);
				\draw				($(m-1-2)!0.5!(m-1-3)$) node {$\dotsb$};
			\end{tikzpicture} \mspace{-15mu} = \mspace{-3mu} \begin{tikzpicture}[textbaseline]
				\matrix(m)[math35, yshift=1.625em]{A_0 & A_1 & A_{n'} & I \\ A_0 & A_1 & A_{n'} & A_n \\ };
				\draw	([yshift=-6.5em]$(m-1-1)!0.5!(m-1-4)$) node (M) {$M$};
				\path[map]	(m-1-1) edge[barred] node[above] {$J_1$} (m-1-2)
														
										(m-1-3) edge[barred] node[above, xshift=-3pt] {$J_n(\id, x)$} (m-1-4)
										(m-1-4) edge node[right] {$x$} (m-2-4)
										(m-2-4) edge[transform canvas={xshift=2pt}] node[below right] {$l$} (M)
										(m-2-1) edge[barred] node[below, inner sep=2.5pt] {$J_1$} (m-2-2)
														edge[transform canvas={xshift=-2pt}] node[below left] {$d$} (M)
										(m-2-3) edge[barred] node[below, inner sep=2.5pt] {$J_n$} (m-2-4);
				\path				(m-1-1) edge[eq] (m-2-1)
										(m-1-2) edge[eq] (m-2-2)
										(m-1-3) edge[eq, transform canvas={xshift=-2pt}] (m-2-3);
				\path				($(m-2-2.south)!0.5!(m-2-3.south)$) edge[cell] node[right] {$\eta$} (M);
				\draw				($(m-1-2)!0.5!(m-2-3)$) node {$\dotsb$}
										($(m-1-3)!0.5!(m-2-4)$) node[font=\scriptsize] {$\cart$};
			\end{tikzpicture} \mspace{8mu} \begin{tikzpicture}[textbaseline]
				\matrix(m)[math35, column sep={1.75em,between origins}]{& A & \\ & & B \\ & M & \\};
				\path[map]	(m-1-2) edge[bend left = 18] node[right] {$j$} (m-2-3)
														edge[bend right = 45] node[left] {$d$} (m-3-2)
										(m-2-3) edge[bend left = 18] node[right] {$l$} (m-3-2);
				\path[transform canvas={yshift=-1.625em}]	(m-1-2) edge[cell] node[right] {$\zeta$} (m-2-2);
			\end{tikzpicture} \mspace{-3mu} = \mspace{-3mu} \begin{tikzpicture}[textbaseline]
    		\matrix(m)[math35, column sep={1.75em,between origins}]{& A & \\ A & & B \\ & M & \\};
    		\path[map]	(m-1-2) edge[transform canvas={xshift=2pt}] node[right] {$j$} (m-2-3)
    								(m-2-1) edge[barred] node[below, inner sep=2pt] {$j_*$} (m-2-3)
    												edge[transform canvas={xshift=-2pt}] node[left] {$d$} (m-3-2)
    								(m-2-3)	edge[transform canvas={xshift=2pt}] node[right] {$l$} (m-3-2);
    		\path				(m-1-2) edge[eq, transform canvas={xshift=-2pt}] (m-2-1);
    		\path				(m-2-2) edge[cell, transform canvas={yshift=0.1em}]	node[right, inner sep=3pt] {$\zeta'$} (m-3-2);
    		\draw				([yshift=-0.5em]$(m-1-2)!0.5!(m-2-2)$) node[font=\scriptsize] {$\cocart$};
  		\end{tikzpicture}
		\end{displaymath}
		
		Next assume that $\V$ is symmetric monoidal and consider the factorisation $\zeta'$ of a vertical cell (i.e.\ a $\V$"/natural transformation) $\zeta \in \enProf\V$ as on the right above. Applying the previous to $\zeta'$, together with the observations of the previous example we find that $\zeta'$ is left Kan in $\enProf\V$ precisely if $(l, \zeta)$ is the `pointwise left Kan extension of $d$ along $j$' in the sense of Section~4 of \cite{Lindner81}. If $\V$ is moreover closed symmetric monoidal then this recovers the usual $\V$"/enriched notion of $\zeta$ `exhibiting $l$ as the left Kan extension of $d$ along $j$' in the sense of Section~4 of \cite{Kelly82}, as originally introduced in \cite{Borceux-Kelly75}.
		
		Finally assume that $A$ and $B$ are small $\V$"/categories and that $\V$ is small complete, so that the $\V$"/categories $\inhom{A, M}$ and $\inhom{B, M}$ of $\V$"/functors $A \to M$ and $B \to M$ exist; see Section~2.2 of \cite{Kelly82}. In that case the existence of the left Kan extension $\map lBM$ of $d$ along $j$ implies the following weaker condition: there exists a $\V$"/natural isomorphism $\inhom{A, M}(d, k \of j) \iso \inhom{B, M}(l, k)$ for any $\V$"/functor $\map kBM$; see Section~4.3 of \cite{Kelly82}. In particular if all left Kan extensions along $j$ exist then the $\V$"/functor \mbox{$\map{\inhom{j, M}}{\inhom{B, M}}{\inhom{A, M}}$} given by precomposition with $j$ admits a left adjoint (Theorem~4.50 of \cite{Kelly82}); the latter condition, in the case of $\V = \Set$, was originally studied by Kan in \cite{Kan58}.
	\end{example}
	
	The following straightforward lemma is useful for obtaining (weak) left Kan extensions in locally full sub"/augmented virtual double categories. For the notion of (locally) full and faithful functor between augmented virtual double categories see \augdefref{3.6}.
	\begin{lemma} \label{full and faithful functors reflect and preserve weakly left Kan cells}
		Any locally full and faithful functor $\map F\K\L$ reflects (weakly) left Kan cells, that is a cell $\eta \in \K$ is (weakly) left Kan whenever its image $F\eta$ is so in $\L$. If $F$ is full and faithful then it preserves weakly left Kan cells as well: $\eta \in \K$ is weakly left Kan if and only if $F\eta$ is so in $\L$.
	\end{lemma}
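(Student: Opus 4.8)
The plan is to unwind the two universal properties involved and to use that a locally full and faithful $F$ induces a bijection between the cells of $\K$ and of $\L$ with any prescribed boundary, together with the fact that a functor of augmented virtual double categories preserves concatenation of paths, nullary cells, and vertical composition.

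For the reflection statement I would start from a cell $\eta \in \K$ whose image $F\eta$ is weakly left Kan, say exhibiting $\map l{A_n}M$ as the weak left Kan extension of $\map d{A_0}M$ along $\ul J$. Given an arbitrary test cell $\phi$ in $\K$ of the shape occurring in \defref{weak left Kan extension}, I would push it forward to $F\phi$, which by functoriality has exactly the shape occurring in the universal property of $F\eta$; hence $F\phi$ factors uniquely through $F\eta$ as a vertical cell $\chi$ from $Fl$ to $Fk$. Since each object and morphism in the boundary of $\chi$ is $F$ of the corresponding datum in $\K$, local fullness produces a vertical cell $\phi'$ from $l$ to $k$ with $F\phi' = \chi$, and $F(\eta \of \phi') = F\phi$ then forces $\eta \of \phi' = \phi$ by local faithfulness; the uniqueness of $\phi'$ transports back from that of $\chi$ in the same way. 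The very same argument works verbatim for the non-weak notion: there $\phi$ carries an extra path $\ul H$ and the factorisation becomes a nullary cell with source $\ul H$, but because $F$ preserves concatenation $F(\ul J \conc \ul H) = F\ul J \conc F\ul H$ is again of the shape permitted by the universal property of $F\eta$, and all boundary data of the factorising cell in $\L$ again lie in the image of $F$.

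For the preservation statement I would assume moreover that $F$ is full and faithful, hence bijective on vertical morphisms. Starting from a weakly left Kan $\eta \in \K$ and a test cell $\phi$ in $\L$ with horizontal source $F\ul J$ and left leg $Fd$, the key point is that its right leg $\map k{FA_n}{FM}$ now lies in the image of $F$, say $k = Fk'$, by fullness on vertical morphisms, so that $\phi$ itself equals $F\phi_0$ for a unique cell $\phi_0$ in $\K$ with legs $d$ and $k'$. Applying the universal property of $\eta$ in $\K$ to $\phi_0$, pushing the resulting vertical factorisation forward through $F$, and transporting uniqueness back across the bijection on cells then shows that $F\eta$ is weakly left Kan.

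I do not expect a genuine obstacle; the lemma is essentially bookkeeping. The one point demanding attention is keeping track of which boundary data are forced to lie in the image of $F$: for reflection everything in sight arises as $F$ of something by construction, so mere local full-and-faithfulness suffices, whereas for preservation the right leg of an arbitrary test cell in $\L$ need not be in the image --- which is precisely why one there asks that $F$ be full and faithful, and also why preservation is asserted only for the weak notion, an arbitrary test path $\ul H$ extending $F\ul J$ in $\L$ not being of the form $F(\dash)$ in general.
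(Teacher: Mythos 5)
Your argument is correct and is precisely the ``straightforward'' bookkeeping the paper has in mind (it omits the proof entirely): push test cells forward, factor through $F\eta$, and transport the factorisation and its uniqueness back across the bijection on cells that local full-and-faithfulness provides, with vertical fullness needed only in the preservation direction to pull back the right leg $k$ of a test cell in $\L$. Your closing remark correctly identifies why preservation is asserted only for the weak notion, namely that an arbitrary extending path $\ul H$ in $\L$ need not lie in the image of $F$.
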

	
	\subsection{Cartesian cells and restrictions of left Kan extensions}
	In \exref{enriched left Kan extension} we used \cororef{left Kan extensions with unital sources} below. The latter uses the notion of cartesian cell (\augdefref{4.1}) which we recall here for convenience, together with the pasting lemma for cartesian cells (\auglemref{4.15}) and the notion of augmented virtual equipment (\augdefref{4.10}). We also recall the notion of full and faithful morphism (\augdefref{4.12}) and those of companion and conjoint (\augdefref{5.1}), which are related to that of cartesian cell.
	
	\begin{definition} \label{cartesian cells}
		A cell $\cell\psi{\ul J}{\ul K}$ with $\ul J$ of length $\lns{\ul J} \leq 1$, as in the right-hand side below, is called \emph{cartesian} if any cell $\chi$, as on the left-hand side, factors uniquely through $\psi$ as a cell $\phi$ as shown.
		\begin{displaymath}
			\begin{tikzpicture}[textbaseline]
				\matrix(m)[math35]{X_0 & X_1 & X_{n'} & X_n \\ A & & & B \\ C & & & D \\};
				\path[map]	(m-1-1) edge[barred] node[above] {$H_1$} (m-1-2)
														edge node[left] {$h$} (m-2-1)
										(m-1-3) edge[barred] node[above] {$H_n$} (m-1-4)
										(m-1-4) edge node[right] {$k$} (m-2-4)
										(m-2-1) edge node[left] {$f$} (m-3-1)
										(m-2-4) edge node[right] {$g$} (m-3-4)
										(m-3-1) edge[barred] node[below] {$\ul K$} (m-3-4);
				\draw				($(m-1-2)!0.5!(m-1-3)$) node {$\dotsc$};
				\path[transform canvas={yshift=-1.625em}]	($(m-1-1.south)!0.5!(m-1-4.south)$) edge[cell] node[right] {$\chi$} ($(m-2-1.north)!0.5!(m-2-4.north)$);
			\end{tikzpicture} \quad = \quad \begin{tikzpicture}[textbaseline]
				\matrix(m)[math35]{X_0 & X_1 & X_{n'} & X_n \\ A & & & B \\ C & & & D \\};
				\path[map]	(m-1-1) edge[barred] node[above] {$H_1$} (m-1-2)
														edge node[left] {$h$} (m-2-1)
										(m-1-3) edge[barred] node[above] {$H_n$} (m-1-4)
										(m-1-4) edge node[right] {$k$} (m-2-4)
										(m-2-1) edge[barred] node[below] {$\ul J$} (m-2-4)
														edge node[left] {$f$} (m-3-1)
										(m-2-4) edge node[right] {$g$} (m-3-4)
										(m-3-1) edge[barred] node[below] {$\ul K$} (m-3-4);
				\draw				($(m-1-2)!0.5!(m-1-3)$) node {$\dotsc$};
				\path				($(m-1-1.south)!0.5!(m-1-4.south)$) edge[cell] node[right] {$\phi$} ($(m-2-1.north)!0.5!(m-2-4.north)$)
										($(m-2-1.south)!0.5!(m-2-4.south)$) edge[cell, transform canvas={yshift=-2pt}] node[right] {$\psi$} ($(m-3-1.north)!0.5!(m-3-4.north)$);
			\end{tikzpicture}
		\end{displaymath}
			
		Vertically dual, provided that $\lns{\ul J} = 1$, the cell $\phi$ is called \emph{weakly cocartesian} if any cell $\chi$ factors uniquely through $\phi$ as a cell $\psi$ as shown.
	\end{definition}
	If a $(1,n)$-ary cartesian cell $\psi$ of the form above exists then its horizontal source \mbox{$\hmap JAB$} is called the \emph{restriction} of $\hmap{\ul K}CD$ along $f$ and $g$, and denoted $\ul K(f, g) \dfn J$. If $\ul K = (C \xbrar K D)$ then we call $K(f, g)$ \emph{unary}; in the case that $\ul K = C$ is an empty path we call $C(f, g)$ \emph{nullary}. Restrictions of the form $\ul K(f, \id)$ and $\ul K(\id, g)$ are called restrictions \emph{on the left} and \emph{right}. We call the nullary restriction \mbox{$\hmap{C(\id, \id)}CC$} the \emph{(horizontal) unit} of the object $C$ and denote it $I_C \dfn C(\id, \id)$; if $I_C$ exists then we call $C$ \emph{unital}.
	
	\begin{lemma}[Pasting lemma for cartesian cells] \label{pasting lemma for cartesian cells}
		If the cell $\psi$ in the composite on the right"/hand side above is cartesian then the composite $\psi \of \phi$ is cartesian if and only if $\phi$ is.
	\end{lemma}
	
	\begin{definition} \label{full and faithful morphism}
		A vertical morphism $\map fAC$ is called \emph{full and faithful} if its identity cell $\id_f$ is cartesian.
	\end{definition}
	
	\begin{definition} \label{augmented virtual equipment}
		An augmented virtual double category $\K$ is said to have \emph{restrictions on the left (resp.\ right)} if it has all unary restrictions of the form $K(f, \id)$ (resp.\ $K(\id, g)$). We call $\K$ a \emph{unital virtual double category} if it has all horizontal units (see \augsecref{10}). An \emph{augmented virtual equipment} is an augmented virtual double category that has all unary restrictions $K(f, g)$. A \emph{unital virtual equipment} is an unital virtual double category that has all restrictions $\ul K(f, g)$.
	\end{definition}
	
	\begin{definition} \label{companion and conjoint}
		Let $\map fAC$ be a vertical morphism in an augmented virtual double category. The nullary restriction $\hmap{C(f, \id)}AC$ is called the \emph{companion} of $f$ and denoted $f_*$. Likewise $\hmap{C(\id, f)}CA$ is called the \emph{conjoint} of $f$ and denoted $f^*$.
	\end{definition}
	Factorising the identity cell $\cell{\id_f}{(A)}{(C)}$ through the cartesian cell defining the companion $f_*$ we obtain a cocartesian cell, in the sense of \augdefref{7.1} (see also \defref{cocartesian path} below), as described by the following lemma which combines \augthreelemref{5.4}{5.9}{7.6} and \augcororef{8.3}. The identities below are called the \emph{companion identities}. A horizontal dual result similarly applies to the conjoint $f^*$.
	\begin{lemma} \label{companion identities lemma}
		Consider the factorisation of a vertical identity cell on the left below. The following conditions are equivalent: \textup{(a)} $\psi$ is cartesian; \textup{(b)} the identity on the right below holds; \textup{(c)} $\phi$ is weakly cocartesian; \textup{(d)} $\phi$ is cocartesian (\augdefref{7.1} or \defref{cocartesian path}). In that case $J$ is the companion of $f$.
	 	\begin{displaymath}
	  	\begin{tikzpicture}[textbaseline]
						\matrix(m)[math35]{A \\ C \\};
						\path[map]	(m-1-1) edge[bend right=45] node[left] {$f$} (m-2-1)
																edge[bend left=45] node[right] {$f$} (m-2-1);
						\path[transform canvas={xshift=-0.5em}]	(m-1-1) edge[cell] node[right] {$\id_f$} (m-2-1);
			\end{tikzpicture} \quad = \quad \begin{tikzpicture}[textbaseline]
    		\matrix(m)[math35, column sep={1.75em,between origins}]{& A & \\ A & & C \\ & C & \\};
    		\path[map]	(m-1-2) edge node[right] {$f$} (m-2-3)
    								(m-2-1) edge[barred] node[below, inner sep=2pt] {$J$} (m-2-3)
    												edge node[left] {$f$} (m-3-2);
    		\path				(m-1-2) edge[eq, transform canvas={xshift=-1pt}] (m-2-1)
    								(m-2-3) edge[eq, transform canvas={xshift=1pt}] (m-3-2);
    		\path				(m-1-2) edge[cell, transform canvas={yshift=-0.25em}] node[right, inner sep=2pt] {$\phi$} (m-2-2)
    								(m-2-2) edge[cell, transform canvas={yshift=0.1em}]	node[right, inner sep=2pt] {$\psi$} (m-3-2);
  		\end{tikzpicture} \qquad\qquad\quad \begin{tikzpicture}[textbaseline]
  			\matrix(m)[math35, column sep={1.75em,between origins}]{& A & & C \\ A & & C & \\};
  			\path[map]	(m-1-2) edge[barred] node[above] {$J$} (m-1-4)
  													edge node[above right, inner sep=0.5pt] {$f$} (m-2-3)
  									(m-2-1) edge[barred] node[below] {$J$} (m-2-3);
  			\path				(m-1-2) edge[eq, transform canvas={xshift=-1pt}] (m-2-1)
  									(m-1-4) edge[eq, transform canvas={xshift=1pt}] (m-2-3);
  			\path				(m-1-2) edge[cell, transform canvas={yshift=-0.25em}]	node[right, inner sep=2pt] {$\phi$} (m-2-2)
  									(m-1-3) edge[cell, transform canvas={yshift=0.25em}] node[right, inner sep=2pt] {$\psi$} (m-2-3);
  		\end{tikzpicture} \mspace{6mu} = \quad \begin{tikzpicture}[textbaseline]
  			\matrix(m)[math35]{A & C \\ A & C \\};
  			\path[map]	(m-1-1) edge[barred] node[above] {$J$} (m-1-2)
  									(m-2-1) edge[barred] node[below] {$J$} (m-2-2);
  			\path				(m-1-1) edge[eq] (m-2-1)
  									(m-1-2) edge[eq] (m-2-2);
  			\path[transform canvas={xshift=1.75em, xshift=-5.5pt}]	(m-1-1) edge[cell] node[right] {$\id_J$} (m-2-1);
  		\end{tikzpicture}
	  \end{displaymath}
	  
	  If $f = \id_A$ then each of the previous conditions is further equivalent to each of the following ones: \textup{(e)} $\psi$ is weakly cocartesian; \textup{(f)} $\psi$ is cocartesian; \textup{(g)} $\phi$ is cartesian. In that case $J$ is the horizontal unit of $A$.
	\end{lemma}
	
	The following is an immediate consequence of \propref{left Kan extensions preserved by restriction} below.
	\begin{corollary} \label{left Kan extensions with unital sources}
		In an augmented virtual double category that has restrictions on the right consider a cell $\eta$ as in the composite below, with $n \geq 1$ and where the object $A_n$ is unital. It is left Kan precisely if, for each vertical morphism $\map fB{A_n}$, the composite is left Kan.
		\begin{displaymath}
			\begin{tikzpicture}
				\matrix(m)[math35]{A_0 & A_1 & A_{n'} & B \\ A_0 & A_1 & A_{n'} & A_n \\};
				\draw	([yshift=-6.5em]$(m-1-1)!0.5!(m-1-4)$) node (M) {$M$};
				\path[map]	(m-1-1) edge[barred] node[above] {$J_1$} (m-1-2)
														
										(m-1-3) edge[barred] node[above, xshift=-3pt] {$J_n(\id, f)$} (m-1-4)
										(m-1-4) edge node[right] {$f$} (m-2-4)
										(m-2-4) edge[transform canvas={yshift=-2pt}] node[below right] {$l$} (M)
										(m-2-1) edge[barred] node[below, inner sep=2.5pt] {$J_1$} (m-2-2)
														edge[transform canvas={yshift=-2pt}] node[below left] {$d$} (M)
										(m-2-3) edge[barred] node[below, inner sep=2.5pt] {$J_n$} (m-2-4);
				\path				(m-1-1) edge[eq] (m-2-1)
										(m-1-2) edge[eq] (m-2-2)
										(m-1-3) edge[eq, transform canvas={xshift=-2pt}] (m-2-3);
				\path				($(m-2-2.south)!0.5!(m-2-3.south)$) edge[cell] node[right] {$\eta$} (M);
				\draw				($(m-1-2)!0.5!(m-2-3)$) node {$\dotsb$}
										($(m-1-3)!0.5!(m-2-4)$) node[font=\scriptsize] {$\cart$};
			\end{tikzpicture}
		\end{displaymath}
	\end{corollary}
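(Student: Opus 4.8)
The plan is to recognise the displayed composite as $\eta$ vertically precomposed with the cartesian cell of a restriction on the right, and to read off both implications from \propref{left Kan extensions preserved by restriction}. Since $\K$ has restrictions on the right, for each $\map fB{A_n}$ the restriction $J_n(\id, f)$ exists together with a cartesian cell $\cart\colon J_n(\id, f) \Rar J_n$ of vertical source $\id_{A_{n'}}$ and vertical target $f$; the composite in the statement is precisely $\eta$ precomposed with the path of cells $(\id_{J_1}, \dotsc, \id_{J_{n'}}, \cart)$, so \propref{left Kan extensions preserved by restriction} --- which says that such a precomposition of a left Kan cell is again left Kan --- gives the ``only if'' direction at once. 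For the ``if'' direction I would specialise to $f = \id_{A_n}$: then $J_n(\id_{A_{n'}}, \id_{A_n})$ may be taken to be $J_n$ itself, with $\id_{J_n}$ as its cartesian cell, so the corresponding composite is $\eta$ and left Kan-ness of $\eta$ follows; this direction uses neither restrictions on the right nor the unitality of $A_n$.

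The substance therefore lies in \propref{left Kan extensions preserved by restriction}, and this is where the hypothesis that $A_n$ is unital is used. The mechanism I would use there is to rewrite the restriction as a horizontal composite: with the unit $I_{A_n}$ available (and restrictions on the right) one forms the conjoint $\hmap{f^*}{A_n}B$ of $f$, namely $f^* \dfn I_{A_n}(\id, f)$, and obtains $J_n(\id, f) \iso J_n \hc f^*$ witnessed by a cocartesian cell $(J_n, f^*) \Rar J_n(\id, f)$. A test cell for the composite --- whose horizontal source is $(J_1, \dotsc, J_{n'}, J_n(\id, f))$ concatenated with an arbitrary trailing path $\ul H$ --- is then pushed through this cocartesian cell to a test cell over $(J_1, \dotsc, J_n, f^*) \conc \ul H$, factored uniquely through $\eta$ by its universal property with trailing path $(f^*) \conc \ul H$, and the resulting cell of vertical source $l$ is absorbed along $f^*$ into a cell over $\ul H$ of vertical source $l \of f$; composing these three bijections yields the unique factorisation demanded by \defref{left Kan extension}, and running them in reverse confirms that it is the only one.

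I expect the only genuine work to be in \propref{left Kan extensions preserved by restriction}, and within it the delicate point to be the bookkeeping with the trailing path $\ul H$: it is precisely this path --- the feature separating \defref{left Kan extension} from the weak notion --- that forces the detour through the cocartesian cell rather than a naive argument, and one must check that the three bijections above are natural and mutually inverse. Once that proposition is available there is nothing further to do for the corollary.
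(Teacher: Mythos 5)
Your proposal is correct and follows the paper's own route exactly: the ``if'' direction by specialising to $f = \id_{A_n}$ with $J_n(\id,\id) \iso J_n$, and the ``only if'' direction by noting that unitality of $A_n$ together with restrictions on the right yields the conjoint $f^*$ and then invoking \propref{left Kan extensions preserved by restriction}. Your sketch of that proposition is also the paper's argument in substance --- the cocartesian cell $(J_n, f^*) \Rar J_n(\id,f)$ of \auglemref{8.1} combined with absorption of the conjoint (\cororef{Kan extension and conjoints}) --- though the paper packages your three bijections more compactly as an application of the vertical pasting lemma (\lemref{vertical pasting lemma}).
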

	\begin{proof}
		For the `if'-part take $f = \id_{A_n}$ and use that $J_n(\id, \id) \iso J_n$. For the converse remember that $A_n$ being unital ensures that all conjoints \mbox{$\hmap{f^*}{A_n}B$} exist, by \augcororef{4.16}, and apply \propref{left Kan extensions preserved by restriction} below.
	\end{proof}
	
	\subsection{Pointwise left Kan extension}
	The previous result leads us to the following ``pointwise'' strengthening of the notion of (weak) left Kan extension.
	\begin{definition} \label{pointwise left Kan extension}
		Consider a path of horizontal morphisms $\hmap{\ul J}{A_0}{A_n}$ of length $n \geq 1$ as well as vertical morphisms $\map d{A_0}M$ and $\map fB{A_n}$. We say that the (weak) left Kan extension of $d$ along $\ul J$ (\defsref{weak left Kan extension}{left Kan extension}) \emph{restricts along $f$} if the restriction $J_n(\id, f)$ exists and, for any (weakly) left Kan cell $\eta$ of the form below, the composite below is again (weakly) left Kan. In that case we also say that $\eta$ \emph{restricts along $f$}.
		
		We call a (weakly) left Kan cell $\eta$ \emph{pointwise} if it restricts along any $\map fB{A_n}$ such that the restriction $J_n(\id, f)$ exists; in that case we say that $\eta$ defines $l$ as the \emph{pointwise} (weak) left Kan extension of $d$ along $(J_1, \dotsc, J_n)$. 
		\begin{displaymath}
			\begin{tikzpicture}
				\matrix(m)[math35]{A_0 & A_1 & A_{n'} & B \\ A_0 & A_1 & A_{n'} & A_n \\};
				\draw	([yshift=-6.5em]$(m-1-1)!0.5!(m-1-4)$) node (M) {$M$};
				\path[map]	(m-1-1) edge[barred] node[above] {$J_1$} (m-1-2)
														
										(m-1-3) edge[barred] node[above, xshift=-3pt] {$J_n(\id, f)$} (m-1-4)
										(m-1-4) edge node[right] {$f$} (m-2-4)
										(m-2-4) edge[transform canvas={yshift=-2pt}] node[below right] {$l$} (M)
										(m-2-1) edge[barred] node[below, inner sep=2.5pt] {$J_1$} (m-2-2)
														edge[transform canvas={yshift=-2pt}] node[below left] {$d$} (M)
										(m-2-3) edge[barred] node[below, inner sep=2.5pt] {$J_n$} (m-2-4);
				\path				(m-1-1) edge[eq] (m-2-1)
										(m-1-2) edge[eq] (m-2-2)
										(m-1-3) edge[eq, transform canvas={xshift=-2pt}] (m-2-3);
				\path				($(m-2-2.south)!0.5!(m-2-3.south)$) edge[cell] node[right] {$\eta$} (M);
				\draw				($(m-1-2)!0.5!(m-2-3)$) node {$\dotsb$}
										($(m-1-3)!0.5!(m-2-4)$) node[font=\scriptsize] {$\cart$};
			\end{tikzpicture}
		\end{displaymath}
	\end{definition}	
	Restrictions of pointwise (weak) left Kan extensions are again pointwise (weak) left Kan extensions as follows.
	\begin{lemma} \label{restrictions of pointwise left Kan extensions}
		Consider the composite above. If $\eta$ is pointwise (weakly) left Kan then so is the composite.
	\end{lemma}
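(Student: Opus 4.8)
The plan is to derive the pointwiseness of the composite from that of $\eta$ by a routine manipulation of cartesian cells. Write $\cell\theta{(J_1, \dotsc, J_{n'}, J_n(\id, f))}M$ for the composite displayed in \defref{pointwise left Kan extension}, so that $\theta$ is obtained by pasting $\eta$ onto a cartesian cell $\cart_f$ defining $J_n(\id, f)$. Note first that $\theta$ is already (weak) left Kan: since $\eta$ is pointwise it restricts along $f$, which is exactly the assertion that $\theta$ is (weak) left Kan. It remains to check that $\theta$ restricts along an arbitrary vertical morphism $\map gCB$ for which the restriction $(J_n(\id, f))(\id, g)$ exists.

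So fix such a $g$ and a cartesian cell $\cart_g$ defining $(J_n(\id, f))(\id, g)$. Recall from \augsecref 4 of \cite{Koudenburg20} that restrictions compose, whence $(J_n(\id, f))(\id, g) \iso J_n(\id, f \of g)$; in particular the restriction $J_n(\id, f \of g)$ exists. Recall too that the vertical composite of two cartesian cells is again cartesian, so the composite of $\cart_g$ with $\cart_f$ exhibits $(J_n(\id, f))(\id, g)$ as the restriction of $J_n$ along $\id$ and $f \of g$; it is therefore a legitimate cartesian cell defining $J_n(\id, f \of g)$, which we denote $\cart_{f \of g}$.

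Pasting $\cart_g$ onto $\theta$ gives, by associativity of pasting, precisely the cell obtained by pasting $\eta$ onto $\cart_{f \of g}$: that is, the cell of the form displayed in \defref{pointwise left Kan extension} (with $f$ replaced by $f \of g$ and $B$ by $C$) that exhibits $l \of (f \of g)$ as a candidate left Kan extension of $d$ along $(J_1, \dotsc, J_{n'}, J_n(\id, f \of g))$. Since $\eta$ is pointwise and $J_n(\id, f \of g)$ exists, $\eta$ restricts along $f \of g$, so this cell is (weak) left Kan; replacing $\cart_{f \of g}$ by any other cartesian cell for $J_n(\id, f \of g)$ changes the cell only by an invertible vertical cell, which preserves (weak) left Kan-ness by \exref{vertical cells defining left Kan extensions}. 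Hence $\theta$ restricts along $g$, and as $g$ was arbitrary, $\theta$ is pointwise. The only step meriting care is the identification ``$\theta$ restricted along $g$'' $=$ ``$\eta$ restricted along $f \of g$'', for which associativity of pasting together with the fact that the composite of the two cartesian cells is again cartesian (hence defines $J_n(\id, f \of g)$) are exactly what is needed; I expect no genuine obstacle.
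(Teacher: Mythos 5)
Your argument is correct and is precisely the paper's intended proof: the paper simply cites the pasting lemma for cartesian cells (\auglemref{4.15}), and your write-up spells out exactly how that lemma identifies ``the composite restricted along $g$'' with ``$\eta$ restricted along $f \of g$''. No differences of substance.
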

	\begin{proof}
		A consequence of the pasting lemma for cartesian cells (\lemref{pasting lemma for cartesian cells}).
	\end{proof}
	
	\begin{remark} \label{pointwise left Kan extension in the presence of horizontal units and restrictions on the right}
		As a consequence of \cororef{left Kan extensions with unital sources} the notions of left Kan extension and pointwise left Kan extension coincide in any unital virtual double category that has all restrictions on the right (\defref{augmented virtual equipment}).
	\end{remark}
	
	\begin{remark}
		The notion of `Yoneda morphism' introduced in \defref{yoneda embedding} below formalises the classical notion of Yoneda embedding, and it is shown in \lemref{weak left Kan extensions of yoneda embeddings} that all four notions of left Kan extension along a Yoneda morphism coincide.
	\end{remark}
	
	The following is an easy consequence of \lemref{full and faithful functors reflect and preserve weakly left Kan cells}.
	\begin{lemma} \label{full and faithful cartesian cell preserving functors reflect pointwise left Kan cells}
		Any locally full and faithful functor $\map F\K\L$ that preserves cartesian cells defining restrictions on the right reflects pointwise (weakly) left Kan cells, that is a cell $\eta \in \K$ is pointwise (weakly) left Kan whenever its image $F\eta$ is so in $\L$.
	\end{lemma}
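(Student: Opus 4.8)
The plan is to deduce the statement directly from \lemref{full and faithful functors reflect and preserve weakly left Kan cells} together with the hypothesis that $F$ preserves cartesian cells defining restrictions on the right; no genuinely new argument is needed. Unwinding \defref{pointwise left Kan extension}, to show that $\eta$ is pointwise (weakly) left Kan in $\K$ I must check two things: that $\eta$ is itself (weakly) left Kan, and that for every $\map fB{A_n}$ for which the restriction $J_n(\id, f)$ exists in $\K$, the composite of $\eta$ with the cartesian cell defining $J_n(\id, f)$ is again (weakly) left Kan in $\K$.

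The first point is immediate: a pointwise (weakly) left Kan cell is in particular (weakly) left Kan, so $F\eta$ is (weakly) left Kan in $\L$, and since $F$ is locally full and faithful it reflects (weakly) left Kan cells by \lemref{full and faithful functors reflect and preserve weakly left Kan cells}, whence $\eta$ is (weakly) left Kan.

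For the second point I would fix $\map fB{A_n}$ with $J_n(\id, f)$ existing in $\K$, let $\xi$ denote the cartesian cell exhibiting this restriction, and write $\eta' = \eta \of \xi$ for the composite appearing in \defref{pointwise left Kan extension}. Applying $F$ and using functoriality gives $F\eta' = F\eta \of F\xi$. Since $F$ preserves cartesian cells defining restrictions on the right, $F\xi$ is a cartesian cell exhibiting the restriction $(FJ_n)(\id, Ff)$ in $\L$; in particular this restriction exists in $\L$, so the hypothesis that $F\eta$ is pointwise (weakly) left Kan applies to the morphism $\map{Ff}{FB}{FA_n}$ and tells us that $F\eta' = F\eta \of F\xi$ is (weakly) left Kan in $\L$. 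Reflecting along $F$ once more, $\eta'$ is (weakly) left Kan in $\K$. As $f$ was arbitrary, $\eta$ restricts along every such $f$, i.e.\ it is pointwise (weakly) left Kan.

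I do not expect any real obstacle. The only point requiring a little care is that the restriction used to test pointwiseness of $F\eta$ must be exactly $F$ applied to the restriction used for $\eta$; this is precisely what the preservation hypothesis supplies, and it simultaneously guarantees that $(FJ_n)(\id, Ff)$ exists in $\L$ so that the pointwise property of $F\eta$ is applicable.
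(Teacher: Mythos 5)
Your argument is correct and is exactly the "easy consequence" the paper has in mind: the paper gives no separate proof, remarking only that the lemma follows from \lemref{full and faithful functors reflect and preserve weakly left Kan cells}, and your two steps (reflect the (weakly) left Kan property of $\eta$ itself, then use preservation of the right-restriction cartesian cell to transport each test composite $\eta \of \xi$ to $F\eta \of F\xi$ and reflect again) are precisely how that consequence is obtained. No gaps.
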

	
	\subsection{Examples}
	We will consider formal category theory in the augmented virtual double categories $\enProf\V$, $\enProf{(\V, \V')}$ and $\ensProf\V$ of (small) $\V$"/enriched profunctors (\augthreeexref{2.4}{2.7}{2.8}; their left Kan extensions were described in \exref{enriched left Kan extension}); in the unital virtual equipment $\inProf\E$ of internal profunctors in a category $\E$ (\augexsref{2.9}{4.9}); in the unital virtual equipment $\dFib{\mathcal C}$ of discrete two"/sided fibrations in a $2$"/category $\mathcal C$ (\exref{discrete two-sided fibrations} below); in the unital virtual equipment $\ModRel(\E)$ of internal modular relations in a category $\E$ (\exref{internal modular relations} below); and in the strict double category $\ClModRel$ of closed modular relations between closed-ordered closure spaces (\exref{closed modular relations} below). \tableref{relations between the notions of left Kan extension} shows the relations between the different notions of left Kan extension in each of these augmented virtual double categories. After considering left Kan extensions in $\ClModRel$, in \exref{continuous left Kan extensions}, \exref{left Kan extensions that are not pointwise} gives an example of a left Kan extension that is not pointwise.
	\begin{table}
		\centering
		{\renewcommand{\arraystretch}{1.1}
		\begin{tabular}{lll}
			$\K$ & $V(\K)$ & \textbf{Equivalence of notions of Kan extension} \\
			\hline \\[-0.8em]
			$\enProf{(\V, \V')}$ & $\enCat{\V'}$ & None in general \\
			& & \\[-0.9em]
			\hline \\[-0.8em]
			\rule[-1.2em]{0pt}{3em}\begin{tabular}{@{}l@{}} $\enProf\V$ \\ $\ensProf\V$ \\ $\ClModRel$ \end{tabular}
			& $\left.\rule[-1.2em]{0pt}{3em}\begin{tabular}{@{}l@{}} $\enCat\V$ \\ $\enCat\V$ \\ $\ClOrdCls$ \end{tabular}\right\}$
			& LK $\Leftrightarrow$ PLK \\
			& & \\[-0.9em]
			\hline \\[-0.8em]
			\rule[-1.2em]{0pt}{3em}\begin{tabular}{@{}l@{}} $\ModRel(\E)$ \\ $\inProf{\E}$ \\ $\dFib{\mathcal C}$ \end{tabular}
			& $\left.\rule[-1.2em]{0pt}{3em}\begin{tabular}{@{}l@{}} $\PreOrd(\E)$ \\ $\inCat\E$ \\ $\mathcal C$ \end{tabular}\right\}$
			& \hspace{-0.4cm} $\left\{ \rule[-1.2em]{0pt}{3em}\begin{tabular}{@{}l@{}} LK $\Leftrightarrow$ PLK $\Leftrightarrow$ PWLK \\ (PLK along $f_*$ in $\K$) = (PLK along $f$ in $V(\K)$)\end{tabular}\right.$ \\
		\end{tabular}}
		\caption{Augmented virtual double categories $\K$ grouped according to the equivalences in $\K$ of the notions of left Kan extension (LK) (\defref{left Kan extension}) and pointwise (weak) left Kan extension (P(W)LK) (\defref{pointwise left Kan extension}). The equivalence \mbox{LK $\Leftrightarrow$ PLK} follows from applying \remref{pointwise left Kan extension in the presence of horizontal units and restrictions on the right} to $\K$, while PLK $\Leftrightarrow$ PWLK follows from applying \thmref{pointwise Kan extensions in terms of pointwise weak Kan extensions} to $\K$. That pointwise left Kan extensions along companions $f_*$ in $\K$ coincide with pointwise left Kan extensions along $f$ in the vertical $2$"/category $V(\K)$, the latter in the usual $2$"/categorical sense of \cite{Street74b}, follows from applying \propref{pointwise left Kan extensions along companions} to $\K$.}
		\label{relations between the notions of left Kan extension}
	\end{table}
	\begin{example} \label{discrete two-sided fibrations}
		Let $\mathcal C$ be a finitely complete $2$"/category and consider the unital virtual equipment $\spFib{\mathcal C}$ of split two"/sided fibrations $\hmap JAB$ in $\mathcal C$ (\augexsref{2.11}{4.9}). Recall that we consider such $J$ to be internal profunctors $\hmap J{A^\2}{B^\2}$ in the underlying category $\und{\mathcal C}$, where $A^\2$ denotes the internal category that is the cotensor of $A$ with the arrow category $\2 = (0 \to 1)$; see \augexref{2.11} where $A^\2$ was denoted by $\Phi A$. The assignment $A \mapsto A^\2$ extends to a locally full embedding of $\spFib{\mathcal C}$ into the unital virtual equipment $\inProf{\und{\mathcal C}}$ of internal profunctors in $\und{\mathcal C}$ (\augexsref{2.9}{4.9}), which restricts to the identity on horizontal morphisms.
		
		A split two"/sided fibration $\hmap JAB$, with underlying span \mbox{$A \xlar{j_A} J \xrar{j_B} B$}, is called \emph{discrete} (\cite{Street80a}) if for any cell $\cell\phi u{\map vXJ}$ in $\mathcal C$ the following holds: if $j_A\of \phi$ and $j_B \of \phi$ are identity cells then so is $\phi$ (in particular $u = v$). We denote by $\dFib{\mathcal C} \subset \spFib{\mathcal C}$ the full sub"/augmented virtual double category generated by the discrete two"/sided fibrations in $\mathcal C$. In Proposition~2 of \cite{Street74b} it is shown that the assignment $A \mapsto A^\2$ extends to a locally full and faithful $2$"/functor \mbox{$\map{(\dash)^\2}{\mathcal C}{\inCat{\und{\mathcal C}} = V(\inProf{\und{\mathcal C}})}$}, with the correspondence between the cells given by the universal property of the $\2$"/cotensors. It follows that $V(\spFib{\mathcal C}) \iso \mathcal C$. Since $\dFib{\mathcal C} \subset \spFib{\mathcal C}$ is full with respect to vertical cells we likewise have \mbox{$V(\dFib{\mathcal C}) \iso \mathcal C$}.
		
		It is straightforward to check that the horizontal units $I_A = (A \leftarrow A^\2 \rightarrow A)$ of $\spFib{\mathcal C}$ are discrete two"/sided fibrations, and that any restriction $K(f, g)$ in $\spFib{\mathcal C}$ of a discrete two"/sided fibration $K$ is again discrete; for the latter recall that $K(f, g)$ is the iterated strict $2$"/pullback of $A \xrar f C \leftarrow K \rightarrow D \xlar g B$ (\augexref{4.9}) and use its $2$"/dimensional universal property. Since the full and faithful inclusion $\dFib{\mathcal C} \subset \spFib{\mathcal C}$ reflects cartesian cells (\auglemref{4.5}) we conclude that $\dFib{\mathcal C}$, like $\spFib{\mathcal C}$, is a unital virtual equipment (\defref{augmented virtual equipment}).
		
		Recall that $\spFib{\mathcal C}$ has all horizontal composites as soon as $\mathcal C$ has reflexive coequalisers preserved by pullback (see \augexref{7.5}). The horizontal composite $(J \hc H)$, in $\spFib{\mathcal C}$, of two discrete two"/sided fibrations $\hmap JAB$ and $\hmap HBE$, is not discrete in general however. Indeed in order for all horizontal composites in $\dFib{\mathcal C}$ to exist further conditions on $\mathcal C$ are needed; compare \cite{Carboni-Johnson-Street-Verity94} where discrete two"/sided fibrations in a finitely complete bicategory $\K$ are shown to form a bicategory $\mathsf{DFib}(\K)$ as soon as $\K$ is `faithfully conservational'.
	\end{example}
	
	\begin{example} \label{internal modular relations}
		Recall that a relation $J$ in a category $\E$ (\augexref{2.10}) is a span $A \xlar{j_0} J \xrar{j_1} B$ in $\E$ whose legs $j_0$ and $j_1$ are jointly monic. If $\E$ has pullbacks then relations in $\E$ form the horizontal morphisms of a unital virtual equipment $\Rel(\E)$ whose vertical morphisms are the morphisms of $\E$; see \augexsref{2.10}{5.8}. $\Rel(\E)$ is locally thin (\augexref{2.5}), that is its cells are uniquely determined by their boundaries.
		
		Consider the unital virtual double category $\ModRel(\E) \dfn (N \of \Mod)\bigpars{\Rel(\E)}$ of bimodules in $\Rel(\E)$, as defined in \augexsref{2.1}{2.2}. The objects of $\ModRel(\E)$ are \emph{internal preorders} in $\E$: they consist of objects $A$ of $\E$ equipped with a relation $\alpha = (A \xlar{\alpha_0} \alpha \xrar{\alpha_1} A)$ such that the (unique) horizontal multiplication and unit cells $\cell{\bar\alpha}{(\alpha, \alpha)}\alpha$ and $\cell{\tilde\alpha}A\alpha$ exist in $\Rel(\E)$; in \cite{Carboni-Street86} these are called the `ordered objects' of $\E$. The vertical morphisms of $\ModRel(\E)$ are the morphisms of $\E$ that preserve order, that is morphisms $\map fAC$ for which there exists a cell in $\Rel(\E)$ as on the left below, while its horizontal morphisms are \emph{internal modular relations} (`ideals' in \cite{Carboni-Street86}), that is relations $\hmap JAB$ in $\E$ such that there exist horizontal action cells $\cell\lambda{(\alpha, J)}J$ and $\cell\rho{(J, \beta)}J$ in $\Rel(\E)$. $\ModRel(\E)$ like $\Rel(\E)$ is locally thin. In particular a nullary cell of the form $\ul J \Rar (C, \gamma)$ exists in $\ModRel(\E)$ if and only if the underlying cell \mbox{$\ul J \Rar \gamma$} exists in $\Rel(\E)$. The locally thin vertical $2$"/category $\PreOrd(\E) \dfn V\bigpars{\ModRel(\E)}$ contained in $\ModRel(\E)$ coincides with the $2$"/category of ordered objects of \cite{Carboni-Street86}. Notice that $\Rel(\E)$ embeds into $\ModRel(\E)$ by equipping each object $A \in \E$ with the \emph{internal discrete ordering} $I_A = (A \xlar{\id} A \xrar{\id} A)$.
		
		$\ModRel(\E)$ is a unital virtual equipment by \augexsref{4.8}{5.8}: unary restrictions are created as in $\Span\E$ (\augexref{4.3}) while the nullary restrictions $C(f, g)$ of $\ModRel(\E)$ are created as the unary restrictions $\gamma(f, g)$ in $\Span\E$. Moreover the forgetful functor $\map U{\ModRel(\E)}{\Rel(\E)}$ creates pointwise composites (\augdefref{9.1}), so that $\ModRel(\E)$ is an equipment (\augpropref{7.8}) whenever $\Rel(\E)$ is, e.g.\ in the case that $\E$ is regular (\augexref{7.4}). Indeed consider a path $\hmap{\ul J}{A_0}{A_n}$ of internal modular relations in $\E$ and a horizontal cocartesian cell $\cell\phi{\ul J}K$ in $\Rel(\E)$, defining $K$ as the horizontal composite of $\ul J$. Then the actions $\cell\lambda{(\alpha_0, J_1)}{J_1}$ and $\cell\rho{(J_n, \alpha_n)}{J_n}$ induce actions of $\alpha_0$ and $\alpha_n$ on $K$, making $K$ into an internal modular relation and $\phi$ into a cell in $\ModRel(\E)$. That $\phi$ is pointwise cocartesian in $\ModRel(\E)$ follows from \auglemsref{9.4}{9.8}.
		\begin{displaymath}
			\begin{tikzpicture}[textbaseline]
				\matrix(m)[math35]{A & A \\ C & C \\};
				\path[map]	(m-1-1) edge[barred] node[above] {$\alpha$} (m-1-2)
														edge node[left] {$f$} (m-2-1)
										(m-1-2) edge node[right] {$f$} (m-2-2)
										(m-2-1) edge[barred] node[below] {$\gamma$} (m-2-2);
				\path[transform canvas={xshift=1.75em}]	(m-1-1) edge[cell] node[right] {$\bar f$} (m-2-1);
			\end{tikzpicture} \qquad\qquad\qquad\qquad\qquad \begin{tikzpicture}[textbaseline]
				\matrix(m)[math35, column sep=3em]{A & \alpha \\ \alpha & A \times A \\};
				\path[map]	(m-1-1) edge node[above] {$\tilde\alpha$} (m-1-2)
														edge node[left] {$\tilde\alpha$} (m-2-1)
										(m-1-2) edge node[right] {$(\alpha_1, \alpha_0)$} (m-2-2)
										(m-2-1) edge node[below] {$(\alpha_0, \alpha_1)$} (m-2-2);
			\end{tikzpicture}
		\end{displaymath}
		
		Next assume that $\E$ has all finite limits. An internal preorder $A$ in $\E$ is an \emph{internal partial order} (Examples~B2.3.8 of \cite{Johnstone02}) whenever the square on the right above is a pullback square in $\E$. Using the joint monicity of $\alpha_0$ and $\alpha_1$ one easily checks that the latter is equivalent to the following condition, which we will mostly use: for any parallel pair $\phi$, $\map\psi X\alpha$ of morphisms, $(\alpha_0, \alpha_1) \of \phi = (\alpha_1, \alpha_0) \of \psi$ implies $\phi = \psi$. Notice that the latter implies the following for a pair of parallel morphisms $f$ and $\map gXA$: if there exists a vertical isomorphism $f \iso g$ in $\ModRel(\E)$ then $f = g$.
	\end{example}
	
	\begin{example} \label{closed modular relations}
		By a \emph{closure space} $A = (A, \Cl A)$ we will mean a set $A$ equipped with a set $\Cl A$ of closed subsets of $A$, such that $A \in \Cl A$ and $\Cl A$ is closed under arbitrary intersections. As introduced by Tholen in \cite{Tholen09}, a \emph{closed"/ordered closure space} $A = (A, \Cl A, \leq)$ is a closure space $(A, \Cl A)$ equipped with a preordering $\leq$ satisfying the closedness axiom (`preservation condition' in \cite{Tholen09})
		\begin{itemize}
			\item[(C)] $V \in \Cl A \quad \Rightarrow \quad \upset V \in \Cl A$,
		\end{itemize}
		where $\upset V \dfn \set{x \in A \mid \exists v \in V \colon v \leq x}$ is the upset generated by $V$. A morphism $\map fAC$ of closed"/ordered closure spaces is an order preserving continuous map. Given a relation $\hmap JAB$ between sets, that is a subset $J \subseteq A \times B$, we will abbreviate $(x, y) \in J$ by $xJy$ and write $JS \dfn \set{y \in B \mid \exists s \in S \colon sJy}$ for the image of a subset $S \subseteq A$ under $J$. A \emph{closed modular relation} $\hmap JAB$ between closed"/ordered closure spaces is a relation that satisfies the modularity and closedness axioms
		\begin{itemize}
			\item[(M)] $x_1 \leq x_2, \quad x_2 J y_1, \quad y_1 \leq y_2 \quad \Rightarrow \quad x_1 J y_2$,
			\item[(C)] $V \in \Cl A \quad \Rightarrow \quad JV \in \Cl B$.
		\end{itemize}
		Closedness is equivalent to the reverse relation $\hmap{\dl J}BA$ being \emph{upper hemi"/continuous}, see e.g.\ Section~17.2 of \cite{Aliprantis-Border06}; see also Section~6 of \cite{Koudenburg18}.
		
		Closed modular relations and morphisms of closed"/ordered closure spaces form a locally thin (\augexref{2.5}) strict double category (\augpropref{7.8}) $\ClModRel$ in which a cell of the form below exists, and is unique, if and only if $xJy$ implies $(fx)K(gy)$ for all $x \in A$ and $y \in B$. The composite $J \hc H$ of closed modular relations $\hmap JAB$ and $\hmap HBE$ is defined as usual: $x(J \hc H)z$ if and only if there exists $y \in B$ with $xJy$ and $yHz$. The horizontal unit $\hmap{I_A}AA$ is the order relation $xI_Ay \defeq x \leq y$, which is closed because $I_A S = \upset S$ for all $S \subseteq A$.
		\begin{displaymath}
			\begin{tikzpicture}
				\matrix(m)[math35]{A & B \\ C & D \\};
				\path[map]	(m-1-1) edge[barred] node[above] {$J$} (m-1-2)
														edge node[left] {$f$} (m-2-1)
										(m-1-2) edge node[right] {$g$} (m-2-2)
										(m-2-1) edge[barred] node[below] {$K$} (m-2-2);
				\draw				($(m-1-1)!0.5!(m-2-2)$) node[rotate=-90] {$\subseteq$};
			\end{tikzpicture}
		\end{displaymath}
		Consider the locally thin strict equipment $\ModRel \dfn \enProf\2$ (\augexref{2.5}) of modular relations between preorders which, by definition, are profunctors enriched in the quantale $\2 \dfn (\bot \leq \top)$ of truth values (see Example~1.9 of \cite{Koudenburg18}). Notice that the forgetful functor $\map U\ClModRel\ModRel$ creates all restrictions $K(\id, g)$ on the right (\auglemref{4.5}) but that restrictions $K(f, \id)$ on the left do not exist in $\ClModRel$ in general. In particular the companion $f_*$ of a morphism $\map fAC$ exists if and only if $\upset fV$ is closed in $C$ for every $V \in \Cl A$.
		
		\emph{Modular closure spaces}, introduced by Tholen in \cite{Tholen09}, are closed"/ordered closure spaces $A = (A, \Cl A, \leq)$ satisfying the modularity axiom
		\begin{itemize}
			\item[(M)] $V \in \Cl A \quad \Rightarrow \quad \upset V = V$,
		\end{itemize}
		which strengthens the closedness axiom above. This notion coincides with that of \emph{modular $(P,\2)$"/categories} in the sense of Section~4 of \cite{Koudenburg18}, where $P$ is the powerset monad; see Example~3.2 of \cite{Koudenburg18}. We denote by \mbox{$\ClModRel_\textup m \subset \ClModRel$} the full sub"/double category generated by modular closure spaces.
	\end{example}
	
	\begin{example} \label{continuous left Kan extensions}
			Consider morphisms $M \xlar d A \xbrar J B$ in the locally thin strict double category $\ClModRel_\textup m$ of closed modular relations between modular closure spaces and assume that for each $y \in B$ the maximum on the right"/hand side below exists in $M$; for sufficient conditions see Theorem~8.1 of \cite{Koudenburg18}. These maxima combine to form an order preserving map $\map lBM$ given by
		\begin{displaymath}
			ly = \max_{x \in \dl Jy} dx
		\end{displaymath}
		which forms the left Kan extension of $d$ along $J$ in the locally thin strict equipment $\ModRel$ of modular relations (\exref{closed modular relations}). In fact, as a left Kan extension $l$ satisfies the \emph{left Beck"/Chevalley} condition in $\ModRel$, in the sense of Definition~2.4 of \cite{Koudenburg18}; see Example~2.7 of \cite{Koudenburg18} or \exref{left Beck-Chevalley condition in ModRel} below.
		
		Regarding $d$ and $J$ as morphisms of modular $(P, \2)$"/categories (\exref{closed modular relations}), the Beck"/Chevalley condition for $l$, closedness of $J$ and the fact that $P$ preserves composites of modular relations allows us to apply Theorem~7.9 of \cite{Koudenburg18} to $\Mod(P)$ (see Section~4 of \cite{Koudenburg18}), which asserts that $l$ is continuous and thus a morphism in $\ClModRel_\textup m$. The latter result partly generalises the ``maximum theorem'', a classical result in analysis; see e.g.\ Lemma~17.30 of \cite{Aliprantis-Border06}. Applying \lemref{full and faithful functors reflect and preserve weakly left Kan cells} to the locally full and faithful functors \mbox{$\ClModRel_\textup m \hookrightarrow \ClModRel \xrar U \ModRel$} we conclude that $l$ forms the left Kan extension of $d$ along $J$ both in $\ClModRel_\textup m$ and in $\ClModRel$. Since the latter are strict double categories that have restrictions on the right $l$ is in fact a pointwise left Kan extension by \remref{pointwise left Kan extension in the presence of horizontal units and restrictions on the right}.
	\end{example}
	
	\begin{example} \label{left Kan extensions that are not pointwise}
		In order to construct a Kan extension (\defref{left Kan extension}) that fails to be a pointwise weak Kan extension (\defref{pointwise left Kan extension}) consider the locally thin equipment $\enProf{\brks{-\infty, \infty}}$ of categories and profunctors enriched in the quantale $\brks{-\infty, \infty}$ of extended real numbers, with reversed order $\geq$ and addition $(+,0)$ as monoid structure; see Examples~1.3 and~1.8 of \cite{Koudenburg18}. As is customary (\cite{Lawvere73}) we think of $\brks{-\infty, \infty}$"/categories as generalised metric spaces with distances in $\brks{-\infty, \infty}$. Let $\K$ denote the full sub"/augmented virtual double category of $\enProf{\brks{-\infty, \infty}}$ generated by those $\brks{-\infty, \infty}$"/profunctors $\hmap JAB$ with images $J(x, y) \geq 0$ for all $x \in A$ and $y \in B$. Notice that $\K$ is closed under horizontal composition of $\brks{-\infty, \infty}$"/profunctors (Example~1.3 of \cite{Koudenburg18}) so that $\K$ has all horizontal composites by \auglemref{9.4}.
		
		Let $M$ be the generalised metric space with two points $d$ and $u$ as pictured on the left below and let $I$ be the `unit $\brks{-\infty, \infty}$"/category', consisting of a single point $*$ with $I(*, *) = 0$. We claim that \emph{any} cell $\eta$ in $\K$ of the form as in the middle below is left Kan. To see this first notice that, by \exref{left Kan extensions when all composites exist} and using that $\K$ is locally thin, it suffices to show that all cells of the form as on the right below exist in $\K$, where $\ul H$ is any path of $\brks{-\infty, \infty}$"/profunctors of length $m \leq 1$ in $\K$ and $\map k{B_m}M$ is any morphism. It is not hard to show that the latter follows from the fact that $k$ and $\ul H$ are non"/expanding (Examples~1.10 and~1.11 of \cite{Koudenburg18}).
		\begin{displaymath}
			\begin{tikzpicture}[textbaseline]
				\matrix(m)[math35, column sep=2.5em]{ & \\};
				\path[map]	(m-1-1) edge[bend left=40] node[above] {$-1$} (m-1-2)
										(m-1-2)	edge[bend left=40] node[below] {$1$} (m-1-1);
				\draw[map, transform canvas={yshift=3pt}]	(m-1-1) arc(-20:280:0.15) node[above, inner sep=10pt] {$0$};
				\draw[map, transform canvas={yshift=3pt}]	(m-1-2) arc(200:-100:0.15) node[above, inner sep=10pt] {$0$};
				\fill	(m-1-1) circle (1pt) node[below left] {$d$}
							(m-1-2) circle (1pt) node[below right, xshift=-2pt] {$u$};
			\end{tikzpicture} \qquad\qquad\qquad\qquad \begin{tikzpicture}[textbaseline]
				\matrix(m)[math35, column sep={1.75em,between origins}]{I & & M \\ & M & \\};
				\path[map]	(m-1-1) edge[barred] node[above] {$J$} (m-1-3)
														edge[transform canvas={xshift=-2pt}] node[left] {$d$} (m-2-2);
				\path[map, transform canvas={yshift=0.25em}]	(m-1-2) edge[cell] node[right] {$\eta$} (m-2-2);
				\path				(m-1-3) edge[eq, transform canvas={xshift=2pt}] (m-2-2);
			\end{tikzpicture} \qquad\qquad\qquad\qquad \begin{tikzpicture}[textbaseline]
				\matrix(m)[math35, column sep={1.75em,between origins}]{M & & B_m \\ & M & \\};
				\path[map]	(m-1-1) edge[barred] node[above] {$\ul H$} (m-1-3)
										(m-1-3)	edge[transform canvas={xshift=2pt}] node[right] {$k$} (m-2-2);
				\path[map, transform canvas={yshift=0.25em}]	(m-1-2) edge[cell] node[right] {$\phi'$} (m-2-2);
				\path				(m-1-1) edge[eq, transform canvas={xshift=-2pt}] (m-2-2);
			\end{tikzpicture}
		\end{displaymath}
		
		For a left Kan cell in $\K$ that is not pointwise weakly left Kan take \mbox{$\hmap JIM$} to be given by $J(*, d) = 1$ and $J(*, u) = 0$. Then $J$ is non"/expanding and the cell $\eta$ of the form above exists. By the previous $\eta$ is left Kan; to arrive at a contradiction let us assume that $\eta$ is pointwise weakly left Kan as well. Let $\map uIM$ denote the morphism that picks out $u$ in $M$ and consider the composition $J(\id, u) \Rar M$ of $\eta$ and the cartesian cell defining the restriction $J(\id, u)$. By the assumption and  \defref{pointwise left Kan extension} the latter defines $u$ as the weak left Kan extension of $d$ along $\hmap{J(\id, u)}II$. By the definition of $J$ the latter equals the horizontal unit of $I$ so that, by \exref{Kan extensions along horizontal units}, $\map{u \iso d}IM$ follows. But that is impossible as there exists no vertical cell $d \Rar u$ in $\K$ (nor in $\enProf{\brks{-\infty, \infty}}$).
	\end{example}
	
	\subsection{Internal left adjoints are cocontinuous}
	The two remaining results of this section describe the interaction between adjunctions and left Kan extensions. The first of these, \propref{left adjoints are cocontinuous} below, shows that left adjoints in an augmented virtual double category are cocontinuous in the sense of the definition below. Analogous results, for left adjoints in $2$-categories (see e.g.\ Proposition~2.19(1) of \cite{Weber07}) and enriched left adjoints (see e.g.\ Section 4.1 of \cite{Kelly82}), are well known. The second result, \propref{taking adjuncts preserves left Kan cells}, instead considers `external' adjunctions $\map{F \ladj G}\L\K$, between augmented virtual double categories and $\K$ and $\L$, and shows that the `adjunct' of a left Kan cell in $\L$ is again left Kan in $\K$.
	\begin{definition} \label{absolutely left Kan}
		Consider a (pointwise) (weakly) left Kan cell $\eta$ of the form below.
		\begin{displaymath}
			\begin{tikzpicture}
				\matrix(m)[math35]{A_0 & A_1 & A_{n'} & A_n \\};
				\draw	([yshift=-3.25em]$(m-1-1)!0.5!(m-1-4)$) node (M) {$M$};
				\path[map]	(m-1-1) edge[barred] node[above] {$J_1$} (m-1-2)
														edge[transform canvas={yshift=-2pt}] node[below left] {$d$} (M)
										(m-1-3) edge[barred] node[above] {$J_n$} (m-1-4)
										(m-1-4) edge[transform canvas={yshift=-2pt}] node[below right] {$l$} (M);
				\path				($(m-1-2.south)!0.5!(m-1-3.south)$) edge[cell] node[right] {$\eta$} (M);
				\draw				($(m-1-2)!0.5!(m-1-3)$) node {$\dotsb$};
			\end{tikzpicture}
		\end{displaymath}
		\begin{enumerate}[label=\textup{(\alph*)}]
			\item A morphism $\map fMN$ is said to \emph{preserve} $\eta$, and to preserve the (pointwise) (weak) left Kan extension of $d$ along $(J_1, \dotsc, J_n)$, if the composite $f \of \eta$ is again (pointwise) (weakly) left Kan.
			\item The cell $\eta$ is called \emph{absolutely} (pointwise) (weakly) left Kan if it is preserved by all morphisms $\map fMN$; in that case the (pointwise) (weak) left Kan extension of $d$ along $(J_1, \dotsc, J_n)$ is called \emph{absolute}.
			\item A morphism $\map fMN$ is called \emph{(weakly) cocontinuous} if it preserves any (weakly) left Kan cell $\zeta$ with horizontal target $M$.
		\end{enumerate}
	\end{definition}
	Notice that (weakly) cocontinuous morphisms preserve pointwise (weakly) left Kan cells as well. In \thmref{left Beck-Chevalley condition and absolute left Kan extensions} below absolutely left Kan cells are characterised in terms of a `left Beck"/Chevalley condition' (\defref{left Beck-Chevalley condition}).
	
	By a left adjoint in an augmented virtual double category $\K$ we mean a left adjoint in the vertical $2$"/category $V(\K)$ (\augexref{1.5}), in the usual $2$"/categorical sense; see e.g.\ \auglemref{5.16}.
\begin{proposition} \label{left adjoints are cocontinuous}
	Left adjoints are both cocontinuous and weakly cocontinuous.
\end{proposition}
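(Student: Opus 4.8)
Let $\map fMN$ be a left adjoint in $V(\K)$, with right adjoint $\map gNM$, unit $\cell e{\id_M}{g \of f}$ and counit $\cell\eps{f \of g}{\id_N}$. I treat weak cocontinuity first, cocontinuity being a routine variation. So fix a weakly left Kan cell $\cell\zeta{\ul J}M$, exhibiting $\map l{A_n}M$ as the weak left Kan extension of $\map d{A_0}M$ along $\hmap{\ul J}{A_0}{A_n}$; we must show that $f \of \zeta$ is again weakly left Kan. The plan is to transport its defining universal property across the adjunction $f \ladj g$, using only the mate calculus available inside $\K$.

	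The main ingredient is a transport bijection. For each vertical morphism $\map k{A_n}N$: applying $g$ to a nullary cell $\cell\phi{\ul J}N$ with left boundary $f \of d$ and right boundary $k$, and then composing the whiskered unit $\cell{e \of d}d{g \of f \of d}$ onto its left boundary, produces a nullary cell $\cell{\ol\phi}{\ul J}M$ with left boundary $d$ and right boundary $g \of k$. Applying $f$ and then composing the whiskered counit $\cell{\eps \of k}{f \of g \of k}k$ onto the right boundary is inverse to this, by the triangle identities for $f \ladj g$ together with the interchange law of $\K$; so $\phi \mapsto \ol\phi$ is a bijection. The point to be checked is its compatibility with factorisations: for a vertical cell $\cell{\phi'}{f \of l}k$, the assignment $\ol{(\dash)}$ carries the factorisation of $\phi$ through $f \of \zeta$ by $\phi'$ to the factorisation of $\ol\phi$ through $\zeta$ by the $2$"/categorical mate $\ol{\phi'}$ of $\phi'$ along $f \ladj g$ --- explicitly, the composite of the whiskered unit $\cell{e \of l}l{g \of f \of l}$ with the whiskering $g \of \phi'$ of $\phi'$ by $g$ --- and conversely. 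This should follow from the naturality of $e$, itself an instance of the interchange law of $\K$ which lets one slide $e$ past $\zeta$, together with the triangle identities and the functoriality of whiskering by $g$.

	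Granting this the conclusion is immediate: since $\zeta$ is weakly left Kan each $\ol\phi$ factors through $\zeta$ by a unique vertical cell $\cell\psi l{g \of k}$, so that --- taking $\phi'$ to be the mate of $\psi$ along $f \ladj g$, and combining the compatibility above with the injectivity of $\phi \mapsto \ol\phi$ --- each $\phi$ factors through $f \of \zeta$ by a unique vertical cell $\cell{\phi'}{f \of l}k$. Hence $f \of \zeta$ is weakly left Kan and $f$ is weakly cocontinuous. For cocontinuity one argues identically, now taking $\zeta$ left Kan and letting $\phi$ range over nullary cells $\cell\phi{\ul J \conc \ul H}N$ with right boundary $\map k{B_m}N$ and $\hmap{\ul H}{A_n}{B_m}$; the transport bijection, applied to the path $\ul J \conc \ul H$, then has the factorising cells $\psi$ and $\phi'$ acquire the common source $\ul H$, and the fact that $\zeta$ is left Kan supplies the required unique factorisations.

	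I expect the only real labour to lie in the second paragraph: choosing the orientations of the various whiskerings so that the vertical boundaries match, and recognising the naturality of the unit $e$ as an instance of the interchange law of $\K$. Beyond this routine mate calculus nothing conceptually subtle should be involved.
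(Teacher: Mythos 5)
Your proposal is correct and takes essentially the same route as the paper's proof: the paper packages your ``transport bijection compatible with factorisations'' as a commuting square of assignments between collections of cells, with the vertical bijections given by $(\iota \of d) \hc (g \of \dash)$ and $(\iota \of l) \hc (g \of \dash)$ (inverted via the counit and the triangle identities) and the commutativity supplied by the interchange axiom, exactly as you describe. The only cosmetic difference is that the paper treats the full (cocontinuous) case first and obtains weak cocontinuity by restricting to the empty path $\ul H = (A_n)$, whereas you argue in the opposite order.
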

	\begin{proof}
		Let $\map fMN$ be left adjoint to $\map gNM$, with unit and counit vertical cells $\cell\iota{\id_M}{g\of f}$ and $\cell\eps{f \of g}{\id_N}$. To show that $f$ is cocontinuous consider any left Kan cell
		\begin{displaymath}
			\begin{tikzpicture}
				\matrix(m)[math35]{A_0 & A_1 & A_{n'} & A_n \\};
				\draw	([yshift=-3.25em]$(m-1-1)!0.5!(m-1-4)$) node (M) {$M$};
				\path[map]	(m-1-1) edge[barred] node[above] {$J_1$} (m-1-2)
														edge[transform canvas={yshift=-2pt}] node[below left] {$d$} (M)
										(m-1-3) edge[barred] node[above] {$J_n$} (m-1-4)
										(m-1-4) edge[transform canvas={yshift=-2pt}] node[below right] {$l$} (M);
				\path				($(m-1-2.south)!0.5!(m-1-3.south)$) edge[cell] node[right] {$\eta$} (M);
				\draw				($(m-1-2)!0.5!(m-1-3)$) node {$\dotsb$};
				\draw				(M) node[xshift=7pt, yshift=-2pt] {;};
			\end{tikzpicture}
		\end{displaymath}
		we have to show that $f \of \eta$ is again left Kan. To this end consider the commuting diagram of assignments, between collections of cells of the forms as shown, below. The vertically drawn assignments are bijections. Indeed, the triangle identities for $\iota$ and $\eps$ imply that their inverses are given by composition with $\eps$ on the right. The bottom assignment is a bijection as well, because $\eta$ is left Kan. We conclude that the top assignment is a bijection too, showing that $f \of \eta$ is left Kan as required.
		\begin{displaymath}
			\begin{tikzpicture}
				\matrix(m)[math35]{A_n & B_1 & B_{m'} & B_m \\};
				\draw	([yshift=-3.25em]$(m-1-1)!0.5!(m-1-4)$) node (M) {$N$};
				\path[map]	(m-1-1) edge[barred] node[above] {$H_1$} (m-1-2)
														edge[transform canvas={yshift=-2pt}] node[below left] {$f \of l$} (M)
										(m-1-3) edge[barred] node[above] {$H_m$} (m-1-4)
										(m-1-4) edge[transform canvas={yshift=-2pt}] node[below right] {$k$} (M);
				\path				($(m-1-2.south)!0.5!(m-1-3.south)$) edge[cell] (M);
				\draw				($(m-1-2)!0.5!(m-1-3)$) node {$\dotsb$};
				
				\matrix(m)[math35, column sep={0.7em}, xshift=21em]
					{	A_0 & A_1 & A_{n'} & A_n & B_1 & B_{m'} & B_m \\ };
				\draw	([yshift=-3.25em]$(m-1-1)!0.5!(m-1-7)$) node (M) {$N$};
				
				\path[map]	(m-1-1) edge[barred] node[above] {$J_1$} (m-1-2)
														edge[transform canvas={yshift=-2pt}] node[below left] {$f \of d$} (M)
										(m-1-3) edge[barred] node[above] {$J_m$} (m-1-4)
										(m-1-4) edge[barred] node[above] {$H_1$} (m-1-5)
										(m-1-6) edge[barred] node[above] {$H_m$} (m-1-7)
										(m-1-7) edge[transform canvas={yshift=-2pt}] node[below right] {$k$} (M);
				\draw[transform canvas={xshift=-0.5pt}]	($(m-1-2)!0.5!(m-1-3)$) node {$\dotsb$}
										($(m-1-5)!0.5!(m-1-6)$) node {$\dotsb$};
				\path				($(m-1-1.south)!0.5!(m-1-7.south)$) edge[cell] (M);
				
				\matrix(m)[math35, yshift=-8.5em]{A_n & B_1 & B_{m'} & B_m \\};
				\draw	([yshift=-3.25em]$(m-1-1)!0.5!(m-1-4)$) node (M) {$M$};
				\path[map]	(m-1-1) edge[barred] node[above] {$H_1$} (m-1-2)
														edge[transform canvas={yshift=-2pt}] node[below left] {$l$} (M)
										(m-1-3) edge[barred] node[above] {$H_m$} (m-1-4)
										(m-1-4) edge[transform canvas={yshift=-2pt}] node[below right] {$g \of k$} (M);
				\path				($(m-1-2.south)!0.5!(m-1-3.south)$) edge[cell] (M);
				\draw				($(m-1-2)!0.5!(m-1-3)$) node {$\dotsb$};
				
				\matrix(m)[math35, column sep={0.7em}, yshift=-8.5em, xshift=21em]
					{	A_0 & A_1 & A_{n'} & A_n & B_1 & B_{m'} & B_m \\ };
				\draw	([yshift=-3.25em]$(m-1-1)!0.5!(m-1-7)$) node (M) {$M$};
				
				\path[map]	(m-1-1) edge[barred] node[above] {$J_1$} (m-1-2)
														edge[transform canvas={yshift=-2pt}] node[below left] {$d$} (M)
										(m-1-3) edge[barred] node[above] {$J_m$} (m-1-4)
										(m-1-4) edge[barred] node[above] {$H_1$} (m-1-5)
										(m-1-6) edge[barred] node[above] {$H_m$} (m-1-7)
										(m-1-7) edge[transform canvas={yshift=-2pt}] node[below right] {$g \of k$} (M);
				\draw[transform canvas={xshift=-0.5pt}]	($(m-1-2)!0.5!(m-1-3)$) node {$\dotsb$}
										($(m-1-5)!0.5!(m-1-6)$) node {$\dotsb$};
				\path				($(m-1-1.south)!0.5!(m-1-7.south)$) edge[cell] (M);
				
				\draw[font=\Large]	(-6.25em,-1.5em) node {$\lbrace$}
										(6.25em,-1.5em) node {$\rbrace$}
										(12.0em,-1.5em) node {$\lbrace$}
										(30.0em,-1.5em) node {$\rbrace$}
										(-6.25em,-10em) node {$\lbrace$}
										(6.25em,-10em) node {$\rbrace$}
										(12.0em,-10em) node {$\lbrace$}
										(30.0em,-10em) node {$\rbrace$};
				
				\path[map]	(7.75em,-1.5em) edge node[above] {$(f \of \eta) \hc \dash$} (10.5em, -1.5em)
										(6.75em,-9em) edge node[below] {$\eta \hc \dash$} (9.5em, -9em)
										(0em, -4.25em) edge node[left] {$(\iota \of l) \hc (g \of \dash)$} (0em,-5.75em)
										(19em, -4.25em) edge node[right] {$(\iota \of d) \hc (g \of \dash)$} (19em,-5.75em);
			\end{tikzpicture}
		\end{displaymath}
		Restricting the previous to the empty path $\ul H = (A_n)$ shows that $f$ is weakly cocontinuous too.
	\end{proof}
	
	\subsection{External adjunctions and left Kan extension}
	Consider an adjunction between augmented virtual double categories, that is an adjunction $\map{F \ladj G}\L\K$ in the $2$"/category $\AugVirtDblCat$ of augmented virtual double categories, their functors and the transformations between them; see \augsecref 3. \propref{taking adjuncts preserves left Kan cells} below asserts that a cell $\cell\phi{\ul J}{GM}$ is left Kan in $\K$ whenever its `adjunct' $\cell{\phi^\flat}{F\ul J}M$ is left Kan in $\L$. More generally it applies to pairs of cells that are adjunct with respect to a `locally universal morphism', as we will now define. The latter is related to (the vertical dual of) the notion of `universal 2"/cell' considered in Proposition~8.6 of \cite{Shulman08}, for functors between pseudo double categories. \propref{taking adjuncts preserves left Kan cells} and the notion of `relative universal morphism', also defined below, will be crucial in \secref{yoneda embeddings in a monoidal augmented virtual double category}.
	
	Given a functor $\map F\K\L$ between augmented virtual double categories (\augdefref{3.1}) and an object $X$ in $\L$ we define the \emph{vertical slice category} $F \vs X$ as follows. Its objects are pairs $(A, f)$ consisting of an object $A$ in $\K$ and a morphism $\map f{FA}X$ in $\L$. Its morphisms $\map{(\ul H,\phi)}{(A, f)}{(C, g)}$ are pairs consisting of a path $\hmap{\ul H}AC$ of horizontal morphisms in $\K$, of any length, and a nullary cell $\phi$ in $\L$ that is of the form below. Composition in $F \vs X$ is given by horizontal composition in $\L$; notice that invertible morphisms $(\ul H, \phi)$ of $F \vs X$ necessarily have $\ul H$ empty and $\phi$ vertical. We abbreviate $\L \vs X \dfn \id_\L \vs X$. In \secref{yoneda embeddings section} we shall consider `horizontal slice categories'; see \propref{equivalence from yoneda embedding} below.
	\begin{displaymath}
		\begin{tikzpicture}
				\matrix(m)[math35, column sep={1.75em,between origins}]{FA & & FC \\ & X & \\};
				\path[map]	(m-1-1) edge[barred] node[above] {$F\ul H$} (m-1-3)
														edge[transform canvas={xshift=-2pt}] node[left] {$f$} (m-2-2)
										(m-1-3) edge[transform canvas={xshift=2pt}] node[right] {$g$} (m-2-2);
				\path				(m-1-2) edge[cell, transform canvas={yshift=0.25em}] node[right] {$\phi$} (m-2-2);
			\end{tikzpicture}
	\end{displaymath}
	\begin{definition} \label{universal vertical morphism}
		Let $\map F\K\L$ be a functor of augmented virtual double categories (\augdefref{3.1}) and $C \in \L$ an object. We call a morphism $\map\eps{FC'}C$ \emph{locally universal from $F$ to $C$} if the functor
		\begin{displaymath}
			\map{\eps \of F\dash}{\K \vs C'}{F \vs C}
		\end{displaymath}
		is full and faithful; if it is an equivalence then we call $\eps$ \emph{universal from $F$ to $C$}.
		
		Let $\mathcal J \subseteq F \vs C$ be a full subcategory. A locally universal morphism $\map\eps{FC'}C$ is called \emph{universal relative to $\mathcal J$} if the full and faithful functor $\eps \of F\dash$ above factors through the inclusion $\mathcal J \hookrightarrow F \vs C$ as an equivalence $\K \vs C' \xrar\simeq \mathcal J$.
	\end{definition}
	
	Unpacking the definition of a locally universal morphism $\map\eps{FC'}C$, notice that any nullary cell $\phi$ as on the left"/hand side below factors through $\eps$ as the $F$"/image of a unique nullary cell $\cell{\shad\phi}{\ul J}{C'}$, as shown. The cells $\phi$ and $\shad\phi$ are said to be \emph{adjuncts} of each other. If moreover $\eps$ is universal relative to $\mathcal J$ then for any morphism $\map h{FA}C$ in $\mathcal J$ we can choose an \emph{adjunct} $\map{\shad h}A{C'}$ in $\K$ as well, such that $h \iso \eps \of F\shad h$ in $\L$.
		\begin{displaymath}
			\begin{tikzpicture}[textbaseline]
				\matrix(m)[math35, column sep={0.875em,between origins}]{FA_0 & & & & FA_n \\ & FC' \mspace{7mu} & & \mspace{7mu} FC' & \\ & & C & & \\};
				\path[map]	(m-1-1) edge[barred] node[above] {$F\ul J$} (m-1-5)
														edge[transform canvas={xshift=-1pt}] node[left] {$Ff$} (m-2-2)
										(m-1-5) edge[transform canvas={xshift=1pt}] node[right] {$Fg$} (m-2-4)
										(m-2-2) edge[transform canvas={xshift=-1pt}, ps] node[left] {$\eps$} (m-3-3)
										(m-2-4) edge[transform canvas={xshift=1pt}, ps] node[right] {$\eps$} (m-3-3);
				\path[transform canvas={yshift=-0.3em}]				(m-1-3) edge[cell] node[right] {$\phi$} (m-2-3);
			\end{tikzpicture} \qquad = \qquad \begin{tikzpicture}[textbaseline]
				\matrix(m)[math35, column sep={1.75em,between origins}]{FA_0 & & FA_n \\ & FC' & \\ & C & \\};
				\path[map]	(m-1-1) edge[barred] node[above] {$F\ul J$} (m-1-3)
														edge[transform canvas={xshift=-2pt}] node[left] {$Ff$} (m-2-2)
										(m-1-3) edge[transform canvas={xshift=2pt}] node[right] {$Fg$} (m-2-2)
										(m-2-2) edge node[right] {$\eps$} (m-3-2);
				\path[transform canvas={xshift=-0.6em, yshift=0.25em}]	(m-1-2) edge[cell] node[right, inner sep=2.5pt] {$F\shad \phi$} (m-2-2);
			\end{tikzpicture}
		\end{displaymath}
	Notice that the uniqueness of the adjuncts $\phi^\sharp$ implies that the assignment $\phi \mapsto \phi^\sharp$ is functorial with respect to horizontal composition of nullary cells, i.e.\ $(\phi \hc \psi)^\sharp = \phi^\sharp \hc \psi^\sharp$. Also notice that any morphism universal relative to $\mathcal J$ is itself contained in $\mathcal J$. It follows that, for any two morphisms $\map\eps{FC'}C$ and $\map\zeta{FC''}C$ that are universal relative to $\mathcal J$, the objects $C'$ and $C''$ are equivalent in the vertical $2$"/category $V(\K)$ (\augexref{1.5}), with the equivalence being the chosen morphism $\map{\shad\zeta}{C''}{C'}$ such that $\eps \of F\shad\zeta \iso \zeta$.
	
	\begin{example} \label{full and faithful morphisms are locally universal}
		If $\map F\K\L$ is locally full and faithful (\augdefref{3.6}) then any full and faithful morphism $\map\eps{FC'}C$ in $\L$ (\defref{full and faithful morphism}) is locally universal from $F$ to $C$.
	\end{example}
	
	\begin{example} \label{counit components are universal}
		Let $\map{F \ladj G}\L\K$ be an adjunction between augmented virtual double categories, with unit $\cell\zeta{\id_\K}{GF}$ and counit $\cell\eps{FG}{\id_\L}$ transformations (\augdefref{3.2}). For any object $C \in \L$ the vertical morphism component $\map{\eps_C}{FGC}C$ of the counit is a universal morphism from $F$ to $C$. Indeed the triangle identities for $\zeta$ and $\eps$ imply that $\map{\eps_C \of F\dash}{\K \vs GC}{F \vs C}$ has an inverse $\shad{(\dash)}$ given by $\shad h \dfn Gh \of \zeta_A$ on objects $\map h{FA}C$ and $\phi^\sharp \dfn G\phi \of \zeta_{\ul J}$ on morphisms $\cell\phi{F\ul J}C$, where $\zeta_{\ul J} \dfn (\zeta_{J_1}, \dotsc, \zeta_{J_n})$ if $\ul J = (J_1, \dotsc, J_n)$ and $\zeta_{\ul J} \dfn \zeta_{A_0}$ if $\ul J = (A_0)$. The notion of adjunction between augmented virtual double categories is used in \defref{closed monoidal augmented virtual double categories} below in order to define closed monoidal augmented virtual double categories; see also the subsequent examples.
	\end{example}
	
	\begin{proposition} \label{taking adjuncts preserves left Kan cells}
		Consider the functor $\map F\K\L$, the locally universal morphism $\map\eps{FC'}C$ and the adjuncts $\phi$ and $\shad\phi$ above. If $\cell\phi{F\ul J}C$ is (weakly) left Kan in $\L$ then so is $\cell{\phi^\sharp}{\ul J}{C'}$ in $\K$. If moreover $\phi$ restricts (\defref{pointwise left Kan extension}) along all $F$"/images $Fk$ of morphisms $\map kB{A_n}$ for which the restriction $J_n(\id, k)$ exists, and $F$ preserves such restrictions, then $\phi^\sharp$ is pointwise (weakly) left Kan.
	\end{proposition}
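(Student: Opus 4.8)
The plan is to transport the relevant universal properties across the functor $\map{\eps \of F\dash}{\K \slash C'}{F \slash C}$, which is full and faithful since $\eps$ is locally universal. Unwinding, this functor sends $(A, f) \mapsto (A, \eps \of Ff)$ and $(\ul H, \chi) \mapsto (\ul H, \eps \of F\chi)$; in particular it leaves path lengths unchanged and preserves horizontal composition of nullary cells, so that for any fixed path $\hmap{\ul H}AB$ in $\K$ and any choice of legs it restricts to a bijection between the nullary cells $\cell\chi{\ul H}{C'}$ in $\K$ with those legs and the nullary cells $\cell\psi{F\ul H}C$ in $\L$ with legs of the form $\eps \of F(\dash)$; taking $\ul H$ empty this is a bijection between vertical cells $g \Rar k$ in $\K$ and vertical cells $\eps \of Fg \Rar \eps \of Fk$ in $\L$. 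All three assertions will follow from this.

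First suppose $\cell\phi{F\ul J}C$ is weakly left Kan. Given any nullary cell $\cell\chi{\ul J}{C'}$ in $\K$ with the same left leg $f$ as $\shad\phi$ and with right leg some $\map k{A_n}{C'}$, its image $\eps \of F\chi$ is a competitor for $\phi = \eps \of F\shad\phi$ and hence factors through it uniquely as a vertical cell $\cell\theta{\eps \of Fg}{\eps \of Fk}$ in $\L$. Let $\cell{\chi'}gk$ be the unique vertical cell in $\K$ with $\eps \of F\chi' = \theta$, provided by full and faithfulness. Because $\eps \of F\dash$ preserves vertical composition and whiskering, $\eps \of F(\chi' \of \shad\phi) = \theta \of \phi = \eps \of F\chi$, whence $\chi = \chi' \of \shad\phi$ by faithfulness; and if $\cell{\chi''}gk$ also satisfies $\chi'' \of \shad\phi = \chi$ then $(\eps \of F\chi'') \of \phi = \eps \of F\chi = \theta \of \phi$ forces $\eps \of F\chi'' = \theta$ and so $\chi'' = \chi'$. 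Thus $\shad\phi$ is weakly left Kan. The case of $\phi$ left Kan is the same word for word, except that $\chi$ is now allowed to have horizontal source $\ul J \conc \ul H$ for an arbitrary path $\hmap{\ul H}{A_n}B$ in $\K$: factoring $\eps \of F\chi$ through $\phi$ yields a nullary cell $\cell\theta{F\ul H}C$, full and faithfulness produces a unique nullary cell $\cell{\chi'}{\ul H}{C'}$ in $\K$ with $\eps \of F\chi' = \theta$, and $\eps \of F(\shad\phi \hc \chi') = \phi \hc \theta = \eps \of F\chi$ gives $\chi = \shad\phi \hc \chi'$ together with its uniqueness.

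For the pointwise statement, fix $\map kB{A_n}$ such that the restriction $J_n(\id, k)$ exists in $\K$, with defining cartesian cell $c_k$. Since $F$ preserves this restriction, $Fc_k$ is a cartesian cell exhibiting $F\bigpars{J_n(\id, k)} \iso (FJ_n)(\id, Fk)$, so that applying $\eps \of F\dash$ to the cell $\shad\phi \of (\id_{J_1}, \dotsc, \id_{J_{n'}}, c_k)$ — the restriction of $\shad\phi$ along $k$ — yields exactly $\phi \of (\id_{FJ_1}, \dotsc, \id_{FJ_{n'}}, Fc_k)$, the restriction of $\phi$ along $Fk$, which is (weakly) left Kan in $\L$ by hypothesis. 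As the former is a nullary cell into $C'$ and is the adjunct of the latter, the first part of the proposition — applied with $\ul J$ replaced by $(J_1, \dotsc, J_{n'}, J_n(\id, k))$ — shows that it is (weakly) left Kan in $\K$, i.e.\ that $\shad\phi$ restricts along $k$. Since $k$ was arbitrary and $\shad\phi$ is (weakly) left Kan by the first part, $\shad\phi$ is pointwise (weakly) left Kan.

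The only points needing care are the compatibilities of $\eps \of F\dash$ with the composites involved — vertical composition with a vertical cell, horizontal concatenation of nullary cells, and substitution of cartesian cells into a nullary cell — each of which holds because $F$ is a functor of augmented virtual double categories and whiskering by $\eps$ distributes over these composites; together with the observation that ``restricting along $k$'' commutes with ``passing to $\eps \of F\dash$-adjuncts'', which is precisely what $F$ preserving the relevant restriction provides. There is no real obstacle beyond this bookkeeping: the substance is carried entirely by the full and faithfulness of $\eps \of F\dash$.
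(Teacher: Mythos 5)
Your proposal is correct and follows essentially the same route as the paper: both arguments transport the universal property across the bijection on nullary cells given by the locally universal morphism (the paper phrases this via the adjunct operation $\shad{(\dash)}$, you via full and faithfulness of $\eps \of F\dash$, which are inverse to one another), using that this bijection respects horizontal composition, and both handle the pointwise case by applying the first part to the composite of $\shad\phi$ with the cartesian cell defining $J_n(\id, k)$. No gaps.
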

	\begin{proof}
		To prove that $\phi^\sharp$ is left Kan whenever $\phi$ is so we have to show that any nullary cell $\cell\psi{\ul J \conc \ul H}{C'}$ in $\K$, with vertical source $f$, factors uniquely as \mbox{$\psi = \phi^\sharp \hc \psi'$} with $\cell{\psi'}{\ul H}{C'}$, as in \defref{left Kan extension}. Since $\phi = \eps \of F\phi^\sharp$ is left Kan there exists a unique cell $\cell\theta{F\ul H}C$ in $\L$ such that $\eps \of F\psi = \phi \hc \theta$, and we claim that $\psi' \dfn \theta^\sharp$ is the unique factorisation that we seek. Indeed $\psi = (\eps \of F\psi)^\sharp = (\phi \hc \theta)^\sharp = \phi^\sharp \hc \psi'$ and, to show that $\psi'$ is unique, consider any other cell $\psi''$ with $\psi = \phi^\sharp \hc \psi''$. Applying $\eps \of F\dash$ gives $\eps \of F\psi = \phi \hc (\eps \of F\psi'')$, where we use that $F$ (like any functor of augmented virtual double categories) preserves horizontal composition of cells (see \augdefref{3.1}). By the uniqueness of factorisations through the left Kan cell $\phi$ it follows that $\eps \of F\psi'' = \theta$, and hence $\psi'' = (\eps \of F\psi'')^\sharp = \theta^\sharp = \psi'$.
		
		When restricted to the empty path $\ul H = (A_n)$ the argument above reduces to the weakly left Kan case. To prove the pointwise (weakly) left Kan case apply the above to composites of the form $\phi^\sharp \of (\id, \dotsc, \id, \cart)$, where $\cart$ is any cartesian cell that defines a right restriction $J_n(\id, k)$ of $J_n$ along some $\map kB{A_n}$, as in \defref{pointwise left Kan extension}, using the fact that $\phi \of (\id, \dotsc, \id, F\cart) = \eps \of F\bigpars{\phi^\sharp \of (\id, \dotsc, \id, \cart)}$ is (weakly) left Kan if $\phi$ restricts along $Ff$ and $F$ preserves the cartesian cell.
	\end{proof}
	
	\begin{example}
		Let $F \ladj G\colon \D \to \mathcal C$ be a strict $2$"/adjunction between $2$"/categories, $\map d{FA}M$ a morphism in $\D$ and $\map jAB$ a morphism in $\mathcal C$. The left Kan extension of $d$ along $Fj$ exists in $\D$ whenever that of the adjunct $\map{d^\sharp}A{GM}$ along $j$ does in $\mathcal C$, and in that case the extensions are adjuncts. To see this consider the induced adjunction $Q(F) \ladj Q(G)$ between the strict double categories of quintets $Q(\mathcal C)$ and $Q(\D)$; see \augpropref{6.4}. By \exref{counit components are universal} the component \mbox{$\map{\eps_M}{Q(F)Q(G)M}M$} of the counit is universal from $Q(F)$ to $M$, so that the result follows from the previous proposition and \exref{Kan extensions in quintets}.
	\end{example}
	
	\section{Pasting lemmas} \label{pasting lemmas section}
	This section describes two useful pasting lemmas for left Kan cells together with some of their consequences. The first of these, below, is the `horizontal' pasting lemma, which concerns horizontal compositions of left Kan cells and whose proof is straightforward. Applying it to the unital virtual double category $\enProf\V$ of $\V$"/profunctors (\exref{enriched left Kan extension}) recovers the classical result on iterated enriched Kan extension (see page~42 of \cite{Dubuc70} or Theorem~4.47 of \cite{Kelly82}).
	
	The `vertical' pasting lemma, \lemref{vertical pasting lemma} below, concerns (weakly) left Kan cells vertically composed with (weakly) cocartesian paths of cells; the latter in the sense of \augdefref{7.1} (see also \defref{cocartesian path} below). It also applies to weaker variants of the notion of (weakly) cocartesian path, which are introduced in \defref{cocartesian path} below.
	
	\subsection{The horizontal pasting lemma}
	\begin{lemma}[Horizontal pasting lemma for left Kan cells] \label{horizontal pasting lemma}
		Assume that the cell $\eta$ in the composite below is left Kan. Then $\eta \hc \zeta$ is (weakly) left Kan precisely if $\zeta$ is so. In that case $\eta \hc \zeta$ restricts along $\map fC{B_m}$ (\defref{pointwise left Kan extension}) precisely if $\zeta$ does so. In particular $\eta \hc \zeta$ is pointwise (weakly) left Kan precisely if $\zeta$ is so.
		\begin{displaymath}
			\begin{tikzpicture}
				\matrix(m)[math35]
					{	A_0 & A_1 & A_{n'} & A_n & B_1 & B_{m'} & B_m \\
						& & & M & & & \\};
				\path[map]	(m-1-1) edge[barred] node[above] {$J_1$} (m-1-2)
														edge[transform canvas={yshift=-2pt}] node[below left] {$d$} (m-2-4)
										(m-1-3) edge[barred] node[above] {$J_n$} (m-1-4)
										(m-1-4) edge[barred] node[above] {$H_1$} (m-1-5)
														edge node[right] {$l$} (m-2-4)
										(m-1-6) edge[barred] node[above] {$H_m$} (m-1-7)
										(m-1-7) edge[transform canvas={yshift=-2pt}] node[below right] {$k$} (m-2-4);
				\draw[transform canvas={xshift=-0.5pt}]	($(m-1-2)!0.5!(m-1-3)$) node {$\dotsb$}
										($(m-1-5)!0.5!(m-1-6)$) node {$\dotsb$};
				\path				($(m-1-1.south)!0.5!(m-1-4.south)$)	edge[cell, transform canvas={shift={(1em,0.333em)}}] node[right] {$\eta$} ($(m-2-1.north)!0.5!(m-2-4.north)$)
										($(m-1-4.south)!0.5!(m-1-7.south)$) edge[cell, transform canvas={shift={(-1em,0.333em)}}] node[right] {$\zeta$} ($(m-2-4.north)!0.5!(m-2-7.north)$);
			\end{tikzpicture}
		\end{displaymath}
	\end{lemma}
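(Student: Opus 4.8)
The plan is to deduce the whole lemma from the single fact that $\eta$ is left Kan, by chasing the bijections it supplies and invoking associativity of horizontal composition of cells. Throughout write $\ul J = (J_1, \dotsc, J_n)$ and $\ul H = (H_1, \dotsc, H_m)$, so that $\eta \colon \ul J \Rar M$ has vertical legs $d$ and $l$, the cell $\zeta \colon \ul H \Rar M$ has vertical legs $l$ and $k$, and $\eta \hc \zeta \colon \ul J \conc \ul H \Rar M$ has vertical legs $d$ and $k$. The first step I would record is the following reformulation of ``$\eta$ is left Kan'': for every (possibly empty) path $\ul G$ starting at $B_m$, the assignment $\bar\phi \mapsto \eta \hc \bar\phi$ is a bijection from the set of nullary cells $\ul H \conc \ul G \Rar M$ with vertical source $l$ onto the set of nullary cells $\ul J \conc \ul H \conc \ul G \Rar M$ with vertical source $d$; I write $\phi \mapsto \bar\phi$ for its inverse.

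Next I would use associativity of horizontal composition, i.e.\ $(\eta \hc \zeta) \hc \phi' = \eta \hc (\zeta \hc \phi')$, to see that for a test cell $\phi$ (with source $\ul J \conc \ul H \conc \ul G$, vertical source $d$) and a candidate factor $\phi' \colon \ul G \Rar M$ one has $\phi = (\eta \hc \zeta) \hc \phi'$ exactly when $\eta \hc \bar\phi = \phi = \eta \hc (\zeta \hc \phi')$, which by injectivity of $\bar\phi \mapsto \eta \hc \bar\phi$ holds exactly when $\bar\phi = \zeta \hc \phi'$. Thus factorisations of $\phi$ through $\eta \hc \zeta$ correspond, via the same $\phi'$, to factorisations of $\bar\phi$ through $\zeta$; since every test cell $\bar\phi$ for $\zeta$ arises as $\overline{\eta \hc \bar\phi}$, the cell $\eta \hc \zeta$ is left Kan precisely when $\zeta$ is. Taking $\ul G = (B_m)$ empty throughout yields the weakly left Kan case with no change.

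For the statement about restricting along $\map fC{B_m}$, let $\cart \colon H_m(\id, f) \Rar H_m$ be the cartesian cell of \defref{pointwise left Kan extension} and set $\zeta_f \dfn \zeta \of (\id_{H_1}, \dotsc, \id_{H_{m'}}, \cart)$, so that ``$\zeta$ restricts along $f$'' means $\zeta_f$ is (weakly) left Kan. The interchange law relating horizontal composition with composition into horizontal"/source slots gives $(\eta \hc \zeta) \of (\id_{\ul J}, \id_{H_1}, \dotsc, \id_{H_{m'}}, \cart) = \eta \hc \zeta_f$, and the left"/hand side is exactly the composite whose (weak) left Kan"/ness expresses that $\eta \hc \zeta$ restricts along $f$. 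Applying the equivalence of the second paragraph with $\zeta_f$ in place of $\zeta$ then gives: $\eta \hc \zeta$ restricts along $f$ iff $\zeta_f$ is (weakly) left Kan iff $\zeta$ restricts along $f$; the ``pointwise'' clause is the conjunction of these over all such $f$. The only point requiring care — and the one I would spell out explicitly — is the bookkeeping that $(\eta \hc \zeta) \hc \phi' = \eta \hc (\zeta \hc \phi')$ and the displayed interchange identity with $\cart$ are instances of the composition axioms for augmented virtual double categories \cite{Koudenburg20}; I anticipate no genuine obstacle beyond this.
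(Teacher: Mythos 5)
Your proof is correct and is exactly the ``straightforward'' argument the paper alludes to when it declines to write out a proof of this lemma: the factorisation bijection supplied by the left Kan property of $\eta$, combined with associativity of horizontal composition and the interchange identity, reduces each universal property of $\eta \hc \zeta$ (left Kan, weakly left Kan, restricting along $f$) to the corresponding property of $\zeta$. No further justification is needed.
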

	
	\begin{example} \label{recursive left Kan extension}
		By iterating the horizontal pasting lemma we see that (pointwise) (weak) left Kan extensions along a path $\ul J = (J_1, \dotsc, J_n)$ can be obtained by extending along each of the $J_1$, \dots, $J_n$ recursively: if for each $1 \leq i < n$ the cell $\eta_i$ below is left Kan and $\eta_n$ is (pointwise) (weak) left Kan then so is their horizontal composite $\eta_1 \hc \dotsb \hc \eta_n$.
		\begin{displaymath}
			\begin{tikzpicture}
				\matrix(m)[math35, column sep={5.5em,between origins}, row sep={3.75em,between origins}]
					{ A_0 & A_1 & A_{n''} & A_{n'} & A_n \\ & & M & & \\ };
					\path[map]	(m-1-1) edge[barred] node[above] {$J_1$} (m-1-2)
															edge[transform canvas={xshift=-4pt,yshift=-2pt}] node[below left] {$d$} (m-2-3)
											(m-1-2) edge node[above, inner sep=2pt] {$l_1$} (m-2-3)
											(m-1-3) edge[barred] node[above] {$J_{n'}$} (m-1-4)
															edge[transform canvas={xshift=1pt}] node[left, inner sep=-0.5pt] {$l_{n''}$} (m-2-3)
											(m-1-4) edge[barred] node[above] {$J_n$} (m-1-5)
															edge node[below right, inner sep=0pt] {$l_{n'}$} (m-2-3)
											(m-1-5) edge[transform canvas={xshift=4pt,yshift=-2pt}] node[below right] {$l_n$} (m-2-3);
						\draw[transform canvas={xshift=-0.5pt}] ($(m-1-2)!0.5!(m-1-3)$) node {$\dotsb$};
						\path[transform canvas={yshift=0.4em}]	(m-1-2) edge[cell, shorten >= 0.75em] node[right, yshift=2pt, inner sep=2pt] {$\eta_1$} (m-2-2)
											(m-1-4) edge[cell, shorten >= 0.75em] node[right, yshift=2pt, inner sep=2pt] {$\eta_n$} (m-2-4);
						\path[transform canvas={xshift=-3.75em, yshift=0.4em}] (m-1-4) edge[cell, shorten >= 0.75em] node[right, yshift=2pt, inner sep=2pt] {$\eta_{n'}$} (m-2-4);
			\end{tikzpicture}
		\end{displaymath}
	\end{example}
	
	\begin{remark} \label{extension along paths}
		Notice that we would not have been able to state the horizontal pasting lemma if our notions of left Kan extension had been defined along a \emph{single} horizontal morphism, rather than a path of horizontal morphisms.
	\end{remark}
	
	\subsection{Right cocartesian paths}
	Throughout the remainder of this article it will become clear that the notions of weakly cocartesian path of cells and cocartesian path of cells, as defined in \augdefref{7.1}, are too strong. The vertical pasting lemma (\lemref{vertical pasting lemma} below) for instance, is most naturally stated using the weaker notions defined below. The first of these weakens the univeral property of weakly cocartesian paths by restricting it to either nullary cells or unary cells only, while the second introduces weaker, ``right"/sided'' and ``left"/sided'' variants of the notion of cocartesian path.
	\begin{definition} \label{cocartesian path}
		A path of cells $\ul\phi = (\phi_1, \dotsc, \phi_n)$, as in the right-hand side below, is called \emph{weakly nullary"/cocartesian} if any nullary cell $\chi$, as on the left-hand side and with $\ul L = (C)$ empty, factors uniquely through $\ul\phi$ as shown. Analogously $\ul\phi$ is called \emph{weakly unary"/cocartesian} if any unary cell $\chi$, as on the left"/hand side below and with $\lns{\ul L} = 1$, factors uniquely through $\ul\phi$. The path $\ul\phi$ is \emph{weakly cocartesian}, in the sense of \defref{cartesian cells}, if it is both weakly nullary"/cocartesian and weakly unary"/cocartesian.
		
		\begin{multline*}
			\begin{tikzpicture}[textbaseline, ampersand replacement=\&]
				\matrix(m)[math35, column sep={0.75em}]
					{	X_{10} \& X_{11} \& X_{1m'_1} \& X_{1m_1} \&[4em] X_{n0} \& X_{n1} \& X_{nm'_n} \& X_{nm_n} \\
						C \& \& \& \& \& \& \& D \\};
				\path[map]	(m-1-1) edge[barred] node[above] {$H_{11}$} (m-1-2)
														edge node[left] {$h \of f_0$} (m-2-1)
										(m-1-3) edge[barred] node[above, xshift=-1pt] {$H_{1m_1}$} (m-1-4)
										(m-1-5)	edge[barred] node[above] {$H_{n1}$} (m-1-6)
										(m-1-7) 	edge[barred] node[above, xshift=-2pt] {$H_{nm_n}$} (m-1-8)
										(m-1-8)	edge node[right] {$k \of f_n$} (m-2-8)
										(m-2-1) edge[barred] node[below] {$\ul L$} (m-2-8);
				\path				($(m-1-1.south)!0.5!(m-1-8.south)$) edge[cell] node[right] {$\chi$} ($(m-2-1.north)!0.5!(m-2-8.north)$);
				\draw[transform canvas={xshift=-1pt}]	($(m-1-2)!0.5!(m-1-3)$) node {$\dotsb$}
										($(m-1-6)!0.5!(m-1-7)$) node {$\dotsb$};
				\draw				($(m-1-4)!0.5!(m-1-5)$) node {$\dotsb$};
			\end{tikzpicture} \\
			= \begin{tikzpicture}[textbaseline, ampersand replacement=\&]
				\matrix(m)[math35, column sep={0.75em}]
					{	X_{10} \& X_{11} \& X_{1m'_1} \& X_{1m_1} \&[4em] X_{n0} \& X_{n1} \& X_{nm'_n} \& X_{nm_n} \\
						A_0 \& \& \& A_1 \& A_{n'} \& \& \& A_n \\
						C \& \& \& \& \& \& \& D \\};
				\path[map]	(m-1-1) edge[barred] node[above] {$H_{11}$} (m-1-2)
														edge node[left] {$f_0$} (m-2-1)
										(m-1-3) edge[barred] node[above, xshift=-1pt] {$H_{1m_1}$} (m-1-4)
										(m-1-4) edge node[right] {$f_1$} (m-2-4)
										(m-1-5)	edge[barred] node[above] {$H_{n1}$} (m-1-6)
														edge node[left] {$f_{n'}$} (m-2-5)
										(m-1-7) 	edge[barred] node[above, xshift=-2pt] {$H_{nm_n}$} (m-1-8)
										(m-1-8)	edge node[right] {$f_n$} (m-2-8)
										(m-2-1) edge[barred] node[below] {$\ul J_1$} (m-2-4)
														edge node[left] {$h$} (m-3-1)
										(m-2-5)	edge[barred] node[below] {$\ul J_n$} (m-2-8)
										(m-2-8)	edge node[right] {$k$} (m-3-8)
										(m-3-1) edge[barred] node[below] {$\ul L$} (m-3-8);
				\path				($(m-1-1.south)!0.5!(m-1-4.south)$) edge[cell] node[right] {$\phi_1$} ($(m-2-1.north)!0.5!(m-2-4.north)$)
										($(m-1-5.south)!0.5!(m-1-8.south)$) edge[cell] node[right] {$\phi_n$} ($(m-2-5.north)!0.5!(m-2-8.north)$)
										($(m-2-1.south)!0.5!(m-2-8.south)$) edge[cell] node[right] {$\chi'$} ($(m-3-1.north)!0.5!(m-3-8.north)$);
				\draw[transform canvas={xshift=-1pt}]	($(m-1-2)!0.5!(m-1-3)$) node {$\dotsb$}
										($(m-1-6)!0.5!(m-1-7)$) node {$\dotsb$};
				\draw				($(m-1-4)!0.5!(m-2-5)$) node {$\dotsb$};
			\end{tikzpicture}
		\end{multline*}
		
		A weakly nullary"/cocartesian path $(\phi_1, \dotsc, \phi_n)$ of the form below is called \emph{right nullary-cocartesian} if, for any non"/empty path $(K_1, \dotsc, K_q)$ of horizontal morphisms such that the restriction $K_1(f_n, \id)$ exists, the composite path below is weakly nullary"/cocartesian. \emph{Right unary"/cocartesian} paths are defined analogously; a path is \emph{right cocartesian} whenever it is both right nullary"/cocartesian and right unary"/cocartesian.
		\begin{displaymath}
			\begin{tikzpicture}
				\matrix(m)[math35, column sep={3em,between origins}]
					{	X_{10} & X_{11} & X_{1m'_1} &[0.5em] X_{1m_1} &[1em] X_{n0} & X_{n1} & X_{nm'_n} &[0.6em] X_{nm_n} & Y_1 & Y_2 & Y_{q'} & Y_q \\
						A_0 & & & A_1 & A_{n'} & & & A_n & Y_1 & Y_2 & Y_{q'} & Y_q \\};
				\path[map]	(m-1-1) edge[barred] node[above] {$H_{11}$} (m-1-2)
														edge node[left, inner sep=1pt] {$f_0$} (m-2-1)
										(m-1-3) edge[barred] node[above, xshift=-1pt] {$H_{1m_1}$} (m-1-4)
										(m-1-4) edge node[right, inner sep=1pt] {$f_1$} (m-2-4)
										(m-1-5)	edge[barred] node[above] {$H_{n1}$} (m-1-6)
														edge node[left, inner sep=0pt] {$f_{n'}$} (m-2-5)
										(m-1-7) edge[barred] node[above, xshift=-2pt] {$H_{nm_n}$} (m-1-8)
										(m-1-8)	edge[barred] node[above, xshift=-10pt] {$K_1(f_n, \id)$} (m-1-9)
														edge node[left, inner sep=1pt] {$f_n$} (m-2-8)
										(m-1-9) edge[barred] node[above] {$K_2$} (m-1-10)
										(m-1-11) edge[barred] node[above] {$K_q$} (m-1-12)
										(m-2-1) edge[barred] node[below] {$\ul J_1$} (m-2-4)
										(m-2-5)	edge[barred] node[below] {$\ul J_n$} (m-2-8)
										(m-2-8)	edge[barred] node[below] {$K_1$} (m-2-9)
										(m-2-9) edge[barred] node[below] {$K_2$} (m-2-10)
										(m-2-11) edge[barred] node[below] {$K_q$} (m-2-12);
				\path				(m-1-9) edge[eq] (m-2-9)
										(m-1-10) edge[eq] (m-2-10)
										(m-1-11) edge[eq] (m-2-11)
										(m-1-12) edge[eq] (m-2-12)
										($(m-1-1.south)!0.5!(m-1-4.south)$) edge[cell] node[right] {$\phi_1$} ($(m-2-1.north)!0.5!(m-2-4.north)$)
										($(m-1-5.south)!0.5!(m-1-8.south)$) edge[cell] node[right] {$\phi_n$} ($(m-2-5.north)!0.5!(m-2-8.north)$);
				\draw[transform canvas={xshift=-1pt}]	($(m-1-2)!0.5!(m-1-3)$) node {$\dotsb$}
										($(m-1-6)!0.5!(m-1-7)$) node {$\dotsb$};
				\draw				($(m-1-4)!0.5!(m-2-5)$) node {$\dotsb$}
										($(m-1-10)!0.5!(m-2-11)$) node {$\dotsb$}
										($(m-1-8)!0.5!(m-2-9)$) node[font=\scriptsize] {$\cart$};
			\end{tikzpicture}
		\end{displaymath}
		
		Horizontally dual, the weakly nullary"/cocartesian path $(\phi_1, \dotsc, \phi_n)$ above is called \emph{left nullary"/cocartesian} if, for any non"/empty path $\hmap{(K'_1, \dotsc, K'_p)}{Y'_0}{A_0}$ such that the restriction $K'_p(\id, f_0)$ exists, the composite path $(\id_{K'_1}, \dotsc, \id_{K'_{p'}}, \cart, \phi_1, \dotsc, \phi_n)$ is weakly nullary"/cocartesian, where $\cart$ defines $K'_p(\id, f_0)$. \emph{Left unary"/cocartesian paths} are defined analogously; a path is \emph{left cocartesian} whenever it is both left nullary"/cocartesian and left unary"/cocartesian.
		
		A right nullary"/cocartesian path $(\phi_1, \dotsc, \phi_n)$ is called \emph{nullary"/cocartesian} if each of the composite paths $(\phi_1, \dotsc, \phi_n, \cart, \id_{K_2}, \dotsc, \id_{K_q})$ above are left nullary"/cocartesian. \emph{Unary"/cocartesian} paths are defined analogously. A path is \emph{cocartesian} in the sense of \augdefref{7.1}\footnote{Definition~7.1 of the original version of \cite{Koudenburg20} (published 2020-02-24), defining the notion of cocartesian path $\ul \phi$, contains a typo: the paths of identity cells that $\ul\phi$ is concatenated with are allowed to be of any lengths $p$ and $q$, that is $p, q \geq 0$ (and not $p, q \geq 1$ as originally printed).} precisely if it is both nullary"/cocartesian and unary"/cocartesian.
	\end{definition}
	
	Notice that the universal property of a right (respectively weakly) nullary"/cocartesian cell does not determine its horizontal target up to isomorphism, in contrast to the universal property of (right) (respectively weakly) (unary"/)cocartesian cells. While we will mostly use the nullary variant of right cocartesian paths of cells, the unary variant will be important in our study of `exact cells' in \secref{exact cells} below. \propref{left exactness and right unary-cocartesianness} for instance shows that, under mild conditions, horizontal cells are `left exact' with respect to a `Yoneda morphism' (\defref{yoneda embedding}) if and only if they are right unary"/cocartesian. The notion of left nullary"/cocartesian paths is used in \secref{pointwise Kan extensions section}.
	
	\begin{example} \label{cocartesian paths in the presence of horizontal units or (1,0)-ary cartesian cells}
		Assume that the horizontal unit $\hmap{I_C}CC$ (\defref{cartesian cells}) of the object $C$ exists. It follows from the horizontal unit identities (see \auglemref{5.9} or \lemref{companion identities lemma}) that the factorisations for the nullary cells $\chi$ above, with empty horizontal target $C$, correspond precisely to factorisations of unary cells $\chi$ with horizontal target $I_C$. We conclude that in unital virtual double categories (\defref{augmented virtual equipment}) the notions of weakly unary"/cocartesian path and weakly cocartesian path coincide, as well as those of right (respectively left) unary"/cocartesian path and right (respectively left) cocartesian path.
		
		Similarly the notions of right (respectively left or weakly) nullary"/cocartesian path and right (respectively left or weakly) cocartesian path coincide in augmented virtual double categories $\K$ that admit, for each horizontal morphism $\hmap LCD$, a nullary cartesian cell $L \Rightarrow X$ where $X$ is any object; see also \defref{cocartesian path of (0,1)-ary cells} below. Examples of such $\K$ are the augmented virtual equipments of enriched profunctors $\enProf{\V}$ and $\enProf{(\V, \V')}$ (\augexsref{2.4}{2.7}); see \exref{cotabulations of enriched profunctors}.
	\end{example}

	\begin{example} \label{cocartesian paths are right cocartesian}
		 Clearly cocartesian paths are both left and right cocartesian. The converse however does not hold in general since cocartesian paths $\ul\phi$ are required to be preserved under two"/sided concatenations of the form \mbox{$(\id, \dotsc, \id, \cart)\conc\ul\phi\conc(\cart, \id, \dotsc, \id)$}.
	\end{example}
	
	The following is a straightforward variation on the pasting lemma for cocartesian paths (\auglemref{7.7}). Likewise \cororef{non-horizontal cocartesian cells} is a variation on \augcororef{8.5}.
	\begin{lemma}[Pasting lemma for cocartesian paths] \label{pasting lemma for cocartesian paths}
		In the configuration of cells below denote by $\ul\phi_j$ the path $\ul\phi_j \dfn (\phi_{j1}, \dotsc, \phi_{jn_j})$, for each $1 \leq j \leq n$, and assume that the path $(\phi_{11}, \dotsc, \phi_{nm_n})$ is weakly nullary"/cocartesian. Then the path $\ul\psi \dfn (\psi_1, \dotsc, \psi_n)$  is weakly nullary"/cocartesian if and only if the path of composites $\bigpars{\psi_1 \of \ul \phi_1, \dotsc, \psi_n \of \ul\phi_n}$ is so.
		\begin{displaymath}
			\begin{tikzpicture}[scheme, x=0.42cm]
				\draw	(1,2) -- (0,2) -- (0,0) -- (18,0) -- (18,2) -- (17,2)
							(2,2) -- (3,2) -- (3,1) -- (0,1)
							(7,2) -- (6,2) -- (6,1) -- (12,1) -- (12,2) -- (11,2)
							(8,2) -- (10,2)
							(9,2) -- (9,0)
							(16,2) -- (15,2) -- (15,1) -- (18,1)
							(28,2) -- (27,2) -- (27,0) -- (36,0) -- (36,2) -- (35,2)
							(29,2) -- (30,2) -- (30,1) -- (27,1)
							(34,2) -- (33,2) -- (33,1) -- (36,1);
				\draw[shift={(0.3pt, -0.33pt)}] (1.5,2) node {\scalebox{.6}{$\dotsb$}}
							(7.5,2) node {\scalebox{.6}{$\dotsb$}}
							(10.5,2) node {\scalebox{.6}{$\dotsb$}}
							(16.5,2) node {\scalebox{.6}{$\dotsb$}}
							(28.5,2) node {\scalebox{.6}{$\dotsb$}}
							(34.5,2) node {\scalebox{.6}{$\dotsb$}};
				\draw[shift={(0.165cm, 0.3cm)}] (1,1) node {$\phi_{11}$}
							(7,1) node {$\phi_{1m_1}$}
							(10,1) node {$\phi_{21}$}
							(16,1) node {$\phi_{2m_2}$}
							(28,1) node {$\phi_{n1}$}
							(34,1) node {$\phi_{nm_n}$}
							(4,0) node {$\psi_1$}
							(13,0) node {$\psi_2$}
							(31,0) node {$\psi_n$}
							(4,1) node {$\dotsb$}
							(13,1) node {$\dotsb$}
							(31,1) node {$\dotsb$};
				\draw[font=]	(22.5,1) node {$\dotsb$};
			\end{tikzpicture}
		\end{displaymath}
		
		Next denote the vertical targets of $\phi_{nm_n}$ and $\psi_n$ by $\map{f_{nm_n}}{X_{nm_nk_{m_n}}}{A_{nm_n}}$ and $\map{h_n}{A_{nm_n}}{C_n}$. If the path $(\phi_{11}, \dotsc, \phi_{nm_n})$ is right nullary"/cocartesian then the following hold:
		\begin{enumerate}[label=\textup{(\alph*)}]
			\item	if $\ul\psi$ is right nullary"/cocartesian then so is $\bigpars{\psi_1 \of \ul \phi_1, \dotsc, \psi_n \of \ul\phi_n}$ provided that for any horizontal morphism $\hmap K{C_n}{C'}$ the following holds: if the restriction \mbox{$K(h_n \of f_{nm_n}, \id)$} exists then so does $K(h_n, \id)$;

			\item if $\bigpars{\psi_1 \of \ul \phi_1, \dotsc, \psi_n \of \ul\phi_n}$ is right nullary"/cocartesian then so is $\ul\psi$ provided that for any horizontal morphism $\hmap K{C_n}{C'}$ the following holds: if the restriction $K(h_n, \id)$ exists then so does $K(h_n \of f_{nm_n}, \id)$.
		\end{enumerate}
		
		Analogous assertions hold for right (respectively weakly) (unary"/)cocartesian paths; horizontally dual assertions hold for left (nullary"/ or unary"/)cocartesian paths.
	\end{lemma}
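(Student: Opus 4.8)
The plan is to follow the proof of \auglemref{7.7} closely, reducing everything to direct manipulation of universal properties. Write $\ul\Phi \dfn (\phi_{11}, \dotsc, \phi_{nm_n})$ for the concatenated path of $\phi$"/cells and $\ul\psi \of \ul\Phi \dfn \bigpars{\psi_1 \of \ul\phi_1, \dotsc, \psi_n \of \ul\phi_n}$ for the path of composites. For the first assertion I would begin from the observation that, since $\ul\Phi$ is weakly nullary"/cocartesian, for each choice of outer vertical legs every nullary cell $\chi$ whose horizontal source is the concatenation of the sources of the $\phi_{jk}$ factors uniquely as $\chi = \bar\chi \of \ul\Phi$ through a unique nullary cell $\bar\chi$ whose horizontal source is the concatenation $(K_{11}, \dotsc, K_{nm_n})$ of the targets of the $\phi_{jk}$; the assignment $\chi \mapsto \bar\chi$ is then a bijection between the two collections of nullary cells so described, compatible with prescribing the vertical legs. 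Since $\chi' \of (\ul\psi \of \ul\Phi) = (\chi' \of \ul\psi) \of \ul\Phi$ for every nullary cell $\chi'$ with horizontal source the targets of the $\psi_j$, uniqueness of factorisations through $\ul\Phi$ identifies the factorisations of $\chi$ through $\ul\psi \of \ul\Phi$ with those of $\bar\chi$ through $\ul\psi$. Letting $\chi$ (equivalently $\bar\chi$) range over all admissible cells then gives that $\ul\psi \of \ul\Phi$ is weakly nullary"/cocartesian precisely when $\ul\psi$ is. The weakly unary"/cocartesian case is word"/for"/word the same, now assuming $\ul\Phi$ weakly unary"/cocartesian and taking $\chi$, $\chi'$ to be unary cells with an arbitrary single horizontal morphism as target.

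For parts (a) and (b) I would bootstrap from the first assertion by concatenating the given configuration on the right with a cartesian cell followed by a tail of identity cells. Fix a non"/empty path $(K_1, \dotsc, K_q)$ and write $g \dfn h_n$ and $g' \dfn h_n \of f_{nm_n}$ for the last vertical targets of $\ul\psi$ and of $\ul\psi \of \ul\Phi$. Under the proviso of (a), the existence of $K_1(g', \id)$ forces that of $K_1(g, \id)$, and the pasting lemma for cartesian cells (\auglemref{4.15}) identifies the iterated restriction $K_1(g, \id)(f_{nm_n}, \id)$ with $K_1(g', \id)$; hence right nullary"/cocartesianness of $\ul\Phi$ makes the extended $\phi$"/path $\bigpars{\phi_{11}, \dotsc, \phi_{nm_n}, \cart, \id_{K_2}, \dotsc, \id_{K_q}}$ weakly nullary"/cocartesian — with $\cart$ defining $K_1(g, \id)(f_{nm_n}, \id)$ — while right nullary"/cocartesianness of $\ul\psi$ does the same for the analogously extended $\psi$"/path $\ul\psi^+$. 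Applying the first assertion to these two extended paths, and using that the vertical composite of the two cartesian tails is again cartesian (so that the resulting path of composites is exactly $\ul\psi \of \ul\Phi$ extended by a cartesian cell for $K_1(g', \id)$ and identities), we conclude that $\ul\psi \of \ul\Phi$ is right nullary"/cocartesian. Part (b) runs the same computation in the opposite direction: the proviso now makes $K_1(g', \id)$ exist whenever $K_1(g, \id)$ does, right nullary"/cocartesianness of $\ul\Phi$ and of $\ul\psi \of \ul\Phi$ make the extended $\phi$"/path and the extended path of composites weakly nullary"/cocartesian, and the cancellation direction of the first assertion then forces $\ul\psi^+$ to be weakly nullary"/cocartesian, i.e.\ $\ul\psi$ to be right nullary"/cocartesian.

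Finally, the unary variants follow verbatim on replacing nullary"/cocartesianness by unary"/cocartesianness throughout (and correspondingly strengthening the standing hypothesis on $\ul\Phi$), and the horizontally dual statements for left (nullary"/ or unary"/)cocartesian paths follow by concatenating the cartesian cell and identity tail on the \emph{left} of the configuration, invoking instead the provisos on the first vertical source $f_0$ of $\ul\Phi$. I expect the only real work to be the bookkeeping: checking that the various families of indices and the outer vertical legs match up so that the bijection $\chi \mapsto \bar\chi$ is well defined and boundary"/preserving, and that \auglemref{4.15} applies to identify $K_1(g, \id)(f_{nm_n}, \id)$ with $K_1(g', \id)$; there is no conceptual difficulty beyond this.
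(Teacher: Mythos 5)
Your proof is correct and is essentially the argument the paper has in mind: the paper omits the proof, remarking only that the lemma is ``a straightforward variation on the pasting lemma for cocartesian paths (\auglemref{7.7})'', and your bijection-of-factorisations argument for the weak case, followed by the reduction of the right nullary-/unary-cocartesian cases to the weak case by appending a cartesian cell (identified via \auglemref{4.15} with the iterated restriction) and a tail of identities, is exactly that variation. The bookkeeping you flag at the end (compatibility of the bijection $\chi \mapsto \bar\chi$ with the outer vertical legs $h \of f_0$ and $k \of f_n$) is indeed the only point requiring care, and it works out as you describe.
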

	
	\begin{corollary} \label{non-horizontal cocartesian cells}
		The cell $\psi$ below is right (respectively weakly) nullary"/cocartesian if and only if the composite is so.
		
		An analogous assertion holds for the composite $\psi \hc \cart$, where $\cart$ defines the companion $\hmap{g_*}{X_n}B$. Analogous assertions hold for right (respectively weakly) (unary"/)cocartesian cells and left (nullary"/ or unary"/)cocartesian cells. 
		\begin{displaymath}
			\begin{tikzpicture}
				\matrix(m)[math35, column sep={1.75em,between origins}]
					{	A & & X_0 & & X_1 & & X_{n'} & & X_n \\
						& A & & & & & & B & \\};
				\path[map]	(m-1-1) edge[barred] node[above] {$f^*$} (m-1-3)
										(m-1-3) edge[barred] node[above] {$H_1$} (m-1-5)
														edge[transform canvas={xshift=1pt}] node[right] {$f$} (m-2-2)
										(m-1-7) edge[barred] node[above] {$H_n$} (m-1-9)
										(m-1-9) edge[transform canvas={xshift=1pt}] node[right] {$g$} (m-2-8)
										(m-2-2) edge[barred] node[below] {$J$} (m-2-8);
				\path				(m-1-1) edge[transform canvas={xshift=-1pt}, eq] (m-2-2)
										(m-1-5) edge[transform canvas={xshift=0.875em}, cell] node[right] {$\psi$} (m-2-5);
				\draw[font=\scriptsize]	([yshift=0.333em]$(m-1-2)!0.5!(m-2-2)$) node {$\cart$};
				\draw				($(m-1-5)!0.5!(m-1-7)$) node {$\dotsb$};
			\end{tikzpicture}
		\end{displaymath}
	\end{corollary}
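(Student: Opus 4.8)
The plan is to reduce every one of the listed assertions to the universal property of the cartesian cell $\cart$, exactly as \augcororef{8.5} is deduced from \auglemref{7.7}; the argument below could equally be packaged as an application of \lemref{pasting lemma for cocartesian paths} with one of its two layers degenerate. Write $\cart$ for the cartesian cell defining the conjoint $\hmap{f^*}A{X_0}$ (see \augsecref 5), so that $\cart \hc \psi$ is the cell $(f^*, H_1, \dotsc, H_n) \Rightarrow J$ with vertical source $\id_A$ and vertical target $g$. The first step is to record the key bijection supplied by $f^*$: horizontal precomposition with $\cart$ identifies, for every path $\hmap{\ul L}CD$ and every pair of morphisms $\map hAC$ and $\map kBD$, the nullary cells $\cell\chi{(H_1, \dotsc, H_n)}{\ul L}$ with vertical source $h \of f$ and vertical target $k \of g$ with the nullary cells $\cell{\hat\chi}{(f^*, H_1, \dotsc, H_n)}{\ul L}$ with vertical source $h$ and vertical target $k \of g$. (Gluing the $(1,0)$"/ary cell $\cart$ onto the left foot leaves the vertical target alone, which is why $k \of g$ occurs on both sides.) By the interchange law this bijection carries a factorisation $\chi = \chi' \of \psi$ through $\psi$, with $\cell{\chi'}{(J)}{\ul L}$ of vertical source $h$ and vertical target $k$, to the factorisation $\hat\chi = \chi' \of (\cart \hc \psi)$ through $\cart \hc \psi$ with the \emph{same} $\chi'$, and conversely.

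Given this, the weak case is immediate: the test cells in the definition of weak nullary"/cocartesianness for $\psi$ (resp.\ for $\cart \hc \psi$) are precisely the $\chi$ (resp.\ the $\hat\chi$) above, and unique factorisations of the former through $\psi$ correspond, under the displayed bijection, to unique factorisations of the latter through $\cart \hc \psi$; hence $\psi$ is weakly nullary"/cocartesian if and only if $\cart \hc \psi$ is. For the `right' variant one notes that gluing $\cart$ on the left does not disturb the right"/hand vertical leg $g$, so a path $(K_1, \dotsc, K_q)$ admits the restriction $K_1(g, \id)$ for $\psi$ exactly when it does for $\cart \hc \psi$; applying the bijection to each right"/extended source $(H_1, \dotsc, H_n) \conc \bigpars{K_1(g, \id), K_2, \dotsc, K_q}$ then yields the `right' equivalence. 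The statement for $\psi \hc \cart$ with $\cart$ the defining cartesian cell of the companion $\hmap{g_*}{X_n}B$ is horizontally dual, gluing $\cart$ onto the right foot so that the left leg $f$ is untouched; the unary"/cocartesian variants are obtained by restricting the bijection to unary test cells $\chi$, and the left"/sided variants by invoking the left"/sided halves of \lemref{pasting lemma for cocartesian paths} in the same way.

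The genuinely routine but fiddly part is the bookkeeping: one must check that the interchange law really does intertwine the two factorisation problems (straightforward once the augmented"/virtual composition of a $(1,0)$"/ary cell with $\psi$ is spelled out), and that the restriction provisos attached to the weakened notions of \defref{cocartesian path} match on the two sides — which they do precisely because $\cart$ alters only the extreme foot of $\psi$ and hence only the extreme vertical leg, leaving the leg against which all required restrictions are taken unchanged. This last verification is the only point at which the present argument genuinely departs from that of \augcororef{8.5}, since \augcororef{8.5} works with the coarser notion of \augdefref{7.1}; I expect it to be the place to be most careful, though no real difficulty arises.
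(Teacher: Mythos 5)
Your treatment of the first assertion is correct and is essentially the paper's proof written out in full: the paper simply applies \lemref{pasting lemma for cocartesian paths} to $\psi_1 = \cart \hc \psi$ and the path $\ul\phi_1 = (\cocart, \id_{H_1}, \dotsc, \id_{H_n})$, where $\cocart$ is the cocartesian cell corresponding to $\cart$, using the conjoint identity $(\cart \hc \psi) \of \ul\phi_1 = \psi$. Your bijection $\chi \leftrightarrow \hat\chi$ is exactly what that application unwinds to, and your observation that gluing $\cart$ on the left leaves the right-hand leg $g$ untouched is precisely what makes the side conditions (a) and (b) of that lemma trivial in this case.

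The place where your argument does not go through is the one you yourself flagged as needing the most care. The closing claim that $\cart$ "alters only the extreme vertical leg, leaving the leg against which all required restrictions are taken unchanged" is true for the combinations (conjoint, right variants) and (companion, left variants), but false for (companion, right variants) and (conjoint, left variants). For $\psi \hc \cart_{g_*}$ the right-hand leg changes from $g$ to $\id_B$, so its right-extensions range over \emph{all} non-empty paths $(K_1, \dotsc, K_q)$ out of $B$ (because $K_1(\id_B, \id) \iso K_1$ always exists), whereas those of $\psi$ range only over paths for which $K_1(g, \id)$ exists. Horizontal duality does not repair this: it converts the (conjoint, right) statement into the (companion, \emph{left}) one, not the (companion, right) one. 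In the direction ``$\psi$ right nullary-cocartesian implies $\psi \hc \cart_{g_*}$ right nullary-cocartesian'' one must treat extensions with $K_1(g, \id)$ possibly non-existent, and then there is no cell with target $K_1$ and left boundary $g$ through which to transport the test cells $\ul H \conc (g_*) \conc (K_1, \dotsc, K_q) \Rightarrow C$ back to a factorisation problem for $\psi$; this is exactly where condition (b) of \lemref{pasting lemma for cocartesian paths} would impose the non-trivial hypothesis that $K(g, \id)$ exist whenever $K$ does. The same mismatch affects the left variants of the first composite $\cart_{f^*} \hc \psi$, whose left leg changes from $f$ to $\id_A$. (The paper's own one-line proof is equally silent on these mismatched combinations, and its sole use of the corollary, in \propref{left exactness and right unary-cocartesianness}, is the unproblematic (conjoint, right/weak unary) case; but as written your final verification asserts something false for half of the ``analogous assertions''.)
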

	\begin{proof}
		Apply the pasting lemma to $\psi_1 = \cart \hc \psi$ and $\ul\phi_1 = (\cocart, \id_{H_1}, \dotsc, \id_{H_n})$ where $\cocart$ denotes the cocartesian cell corresponding to the cartesian cell above (\lemref{companion identities lemma}), so that $\psi_1 \of \ul\phi_1 = \psi$.
	\end{proof}
	
	\subsection{Pointwise right cocartesian paths}
	The pointwise variants of the notions of right (respectively weakly) nullary"/ and unary"/cocartesian path are similar to the pointwise variant of cocartesian path given in \augdefref{9.1}, as follows; see \remref{right pointwise cocartesian and pointwise right cocartesian comparison} below for the differences. Recall that $n' \dfn n - 1$ for any postive integer $n$.
	\begin{definition} \label{pointwise right cocartesian path}
		Consider a right (respectively weakly) nullary"/cocartesian path $\ul\phi = (\phi_1, \dotsc, \phi_n)$ whose last cell $\phi_n$ has trivial vertical target and arity $(m_n, 1)$ with $m_n \geq 1$, as in the left"/hand side below.
		\begin{displaymath}
			\begin{tikzpicture}[textbaseline]
				\matrix(m)[math35, column sep={2.1em,between origins}]
					{ X_{n0} & & X_{n1} & & X_{n(m_n)'} &[0.25em] & Y \\
						X_{n0} & & X_{n1} & & X_{n(m_n)'} & & A_n \\
						& A_{n'} & & & & A_n & \\ };
				\path[map]	(m-1-1) edge[barred] node[above] {$H_{n1}$} (m-1-3)
										(m-1-5) edge[barred] node[above, xshift=-12pt] {$H_{nm_n}(\id, f)$} (m-1-7)
										(m-1-7) edge node[right] {$f$} (m-2-7)
										(m-2-1) edge[barred] node[below] {$H_{n1}$} (m-2-3)
														edge node[left] {$f_{n'}$} (m-3-2)
										(m-2-5) edge[barred] node[below] {$H_{nm_n}$} (m-2-7)
										(m-3-2) edge[barred] node[below] {$J_n$} (m-3-6);
				\path				(m-1-3) edge[eq] (m-2-3)
										(m-1-5) edge[eq] (m-2-5)
										(m-1-1) edge[eq] (m-2-1)
										(m-2-7) edge[eq] (m-3-6)
										(m-2-4) edge[cell, transform canvas={xshift=0.125em}] node[right] {$\phi_n$} (m-3-4);
				\draw[font=\scriptsize]	($(m-1-6)!0.5!(m-2-6)$) node {$\cart$};
				\draw				($(m-1-4)!0.5!(m-2-4)$) node {$\dotsb$};
			\end{tikzpicture} \qquad = \qquad \begin{tikzpicture}[textbaseline]
				\matrix(m)[math35, column sep={2.1em,between origins}]
					{ X_{n0} & & X_{n1} & & X_{n(m_n)'} & & Y \\
						A_{n'} & & & & & & Y \\
						& A_{n'} & & & & A_n & \\ };
				\path[map]	(m-1-1) edge[barred] node[above] {$H_{n1}$} (m-1-3)
														edge node[left] {$f_{n'}$} (m-2-1)
										(m-1-5) edge[barred] node[above, xshift=-12pt] {$H_{nm_n}(\id, f)$} (m-1-7)
										(m-2-1) edge[barred] node[below] {$J_n(\id, f)$} (m-2-7)
										(m-2-7) edge node[right] {$f$} (m-3-6)
										(m-3-2) edge[barred] node[below] {$J_n$} (m-3-6);
				\path				(m-2-1) edge[eq] (m-3-2)
										(m-1-7) edge[eq] (m-2-7)
										(m-1-4) edge[cell] node[right] {$\phi_n'$} (m-2-4);
				\draw[font=\scriptsize]	([yshift=-0.25em]$(m-2-4)!0.5!(m-3-4)$) node {$\cart$};
				\draw				(m-1-4) node[xshift=-3pt] {$\dotsb$};
			\end{tikzpicture}
		\end{displaymath}
		\begin{enumerate}[label=\textup{(\alph*)}]
			\item Let $\map fY{A_n}$ be any morphism. We say that $\ul\phi$ \emph{restricts along $f$} if both restrictions $H_{nm_n}(\id, f)$ and $J_n(\id, f)$ exist and the path $(\phi_1, \dotsc, \phi_{n'}, \phi_n')$ is again right (respectively weakly) nullary"/cocartesian, where $\phi'_n$ is the unique factorisation in the right"/hand side above.
			\item We call $\ul\phi$ \emph{pointwise} if it restricts along any morphism $\map fY{A_n}$ such that $H_{nm_n}(\id, f)$ exists.
		\end{enumerate}
		The notion of \emph{restriction} for right (respectively weakly) unary"/cocartesian and right (respectively weakly) cocartesian cells is defined analogously, as is the notion of \emph{pointwise right (respectively weakly) unary"/cocartesian} path and that of \emph{pointwise right (respectively weakly) cocartesian} path.
	\end{definition}
	Unpacking the definition of a pointwise right (respectively weakly) nullary"/cocartesian path $\ul\phi$ above we find that it requires the following for every morphism $\map fY{A_n}$: if the restriction $H_{nm_n}(\id, f)$ exists then so does $J_n(\id, f)$ and, in that case, the path $(\phi_1, \dotsc, \phi_{n'}, \phi_n')$, with $\phi_n'$ as above, is right (respectively weakly) nullary"/cocartesian. A single horizontal cell $\cell\phi{(H_1, \dotsc, H_n)}J$ is called \emph{(pointwise) right cocartesian} whenever the singleton path $(\phi)$ is (pointwise) right cocartesian; in that case we call $J$ the \emph{(pointwise) right composite} of $(H_1, \dotsc, H_n)$. If $(\phi)$ is cocartesian (\defref{cocartesian path}) then $J$ is the \emph{horizontal composite} $(H_1 \hc \dotsb \hc H_n)$ in the sense of \augdefref{7.1}; in this article we will use the notation $(H_1 \hc \dotsb \hc H_n)$ for both horizontal composites and right composites. Notice that the pasting lemma (\lemref{pasting lemma for cocartesian paths}) only allows us to combine right composites on the left, that is given a right composite $\hmap{(H_1 \hc \dotsb \hc H_n)}{A_0}{A_n}$ and a path $\hmap{\ul J}{A_n}{B_m}$, the right composite $\bigpars{(H_1 \hc \dotsb \hc H_n) \hc J_1 \hc \dotsb \hc J_m}$ exists if and only if the right composite $(H_1 \hc \dotsb \hc H_n \hc J_1 \hc \dotsb \hc J_m)$ does, and in that case they are canonically isomorphic. In particular the associator $\bigpars{(H \hc J) \hc K)} \iso \bigpars{H \hc (J \hc K)}$ of right composites need not exist in general, unless $(J \hc K)$ is a horizontal composite.
	
	\begin{remark} \label{right pointwise cocartesian and pointwise right cocartesian comparison}
		Consider a path $\ul\phi = (\phi_1, \dotsc, \phi_n)$ of the form as in the definition above, but which is not necessarily nullary"/cocartesian. In \augdefref{9.1} $\ul\phi$ is called \emph{right pointwise cocartesian} if, for every morphism $\map fY{A_n}$ for which both $H_{nm_n}(\id, f)$ and $J_n(\id, f)$ exist, the path $(\phi_1, \dotsc, \phi_{n'}, \phi_n')$, with $\phi_n'$ as above, is cocartesian. Since every cocartesian path is right cocartesian (\exref{cocartesian paths are right cocartesian}) it follows that any right pointwise cocartesian path, in the sense of \augdefref{9.1}, is pointwise right cocartesian, in the above sense, whenever, for each $\map fY{A_n}$, if the restriction $H_{nm_n}(\id, f)$ exists then so does the restriction $J_n(\id, f)$.
		
		We conclude that in augmented virtual double categories that have restrictions on the right (\defref{augmented virtual equipment}) any right pointwise cocartesian path is pointwise right cocartesian and, in particular, any pointwise composite $(H_1 \hc \dotsb \hc H_n)$ in the sense of \augdefref{9.1} is a pointwise right composite in the sense above. Under some conditions on the base category $\V$ such pointwise composites exist in the augmented virtual double categories $\enProf\V$, $\enProf{(\V, \V')}$ and $\ensProf\V$ of (small) $\V$"/profunctors, where they are given by $\V$"/coends; see \augexsref{9.2}{9.3}.
	\end{remark}
	
	Pointwise cocartesian paths are coherent in the sense of the following two lemmas. Taking into account the slight difference in the notions of pointwise right cocartesian path and right pointwise cocartesian path regarding the existence of the relevant restrictions, as described in the previous remark, the proofs of these lemmas are essentially the same as those of \auglemsref{9.5}{9.6}.
	\begin{lemma} \label{coherence of pointwise cocartesian paths}
		If the path $(\phi_1, \dotsc, \phi_n)$ is pointwise right (respectively weakly) (nullary- or unary"/)cocartesian then any path of the form $(\phi_1, \dotsc, \phi_n')$ as in \defref{pointwise right cocartesian path} is again pointwise right (respectively weakly) (nullary- or unary"/)cocartesian.
	\end{lemma}
	
	Let $(H_1 \hc \dotsb \hc H_n)$ be a pointwise right composite and $f$ a morphism such that the restriction $H_n(\id, f)$ exists. Applying the lemma above to the singleton path consisting of the pointwise right cocartesian cell defining the composite we find that the restriction $(H_1 \hc \dotsb \hc H_n)(\id, f)$ forms the pointwise right composite $\bigpars{H_1 \hc \dotsb \hc H_n(\id, f)}$.
	
	\begin{lemma}[Pasting lemma for pointwise cocartesian paths] \label{pasting lemma for pointwise cocartesian paths}
		Consider the configuration of cells of \lemref{pasting lemma for cocartesian paths}. Assume that all its cells $\psi_i$ and $\phi_{ij}$ are unary, that $\phi_{nm_n}$ has non"/empty horizontal source, and that the vertical targets of the final cells $\psi_n$ and $\phi_{nm_n}$ are both the identity morphism on the object $C_n$. Denote by $J$ the horizontal target of $\phi_{nm_n}$ and by $H$ the final morphism of its horizontal source. Assume that the path $(\phi_{11}, \dotsc, \phi_{nm_n})$ is pointwise right (respectively weakly) (nullary"/ or unary"/)cocartesian.
		
		If the path $(\psi_1, \dotsc, \psi_n)$ pointwise right (respectively weakly) (nullary"/ or unary"/)cocartesian then so is the path of composites $(\psi_1 \of \ul\phi_1, \dotsc, \psi_n \of \ul\phi_n)$. The converse holds whenever the following holds for all morphisms $\map fY{C_n}$: if the restriction $J(\id, f)$ exists then so does $H(\id, f)$.
	\end{lemma}
	
	\subsection{The vertical pasting lemma}
	We are now ready to state the vertical pasting lemma for left Kan cells.
	\begin{lemma}[Vertical pasting lemma for left Kan cells] \label{vertical pasting lemma}
		Consider the composite below. If the path $\ul\phi = (\phi_1, \dotsc, \phi_n)$ is right (respectively weakly) nullary"/cocartesian then the cell $\eta$ is (weakly) left Kan precisely if the composite $\eta \of \ul\phi$ is so.	If moreover $\ul\phi$ restricts along \mbox{$\map fY{A_n}$} then $\eta$ restricts along $f$ (\defref{pointwise left Kan extension}) precisely if the composite does so.
		
		Next assume that $\ul\phi$ is pointwise right (respectively weakly) nullary"/cocartesian. If $\eta$ is pointwise (weakly) left Kan (\defref{pointwise left Kan extension}) then so is the composite. The converse holds whenever the restrictions $H_{nm_n}(\id, f)$ exist for all $\map fY{A_n}$.
		\begin{displaymath}
			\begin{tikzpicture}
				\matrix(m)[math35]
					{	X_{10} & X_{11} & X_{1m'_1} & X_{1m_1} &[4em] X_{n0} & X_{n1} & X_{nm'_n} & A_n \\
						A_0 & & & A_1 & A_{n'} & & & A_n \\};
				\draw				([yshift=-3.25em]$(m-2-1)!0.5!(m-2-8)$) node (M) {$M$};
				\path[map]	(m-1-1) edge[barred] node[above] {$H_{11}$} (m-1-2)
														edge node[left] {$f_0$} (m-2-1)
										(m-1-3) edge[barred] node[above] {$H_{1m_1}$} (m-1-4)
										(m-1-4) edge node[right] {$f_1$} (m-2-4)
										(m-1-5)	edge[barred] node[above] {$H_{n1}$} (m-1-6)
														edge node[left] {$f_{n'}$} (m-2-5)
										(m-1-7) 	edge[barred] node[above, xshift=-2pt] {$H_{nm_n}$} (m-1-8)
										(m-2-1) edge[barred] node[below] {$\ul J_1$} (m-2-4)
														edge[transform canvas={yshift=-2pt}] node[below left] {$d$} (M)
										(m-2-5)	edge[barred] node[below] {$\ul J_n$} (m-2-8)
										(m-2-8)	edge[transform canvas={yshift=-2pt}] node[below right] {$l$} (M);
				\path				($(m-1-1.south)!0.5!(m-1-4.south)$) edge[cell] node[right] {$\phi_1$} ($(m-2-1.north)!0.5!(m-2-4.north)$)
										($(m-1-5.south)!0.5!(m-1-8.south)$) edge[cell] node[right] {$\phi_n$} ($(m-2-5.north)!0.5!(m-2-8.north)$)
										($(m-2-1.south)!0.5!(m-2-8.south)$) edge[cell] node[right] {$\eta$} (M)
										(m-1-8)	edge[eq] (m-2-8);
				\draw[transform canvas={xshift=-1pt}]	($(m-1-2)!0.5!(m-1-3)$) node {$\dotsb$}
										($(m-1-6)!0.5!(m-1-7)$) node {$\dotsb$};
				\draw				($(m-1-4)!0.5!(m-2-5)$) node {$\dotsb$};
			\end{tikzpicture}
		\end{displaymath}
	\end{lemma}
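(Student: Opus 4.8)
The plan is to derive all three assertions from the universal properties of \defsref{weak left Kan extension}{left Kan extension} and \defref{cocartesian path}, using only the associativity and interchange laws for composition of cells in an augmented virtual double category. For the first assertion, note that since the rightmost vertical morphism of $\ul\phi$ is $\id_{X_{nm_n}}$, the restriction $K_1(\id, \id) \iso K_1$ exists for every $K_1$; hence $\ul\phi$ being right nullary"/cocartesian yields, for each path $\ul K = (K_1, \dotsc, K_q)$, that $\bigpars{\phi_1, \dotsc, \phi_n, \id_{K_1}, \dotsc, \id_{K_q}}$ is weakly nullary"/cocartesian, while $\ul\phi$ being merely weakly nullary"/cocartesian yields this only for empty $\ul K$. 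I would then \emph{compose the two universal properties}: given a nullary cell $\chi$ of horizontal source $(\ul H_1 \conc \dotsb \conc \ul H_n) \conc \ul K$ and vertical source $d \of f_0$, first factor $\chi = \chi' \of (\ul\phi, \id_{\ul K})$ through the above path, for a unique $\chi'$ of horizontal source $\ul J \conc \ul K$ and vertical source $d$, and then factor $\chi' = \eta \hc \chi''$ uniquely using that $\eta$ is left Kan. Interchange gives $\chi = (\eta \hc \chi'') \of (\ul\phi, \id_{\ul K}) = (\eta \of \ul\phi) \hc \chi''$, and combining the two uniqueness clauses shows $\chi''$ is the unique such factorisation, so $\eta \of \ul\phi$ is left Kan. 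The converse runs the bijection backwards: from $\chi'$ of horizontal source $\ul J \conc \ul K$ set $\chi \dfn \chi' \of (\ul\phi, \id_{\ul K})$, factor $\chi$ through the left Kan cell $\eta \of \ul\phi$, and recover the factorisation of $\chi'$ through $\eta$ from the uniqueness part of weak nullary"/cocartesianness. The ``weakly'' variant is identical with $\ul K$ empty, the $\hc$"/factorisations through $\eta$ being replaced by the vertical"/cell factorisations of \defref{weak left Kan extension}.

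For the second assertion the computation to carry out is the identity
\[
	(\eta \of \ul\phi) \of (\id, \dotsc, \id, \cart_H) \;=\; \bigpars{\eta \of (\id, \dotsc, \id, \cart_J)} \of (\phi_1, \dotsc, \phi_{n'}, \phi'_n),
\]
where $\cart_H$ and $\cart_J$ are the cartesian cells defining $H_{nm_n}(\id, f)$ and $J_n(\id, f)$, and $\phi'_n$ is the factorisation of \defref{pointwise right cocartesian path}, characterised by $\phi_n \of (\id, \dotsc, \id, \cart_H) = \cart_J \of \phi'_n$. This follows by associativity once one observes that $(\id, \dotsc, \id, \cart_H)$ only affects the source slots of $\phi_n$, substitutes the defining equation of $\phi'_n$, and pulls $\cart_J$ back out past $\eta$. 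Writing $\eta' \dfn \eta \of (\id, \dotsc, \id, \cart_J)$ and $\ul\phi' \dfn (\phi_1, \dotsc, \phi_{n'}, \phi'_n)$, the hypothesis ``$\ul\phi$ restricts along $f$'' says precisely that $\ul\phi'$ is again right (respectively weakly) nullary"/cocartesian, and since its rightmost vertical morphism is $\id_Y$ the pair $(\eta', \ul\phi')$ sits in the configuration of the lemma; so the first assertion gives that $\eta'$ is (weakly) left Kan if and only if $\eta' \of \ul\phi'$ is so. As $\eta'$ is (weakly) left Kan exactly when $\eta$ restricts along $f$, and $\eta' \of \ul\phi'$ is (weakly) left Kan exactly when $\eta \of \ul\phi$ restricts along $f$ --- the restrictions $J_n(\id, f)$ and $H_{nm_n}(\id, f)$ existing by hypothesis --- this is the asserted equivalence.

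The third assertion then follows formally. Assume $\ul\phi$ pointwise right (respectively weakly) nullary"/cocartesian; it is in particular right (respectively weakly) nullary"/cocartesian, so the first assertion makes $\eta \of \ul\phi$ (weakly) left Kan as soon as $\eta$ is. For any $\map fY{X_{nm_n}}$ with $H_{nm_n}(\id, f)$ existing, pointwiseness of $\ul\phi$ forces $J_n(\id, f)$ to exist and $\ul\phi$ to restrict along $f$, so if $\eta$ is pointwise (weakly) left Kan it restricts along $f$, whence so does $\eta \of \ul\phi$ by the second assertion; thus $\eta \of \ul\phi$ is pointwise. Conversely, under the extra hypothesis that $H_{nm_n}(\id, f)$ exists for every $f$, pointwiseness of $\ul\phi$ then makes $J_n(\id, f)$ exist for every $f$ too, both $\ul\phi$ and $\eta \of \ul\phi$ restrict along every such $f$, and the second assertion transports this back to $\eta$.

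I expect the main obstacle to be the bookkeeping in the second assertion: verifying the displayed identity by carefully tracking which slots each precomposition affects and which vertical morphisms appear as sources and targets, and checking that $\ul\phi'$ genuinely fits the configuration needed to re"/invoke the first assertion. The first assertion is otherwise just a clean presentation of the composition of universal properties, run in parallel for the ``weakly'' and non"/``weakly'' cases, and the third is a diagrammatic unpacking of \defref{pointwise left Kan extension} and \defref{pointwise right cocartesian path}.
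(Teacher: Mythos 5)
Your proposal is correct and follows essentially the same route as the paper: the displayed identity and the re‑invocation of the main assertion for the restricted path $(\phi_1,\dotsc,\phi_{n'},\phi_n')$, and the formal unpacking for the pointwise case, are exactly the paper's argument. The only cosmetic difference is that the paper packages your first step — composing the bijection ``precompose with the (identity‑extended) nullary‑cocartesian path'' with ``factor through $\eta$'', using interchange and the fact that $K_1(\id,\id)\iso K_1$ — into the separately stated \lemref{unique factorisations factorised through cocartesian paths} (applied with $\ul\zeta$ a path of identity cells), whereas you carry out that factorisation chase inline.
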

	\begin{proof}
		The main assertion follows from applying \lemref{unique factorisations factorised through cocartesian paths} below to $\eta$ and $\ul \chi = \ul \phi$, while $\ul \zeta$ ranges over all (including the empty) paths of horizontal identity cells. Indeed, the assertion of that lemma implies that the unique factorisations showing that $\eta$ is (weakly) left Kan correspond precisely to those showing that $\eta \of \ul\phi$ is so.
		
		Next assume that $\ul\phi$ restricts along a morphism $\map fY{A_n}$ so that, in particular, $\phi_n$ is of arity $(m_n, 1)$ with $m_n \geq 1$. The schematically drawn identity below, where the cartesian cells defining $H_{nm_n}(\id, f)$ and $J_n(\id, f)$ are both denoted by `c', follows from the identity of \defref{pointwise right cocartesian path}. By \defref{pointwise right cocartesian path}(a) the top row of the right"/hand side is again right (respectively weakly) nullary"/cocartesian.
		\begin{displaymath}
			\begin{tikzpicture}[scheme]
				\draw (0,2) -- (1,2) -- (1,3) -- (0,3) -- (0,0) -- (9,0) -- (9,3) -- (8,3) -- (8,2) -- (9,2) (3,2) -- (2,2) -- (2,3) -- (3,3) -- (3,1) -- (0,1) (6,2) -- (7,2) -- (7,3) -- (6,3) -- (6,1) -- (9,1);
				\draw	(1.5,2.5) node[xshift=0.75pt] {$\dotsb$}
							(1.5,1.5) node {$\phi_1$}
							(4.5,2) node[font=] {$\dotsb$}
							(4.5,0.5) node {$\eta$}
							(7.5,2.5) node[xshift=0.75pt] {$\dotsb$}
							(7.5,1.5) node {$\phi_n$}
							(8.5,2.5) node {c};
			\end{tikzpicture} \qquad = \qquad \begin{tikzpicture}[scheme]
				\draw (1,3) -- (0,3) -- (0,0) -- (9,0) -- (9,3) -- (8,3) (2,3) -- (3,3) -- (3,1) -- (0,1) (7,3) -- (6,3) -- (6,1) -- (9,1) (0,2) -- (3,2) (6,2) -- (9,2);
				\draw	(1.5,3) node[xshift=0.75pt, yshift=-0.25pt] {$\dotsb$}
							(1.5,2.5) node {$\phi_1$}
							(4.5,2.5) node[font=] {$\dotsb$}
							(4.5,1.5) node[font=] {$\dotsb$}
							(4.5,0.5) node {$\eta$}
							(7.5,3) node[xshift=0.75pt, yshift=-0.25pt] {$\dotsb$}
							(7.5,2.5) node[yshift=1pt] {$\phi_n'$}
							(7.5,1.5) node {c};
			\end{tikzpicture}
		\end{displaymath}
		Notice that, by \defref{pointwise left Kan extension}, $\eta \of \ul\phi$ restricts along $f$ as soon as the left"/hand side above is (weakly) left Kan while $\eta$ restricts along $f$ whenever the composite of the bottom two rows of the right"/hand side is so. That the latter are equivalent follows from applying the main assertion to $(\phi_1, \dotsc, \phi_n')$.
		
		Finally assume that $\ul\phi$ is pointwise right (respectively weakly) nullary"/cocartesian. If $\eta$ is pointwise (weakly) left Kan then, for each $\map fY{A_n}$ such that $H_{nm_n}(\id, f)$ exists, both $\ul\phi$ and $\eta$ restrict along $f$ so that by the previous the composite $\eta \of \ul \phi$ restricts along $f$ too. We conclude that the composite is pointwise (weakly) left Kan. The converse similarly follows from the previous provided that all restrictions $H_{nm_n}(\id, f)$ exist so that, by the assumption on $\ul\phi$, so do all restrictions $J_n(\id, f)$.
	\end{proof}
	
	\begin{example} \label{Kan extensions along composites}
		Consider a path $\hmap{(J_1, \dotsc, J_n)}{A_0}{A_n}$ of horizontal morphisms as well as a vertical morphism $\map d{A_0}M$ and assume that the horizontal composite $(J_1 \hc \dotsb \hc J_n)$ exists (\defref{pointwise right cocartesian path}). Applying the vertical pasting lemma to the cocartesian cell defining the composite we find that the (weak) left Kan extension of $d$ along $(J_1, \dotsc, J_n)$ exists precisely if that of $d$ along $(J_1 \hc \dotsb \hc J_n)$ does, and in that case they are isomorphic. If $(J_1 \hc \dotsb \hc J_n)$ is a pointwise composite (\remref{right pointwise cocartesian and pointwise right cocartesian comparison}) and all restrictions on the right exist (\defref{augmented virtual equipment}) then the analogous equivalence holds for pointwise (weak) left Kan extensions.
	\end{example}
	
	\begin{example} \label{Kan extensions along horizontal units}
		Consider parallel morphisms $d$ and $\map lAM$ whose source admits a horizontal unit $\hmap{I_A}AA$ (\defref{cartesian cells}). Applying the vertical pasting lemma to the cocartesian cell defining $I_A$ (\lemref{companion identities lemma}), combined with \exref{vertical cells defining left Kan extensions}, we find that $l$ is the (weak) left Kan extension of $d$ along $I_A$ if and only if $l \iso d$.
	\end{example}
	
	\begin{lemma} \label{unique factorisations factorised through cocartesian paths}
	Consider composable paths $\ul\chi = (\chi_1, \dotsc, \chi_n)$ and $\ul\zeta = (\zeta_1, \dotsc, \zeta_m)$ of cells $\cell{\chi_i}{\ul K_i}{\ul J_i}$ and $\cell{\zeta_j}{\ul L_j}{\ul H_j}$, and assume that their concatenation $\ul\chi \conc \ul\zeta$ is weakly nullary"/cocartesian so that the assignment of cells below, given by composition with $\ul\chi \conc \ul\zeta$, is a bijection; here $\map{f_0}{X_{10}}{A_0}$ and \mbox{$\map{g_m}{Y_{mq_m}}{B_m}$} denote the vertical source of $\chi_1$ and the vertical target of $\zeta_m$.
		\begin{displaymath}
			\begin{tikzpicture}
				\matrix(m)[math35, column sep={6em,between origins}, xshift=-10.5em]{A_0 & & B_m \\ & M & \\};
				\path[map]	(m-1-1) edge[barred] node[above] {$\ul J_1 \conc \dotsb \conc \ul J_n \conc \ul H_1 \conc \dotsb \conc \ul H_m$} (m-1-3)
														edge[transform canvas={yshift=-2pt}] node[below left] {$d$} (m-2-2)
										(m-1-3) edge[transform canvas={yshift=-2pt}] node[below right] {$k$} (m-2-2);
				\path				(m-1-2) edge[cell] node[right] {$\phi$} (m-2-2);
				
				\matrix(n)[math35, column sep={6.5em,between origins}, xshift=10.5em]{X_{10} & & Y_{mq_m} \\ & M & \\};
				\path[map]	(n-1-1) edge[barred] node[above] {$\ul K_1 \conc \dotsb \conc \ul K_n \conc \ul L_1 \conc \dotsb \conc \ul L_m$} (n-1-3)
														edge[transform canvas={yshift=-2pt}] node[below left] {$d \of f_0$} (n-2-2)
										(n-1-3) edge[transform canvas={yshift=-2pt}] node[below right] {$k \of g_m$} (n-2-2);
				\path				(n-1-2) edge[cell] node[right] {$\psi$} (n-2-2);
				
				\draw[font=\Large]	(-17.5em,0.3em) node {$\lbrace$}
										(-3.6em,0.3em) node {$\rbrace$}
										(2.7em,0.3em) node {$\lbrace$}
										(18.2em,0.3em) node {$\rbrace$};
				\path[map]	(-2.1em,0.3em) edge node[above] {$\dash \of (\ul \chi \conc \ul \zeta)$} (1.2em,0.3em);
			\end{tikzpicture}
		\end{displaymath}
		 Consider a nullary cell $\eta$ of the form below. If $\ul\zeta$ is weakly nullary"/cocartesian too then cells of the form $\phi$ above factor uniquely through $\eta$, i.e.\ each $\phi = \eta \hc \phi'$ for a unique nullary cell $\cell{\phi'}{\ul H_1 \conc \dotsb \conc \ul H_m}M$, precisely if cells of the form $\psi$ above factor uniquely through $\eta \of \ul\chi$, i.e.\ each $\psi = (\eta \of \ul \chi) \hc \psi'$ for a unique nullary cell $\cell{\psi'}{\ul L_1 \conc \dotsb \conc \ul L_m}M$.
		\begin{displaymath}
			\begin{tikzpicture}
				\matrix(m)[math35]{A_0 & A_1 & A_{n'} & A_n \\};
				\draw	([yshift=-3.25em]$(m-1-1)!0.5!(m-1-4)$) node (M) {$M$};
				\path[map]	(m-1-1) edge[barred] node[above] {$J_1$} (m-1-2)
														edge[transform canvas={yshift=-2pt}] node[below left] {$d$} (M)
										(m-1-3) edge[barred] node[above] {$J_n$} (m-1-4)
										(m-1-4) edge[transform canvas={yshift=-2pt}] node[below right] {$l$} (M);
				\path				($(m-1-2.south)!0.5!(m-1-3.south)$) edge[cell] node[right] {$\eta$} (M);
				\draw				($(m-1-2)!0.5!(m-1-3)$) node {$\dotsb$};
										
			\end{tikzpicture}
		\end{displaymath}
	\end{lemma}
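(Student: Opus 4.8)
The plan is to reduce the claimed equivalence to a purely formal diagram chase between four collections of nullary cells linked by two bijections supplied by the hypotheses. First I would fix notation: write $e$ for the vertical leg shared by $\ul\chi$ and $\ul\zeta$, that is the vertical target of $\chi_n$, which equals the vertical source of $\zeta_1$; then $\eta$ has vertical source $d$ and vertical target $l$, and hence $\eta \of \ul\chi$ has vertical source $d \of f_0$ and vertical target $l \of e$.

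Next I would introduce the four relevant collections of nullary cells, each with horizontal target the object $M$: let $A$ be the collection of cells $\cell\phi{\ul J_1 \conc \dotsb \conc \ul J_n \conc \ul H_1 \conc \dotsb \conc \ul H_m}M$ with vertical source $d$ and vertical target $k$, i.e.\ the cells $\phi$ in the displayed bijection; let $A'$ be the collection of cells $\cell{\phi'}{\ul H_1 \conc \dotsb \conc \ul H_m}M$ with vertical source $l$ and vertical target $k$; let $B$ be the collection of cells $\cell\psi{\ul K_1 \conc \dotsb \conc \ul K_n \conc \ul L_1 \conc \dotsb \conc \ul L_m}M$ with vertical source $d \of f_0$ and vertical target $k \of g_m$, i.e.\ the cells $\psi$ in the displayed bijection; and let $B'$ be the collection of cells $\cell{\psi'}{\ul L_1 \conc \dotsb \conc \ul L_m}M$ with vertical source $l \of e$ and vertical target $k \of g_m$. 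I would then record four maps: vertical composition with $\ul\chi \conc \ul\zeta$ gives $\map\alpha AB$, $\phi \mapsto \phi \of (\ul\chi \conc \ul\zeta)$, which is a bijection because $\ul\chi \conc \ul\zeta$ is weakly nullary-cocartesian; vertical composition with $\ul\zeta$ gives $\map\beta{A'}{B'}$, $\phi' \mapsto \phi' \of \ul\zeta$, which is a bijection because $\ul\zeta$ is weakly nullary-cocartesian; horizontal composition with $\eta$ gives $\map F{A'}A$, $\phi' \mapsto \eta \hc \phi'$; and horizontal composition with $\eta \of \ul\chi$ gives $\map G{B'}B$, $\psi' \mapsto (\eta \of \ul\chi) \hc \psi'$. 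A short inspection of the vertical legs shows that all four maps land in the stated collections, and then the assertion to be proved is exactly that $F$ is a bijection if and only if $G$ is, since ``the cells $\phi$ factor uniquely through $\eta$'' unwinds to ``$F$ is a bijection'' and ``the cells $\psi$ factor uniquely through $\eta \of \ul\chi$'' unwinds to ``$G$ is a bijection''.

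The heart of the argument is then the commutativity of the square built from $\alpha$, $\beta$, $F$ and $G$: for any $\phi' \in A'$ one has
\begin{displaymath}
	\alpha\bigpars{F\phi'} = (\eta \hc \phi') \of (\ul\chi \conc \ul\zeta) = (\eta \of \ul\chi) \hc (\phi' \of \ul\zeta) = G\bigpars{\beta\phi'},
\end{displaymath}
the middle equality being the interchange of horizontal composition of nullary cells with vertical composition of cells, which is part of the structure of an augmented virtual double category. Since $\alpha$ and $\beta$ are bijections it follows that $F$ is a bijection precisely when $\alpha \of F = G \of \beta$ is, precisely when $G$ is, which finishes the proof. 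I do not expect a genuine obstacle here; the only step needing care---which I would write out rather than assert---is the bookkeeping of the vertical legs that makes $F$, $G$, $\alpha$ and $\beta$ well defined with the claimed sources and targets, together with citing the precise interchange axiom used for the middle equality.
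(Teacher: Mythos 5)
Your proof is correct and is essentially identical to the paper's: the paper also forms the commuting square of the four collections of cells, with the two vertical-composition assignments $\dash \of \ul\zeta$ and $\dash \of (\ul\chi \conc \ul\zeta)$ bijective by the weak nullary-cocartesianness hypotheses, observes that it commutes by the interchange axiom, and concludes that $\eta \hc \dash$ is a bijection precisely when $(\eta \of \ul\chi) \hc \dash$ is. Your extra care with the vertical legs (your $e$ is the paper's $f_n$) matches the paper's bookkeeping exactly.
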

	\begin{proof}
		Consider the following assignments between collections of nullary cells that are of the form as shown, where $\map{f_n}{X_{np_n}}{A_n}$ denotes the common vertical boundary of $\chi_n$ and $\zeta_1$. The diagram commutes by one of the interchange axioms (\auglemref{1.3}).
		\begin{displaymath}
			\begin{tikzpicture}
				\matrix(m)[math35]{A_n & & B_m \\ & M & \\};
				\path[map]	(m-1-1) edge[barred] node[above] {$\ul H_1 \conc \dotsb \conc \ul H_m$} (m-1-3)
														edge[transform canvas={yshift=-2pt}] node[below left] {$l$} (m-2-2)
										(m-1-3) edge[transform canvas={yshift=-2pt}] node[below right] {$k$} (m-2-2);
				\path				(m-1-2) edge[cell] node[right] {$\phi'$} (m-2-2);
			
				\matrix(m)[math35, xshift=20em]{X_{np_n} & & Y_{mq_m} \\ & M & \\};
				\path[map]	(m-1-1) edge[barred] node[above, xshift=-3pt] {$\ul L_1 \conc \dotsb \conc \ul L_m$} (m-1-3)
														edge[transform canvas={yshift=-2pt}] node[below left] {$l \of f_n$} (m-2-2)
										(m-1-3) edge[transform canvas={yshift=-2pt}] node[below right] {$k \of g_m$} (m-2-2);
				\path				(m-1-2) edge[cell] node[right] {$\psi'$} (m-2-2);
				
				\draw[font=\Large]	(-4.8em,0.3em) node {$\lbrace$}
										(4.6em,0.3em) node {$\rbrace$}
										(14.8em,0.3em) node {$\lbrace$}
										(25em,0.3em) node {$\rbrace$};
				
				\draw				(0em,-6em) node {$\set\phi$}
										(20em,-6em) node {$\set\psi$};
				
				\path[map]	(7em,0.3em) edge node[above] {$\dash \of \ul\zeta$} (12.4em,0.3em)
										(3em,-6em) edge node[below] {$\dash \of (\ul\chi \conc \ul\zeta)$} (17em,-6em)
										(0em,-3em) edge node[left] {$\eta \hc \dash$} (0em,-4.5em)
										(20em,-3em) edge node[right] {$(\eta \of \ul\chi) \hc \dash$} (20em, -4.5em);
			\end{tikzpicture}
		\end{displaymath}
		The horizontally drawn assignments are bijective because the paths $\ul\zeta$ and $\ul\chi \conc \ul\zeta$ are assumed to be weakly nullary"/cocartesian, so that the proof follows.
	\end{proof}
	
	\begin{example} \label{left Kan extensions when all composites exist}
		Consider an augmented virtual double category $\K$ that admits all horizontal composites (\defref{pointwise right cocartesian path}). Applying the lemma to the cocartesian cells $\zeta = \cocart$ defining these composites we find that the universal property of a left Kan cell $\cell\eta{\ul J}M$ in $\K$, as given in \defref{left Kan extension}, reduces to the requirement that cells of the form $\cell\phi{\ul J \conc \ul H}M$ with $\ul H$ of length $\lns{\ul H} = 0$ or $1$ factor uniquely through $\eta$.
	
		If $\K$ has horizontal units too, i.e.\ it is induced by a pseudo double category (see \augpropref{7.8}), then the universal property need only be checked for cells of the form $\cell\phi{\ul J\conc \ul H}M$ with $\lns{\ul H} = 1$. It follows that the present notion of left Kan extension in $\K$ coincides with that of `pointwise left Kan extension' in $\K$ regarded as a pseudo double category, in the sense of Definition~3.10 of \cite{Koudenburg14a}.
		
		Finally consider a `proarrow equipment' $\map{(\dash)_*}\K\M$ in the sense of Wood \cite{Wood85} (where $\M$ need not be biclosed, as is required in \cite{Wood82}). As is shown in Proposition~C.3 of \cite{Shulman08}, if $\K$ is a strict $2$"/category then $(\dash)_*$ induces a pseudo double category $\D$ that has all companions and conjoints (a `framed bicategory'). The objects and vertical morphisms of $\D$ are those of $\K$, and the horizontal morphisms are those of $\M$. In such proarrow equipments Wood's notion of `indexed limit', given in Section~2 of \cite{Wood82}, coincides with the notion of `pointwise left Kan extension' in the corresponding pseudo double category $\D$, in the sense of \cite{Koudenburg14a} (see its Section~3.5), and hence coincides with our notion of left Kan extension.
	\end{example}
	
	\subsection{Consequences of the pasting lemmas}
	The remainder of this section consists of consequences of the pasting lemmas.
	\begin{corollary} \label{restriction as left Kan extension}
		Let $\map jBA$ be a vertical morphism. The nullary cartesian cell below, that defines the conjoint of $j$, is absolutely pointwise left Kan (\defref{absolutely left Kan}).
		\begin{displaymath}
			\begin{tikzpicture}
				\matrix(m)[math35, column sep={1.75em,between origins}]{A & & B \\ & A & \\};
				\path[map]	(m-1-1) edge[barred] node[above] {$j^*$} (m-1-3)
										(m-1-3) edge[transform canvas={xshift=2pt}] node[right] {$j$} (m-2-2);
				\path				(m-1-1) edge[eq, transform canvas={xshift=-2pt}] (m-2-2);
				\draw				([yshift=0.333em]$(m-1-2)!0.5!(m-2-2)$) node[font=\scriptsize] {$\cart$};
			\end{tikzpicture}
		\end{displaymath}
	\end{corollary}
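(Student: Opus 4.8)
The plan is to prove the absolute and pointwise parts together, by showing that for \emph{every} morphism $\map pAN$ the composite $p \of \cart$ of the cartesian cell $\cart$ with $p$ exhibits $p \of j$ as the pointwise left Kan extension of $p$ along the path $(j^*)$. Taking $p = \id_A$ gives the assertion of the corollary, and, since $p$ ranges over all morphisms out of $A$, this also shows that $\cart$ is preserved by every morphism, that is it is absolute (\defref{absolutely left Kan}).

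The input from the prequel is the cocartesian cell $\cocart$ associated to $\cart$ by \auglemref{5.4} (see also the proof of \cororef{non-horizontal cocartesian cells}): it is the $(0,1)$"/ary cell $\cell\cocart{()}{j^*}$ over the object $B$, with left leg $j$ and right leg $\id_B$, and it satisfies the triangle identity $\cart \of \cocart = \id_j$. Being cocartesian, $\cocart$ has the property that, for any object $N$, any $\map pAN$ and any path $\ul H$ starting at $B$, composition with $\cocart$ in the $j^*$"/slot is a bijection from nullary cells $\cell\phi{(j^*)\conc\ul H}N$ with left leg $p$ to nullary cells $\cell{\phi'}{\ul H}N$ with left leg $p \of j$.

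Next I would observe that the factorisation operation $\phi' \mapsto (p \of \cart) \hc \phi'$ occurring in \defref{left Kan extension} is a section of this bijection: by the interchange axioms of augmented virtual double categories, followed by the triangle identity, one computes
\begin{displaymath}
	\bigpars{(p \of \cart) \hc \phi'} \of (\cocart, \id_{\ul H}) = (p \of \cart \of \cocart) \hc \phi' = (p \of \id_j) \hc \phi' = \phi'.
\end{displaymath}
A section of a bijection is its two"/sided inverse, so $\phi' \mapsto (p \of \cart) \hc \phi'$ is itself a bijection; this is precisely the universal property of \defref{left Kan extension} (the weak variant being the instance $\ul H = (B)$, so in particular $p \of \cart$ is left Kan). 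For pointwiseness, let $\map gYB$ be such that the restriction $j^*(\id, g)$ exists. By transitivity of restrictions (\auglemref{4.15}) this restriction is $A(\id, j \of g)$, i.e.\ the conjoint of $j \of g$, and the composite of $p \of \cart$ with the cartesian cell defining $j^*(\id, g)$ is the cartesian cell defining the conjoint of $j \of g$, whiskered by $p$; by the previous paragraph, applied to $j \of g$ in place of $j$, this composite is again left Kan. Hence $p \of \cart$ restricts along every such $g$, so it is pointwise left Kan, which finishes the proof.

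I expect the only delicate point to be pinning down the exact shape of the cocartesian universal property used in the second paragraph — namely that it is composition \emph{into the $j^*$"/slot} (substitution), not horizontal pasting, that yields the stated bijection of nullary cells with prescribed left leg — together with the routine but careful tracking of the vertical legs in the displayed interchange computation; once the conventions of \cite{Koudenburg20} are fixed, the remainder is formal.
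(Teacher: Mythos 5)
Your proof is correct and follows essentially the same route as the paper: the paper applies the vertical pasting lemma (\lemref{vertical pasting lemma}) to the conjoint identity $g \of \cart \of \cocart = \id_{g \of j}$ and handles pointwiseness via $j^*(\id, f) \iso (j \of f)^*$ from \auglemref{4.15}, exactly the two ingredients you use. The only difference is presentational — you inline the relevant instance of the vertical pasting lemma as an explicit section-of-a-bijection argument instead of citing it.
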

	\begin{proof}
		To see that the cartesian cell above is absolutely left Kan we have to show that $g \of \cart$ is left Kan for any $\map gAN$. This follows from applying the vertical pasting lemma to the identity $g \of \cart \of \cocart = \id_{g \of j}$ which is the conjoint identity for $j^*$ (\lemref{companion identities lemma}) composed with $g$, and where $\id_{g \of j}$ is left Kan by \exref{vertical cells defining left Kan extensions}. That the cartesian cell is in fact absolutely pointwise left Kan follows from the fact that $j^*(\id, f) \iso (j \of f)^*$ for any $\map fCB$; see \lemref{pasting lemma for cartesian cells}.
	\end{proof}
	
	Combined with the horizontal pasting lemma the previous result implies the following.
	\begin{corollary} \label{Kan extension and conjoints}
		The composite on the left below is (pointwise) (weak) left Kan precisely if the cell $\zeta$ is so. The composite on the right is pointwise left Kan as soon as the cell $\eta$ is left Kan.
		\begin{displaymath}
			\begin{tikzpicture}[baseline]
				\matrix(m)[math35, column sep={1.75em,between origins}]
					{ C & & A & & A_1 & & A_{n'} & & A_n \\
						& C & & & & & & & \\};
				\draw				([yshift=-6.5em]$(m-1-1)!0.5!(m-1-9)$)	node (M) {$M$};
				\path[map]	(m-1-1) edge[barred] node[above] {$h^*$} (m-1-3)
										(m-1-3) edge[barred] node[above] {$J_1$} (m-1-5)
														edge[transform canvas={xshift=2pt}]	node[right] {$h$} (m-2-2)
										(m-1-7) edge[barred] node[above] {$J_n$} (m-1-9)
										(m-1-9) edge[transform canvas={yshift=-3pt}] node[below right] {$l$} (M)
										(m-2-2) edge[transform canvas={yshift=-2pt}] node[below left] {$d$} (M);
				\path				(m-1-1) edge[eq, transform canvas={xshift=-2pt}]	(m-2-2)
										($(m-1-1.south)!0.5!(m-1-9.south)$) edge[cell, transform canvas={yshift=-1.75em}] node[right] {$\zeta$} ($(m-2-1.north)!0.5!(m-2-9.north)$);
				\draw				($(m-1-5)!0.5!(m-1-7)$) node {$\dotsb$}
										([yshift=0.333em]$(m-1-2)!0.5!(m-2-2)$) node[font=\scriptsize] {$\cart$};
			\end{tikzpicture} \qquad\quad \begin{tikzpicture}[baseline]
				\matrix(m)[math35, column sep={1.75em,between origins}]
					{	A_0 & & A_1 & & A_{n'} & & A_n & & B \\
						&	A_0 & & A_1 & & A_{n'} & & A_n & \\};
				\draw				([yshift=-3.25em]$(m-2-2)!0.5!(m-2-8)$) node (M) {$M$};
				\path[map]	(m-1-1) edge[barred] node[above] {$J_1$} (m-1-3)
										(m-1-5) edge[barred] node[above] {$J_n$} (m-1-7)
										(m-1-7) edge[barred] node[above] {$f^*$} (m-1-9)
										(m-1-9) edge[transform canvas={xshift=2pt}] node[right] {$f$} (m-2-8)
										(m-2-2) edge[barred] node[above] {$J_1$} (m-2-4)
														edge[transform canvas={yshift=-3pt}] node[below left] {$d$} (M)
										(m-2-6) edge[barred] node[above] {$J_n$} (m-2-8)
										(m-2-8) edge[transform canvas={yshift=-3pt}] node[below right] {$l$} (M);
				\path				(m-1-1) edge[eq, transform canvas={xshift=-2pt}] (m-2-2)
										(m-1-3) edge[eq, transform canvas={xshift=-2pt}] (m-2-4)
										(m-1-5) edge[eq, transform canvas={xshift=-2pt}] (m-2-6)
										(m-1-7) edge[eq, transform canvas={xshift=-2pt}] (m-2-8)
										($(m-2-2.south)!0.5!(m-2-8.south)$) edge[cell] node[right] {$\eta$} (M);
				\draw				($(m-1-3)!0.5!(m-2-6)$) node {$\dotsb$}
										([yshift=0.333em]$(m-1-8)!0.5!(m-2-8)$) node[font=\scriptsize] {$\cart$};
			\end{tikzpicture}
		\end{displaymath}
	\end{corollary}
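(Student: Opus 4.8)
The plan is to deduce both assertions from the horizontal pasting lemma (\lemref{horizontal pasting lemma}), once the cartesian cells defining the conjoints that appear on the left-hand sides are recognised, via \cororef{restriction as left Kan extension}, as absolutely pointwise left Kan cells. I would first record a stability observation: if a nullary cell $\zeta$ is absolutely pointwise left Kan (\defref{absolutely left Kan}) and $\map gMN$ is any vertical morphism, then $g \of \zeta$ is absolutely pointwise left Kan too. Absoluteness is immediate, since for any $\map hNP$ we have $h \of (g \of \zeta) = (h \of g) \of \zeta$, which is left Kan; and pointwiseness follows because the restriction of $g \of \zeta$ along a morphism $f$ equals $g$ composed with the restriction of $\zeta$ along $f$, and the latter is again left Kan by absoluteness of $\zeta$. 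Applying this to the nullary cartesian cell $\cart$ defining the conjoint $h^*$ of $h$ — which is absolutely pointwise left Kan by \cororef{restriction as left Kan extension} — shows that $d \of \cart$ is absolutely pointwise left Kan, and likewise $l \of \cart$ is absolutely pointwise left Kan for the cartesian cell defining the conjoint $f^*$ of $f$.

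For the left-hand composite I would observe that it equals the horizontal composite $(d \of \cart) \hc \zeta$: the cell $\cart$ defining $h^*$ has left leg $\id$ and right leg $h$, so $d \of \cart$ is the nullary cell with horizontal source $(h^*)$, left leg $d$ and right leg $d \of h$, and this glues with $\zeta$ along $d \of h$ to yield the displayed cell with source $(h^*, J_1, \dotsc, J_n)$, left leg $d$ and right leg $l$. Since $d \of \cart$ is left Kan, \lemref{horizontal pasting lemma} applies with its ``$\eta$'' taken to be $d \of \cart$, and its conclusion is exactly that $(d \of \cart) \hc \zeta$ is (pointwise) (weakly) left Kan precisely when $\zeta$ is so.

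For the right-hand composite the same bookkeeping identifies it with $\eta \hc (l \of \cart)$, where now $\cart$ defines the conjoint $f^*$ of $\map fB{A_n}$, so that $l \of \cart$ has source $(f^*)$, left leg $l$ and right leg $l \of f$. Since $\eta$ is assumed left Kan and $l \of \cart$ is (absolutely pointwise, in particular) left Kan, \lemref{horizontal pasting lemma} gives that $\eta \hc (l \of \cart)$ is left Kan; and since $l \of \cart$ restricts along every $\map gCB$ for which $f^*(\id, g)$ exists — this being the pointwiseness of $l \of \cart$, using $f^*(\id, g) \iso (f \of g)^*$ as in \auglemref{4.15} — the same lemma upgrades this to the composite being pointwise left Kan.

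The main obstacle, such as it is, lies entirely in the bookkeeping of the second and third paragraphs: verifying that post-composing the conjoint-defining cartesian cells with $d$, respectively $l$, and then pasting horizontally, reproduces precisely the two displayed composites together with their stated vertical legs ($d$ and $l$ on the left, $d$ and $l \of f$ on the right). Once that identification is in place the statements follow immediately by feeding \cororef{restriction as left Kan extension} (plus the stability observation) into \lemref{horizontal pasting lemma}.
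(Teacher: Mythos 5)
Your proof is correct and follows exactly the route the paper intends: the paper gives no separate proof, only the remark that the corollary is ``the previous result combined with the horizontal pasting lemma'', i.e.\ precisely your combination of \cororef{restriction as left Kan extension} with \lemref{horizontal pasting lemma} after identifying the two displays as $(d \of \cart) \hc \zeta$ and $\eta \hc (l \of \cart)$. (Your preliminary ``stability observation'' is harmless but unnecessary, since the definition of absolutely pointwise left Kan already says directly that $d \of \cart$ and $l \of \cart$ are pointwise left Kan.)
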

	
	The following result was used in the proof of \cororef{left Kan extensions with unital sources}.
	\begin{proposition} \label{left Kan extensions preserved by restriction}
		The composite below is left Kan as soon as the cell $\eta$ is so and the conjoint $\hmap{f^*}{A_n}B$ exists.
		\begin{displaymath}
			\begin{tikzpicture}[baseline]
				\matrix(m)[math35]{A_0 & A_1 & A_{n'} & B \\ A_0 & A_1 & A_{n'} & A_n \\};
				\draw	([yshift=-6.5em]$(m-1-1)!0.5!(m-1-4)$) node (M) {$M$};
				\path[map]	(m-1-1) edge[barred] node[above] {$J_1$} (m-1-2)
														
										(m-1-3) edge[barred] node[above, xshift=-3pt] {$J_n(\id, f)$} (m-1-4)
										(m-1-4) edge node[right] {$f$} (m-2-4)
										(m-2-4) edge[transform canvas={yshift=-2pt}] node[below right] {$l$} (M)
										(m-2-1) edge[barred] node[below, inner sep=2.5pt] {$J_1$} (m-2-2)
														edge[transform canvas={yshift=-2pt}] node[below left] {$d$} (M)
										(m-2-3) edge[barred] node[below, inner sep=2.5pt] {$J_n$} (m-2-4);
				\path				(m-1-1) edge[eq] (m-2-1)
										(m-1-2) edge[eq] (m-2-2)
										(m-1-3) edge[eq, transform canvas={xshift=-2pt}] (m-2-3);
				\path				($(m-2-2.south)!0.5!(m-2-3.south)$) edge[cell] node[right] {$\eta$} (M);
				\draw				($(m-1-2)!0.5!(m-2-3)$) node {$\dotsb$}
										($(m-1-3)!0.5!(m-2-4)$) node[font=\scriptsize] {$\cart$};
			\end{tikzpicture}
		\end{displaymath}
	\end{proposition}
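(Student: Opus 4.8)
The plan is to reduce the claim, via the vertical pasting lemma (\lemref{vertical pasting lemma}), to the right-hand composite of \cororef{Kan extension and conjoints}, which we already know to be left Kan whenever $\eta$ is. Throughout write $\Xi$ for the composite appearing in the statement, so that $\Xi = \eta \of (\id_{J_1}, \dotsc, \id_{J_{n'}}, c)$, where $\cell c{J_n(\id, f)}{J_n}$ is the cartesian cell defining the restriction (with vertical legs $\id_{A_{n'}}$ and $f$).

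First I would record how the restriction $J_n(\id, f)$ relates to the conjoint $f^*$. Since $\hmap{f^*}{A_n}B$ exists, the restriction $J_n(\id, f)$ is the horizontal composite $J_n \hc f^*$; concretely there is a cocartesian cell $\cell\kappa{(J_n, f^*)}{J_n(\id, f)}$ with identity vertical boundaries such that $c \of \kappa = \id_{J_n} \hc \mathrm{cart}_{f^*}$, where $\cell{\mathrm{cart}_{f^*}}{(f^*)}{A_n}$ is the nullary cartesian cell defining the conjoint $f^*$. This is a routine manipulation of the universal properties of cartesian and cocartesian cells, using \auglemref{4.15} together with the treatment of conjoints in \augsecref 5. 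It follows that
\[
  \Xi \of (\id_{J_1}, \dotsc, \id_{J_{n'}}, \kappa) = \eta \of (\id_{J_1}, \dotsc, \id_{J_{n'}}, c \of \kappa) = \eta \of (\id_{J_1}, \dotsc, \id_{J_{n'}}, \id_{J_n} \hc \mathrm{cart}_{f^*}),
\]
and the right-hand side is precisely the composite appearing on the right in \cororef{Kan extension and conjoints}. Since $\eta$ is left Kan, that corollary shows $\Xi \of (\id_{J_1}, \dotsc, \id_{J_{n'}}, \kappa)$ is (pointwise) left Kan.

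It remains to transfer this back to $\Xi$. The path $(\id_{J_1}, \dotsc, \id_{J_{n'}}, \kappa)$ consists of cocartesian cells, hence is a cocartesian path and in particular right nullary-cocartesian (\exref{cocartesian paths are right cocartesian}). The vertical pasting lemma (\lemref{vertical pasting lemma}) therefore gives that $\Xi$ is left Kan if and only if $\Xi \of (\id_{J_1}, \dotsc, \id_{J_{n'}}, \kappa)$ is, and the latter was just established. I expect the only delicate point to be the first step — pinning down the cocartesian cell $\kappa$ and the identity $c \of \kappa = \id_{J_n} \hc \mathrm{cart}_{f^*}$ from the universal properties of the restriction and the conjoint; the remaining steps are purely formal.
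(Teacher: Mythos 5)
Your proposal is correct and follows essentially the same route as the paper's proof: the cocartesian cell $\kappa$ together with the identity $c \of \kappa = \id_{J_n} \hc \mathrm{cart}_{f^*}$, which you flag as the delicate point, is exactly what \auglemref{8.1} supplies, and the paper likewise reduces to the right-hand composite of \cororef{Kan extension and conjoints} and concludes by applying the vertical pasting lemma (\lemref{vertical pasting lemma}) to the path $(\id_{J_1}, \dotsc, \id_{J_{n'}}, \kappa)$.
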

	\begin{proof}
		\auglemref{8.1} supplies a cocartesian horizontal cell $\cell\phi{(J_n, f^*)}{J_n(\id, f)}$ which, when composed with the composite above, gives the composite on the right of \cororef{Kan extension and conjoints}. The latter is left Kan because $\eta$ is so and the proof follows from applying the vertical pasting lemma to the path $(\id_{J_1}, \dotsc, \id_{J_{n'}}, \phi)$.
	\end{proof}
	
	The following propositions generalise well known results for Kan extensions internal in a $2$-category and enriched Kan extensions. The first of these generalises Proposition~22 of \cite{Street74b} (for internal Kan extensions) and Proposition~4.23 of \cite{Kelly82} (for enriched Kan extensions); see also Proposition~14 of \cite{Wood82} for the analogous result in proarrow equipments. Remember that a vertical morphism $\map fAC$ is called full and faithful (\defref{full and faithful morphism}) whenever its identity cell $\id_f$ is cartesian.
	\begin{proposition} \label{pointwise left Kan extension along full and faithful map}
		Consider a (weakly) left Kan cell $\eta$ as in the composite below and assume it restricts along the morphism $\map f{A_n}B$ (\defref{pointwise left Kan extension}). If $f$ is full and faithful and the restriction $B(f, f)$ exists then the composite is again (weakly) left Kan; in particular if $n = 0$ then the composite, in that case a vertical cell, is invertible.
		\begin{displaymath}
			\begin{tikzpicture}
				\matrix(m)[math35, column sep={1.75em,between origins}]
					{	& A_0 & & A_1 & & A_{n'} & & A_n & \\
						A_0 & & A_1 & & A_{n'} & & A_n & & B \\};
				\draw				([yshift=-3.25em]$(m-2-1)!0.5!(m-2-9)$) node (M) {$M$};
				\path[map]	(m-1-2) edge[barred] node[above] {$J_1$} (m-1-4)
										(m-1-6) edge[barred] node[above] {$J_n$} (m-1-8)
										(m-1-8) edge[transform canvas={xshift=2pt}]	node[right] {$f$} (m-2-9)
										(m-2-1) edge[barred] node[above] {$J_1$} (m-2-3)
														edge[transform canvas={yshift=-2pt}] node[below left] {$d$} (M)
										(m-2-5) edge[barred] node[above] {$J_n$} (m-2-7)
										(m-2-7) edge[barred] node[below, inner sep=2pt] {$f_*$} (m-2-9)
										(m-2-9) edge[transform canvas={yshift=-2pt}] node[below right] {$l$} (M);
				\draw				([xshift=-2pt]$(m-1-4)!0.5!(m-2-5)$) node {$\dotsb$}
										([yshift=-0.5em]$(m-1-8)!0.5!(m-2-8)$) node[font=\scriptsize] {$\cocart$};
				\path				($(m-2-1.south)!0.5!(m-2-9.south)$) edge[cell] node[right] {$\eta$} (M)
										(m-1-2) edge[eq, transform canvas={xshift=-2pt}] (m-2-1)
										(m-1-4) edge[eq, transform canvas={xshift=-2pt}] (m-2-3)
										(m-1-6) edge[eq, transform canvas={xshift=-2pt}] (m-2-5)
										(m-1-8)	edge[eq, transform canvas={xshift=-2pt}] (m-2-7);
			\end{tikzpicture}
		\end{displaymath}
	\end{proposition}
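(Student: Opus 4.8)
The plan is to use the hypothesis that $\eta$ restricts along $f$ in order to trade the companion $f_*$ at the end of the source path for the restriction $B(f, f) \iso f_*(\id, f)$, and then to exploit that full"/faithfulness of $f$ turns this restriction into a horizontal unit for $A_n$; at that point the argument becomes essentially the one behind \exref{Kan extensions along horizontal units}, carried out via the vertical pasting lemma (\lemref{vertical pasting lemma}). It is worth stressing that the cell labelled $\cocart$ in the displayed composite is the cocartesian cell defining the \emph{companion} $f_*$, and is \emph{not} a cocartesian cell defining a horizontal composite, so the vertical pasting lemma cannot be applied to $\eta$ and $(\id_{J_1}, \dotsc, \id_{J_n}, \cocart)$ directly; full"/faithfulness is precisely what lets $\cocart$ be re"/expressed through the horizontal unit cell of $A_n$.

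In detail, write $\cart$ for the cartesian cell defining the restriction $B(f, f) \iso f_*(\id, f)$, with identity left leg and right leg $f$ (this cell exists by assumption). First I would invoke the hypothesis: since $\eta$ restricts along $f$, \defref{pointwise left Kan extension} makes $\tilde\eta \dfn \eta \of (\id_{J_1}, \dotsc, \id_{J_n}, \cart)$ again (weakly) left Kan, exhibiting $l \of f$ as the (weak) left Kan extension of $d$ along $\ul J \conc \bigpars{f_*(\id, f)}$. Next I would use that, $f$ being full and faithful and $B(f, f)$ existing, the restriction $f_*(\id, f) \iso B(f, f)$ is a horizontal unit $I_{A_n}$ of $A_n$ (which follows from \augdefref{4.12}); so $A_n$ is unital and $\tilde\eta$ is a (weakly) left Kan cell along $\ul J \conc (I_{A_n})$. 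Then, writing $\cell u{(A_n)}{I_{A_n}}$ for the cocartesian cell exhibiting $I_{A_n}$ as the horizontal composite of the empty path (\auglemref{5.9}), I would note that $(\id_{J_1}, \dotsc, \id_{J_n}, u)$ is right nullary"/cocartesian --- each of its cells is right cocartesian by \exref{cocartesian paths are right cocartesian}, and prepending identity cells preserves this --- so the vertical pasting lemma applied to $\tilde\eta$ makes the composite $\tilde\eta \of (\id_{J_1}, \dotsc, \id_{J_n}, u) = \eta \of (\id_{J_1}, \dotsc, \id_{J_n}, \cart \of u)$ again (weakly) left Kan. A routine comparison of the binding cells of the companion $f_*$ then identifies the $(0,1)$"/ary cell $\cart \of u$ with $\cocart$, so the displayed composite is (weakly) left Kan. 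Finally, for $n = 0$ the displayed composite is a vertical cell $d \Rar l \of f$, which has just been shown to be weakly left Kan and hence is invertible by \exref{vertical cells defining left Kan extensions}.

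The only ingredient that is not purely formal is the prequel fact that a full and faithful morphism with $B(f, f)$ existing makes its domain unital, with $B(f, f)$ serving as the horizontal unit; everything else is an instance of the pasting lemmas and the basic properties of companions and units. The most delicate point of the formal part is the identification of $\cart \of u$ with the companion cocartesian cell $\cocart$; this can be avoided by concluding directly from the vertical pasting lemma that $d$ admits a (weak) left Kan extension along $\ul J$ equal to $l \of f$ and then appealing to the uniqueness of (weak) left Kan extensions and of their exhibiting cells up to invertible vertical cell.
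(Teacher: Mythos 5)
Your primary route is essentially the paper's own proof: use the hypothesis that $\eta$ restricts along $f$ to compose with the cartesian cell exhibiting $B(f,f)$ as the restriction $f_*(\id, f)$, use full and faithfulness to recognise $B(f,f)$ as the horizontal unit of $A_n$ with cocartesian unit cell, and conclude with the vertical pasting lemma (\lemref{vertical pasting lemma}); this is exactly the paper's factorisation $\cocart = \chi' \of \id_f'$ followed by the same two pasting steps. One citation slip: the fact that full and faithfulness of $f$ (with $B(f,f)$ existing) makes $B(f,f)$ the horizontal unit of $A_n$ is \auglemref{5.14} (stated in the remark right after the proposition); \augdefref{4.12} is only the definition of full and faithfulness.

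Two substantive remarks on your ``delicate point''. The identification of $\cart \of u$ with $\cocart$ is indeed routine, but it should be organised as the paper does: take $u$ to be the factorisation $\id_f'$ of $\id_f$ through the cartesian cell $\chi$ defining $B(f,f)$ --- the very cell that \auglemref{5.14} shows to be cartesian and hence cocartesian (\auglemsref{5.9}{7.6}) --- and then factor the identity $\id_f = \chi \of \id_f'$ through the cartesian cell $f_* \Rar B$ corresponding to $\cocart$: the companion identity of \auglemref{5.4} and uniqueness of factorisations through cartesian cells give $\cart \of \id_f' = \cocart$ on the nose. If instead $u$ is the unit cell of a separately chosen $I_{A_n}$, you must additionally check that the comparison isomorphism $I_{A_n} \iso B(f,f)$ is compatible, which is avoidable by working with $\id_f'$ directly. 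By contrast, your proposed fallback does not work as stated: uniqueness of (weak) left Kan extensions compares two cells that are \emph{both already known} to be left Kan, so knowing that some cell exhibits $l \of f$ as the (weak) left Kan extension of $d$ along $\ul J$ does not by itself imply that the particular composite $\eta \of (\id_{\ul J} \conc \cocart)$ appearing in the statement is left Kan. Hence the comparison of $\cart \of u$ with $\cocart$ cannot be bypassed; with it carried out as above, your argument coincides with the paper's.
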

	If $f$ is full and faithful then $B(f, f)$ is the horizontal unit of $A_n$ by \auglemref{5.14} so that, in an augmented virtual double category with restrictions on the right, the composite above is in fact pointwise (weakly) left Kan by \cororef{left Kan extensions with unital sources}.
	\begin{proof}
		Assume that $f$ is full and faithful. Writing $\cell\chi{B(f, f)}B$ for the cartesian cell defining $B(f, f)$ consider the unique factorisation $\id_f = \chi \of \id_f'$, where the horizontal cell $\cell{\id_f'}{A_n}{B(f, f)}$ is cartesian and cocartesian by \lemref{companion identities lemma}. Factorising both sides of the latter identity through the cartesian cell $f_* \Rar B$ corresponding to the cocartesian cell above, in the sense of \lemref{companion identities lemma}, we obtain $\cocart = \chi' \of \id_f'$ where $\cell{\chi'}{B(f, f)}{f_*}$ is cartesian by the pasting lemma (\lemref{pasting lemma for cartesian cells}) and $\id_f'$ is cocartesian. The main assertion now follows from the assumption that $\eta$ restricts along $f$, so that $\eta \of (\id_{\ul J} \conc \chi')$ is (weakly) left Kan, and the application of the vertical pasting lemma (\lemref{vertical pasting lemma}) to the path $\id_{\ul J} \conc \id_f'$ in $\eta \of (\id_{\ul J} \conc \chi') \of (\id_{\ul J} \conc \id_f') = \eta \of (\id_{\ul J} \conc \cocart)$. That the composite is invertible in the case of $n = 0$ follows immediately from \exref{vertical cells defining left Kan extensions}.
	\end{proof}
	
	When applied to the unital virtual double categories $Q(\mathcal C)$, of quintets in a $2$"/category $\mathcal C$ (see \exref{Kan extensions in quintets}), or $\enProf\V$, of $\V$"/profunctors (see \exref{enriched left Kan extension}), the following result reduces to the classical description of right adjoints as left Kan extensions, see e.g.\ Example~2.17 of \cite{Weber07} and Theorem~4.81 of \cite{Kelly82} respectively.
	\begin{proposition} \label{adjunctions in terms of left Kan extensions}
		In an augmented virtual double category $\K$ consider the factorisation below.
		\begin{displaymath}
			\begin{tikzpicture}[textbaseline]
				\matrix(m)[math35, column sep={1.75em,between origins}]{& A & \\ & & C \\ & A & \\};
				\path[map]	(m-1-2) edge[bend left = 18] node[right] {$f$} (m-2-3)
										(m-2-3) edge[bend left = 18] node[right] {$g$} (m-3-2);
				\path				(m-1-2) edge[bend right = 45, eq] (m-3-2);
				\path[transform canvas={yshift=-1.625em}]	(m-1-2) edge[cell] node[right] {$\iota$} (m-2-2);
			\end{tikzpicture} = \begin{tikzpicture}[textbaseline]
    		\matrix(m)[math35, column sep={1.75em,between origins}]{& A & \\ A & & C \\ & A & \\};
    		\path[map]	(m-1-2) edge[transform canvas={xshift=2pt}] node[right] {$f$} (m-2-3)
    								(m-2-1) edge[barred] node[below, inner sep=2pt] {$f_*$} (m-2-3)
    								(m-2-3)	edge[transform canvas={xshift=2pt}] node[right] {$g$} (m-3-2);
    		\path				(m-1-2) edge[eq, transform canvas={xshift=-2pt}] (m-2-1)
    								(m-2-1) edge[eq, transform canvas={xshift=-2pt}] (m-3-2);
    		\path				(m-2-2) edge[cell, transform canvas={yshift=0.1em}]	node[right, inner sep=4pt] {$\iota'$} (m-3-2);
    		\draw				([yshift=-0.5em]$(m-1-2)!0.5!(m-2-2)$) node[font=\scriptsize] {$\cocart$};
  		\end{tikzpicture}
		\end{displaymath}
		The following are equivalent:
		\begin{enumerate}[label=\textup{(\alph*)}]
			\item $\iota$ is the unit of an adjunction $f \ladj g$ in the $2$"/category $V(\K)$ (\augexref{1.5});
			\item $\iota'$ is cartesian in $\K$ (defining $f_*$ as the conjoint of $g$);
			\item $\iota'$ is weakly left Kan in $\K$ and is preserved by $f$ (\defref{absolutely left Kan});
			\item $\iota'$ is absolutely pointwise left Kan in $\K$ (\defref{absolutely left Kan}).
		\end{enumerate}
		Under these conditions $f$ is full and faithful (\defref{full and faithful morphism}) if $\iota$ is invertible; the converse holds whenever the restriction $C(f, f)$ exists.
	\end{proposition}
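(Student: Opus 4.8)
The plan is to establish the cycle $(\mathrm b)\Rightarrow(\mathrm d)\Rightarrow(\mathrm c)\Rightarrow(\mathrm a)\Rightarrow(\mathrm b)$ and then to treat the full-and-faithfulness clause separately; throughout write $\cocart$ for the cocartesian cell in the displayed factorisation, so that $\iota=\iota'\of\cocart$ and $\cocart$ defines the companion $f_*$. The implication $(\mathrm b)\Rightarrow(\mathrm d)$ is immediate from \cororef{restriction as left Kan extension}: if $\iota'$ is cartesian it is precisely the nullary cartesian cell exhibiting $f_*$ as the conjoint of $g\colon C\to A$, so that corollary, applied with $g$ in the role of its vertical morphism, shows $\iota'$ is absolutely pointwise left Kan. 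Then $(\mathrm d)\Rightarrow(\mathrm c)$ is formal, a pointwise left Kan cell being weakly left Kan and ``absolutely'' including preservation by $f$.

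For $(\mathrm c)\Rightarrow(\mathrm a)$ I would use \propref{weak left Kan extensions along companions} twice. Applied with $j=f$, $d=\id_A$, $l=g$ and $M=A$ it says that $\iota'$ is weakly left Kan in $\K$ if and only if $\iota$ exhibits $g$ as the left Kan extension of $\id_A$ along $f$ in $V(\K)$. Post-composing the factorisation with $f$ gives $f\of\iota=(f\of\iota')\of\cocart$, so the same proposition applied to the vertical cell $f\of\iota$ — with $d=f$, $j=f$, $l=f\of g$, $M=C$ and the same cocartesian cell $\cocart$ defining $f_*$ — identifies ``$\iota'$ is preserved by $f$'', i.e.\ $f\of\iota'$ weakly left Kan, with ``$f\of\iota$ exhibits $f\of g$ as the left Kan extension of $f$ along $f$'', i.e.\ with $f$ preserving the left Kan extension $g$. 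Hence $(\mathrm c)$ says exactly that $\iota$ exhibits $g$ as the left Kan extension of $\id_A$ along $f$ and that $f$ preserves it, which by the standard fact that such data is precisely the unit of an adjunction $f\ladj g$ — one triangle identity coming from the universal property of the left Kan extension of $f$ along $f$, the other from uniqueness of factorisations through $\iota$ — gives $(\mathrm a)$.

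Finally $(\mathrm a)\Rightarrow(\mathrm b)$ is the characterisation of adjunctions in terms of companions and conjoints: $f\ladj g$ with unit $\iota$ exactly when $\iota'$ exhibits $f_*$ as the conjoint of $g$ (\auglemref{5.16}); equivalently one invokes $f\ladj g\iff f_*\iso g^*$, valid since both sides exist here. For the full-and-faithfulness clause — the formal analogue of ``a left adjoint is fully faithful iff its unit is invertible'' — assume the equivalent conditions, so $\iota'$ is cartesian. If $\iota$ is invertible then it is cartesian, being an invertible vertical cell, and cancelling $\iota'$ from $\iota=\iota'\of\cocart$ by the pasting lemma for cartesian cells (\auglemref{4.15}) shows $\cocart$ is cartesian, which by the cartesian-cell description of full-and-faithfulness (\augdefref{4.12}, \auglemref{5.14}) means $f$ is full and faithful. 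Conversely, if $C(f,f)$ exists and $f$ is full and faithful then \auglemref{5.14} identifies $C(f,f)$ with the horizontal unit $I_A$, and in this now-unital situation one runs the argument in reverse to get that $\cocart$, hence $\iota=\iota'\of\cocart$, is cartesian, hence invertible.

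Most of this is assembly of results already available. The steps needing genuine care are: verifying that $f\of\iota'$ really is the cocartesian factorisation of $f\of\iota$, so the second use of \propref{weak left Kan extensions along companions} genuinely captures ``$\iota'$ preserved by $f$''; reproving the Kan-extension description of adjunctions in the precise weak, nullary form required; and, most delicately, matching \augdefref{4.12}'s notion of full-and-faithfulness with cartesianness of $\cocart$ — a match that is clean only once $C(f,f)$ is known to exist and equal $I_A$, which is exactly why the converse in the final clause carries that hypothesis.
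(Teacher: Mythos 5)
Your cycle $(\mathrm b)\Rightarrow(\mathrm d)\Rightarrow(\mathrm c)\Rightarrow(\mathrm a)\Rightarrow(\mathrm b)$ consists of exactly the four implications the paper proves, from the same ingredients: \auglemref{5.16} for (a)~$\Rightarrow$~(b), \cororef{restriction as left Kan extension} for (b)~$\Rightarrow$~(d), and \propref{weak left Kan extensions along companions} for (c)~$\Rightarrow$~(a). The only difference in the main equivalence is that where the paper cites Example~2.17 of Weber for the $2$-categorical fact that a left Kan extension of an identity preserved by $f$ is an adjunction unit, you sketch that argument directly; your sketch is correct, including the point you rightly flag, that $f\of\iota'$ is the (unique) factorisation of $f\of\iota$ through the cocartesian cell, so that the second application of \propref{weak left Kan extensions along companions} really does translate ``$\iota'$ is preserved by $f$''.

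The final clause is where you genuinely diverge. The paper obtains ``$\iota$ invertible $\Rightarrow f$ full and faithful'' by composing with the counit to exhibit $\id_f$ as cartesian (\auglemref{4.17}), and the converse from \propref{pointwise left Kan extension along full and faithful map}; you instead shuttle cartesianness between $\iota$, $\iota'$ and the cocartesian cell via the pasting lemma and the companion identity $\id_f = \cart\of\cocart$, which is the argument the paper itself uses in the proposition following \lemref{bijection between cells induced by Yoneda morphisms}, and is fine for the forward direction. Two caveats on your converse. First, the appeal to \auglemref{5.14} and ``unitality'' does no work: what your argument actually uses is only that full and faithfulness of $f$ makes $\cocart$ cartesian (pasting lemma applied to $\id_f=\cart\of\cocart$), hence $\iota=\iota'\of\cocart$ cartesian. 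Second, the closing step ``cartesian, hence invertible'' for the vertical cell $\cell\iota{\id_A}{g\of f}$ is asserted without justification; \augexref{4.4} is only invoked in the paper for the implication ``invertible $\Rightarrow$ cartesian''. The converse does hold for a unit-shaped vertical cell (extract a two-sided inverse from the factorisation bijections at the boundary pairs $(g\of f,\id_A)$ and $(\id_A,\id_A)$), but you should either prove it or, as the paper does, conclude invertibility from $\iota$ being weakly left Kan (\exref{vertical cells defining left Kan extensions}) via \propref{pointwise left Kan extension along full and faithful map} --- note that the latter is where the hypothesis that $C(f,f)$ exists actually enters, whereas your route appears not to use it, so you should at least remark on that discrepancy.
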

	\begin{proof}
		We have (a) $\Rightarrow$ (b) by \auglemref{5.16} and (b) $\Rightarrow$ (d) by \cororef{restriction as left Kan extension} while clearly (d) $\Rightarrow$ (c). That (c) $\Rightarrow$ (a) follows from combining \propref{weak left Kan extensions along companions} with Example~2.17 of \cite{Weber07}. For the final assertion notice that the invertible unit cell $\iota$ is cartesian (\augexref{4.4}) so that, composing with the counit $\cell\eps{f \of g}{\id_C}$, the identity cell $\id_f = (f \of \iota) \hc (\eps \of f)$ is cartesian too by \auglemref{4.17}. The converse follows from \propref{pointwise left Kan extension along full and faithful map}.
	\end{proof}
	
	\begin{example} \label{left Kan extension along the companion of a left adjoint}
		Applying the first assertion of \cororef{Kan extension and conjoints} to the cell $\iota'$ of the previous proposition we obtain the following. Consider an adjoint pair \mbox{$\map{f \ladj g}{A_0}C$}, a morphism $\map dCM$ and a path $\hmap{\ul J}{A_0}{A_n}$. If the companion $\hmap{f_*}C{A_0}$ exists then the (pointwise) (weak) left Kan extension of $d \of g$ along $\ul J$ exists if and only if that of $d$ along $f_* \conc \ul J$ does, and in that case they are isomorphic.
	\end{example}
	
	\section{Pointwise Kan extension in terms of pointwise weak Kan extension} \label{pointwise Kan extensions section}
	In \thmref{pointwise Kan extensions in terms of pointwise weak Kan extensions} below we prove that the notions of pointwise weak left Kan extension and pointwise left Kan extension (\defref{pointwise left Kan extension}) coincide in augmented virtual double categories $\K$ that have restrictions on the right (\defref{augmented virtual equipment}) as well as `left nullary"/cocartesian paths of $(0,1)$"/ary cells'; the latter in the sense of the definition below, which is a strengthening of \augdefref{7.10}. Together with \remref{pointwise left Kan extension in the presence of horizontal units and restrictions on the right} and \exref{left Kan extensions when all composites exist} this result recovers Theorem~5.11 of \cite{Koudenburg14a}. In most of our examples left nullary"/cocartesian paths of $(0, 1)$"/ary cells can be obtained by ``concatenating'' cocartesian universal cells that define `tabulations', in the sense of \defref{tabulation} below; this is explained in \cororef{cocartesian tabulations imply cocartesian paths of (0,1)-ary cells}. \propref{pointwise left Kan extensions along companions} shows that if $\K$ has such `cocartesian tabulations' then pointwise left Kan extension along a companion $j_*$ in $\K$ coincides with pointwise left Kan extension along $j$ in the vertical $2$"/category $V(\K)$ (\augexref{1.5}); the latter in the classical sense of \cite{Street74b}.
	
	\subsection{Cocartesian paths of $(0,1)$"/ary cells}
	\begin{definition} \label{cocartesian path of (0,1)-ary cells}
		Let $\hmap{\ul J = (J_1, \dotsc, J_n)}{A_0}{A_n}$ be a path of horizontal morphisms. A \emph{(left, right or weakly) (nullary"/)cocartesian path of $(0,1)$"/ary cells} for $\ul J$ consists of an object $X$ together with a (left, right or weakly) (nullary"/)cocartesian path (\defref{cocartesian path}) $\ul\phi = (\phi_1, \dotsc, \phi_n)$ of $(0,1)$"/ary cells $\phi_i$ as on the left below. An augmented virtual double category $\K$ is said to \emph{have (left, right or weakly) (nullary"/)cocartesian paths of $(0,1)$"/ary cells} if every path $\ul J \in \K$ admits a (left, right or weakly) (nullary"/)cocartesian path $\ul\phi$ of $(0,1)$"/ary cells.
		\begin{displaymath}
			\begin{tikzpicture}[baseline]
  			\matrix(m)[math35, column sep={1.75em,between origins}]{& X & \\ A_{i'} & & A_i \\};
				\path[map]	(m-1-2) edge[transform canvas={xshift=-2pt}] node[left] {$f_{i'}$} (m-2-1)
														edge[transform canvas={xshift=2pt}] node[right] {$f_i$} (m-2-3)
										(m-2-1) edge[barred] node[below] {$J_i$} (m-2-3);
				\path				(m-1-2) edge[cell, transform canvas={yshift=-0.25em}] node[right, inner sep=2.5pt] {$\phi_i$} (m-2-2);
			\end{tikzpicture} \qquad \qquad \qquad \qquad \qquad \qquad \begin{tikzpicture}[baseline]
				\matrix(m)[math35, column sep={1.75em,between origins}]{A & & B \\ & C & \\};
				\path[map]	(m-1-1) edge[barred] node[above] {$J$} (m-1-3)
														edge[transform canvas={xshift=-2pt}] node[left] {$f$} (m-2-2)
										(m-1-3) edge[transform canvas={xshift=2pt}] node[right] {$g$} (m-2-2);
				\path[transform canvas={yshift=0.25em}]	(m-1-2) edge[cell] node[right] {$\psi$} (m-2-2);
			\end{tikzpicture}
		\end{displaymath}
		
		Vertically dual, a \emph{cartesian nullary cell} for a horizontal morphism $\hmap JAB$ is a cartesian cell $\psi$ as on the right above. An augmented virtual double category $\K$ is said to \emph{have cartesian nullary cells} if every horizontal morphism $J \in \K$ admits a cartesian nullary cell.
	\end{definition}
	
	Notice that the cartesian nullary cell $\psi$ on the right above defines $J$ as the nullary restriction $C(f, g)$ (\defref{cartesian cells}). Conversely nullary restrictions, including companions, conjoints and horizontal units, admit cartesian nullary cells, by definition.
	
	\begin{example}
		Given a `(weak) Yoneda morphism' $\map\yon A{\ps A}$, in the sense of \defref{yoneda embedding} below, any horizontal morphism $\hmap JAB$ admits a cartesian nullary cell $J \Rar \ps A$; this is a direct consequence of the `Yoneda axiom' that is satisfied by $\yon$.
	\end{example}
	
	\subsection{Cocartesian tabulations}
	Many augmented virtual double categories $\K$ admit a universal $(0,1)$-ary cell among all $(0,1)$"/ary cells $\cell\phi XJ$ into any fixed horizontal morphism $J$, in the sense of the following definition, which is a direct translation of the double categorical notion of tabulation that was introduced by Grandis and Par\'e in \cite{Grandis-Pare99}. \exrref{tabulations of profunctors}{tabulations of discrete two-sided fibrations} below give examples of (co)tabulations. Often this universal cell is cocartesian so that, in that case, $\K$ admits all cocartesian paths of $(0, 1)$"/ary cells that are of length $1$. In \cororef{cocartesian tabulations imply cocartesian paths of (0,1)-ary cells} below we will see that the latter can be ``concatenated'' to form cocartesian paths of $(0,1)$"/ary cells of any length, provided that $\K$ is an augmented virtual equipment (\defref{augmented virtual equipment}).
	
	\begin{definition} \label{tabulation}
		The \emph{tabulation} $\tab J$ of a horizontal morphism $\hmap JAB$ consists of an object $\tab J$ equipped with a $(0,1)$"/ary cell $\pi$ as on the left below, satisfying the following $1$-dimensional and $2$"/dimensional universal properties.
		\begin{displaymath}
			\begin{tikzpicture}[baseline]
  			\matrix(m)[math35, column sep={1.75em,between origins}]{& \tab J & \\ A & & B \\};
				\path[map]	(m-1-2) edge[transform canvas={xshift=-2pt}] node[left] {$\pi_A$} (m-2-1)
														edge[transform canvas={xshift=2pt}] node[right] {$\pi_B$} (m-2-3)
										(m-2-1) edge[barred] node[below] {$J$} (m-2-3);
				\path				(m-1-2) edge[cell, transform canvas={yshift=-0.25em}] node[right, inner sep=2.5pt] {$\pi$} (m-2-2);
			\end{tikzpicture} \qquad\qquad\qquad\qquad\qquad\qquad \begin{tikzpicture}[baseline]
  			\matrix(m)[math35, column sep={1.75em,between origins}]{& X_0 & \\ A & & B \\};
				\path[map]	(m-1-2) edge[transform canvas={xshift=-2pt}] node[left] {$\phi_A$} (m-2-1)
														edge[transform canvas={xshift=2pt}] node[right] {$\phi_B$} (m-2-3)
										(m-2-1) edge[barred] node[below] {$J$} (m-2-3);
				\path				(m-1-2) edge[cell, transform canvas={yshift=-0.25em}] node[right, inner sep=2.5pt] {$\phi$} (m-2-2);
			\end{tikzpicture}
		\end{displaymath}
		Given another $(0,1)$"/ary cell $\phi$ as on the right above, the $1$-dimensional property states that there exists a unique morphism $\map{\phi'}{X_0}{\tab J}$ such that $\pi \of \phi' = \phi$.
		
		The $2$-dimensional property is the following. Suppose we are given another $(0,1)$"/ary cell $\psi$ as in the identity below, which factors through $\pi$ as $\map{\psi'}{X_n}{\tab J}$, like $\phi$ factors as $\phi'$. Then for any pair of cells $\xi_A$ and $\xi_B$ as in the identity on the left below there exists a unique cell $\xi'$ as in the middle below such that $\pi_A \of \xi' = \xi_A$ and $\pi_B \of \xi' = \xi_B$.

		We call the tabulation $\tab J$ \emph{(left) (nullary"/)cocartesian} whenever its defining cell $\pi$ is (left) (nullary"/)cocartesian (\defref{cocartesian path}).
		\begin{displaymath}
			\begin{tikzpicture}[textbaseline]
				\matrix(m)[math35, row sep={3.75em,between origins}, column sep={2em,between origins}]
					{ X_0 & & X_n & \\ & A & & B \\};
				\path[map]	(m-1-1) edge[barred] node[above] {$\ul H$} (m-1-3)
														edge[transform canvas={xshift=-2pt}] node[left] {$\phi_A$} (m-2-2)
										(m-1-3) edge node[desc] {$\psi_A$} (m-2-2)
														edge[transform canvas={xshift=2pt}] node[right] {$\psi_B$} (m-2-4)
										(m-2-2) edge[barred] node[below] {$J$} (m-2-4);
				\path				($(m-1-1.south)!0.5!(m-1-3.south)$) edge[cell, transform canvas={shift={(-0.5em,0.8em)}}, shorten >= 6.25pt, shorten <= 6.25pt] node[right] {$\xi_A$} (m-2-2)
										(m-1-3) edge[cell, transform canvas={yshift=-0.5em}, shorten >= 6.25pt, shorten <= 6.25pt] node[right] {$\psi$} ($(m-2-2.north)!0.5!(m-2-4.north)$);
			\end{tikzpicture} \quad = \quad \begin{tikzpicture}[textbaseline]
				\matrix(m)[math35, row sep={3.75em,between origins}, column sep={2em,between origins}]
					{ & X_0 & & X_n \\ A & & B & \\};
				\path[map]	(m-1-2) edge[transform canvas={xshift=-2pt}] node[left] {$\phi_A$} (m-2-1)
														edge[barred] node[above] {$\ul H$} (m-1-4)
														edge node[desc] {$\phi_B$} (m-2-3)
										(m-1-4) edge[transform canvas={xshift=2pt}] node[right] {$\psi_B$} (m-2-3)
										(m-2-1) edge[barred] node[below] {$J$} (m-2-3);
				\path				(m-1-2) edge[cell, transform canvas={shift={(-0.3em, -0.8em)}}, shorten >= 6.25pt, shorten <= 6.25pt] node[right] {$\phi$} ($(m-2-1.north)!0.5!(m-2-3.north)$)
										($(m-1-2.south)!0.5!(m-1-4.south)$) edge[cell, transform canvas={yshift=0.5em}, shorten >= 6.25pt, shorten <= 6.25pt] node[right] {$\xi_B$} (m-2-3);
			\end{tikzpicture} \qquad \qquad \begin{tikzpicture}[textbaseline]
  			\matrix(m)[math35, column sep={1.75em,between origins}]
  				{ X_0 & & X_n \\ & \tab J \\};
  			\path[map]  (m-1-1) edge[barred] node[above] {$\ul H$} (m-1-3)
														edge[transform canvas={xshift=-2pt}] node[left] {$\phi'$} (m-2-2)
										(m-1-3)	edge[transform canvas={xshift=2pt}] node[right] {$\psi'$} (m-2-2);
				\path				($(m-1-1.south)!0.5!(m-1-3.south)$) edge[cell, transform canvas={yshift=0.25em}] node[right] {$\xi'$} (m-2-2);
			\end{tikzpicture} \qquad \begin{tikzpicture}[textbaseline]
  			\matrix(m)[math35, column sep={1.75em,between origins}]
  				{ A & & B \\ & \brks J \\};
  			\path[map]  (m-1-1) edge[barred] node[above] {$J$} (m-1-3)
														edge[transform canvas={xshift=-2pt}] node[left] {$\sigma_A$} (m-2-2)
										(m-1-3)	edge[transform canvas={xshift=2pt}] node[right] {$\sigma_B$} (m-2-2);
				\path				($(m-1-1.south)!0.5!(m-1-3.south)$) edge[cell, transform canvas={yshift=0.25em}] node[right] {$\sigma$} (m-2-2);
			\end{tikzpicture}
		\end{displaymath}
		
		Vertically dual, the \emph{cotabulation} $\brks J$ of $J$ is defined by a nullary cell $\sigma$ as on the right above, satisfying 1-dimensional and 2-dimensional universal properties that are vertical dual to those for $\tab J$. We call $\brks J$ \emph{cartesian} whenever $\sigma$ is cartesian.
	\end{definition}
	
	The following is a direct consequence of the pasting lemma for cocartesian paths of $(0,1)$"/ary cells, \lemref{pasting lemma for cocartesian paths of (0,1)-ary cells} below.
	\begin{corollary} \label{cocartesian tabulations imply cocartesian paths of (0,1)-ary cells}
		An augmented virtual double category has all left (nullary"/)cocartesian paths of $(0,1)$"/ary cells (\defref{cocartesian path of (0,1)-ary cells}) whenever it has all left (nullary"/)cocartesian tabulations and all restrictions on the right (\defref{augmented virtual equipment}). An augmented virtual equipment (\defref{augmented virtual equipment}) has all (nullary"/)cocartesian paths of $(0,1)$"/ary cells whenever it has all (nullary"/)cocartesian tabulations.
	\end{corollary}
	
	\subsection{Examples of tabulations}
	\begin{example} \label{tabulations of profunctors}
		In the unital virtual equipment $\enProf{\Set}$ (\augexref{2.4}), of $\Set$"/profunctors between locally small categories, the tabulation $\tab J$ is the well known \emph{graph} of $\hmap JAB$ as follows. It has triples $(x, u, y)$ as objects, where \mbox{$(x, y) \in A \times B$} are objects and $u \in J(x, y)$, while a morphism $(x, u, y) \to (x', u', y')$ is a pair $\map{(s, t)}{(x,y)}{(x', y')}$ in $A \times B$ such that $\lambda(s, u') = \rho(u, t)$ in $J(x, y')$, where $\lambda$ and $\rho$ denote the actions of $A$ and $B$ on $J$. The functors $\pi_A$ and $\pi_B$ are the projections while the cell $\nat \pi{\tab J}J$ maps $(x, u, y)$ to $u \in J(x, y)$. It is straightforward to check that $\pi$ satisfies the universal properties above, and that it is cocartesian. Cocartesian tabulations in the augmented virtual equipment $\enProf{(\Set, \Set')}$, of $\Set$"/profunctors between $\Set'$"/categories (\augexref{2.6}), are constructed as graphs in the same way. We conclude that $\enProf{\Set}$ and $\enProf{(\Set, \Set')}$ have all cocartesian paths of $(0,1)$"/ary cells.
	\end{example}
	
	\begin{example} \label{tabulations of 2-profunctors}
		Let $\Cat \dfn \inCat{\Set}$ (\augexref{2.9}) denote the category of small categories. In the unital virtual equipment $\enProf{\Cat}$ (\augexref{2.4}) the tabulation $\tab J$ of a $2$"/profunctor (that is a $\Cat$"/enriched profunctor) $\hmap JAB$, where $A$ and $B$ are locally small $2$"/categories, is constructed as follows. It has as underlying category $\tab J_0$ the graph of the profunctor $J_0$ underlying $J$, whose images $J_0(x, y)$ are the sets of objects of the categories $J(x, y)$, for all $x \in A$ and $y \in B$. The cells $(s, t) \Rar (s', t')$ of $\tab J$ are pairs $(\delta, \eps)$ of cells \mbox{$\cell\delta s{s'}$} in $A$ and $\cell\eps t{t'}$ in $B$ as in the diagram on the left below such that $\lambda(\delta, u') = \rho(u, \eps)$ in $J(x, y')$. Tabulations in the augmented virtual equipment $\enProf{(\Cat, \Cat')}$, of $2$"/profunctors between (possibly locally large) $2$"/categories (\augexref{2.7}), are constructed in the same way.
		\begin{displaymath}
			\begin{tikzpicture}[textbaseline]
				\matrix(m)[math35]{x & y \\ x' & y' \\};
				\path[map]	(m-1-1) edge node[above] {$u$} (m-1-2)
														edge[bend right=35] node[left] {$s$} (m-2-1)
														edge[bend left=35] node[right, inner sep=2pt] {$s'$} (m-2-1)
										(m-1-2) edge[bend right=35] node[left, inner sep=2pt] {$t$} (m-2-2)
														edge[bend left=35] node[right] {$t'$} (m-2-2)
										(m-2-1) edge node[below] {$u'$} (m-2-2);
				\path				([xshift=-0.8em]$(m-1-1)!0.5!(m-2-1)$) edge[cell] node[above] {$\delta$} ([xshift=0.9em]$(m-1-1)!0.5!(m-2-1)$)
										([xshift=-0.8em]$(m-1-2)!0.5!(m-2-2)$) edge[cell] node[above] {$\eps$} ([xshift=0.9em]$(m-1-2)!0.5!(m-2-2)$);
			\end{tikzpicture} \qquad\qquad J(*,*) = \bigpars{\begin{tikzpicture}[textbaseline, ampersand replacement=\&]
				\matrix(m)[math35, column sep=1em]{u \& v \\};
				\path[map]	(m-1-1) edge (m-1-2);
			\end{tikzpicture}} \qquad\qquad K(*,*) = \bigpars{\begin{tikzpicture}[textbaseline, ampersand replacement=\&]
				\matrix(m)[math35, column sep=1em]{u' \& v' \\};
			\end{tikzpicture}}
		\end{displaymath}

		Tabulations of $2$-profunctors fail to be cocartesian in general. As an example consider the $2$-profunctors $J$ and $\hmap K11$, where $1$ denotes the terminal $2$-category with single object $*$, whose images $J(*,*)$ and $K(*,*)$ are the `interval category' and the discrete category with two objects respectively, as shown above. The tabulation $\tab J$ is discrete with objects $(*, u, *)$ and $(*, v, *)$, so that the assignments \mbox{$(*, u, *) \mapsto u'$} and $(*, v, *) \mapsto v'$ define a cell $\cell\phi{\tab J}K$. It is easily checked that $\phi$ does not factor through $\cell\pi{\tab J}J$, showing that $\pi$ is not weakly cocartesian. As a consequence \propref{pointwise left Kan extensions along companions} below fails to hold in $\enProf{(\Cat, \Cat')}$; see \exref{pointwise is stronger than enriched}.
	\end{example}
	
	\begin{example} \label{cotabulations of enriched profunctors}
		Let $\V' = (\V' ,\tens, I)$ be a monoidal category with initial object $\emptyset$ preserved by the functors $x \tens \dash$ and $\dash \tens x$, for all $x \in \V'$. In the unital virtual equipment $\enProf{\V'}$ (\augexref{2.4}) the cotabulation $\brks J$ of a $\V'$"/profunctor \mbox{$\hmap JAB$} is the \emph{cograph} of $J$, as follows. Its collection of objects is the disjoint union $\ob\brks J \dfn \ob A \djunion \ob B$ while its hom"/objects are given by
		\begin{displaymath}
			\brks J(x, y) \dfn \begin{cases}
				A(x, y) & \text{if $x, y \in A$;} \\
				J(x, y) & \text{if $x \in A$ and $y \in B$;} \\
				B(x, y) & \text{if $x, y \in B$;} \\
				\emptyset & \text{otherwise.}
			\end{cases}
		\end{displaymath}
		Composition in $\brks J$ is induced by composition in $A$ and $B$ as well as the actions of $A$ and $B$ on $J$. Taking $\sigma_A$ and $\sigma_B$ to be the embeddings of $A$ and $B$ into $\brks J$, the universal cell $\cell\sigma J{\brks J}$ is simply given by the identities on the $\V'$"/objects $J(x, y)$. It is straightforward to check that $\sigma$ satisfies the universal properties and that it is cartesian. We conclude that $\enProf{\V'}$ has all cartesian cotabulations and that, applying \lemref{full and faithful functors reflect tabulations} below to $\enProf{(\V, \V')} \hookrightarrow \enProf{\V'}$, so does the augmented virtual equipment $\enProf{(\V, \V')}$ (\augexref{2.7}). In greater generality, in \cite{Street80b} cographs of ``$\V$"/gamuts'' are used to characterise $\V$"/profunctors; see paragraph 6.10 and Corollary~6.16 therein.
	\end{example}
	
	\begin{example} \label{tabulations of internal profunctors}
		Let $\E$ be a category with pullbacks. The unital virtual equipment $\inProf\E$ of internal profunctors in $\E$ (\augexsref{2.9}{4.9}) has cocartesian tabulations as follows, and hence has cocartesian paths of $(0,1)$"/ary cells. In Proposition~5.15 of \cite{Koudenburg14a}, where $\E$ is assumed to have coequalisers preserved by pullback so that $\inProf\E$ has all horizontal composites (\augexref{7.5}), the tabulation $\tab J$ of $\hmap JAB$ in $\inProf\E$, with underlying span $A \leftarrow J \rightarrow B$, was constructed as follows. It is the internal category that has $J$ as its object of objects and, as its object of morphisms, the pullback $\tab J$ below, where $\lambda$ and $\rho$ denote the actions of the objects of morphisms $\alpha$ and $\beta$, of the internal categories $A$ and $B$, on $J$. The source and target morphisms of $\tab J$ are the composite projections $d_0 = \bigbrks{\tab J \to J \times_B \beta \to J}$ and $d_1 = \bigbrks{\tab J \to \alpha \times_A J \to J}$. If $\E = \Set$ then the latter recovers $\tab J$ as the graph of $J$ (\exref{tabulations of profunctors}).
		\begin{displaymath}
			\begin{tikzpicture}
			\matrix(m)[math35, column sep=1em]{\tab J & J \times_B \beta \\ \alpha \times_A J & J \\};
				\path[map]	(m-1-1)	edge (m-1-2)
														edge (m-2-1)
										(m-1-2) edge node[right] {$\rho$} (m-2-2)
										(m-2-1) edge node[below] {$\lambda$} (m-2-2);
				\coordinate (hook) at ($(m-1-1)+(0.4,-0.4)$);
				\draw (hook)+(0,0.17) -- (hook) -- +(-0.17,0);
			\end{tikzpicture}
		\end{displaymath}
		The argument given in \cite{Koudenburg14a}, proving that $\tab J$ is an internal category which forms the tabulation of $J$, carries over to the general case, with $\E$ not necessarily having coequalisers, with minor adjustments. In particular the universal cell $\cell\pi {\tab J}J$, as a $(0,1)$"/ary cell, is given by the identity on $J$ and it is straightforward to show that this makes $\pi$ cocartesian in the unital virtual double category $\inProf\E$.
	\end{example}
	
	\begin{example} \label{tabulations of internal modular relations}
		Let $\E$ be a category with finite limits. The unital virtual equipment $\ModRel(\E)$ of internal modular relations in $\E$ (\exref{internal modular relations}) has cocartesian tabulations as follows, and hence has cocartesian paths of $(0,1)$"/ary cells. By applying the composite $2$"/functor $N \of \Mod$ of \augexref{2.2} to the embedding $\Rel(\E) \hookrightarrow \Span\E$ we obtain an embedding $\ModRel(\E) \hookrightarrow \Prof(\E)$. We claim that the latter creates cocartesian tabulations so that, like $\Prof(\E)$ by the previous example, $\ModRel(\E)$ has cocartesian tabulations. Since the embedding is full and faithful and preserves cartesian cells, to prove the claim it suffices, using \lemref{full and faithful functors reflect tabulations} below, to show that for any internal modular relation $\hmap JAB$ in $\ModRel(\E)$ the tabulation $\tab J$ in $\Prof(\E)$, as constructed in the previous example, is an internal preorder in $\E$ (\exref{internal modular relations}). We do so below; as an aside we remark that one can also show that if $A$ and $B$ are internal partial orders (\exref{internal modular relations}) then so is $\tab J$.
		
		We have to show that the span $J \xlar{d_0} \tab J \xrar{d_1} J$ underlying $\tab J$, with legs as described in the previous example, is a relation in $\E$, that is $d_0$ and $d_1$ are jointly monic. To do so consider any parallel pair $\phi$, $\map\psi X{\tab J}$ of morphisms in $\E$ such that $d_i \of \phi = d_i \of \psi$ for $i = 0, 1$. Writing $p^{\tab J}_\alpha$ for the composite projection \mbox{$p^{\tab J}_\alpha \dfn \bigbrks{\tab J \to \alpha \times_A J \to \alpha}$} one checks that $\alpha_0 \of p^{\tab J}_\alpha \of \phi = \alpha_0 \of p^{\tab J}_\alpha \of \psi$ follows from the fact that $\lambda$ and $\rho$ are morphisms of spans, the definition of $\tab J$ as a pullback, the definition of $d_0$, and the assumption on $\phi$ and $\psi$. Similarly $\alpha_1 \of p^{\tab J}_\alpha \of \phi = \alpha_1 \of p^{\tab J}_\alpha \of \psi$ so that $p^{\tab J}_\alpha \of \phi = p^{\tab J}_\alpha \of \psi$ follows from the joint monicity of $\alpha_0$ and $\alpha_1$. Together with $d_1 \of \phi = d_1 \of \psi$ we conclude that $p^{\tab J}_{\alpha \times_A J} \of \phi = p^{\tab J}_{\alpha \times_A J} \of \psi$ where $\map{p^{\tab J}_{\alpha \times_A J}}{\tab J}{\alpha \times_A J}$ is the projection. Similarly also $p^{\tab J}_{J \times_B \beta} \of \phi = p^{\tab J}_{J \times_B \beta} \of \psi$; together the latter imply $\phi = \psi$ as required.
	\end{example}
	
	\begin{example} \label{tabulations of discrete two-sided fibrations}
		The unital virtual equipment $\dFib{\mathcal C}$ of discrete two"/sided fibrations in a finitely complete $2$"/category $\mathcal C$ (\exref{discrete two-sided fibrations}) has cocartesian tabulations that are preserved by the embeddings $\dFib{\mathcal C} \hookrightarrow \spFib{\mathcal C} \hookrightarrow \inProf{\und{\mathcal C}}$, as follows. Given any discrete two"/sided fibration $\hmap JAB$ in $\dFib{\mathcal C}$ one can show that the square in $\mathcal C$ on the left below commutes and that it is a pullback square; compare the definition of internal discrete two"/sided fibration given in Section~8 of \cite{Street17}.
		\begin{displaymath}
			\begin{tikzpicture}[baseline]
				\matrix(m)[math35, column sep=1em]{J^\2 & J \times_B B^\2 \\ A^\2 \times_A J & J \\};
				\path[map]	(m-1-1)	edge node[above] {$(d_0, j_B^\2)$} (m-1-2)
														edge node[left] {$(j_A^\2, d_1)$} (m-2-1)
										(m-1-2) edge node[right] {$\rho$} (m-2-2)
										(m-2-1) edge node[below] {$\lambda$} (m-2-2);
				\coordinate (hook) at ($(m-1-1)+(0.4,-0.4)$);
			\end{tikzpicture} \quad \qquad \qquad \qquad \begin{tikzpicture}[baseline]
  			\matrix(m)[math35, column sep={1.75em,between origins}]{& J & \\ A & & B \\};
				\path[map]	(m-1-2) edge[transform canvas={xshift=-2pt}] node[left] {$j_A$} (m-2-1)
														edge[transform canvas={xshift=2pt}] node[right] {$j_B$} (m-2-3)
										(m-2-1) edge[barred] node[below] {$J$} (m-2-3);
				\path				(m-1-2) edge[cell, transform canvas={yshift=-0.25em}] node[right, inner sep=2.5pt] {$\pi$} (m-2-2);
			\end{tikzpicture} \qquad \qquad \qquad \begin{tikzpicture}[baseline]
  			\matrix(m)[math35, column sep={1.75em,between origins}]{& X^\2 & \\ A^\2 & & B^\2 \\};
				\path[map]	(m-1-2) edge[transform canvas={xshift=-2pt}] node[left] {$\phi_A^\2$} (m-2-1)
														edge[transform canvas={xshift=2pt}] node[right] {$\phi_B^\2$} (m-2-3)
										(m-2-1) edge[barred] node[below] {$J$} (m-2-3);
				\path				(m-1-2) edge[cell, transform canvas={yshift=-0.25em}] node[right, inner sep=2.5pt] {$\phi$} (m-2-2);
			\end{tikzpicture}
		\end{displaymath}
		
		It follows that, when regarding $J$ as an internal profunctor $\hmap J{A^\2}{B^2}$ in $\inProf{\und{\mathcal C}}$, we can take its tabulation, as described in \exref{tabulations of internal profunctors}, to have $\tab J \dfn J^\2$ as its object of morphisms. Going through the proof of Proposition~5.15 of \cite{Koudenburg14a} it is then easy to check that this choice implies that $\tab J = J^\2$ as an internal category, and that the projections $\map{\pi_A}{J^\2}{A^\2}$ and $\map{\pi_B}{J^\2}{B^\2}$ are the internal functors $j_A^\2$ and $j_B^\2$. Hence the $(0,1)$"/ary cell $\cell\pi{J^\2}J$ in $\inProf{\und{\mathcal C}}$, that defines $J^\2$ as the tabulation of $J$, forms a $(0,1)$"/ary cell $\pi$ in $\dFib{\mathcal C}$ that is of the form as in the middle above. Next consider any $(0,1)$"/ary cell $\cell\phi XJ$ in $\dFib{\mathcal C}$, i.e.\ any cell $\phi$ in $\inProf{\und{\mathcal C}}$ of the form as on the right above. One readily checks that the latter factors through $\pi$ (in $\inProf{\und{\mathcal C}}$) as the internal functor $\map{\phi^\2}{X^\2}{J^\2}$, showing that in $\dFib{\mathcal C}$ the cell $\phi$ factors through $\pi$ as the morphism $\map\phi XJ$. We conclude that the $1$"/dimensional universal property of $\pi$ (\defref{tabulation}) in $\inProf{\und{\mathcal C}}$ restricts to its sub"/unital virtual equipments $\dFib{\mathcal C}$ and $\spFib{\mathcal C}$. That the $2$"/dimensional universal property does so too follows from the fact that the embeddings \mbox{$\dFib{\mathcal C} \hookrightarrow \spFib{\mathcal C} \hookrightarrow \inProf{\und{\mathcal C}}$} are locally full and faithful; we conclude that $\pi$ defines the object $J$ as the tabulation of $\hmap JAB$ both $\dFib{\mathcal C}$ and $\spFib{\mathcal C}$. Because the embeddings preserve cartesian cells $\pi$ is cocartesian in $\dFib{\mathcal C}$ and $\spFib{\mathcal C}$ too by \auglemref{9.4}. In particular $J \iso j_A^* \hc j_{B*}$ in $\dFib{\mathcal C}$ by \lemref{horizontal morphism as composite of conjoint and companion} below; compare Proposition~4.25 of \cite{Carboni-Johnson-Street-Verity94} which shows that $E \iso p_* \of q^*$ for any `discrete fibration' $A \xlar p E \xrar q B$ in a `faithfully conservational bicategory'.
	\end{example}
	
	The following lemma was used in the examples above in obtaining (co)tabulations in full sub"/virtual double categories. The proof of its main assertion is straightforward; for the other assertions use \auglemsref{4.5}{9.4}.
	\begin{lemma} \label{full and faithful functors reflect tabulations}
		Any full and faithful functor $\map F\K\L$ (\augdefref{3.6}) reflects tabulations, that is $\cell\pi XJ$ defines $X$ as the tabulation of $J$ in $\K$ whenever $F\pi$ defines $FX$ as the tabulation of $FJ$ in $\L$. Similarly $F$ reflects (cartesian) cotabulations. If moreover $F$ preserves cartesian cells then it reflects cocartesian tabulations as well.
	\end{lemma}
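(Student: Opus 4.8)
The strategy is to transfer the $1$- and $2$-dimensional universal properties of Definition~\ref{tabulation} directly along $F$, using that $F$, being full and faithful (\augdefref{3.6}), induces bijections both on cells and on vertical morphisms between fixed sources and targets that are images of objects of $\K$.

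For the main assertion, suppose $F\pi$ defines $FX$ as the tabulation of $FJ$ in $\L$. For the $1$-dimensional property, let $\cell\phi{X_0}J$ be a $(0,1)$-ary cell in $\K$ with legs $\phi_A$ and $\phi_B$, as on the right of Definition~\ref{tabulation}. Applying $F$ gives $\cell{F\phi}{FX_0}{FJ}$, and the tabulation property of $F\pi$ yields a unique morphism $\map g{FX_0}{FX}$ in $\L$ with $F\pi \of g = F\phi$; by full and faithfulness $g = F\phi'$ for a unique $\map{\phi'}{X_0}X$ in $\K$. Then $F(\pi \of \phi') = F\pi \of g = F\phi$, so $\pi \of \phi' = \phi$ by faithfulness on cells, and the uniqueness of $\phi'$ follows from that of $g$ together with faithfulness on morphisms. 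The $2$-dimensional property is handled the same way: given a competing $(0,1)$-ary cell $\psi$, factoring as $\psi'$ through $\pi$, and cells $\xi_A$, $\xi_B$ as in the displayed identity, faithfulness forces $F\phi'$ and $F\psi'$ to be the factorisations of $F\phi$ and $F\psi$ through $F\pi$ in $\L$, so the hypotheses of the $2$-dimensional property for $F\pi$ are met by $F\xi_A$ and $F\xi_B$; the unique resulting cell in $\L$ is $F\xi'$ for a unique cell $\xi'$ in $\K$ by local full and faithfulness, and the equations $\pi_A \of \xi' = \xi_A$ and $\pi_B \of \xi' = \xi_B$, as well as the uniqueness of $\xi'$, again follow by faithfulness.

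The reflection of cotabulations is the vertical dual of the argument above (applied to $\co\K \to \co\L$), now transferring the vertically dual universal properties of the defining nullary cell $\sigma$. The reflection of \emph{cartesian} cotabulations follows by combining this with \auglemref{4.5}: if $F\sigma$ is cartesian in $\L$ then $\sigma$ is cartesian in $\K$, since (locally) full and faithful functors reflect cartesian cells.

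Finally, for cocartesian tabulations we must additionally show that $\pi$ is cocartesian in $\K$ whenever $F\pi$ is cocartesian in $\L$; this is the one place the hypothesis that $F$ preserves cartesian cells is needed. Since the universal property of a cocartesian cell in an augmented virtual double category involves stability under concatenation with cartesian cells defining restrictions, one needs those concatenated configurations to transfer along $F$; combining the cell bijections of $F$ with \auglemref{9.4} then shows that a locally full and faithful functor preserving such cartesian cells reflects cocartesian cells. Together with the reflection of tabulations already established, this gives the claim. I expect no real obstacle here: the only point requiring care is the bookkeeping in the $2$-dimensional property, namely that the chosen factorisations in $\K$ map to the chosen ones in $\L$, but this is forced by uniqueness, so the lemma is routine once the hom-level bijections are invoked.
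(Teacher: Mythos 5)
Your proof is correct and follows essentially the same route as the paper, which simply declares the main assertion straightforward and points to \auglemref{4.5} and \auglemref{9.4} for the cartesian and cocartesian refinements — exactly the two lemmas you invoke. The transfer of the $1$- and $2$-dimensional universal properties via the hom-level bijections is the intended argument, and your care about the chosen factorisations being forced by uniqueness is the right (and only) subtle point.
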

	
	\subsection{Properties of cocartesian paths of $(0,1)$"/ary cells}
	Before stating the main theorem of this section we record some useful properties of cocartesian paths of $(0,1)$"/ary cells.	Recall that $n' \dfn n-1$ for any positive integer $n$.
	\begin{lemma}[Pasting lemma for cocartesian paths of $(0,1)$"/ary cells] \label{pasting lemma for cocartesian paths of (0,1)-ary cells}
		Consider composable paths of horizontal morphisms $\hmap{\ul J}{A_0}{A_n}$ and $\hmap{\ul H}{A_n}{B_m}$. Let $\ul\psi = (\psi_1, \dotsc, \psi_m)$ be a left cocartesian path of $(0,1)$"/ary cells for $\ul H$ and assume that the restriction \mbox{$\hmap{J_n(\id, g_0)}{A_{n'}}Y$} exists, where $\map{g_0}Y{A_n}$ is the vertical source of $\psi_1$. If $\ul\phi = (\phi_1, \dotsc, \phi_n)$ is a left cocartesian path of $(0,1)$"/ary cells for $\pars{J_1, \dotsc, J_{n'}, J_n(\id, g_0)}$ then the concatenation \mbox{$(\phi_1, \dotsc, \phi_{n'}, \cart \of \phi_n, \psi_1, \dotsc, \psi_m)$} is a left cocartesian path of $(0,1)$"/ary cells for $\ul J \conc \ul H$, where $\cart$ denotes the cartesian cell defining $J_n(\id, g_0)$.
		
		Denoting the vertical targets of $\phi_n$ and $\psi_n$ by $\map{f_n}XY$ and $\map{g_m}Y{B_m}$, the latter concatenation is a cocartesian path of $(0,1)$"/ary cells for $\ul J \conc \ul H$ whenever $\ul \phi$ and $\ul \psi$ are cocartesian and, for every $\hmap K{B_m}C$, if the restriction $K(g_m \of f_n, \id)$ exists then so does $K(g_m, \id)$.
		
		Analogous assertions hold for left nullary"/cocartesian paths of $(0,1)$"/ary cells; horizontally dual assertions hold for right (nullary"/)cocartesian paths of $(0,1)$"/ary cells.
	\end{lemma}
	
	 The next lemma shows that a $(0,1)$"/ary cocartesian cell $\cell{\phi_1}X{J_1}$ can be used to write $J_1$ as a composite of a conjoint followed by a companion. This formalises the classical fact (see e.g.\ Proposition~2.3.2 of \cite{Benabou73}) that any profunctor $\hmap JAB$ can be written as $J \iso \pi_A^* \hc \pi_{B*}$ where $A \xlar{\pi_A} \tab J \xrar{\pi_B} B$ are the projections of the graph $\tab J$ of $J$ (\exref{tabulations of profunctors}).
	\begin{lemma} \label{horizontal morphism as composite of conjoint and companion}
		Let $\cell{\phi_1}X{J_1}$ be a cocartesian $(0,1)$"/ary cell for $\hmap {J_1}{A_0}{A_1}$ as in \defref{cocartesian path of (0,1)-ary cells}. If the conjoint $\hmap{f_0^*}{A_0}X$ and the companion $\hmap{f_{1*}}X{A_1}$ exist then the composite $\cell{\cart \hc \phi_1 \hc \cart}{(f_0^*, f_{1*})}{J_1}$, where the cartesian cells defining $f_0^*$ and $f_{1*}$ are denoted by $\cart$, is cocartesian. In particular $J_1 \iso f_0^* \hc f_{1*}$.
	\end{lemma}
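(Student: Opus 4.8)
The plan is to assemble $\cart \hc \phi_1 \hc \cart$ one cartesian cell at a time, transferring cocartesianness along each step by (essentially) \cororef{non-horizontal cocartesian cells}. First I would attach the companion on the right: writing $\cell\cart{(f_{1*})}{A_1}$ for the cartesian cell defining $f_{1*}$ and $\cell\cocart X{f_{1*}}$ for the corresponding cocartesian $(0,1)$"/ary cell (\auglemref{5.4}), so that $(\phi_1 \hc \cart) \of \cocart = \phi_1$, I apply the pasting lemma (\lemref{pasting lemma for cocartesian paths}) to $\psi_1 = \phi_1 \hc \cart$ and the (weakly) cocartesian singleton path $\ul\phi_1 = (\cocart)$; this shows that $\cell{\phi_1 \hc \cart}{(f_{1*})}{J_1}$ is (weakly) cocartesian precisely when $\phi_1$ is, hence that it is cocartesian. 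This is the $n = 0$ instance of \cororef{non-horizontal cocartesian cells}, which I cannot quote verbatim because there the horizontal source path is drawn nonempty, but the argument is unchanged.

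Next I would attach the conjoint on the left. The cell $\cell{\phi_1 \hc \cart}{(f_{1*})}{J_1}$ has left vertical leg $\map{f_0}X{A_0}$, so \cororef{non-horizontal cocartesian cells} now applies directly: prepending the cartesian cell $\cell\cart{(f_0^*)}{A_0}$ that defines the conjoint of $f_0$ yields $\cell{\cart \hc \phi_1 \hc \cart}{(f_0^*, f_{1*})}{J_1}$, which is cocartesian since $\phi_1 \hc \cart$ is. This proves the first assertion. For the ``in particular'', a cocartesian cell of this shape defines its horizontal target as the horizontal composite of its horizontal source (\augdefref{7.1}), so $J_1 \iso f_0^* \hc f_{1*}$.

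The only point that needs care is the first step, where $\phi_1$ has an empty horizontal source so that \cororef{non-horizontal cocartesian cells} must be re"/derived rather than cited; everything else is routine bookkeeping with the companion"/conjoint identities of \auglemref{5.4} and the pasting lemma. (One could alternatively collapse both steps into a single application of \lemref{pasting lemma for cocartesian paths} with the two"/cell path $(\cocart_{f_0^*}, \cocart_{f_{1*}})$, but doing it in two stages keeps the arities transparent.)
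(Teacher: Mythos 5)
Your proof is correct and is essentially the paper's argument: the paper performs your two steps in one go, applying the pasting lemma to the cocartesian path $(\cocart, \cocart)$ and the identity $(\cart \hc \phi_1 \hc \cart) \of (\cocart, \cocart) = \phi_1$, exactly as in your closing parenthetical. The only quibble is that for cocartesianness in the full sense of \augdefref{7.1} you should cite the prequel's pasting lemma (\auglemref{7.7}), as the paper does, rather than the in-paper \lemref{pasting lemma for cocartesian paths}, whose stated variants cover only the weakly/right/left notions.
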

	\begin{proof}
		Denote by $\cocart$ the cocartesian cells corresponding to the cartesian cells that define $f_0^*$ and $f_{1*}$ (\lemref{companion identities lemma}). By the pasting lemma for cocartesian paths (\auglemref{7.7}) the path $\cell{(\cocart, \cocart)}X{(f_0^*, f_{1*})}$ is cocartesian. Applying the same lemma to $(\cart \hc \phi_1 \hc \cart) \of (\cocart, \cocart) = \phi_1$ shows that $\cart \hc \phi_1 \hc \cart$ is cocartesian too.
	\end{proof}
	
	Cocartesian paths of $(0,1)$"/ary cells can be used to reduce left Kan extension along a path $\ul J$ of horizontal morphisms to left Kan extension along a single companion morphism as follows.
	\begin{proposition} \label{reducing to Kan extensions along a companion}
		Consider the right (respectively weakly) nullary"/cocartesian path $\ul\phi$ of $(0,1)$"/ary cells for the path $\ul J$ in \defref{cocartesian path of (0,1)-ary cells} and assume that the companion \mbox{$\hmap{f_{n*}}X{A_n}$} exists. For any morphism $\map d{A_0}M$ the (weak) left Kan extension of $d$ along $\ul J$ exists if and only if the (weak) left Kan extension of $d \of f_0$ along $f_{n*}$ does so, and in that case they are isomorphic.
	\end{proposition}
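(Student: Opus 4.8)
The plan is to derive the equivalence from two applications of the vertical pasting lemma (\lemref{vertical pasting lemma}), bridged by the cocartesian $(0,1)$"/ary cell that defines the companion. Write $\map{f_0}X{A_0}$ and $\map{f_n}X{A_n}$ for the vertical source of $\phi_1$ and the vertical target of $\phi_n$, and let $\cocart$ denote the cocartesian $(0,1)$"/ary cell defining $f_{n*}$, so $\cocart$ has empty horizontal source, horizontal target $f_{n*}$, vertical source $\id_X$ and vertical target $f_n$ (see \augsecref 5). The first thing I would record is that, by \exref{cocartesian paths are right cocartesian}, the singleton path $(\cocart)$ is itself right nullary"/cocartesian, so the vertical pasting lemma applies to it just as it does to $\ul\phi$.

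Next I would fix a vertical morphism $\map l{A_n}M$ and build a bijection, for this $l$, between the nullary cells $\cell\eta{\ul J}M$ with vertical source $d$ and vertical target $l$ and the nullary cells $\cell{\bar\eta}{f_{n*}}M$ with vertical source $d \of f_0$ and vertical target $l$. Since $\ul\phi$ is (right, hence) weakly nullary"/cocartesian, precomposition with $\ul\phi$ identifies the first collection with the collection of nullary cells $\cell\chi{()}M$ over $X$ with vertical source $d \of f_0$ and vertical target $l \of f_n$; since $\cocart$ is cocartesian, precomposition with $\cocart$ identifies the second collection with that same collection of cells $\chi$. Composing the two bijections yields $\eta \mapsto \bar\eta$, characterised by $\eta \of \ul\phi = \bar\eta \of \cocart$.

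The heart of the argument is then that $\eta$ is (weakly) left Kan if and only if $\bar\eta$ is, which falls out once the pieces are in place: applying \lemref{vertical pasting lemma} to $\eta$ and the right (respectively weakly) nullary"/cocartesian path $\ul\phi$ shows $\eta$ is (weakly) left Kan precisely when $\eta \of \ul\phi$ is; applying it to $\bar\eta$ and the right nullary"/cocartesian singleton path $(\cocart)$ shows $\bar\eta$ is (weakly) left Kan precisely when $\bar\eta \of \cocart$ is; and the two composites coincide. Existence on one side transports to the other along the bijection $\eta \leftrightarrow \bar\eta$, and since that bijection fixes the vertical target $l$, the two left Kan extensions share the same underlying vertical morphism and are therefore isomorphic.

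I expect no obstacle of substance here; the only points I would take care over are bookkeeping. These are: that the companion"/defining cell really is a cocartesian $(0,1)$"/ary cell, so that \exref{cocartesian paths are right cocartesian} and \lemref{vertical pasting lemma} apply to $(\cocart)$; that the empty"/source degenerate shape of $\eta \of \ul\phi = \bar\eta \of \cocart$ is a legitimate instance of the configuration in \lemref{vertical pasting lemma} (all the $\phi_i$, and $\cocart$, being $(0,1)$"/ary, i.e.\ with empty horizontal sources); and that the vertical boundaries compose as claimed — in particular that $\eta \of \ul\phi$ has vertical source $d \of f_0$ and vertical target $l \of f_n$, matching $\bar\eta \of \cocart$ because the vertical source of $\cocart$ is $\id_X$. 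With those verifications in hand the proposition is just a two"/step invocation of the vertical pasting lemma.
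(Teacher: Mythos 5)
Your bijection $\eta \leftrightarrow \bar\eta$, characterised by $\eta \of \ul\phi = \bar\eta \of \cocart$, is a correct observation, but the step you call the heart of the argument fails: \lemref{vertical pasting lemma} does not apply to the paths $\ul\phi$ and $(\cocart)$ in the way you use it. In the configuration of that lemma the rightmost vertical boundary of the cocartesian path is required to be an \emph{identity} (this is visible in its diagram and is what its proof, via \lemref{unique factorisations factorised through cocartesian paths} with paths of identity cells, needs), precisely so that precomposition with the path matches test cells having the same vertical targets on both sides. Here the last cell of $\ul\phi$, and likewise $\cocart$, has vertical target $f_n$, so the composites $\eta \of \ul\phi = \bar\eta \of \cocart$ are \emph{vertical} cells $d \of f_0 \Rar l \of f_n$ over $X$. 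By \exref{vertical cells defining left Kan extensions} a vertical cell is (weakly) left Kan if and only if it is invertible, so your two claimed equivalences would force $d \of f_0 \iso l \of f_n$ whenever either extension exists. Taking $\ul\phi = (\cocart)$ itself shows this is false: by \propref{weak left Kan extensions along companions}, $\bar\eta$ being weakly left Kan means that $\bar\eta \of \cocart$ exhibits an ordinary left Kan extension along $f_n$ in $V(\K)$, which is not an isomorphism in general. Hence ``$\bar\eta$ is (weakly) left Kan precisely when $\bar\eta \of \cocart$ is'' is simply wrong, and the same for the pair $\eta$ and $\eta \of \ul\phi$.

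The paper avoids exactly this by bringing in the \emph{cartesian} cell $\cart$ of the companion as well: it forms the path $(\phi_1, \dotsc, \phi_n, \cart)$ from $f_{n*}$ to $\ul J$, whose rightmost vertical boundary is now an identity, shows that this path is right (respectively weakly) nullary-cocartesian by applying \lemref{pasting lemma for cocartesian paths} to the companion identity $(\phi_1, \dotsc, \phi_n, \cart) \of \cocart = \ul\phi$, and then applies \lemref{vertical pasting lemma} once, legitimately, to that path; existence transfers in both directions because cells $f_{n*} \Rar M$ with source $d \of f_0$ factor uniquely through it, and the common vertical target gives the isomorphism of the two extensions. If you want to keep your two-step outline, you would have to compare the full universal properties directly — test cells $\ul J \conc \ul H \Rar M$ against $f_{n*} \conc \ul H \Rar M$ for arbitrary $\ul H$ and arbitrary vertical targets — using the cocartesianness of $\ul\phi$ extended on the right by $\cart$ and identity cells; that is essentially the paper's argument rather than a shortcut around it.
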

	\begin{proof}
		Let $\cart$ and $\cocart$ denote the cartesian and cocartesian cells that define the companion $f_{n*}$, as in \lemref{companion identities lemma}. Applying the pasting lemma for nullary"/cocartesian paths (\lemref{pasting lemma for cocartesian paths}) to the identity \mbox{$(\phi_1, \dotsc, \phi_n, \cart) \of \cocart = (\phi_1, \dotsc, \phi_n)$}, which follows from the companion identities (\lemref{companion identities lemma}), we find that the path \mbox{$\cell{(\phi_1, \dotsc, \phi_n, \cart)}{f_{n*}}{\ul J}$}, which has vertical source $f_0$, is right (respectively weakly) nullary"/cocartesian because $\ul\phi$ and $\cocart$ are so. The result follows from applying the vertical pasting lemma for left Kan extensions (\lemref{vertical pasting lemma}) to the path $(\phi_1, \dotsc, \phi_n, \cart)$.
	\end{proof}
	
	\subsection{Pointwise Kan extensions in terms of pointwise weak Kan extensions}
	The following theorem is the main result of this section.
	\begin{theorem} \label{pointwise Kan extensions in terms of pointwise weak Kan extensions}
		In an augmented virtual double category that has restrictions on the right (\defref{augmented virtual equipment}) as well as left nullary"/cocartesian paths of $(0,1)$"/ary cells (\defref{cocartesian path of (0,1)-ary cells}), all pointwise weakly left Kan cells are pointwise left Kan (\defref{pointwise left Kan extension}).
	\end{theorem}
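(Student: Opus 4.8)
The plan is to upgrade the universal property of a pointwise weakly left Kan cell from that of \defref{weak left Kan extension} to that of \defref{left Kan extension}, and then to bootstrap the ``pointwise'' clause itself. So let $\cell\eta{\ul J}M$ be pointwise weakly left Kan, with $\ul J = (J_1, \dotsc, J_n)$ of length $n \geq 1$ and vertical boundaries $\map d{A_0}M$ and $\map l{A_n}M$.

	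First I would show that $\eta$ is left Kan. Let $\hmap{\ul H = (H_1, \dotsc, H_m)}{A_n}{B_m}$ be any path; when $m = 0$ the required factorisation property is exactly the weakly left Kan property, so assume $m \geq 1$. Using the hypothesis, I would choose a left nullary-cocartesian path $\ul\psi = (\psi_1, \dotsc, \psi_m)$ of $(0,1)$-ary cells for $\ul H$ (\defref{cocartesian path of (0,1)-ary cells}), with apex $X$ and vertical source $\map{g_0}X{A_n}$ (the left leg of $\psi_1$). Since $\K$ has restrictions on the right (\augdefref{4.10}) the restriction $\hmap{J_n(\id, g_0)}{A_{n'}}X$ exists; write $\cart$ for its defining cartesian cell. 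Applying the ``left'' clause of the definition of left nullary-cocartesian path to the path $(J_1, \dotsc, J_n)$ — which ends at the codomain $A_n$ of $g_0$ and for which $J_n(\id, g_0)$ exists — the concatenation
	\begin{displaymath}
		\ul\Psi \dfn (\id_{J_1}, \dotsc, \id_{J_{n'}}, \cart, \psi_1, \dotsc, \psi_m) \colon (J_1, \dotsc, J_{n'}, J_n(\id, g_0)) \Rar \ul J \conc \ul H
	\end{displaymath}
	is weakly nullary-cocartesian. Now I would invoke \lemref{unique factorisations factorised through cocartesian paths} applied to the cell $\eta$ with $\ul\chi \dfn (\id_{J_1}, \dotsc, \id_{J_{n'}}, \cart)$ and $\ul\zeta \dfn \ul\psi$, so that $\ul\chi \conc \ul\zeta = \ul\Psi$ and $\ul\zeta$ are both weakly nullary-cocartesian. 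The point is that $\eta \of \ul\chi$ is precisely the composite expressing that $\eta$ restricts along $g_0$ in the sense of \defref{pointwise left Kan extension}; since $\eta$ is pointwise weakly left Kan and $J_n(\id, g_0)$ exists, $\eta \of \ul\chi$ is weakly left Kan, so every nullary cell over $(J_1, \dotsc, J_{n'}, J_n(\id, g_0))$ factors uniquely through it. The cited lemma then delivers that every nullary cell $\cell\phi{\ul J \conc \ul H}M$ factors uniquely as $\phi = \eta \hc \phi'$ with $\cell{\phi'}{\ul H}M$. As $\ul H$ was arbitrary, $\eta$ is left Kan.

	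Finally I would check that $\eta$ is \emph{pointwise} left Kan, that is, that it restricts — now in the sense of \defref{pointwise left Kan extension} for left Kan cells — along every $\map fB{A_n}$ for which $J_n(\id, f)$ exists. Fixing such an $f$, let $\xi$ be the composite of $\eta$ with $(\id_{J_1}, \dotsc, \id_{J_{n'}}, \cart_f)$, where $\cart_f$ defines $J_n(\id, f)$. Since $\eta$ is pointwise weakly left Kan, $\xi$ is weakly left Kan, and by \lemref{restrictions of pointwise left Kan extensions} it is even pointwise weakly left Kan; applying the first part of the theorem to $\xi$ shows that $\xi$ is left Kan, which is exactly the assertion that $\eta$ restricts along $f$. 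Hence $\eta$ is pointwise left Kan. I expect the only delicate points to be purely organisational: verifying that $\ul\Psi$ is well-typed and that $\eta \of \ul\chi$ is literally the restriction of $\eta$ along $g_0$; once these are in place the result drops out of \lemref{unique factorisations factorised through cocartesian paths}.
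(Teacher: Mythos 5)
Your proposal is correct and follows essentially the same route as the paper's proof: both reduce the left Kan property of $\eta$ to the weakly left Kan property of its restriction $\eta \of (\id_{J_1}, \dotsc, \id_{J_{n'}}, \cart)$ along the source leg $g_0$ of a left nullary-cocartesian path of $(0,1)$-ary cells for $\ul H$, via \lemref{unique factorisations factorised through cocartesian paths} applied to exactly the decomposition $\ul\chi = (\id_{J_1}, \dotsc, \id_{J_{n'}}, \cart)$, $\ul\zeta = \ul\psi$, and then bootstrap the pointwise clause using \lemref{restrictions of pointwise left Kan extensions}. The only difference is presentational (your explicit handling of $m = 0$ and of the well-typedness of $\ul\Psi$), which the paper leaves implicit.
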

	\begin{proof}
		Consider a pointwise weakly left Kan cell $\eta$ as in the composite on the left below. We will first prove that $\eta$ is left Kan (\defref{left Kan extension}), that is any cell $\phi$ as in the middle below factors uniquely through $\eta$. To do so let $\ul\zeta = (\zeta_1, \dotsc, \zeta_m)$ be a left nullary"/cocartesian path of $(0,1)$"/ary cells for the path $\ul H = (H_1, \dotsc, H_m)$ and denote by $\map{h_0}X{A_n}$ and $\map{h_m}X{B_m}$ the vertical source of $\zeta_1$ and the vertical target of $\zeta_m$. Under precomposition with the weakly cocartesian path $(\id_{J_1}, \dotsc, \id_{J_{n'}}, \cart, \zeta_1, \dotsc, \zeta_n)$, where $\cart$ defines $J_n(\id, h_0)$, cells $\phi$ as in the middle below correspond to cells $\psi$ as on the right. Applying \lemref{unique factorisations factorised through cocartesian paths} to $\ul\chi = (\id_{J_1}, \dotsc, \id_{J_{n'}}, \cart)$ and $\ul\zeta$ we find that, under this correspondence, the cells $\phi$ factor uniquely through $\eta$ precisely if the cells $\psi$ factor uniquely through the composite on the the left below, as vertical cells $l \of h_0 \Rar k \of h_n$. The latter factorisations exist by the assumption that $\eta$ is pointwise weakly left Kan, so that the existence of the factorisations of the cells $\phi$ through $\eta$ follows as required.
		\begin{displaymath}
			\begin{tikzpicture}[baseline, yshift=1.625em]
				\matrix(m)[math35]{A_0 & A_1 & A_{n'} & X \\ A_0 & A_1 & A_{n'} & A_n \\};
				\draw	([yshift=-6.5em]$(m-1-1)!0.5!(m-1-4)$) node (M) {$M$};
				\path[map]	(m-1-1) edge[barred] node[above] {$J_1$} (m-1-2)				
										(m-1-3) edge[barred] node[above, shift={(-3pt,2pt)}] {$J_n(\id, h_0)$} (m-1-4)
										(m-1-4) edge node[right] {$h_0$} (m-2-4)
										(m-2-4) edge[transform canvas={yshift=-2pt}] node[below right] {$l$} (M)
										(m-2-1) edge[barred] node[below, inner sep=2.5pt] {$J_1$} (m-2-2)
														edge[transform canvas={yshift=-2pt}] node[below left] {$d$} (M)
										(m-2-3) edge[barred] node[below, inner sep=2.5pt] {$J_n$} (m-2-4);
				\path				(m-1-1) edge[eq] (m-2-1)
										(m-1-2) edge[eq] (m-2-2)
										(m-1-3) edge[eq] (m-2-3);
				\path				($(m-2-1.south)!0.5!(m-2-4.south)$) edge[cell] node[right] {$\eta$} (M);
				\draw				($(m-1-2)!0.5!(m-2-3)$) node {$\dotsb$}
										($(m-1-3)!0.5!(m-2-4)$) node[font=\scriptsize] {$\cart$};
			\end{tikzpicture} \qquad \begin{tikzpicture}[baseline]
				\matrix(m)[math35]{A_0 & & B_m \\ & M & \\};
				\path[map]	(m-1-1) edge[barred] node[above] {$\ul J \conc \ul H$} (m-1-3)
														edge[transform canvas={yshift=-2pt}] node[below left] {$d$} (m-2-2)
										(m-1-3) edge[transform canvas={yshift=-2pt}] node[below right] {$k$} (m-2-2);
				\path				(m-1-2) edge[cell] node[right] {$\phi$} (m-2-2);
			\end{tikzpicture} \quad \begin{tikzpicture}[baseline]
				\matrix(m)[math35, yshift=1.625em]{A_0 & A_1 & A_{n'} & X \\};
				\draw				([yshift=-3.25em]$(m-1-1)!0.5!(m-1-4)$) node (M) {$M$};
				\path[map]	(m-1-1) edge[barred] node[above] {$J_1$} (m-1-2)
														edge[transform canvas={yshift=-2pt}] node[below left] {$d$} (M)
										(m-1-3) edge[barred] node[above, shift={(-3pt,2pt)}] {$J_n(\id, h_0)$} (m-1-4)
										(m-1-4) edge[transform canvas={yshift=-2pt}] node[below right] {$k\of h_n$} (M);
				\draw				($(m-1-2)!0.5!(m-1-3)$) node {$\dotsb$};
				\path				($(m-1-1.south)!0.5!(m-1-4.south)$) edge[cell] node[right] {$\psi$} (M);
			\end{tikzpicture}
		\end{displaymath}
		
		Finally, to prove that $\eta$ is in fact pointwise left Kan, we have to show that any composite \mbox{$\eta \of (\id_{J_1}, \dotsc, \id_{J_{n'}}, \cart)$}, where $\cart$ defines the restriction $J_n(\id, f)$ along any morphism $\map fB{A_n}$, is again left Kan. Since the latter composite is again pointwise weakly left Kan by \lemref{restrictions of pointwise left Kan extensions} this follows immediately from the argument above.
	\end{proof}
	
	\subsection{Pointwise left Kan extension along a companion}
	We can use the previous theorem to extend \propref{weak left Kan extensions along companions} to the pointwise case. This generalises the corresponding result for pseudo double categories, Proposition~5.12 of \cite{Koudenburg14a}.
	\begin{proposition} \label{pointwise left Kan extensions along companions}
		Consider the following factorisation in an augmented virtual double category $\K$ that has restrictions on the right (\defref{augmented virtual equipment}) as well as left nullary"/cocartesian tabulations (\defref{tabulation}).
		\begin{displaymath}
			\begin{tikzpicture}[textbaseline]
				\matrix(m)[math35, column sep={1.75em,between origins}]{& A & \\ & & B \\ & M & \\};
				\path[map]	(m-1-2) edge[bend left = 18] node[above right] {$j$} (m-2-3)
														edge[bend right = 45] node[left] {$d$} (m-3-2)
										(m-2-3) edge[bend left = 18] node[below right] {$l$} (m-3-2);
				\path[transform canvas={yshift=-1.625em}]	(m-1-2) edge[cell] node[right] {$\eta$} (m-2-2);
			\end{tikzpicture} \quad = \quad \begin{tikzpicture}[textbaseline]
    		\matrix(m)[math35, column sep={1.75em,between origins}]{& A & \\ A & & B \\ & M & \\};
    		\path[map]	(m-1-2) edge[transform canvas={xshift=2pt}] node[right] {$j$} (m-2-3)
    								(m-2-1) edge[barred] node[below, inner sep=2pt] {$j_*$} (m-2-3)
    												edge[transform canvas={xshift=-2pt}] node[left] {$d$} (m-3-2)
    								(m-2-3)	edge[transform canvas={xshift=2pt}] node[right] {$l$} (m-3-2);
    		\path				(m-1-2) edge[eq, transform canvas={xshift=-2pt}] (m-2-1);
    		\path				(m-2-2) edge[cell, transform canvas={yshift=0.1em}]	node[right, inner sep=3pt] {$\eta'$} (m-3-2);
    		\draw				([yshift=-0.5em]$(m-1-2)!0.5!(m-2-2)$) node[font=\scriptsize] {$\cocart$};
  		\end{tikzpicture}
  	\end{displaymath}
  	
  	The cell $\eta$ defines $l$ as the pointwise left Kan extension of $d$ along $j$ in the $2$"/category $V(\K)$ (\augexref{1.5}), in the sense of Section 4 of \cite{Street74b}, precisely if its factorisation $\eta'$ is pointwise left Kan in $\K$  (\defref{pointwise left Kan extension}).
	\end{proposition}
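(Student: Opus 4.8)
The plan is to use \thmref{pointwise Kan extensions in terms of pointwise weak Kan extensions} to pass to the \emph{weak} notion, and then to match, one vertical morphism $\map fCB$ at a time, the restriction of $\eta'$ along $f$ with the corresponding clause of Street's pointwise condition. Since $\K$ has restrictions on the right and all left nullary"/cocartesian tabulations, \cororef{cocartesian tabulations imply cocartesian paths of (0,1)-ary cells} shows it has all left nullary"/cocartesian paths of $(0,1)$"/ary cells, so \thmref{pointwise Kan extensions in terms of pointwise weak Kan extensions} applies: in $\K$ a cell is pointwise left Kan precisely if it is pointwise weakly left Kan. As a left Kan cell is weakly left Kan, and a restriction of a left Kan cell is a fortiori weakly left Kan, it follows that $\eta'$ is pointwise left Kan in $\K$ if and only if it is pointwise weakly left Kan; and, unwinding \defref{pointwise left Kan extension} — noting that $\K$ having restrictions on the right makes the restriction $\hmap{j_*(\id, f)}AC$ exist for every $\map fCB$ — this holds if and only if for each such $f$ the composite $\eta' \of \cart_f$ is weakly left Kan in $\K$, where $\cell{\cart_f}{j_*(\id, f)}{j_*}$ is the cartesian cell defining the restriction. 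It therefore remains to show, for a fixed $f$, that $\eta' \of \cart_f$ is weakly left Kan in $\K$ exactly when $l \of f$ is the left Kan extension of $d \of \pi_A$ along $\pi_C$ in $V(\K)$ (in the sense of \cite{Street72}), where $\map{\pi_A}{\tab{j_*(\id, f)}}A$ and $\map{\pi_C}{\tab{j_*(\id, f)}}C$ are the projections of a chosen left nullary"/cocartesian tabulation $\cell\pi{\tab{j_*(\id, f)}}{j_*(\id, f)}$: ranging over all $f$ these are precisely the clauses of Street's definition from Section~4 of \cite{Street74b}, since the universal properties of the companion $j_*$, of the restriction $j_*(\id, f)$ and of the tabulation $\pi$ together exhibit $\tab{j_*(\id, f)}$, equipped with $\pi_A$, $\pi_C$ and the $2$"/cell $\cell\theta{j \of \pi_A}{f \of \pi_C}$ corresponding to $\cart_f \of \pi$ under the universal property of $j_*$, as the comma object $j \downarrow f$ in $V(\K)$.

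For the fixed"/$f$ equivalence I would argue as follows. Being left nullary"/cocartesian, $\pi$ is in particular weakly nullary"/cocartesian, so precomposition with $\pi$ gives, for every $\map kCM$, a bijection between nullary cells $\cell\phi{j_*(\id, f)}M$ with vertical source $d$ and vertical target $k$ and nullary cells out of $\tab{j_*(\id, f)}$ with empty horizontal source and target $M$, vertical source $d \of \pi_A$ and vertical target $k \of \pi_C$; the latter are exactly the $2$"/cells $d \of \pi_A \Rar k \of \pi_C$ in $V(\K)$. Unwinding the definitions of $\eta'$ and of $\theta$, this bijection sends $\eta' \of \cart_f$ to the pasting composite of $\eta$ whiskered by $\pi_A$ followed by $l$ whiskered by $\theta$; and, by the interchange axioms (\auglemref{1.3}), it sends the postcomposite $(\eta' \of \cart_f) \hc \phi'$ with a vertical cell $\cell{\phi'}{l \of f}k$ to the vertical paste of that composite with $\phi'$ whiskered by $\pi_C$. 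Hence the universal property making $\eta' \of \cart_f$ weakly left Kan translates, under the bijection, into the universal property making the aforementioned pasting composite exhibit $l \of f$ as the left Kan extension of $d \of \pi_A$ along $\pi_C$ in $V(\K)$. Combining this, over all $\map fCB$, with the reduction of the first paragraph proves the proposition; specialising to $f = \id_B$ recovers \propref{weak left Kan extensions along companions}.

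The main obstacle I anticipate is the diagram"/chasing in the second paragraph: checking, with the right variances, that the cell $\theta$ extracted from $\cart_f \of \pi$ is indeed the universal comma $2$"/cell, that $\eta' \of \cart_f$ corresponds to the expected unit, and that horizontal composition of nullary cells with vertical cells becomes whiskering and vertical pasting of $2$"/cells in $V(\K)$ — each a short verification through the universal properties of $j_*$, of $j_*(\id, f)$ and of the (weakly nullary"/cocartesian) tabulation $\pi$, but collectively fiddly. A secondary point is to pin down the exact shape of Street's definition in \cite{Street74b} so that the comma object $j \downarrow f$ and the orientation of $\theta$ match it; once $\tab{j_*(\id, f)} \iso j \downarrow f$ in $V(\K)$ is established this is routine.
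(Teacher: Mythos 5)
Your proposal is correct and follows essentially the same route as the paper's proof: reduce to the pointwise \emph{weak} notion via \cororef{cocartesian tabulations imply cocartesian paths of (0,1)-ary cells} and \thmref{pointwise Kan extensions in terms of pointwise weak Kan extensions}, identify $\tab{j_*(\id,f)}$ with the comma object $j \slash f$ in $V(\K)$, and use the weak nullary-cocartesianness of the tabulation cell $\pi$ to transport the factorisation property of $\eta' \of \cart_f$ into Street's clause for each $f$. The "fiddly" verifications you flag are handled in the paper by a single commuting diagram of bijective assignments, together with the observation that the composite $\cocart \hc \cart \colon B(j,f) \Rar j_*$ is cartesian.
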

	Street's notion of pointwise Kan extension in a $2$-category uses the well known notion of \emph{comma object}; see e.g.\ Section~1 of \cite{Street74b}. Instead of recalling the definition of comma object we record the following straightforward lemma, which relates it to that of tabulation.
	\begin{lemma}
		Consider morphisms $\map fAC$ and $\map gBC$ in an augmented virtual double category $\K$. If both the cartesian cell and the tabulation below exist then their composite defines $\tab{C(f, g)}$ as the comma object $f \slash g$ of $f$ and $g$ in $V(\K)$.
		\begin{displaymath}
			\begin{tikzpicture}
    		\matrix(m)[math35, column sep={1.75em,between origins}]{& \tab{C(f,g)} & \\ A & & B \\ & C & \\};
    		\path[map]	(m-1-2)	edge[transform canvas={xshift=-2pt}] node[left] {$\pi_A$} (m-2-1)
    												edge[transform canvas={xshift=2pt}] node[right] {$\pi_B$} (m-2-3)
    								(m-2-1) edge[barred] node[below, inner sep=2pt] {$C(f,g)$} (m-2-3)
    												edge[transform canvas={xshift=-2pt}] node[below left] {$f$} (m-3-2)
    								(m-2-3)	edge[transform canvas={xshift=2pt}] node[right] {$g$} (m-3-2);
    		\path				(m-1-2) edge[cell, transform canvas={yshift=-0.25em}]	node[right] {$\pi$} (m-2-2);
    		\draw				([yshift=0.1em]$(m-2-2)!0.5!(m-3-2)$) node[font=\scriptsize] {$\cart$};
  		\end{tikzpicture}
		\end{displaymath}
	\end{lemma}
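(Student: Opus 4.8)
The plan is to exhibit $\tab{C(f,g)}$ directly as the comma object $f \slash g$, equipped with the two projections $\pi_A$, $\pi_B$ of the tabulation together with the $2$"/cell coming from the composite in the statement. First I would note that the composite nullary cell $\tab{C(f,g)} \Rightarrow C$ has empty horizontal source, so under \augexref{1.5} it corresponds to a vertical $2$"/cell $\cell\kappa{f \of \pi_A}{g \of \pi_B}$ in $V(\K)$; this $\kappa$ is the candidate comma $2$"/cell, and it remains to verify the $1$"/dimensional and $2$"/dimensional universal properties of a comma object, as recalled in Section~1 of \cite{Street74b}.

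For the $1$"/dimensional property, suppose given morphisms $\map aXA$, $\map bXB$ and a vertical $2$"/cell $\cell\alpha{f \of a}{g \of b}$. Viewing $\alpha$ as a nullary cell $X \Rightarrow C$ with empty horizontal source and vertical legs $f \of a$ and $g \of b$, the universal property of the cartesian cell $\cart$ defining the nullary restriction $C(f,g)$ (\augdefref{4.1}) yields a unique factorisation $\alpha = \cart \of \phi$ through $\cart$, where $\cell\phi{}{C(f,g)}$ is the $(0,1)$"/ary cell with legs $a$ and $b$. The $1$"/dimensional universal property of the tabulation $\tab{C(f,g)}$ (\defref{tabulation}) then factors $\phi$ uniquely through $\pi$ as a morphism $\map hX{\tab{C(f,g)}}$ with $\pi \of h = \phi$, whence $\pi_A \of h = a$, $\pi_B \of h = b$, and $\kappa$ whiskered by $h$ equals $\alpha$ upon composing $\pi \of h = \phi$ with $\cart$. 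Uniqueness of $h$ follows by running these two steps in reverse: for any competitor $h'$, the cell $\pi \of h'$ is a $(0,1)$"/ary cell with legs $a$, $b$ whose composite with $\cart$ is $\alpha$, so $\pi \of h' = \phi$ by uniqueness through $\cart$, and hence $h' = h$ by uniqueness through $\pi$.

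For the $2$"/dimensional property, given $\map{h_0}X{\tab{C(f,g)}}$, $\map{h_1}X{\tab{C(f,g)}}$ together with $2$"/cells $\cell{\xi_A}{\pi_A \of h_0}{\pi_A \of h_1}$ and $\cell{\xi_B}{\pi_B \of h_0}{\pi_B \of h_1}$ satisfying the comma compatibility condition relative to $\kappa$, I would invoke the $2$"/dimensional universal property of \defref{tabulation} with empty horizontal source $\ul H = (X)$ and with the two $(0,1)$"/ary cells taken to be $\pi \of h_0$ and $\pi \of h_1$. The one point requiring attention before doing so is that the comma compatibility condition for $\xi_A$, $\xi_B$ with respect to $\kappa$ must be shown equivalent to the compatibility identity demanded by \defref{tabulation}, which is phrased with respect to $\pi$ rather than $\kappa = \cart \of \pi$. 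This equivalence is obtained from the fact that vertical composition with the cartesian cell $\cart$ is a bijection between $(0,1)$"/ary cells $X \Rightarrow C(f,g)$ with the relevant legs and nullary cells $X \Rightarrow C$ with the corresponding legs, so that an equation between cells valued in $C(f,g)$ holds exactly when its image under $\cart$ does. The cell $\cell{\xi'}{h_0}{h_1}$ produced by \defref{tabulation}, with $\pi_A \of \xi' = \xi_A$ and $\pi_B \of \xi' = \xi_B$, is then the desired factorisation, unique by the uniqueness clause of \defref{tabulation}.

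The bulk of the argument, and the step I expect to be the main obstacle, is this last translation: reconciling the $2$"/dimensional universal property of tabulations, which is stated for an arbitrary path $\ul H$ and in terms of $\pi$, with the strictly $2$"/categorical $2$"/dimensional universal property of comma objects, and checking that the two compatibility identities genuinely correspond under composition with $\cart$. Once the composition conventions of augmented virtual double categories are unwound this is a routine diagram chase, but it is where care is needed; the remaining verifications are formal consequences of the universal properties of cartesian cells and tabulations already established in the excerpt.
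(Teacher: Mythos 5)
Your proof is correct and is precisely the argument the paper has in mind: the lemma is stated without proof (as ``straightforward''), and your verification — translating comma-object data through the bijection given by vertical composition with the cartesian cell defining the nullary restriction $C(f,g)$, and then invoking the one- and two-dimensional universal properties of the tabulation — supplies exactly the omitted details. The one step you rightly flag as needing care, namely that the comma compatibility condition for $\xi_A$, $\xi_B$ relative to $\kappa = \cart \of \pi$ corresponds under composition with $\cart$ to the compatibility identity of \defref{tabulation}, does go through via the interchange axioms (\auglemref{1.3}) and the uniqueness of factorisations through cartesian cells, as you indicate.
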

	\begin{proof}[of \propref{pointwise left Kan extensions along companions}]
		First notice that the assumptions imply that $\K$ has left nullary"/cocartesian paths of $(0,1)$"/ary cells by \cororef{cocartesian tabulations imply cocartesian paths of (0,1)-ary cells}, so that \thmref{pointwise Kan extensions in terms of pointwise weak Kan extensions} applies. Moreover notice that all restrictions $B(j, f)$, where $\map fCB$, exist in $\K$ as the restrictions $j_*(\id, f)$ (compare \auglemref{5.11}). Next consider composites $\zeta$ as on the left below, where $\map fCB$ varies, $\cart \of \pi$ defines the comma object $j \slash f$ as in the previous lemma, and $\eta = \eta' \of \cocart$. In each $\zeta$ the composite $\cell{\cocart \hc \cart}{B(j, f)}{j_*}$ is cartesian: using the pasting lemma for cartesian cells (\lemref{pasting lemma for cartesian cells}) this follows from the fact that composing it with the cartesian cell defining $j_*$, that corresponds to $\cocart$ (\lemref{companion identities lemma}), results in the cartesian cell defining $B(j, f)$. 
		\begin{displaymath}
			\zeta \dfn \begin{tikzpicture}[textbaseline]
				\matrix(m)[math35, column sep={1.75em,between origins}]
					{ & & j \slash f & \\
						& A & & C \\
						A & & B & \\
						& M & & \\ };
				\path[map]	(m-1-3) edge[transform canvas={xshift=-2pt}] node[left] {$\pi_A$} (m-2-2)
										(m-2-2) edge[transform canvas={xshift=1pt}] node[left, yshift=1pt] {$j$} (m-3-3);
				\path[map, transform canvas={xshift=2pt}]	(m-1-3) edge node[right] {$\pi_C$} (m-2-4)
										(m-2-4) edge node[right] {$f$} (m-3-3)
										(m-3-3) edge node[right] {$l$} (m-4-2);
				\path[map]	(m-2-2) edge[barred] node[below, inner sep=2pt] {$B(j, f)$} (m-2-4)
										(m-3-1)	edge[barred] node[below, inner sep=1pt] {$j_*$} (m-3-3)
														edge[transform canvas={xshift=-2pt}] node[below left] {$d$} (m-4-2);
				\path				(m-1-3) edge[cell, transform canvas={yshift=-0.25em}]	node[right] {$\pi$} (m-2-3)
										(m-3-2) edge[cell, transform canvas={yshift=0.25em}] node[right] {$\eta'$} (m-4-2)
										(m-2-2) edge[eq, transform canvas={xshift=-2pt}] (m-3-1);
    		\draw				([yshift=0.15em, xshift=1pt]$(m-2-3)!0.5!(m-3-3)$) node[font=\scriptsize] {$\cart$}
    								([yshift=-0.55em]$(m-2-2)!0.5!(m-3-2)$) node[font=\scriptsize] {$\cocart$};
			\end{tikzpicture} \qquad\qquad \begin{tikzpicture}[textbaseline]
				\matrix(m)[math35, column sep={1.75em,between origins}, xshift=-5em]{& C & \\ B & & \\ & M & \\};
				\path[map]	(m-1-2) edge[bend right = 18] node[above left] {$f$} (m-2-1)
														edge[bend left = 45] node[right] {$k$} (m-3-2)
										(m-2-1) edge[bend right = 18] node[below left] {$l$} (m-3-2);
				\path[transform canvas={yshift=-1.625em}]	(m-1-2) edge[cell] (m-2-2);
				
				\matrix(m)[math35, column sep={1.75em,between origins}, xshift=10em, yshift=-5em]{& j \slash f & \\ A & & C \\ & M & \\};
				\path[map]	(m-1-2) edge[bend right = 18] node[above left] {$\pi_A$} (m-2-1)
														edge[bend left = 18] node[above right] {$\pi_C$} (m-2-3)
										(m-2-1) edge[bend right = 18] node[below left] {$d$} (m-3-2)
										(m-2-3) edge[bend left = 18] node[below right] {$k$} (m-3-2);
				\path[transform canvas={yshift=-1.625em}]	(m-1-2) edge[cell] (m-2-2);
				
				\matrix(m)[math35, column sep={1.75em,between origins}, xshift=10em, yshift=5em]
  				{ A & & C \\ & M \\};
  			\path[map]  (m-1-1) edge[barred] node[above] {$B(j, f)$} (m-1-3)
														edge[transform canvas={xshift=-2pt}] node[left] {$d$} (m-2-2)
										(m-1-3)	edge[transform canvas={xshift=2pt}] node[right] {$k$} (m-2-2);
				\path				(m-1-2) edge[cell, transform canvas={yshift=0.25em}] (m-2-2);
				
				\draw[font=\Large]	(7.5em,5em) node {$\lbrace$}
										(12.675em,5em) node {$\rbrace$}
										(-7.5em,0) node {$\lbrace$}
										(-1.75em,0) node {$\rbrace$}
										(7.5em,-5em) node {$\lbrace$}
										(12.5em,-5em) node {$\rbrace$};
				\path[map]	(-2.5em,3em) edge[bend left=10] node[above left] {$\eta \hc (l \of \cart) \hc \dash$} (6.5em,5em)
										(10em,1.5em) edge node[right] {$\dash \of \pi$} (10em,-0em)
										(-2.5em,-3em) edge[bend right=10] node[below left] {$\zeta \hc (\dash \of \pi_C)$} (6.5em,-5em);
			\end{tikzpicture}
		\end{displaymath}
		It follows that the top assignment in the commutative diagram on the right above, between collections of cells in $\K$ as shown, is a bijection, for every $f$, precisely if $\eta'$ is pointwise weakly left Kan in $\K$. By \thmref{pointwise Kan extensions in terms of pointwise weak Kan extensions} the latter is equivalent to $\eta'$ being pointwise left Kan. Because the cell $\pi$ is weakly nullary"/cocartesian the assignment on the right is a bijection, and we conclude that $\eta'$ is pointwise left Kan precisely if the bottom assignment is a bijection, for every $f$. Since $\cart \of \pi$ defines the comma object $j \slash f$ the latter, by definition, precisely means that $\eta$ exhibits $l$ as the pointwise left Kan extension in the $2$"/category $V(\K)$, which completes the proof.
	\end{proof}
	
	The following is Example 2.24 of \cite{Koudenburg13}.
	\begin{example} \label{pointwise is stronger than enriched}
		Recall from \exref{tabulations of 2-profunctors} that tabulations of $2$-profunctors are in general not cocartesian. As a consequence the equivalence of \propref{pointwise left Kan extensions along companions} fails to hold in the unital virtual equipment $\enProf\Cat$. For a counterexample consider the cell $\eta'$ below, where $1$ is the terminal $2$-category and the collapsing $2$-functor $l$ on the right has the `free living' cell as source and the free living parallel pair of arrows as target; $\eta'$ is uniquely determined by its boundary. Using \exref{enriched left Kan extension} it is straightforward to check that $\eta'$ is left Kan in $\enProf\Cat$, so that the corresponding $2$"/natural transformation $\cell\eta{x'}{l \of x}$ defines $l$ as the enriched left Kan extension of $x'$ along $x$, in the sense of e.g.\ Section 4.1 of \cite{Kelly82}. In contrast the pointwise left Kan extension of $x'$ along $x$ in the $2$-category $\twoCat = V(\enProf\Cat)$, of $2$-categories, $2$-functors and $2$-natural transformations, and in the sense of Section~4 of \cite{Street74b}, does not exist. To see this one checks that the composite of $\eta$ with the cell defining the comma object $x\slash y$ does not define $ly$ as a left Kan extension.
		\begin{displaymath}
			\begin{tikzpicture}
				\matrix(m)[math35, column sep=2.5em, xshift=5em]{ & \\};
				\path[map]	(m-1-1) edge[bend left=40] (m-1-2)
														edge[bend right=40] (m-1-2);
				\fill[font=\scriptsize]	(m-1-1) circle (1pt) node[left, inner sep=2pt] {$x$}
							(m-1-2) circle (1pt) node[right, inner sep=2pt] {$y$};
				\path	($(m-1-1)!0.5!(m-1-2)+(0,0.8em)$) edge[cell] ($(m-1-1)!0.5!(m-1-2)-(0,0.8em)$);
				
				\matrix(m)[math35, column sep=2.5em, yshift=-5em]{ & \\};
				\path[map]	(m-1-1) edge[bend left=40] (m-1-2)
														edge[bend right=40] (m-1-2);
				\fill[font=\scriptsize]	(m-1-1) circle (1pt) node[left, inner sep=1pt] {$x'$}
							(m-1-2) circle (1pt) node[right, inner sep=2pt] {$y'$};
				
				\draw	(-5em,0) node {$1$};
				
				\draw[font=\Large]	(2.5em,0) node {$($}
							(7.5em,0) node {$)$}
							(-2.75em,-5em) node {$($}
							(2.75em,-5em) node {$)$};
				
				\path[map]	(-3em, 0) edge[barred] node[above] {$x_*$} (0.5em, 0)
										(-4.5em, -1.25em) edge node[below left] {$x'$} (-2.75em, -3.75em)
										(4.5em, -1.25em) edge node[below right] {$\text{(collapse onto $x'$)} \nfd l$} (2.75em, -3.75em);
				
				\path				(0 em,-1.25em) edge[cell] node[right] {$\eta'$} (0 em,-3.0em);
			\end{tikzpicture}
		\end{displaymath}
	\end{example}

	\section{Yoneda morphisms}\label{yoneda embeddings section}
	Having introduced notions of left Kan extension in augmented virtual double categories we now turn to formalising the classical notion of Yoneda embedding. Informally, in \defref{yoneda embedding} below we will take a `Yoneda morphism' in an augmented virtual double category to be a vertical morphism that is `dense' (as defined below) and that satifies an axiom that formalises the classical Yoneda's lemma (see \exref{enriched yoneda embedding summary} below). These two conditions are closely related to the axioms satisfied by the formal Yoneda embeddings comprising a `Yoneda structure' on a $2$-category, as introduced by Street and Walters in \cite{Street-Walters78}. In fact given an augmented virtual double category $\K$ the main theorem of this section, \thmref{yoneda structures}, describes relations between supplying a family of (weak) Yoneda morphisms in $\K$, in our sense (\defref{yoneda embedding}), and equipping the vertical $2$"/category $V(\K)$ (\augexref{1.5}) with a Yoneda structure, in the sense of \cite{Street-Walters78}, as well as equipping $V(\K)$ with two strengthenings of the latter that were introduced in \cite{Street-Walters78} and \cite{Weber07}.
	
	An advantage of our approach to formalising Yoneda embeddings is a consequence of our viewpoint of regarding all horizontal morphisms as being `admissible' (informally these are to be thought of as ``small in size''; see the \introref): this enables us to give a relatively simple definition of a \emph{single} Yoneda morphism, which satisfies a formalisation of Yoneda's lemma ``with respect to all horizontal morphisms''. In contrast the notions of Yoneda structure of \cite{Street-Walters78} and \cite{Weber07} consist of a \emph{collection} of Yoneda embeddings satisfying a formalisation of Yoneda's lemma with respect to a specified collection of `admissible' morphisms.
	
	\subsection{Density}
	Using the notion of (weak) left Kan extension, we start by defining the notion of (weak) density as one of the three equivalent conditions below. For augmented virtual double categories $\K$ satisfying the assumptions of \propref{pointwise left Kan extensions along companions} condition~(c) below coincides with the original $2$"/categorical notion of density, given in Section~3 of \cite{Street74a}, when applied to the vertical $2$"/category $V(\K)$; in particular density in $\enProf{(\Set, \Set')}$ (\augexref{2.6}) recovers the classical notion of density for functors (see e.g.\ Section~X.6 of \cite{MacLane98}). Similarly it follows from \exref{enriched left Kan extension} that, when considered in the unital virtual equipment $\enProf\V$ of $\V$"/profunctors, condition~(c) below coincides with the classical notion of density for enriched functors; see e.g.\ Section 5.1 of \cite{Kelly82}.
	\begin{lemma} \label{density lemma}
		For a morphism $\map fAM$ the following conditions are equivalent:
		\begin{enumerate}[label=\textup{(\alph*)}]
			\item	if a cell $\eta$, as on the left below, is cartesian then it is (weakly) left Kan (\defsref{weak left Kan extension}{left Kan extension});
			\item if a cell $\eta$ as below is cartesian then it is pointwise (weakly) left Kan (\defref{pointwise left Kan extension}).
		\end{enumerate}
		If the companion $\hmap{f_*}AM$ exists then the following condition is equivalent too:
		\begin{enumerate}
			\item[\textup{(c)}]	the cartesian cell defining $f_*$ on the right below is pointwise (weakly) left Kan.
		\end{enumerate}
			\begin{displaymath}
				\begin{tikzpicture}[baseline]
				\matrix(m)[math35, column sep={1.75em,between origins}]{A & & B \\ & M & \\};
				\path[map]	(m-1-1) edge[barred] node[above] {$J$} (m-1-3)
														edge[transform canvas={xshift=-2pt}] node[left] {$f$} (m-2-2)
										(m-1-3) edge[transform canvas={xshift=2pt}] node[right] {$l$} (m-2-2);
				\path[transform canvas={yshift=0.25em}]	(m-1-2) edge[cell] node[right] {$\eta$} (m-2-2);
			\end{tikzpicture} \qquad\qquad\qquad\qquad\qquad \begin{tikzpicture}[baseline]
					\matrix(m)[math35, column sep={1.75em,between origins}]{A & & M \\ & M & \\};
					\path[map]	(m-1-1) edge[barred] node[above] {$f_*$} (m-1-3)
															edge[transform canvas={xshift=-2pt}] node[left] {$f$} (m-2-2);
					\path				(m-1-3) edge[eq, transform canvas={xshift=2pt}] (m-2-2);
					\draw				([yshift=0.333em]$(m-1-2)!0.5!(m-2-2)$) node[font=\scriptsize] {$\cart$};
				\end{tikzpicture}
	    \end{displaymath}
	\end{lemma}
	\begin{definition} \label{density definition}
		A morphism $\map fAM$ is \emph{(weakly) dense} if the equivalent conditions above are satisfied.
	\end{definition}
	Notice that the notions of weak density and density coincide in augmented virtual double categories that satisfy the assumptions of \thmref{pointwise Kan extensions in terms of pointwise weak Kan extensions}, as do the notions of weak Yoneda morphism and Yoneda morphism below.
	\begin{proof}[of \lemref{density lemma}]
		(b) $\Rightarrow$ (a) is clear. For the converse assume that (a) holds: we have to show that any composite as on the left"/hand side below is (weakly) left Kan. Since $\eta$ is cartesian the composite is cartesian too by the pasting lemma (\lemref{pasting lemma for cartesian cells}), so that it is (weakly) left Kan by (a).
		\begin{displaymath}
			\begin{tikzpicture}[textbaseline]
  			\matrix(m)[math35, column sep={1.75em,between origins}]{A & & C \\ A & & B \\ & M & \\};
  			\path[map]	(m-1-1) edge[barred] node[above] {$J(\id, g)$} (m-1-3)
  									(m-1-3) edge node[right] {$g$} (m-2-3)
  									(m-2-1) edge[barred] node[above] {$J$} (m-2-3)
  													edge[transform canvas={xshift=-2pt}] node[left] {$f$} (m-3-2)
  									(m-2-3) edge[transform canvas={xshift=2pt}] node[right] {$l$} (m-3-2);
  			\path				(m-1-1) edge[eq] (m-2-1);
  			\path[transform canvas={yshift=0.25em}]	(m-2-2) edge[cell] node[right, inner sep=3pt] {$\eta$} (m-3-2);
        \draw       ([yshift=0.25em]$(m-1-1)!0.5!(m-2-3)$) node[font=\scriptsize] {$\cart$};
  		\end{tikzpicture} \qquad\qquad\qquad\qquad\qquad \begin{tikzpicture}[textbaseline]
				\matrix(m)[math35, column sep={1.75em,between origins}]{A & & B \\ & M & \\};
				\path[map]	(m-1-1) edge[barred] node[above] {$J$} (m-1-3)
														edge[transform canvas={xshift=-2pt}] node[left] {$f$} (m-2-2)
										(m-1-3) edge[transform canvas={xshift=2pt}] node[right] {$l$} (m-2-2);
				\path[transform canvas={yshift=0.25em}]	(m-1-2) edge[cell] node[right, inner sep=3pt] {$\eta$} (m-2-2);
			\end{tikzpicture} \quad = \quad \begin{tikzpicture}[textbaseline]
  			\matrix(m)[math35, column sep={1.75em,between origins}]{A & & B \\ A & & M \\ & M & \\};
  			\path[map]	(m-1-1) edge[barred] node[above] {$J$} (m-1-3)
  									(m-1-3) edge node[right] {$l$} (m-2-3)
  									(m-2-1) edge[barred] node[below] {$f_*$} (m-2-3)
  													edge[transform canvas={xshift=-2pt}] node[left] {$f$} (m-3-2);
  			\path				(m-1-1) edge[eq] (m-2-1)
  									(m-2-3) edge[eq, transform canvas={xshift=2pt}] (m-3-2);
  			\path[transform canvas={xshift=1.75em}]
        		        (m-1-1) edge[cell] node[right] {$\eta'$} (m-2-1);
        \draw       ([yshift=0.25em]$(m-2-2)!0.5!(m-3-2)$) node[font=\scriptsize] {$\cart$};
  		\end{tikzpicture}
	  \end{displaymath}
		
		(b) $\Rightarrow$ (c) is clear. For the converse consider a cartesian cell $\eta$ as on the right above and let $\eta'$ be its factorisation as shown; $\eta'$ is cartesian because $\eta$ is so, by the pasting lemma. Assuming (c) it follows from \lemref{restrictions of pointwise left Kan extensions} that $\eta$ is pointwise (weakly) left Kan.
	\end{proof}
	
	\subsection{Yoneda morphisms}
	(Weak) Yoneda morphisms in an augmented virtual double category are (weakly) dense morphisms satisfying a `Yoneda axiom' as follows.
	\begin{definition} \label{yoneda embedding}
		A (weakly) dense morphism $\map\yon A{\ps A}$ is called a \emph{(weak) Yoneda morphism} if it satisfies the \emph{Yoneda axiom}: for every horizontal morphism $\hmap JAB$ there exists a vertical morphism $\map{\cur J}B{\ps A}$ equipped with a cartesian cell
		\begin{displaymath}
			\begin{tikzpicture}
				\matrix(m)[math35, column sep={1.75em,between origins}]{A & & B \\ & \ps A. & \\};
				\path[map]	(m-1-1) edge[barred] node[above] {$J$} (m-1-3)
														edge[transform canvas={xshift=-2pt}] node[left] {$\yon$} (m-2-2)
										(m-1-3) edge[transform canvas={xshift=2pt}] node[right] {$\cur J$} (m-2-2);
				\draw				([yshift=0.333em]$(m-1-2)!0.5!(m-2-2)$) node[font=\scriptsize] {$\cart$};
			\end{tikzpicture}
	  \end{displaymath}
	\end{definition}
	
	 Notice that the (weak) density of $\map\yon A{\ps A}$ implies that the vertical morphism $\cur J$ is unique up to vertical isomorphism. In particular, if the companion $\hmap{\yon_*}A{\ps A}$ exists then its defining cartesian cell implies that $\cur{\yon_*} \iso \id_{\ps A}$. Since density implies weak density, any Yoneda morphism is a weak Yoneda morphism. We call the target $\ps A$ of $\yon$ the \emph{object of presheaves on $A$}, or \emph{presheaf object} for short. (Weak) Yoneda morphisms $\map\yon A{\ps A}$ such that all nullary restrictions $\ps A(\yon, f)$ exist, for any $\map fB{\ps A}$, are especially pleasant to work with; in that case we will say that $\yon$ \emph{admits nullary restrictions}. Notice that, when considered in an augmented virtual double category with restrictions on the right (\defref{augmented virtual equipment}), the latter condition reduces to the existence of the companion $\yon_*$, since $\ps A(\yon, f) \iso \yon_*(\id, f)$ by the pasting lemma for cartesian cells (\lemref{pasting lemma for cartesian cells}).
	
	Before giving examples of Yoneda morphisms in \exrref{yoneda embedding for unit V-category}{yoneda embeddings for internal preorders} below we first consider full and faithfulness of Yoneda morphisms $\map\yon A{\ps A}$, that is the cartesianness of their identity cell $\id_\yon$ (\defref{full and faithful morphism}). Full and faithfulness of $\yon$ does not hold in general; it is instead related to the unitality of $A$ as described by the lemma below. This is in contrast to the situation for Yoneda structures, in the sense of Street and Walters \cite{Street-Walters78}, whose Axiom~3 ensures that its Yoneda embeddings are full and faithful, in their sense.
	
	By a \emph{(weak) Yoneda embedding} we will mean a (weak) Yoneda morphism that is full and faithful. All examples of Yoneda morphisms below are Yoneda embeddings. In \cororef{uniqueness of yoneda embeddings} below we will see that (weak) Yoneda embeddings are unique up to equivalence provided that they admit nullary restrictions.
	\begin{lemma} \label{full and faithful yoneda embedding}
		Let $\map\yon A{\ps A}$ be a weak Yoneda morphism. The object $A$ is unital (\defref{cartesian cells}) if and only if both the restriction $\ps A(\yon, \yon)$ exists and $\yon$ is full and faithful (\defref{full and faithful morphism}). In that case $I_A \iso \ps A(\yon, \yon)$ so that $\cur I_A \iso \yon$.
	\end{lemma}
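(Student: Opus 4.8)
The plan is to prove the two implications separately, using the Yoneda axiom to convert statements about the horizontal unit $I_A$ into statements about the restriction $\ps A(\yon, \yon)$, and the (weak) density of $\yon$ to transfer universal properties between $\K$ and the associated extensions. First I would handle the direction where $A$ is assumed unital. Given the horizontal unit $\hmap{I_A}AA$, the Yoneda axiom applied to $J = I_A$ supplies a vertical morphism $\map{\cur{I_A}}A{\ps A}$ together with a cartesian cell $\cell\chi{I_A}{\ps A}$ with vertical boundaries $\yon$ and $\cur{I_A}$. This cartesian cell exhibits $\ps A(\yon, \cur{I_A})$ as existing and isomorphic to $I_A$ by the universal property of nullary restrictions (\augdefref{4.1}). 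The key step is then to identify $\cur{I_A}$ with $\yon$ itself: since $\yon$ is weakly dense and $I_A$ is the horizontal unit, applying \exref{Kan extensions along horizontal units} (the weak left Kan extension of $\yon$ along $I_A$ is $\yon$ up to isomorphism) together with the weak-density characterisation \lemref{density axioms}(a) — a cartesian cell out of $\yon$ is weakly left Kan — forces $\cur{I_A} \iso \yon$. Hence $\ps A(\yon, \yon) \iso I_A$ exists. Full and faithfulness of $\yon$ then follows because $\ps A(\yon, \yon) \iso I_A$ means the cartesian cell defining this restriction has the same boundary as $\id_\yon$; using the unit identities (\auglemref{5.9}) and the fact that $I_A$ satisfies the defining property of the horizontal unit, one checks that $\id_\yon$ itself must be cartesian, i.e.\ $\yon$ is full and faithful (\augdefref{4.12}).

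For the converse, assume that $\ps A(\yon, \yon)$ exists and that $\yon$ is full and faithful; I must show $A$ is unital, i.e.\ that the horizontal unit $\hmap{I_A}AA$ exists, and the natural candidate is $\ps A(\yon, \yon)$ itself. Write $\cell\chi{\ps A(\yon, \yon)}{\ps A}$ for its defining cartesian cell. The strategy is to verify that $\ps A(\yon, \yon)$ satisfies the defining universal property of the horizontal unit (\augdefref{4.1}): every cell with horizontal source a path through $A$ at that spot factors uniquely. Full and faithfulness of $\yon$ gives the cartesian cell $\id_\yon$, and composing $\chi$ with $\id_\yon$ appropriately, together with the pasting lemma for cartesian cells (\auglemref{4.15}), should show that $\ps A(\yon, \yon)$ has the required universal mapping property — the point being that a nullary cell into $A$ and a nullary cell into $\ps A$ factoring through $\yon$ correspond, by cartesianness of $\id_\yon$ and of $\chi$, to exactly the data the horizontal unit must classify. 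Once the horizontal unit is shown to exist as $\ps A(\yon, \yon)$, the isomorphism $I_A \iso \ps A(\yon, \yon)$ is immediate, and then $\cur{I_A} \iso \yon$ follows from the uniqueness up to vertical isomorphism of $\cur J$ (a consequence of weak density, as remarked after \defref{yoneda embedding}) applied to $J = I_A$, since $\chi$ exhibits $\yon$ as playing the role of $\cur{\ps A(\yon,\yon)} = \cur{I_A}$.

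The main obstacle I anticipate is the careful bookkeeping in the converse direction: showing that the cartesian cell $\chi$ defining $\ps A(\yon, \yon)$, when combined with $\id_\yon$, genuinely yields the full two-sided universal property of a horizontal unit rather than merely a one-sided restriction property. This requires using that $\yon$ is full and faithful to "cancel" $\yon$ on both sides, and invoking \auglemref{5.14} (full and faithful morphisms and horizontal units) — indeed that lemma essentially says $\ps A(\yon, \yon)$ is the horizontal unit of $A$ precisely when $\yon$ is full and faithful, so the bulk of the converse may reduce to a direct citation of \auglemref{5.14} together with \auglemref{4.15}. The forward direction's identification $\cur{I_A} \iso \yon$ via \exref{Kan extensions along horizontal units} and \lemref{density axioms} is the other place where care is needed, but it is a clean two-line argument once the right lemmas are lined up.
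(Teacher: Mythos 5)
Your proposal is correct and follows essentially the same route as the paper: in the forward direction you apply the Yoneda axiom to $I_A$, use weak density plus the fact that weak left Kan extension along a horizontal unit is the identity up to isomorphism to get $\cur{I_A} \iso \yon$ and hence $\ps A(\yon,\yon) \iso I_A$, then use the cartesianness of the nullary cell defining $I_A$ (\auglemref{5.9}) and the pasting lemma to deduce that $\id_\yon$ is cartesian, while the converse is, as you yourself note, a direct citation of \auglemref{5.14}. The only loose phrase is that the cartesian cell defining $\ps A(\yon,\yon)$ does not literally have the same boundary as $\id_\yon$ (its horizontal source is $I_A$, not empty); the precise step, as in the paper, is to compose it with the cartesian nullary unit cell and then cancel the resulting invertible vertical cell $\yon \iso \cur{I_A}$ against its inverse, using that cartesian cells are stable under composition with invertible vertical cells (\auglemref{4.15}).
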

	It follows that if $\yon$ admits nullary restrictions (\defref{yoneda embedding}) then the unitality of $A$ is equivalent to the full and faithfulness of $\yon$.
	\begin{proof}
		The `if'"/part follows immediately from \auglemref{5.14}. To prove the `only if'"/part assume that $A$ is unital with horizontal unit $\hmap{I_A}AA$ and consider the cartesian cell $\eps$ that defines $I_A$ as the restriction of $\ps A$ along $\yon$ and $\map{\cur I_A}A{\ps A}$, as supplied by the Yoneda axiom (\defref{yoneda embedding}). Weak density of $\yon$ (\defref{density definition}) implies that $\eps$ is weakly left Kan so that the composite $\eps \of \cocart$, where $\cocart$ denotes the weakly cocartesian cell defining $I_A$ (\lemref{companion identities lemma}), is an invertible vertical cell $\yon \iso \cur I_A$ by \exref{Kan extensions along horizontal units}. Composing $\eps$ with the inverse $\cur I_A \iso \yon$ we thus obtain a cartesian cell that defines $I_A$ as the restriction $\ps A(\yon, \yon)$. Moreover $\cocart$ is cartesian by \lemref{companion identities lemma} so that $\eps \of \cocart\colon \yon \iso \cur I_A$ is cartesian by the pasting lemma (\lemref{pasting lemma for cartesian cells}). Composing the latter with its own inverse we find that the identity cell $\id_\yon$ is cartesian, that is $\yon$ is full and faithful.
	\end{proof}
	
	\begin{remark} \label{dependence on full and faithfulness}
		We remark that several of our results concerning Yoneda morphisms do not require full and faithfulness. None of the results on exact cells (\secref{exact cells}) do. Several results on totality (\secref{totality section}) do, but not the main result, \thmref{total morphisms}, of that section. \thmref{presheaf objects as free cocompletions}, which describes the sense in which a Yoneda embedding $\map\yon A{\ps A}$ defines $\ps A$ as the `free small cocompletion' of $A$, requires $A$ to be unital.
	\end{remark}
	
	\subsection{Examples of enriched Yoneda embeddings}
	The following two examples show that the classical Yoneda embeddings $\yon_A$ for $\V$"/enriched categories $A$ satisfy \defref{yoneda embedding}. The first treates the simple case $\yon_I$ of the unit $\V$"/category $I$ (\exref{weighted colimits}). The second summarises how $\yon_I$ and the closed monoidal structure for $\V$"/categories can be used to ``generate'' Yoneda embeddings for other $\V$"/categories, by using the results of \secref{yoneda embeddings in a monoidal augmented virtual double category}. We will see in \exref{yoneda embedding for small enriched profunctors} that the Yoneda embeddings of the unital virtual double category $\ensProf\V$ of small $\V$"/profunctors (\augexref{2.8}), in the sense of \defref{yoneda embedding}, can be obtained by corestricting of the Yoneda embeddings $\yon_A$ to $\V$"/categories of small $\V$"/presheaves.
	\begin{example} \label{yoneda embedding for unit V-category}
		Let $\V$ be a monoidal category and let $\V \subset \V'$ be a universe enlargement (\exref{weighted colimits}). The Yoneda embedding $\map\yon I{\ps I}$ for the unit $\V$"/category $I$ (\exref{weighted colimits}) in the augmented virtual equipment $\enProf{(\V, \V')}$, of $\V$"/profunctors between $\V'$"/categories, is defined as follows. Since $\V'$ is closed monoidal $\V$ can be enriched in $\V'$, with hom $\V'$"/objects $\V(x, y) \dfn \brks{x, y}'$, and we take $\ps I \dfn \V$ to be the object of presheaves on $I$. As expected the Yoneda embedding $\map\yon I\V$ is the $\V'$"/functor that maps the single object $* \in I$ to $\yon(*) \dfn I$, the monoidal unit of $\V$: we will show that $\yon$ satisfies \defref{yoneda embedding}.
		
		First notice that the companion $\hmap{\yon_*}I\V$ exists in $\enProf{(\V, \V')}$ because for all $x \in \V$ the image $\yon_*(*, x) = \brks{I, x}' \iso x$ is (isomorphic to) a $\V$"/object (\augexref{4.6}). Hence to prove that $\yon$ is dense it suffices by \defref{density definition} to show that the cartesian cell defining $\yon_*$ is pointwise left Kan. By \exref{enriched left Kan extension} in turn it suffices to show that, for all $x \in \V$, the cartesian cell in the right"/hand side below defines $\map xI\V$ as the $y_*(\id, x)$"/weighted colimit of $\yon$ in $\enProf{\V'}$ which, by \exref{weighted colimits}, means that any cell $\phi$ below, with $H \in \V'$, factors uniquely as shown. It is easy to check that, under the isomorphisms $\brks{I, x}' \iso x$ and $\brks{I, y}' \iso y$, the factorisation $\map{\phi'}H{\brks{x, y}'}$ is the adjunct of $\map\phi{\brks{I, x}' \tens' H}{\brks{I, y}'}$.
		\begin{displaymath}
			\begin{tikzpicture}[textbaseline]
				\matrix(m)[math35]{I & I & I \\ & \V & \\};
				\path[map]	(m-1-1) edge[barred] node[above] {$y_*(\id, x)$} (m-1-2)
														edge[transform canvas={xshift=-2pt}] node[below left] {$\yon$} (m-2-2)
										(m-1-2) edge[barred] node[above] {$H$} (m-1-3)
										(m-1-3) edge[transform canvas={xshift=2pt}] node[below right] {$y$} (m-2-2);
				\path				(m-1-2) edge[cell] node[right] {$\phi$} (m-2-2);
			\end{tikzpicture} \quad = \quad \begin{tikzpicture}[textbaseline]
				\matrix(m)[math35]{I & I & I \\ & \V & \\};
				\path[map]	(m-1-1) edge[barred] node[above] {$y_*(\id, x)$} (m-1-2)
														edge[transform canvas={xshift=-2pt}] node[below left] {$\yon$} node[sloped, above, inner sep=6.5pt, font=\scriptsize, pos=0.55] {$\cart$} (m-2-2)
										(m-1-2) edge[barred] node[above] {$H$} (m-1-3)
														edge node[right, inner sep=2pt] {$x$} (m-2-2)
										(m-1-3) edge[transform canvas={xshift=2pt}] node[below right] {$y$} (m-2-2);
				\path[transform canvas={xshift=1.25em, yshift=0.5em}]	(m-1-2) edge[cell] node[right, inner sep=2pt] {$\phi'$} (m-2-2);
			\end{tikzpicture}
		\end{displaymath}
		
		Finally to prove that $\map\yon I\V$ satisfies the Yoneda axiom define, for any $\V$"/profunctor $\hmap JIB$, the $\V'$"/functor $\map{\cur J}B\V$ as follows. Set $\cur J(y) \dfn J(*, y)$ on objects and let the action $\map{\cur J}{B(y_1, y_2)}{\brks{J(*, y_1), J(*,y_2)}'}$ on hom"/objects be the adjunct of the action of $B$ on $J$. That $J$ is the restriction of $\V$ along $\yon$ and $\cur J$, as required, follows from the isomorphisms $\brks{I, \cur J(y)}' \iso \cur J(y) = J(*, y)$ which are natural in $y \in B$.
	\end{example}
	
	\begin{example} \label{enriched yoneda embedding summary}
		Consider a universe enlargement $\V \subset \V'$ (\exref{weighted colimits}) that is \emph{symmetric}, i.e.\ such that $\V$, $\V'$ and the embedding itself are symmetric monoidal; see Section~3.11 of \cite{Kelly82}. Writing $\brks{\op A, \V}'$ for the $\V'$"/category of $\V$"/presheaves on a $\V$"/category $A$ (see e.g.\ Section~2.4 of \cite{Kelly82}) consider the classical enriched Yoneda embedding $\map{\yon_A}A{\brks{\op A, \V}'}$ given by $\yon_Ax = A(\dash, x)$. In \exref{enriched yoneda embedding} below we will see that, using the closed monoidal structure of $\enProf{\V'}$, the Yoneda embedding $\map\yon I{\V}$ of the previous example ``generates'', for each $\V$"/category $A$, $\yon_A$ as a Yoneda embedding in $\enProf{(\V, \V')}$ and that $\yon_A$ admits nullary restrictions, both in the sense of \defref{yoneda embedding}. In particular any $\V$"/profunctor $\hmap JAB$ induces a $\V'$"/functor $\map{\cur J}B{\brks{\op A, \V}'}$ given by $\cur Jy = J(\dash, y)$, which is equipped with a cartesian cell (\augexref{4.6}) consisting of isomorphisms $J(x, y) \iso \inhom{\op A, \V}'(\yon x, \cur Jy)$ natural in $x \in A$ and $y \in B$. Restricted to $B = I$, so that $\V$"/profunctors $\hmap JAI$ can be identified with $\V$"/presheaves $\map J{\op A}\V$, these isomorphisms recover the classical enriched Yoneda lemma; see e.g.\ Section~2.4 of \cite{Kelly82}. Instantiating $\V \subset \V'$ by $\Set \subset \Set'$ recovers the original (unenriched) Yoneda lemma; see e.g.\ Proposition~I.1.4 of \cite{Grothendieck-Verdier72} or Section~III.2 of \cite{MacLane98}. Returning to general $\V \subset \V'$ and $B$, it follows from \propref{equivalence from yoneda embedding} below that the assignment $J \mapsto \cur J$ extends to an equivalence between $\V$"/profunctors $A \brar B$ and $\V'$"/functors $B \to \brks{\op A, \V}'$.
		
		In the case that the universe enlargement $\V \subset \V'$ does not admit a symmetric monoidal structure the Yoneda embedding $\map{\yon_A}A{\ps A}$ in $\enProf{(\V, \V')}$ can still be constructed, directly, for any $\V$"/category $A$. Indeed one can take $\V$"/presheaves on $A$, which are the objects of $\ps A$, to be $\V$"/profunctors of the form $\hmap pAI$, while taking the hom $\V'$"/object $\ps A(p, q)$ to be the ``end"/like'' limit of the diagram in $\V'$ that consists of all cospans of the form $\brks{px, qx}' \to \brks{A(x,y) \tens py, qx}' \leftarrow \brks{py, qy}'$, one for each pair $x, y \in A$, whose legs are induced by the actions of $A$ on $p$ and $q$. For details see Proposition~5.5 of \cite{Koudenburg15b}.
	\end{example}
	
	\begin{example}
		Consider the universe enlargement $\2 \subset \Set'$ of the category of truth values $\2 = (\bot \to \top)$, that maps $\bot$ to the empty set $\emptyset$ and $\top$ to the terminal set $1$. Applying \lemref{functors reflecting yoneda embeddings} below to the full inclusion $\ModRel \dfn \enProf\2 \hookrightarrow \enProf{(\2, \Set')}$ we find that the Yoneda embeddings in $\enProf{(\2, \Set')}$, as described in the previous example, reflect as Yoneda embeddings in the locally thin strict equipment $\ModRel$ of modular relations between preorders (\exref{closed modular relations}) as follows. The presheaf object $\ps A$ of a preorder $(A, \leq)$ is the set $\Dn A$ of \emph{downsets} $X \subseteq A$ in $A$, satisfying
		\begin{displaymath}
			x \leq y \quad \text{and} \quad y \in X \quad \Rightarrow \quad x \in X
		\end{displaymath}
		for all $x, y \in A$. The preorder on $\ps A \dfn \Dn A$ is given by inclusion and the Yoneda embedding $\map\yon A{\Dn A}$ is given by $\yon(x) = \downset x$, the downset generated by $x$. Under the equivalence of \propref{equivalence from yoneda embedding} below any modular relation $\hmap JAB$ corresponds to the order preserving morphism $\map{\cur J}B{\Dn A}$ given by $\cur J(y) = \dl Jy \dfn \set{x \in A \mid xJy}$, the preimage of $y \in B$ under $J$, as supplied by the Yoneda axiom (\defref{yoneda embedding}).
	\end{example}
	
	\begin{example} \label{closed-ordered closure space yoneda embedding}
		The Yoneda embedding for a closed"/ordered closure space $A$ (\exref{closed modular relations}) reflects along the forgetful functor $\map U\ClModRel\ModRel$ as follows. The closed subsets $\Cl A$ of $A$ induce a set $\Cl(\Dn A)$ of closed subsets of the preorder $\Dn A$ of downsets in $A$, making $\Dn A$ into a modular closure space (\exref{closed modular relations}), as follows. We take $\Cl(\Dn A)$ to be generated by (arbitrary intersections of) subsets of the form
		\begin{displaymath}
			V^+ \dfn \set{X \in \Dn A \mid X \isect V \neq \emptyset}
		\end{displaymath}
		where $V \in \Cl A$; compare the \emph{upper Vietoris topology} on the powerset $PA$ of a topological space $A$, see e.g.\ Section~1 of \cite{Clementino-Tholen97}. Since $\upset(V^+) = V^+$ for all $V \in \Cl A$, with respect to $\subseteq$ on $\Dn A$, it follows that each $W \in \Cl(\Dn A)$ is an upset, i.e.\ $\upset W = W$, showing that $\Dnp A \dfn \bigpars{\Dn A, \Cl(\Dn A), \subseteq}$ is a modular closure space. We call $\Dnp A$ the \emph{upper Vietoris space of downsets in $A$}.
		
		Next consider the Yoneda embedding $\map\yon A{\Dn A}$ in $\ModRel$ and, for each closed modular relation $\hmap JAB$ (\exref{closed modular relations}), its corresponding morphism $\hmap{\cur J}B{\Dn A}$; both as described in the previous example. Notice that $\yon$ as well as all $\cur J$ are continuous morphisms with respect to the closed subsets of $\Dnp A$: this follows from the closedness axioms satisfied by $A$ and $J$ (\exref{closed modular relations}) together with the fact that $\inv\yon(V^+) = \upset V$ and $\inv{(\cur J)}(V^+) = JV$ for all $V \in \Cl A$. Notice too that $\upset(\yon V) = V^+$ for $V \in \Cl A$, so that the companion $\hmap{\yon_*}A{\Dnp A}$ exists in $\ClModRel$. Using \lemref{functors reflecting yoneda embeddings} below we conclude that $\map\yon A{\Dnp A}$ forms a Yoneda embedding in $\ClModRel$. In particular, using \propref{equivalence from yoneda embedding} below, we obtain a correspondence between closed modular relations \mbox{$A \brar B$} and continuous maps $B \to \Dnp A$. Compare Proposition~3.1 of \cite{Clementino-Tholen97} which describes the related correspondence for (unordered) topological spaces, between closed relations $A \brar B$ and continuous maps $B \to P^+A$, where $P^+A$ is the powerset $PA$ equipped with the upper Vietoris topology. 
		
		Finally notice that, if $A$ itself is a modular closure space too then, again by the lemma below, we find that $\map\yon A{\Dnp A}$ also forms a Yoneda embedding in the full sub"/double category $\ClModRel_\textup m$ of $\ClModRel$ that is generated by all modular closure spaces.
	\end{example}
	
	Let $\map F\K\L$ be a functor and let $A \xrar f C \xbrar{\ul K} D \xlar g B$ be morphisms in $\K$, with $\lns{\ul K} \leq 1$. We say that $F$ \emph{creates} the restriction $\ul K(f, g)$ in $\K$ if, given a cartesian cell $\psi$ defining $(F\ul K)(Ff, Fg)$ in $\L$, there exists a unique cartesian cell $\phi$ in $\K$ that defines $\ul K(f, g)$ such that $F\phi = \psi$. Notice that if $(F\ul K)(Ff, Fg)$ exists then this means that $F$ preserves any cartesian cell that defines $\ul K(f, g)$. The following lemma is a straightforward consequence of \lemref{full and faithful functors reflect and preserve weakly left Kan cells}.
	\begin{lemma} \label{functors reflecting yoneda embeddings}
		A locally full and faithful functor (\augdefref{3.6}) $\map F\K\L$ reflects any (weak) Yoneda morphism $\map{\yon}A{\ps A}$ in $\K$, that is $\yon$ is a (weak) Yoneda morphism in $\K$ if $F\yon$ is so in $\L$, whenever the following conditions hold:
		\begin{enumerate}[label=\textup{(\alph*)}]
			\item $F$ preserves and creates restrictions of the form $\ps A(\yon, f)$, for any $\map fB{\ps A}$;
			\item $F$ is essentially full on morphisms of the form $\map{\cur{(FJ)}}{FB}{F\ps A}$, that is for every $\hmap JAB$ in $\K$ there exists $\map{\cur J}B{\ps A}$ in $\K$ such that $F(\cur J) \iso \cur{(FJ)}$, where $\cur{(FJ)}$ is given by the Yoneda axiom for $F\yon$ (\defref{yoneda embedding}).
		\end{enumerate}
	\end{lemma}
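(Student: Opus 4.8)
The plan is to verify the two defining conditions of a (weak) Yoneda morphism for $\yon$ in $\K$ --- (weak) density and the Yoneda axiom (\defref{yoneda embedding}) --- by transporting each one across $F$. Throughout I will use that a locally full and faithful functor reflects (weakly) left Kan cells (\lemref{full and faithful functors reflect and preserve weakly left Kan cells}), and also that such a functor reflects isomorphisms between parallel horizontal morphisms; the latter is a routine consequence of applying local fullness to the (invertible) cells witnessing such an isomorphism in $\L$ and local faithfulness to the composites that must be identities, using that $F$ preserves vertical composition and identity cells (\augdefref{3.1}).

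For (weak) density I would use characterisation (a) of \lemref{density axioms}: it suffices to show that every nullary cartesian cell $\eta$ in $\K$ with vertical source $\yon$ --- equivalently, every cartesian cell exhibiting its horizontal source $\hmap JAB$ as a nullary restriction $\ps A(\yon, l)$ for some $\map lB{\ps A}$ --- is (weakly) left Kan. Since $F$ preserves such restrictions by the ``preserves'' half of hypothesis (a), the cell $F\eta$ is cartesian in $\L$ and still has vertical source $F\yon$. As $F\yon$ is a (weak) Yoneda morphism it is (weakly) dense, so \lemref{density axioms}(a), now read for $F\yon$, makes $F\eta$ (weakly) left Kan in $\L$; reflection of (weakly) left Kan cells then yields that $\eta$ is (weakly) left Kan in $\K$. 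Hence $\yon$ is (weakly) dense.

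For the Yoneda axiom, fix a horizontal morphism $\hmap JAB$ in $\K$. The Yoneda axiom for $F\yon$ supplies $\map{\cur{(FJ)}}{FB}{F\ps A}$ together with a cartesian cell in $\L$ exhibiting $FJ$ as the restriction $(F\ps A)(F\yon, \cur{(FJ)})$. By hypothesis (b) choose $\map{\cur J}B{\ps A}$ in $\K$ with $F(\cur J) \iso \cur{(FJ)}$, and compose the preceding cartesian cell with this vertical isomorphism on its target boundary to obtain a cell $\psi$ in $\L$, again cartesian, that exhibits $FJ$ as $(F\ps A)(F\yon, F(\cur J))$; in particular the restriction $(F\ps A)(F\yon, F(\cur J))$ exists in $\L$. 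Now invoke the ``creates'' half of hypothesis (a), applied to the nullary restriction $\ps A(\yon, \cur J)$ and the cell $\psi$: it produces a (unique) cartesian cell $\phi$ in $\K$ defining $\ps A(\yon, \cur J)$ with $F\phi = \psi$. Its horizontal source --- the restriction $\ps A(\yon, \cur J)$ --- therefore has $F$-image equal to $FJ$, so by the iso-reflection remark of the first paragraph it is isomorphic to $J$ in $\K$. Composing $\phi$ with this isomorphism gives a nullary cartesian cell with horizontal source $J$, vertical source $\yon$ and vertical target $\cur J$, which is precisely the cell demanded by the Yoneda axiom. Thus $\yon$ is a (weak) Yoneda morphism in $\K$.

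The argument is essentially bookkeeping, and the one step that needs a little care is this last identification: reconciling the isomorphism $F(\cur J) \iso \cur{(FJ)}$ coming from (b) with the strict equality $F\phi = \psi$ built into the definition of ``$F$ creates the restriction'', and then recognising the created restriction $\ps A(\yon, \cur J)$ as (an iso copy of) $J$. Note also that the ``preserves'' clause of (a) already suffices for density while the ``creates'' clause is used only for the Yoneda axiom, and that, since density implies weak density (\lemref{density axioms}), the weak and non-weak forms of the statement are handled simultaneously by reading the parentheses consistently.
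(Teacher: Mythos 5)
Your proof is correct, and it is precisely the argument the paper has in mind: the paper omits a proof, remarking only that the lemma is a straightforward consequence of \lemref{full and faithful functors reflect and preserve weakly left Kan cells}, and you fill in exactly those details — preservation of the cartesian cells $\ps A(\yon, l)$ plus reflection of (weakly) left Kan cells for density, and creation together with essential fullness (and lifting the identification $FK = FJ$ through local full and faithfulness) for the Yoneda axiom.
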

	
	\subsection{Generic subobjects as Yoneda embeddings}
	Let $\E$ be a category with finite limits. The next two examples show that a generic subobject in $\E$ (see e.g.\ Section~A1.6 of \cite{Johnstone02}) is the same as a Yoneda embedding $\map{\yon_1}1{\ps 1}$ in $\ModRel(E)$ (\exref{internal modular relations}) for the terminal internal preorder $1$ whose presheaf object $\ps 1$ is an internal partial order (\exref{internal modular relations}). Assuming that $\E$ is cartesian closed \exref{yoneda embeddings for internal preorders} then describes how $\yon_1$ generates the other Yoneda embeddings of $\ModRel(\E)$, using the results of \secref{yoneda embeddings in a monoidal augmented virtual double category}.
	\begin{example} \label{yoneda embedding for the terminal object in ModRel(E)}
		Consider the unital virtual equipment $\ModRel(\E)$ of internal modular relations in a category $\E$ with finite limits (\exref{internal modular relations}) and write $1 \dfn (1, I_1)$ for the terminal internal preorder in $\E$. Given any internal preorder $B = (B, \beta)$ we call internal modular relations $\hmap J1B$ \emph{modular subobjects} of $B$. They consist of monomorphisms $\mono jJB$ equipped with a right action \mbox{$\cell\rho{(J, \beta)}J$}, that is a (necessarily unique) morphism $\map\rho{J \times_B \beta}J$ over $B$. Modular subobjects of a discrete internal preorder $(B, I_B)$ are precisely the subobjects of $B$ in the usual sense; see e.g.\ Section~A1.3 of \cite{Johnstone02}. Given another modular subobject \mbox{$\hmap K1{(D, \delta)}$} and an order preserving morphism $\map g{(B, \beta)}{(D, \delta)}$ notice that a cell $J \Rar K$, as labelled (a) below, exists in $\ModRel(\E)$ if and only if there exists a morphism $J \to K$ in $\E$ that makes the square labelled (b) below commute. Recalling from \exref{internal modular relations} the way restrictions are created in $\ModRel(\E)$ we find that the cell $J \Rar K$ is cartesian if and only if the latter square is a pullback square. 
		\begin{displaymath}
			\begin{tikzpicture}[baseline]
			\matrix(m)[math35]{1 & B \\ 1 & D \\};
				\path[map]	(m-1-1) edge[barred] node[above] {$J$} (m-1-2)
										(m-1-2) edge[ps] node[right] {$g$} (m-2-2)
										(m-2-1) edge[barred] node[below] {$K$} (m-2-2);
				\path				(m-1-1) edge[eq] (m-2-1);
				\path[transform canvas={xshift=1.75em}]	(m-1-1) edge[cell] (m-2-1);
				\draw	(0,-1.6cm) node {(a)};
			\end{tikzpicture} \qquad \qquad \begin{tikzpicture}[baseline]
			\matrix(m)[math35]{J & B \\ K & D \\};
				\path[map]	(m-1-1)	edge[mono] node[above] {$j$} (m-1-2)
														edge (m-2-1)
										(m-1-2) edge node[right] {$g$} (m-2-2)
										(m-2-1) edge[mono] node[below] {$k$} (m-2-2);
				\draw	(0,-1.6cm) node {(b)};
			\end{tikzpicture} \qquad \qquad \begin{tikzpicture}[baseline]
			\matrix(m)[math35]{J & B \\ \yon_* & \ps 1 \\};
				\path[map]	(m-1-1)	edge[mono] node[above] {$j$} (m-1-2)
														edge (m-2-1)
										(m-1-2) edge[ps] node[right] {$\cur J$} (m-2-2)
										(m-2-1) edge[mono] node[below] {$\yon_{*1}$} (m-2-2);
				\coordinate (hook) at ($(m-1-1)+(0.4,-0.4)$);
				\draw (hook)+(0,0.17) -- (hook) -- +(-0.17,0);
				\draw	(0,-1.6cm) node {(c)};
			\end{tikzpicture} \qquad \qquad \begin{tikzpicture}[baseline]
			\matrix(m)[math35]{J & B \\ \yon_* & \ps 1 \\};
				\path[map]	(m-1-1)	edge[mono] node[above] {$j$} (m-1-2)
														edge (m-2-1)
										(m-1-2) edge[ps] node[right] {$g$} (m-2-2)
										(m-2-1) edge[mono] node[below] {$\yon_{*1}$} (m-2-2);
				\draw	(0,-1.6cm) node {(d)};
			\end{tikzpicture}
		\end{displaymath}
		
		Next consider an order preserving morphism of the form $\map\yon 1{(\ps 1, \omega)}$. Its companion $\yon_* = (1 \xlar ! \yon_* \xrar{\yon_{*1}} \ps 1)$ is a modular subobject of $\ps 1$; recall that $\yon_*$ is the pullback of $1 \xrar \yon \ps 1 \xlar{\omega_0} \omega$ and that $\yon_{*1}$ is the composite $\yon_* \to \omega \xrar{\omega_1} \ps 1$. Using the previous we can translate the Yoneda and density axioms for $\yon$ (\defsref{density definition}{yoneda embedding}) in terms of (pullback) squares in $\E$, by factoring the (cartesian) cells considered in these axioms through $\yon_*$ and by appying the pasting lemma for cartesian cells (\lemref{pasting lemma for cartesian cells}). We find that $\yon$ is a weak Yoneda embedding (and hence a Yoneda embedding, by \defref{density definition} and \exref{tabulations of internal modular relations}) if and only if for each modular subobject $\mono jJB$ there exists an order preserving morphism $\map{\cur J}B{\ps 1}$ equipped with a pullback square in $\E$ as labelled (c) above, that is universal as follows: for every commuting square as labelled (d) above, with $\map gB{\ps 1}$ order preserving, there exists a (unique) vertical cell $\cur J \Rar g$ in $\ModRel(\E)$. Using the weak density of $\yon$ this implies that $\cur J$ is unique up to isomorphism among order preserving morphisms $\map gB\ps 1$ for which there exists a pullback square of the form as on the right above.
		
		In particular if $\ps 1$ is an internal partial order (\exref{internal modular relations}) then $\cur J$ is uniquely determined by $\hmap J1B$ so that, restricting the previous to discrete $B = (B, I_B)$ and thus to ordinary subobjects $J \rightarrowtail B$ in $\E$, we find that $\mono{\yon_{*1}}{\yon_*}{\ps 1}$ is a \emph{generic subobject} in $\E$, defining $\ps 1$ as the \emph{subobject classifier} for $\E$; see e.g.\ Section A1.6 of \cite{Johnstone02}. As discussed there it follows in this case that $\yon_* \iso 1$; since the cocartesian cell defining $\hmap{\yon_*}1{\ps 1}$ necessarily consists of this isomorphism we conclude that $\yon = \bigbrks{1 \iso \yon_* \xrar{\yon_{*1}} \ps 1}$ is itself a generic subobject in~$\E$, whose companion in $\ModRel(\E)$ can be taken to be $\yon_* \iso (1 \xlar{\id} 1 \xrar\yon \ps 1)$.
	\end{example}
	
	\begin{example} \label{generic subobjects are yoneda embeddings}
		In the previous example we saw that a (weak) Yoneda embedding $\map\yon 1{\ps 1}$ in $\ModRel(\E)$ is a generic subobject in $\E$ whenever $\ps 1$ is an internal partial order. Here we will show the converse: a generic subobject $\map\gso 1\Omega$ in $\E$ lifts as a Yoneda embedding $\map\gso{(1, I_1)}{(\Omega, \omega)}$ in $\ModRel(\E)$. To start, first recall from e.g.\ Lemma~A1.6.3(a) of \cite{Johnstone02} that $\Omega$ admits the structure of an internal Heyting semilattice in $\E$, which equips $\Omega$ with an internal partial ordering \mbox{$\omega = (\Omega \xlar{\omega_0} \omega \xrar{\omega_1} \Omega)$}; see also Examples~B2.3.8(a) therein. That $\gso$ is an order preserving morphism \mbox{$\map\gso{(1, I_1)}{(\Omega, \omega)}$} (\exref{internal modular relations}) follows immediately from the definition of $\omega$; see the proof of Lemma~A1.6.3(i) of \cite{Johnstone02}, where the ordering is denoted $\Omega_1$. The latter contains another assertion that we will use: any morphism $\map{(f, g)}A{\Omega \times \Omega}$ in $\E$, where $f$ and $\map gA\Omega$ classify subobjects $f^*(\gso)$ and $g^*(\gso)$ of $A$, factors through $\map{(\omega_0, \omega_1)}\omega{\Omega \times \Omega}$ if and only if $f^*(\gso) \leq g^*(\gso)$, that is there exists a morphism $f^*(\gso) \to g^*(\gso)$ in the slice category $\E \slash A$ of $\E$ over $A$. If $f$ and $g$ are order preserving morphisms $(A, \alpha) \to (\Omega, \omega)$ in $\ModRel(\E)$ then the former means that there exists a vertical cell $f \Rar g$ in $\ModRel(\E)$. For instance notice that \mbox{$\id_\Omega^*(\top) = \top \leq \id_\Omega = (\top \of \term)^*(\top)$} so that there exists a vertical cell $\cell\phi{\id_\Omega}{\top \of \term}$ in $\ModRel(\E)$.
		
		We can use the cell $\phi$ to prove that the companion $\gso_*$ is the modular relation \mbox{$(1 \xlar{\id} 1 \xrar{\gso} \Omega)$} in $\E$, as follows. Analogous to the construction of $\yon_*$ in the previous example recall that $\gso_*$ is the pullback of $1 \xrar\gso \Omega \xlar{\omega_0} \omega$, with non"/trivial leg $\gso_{*1}$ the composite $\gso_* \to \omega \xrar{\omega_1} \Omega$. Hence it suffices to show that the commuting square on the left below is a pullback. To do so consider any morphism $\map fX\omega$ with $\omega_0 \of f = \gso \of \term$; we have to show that $f = \tilde\omega \of \gso \of \term$. Regarding $\omega_1 \of f$ as an order preserving morphism \mbox{$\map{\omega_1 \of f}{(X, I_X)}{(\Omega, \omega)}$}, the assumption on $f$ implies that there exists a vertical cell $\gso \of \term \Rar \omega_1 \of f$ in $\ModRel(\E)$. Together with the vertical cell $\cell{\phi \of \omega_1 \of f}{\omega_1 \of f}{\gso \of \term}$ we conclude that $\omega_1 \of f \iso \gso \of \term$ so that, because $\Omega$ is an internal partial order, $\omega_1 \of f = \gso \of \term$ follows; see \exref{internal modular relations}. But this means that $(\omega_0, \omega_1) \of f = (\omega_1, \omega_0) \of \tilde\omega \of \gso \of \term$ from which, as required, $f = \tilde \omega \of \gso \of \term$ follows, again by using that $\Omega$ is an internal partial order.

		We can now show that $\gso$ is weakly dense in $\ModRel(\E)$ so that, since $\ModRel(\E)$ has cocartesian tabulations (\exref{tabulations of internal modular relations}), $\gso$ is in fact dense; see \defref{density definition}. Weak density of $\gso$ means that any nullary cartesian cell in $\ModRel(\E)$ as in the middle below is weakly left Kan (\defref{weak left Kan extension}), that is any nullary cell as in the right below induces a vertical cell $l \Rar k$ in $\ModRel(\E)$. As explained in the previous example, the factorisations of these nullary cells through $\gso_* = (1 \xlar{\id} 1 \xrar\gso \Omega)$ correspond to commutative squares in $\E$: the cartesian cell corresponds to the pullback square $l \of j = \gso \of \term$ while the other cell corresponds to the commuting square $k \of j = \gso \of \term$. The pullback square implies $l^*(\gso) = j$ while the other square factors through the pullback that defines the subobject $k^*(\gso)$. We conclude that $l^*(\gso) = j \leq k^*(\gso)$ so that the existence of the cell $l \Rar k$ follows as required. 
				\begin{displaymath}
			\begin{tikzpicture}[baseline]
				\matrix(m)[math35]{1 & \omega \\ 1 & \Omega \\};
				\path[map]	(m-1-1)	edge node[above] {$\tilde\omega \of \gso$} (m-1-2)
														edge node[left] {$\id$} (m-2-1)
										(m-1-2) edge[ps] node[right] {$\omega_0$} (m-2-2)
										(m-2-1) edge[mono] node[below] {$\gso$} (m-2-2);
			\end{tikzpicture} \qquad \qquad \qquad \qquad \begin{tikzpicture}[baseline]
				\matrix(m)[math35, column sep={1.75em,between origins}]{1 & & B \\ & \Omega & \\};
				\path[map]	(m-1-1) edge[barred] node[above] {$J$} (m-1-3)
														edge[transform canvas={xshift=-2pt}] node[left] {$\top$} (m-2-2)
										(m-1-3) edge[transform canvas={xshift=2pt}] node[right] {$l$} (m-2-2);
				\draw				([yshift=0.333em]$(m-1-2)!0.5!(m-2-2)$) node[font=\scriptsize] {$\cart$};
			\end{tikzpicture} \qquad \qquad \qquad \qquad \begin{tikzpicture}[baseline]
				\matrix(m)[math35, column sep={1.75em,between origins}]{1 & & B \\ & \Omega & \\};
				\path[map]	(m-1-1) edge[barred] node[above] {$J$} (m-1-3)
														edge[transform canvas={xshift=-2pt}] node[left] {$\gso$} (m-2-2)
										(m-1-3) edge[transform canvas={xshift=2pt}] node[right] {$k$} (m-2-2);
				\path[transform canvas={yshift=0.25em}]	(m-1-2) edge[cell] (m-2-2);
			\end{tikzpicture}
		\end{displaymath}
		
		It remains to show that $\top$ satisfies the Yoneda axiom in $\ModRel(\E)$ (\defref{yoneda embedding}). To do so let $\hmap J{(1, I_1)}{(B, \beta)}$ be a modular subobject and consider the morphism $\map lB\Omega$ that classifies $J \rightarrowtail B$. It suffices to show that $l$ is an order preserving morphism $(B, \beta) \to (\Omega, \omega)$: as explained in the previous example the pullback square defining $l$ then corresponds to the cartesian cell in the middle above in $\ModRel(\E)$. That $l$ is order preserving means that $\map{(l \of \beta_0, l \of \beta_1)}\beta{\Omega \times \Omega}$ factors through $(\omega_0, \omega_1)$. Composing the pullback square that defines $l$ with the action $\map\rho{J \times_B \beta}J$ we obtain the commuting square $\bigbrks{J \times_B \beta \to \beta \xrar{\beta_1} B \xrar l \Omega} = \gso \of \term$. The latter factors through the pullback square that defines $(l \of \beta_1)^*(\gso)$ so that $(l \of \beta_0)^*(\gso) = \bigbrks{J \times_B \beta \to \beta} \leq (l \of \beta_1)^*(\gso)$; hence $(l \of \beta_0, l \of \beta_1)$ factors through $(\omega_0, \omega_1)$ as required.
	\end{example}
	
	\begin{example} \label{yoneda embeddings for internal preorders}
		Assume that $\E$ is a cartesian closed category with finite limits. A Yoneda embedding $\map\yon 1{\ps 1}$ in $\ModRel(\E)$ (\exref{yoneda embedding for the terminal object in ModRel(E)}) induces Yoneda embeddings for all internal preorders $A = (A, \alpha)$ in $\E$ as follows. Writing $\dl A$ for the `horizontal dual' $\dl A \dfn (A, \dl\alpha)$ of $A$, with reversed internal order $\dl\alpha = (A \xlar{\alpha_1} \alpha \xrar{\alpha_0} A)$, we will see in \exref{horizontal dual of internal preorders} below that the internal order $\alpha$ can be considered as a modular subobject of $\dl A \times A$ which, under the correspondence of \exref{yoneda embedding for the terminal object in ModRel(E)}, induces an order preserving morphism $\dl A \times A \to \ps 1$. The cartesian closed structure on $\E$ induces a cartesian closed structure on the locally thin $2$"/category $\PreOrd(\E) = V\bigpars{\ModRel(\E)}$ (\exsref{internal modular relations}{ModRel(E) is closed cartesian monoidal}) of internal preorders, under which the latter corresponds to an order preserving morphism \mbox{$\map{\yon_A}A{\inhom{\dl A, \ps 1}}$}. In \exref{yoneda embeddings for internal preorders summary} we will see that $\yon_A$ forms a Yoneda embedding in $\ModRel(\E)$. In particular, using \propref{equivalence from yoneda embedding} below, internal modular relations $A \brar B$ correspond to order preserving morphisms $B \to \inhom{\dl A, \ps 1}$.
		
		Next assume that $\ps 1$ is an internal partial order so that $\yon$ defines $\ps 1$ as the subobject classifier $\Omega \dfn \ps 1$ for $\E$ (\exref{yoneda embedding for the terminal object in ModRel(E)}). In this case the latter correspondence recovers the natural equivalence considered on page~283 of \cite{Carboni-Street86}. Moreover, using that $I_A \iso \inhom{\dl A, \ps 1}(\yon, \yon)$ by \lemref{full and faithful yoneda embedding} we find that our $\map{\yon_A}A{\inhom{\dl A, \ps 1}}$ coincides with the Yoneda embedding $\map{y_A}A{\mathcal PA}$ considered there. In our terms the `membership ideal', considered on the same page, is the companion $\hmap{\in_A \dfn \yon_{A*}}A{\inhom{\dl A, \ps 1}}$. Finally restrict to a discrete internal preorder $A = (A, I_A)$ in the previous, so that $\dl A = A$ and, as we will see in \exref{ModRel(E) is closed cartesian monoidal}, $\inhom{A, \ps 1}$ is an internal partial order with the exponential ${\ps 1}^A$ as underlying $\E$"/object. Using arguments similar to the ones used in \exref{yoneda embedding for the terminal object in ModRel(E)} it is straightforward to see that, in this case, $\yon_A$ being a Yoneda embedding in $\ModRel(\E)$ implies that the internal relation underlying $\in_A = \yon_{A*}$ defines ${\ps 1}^A$ as the \emph{power object} of $A$ in $\E$ in the classical sense; see e.g.\ Section~A2.1 of \cite{Johnstone02}.
	\end{example}
	
	\subsection{Properties of Yoneda morphisms}
	The results below record some basic properties of Yoneda morphisms. The first of these follows immediately from the density of Yoneda morphisms (\defref{density definition}) and the vertical pasting lemma (\lemref{vertical pasting lemma}). 
	\begin{corollary} \label{left Kan extensions of yoneda embeddings along paths}
		Let $\map\yon C{\ps C}$ be a (weak) Yoneda morphism. Consider the composite below where $\cur K$ is supplied by the Yoneda axiom (\defref{yoneda embedding}). If the cell $\phi$ is right (respectively weakly) nullary"/cocartesian (\defref{cocartesian path}) then the composite is (weakly) left Kan (\defsref{weak left Kan extension}{left Kan extension}).
		
		If moreover $\phi$ restricts along $\map gXD$ (\defref{pointwise right cocartesian path}) then so does the (weakly) left Kan composite (\defref{pointwise left Kan extension}). If $\phi$ is pointwise right (respectively weakly) nullary"/cocartesian (\defref{pointwise right cocartesian path}) then the composite is pointwise (weakly) left Kan (\defref{pointwise left Kan extension}).
		\begin{displaymath}
			\begin{tikzpicture}
				\matrix(m)[math35, column sep={1.75em,between origins}]
					{ A_0 & & A_1 & \cdots & A_{n'} & & D \\
						& C & & & & D & \\
						& & & \ps C & & & \\ };
				\path[map]	(m-1-1) edge[barred] node[above] {$J_1$} (m-1-3)
														edge[transform canvas={xshift=-2pt}] node[left] {$f$} (m-2-2)
										(m-1-5) edge[barred] node[above] {$J_n$} (m-1-7)
										(m-2-2) edge[barred] node[below, inner sep=1.5pt] {$K$} (m-2-6)
														edge node[below left] {$\yon$} (m-3-4)
										(m-2-6) edge node[below right] {$\cur K$} (m-3-4);
				\path				(m-1-7) edge[transform canvas={xshift=1pt}, eq] (m-2-6)
										(m-1-4) edge[cell] node[right] {$\phi$} (m-2-4);
				\draw				([yshift=0.25em]$(m-2-4)!0.5!(m-3-4)$) node[font=\scriptsize] {$\cart$};
			\end{tikzpicture}
		\end{displaymath}
	\end{corollary}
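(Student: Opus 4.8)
The plan is to exhibit the composite displayed in the statement as the vertical composite, in the sense of the vertical pasting lemma (\lemref{vertical pasting lemma}), of the cartesian cell $\cell\chi K{\ps C}$ supplied by the Yoneda axiom (\defref{yoneda embedding}) --- the one with vertical source $\yon$ and vertical target $\cur K$ --- with the singleton path $\ul\phi \dfn (\phi)$. Once this identification is in place the corollary follows directly from \lemref{density axioms} and \lemref{vertical pasting lemma}.

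First I would note that a (weak) Yoneda morphism is by definition (weakly) dense, so condition~(b) of \lemref{density axioms} applies to $\chi$: being cartesian it is pointwise (weakly) left Kan (\defref{pointwise left Kan extension}), that is it exhibits $\cur K$ as the pointwise (weak) left Kan extension of $\yon$ along $K$; in particular $\chi$ is (weakly) left Kan (\defsref{weak left Kan extension}{left Kan extension}).

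Next, since the composite in the statement is exactly $\chi \of \ul\phi$, I would apply \lemref{vertical pasting lemma} with $\eta \dfn \chi$ and $\ul\phi$ as above. The hypothesis that $\phi$ is right (respectively weakly) nullary-cocartesian is exactly the hypothesis that the singleton path $\ul\phi$ is so, so the first assertion of \lemref{vertical pasting lemma}, together with $\chi$ being (weakly) left Kan, yields that $\chi \of \ul\phi$ is (weakly) left Kan. For the refinement concerning restrictions, I would observe that ``$\phi$ restricts along $\map gXD$'' in the sense of \defref{pointwise right cocartesian path} entails in particular that the restriction $K(\id, g)$ exists; since $\chi$ is pointwise (weakly) left Kan it therefore restricts along $g$ (\defref{pointwise left Kan extension}), and the ``moreover'' clause of the first assertion of \lemref{vertical pasting lemma} then shows that $\chi \of \ul\phi$ restricts along $g$ as well. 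Finally, if $\phi$ is pointwise right (respectively weakly) nullary-cocartesian then, $\chi$ being pointwise (weakly) left Kan, the second assertion of \lemref{vertical pasting lemma} gives that $\chi \of \ul\phi$ is pointwise (weakly) left Kan.

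No substantial obstacle is expected --- the work is essentially bookkeeping. The two points to watch are: first, treating the single horizontal morphism $K$ as a length-one path, so that the statement of \lemref{vertical pasting lemma} is literally applicable, and verifying that ``$\phi$ restricts along $g$'' does supply the existence of $K(\id, g)$ that makes $\chi$ restrict along $g$; second, keeping the ``weak'' and non-weak alternatives matched throughout, since the strength of density of $\yon$ (hence the strength of the left Kan property of $\chi$) must be paired with the strength of nullary-cocartesianness of $\phi$ in order to conclude the corresponding strength of the left Kan property of the composite.
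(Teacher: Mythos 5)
Your proof is correct and is exactly the argument the paper intends: the paper disposes of this corollary in one sentence, citing the density of Yoneda morphisms (\lemref{density axioms}(b)) and the vertical pasting lemma (\lemref{vertical pasting lemma}), which is precisely your decomposition of the composite as $\chi\of(\phi)$ with $\chi$ the pointwise (weakly) left Kan cartesian cell defining $\cur K$. Your handling of the restriction clause (extracting the existence of $K(\id,g)$ from ``$\phi$ restricts along $g$'' so that $\chi$ restricts along $g$) and of the weak/non-weak pairing is also the right bookkeeping.
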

	
	The lemma below is a variation of Lemma~3.2 of \cite{Weber07}. It implies that the notions of weak left Kan extension and pointwise weak left Kan extension coincide when extending along a weak Yoneda morphism, and that all four notions of left Kan extension coincide when extending along a Yoneda morphism.
	\begin{lemma} \label{weak left Kan extensions of yoneda embeddings}
		Let $\map\yon A{\ps A}$ be a (weak) Yoneda morphism. The following are equivalent for the cell $\eta$ below: \textup{(a)} $\eta$ is cartesian; \textup{(b)} $\eta$ is pointwise (weakly) left Kan (\defref{pointwise left Kan extension}); \textup{(c)} $\eta$ is weakly left Kan (\defref{weak left Kan extension}).
		\begin{displaymath}
			\begin{tikzpicture}
				\matrix(m)[math35, column sep={1.75em,between origins}]{A & & B \\ & \ps A & \\};
				\path[map]	(m-1-1) edge[barred] node[above] {$J$} (m-1-3)
														edge[transform canvas={xshift=-2pt}] node[left] {$\yon$} (m-2-2)
										(m-1-3) edge[transform canvas={xshift=2pt}] node[right] {$l$} (m-2-2);
				\path[transform canvas={yshift=0.25em}]	(m-1-2) edge[cell] node[right, inner sep=3pt] {$\eta$} (m-2-2);
			\end{tikzpicture}
	  \end{displaymath}
	\end{lemma}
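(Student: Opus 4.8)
The plan is to establish the cycle of implications (a) $\Rightarrow$ (b) $\Rightarrow$ (c) $\Rightarrow$ (a), of which only the last requires any work.

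For (a) $\Rightarrow$ (b) I would simply observe that a (weak) Yoneda morphism is by definition (weakly) dense, and that the cell $\eta$ is of exactly the form appearing in \lemref{density axioms}; hence condition~(b) of that lemma applies verbatim and a cartesian $\eta$ is pointwise (weakly) left Kan. The implication (b) $\Rightarrow$ (c) is then immediate from the definitions: by \defref{pointwise left Kan extension} a pointwise (weak) left Kan cell is in particular (weak) left Kan (the path $(J)$ has length $n = 1 \ges 1$, so the pointwise notion is available), and in the non"/weak case every left Kan cell is weakly left Kan by the remark following \defref{left Kan extension}.

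For (c) $\Rightarrow$ (a) I would first invoke the Yoneda axiom (\defref{yoneda embedding}) to obtain the vertical morphism $\map{\cur J}B{\ps A}$ together with a cartesian cell $\eta_0$ of the same form as $\eta$ but with vertical target $\cur J$. Since $\yon$ is at least weakly dense, \lemref{density axioms}(a) shows that $\eta_0$, being cartesian, is weakly left Kan, so it defines $\cur J$ as the weak left Kan extension of $\yon$ along $J$; by the hypothesis (c) the cell $\eta$ defines $l$ as a weak left Kan extension of $\yon$ along $J$ as well. Invoking the uniqueness of weak left Kan extensions (recorded immediately after \defref{weak left Kan extension}) I then obtain an invertible vertical cell $\cell\theta{\cur J}l$ with $\eta = \theta \of \eta_0$. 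Finally $\theta$ is cartesian because it is invertible (\augexref{4.4}), so $\eta = \theta \of \eta_0$ is a vertical composite of two cartesian cells and is therefore cartesian by the pasting lemma for cartesian cells (\auglemref{4.15}).

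The only real obstacle is the bookkeeping in (c) $\Rightarrow$ (a): one must check that $\eta$ is a valid test cell for the universal property of the weakly left Kan cell $\eta_0$ (and symmetrically that $\eta_0$ is one for $\eta$), so that the comparison cells exist, and that each composite of the two comparison cells reduces to an identity by uniqueness of factorisations, making $\theta$ invertible; and one must confirm that post"/composing a cartesian cell with an invertible vertical cell yields a cartesian cell, which is precisely the move already used in the proof of \lemref{full and faithful yoneda embedding}. Both verifications are routine.
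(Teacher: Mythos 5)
Your proposal is correct and follows essentially the same route as the paper: (a)\,$\Rightarrow$\,(b) via \lemref{density axioms}(b), (b)\,$\Rightarrow$\,(c) by unwinding definitions, and (c)\,$\Rightarrow$\,(a) by comparing $\eta$ with the cartesian cell defining $\cur J$ from the Yoneda axiom, obtaining an invertible comparison cell from uniqueness of weak left Kan extensions, and concluding cartesianness of $\eta$ since cartesian cells are stable under composition with invertible vertical cells. The only cosmetic quibble is that the composite $\eta = \eta_0 \hc \theta$ is a \emph{horizontal} composite with the vertical cell $\theta$ in the paper's conventions, not a vertical one, but this does not affect the argument.
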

	\begin{proof}
		(a) $\Rar$ (b) follows from the (weak) density of $\yon$ (\defref{density definition}) and (b)~$\Rar$~(c) is clear. To prove (c) $\Rar$ (a) consider the morphism $\map{\cur J}B{\ps A}$ supplied by the Yoneda axiom (\defref{yoneda embedding}), which comes equipped with a cartesian cell that defines $J$ as the nullary restriction of $\ps A$ along $\yon$ and $\cur J$. By (weak) density of $\yon$ this cartesian cell is weakly left Kan so that, assuming (c) and using the uniqueness of left Kan extensions, $\eta$ factors through it as a vertical isomorphism $\cur J \iso l$. Since cartesian cells are preserved by horizontal composition with vertical isomorphisms we conclude that the cell $\eta$ is cartesian too, as required.
	\end{proof}
	
	The following is a variation of Corollary~3.5(2) of \cite{Weber07}.
	\begin{proposition}
		Let $\map\yon A{\ps A}$ be a weak Yoneda embedding. If the composite below is invertible then $\map fAC$ is full and faithful (\defref{full and faithful morphism}). The converse holds whenever the restriction $C(f, f)$ exists.
		\begin{displaymath}
			\begin{tikzpicture}
    		\matrix(m)[math35, column sep={1.75em,between origins}]{& A & \\ A & & C \\ & \ps A & \\};
 	  		\path[map]	(m-1-2) edge[transform canvas={xshift=2pt}] node[right] {$f$} (m-2-3)
 	  								(m-2-1) edge[barred] node[below, inner sep=2pt] {$f_*$} (m-2-3)
 	  												edge[transform canvas={xshift=-2pt}] node[left] {$\yon$} (m-3-2)
 	  								(m-2-3)	edge[transform canvas={xshift=2pt}] node[right] {$\cur{f_*}$} (m-3-2);
 	  		\path				(m-1-2) edge[eq, transform canvas={xshift=-2pt}] (m-2-1);
 	  		\draw				([yshift=-0.5em]$(m-1-2)!0.5!(m-2-2)$) node[font=\scriptsize] {$\cocart$}
 	  								([yshift=0.25em]$(m-2-2)!0.5!(m-3-2)$) node[font=\scriptsize] {$\cart$};
 			\end{tikzpicture}
		\end{displaymath}
	\end{proposition}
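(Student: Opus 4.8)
The plan is to prove the two implications separately. Throughout, write $\chi$ for the cartesian cell that the Yoneda axiom (\defref{yoneda embedding}) supplies for the horizontal morphism $\hmap{f_*}AC$ --- so $\chi$ has source $f_*$, left leg $\yon$, right leg $\cur{f_*}$ and target $\ps A$ --- and write $\cocart$ and $\cart$ for the cocartesian and cartesian cells that define the companion $f_*$ (\augsecref{5}; \auglemref{5.4}), so that in particular $\cart \of \cocart = \id_f$. The composite in the statement is then $\theta \dfn \chi \of \cocart$, a vertical cell $\yon \Rar \cur{f_*} \of f$.

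For the implication ``$\theta$ invertible implies $f$ is full and faithful'' I would chase cells through the available cartesian data. Fix a path $\hmap{\ul L}XY$ together with morphisms $\map pXA$ and $\map qYA$. The cartesian cell $\cart$ identifies cells with horizontal source $\ul L$, legs $(f \of p,\, f \of q)$ and target $C$ with cells of horizontal source $\ul L$, legs $(p,\, f \of q)$ and target $f_*$; the cartesian cell $\chi$ then identifies these with cells of legs $(\yon \of p,\, \cur{f_*} \of f \of q)$ into $\ps A$; whiskering the right leg by the invertible $\theta$ identifies those with cells of legs $(\yon \of p,\, \yon \of q)$ into $\ps A$; and, since $\yon$ is full and faithful, the cartesian cell $\id_\yon$ identifies those in turn with cells of horizontal source $\ul L$, legs $(p,\, q)$ and target $A$. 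Tracing a cell around this chain of bijections shows that the composite bijection is precomposition with $\cart \of \cocart = \id_f$, whence $\id_f$ is cartesian and $f$ is full and faithful (\augdefref{4.12}). (If the pasting lemma for cartesian cells (\auglemref{4.15}) is available in the form ``$\chi$ and $\chi \of \cocart$ cartesian force $\cocart$ cartesian'' at this arity, there is a shortcut: $\theta$ is cartesian by \augexref{4.4}, so $\id_f = \cart \of \cocart$ is then a composite of cartesian cells.)

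For the converse, assume $f$ is full and faithful and $C(f,f)$ exists. Since $\yon$ is weakly dense, the cartesian cell $\chi$ is weakly left Kan (\lemref{density axioms}(a)), hence pointwise weakly left Kan by \lemref{weak left Kan extensions of yoneda embeddings}; in particular it restricts along $f$, the restriction $f_*(\id, f) \iso C(f,f)$ existing by \auglemref{5.11}. Applying \propref{pointwise left Kan extension along full and faithful map} to $\chi$ in the case $n = 0$ then makes the composite $\theta = \chi \of \cocart$ --- a vertical cell because $n = 0$ --- invertible. I expect the first implication to be the only real work here: the converse is essentially a direct appeal to \propref{pointwise left Kan extension along full and faithful map}, whereas extracting the cartesianness of $\id_f$ from the invertibility of the vertical cell $\theta$ calls for careful bookkeeping of the companion cells $\cocart$, $\cart$ and of the empty horizontal source of $\cocart$.
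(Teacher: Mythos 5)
Your main argument for the forward direction is correct and is essentially the paper's proof with the pasting lemma for cartesian cells (\auglemref{4.15}) unrolled: the paper observes that the composite in the statement, being the horizontal composite of the cartesian cell $\id_\yon$ with an invertible vertical cell, is itself cartesian, whence $\cocart$ is cartesian by the pasting lemma and hence so is $\id_f = \cart \of \cocart$; your chain of bijections re-derives exactly these factorisations (the step you gloss over --- that the chain composes to $\psi \mapsto \id_f \of \psi$ --- comes down to the interchange axioms of \auglemref{1.3} together with the companion identities, via $\chi \of \cocart \of \psi = (\yon \of \psi) \hc (\theta \of q)$). Your converse likewise matches the paper's, and is in fact more explicit in verifying the ``restricts along $f$'' hypothesis of \propref{pointwise left Kan extension along full and faithful map} by passing through \lemref{weak left Kan extensions of yoneda embeddings}. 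The one genuine error is in your parenthetical shortcut: \augexref{4.4} cannot assert that invertible vertical cells are cartesian in general, for then every identity cell $\id_f$ would be cartesian and every morphism would be full and faithful, rendering the proposition vacuous. Invertibility of the composite alone does not yield its cartesianness; what does is writing it as $\id_\yon$ horizontally composed with an invertible vertical cell, where $\id_\yon$ is cartesian precisely because $\yon$ is a weak Yoneda \emph{embedding} rather than a mere weak Yoneda morphism. This is where the full-and-faithfulness hypothesis on $\yon$ enters, and your main argument uses it correctly in the last link of the chain --- only the shortcut's citation is off.
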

	\begin{proof}
		Assume that the composite is invertible. Since the identity cell $\id_\yon$ is cartesian and because cartesian cells are preserved under horizontal composition with invertible vertical cells, it follows that the composite is cartesian as well. Hence the cocartesian cell $\cocart$ is cartesian by the pasting lemma for cartesian cells (\lemref{pasting lemma for cartesian cells}). Since $\id_f$ factors through the latter as a cartesian cell $f_* \Rar C$ (see \lemref{companion identities lemma}), the same pasting lemma implies that $\id_f$ is cartesian too, so that $f$ is full and faithful. The converse follows from the fact that the cartesian cell defining $\cur{f_*}$ is weakly left Kan, by the weak density of $\yon$ (\defref{density definition}), and \propref{pointwise left Kan extension along full and faithful map}.
	\end{proof}
	
	The following lemma is important to our study of Yoneda morphisms: its corollary \propref{left Kan extensions along a yoneda embedding in terms of left y-exact cells}, for instance, is used in \thmref{presheaf objects as free cocompletions} to give a condition that ensures the `cocompleteness' (\defref{cocompletion}) of presheaf objects. Its restriction to empty paths $\ul H = (B)$ is analogous to Proposition~7 of \cite{Street-Walters78} for Yoneda structures.
	\begin{lemma} \label{bijection between cells induced by Yoneda morphisms}
		Let $\map\yon A{\ps A}$ be a Yoneda morphism. The equality below determines a bijection between cells $\phi$ and $\psi$ of the forms as shown.
		\begin{displaymath}
			\begin{tikzpicture}[textbaseline]
				\matrix(m)[math35, column sep={1.75em,between origins}]
					{	A & & B & & B_1 & \cdots & B_{n'} & & B_n \\
						& & & A & & D & & & \\
						& & & & \ps A & & & & \\};
				\path[map]	(m-1-1) edge[barred] node[above] {$J$} (m-1-3)
										(m-1-3) edge[barred] node[above] {$H_1$} (m-1-5)
										(m-1-7) edge[barred] node[above] {$H_n$} (m-1-9)
										(m-1-9) edge[transform canvas={xshift=2pt}] node[below right] {$s$} (m-2-6)
										(m-2-4) edge[barred] node[below, inner sep=2pt] {$K$} (m-2-6)
														edge[transform canvas={xshift=-2pt}] node[left] {$\yon$} (m-3-5)
										(m-2-6) edge[transform canvas={xshift=2pt}] node[right] {$\cur K$} (m-3-5);
				\path				(m-1-1) edge[transform canvas={xshift=-1pt}, eq] (m-2-4)
										(m-1-5) edge[cell] node[right] {$\phi$} (m-2-5);
				\draw				([yshift=0.25em]$(m-2-5)!0.5!(m-3-5)$) node[font=\scriptsize] {$\cart$};
			\end{tikzpicture} \quad = \quad \begin{tikzpicture}[textbaseline]
				\matrix(m)[math35, column sep={1.75em,between origins}]
					{	A & & B & & B_1 & \cdots & B_{n'} & & B_n \\
						& & & & & D & & & \\
						& & \ps A & & & & & & \\};
				\path[map]	(m-1-1) edge[barred] node[above] {$J$} (m-1-3)
														edge[transform canvas={xshift=-2pt}] node[left] {$\yon$} (m-3-3)
										(m-1-3) edge[barred] node[above] {$H_1$} (m-1-5)
														edge[ps] node[right] {$\cur J$} (m-3-3)
										(m-1-7) edge[barred] node[above] {$H_n$} (m-1-9)
										(m-1-9) edge[transform canvas={xshift=2pt}] node[below right] {$s$} (m-2-6)
										(m-2-6) edge[transform canvas={xshift=2pt}] node[below right] {$\cur K$} (m-3-3);
				\path				(m-1-6) edge[transform canvas={xshift=-1.5em,yshift=-0.5em}, cell] node[right] {$\psi$} (m-2-6);
				\draw				([yshift=1.15em,xshift=0.5em]$(m-1-2)!0.5!(m-3-2)$) node[font=\scriptsize] {$\cart$};
			\end{tikzpicture}
		\end{displaymath}
		
		If $\yon$ is merely a weak Yoneda morphism then the equality above determines a bijection between cells $\phi$ and $\psi$ with $\ul H = (B)$ empty. Under the latter restriction $\phi$ is cartesian if and only if the corresponding $\psi$ (in this case a vertical cell) is invertible.
	\end{lemma}
	\begin{proof}
		The bijection is given by the assignment $\phi \mapsto \psi$ obtained by factorising the left"/hand side above through the cartesian cell that defines $\cur J$ (\defref{yoneda embedding}), which is (weakly) left Kan by density of $\yon$ (\defref{density definition}), and the assignment $\psi \mapsto \phi$ obtained by factorising the right"/hand side through the cartesian cell that defines $\cur K$. Restricting to empty paths $\ul H = (B)$, notice that $\psi$ is invertible if and only if the right"/hand side, and hence both sides, are weakly left Kan. By the previous lemma this is equivalent to the left"/hand side being cartesian which, by the pasting lemma for cartesian cells (\lemref{pasting lemma for cartesian cells}), is in turn equivalent to $\phi$ being cartesian.
	\end{proof}
		
	\subsection{Equivalence of morphisms $A \brar B$ and morphisms $B \to \ps A$}
	Let $\map\yon A{\ps A}$ be a weak Yoneda morphism. The proposition below uses the previous lemma to describe the functoriality of the assignment $J \mapsto \cur J$ (\defref{yoneda embedding}), showing also that the resulting functor is an equivalence if and only if $\yon$ admits nullary restrictions (\defref{yoneda embedding}).
	
	In stating the proposition we use the following two notions of slice category in an augmented virtual double category $\K$. Given an object $A$ of $\K$ the \emph{horizontal slice category} $A \hs \K$ has as objects horizontal morphisms $\hmap JAB$ and as morphisms $J \to K$ cells $\phi$ of the form as on the left below. Fixing a target $B$ in $\K$ we denote by $H(\K)(A, B) \subseteq A \hs \K$ the subcategory generated by morphisms $J \to K$ with vertical target $s = \id_B$. Next recall that $\K$ contains a $2$"/category $V(\K)$ of vertical morphisms (\augexref{1.5}). Given an object $P$ of $\K$ we denote by $V(\K) \slash P$ the \emph{lax slice category} consisting of vertical morphisms $\map gAP$ as objects and cells $\psi$ of the form as on the right below as morphisms $g \to h$; see e.g.\ Section I,2.5 of \cite{Gray74} for the more general notion of ``lax comma $2$"/category'' (therein called ``$2$"/comma category''). Notice that $V(\K) \slash P$ contains the hom"/categories $V(\K)(A, P)$ as subcategories.
	\begin{displaymath}
		\begin{tikzpicture}
			\matrix(m)[math35]{A & B \\ A & D \\};
				\path[map]	(m-1-1) edge[barred] node[above] {$J$} (m-1-2)
										(m-1-2) edge node[right] {$s$} (m-2-2)
										(m-2-1) edge[barred] node[below] {$K$} (m-2-2);
				\path				(m-1-1) edge[eq] (m-2-1)
										(m-1-1) edge[cell, transform canvas={xshift=1.75em}] node[right] {$\phi$} (m-2-1);
		\end{tikzpicture} \qquad\qquad\qquad\qquad\qquad\qquad\qquad \begin{tikzpicture}
			\matrix(m)[math35, column sep={1.75em,between origins}]{A & & B \\ & P & \\};
				\path[map]	(m-1-1) edge node[above] {$s$} (m-1-3)
														edge[transform canvas={xshift=-2pt}] node[left] {$g$} (m-2-2)
										(m-1-3) edge[transform canvas={xshift=2pt}] node[right] {$h$} (m-2-2);
				\path				($(m-1-1)!0.5!(m-2-2)$) edge[transform canvas={}, cell, shorten >= 4.5pt, shorten <= 4.5pt] node[below, xshift=2pt] {$\psi$} (m-1-3);
		\end{tikzpicture}
	\end{displaymath}
	\begin{proposition} \label{equivalence from yoneda embedding}
		Let $\map\yon A{\ps A}$ be a weak Yoneda morphism in an augmented virtual double category $\K$. Choosing for each $\hmap JAB$ a morphism $\map{\cur J}B{\ps A}$ as in the Yoneda axiom (\defref{yoneda embedding}) induces a full and faithful functor
		\begin{displaymath}
			\map{\cur{(\dash)}}{A \hs \K}{V(\K) \slash \ps A}
		\end{displaymath}
		that maps a cell $\cell\phi JK$ as on the left above to the vertical cell \mbox{$\cell{\cur\phi}{\cur J}{\cur K \of s}$} that corresponds to $\phi$ under the bijection of \lemref{bijection between cells induced by Yoneda morphisms} (with $\ul H = (B)$ empty). Fixing the target $B$ restricts $\cur{(\dash)}$ to a full and faithful functor
		\begin{displaymath}
			\map{\cur{(\dash)}}{H(\K)(A, B)}{V(\K)(B, \ps A)}.
		\end{displaymath}
		
		Moreover $\cur{(\dash)}$ is an equivalence of categories $A \hs \K \simeq V(\K) \slash \ps A$ if and only if $\yon$ admits nullary restrictions (\defref{yoneda embedding}).
	\end{proposition}
	\begin{proof}
		To show that $\cur{(\dash)}$ is functorial consider composable morphisms in $A \hs \K$, that is cells $\cell\phi JK$ and $\cell\psi KL$ both with vertical source $\id_A$; let us denote their vertical targets by $\map sBD$ and $\map tDE$ respectively. Using the bijection of \lemref{bijection between cells induced by Yoneda morphisms} and the interchange axioms (\auglemref{1.3}) we obtain the following equality, where the cartesian cells define the morphisms $\cur J$, $\cur K$ and $\cur L$.
		\begin{multline*}
			\cart_{\cur J} \hc \cur{(\psi \of \phi)} = \cart_{\cur L} \of \psi \of \phi = (\cart_{\cur K} \hc \cur\psi) \of \phi\\
				= (\cart_{\cur K} \of \phi) \hc (\cur\psi \of s) = \cart_{\cur J} \hc \cur\phi \hc (\cur\psi \of s)
		\end{multline*}
		By weak density of $\yon$ (\defref{density definition}) the cartesian cell $\cart_{\cur J}$ is weakly left Kan so that, using the uniqueness of factorisations through left Kan cells, the equality above implies $\cur{(\psi \of \phi)} = \cur\phi \hc (\cur\psi \of s)$ as required. That the functor $\cur{(\dash)}$ is full and faithful is because the correspondence of \lemref{bijection between cells induced by Yoneda morphisms} is a bijection.
		
		To show the final assertion it suffices to prove that $\yon$ admits nullary restrictions if and only if $\cur{(\dash)}$ is essentially surjective (see e.g.\ Theorem~1 of Section~IV.4 of \cite{MacLane98}). For the `only if'"/part consider any morphism $\map gB{\ps A}$ and assume that the restriction $\hmap{\ps A(\yon, g)}AB$ exists; we will show that $g \iso \cur{\ps A(\yon, g)}$ (\defref{yoneda embedding}). Consider the cartesian cells that define $\ps A(\yon ,g)$ and $\cur{\ps A(\yon, g)}$. By weak density of $\yon$ (\defref{density definition}) both define the weak left Kan extension of $\yon$ along $\ps A(\yon, g)$ so that the required isomorphism exists by the uniqueness of Kan extensions. To show the converse assume that $\cur{(\dash)}$ is essentially surjective: for every $\map gB{\ps A}$ there exists $\hmap JAB$ with $\cur J \iso g$. Composing the latter isomorphism with the cartesian cell defining $\cur J$ (\defref{yoneda embedding}) we obtain a cartesian cell that defines $J$ as the nullary restriction $\ps A(\yon ,g)$.
	\end{proof}
	
	\begin{remark}
		Consider a Yoneda structure on a $2$"/category $\mathcal C$, in the sense of \cite{Street-Walters78}, consisting of a right ideal $\mathcal A$ of `admissible' morphisms in $\mathcal C$ and a `Yoneda embedding' $\map{yA}A{\mathcal PA}$ for each admissible object $A$ (i.e.\ with $\id_A \in \mathcal A$). Similar to the above result such a structure induces full and faithful functors \mbox{$\mathcal C_{\mathcal A}(A, B) \to \mathcal C(B, \mathcal PA)$}, given by \mbox{$f \mapsto B(f, 1)$} (see Axiom~1 and Proposition~7 of \cite{Street-Walters78}), where \mbox{$\mathcal C_{\mathcal A}(A, B) \subseteq \mathcal C(A, B)$} denotes the full subcategory of admissible morphisms. These functors however are not essentially surjective in general, as can be easily seen by taking $\mathcal C = \inCat{\Set'}$ the $2$"/category of large categories and $A = B = 1$ the terminal category.
	\end{remark}
	
	The following result is a partial converse to the previous proposition.
	\begin{proposition} \label{density and the yoneda axiom in terms of nullary restrictions}
		Let $\map fAP$ be a morphism in an augmented virtual double category $\K$ and let $B \in \K$ be an object. Choosing nullary restrictions $\hmap{P(f, g)}AB$ for all $\map gBP$ induces a functor
		\begin{displaymath}
			\map{P(f, \dash)}{V(\K)(B, P)}{H(\K)(A,B)}
		\end{displaymath}
		which maps a vertical cell $\cell\phi gh$ to the unique factorisation \mbox{$\cell{P(f, \phi)}{P(f, g)}{P(f, h)}$} in the right"/hand side below.
		\begin{displaymath}
			\begin{tikzpicture}[textbaseline]
  			\matrix(m)[math35]{A & B \\ & P \\};
  			\path[map]	(m-1-1) edge[barred] node[above] {$P(f, g)$} (m-1-2)
  													edge[transform canvas={xshift=-3pt}] node[below left] {$f$} (m-2-2)
  									(m-1-2) edge[bend right=30] node[left, inner sep=0.5pt] {$g$} (m-2-2)
  													edge[bend left=30] node[right] {$h$} (m-2-2);
  			\path				(m-1-2) edge[cell, transform canvas={xshift=-1pt}] node[right, inner sep=1.5pt] {$\phi$} (m-2-2);
  			\draw				($(m-1-1)!0.5!(m-2-2)$) node[yshift=0.75em, xshift=2pt, font=\scriptsize] {$\cart$};
  		\end{tikzpicture} \quad = \quad \begin{tikzpicture}[textbaseline]
  			\matrix(m)[math35, column sep={1.75em,between origins}]{A & & B \\ A & & B \\ & P & \\};
  			\path[map]	(m-1-1) edge[barred] node[above] {$P(f, g)$} (m-1-3)
  									(m-2-1) edge[barred] node[below, inner sep=2pt] {$P(f, h)$} (m-2-3)
  													edge[transform canvas={xshift=-2pt}] node[left] {$f$} (m-3-2)
  									(m-2-3) edge[transform canvas={xshift=2pt}] node[below right] {$h$} (m-3-2);
  			\path				(m-1-1) edge[eq] (m-2-1)
  									(m-1-3) edge[eq] (m-2-3)
  									(m-1-2) edge[cell, transform canvas={xshift=-1.2em}] node[right,inner sep=2pt] {$P(f, \phi)$} (m-2-2);
  			\draw				($(m-2-2)!0.5!(m-3-2)$) node[font=\scriptsize] {$\cart$};
  		\end{tikzpicture}
		\end{displaymath}
		The following hold:
		\begin{enumerate}[label=\textup{(\alph*)}]
			\item $f$ is weakly dense (\defref{density definition}) if and only if the functors $P(f, \dash)$ are full and faithful for each object $B \in \K$;
			\item $f$ satisfies the Yoneda axiom (\defref{yoneda embedding}) if and only if the functors $P(f, \dash)$ are essentially surjective for each $B \in \K$;
			\item $f$ is a weak Yoneda morphism if and only if the functors $P(f, \dash)$ are equivalences of categories $V(\K)(B, P) \simeq H(\K)(A, B)$ for each $B \in \K$.
		\end{enumerate}
		
		Moreover if $f$ is a weak Yoneda morphism then the functors $P(f, \dash)$ extend to a pseudo"/inverse to the equivalence $\cur{(\dash)}\colon A \hs \K \simeq V(\K) \slash P$ of \propref{equivalence from yoneda embedding}, that is induced by $f$.
	\end{proposition}
	\begin{proof}
		To see part (b) notice that the Yoneda axiom asserts that, for every horizontal morphism $\hmap JAB$, there exists a vertical morphism $\map{\cur J}BP$ such that \mbox{$J \iso P(f, \cur J)$}. That part (c) follows from parts (a) and (b) is well"/known (see e.g.\ Theorem~1 of Section~IV.4 of \cite{MacLane98}). To prove part (a) consider the assignments below between cells of $\K$ of the form as shown, given by composition with the cartesian cells that define the chosen nullary restrictions $P(f, g)$ and $P(f, h)$ respectively. By definition (\defref{cartesian cells}) the assignment on the right is a bijection.
		\begin{displaymath}
			\begin{tikzpicture}[baseline]
				\matrix(m)[math35, xshift=-14em]{B \\ P \\};
				\path[map]	(m-1-1) edge[bend right=45] node[left] {$g$} (m-2-1)
														edge[bend left=45] node[right] {$h$} (m-2-1);
				\path	(m-1-1) edge[cell] (m-2-1);
			
				\matrix(m)[math35, column sep={1.75em,between origins}]{A & & B \\ & P & \\};
				\path[map]	(m-1-1) edge[barred] node[above] {$P(f, g)$} (m-1-3)
														edge[transform canvas={xshift=-2pt}] node[left] {$f$} (m-2-2)
										(m-1-3) edge[transform canvas={xshift=2pt}] node[right] {$h$} (m-2-2);
				\path				(m-1-2) edge[transform canvas={yshift=0.333em}, cell] (m-2-2);
			
				\matrix(m)[math35, xshift=15em]{A & B \\ A & B \\};
				\path[map]	(m-1-1) edge[barred] node[above] {$P(f, g)$} (m-1-2)
										(m-2-1) edge[barred] node[below] {$P(f, h)$} (m-2-2);
				\path				(m-1-1) edge[eq] (m-2-1)
										(m-1-2) edge[eq] (m-2-2)
										(m-1-1) edge[transform canvas={xshift=1.75em}, cell] (m-2-1);
				
				\draw[font=\Large]	(-16.3em, 0) node {$\lbrace$}
										(-11.7em, 0) node {$\rbrace$}
										(-2.5em, 0) node {$\lbrace$}
										(2.5em, 0) node {$\rbrace$}
										(12.5em, 0) node {$\lbrace$}
										(17.4em, 0) node {$\rbrace$};
				
				\path[map]	(-9.7em, 0) edge node[above] {$\cart_{P(f, g)} \hc \dash$} (-4.5em,0)
										(10.5em, 0) edge node[above] {$\cart_{P(f, h)} \of \dash$} (4.5em, 0);
			\end{tikzpicture}
		\end{displaymath}
		By definition the image under $P(f, \dash)$ of a vertical cell $\cell\phi gh$ as on the left above is the horizontal cell $\cell{P(f, \phi)}{P(f, g)}{P(f, h)}$ that corresponds to $\cart_{P(f, g)} \hc \phi$ under the bijection on the right. Hence $P(f, \dash)$ is full and faithful precisely if the assignment $\cart_{P(f, g)} \hc \dash$ on the left is a bijection for each $\map gBP$. By \defref{weak left Kan extension} the latter means that any nullary cartesian cell with $f$ as vertical source is weakly left Kan, that is $f$ is weakly dense (\defref{density definition}) as required.
		
		For the final assertion assume that $\map fAP$ is a weak Yoneda morphism, thus inducing the an equivalence of categories $\cur{(\dash)}\colon A \hs \K \simeq V(\K) \slash P$ as in \propref{equivalence from yoneda embedding}. As explained in proof of the latter $\cur{(\dash)}$ is essentially surjective because $g \iso \cur{P(f, g)}$ for each $\map gAP$. It follows that the assignment \mbox{$g \mapsto P(f, g)$} uniquely extends to a pseudo"/inverse to $\cur{(\dash)}$, with the latter isomorphims forming the unit (see e.g.\ Theorem~1 of Section~IV.4 of \cite{MacLane98}). That the action of this pseudo"/inverse on vertical cells coincides with that of the functors $P(f, \dash)$ of the statement is straightforward to check.
	\end{proof}
	
	\begin{remark}
		Given a morphism $\map yC{\bar C}$ in a `bicategory $\mathcal C$ equipped with proarrows', in the sense of \cite{Wood82}, Melli\`es and Tabareau consider in \cite{Mellies-Tabareau08} functors analogous to the functors $P(f, \dash)$ of \propref{density and the yoneda axiom in terms of nullary restrictions}; they define $y$ to be a `Yoneda situation' if both $y$ and each of these functors are full and faithful.
	\end{remark}
	
	\subsection{Yoneda embeddings from $2$"/toposes}
	Weber shows in \cite{Weber07} that every `$2$"/topos' $\mathcal C$ admits a `good Yoneda structure', the construction of which he attributes to Street (\cite{Street74a} and \cite{Street80a}). Given a finitely complete cartesian closed category $\E$, in Section~7 of \cite{Street17} this result is used to obtain a good Yoneda structure on the $2$"/category $\inCat \E$ of categories internal to $\E$. Similarly to Weber's result, \exref{yoneda embeddings in 2-topoi} below shows that a $2$"/topos structure on a $2$"/category $\mathcal C$ induces a collection of Yoneda embeddings, in our sense, in a certain full sub"/augmented virtual equipment of the unital virtual equipment $\dFib{\mathcal C}$ of discrete two"/sided fibrations in $\mathcal C$ (\exref{discrete two-sided fibrations}). We will use the following lemma, which generalises the functors $P(f, \dash)$ described in \propref{density and the yoneda axiom in terms of nullary restrictions}, to functors $K(\id, \dash)$ given by restriction of a fixed horizontal morphism $K$ on the right.
		
		\begin{lemma} \label{restricting on the right is pseudofunctorial}
		Let $\hmap KCD$ be a morphism and $B$ an object in an augmented virtual double category $\K$. Choosing a restriction $\hmap{K(\id, g)}CB$ for each morphism $\map gBD$ induces a functor $\map{K(\id, \dash)}{V(\K)(B, D)}{H(\K)(C, B)}$ which maps a cell $\cell\phi gh$ to the unique factorisation $\cell{K(\id,\phi)}{K(\id, g)}{K(\id, h)}$ in the right"/hand side below:
		\begin{displaymath}
			\begin{tikzpicture}[textbaseline]
				\matrix(m)[math35]{C & B \\ C & D \\};
				\path[map]	(m-1-1) edge[barred] node[above] {$K(\id, g)$} (m-1-2)
										(m-1-2) edge[bend right=30] node[below left, inner sep=1pt] {$g$} (m-2-2)
														edge[bend left=30] node[right] {$h$} (m-2-2)
										(m-2-1) edge[barred] node[below] {$K$} (m-2-2);
				\path[transform canvas={xshift=-1.5pt}]	(m-1-1) (m-1-2) edge[cell] node[right, inner sep=2.5pt] {$\phi$} (m-2-2);
				\path				(m-1-1) edge[eq] (m-2-1);
				\draw[font=\scriptsize]	([xshift=-3pt]$(m-1-1)!0.5!(m-2-2)$) node {$\cart$};
			\end{tikzpicture} = \begin{tikzpicture}[textbaseline]
				\matrix(m)[math35]{C & B \\ C & B \\ C & D \\};
				\path[map]	(m-1-1) edge[barred] node[above] {$K(\id, g)$} (m-1-2)
										(m-2-1) edge[barred] node[below, inner sep=2pt] {$K(\id, h)$} (m-2-2)
										(m-2-2) edge node[right] {$h$} (m-3-2)
										(m-3-1) edge[barred] node[below] {$K$} (m-3-2);
				\path				(m-1-1) edge[eq] (m-2-1)
										(m-1-2) edge[eq] (m-2-2)
										(m-2-1) edge[eq] (m-3-1)
										(m-1-1) edge[cell, transform canvas={xshift=0.475em}] node[right] {$K(\id,\phi)$} (m-2-1);
				\draw[font=\scriptsize]	($(m-2-1)!0.5!(m-3-2)$) node {$\cart$};
			\end{tikzpicture}
		\end{displaymath}
	\end{lemma}
	
	\begin{example} \label{yoneda embeddings in 2-topoi}
		Let $\mathcal C$ be a finitely complete $2$"/category with terminal object denoted by $1$. A \emph{discrete opfibration} (Section~2 of \cite{Weber07}) is, in our terms, a discrete two"/sided fibration $\hmap J1B$ in $\dFib{\mathcal C}$ (\exref{discrete two-sided fibrations}). In Definition~4.1 of \cite{Weber07} a discrete opfibration $\hmap \tau 1\Omega$ is called \emph{classifying} if, in our terms,  for each $B \in \mathcal C$ the functor \mbox{$\map{\tau(\id, \dash)}{\mathcal C(B, \Omega)}{H(\dFib{\mathcal C})(1, B)}$} of the previous lemma, given by pulling back $\tau$, is full and faithful. Definition~4.10 of \cite{Weber07} defines a \emph{$2$"/topos} $(\mathcal C, (\dash)^\circ, \tau)$ to consist of a finitely complete cartesian closed $2$"/category $\mathcal C$ equipped with a `duality involution' $\map{(\dash)^\circ}{\co{\mathcal C}}{\mathcal C}$ (Definition~2.14 of \cite{Weber07}) and a classifying discrete opfibration $\hmap\tau 1\Omega$.
		
		Given an object $A$ in a $2$"/topos $\mathcal C$, Section~5 of \cite{Weber07} sets $\ps A \dfn \brks{A^\circ, \Omega}$ and considers, for each $B \in \mathcal C$, the composite functor
		\begin{displaymath}
			\mathcal C(B, \ps A) \iso \mathcal C(A^\circ \times B, \Omega) \xrar{\tau(\id, \dash)} H(\dFib{\mathcal C})(1, A^\circ \times B) \simeq H(\dFib{\mathcal C})(A, B),
		\end{displaymath}
		where the isomorphism is given by the cartesian closed structure and the equivalence is given by the involution structure; this composite is full and faithful and pseudonatural in $A$ and $B$. A discrete two"/sided fibration $\hmap JAB$ is then called an \emph{attribute} if it is contained in the essential image of this composite. We write $\mathsf{Attr}(\mathcal C) \subseteq \dFib{\mathcal C}$ for the full sub"/augmented virtual double category generated by the attributes. The pseudonaturality of the composite above implies that $\mathsf{Attr}(\mathcal C)$ is closed under taking restrictions so that it, like $\dFib{\mathcal C}$ (\exref{discrete two-sided fibrations}), is an augmented virtual equipment. Moreover by \lemref{full and faithful functors reflect tabulations} $\mathsf{Attr}(\mathcal C)$ has all cocartesian tabulations, that are created as in $\dFib{\mathcal C}$ (\exref{tabulations of discrete two-sided fibrations}).
		
		 A morphism $\map fAC$ of $\mathcal C$ is defined to be \emph{admissible} in Section~5 of \cite{Weber07} whenever, in our terms, the companion $\hmap{f_*}AC$ of $f$ (in $\dFib{\mathcal C}$) is an attribute; i.e.\ $f$ admits a companion in $\mathsf{Attr}(\mathcal C)$. In particular an object $A$ is admissible if and only if its horizontal unit $\hmap{I_A}AA$ (\defref{cartesian cells}) is an attribute, that is $A$ admits a horizontal unit in $\mathsf{Attr}(\mathcal C)$. Consider an admissible object $A$. By definition there exists a morphism $\map{\yon_A}A{\ps A}$ whose image, under the composite above, is isomorphic to $I_A$ and we will show that $\yon_A$ forms a Yoneda embedding (\defref{yoneda embedding}) in the augmented virtual equipment $\mathsf{Attr}(\mathcal C)$ of attributes in $\mathcal C$. 
		 
		 To prove the Yoneda axiom for $\yon_A$ consider any attribute $\hmap JAB$ and let \mbox{$\map{\cur J}B{\ps A}$} be any morphism whose image under the composite above is isomorphic to $J$. By Proposition~5.2 of \cite{Weber07} we have $J \iso \yon_A / \cur J$, the \emph{comma object} of $\yon_A$ and $\cur J$ in $\mathcal C$; see e.g.\ Section~1 of \cite{Street74b}. In our terms, by Proposition~1 of the latter, $\yon_A / \cur J \iso \yon_{A*} \hc (\ps A)^\2 \hc J^{\lambda*}$ as spans in $\Span{\und{\mathcal C}}$ (\augexref{2.9}), so that $J \iso \ps A(\yon_A, \cur J)$ in $\dFib{\mathcal C}$ (see \exref{discrete two-sided fibrations} and \augexref{4.9}) and hence in $\mathsf{Attr}(\mathcal C)$, which proves the Yoneda axiom for $\yon_A$. It remains to show that $\yon_A$ is dense in $\mathsf{Attr}(\mathcal C)$. We use that $\yon_A$ is admissible, which is a consequence of Proposition~5.2 of \cite{Weber07}. It follows that the companion $\yon_{A*}$ exists in $\mathsf{Attr}(\mathcal C)$ so that, by \defref{density definition}, it suffices to show that the cartesian cell $\cart$ defining $\yon_{A*}$ is pointwise left Kan in $\mathsf{Attr}(\mathcal C)$. By applying \propref{pointwise left Kan extensions along companions} to the companion identity $\id_{\yon_A} = \cart \of \cocart$ (\lemref{companion identities lemma}), we may equivalently show that the identity cell $\id_{\yon_A}$ defines $\id_{\ps A}$ as the pointwise left Kan extension of $\yon_A$ along $\yon_A$ in the vertical $2$"/category $V\bigpars{\mathsf{Attr}(\mathcal C)} \iso \mathcal C$. Since $\id_{\yon_A}$ trivially defines $\yon_A$ as an absolute left lifting of $\yon_A$ along $\id_{\ps A}$ the latter follows from Theorem~5.3(2) of \cite{Weber07}.
	\end{example}
	
	\subsection{Lifting Yoneda morphisms along universal morphisms}
	Given a functor $\map F\K\L$ and an object $P \in \L$ consider a universal morphism $\map\eps{FP'}P$ from $F$ to $P$ (\defref{universal vertical morphism}). Given $A \in \K$ and a (weak) Yoneda morphism $\map{\yon_{FA}}{FA}P$ in $\L$, the following theorem gives conditions ensuring that $\yon_{FA}$ induces a (weak) Yoneda embedding $\map{\yon_A}A{P'}$ in $\K$. In \exref{yoneda embedding for small enriched profunctors} below we use this to obtain Yoneda embeddings in the unital virtual double category $\ensProf\V$ of small $\V$"/profunctors (\augexref{2.8}) from those in $\enProf{(\V, \V')}$ (\exref{enriched yoneda embedding summary}). A related result is Theorem~4.1 of \cite{Hermida01}, which allows one to transfer a Yoneda structure (\cite{Street-Walters78}) on a $2$"/category $\L$ along a biadjunction $\map{F \ladj G}\L\K$ with full and faithful unit.
	
	To state the theorem consider the vertical slice category $F \vs P$ (\defref{universal vertical morphism}) and let $\cur{(F \vs P)} \subseteq F \vs P$ denote the full subcategory generated by all objects \mbox{$(B, \map f{FB}P)$} such that $f \iso \cur{(FJ)}$ for some $\hmap JAB$ in $\K$, where $\cur{(FJ)}$ is supplied by the Yoneda axiom for $\yon_{FA}$ (\defref{yoneda embedding}). In other words \mbox{$(B, f) \in \cur{(F\vs P)}$} if and only if the restriction $P(\yon_A, f)$ exists in $\L$, and it is contained in the essential image of $F$. Assume that \mbox{$\map\eps{FP'}P$} is universal from $F$ to $P$ relative to $\cur{(F \vs P)}$ (\defref{universal vertical morphism}) and that $A$ is unital, with horizontal unit $\hmap{I_A}AA$. It follows that $FA \in \L$ is unital, with horizontal unit $FI_A$ (see \augcororef{5.5}), and that $FI_A \iso P(\yon_{FA}, \yon_{FA})$ by \lemref{full and faithful yoneda embedding}. Comparing the cartesian cell defining the latter restriction with the cartesian cell defining $\cur{(FI_A)}$ (\defref{yoneda embedding}), using that both cartesian cells are (weakly) left Kan by the (weak) density of $\yon_{FA}$ (\defref{density definition}), we find that $\cur{(FI_A)} \iso \yon_{FA}$. We conclude that \mbox{$\yon_{FA} \in \cur{(F \vs P)}$} so that, by universality of $\eps$, there exists a morphism \mbox{$\map{\yon_A \dfn \shad{(\yon_{FA})}}A{P'}$} in $\K$ such that $\yon_{FA} \iso \eps \of F\yon_A$.
	\begin{theorem} \label{yoneda embedding from a vertical universal morphism}
		Let the functor $\map F\K\L$, the unital object $A \in \K$, the (weak) Yoneda morphism $\map{\yon_{FA}}{FA}P$ in $\L$ and the universal morphism \mbox{$\map\eps{FP'}P$} relative to $\cur{(F \vs P)}$ be as above. The morphism $\map{\yon_A}A{P'}$, as obtained above, is a (weak) Yoneda embedding in $\K$ as long as the functor \mbox{$\map{\eps \of F\dash}{\K \vs P'}{F \vs P}$} (\defref{universal vertical morphism}) preserves and reflects all cartesian cells in $\K \vs P'$ that define restrictions of the form $P'(\yon_A, g)$, where $\map gB{P'}$ is any morphism.
	\end{theorem}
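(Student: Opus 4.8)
The plan is to verify directly, in order, the three defining properties of a (weak) Yoneda embedding for $\map{\yon_A}A{P'}$: the Yoneda axiom, (weak) density, and full and faithfulness. Throughout I use the fixed isomorphism $\yon_{FA} \iso \eps \of F\yon_A$ in $\L$, and the fact that, $\eps$ being universal relative to $\cur{(F \slash P)}$, the functor $\map{\eps \of F\dash}{\K \slash P'}{F \slash P}$ of \defref{universal vertical morphism} is in particular full and faithful, so that any nullary cell in $\L$ whose vertical source and target are of the form $\eps \of F(\dash)$ has a unique adjunct in $\K$.

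For the Yoneda axiom, fix $\hmap JAB$ in $\K$. The Yoneda axiom for $\yon_{FA}$ (\defref{yoneda embedding}) supplies a morphism $\map{\cur{(FJ)}}{FB}P$ and a cartesian cell $\cell\chi{FJ}P$ defining $FJ$ as the restriction $P(\yon_{FA}, \cur{(FJ)})$. By construction $(B, \cur{(FJ)}) \in \cur{(F \slash P)}$, so universality of $\eps$ relative to $\cur{(F \slash P)}$ yields a morphism $\map{\cur J}B{P'}$, namely $\cur J \dfn \shad{(\cur{(FJ)})}$, with $\cur{(FJ)} \iso \eps \of F\cur J$. Composing $\chi$ horizontally with the vertical isomorphisms $\eps \of F\yon_A \iso \yon_{FA}$ and $\cur{(FJ)} \iso \eps \of F\cur J$ on its source and target produces a cell $\cell{\chi'}{FJ}P$ that is still cartesian (cartesian cells are stable under horizontal composition with vertical isomorphisms) and whose vertical source and target are now $\eps \of F\yon_A$ and $\eps \of F\cur J$. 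Hence $\chi'$ has a unique adjunct $\cell{\shad{\chi'}}J{P'}$ in $\K$, with vertical source $\yon_A$ and vertical target $\cur J$, satisfying $\eps \of F\shad{\chi'} = \chi'$. Since $\shad{\chi'}$ has vertical source $\yon_A$ and its image $\chi'$ is a cartesian cell in $\L$, the hypothesis that $\eps \of F\dash$ reflects cartesian cells defining restrictions of the form $P'(\yon_A, g)$ shows that $\shad{\chi'}$ is itself cartesian, i.e.\ defines $J$ as the restriction $P'(\yon_A, \cur J)$. This proves the Yoneda axiom.

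For (weak) density, by \lemref{density axioms}(a) it suffices to show that every cartesian cell $\cell\eta J{P'}$ in $\K$ with vertical source $\yon_A$ (and vertical target some $\map lB{P'}$) is (weakly) left Kan. Such an $\eta$ defines a restriction of the form $P'(\yon_A, l)$, so the hypothesis that $\eps \of F\dash$ preserves such cartesian cells shows that $\eps \of F\eta$ is cartesian in $\L$; its vertical source is $\eps \of F\yon_A \iso \yon_{FA}$, so by (weak) density of $\yon_{FA}$ — equivalently, by \lemref{weak left Kan extensions of yoneda embeddings} — the cell $\eps \of F\eta$ is (weakly) left Kan in $\L$. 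Since $\eta$ is the adjunct of $\eps \of F\eta$, \propref{taking adjuncts preserves left Kan cells} shows that $\eta$ is (weakly) left Kan in $\K$. Thus $\yon_A$ is (weakly) dense, and together with the Yoneda axiom it is a (weak) Yoneda morphism; finally, since $A$ is unital, \lemref{full and faithful yoneda embedding} shows that $\yon_A$ is full and faithful (and $I_A \iso P'(\yon_A, \yon_A)$). Hence $\yon_A$ is a (weak) Yoneda embedding in $\K$.

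I expect the main obstacle to be the bookkeeping in the Yoneda axiom step: one must track the isomorphisms $\yon_{FA} \iso \eps \of F\yon_A$ and $\cur{(FJ)} \iso \eps \of F\cur J$ precisely enough that the adjunct $\shad{\chi'}$ comes out with vertical source exactly $\yon_A$, so that the reflection hypothesis applies verbatim. Once that is in place, (weak) density and full and faithfulness follow formally from \propref{taking adjuncts preserves left Kan cells}, \lemref{weak left Kan extensions of yoneda embeddings}, and \lemref{full and faithful yoneda embedding}.
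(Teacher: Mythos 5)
Your proposal is correct and follows essentially the same route as the paper's proof: density is obtained by combining the preservation hypothesis with the (weak) density of $\yon_{FA}$ and \propref{taking adjuncts preserves left Kan cells}, while the Yoneda axiom is obtained by transporting the cartesian cell defining $\cur{(FJ)}$ along the isomorphisms $\yon_{FA} \iso \eps \of F\yon_A$ and $\cur{(FJ)} \iso \eps \of F\cur J$, taking its adjunct via local universality, and applying the reflection hypothesis. The only differences are cosmetic — you treat the Yoneda axiom before density and make the full-and-faithfulness step (via \lemref{full and faithful yoneda embedding} and unitality of $A$) explicit where the paper leaves it implicit.
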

	\begin{proof}
		Denote the invertible vertical cell $\yon_{FA} \Rar \eps \of F\yon_A$ by $\sigma$. To show that $\yon_A$ is (weakly) dense we have to show that any cartesian cell $\eta$ in $\K$ as on the left below is (weakly) left Kan (\defref{density definition}). By assumption $\eps \of F\eta$ is cartesian in $\L$ so that $\sigma \hc (\eps \of F\eta)$ is (weakly) left Kan by the (weak) density of $\yon_{FA}$. We conclude that $\eps \of F\eta$ is (weakly) left Kan so that $\eta$, being the adjunct of $\eps \of F\eta$, is (weakly) left Kan by \propref{taking adjuncts preserves left Kan cells}.
		\begin{displaymath}
			\begin{tikzpicture}[textbaseline]
				\matrix(m)[math35, column sep={1.75em,between origins}]{A & & B \\ & P' & \\};
				\path[map]	(m-1-1) edge[barred] node[above] {$J$} (m-1-3)
														edge[transform canvas={xshift=-2pt}] node[left] {$\yon_A$} (m-2-2)
										(m-1-3) edge[transform canvas={xshift=2pt}] node[right] {$l$} (m-2-2);
				\path[transform canvas={yshift=0.25em}]	(m-1-2) edge[cell] node[right] {$\eta$} (m-2-2);
			\end{tikzpicture} \qquad\qquad\qquad\qquad \begin{tikzpicture}[baseline]
					\matrix(m)[math35, column sep={2em,between origins}]{& FA & & FB & \\ FP' & & & & FP' \\ & & P & & \\};
					\path[map]	(m-1-2) edge[barred] node[above] {$FJ$} (m-1-4)
															edge[bend right=20] node[above left] {$F\yon_A$} (m-2-1)
															edge[transform canvas={xshift=1pt}] node[left, yshift=7pt, xshift=0pt] {$\yon_{FA}$} (m-3-3)
											(m-1-4) edge[transform canvas={xshift=-1pt}] node[right, yshift=7pt, xshift=0pt] {$\cur{(FJ)}$} (m-3-3)
															edge[bend left=20] node[above right] {$F\cur J$} (m-2-5)
											(m-2-1) edge[bend right=35] node[below left] {$\eps$} (m-3-3)
											(m-2-5) edge[bend left=35] node[below right] {$\eps$} (m-3-3);
					\draw[font=\scriptsize]				([yshift=1.5em]$(m-1-3)!0.5!(m-3-3)$) node {$\cart$};
					\draw				([yshift=1em]$(m-2-3)!0.5!(m-3-1)$) node[rotate=25] {$\iso$}
											([yshift=1em]$(m-2-3)!0.5!(m-3-5)$) node[rotate=-25] {$\iso$};
			\end{tikzpicture} \quad = \quad \begin{tikzpicture}[textbaseline]
				\matrix(m)[math35, column sep={1.75em,between origins}]{FA & & FB \\ & FP' & \\ & P & \\};
				\path[map]	(m-1-1) edge[barred] node[above] {$FJ$} (m-1-3)
														edge[transform canvas={xshift=-2pt}] node[left] {$F\yon_A$} (m-2-2)
										(m-1-3) edge[transform canvas={xshift=2pt}] node[right] {$F\cur J$} (m-2-2)
										(m-2-2) edge node[right] {$\eps$} (m-3-2);
				\path[transform canvas={xshift=-0.5em, yshift=0.25em}]	(m-1-2) edge[cell] node[right, inner sep=2.5pt] {$F\phi$} (m-2-2);
			\end{tikzpicture}
		\end{displaymath}
		
		To prove the Yoneda axiom (\defref{yoneda embedding}) for $\map{\yon_A}A{P'}$ we have to supply, for every $\hmap JAB$ in $\K$, a morphism $\map{\cur J}B{P'}$ and a cartesian cell that defines $J$ as the restriction $P'(\yon_A, \cur J)$. By the Yoneda axiom for $\yon_{FA}$ there exists a morphism $\map{\cur{(FJ)}}{FB}P$ such that $FJ \iso P\bigpars{\yon_{FA}, \cur{(FJ)}}$, and we take $\cur J \dfn \shad{\bigpars{\cur{(FJ)}}}$ to be its adjunct, satisfying $\cur{(FJ)} \iso \eps \of F\cur J$, which exists by the universality of $\eps$. Consider the composite cartesian cell on the left"/hand side above, of the cartesian cell that defines $\cur{(FJ)}$ and the invertible vertical cells that equip the adjuncts $\yon_A$ and $\cur J$. Its adjunct $\cell\phi J{P'}$, which exists by the local universality of $\eps$ (\defref{universal vertical morphism}), satisfies the identity above. By assumption the cartesianness of the left"/hand side above implies the cartesianness of $\phi$, which thus defines $J$ as the restriction $P'(\yon_A, \cur J)$ as required.
	\end{proof}
	
	\begin{example} \label{yoneda embedding for small enriched profunctors}
		Let $\V \subset \V'$ be a symmetric universe enlargement (\exref{enriched yoneda embedding summary}), and assume that $\V$ is small complete. By applying the previous theorem to the full embedding $\emb F{\ensProf\V}{\enProf{(\V, \V')}}$ we will see that the Yoneda embeddings $\map{\yon_A}A{\inhom{\op A, \V}'}$ of $\enProf{(\V, \V')}$ (\exref{enriched yoneda embedding summary}), for $\V$"/categories $A$, can be corestricted to $\V$"/categories of `small $\V$"/presheaves on $A$', as introduced by Lindner in \cite{Lindner74}, to form Yoneda embeddings in the unital virtual double category $\ensProf\V$ of small $\V$"/profunctors (\augexref{2.8}).
		
		We denote by $\inhom{\op A, \V}_\textup s \subseteq \inhom{\op A, \V}'$ the full sub"/$\V'$"/category of $\V$"/presheaves $\map p{\op A}\V$ that are \emph{small} in the sense of \cite{Lindner74} and \cite{Day-Lack07}: these are precisely the $\V$"/pre\-sheaves $\op A \to \V$ that correspond to small $\V$"/profunctors $A \brar I$ in the sense of \augexref{2.8}. It is straightforward to check that $\V$ being small complete implies that $\inhom{\op A, \V}_\textup s$ is a $\V$"/category; see also Corollary~2.3 of \cite{Lindner74}. In fact, using that any small $\V$"/presheaf $\hmap pAI$ is ``generated'' by its restriction to a small sub"/$\V$"/category $A_* \subseteq A$, in the sense of \augexref{2.8}, one checks that the hom $\V'$"/objects $\inhom{\op A, \V}'(p, q)$ are computed by the small $\V$"/ends $\int_{x \in A_*}\inhom{px, qx}$.
		
		Regarding $\inhom{\op A, \V}_\textup s$ as an object of $\ensProf\V$ let $\emb\eps{F\inhom{\op A, \V}_\textup s}{\inhom{\op A, \V}'}$ denote the embedding in $\enProf{(\V, \V')}$. Because $\eps$ is full and faithful in $\enProf{(\V, \V)'}$ and $F$ is a full and faithful functor, $\eps$ is locally universal by \exref{full and faithful morphisms are locally universal}. To show that $\eps$ is universal relative to $\cur{(F\vs\inhom{\op A, \V}')}$ (\defref{universal vertical morphism}) let $\map f{FB}{\inhom{\op A, \V}'}$ be any $\V'$"/functor, with $B$ a $\V$"/category. Using the correspondence of $\V$"/profunctors $A \brar B$ and $\V'$"/functors $B \to \inhom{\op A, \V}'$ in $\enProf{(\V, \V')}$ (\exref{enriched yoneda embedding summary}) one checks that \mbox{$(B, f) \in \cur{\bigpars{F \vs \inhom{\op A, \V}'}}$} precisely if $f$, regarded as a $\V$"/profunctor $A \brar B$, is small in the sense of \augexref{2.8}, that is $\map{f(y)}{\op A}\V$ is a small $\V$"/presheaf for every $y \in B$ or, equivalently, the $\V'$"/functor $\op A \to \inhom{B, \V}'$ corresponding to $f$ is \emph{pointwise small} in the sense of \cite{Day-Lack07}. Hence \mbox{$(B, f) \in \cur{\bigpars{F \vs \inhom{\op A, \V}'}}$} if and only if there exists a $\V$"/functor $\map{f'}B{\inhom{\op A, \V}_\textup s}$ such that $f = \eps \of Ff'$ and we conclude that $\map{\eps \of F\dash}{\ensProf\V \vs \inhom{\op A, \V}_\textup s}{F \vs \inhom{\op A, \V}'}$ factors as an equivalence through \mbox{$\cur{\bigpars{F \vs \inhom{\op A, \V}'}} \hookrightarrow F\vs \inhom{\op A, \V}'$}, so that $\eps$ is universal relative to $\cur{\bigpars{F \vs \inhom{\op A, \V}'}}$.
		
		 Moreover since $F$ preserves and reflects cartesian cells (use \augexref{4.7}), so does $\map{\eps \of F\dash}{\ensProf\V \vs \inhom{\op A, \V}_\textup s}{F \vs \inhom{\op A, \V}'}$, by combining \augexref{4.11} and the pasting lemma (\lemref{pasting lemma for cartesian cells}). Thus all hypotheses of the theorem are satisfied. We conclude that the Yoneda embedding $\map{\yon_A}A{\inhom{\op A, \V}'}$ in $\enProf{(\V, \V')}$ factors through $\eps$ as a $\V$"/functor $\map{\yon_A}A{\inhom{\op A, \V}_\textup s}$ that is a Yoneda embedding in $\ensProf\V$. Finally notice that the companion $\hmap{\yon_{A*}}A{\inhom{\op A, \V}_\textup s}$ is a small $\V$"/profunctor so that, because $\ensProf\V$ has restrictions on the right (\augexref{4.7}), $\yon_A$ admits nullary restrictions in the sense of \defref{yoneda embedding}. To see this consider, for any $p \in \inhom{A, \V}_\textup s$, the small sub"/$\V$"/category $A_p \dfn A_* \subseteq A$ as in the above. The cascade of isomorphisms below, for each $x \in A$, shows that $\yon_*$ is small. The first and last isomorphisms here are induced by the `strong Yoneda lemma' (see e.g.\ Formula~2.31 of \cite{Kelly82}) while the middle isomorphism exhibits the smallness of $p$.
		\begin{displaymath}
			\int^{x' \in A_p} A(x, x') \tens \yon_*(x', p) \iso \int^{x' \in A_p} A(x, x') \tens px' \iso px \iso \yon_*(x, p)
		\end{displaymath} 
	\end{example}
	
	\subsection{Yoneda morphisms compared to Yoneda structures and powers}
	We next compare our notion of (weak) Yoneda embedding to the notions of \emph{Yoneda structure} and \emph{good Yoneda structure} of \cite{Street-Walters78} and \cite{Weber07} respectively, in the theorem below, as well as to the notion of \emph{power} of \cite{Lambert22}, in \propref{powers from weak yoneda morphisms} below. Recall that part of a (good) Yoneda structure is a \emph{right ideal} $\mathcal A$ of \emph{admissible} morphisms in $V(\K)$: any composite $g \of f$ in $V(\K)$ is admissible as soon as $g$ is so. An object $A \in \K$ is called \emph{admissible} whenever its identity morphism $\id_A$ is admissible.
	
	\begin{theorem} \label{yoneda structures}
		Let $\K$ be an augmented virtual double category and let $\mathcal A$ be a right ideal of admissible morphisms in $V(\K)$. Assume that for every admissible \mbox{$\map fAC$} the companion $\hmap{f_*}AC$ exists and consider given an admissible morphism \mbox{$\map{\yon_A}A{\ps A}$} for each admissible object $A$.
		
		The implications \textup{(gys)} $\Rightarrow$ \textup{(ys*)} $\Rightarrow$ \textup{(ys)} and \textup{(ye)} $\Rightarrow$ \textup{(wye)} hold among the conditions below. If $\K$ has restrictions on the right (\defref{augmented virtual equipment}) then \textup{(wye)}~$\Rightarrow$~\textup{(ys)} holds too. If $\K$ has weakly cocartesian paths of $(0,1)$"/ary cells (\defref{cocartesian path of (0,1)-ary cells}) then \textup{(wye)}~$\Rightarrow$~\textup{(ys*)}. If $\K$ has left nullary"/cocartesian tabulations (\defref{tabulation}) and restrictions on the right then \textup{(ye)} $\Leftrightarrow$ \textup{(wye)} $\Rightarrow$ \textup{(gys)}. If $\K$ has left cocartesian tabulations, restrictions on the right and, for each $\hmap JAB$ with $A$ admissible, a cartesian nullary cell (\defref{cocartesian path of (0,1)-ary cells}) below exists with $f$ admissible, then \textup{(gys)}~$\Rightarrow$~\textup{(ye)} holds too.
		\begin{displaymath}
			\begin{tikzpicture}
				\matrix(m)[math35, column sep={1.75em,between origins}]{A & & B \\ & C & \\};
				\path[map]	(m-1-1) edge[barred] node[above] {$J$} (m-1-3)
														edge[transform canvas={xshift=-2pt}] node[left] {$f$} (m-2-2)
										(m-1-3) edge[transform canvas={xshift=2pt}] node[right] {$g$} (m-2-2);
				\draw				([yshift=0.333em]$(m-1-2)!0.5!(m-2-2)$) node[font=\scriptsize] {$\cart$};
			\end{tikzpicture}
		\end{displaymath}
		\begin{enumerate}
			\item[\textup{(ys)}] The morphisms $\yon_A$ form a Yoneda structure on $V(\K)$ in the sense of \cite{Street-Walters78};
			\item[\textup{(ys*)}] the morphisms $\yon_A$ form a Yoneda structure on $V(\K)$ that satisfies Axiom~3* of \cite{Street-Walters78};
			\item[\textup{(gys)}] the morphisms $\yon_A$ form a good Yoneda structure on $V(\K)$ in the sense of \cite{Weber07};
			\item[\textup{(wye)}] each morphism $\yon_A$ is a weak Yoneda embedding (\defref{yoneda embedding});
			\item[\textup{(ye)}] each morphism $\yon_A$ is a Yoneda embedding (\defref{yoneda embedding}).
		\end{enumerate}
	\end{theorem}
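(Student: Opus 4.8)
The plan is to prove the various implications by transporting, along the dictionary developed in this section, between the $2$"/categorical axioms for (good) Yoneda structures on $V(\K)$ and the conditions defining (weak) Yoneda embeddings in $\K$. Several implications are formal. The chain \textup{(gys)} $\Rightarrow$ \textup{(ys*)} $\Rightarrow$ \textup{(ys)} holds because a good Yoneda structure is by definition a Yoneda structure, because Weber proves in \cite{Weber07} that good Yoneda structures satisfy Axiom~3*, and because Axiom~3* is a strengthening of the ordinary axioms. The implication \textup{(ye)} $\Rightarrow$ \textup{(wye)} is immediate, since density implies weak density (so a Yoneda morphism is a weak Yoneda morphism, as noted after \lemref{density axioms}) and the full and faithfulness clause is common to both notions. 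When $\K$ has restrictions on the right and left nullary"/cocartesian tabulations, $\K$ has left nullary"/cocartesian paths of $(0,1)$"/ary cells by \cororef{cocartesian tabulations imply cocartesian paths of (0,1)-ary cells}, so \thmref{pointwise Kan extensions in terms of pointwise weak Kan extensions} applies and (again by the discussion after \lemref{density axioms}) weak density coincides with density; hence \textup{(ye)} $\Leftrightarrow$ \textup{(wye)} in that case.

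The substantive work is the comparison of \textup{(wye)}/\textup{(ye)} with \textup{(ys)}, \textup{(ys*)} and \textup{(gys)}. For \textup{(wye)} $\Rightarrow$ \textup{(ys)} (with restrictions on the right) I would verify Street and Walters' axioms in turn, the admissible morphisms $\yon_A$ and right ideal $\mathcal A$ being supplied. For Axiom~1 one takes the morphism ``$B(f,1)$'' attached to an admissible $\map fAB$ to be $\cur{f_*}$, where the companion $\hmap{f_*}AB$ exists by hypothesis and $\cur{f_*}$ comes from the Yoneda axiom; the full and faithfulness of the resulting functors out of the categories of admissible morphisms follows from the bijection of \lemref{bijection between cells induced by Yoneda morphisms} (equivalently from \propref{equivalence from yoneda embedding}, valid since $\yon_A$ admits nullary restrictions when $\K$ has restrictions on the right). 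For Axiom~2 one takes ``$\chi^f$'' to be the factorisation through the companion cocartesian cell of the cartesian cell defining $f_* = \ps A(\yon_A, \cur{f_*})$; this cartesian cell has vertical source $\yon_A$ and is therefore weakly left Kan by weak density, so by \propref{weak left Kan extensions along companions} $\chi^f$ exhibits $\cur{f_*}$ as a left Kan extension of $\yon_A$ along $f$ in $V(\K)$, and the absolute left lifting property required by Axiom~2 is then obtained by invoking the correspondence between absolute left liftings and pointwise left Kan extensions in the presence of a Yoneda structure (Theorem~5.3 of \cite{Weber07}), together with \propref{pointwise left Kan extensions along companions}. Axiom~3 follows from \lemref{full and faithful yoneda embedding} and the full and faithfulness of $\yon_A$. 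The variant \textup{(wye)} $\Rightarrow$ \textup{(ys*)} goes the same way, using in addition \lemref{horizontal morphism as composite of conjoint and companion} --- available once $\K$ has weakly cocartesian paths of $(0,1)$"/ary cells --- to express horizontal morphisms out of $A$ as composites of a conjoint and a companion, yielding the coherence isomorphisms of Axiom~3*.

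For \textup{(wye)}/\textup{(ye)} $\Rightarrow$ \textup{(gys)} under restrictions on the right and left nullary"/cocartesian tabulations, I would appeal to Weber's characterisation (again Theorem~5.3 of \cite{Weber07}) of when a Yoneda structure is good: the extra conditions amount to pointwise left Kan and density properties of the $\yon_A$, which hold in $\K$ by \lemref{density axioms} and \propref{pointwise left Kan extensions along companions} under those hypotheses. Conversely, for \textup{(gys)} $\Rightarrow$ \textup{(ye)} I would run the construction backwards. Admissibility of $\yon_A$ makes $\yon_{A*}$ exist, and from the good Yoneda structure data one reads off, via \propref{pointwise left Kan extensions along companions}, that the cartesian cell defining $\yon_{A*}$ is pointwise left Kan, i.e.\ that $\yon_A$ is dense. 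For the Yoneda axiom, given $\hmap JAB$ the hypothesis provides a cartesian nullary cell $J \Rar C$ with $\map fAC$ admissible, whence $J \iso C(f, g) \iso f_*(\id, g)$ with $f_*$ a companion; the morphism ``$B(f,1)$'' and cell ``$\chi^f$'' of the good Yoneda structure, reinterpreted through comma objects and the pasting lemma for cartesian cells (\auglemref{4.15}), then assemble into a cartesian cell exhibiting $J$ as $\ps A(\yon_A, \cur J)$.

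The main obstacle throughout is exactly this translation bookkeeping: getting the variances and the admissibility constraints of the $2$"/categorical axioms to line up with the formulation in terms of cartesian cells --- in particular matching Street and Walters' ``absolute left lifting'' with the correct density or pointwise left Kan statement in $\K$, which is what makes the roles of \propref{weak left Kan extensions along companions}, \propref{pointwise left Kan extensions along companions} and Theorem~5.3 of \cite{Weber07} delicate --- and, in the \textup{(gys)} $\Rightarrow$ \textup{(ye)} direction, reconstructing genuinely double"/dimensional data (cartesian cells and the Yoneda axiom) from purely $2$"/categorical input, which is precisely what forces the extra hypotheses on cartesian nullary cells and left cocartesian tabulations.
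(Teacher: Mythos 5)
Your overall architecture matches the paper's: the same decomposition into implications, the same choice of $B(f,1) \dfn \cur{f_*}$ with $\chi^f$ the composite of the cocartesian cell for $f_*$ and the cartesian cell defining $\cur{f_*}$, and the same use of \cororef{cocartesian tabulations imply cocartesian paths of (0,1)-ary cells} and \thmref{pointwise Kan extensions in terms of pointwise weak Kan extensions} for (ye) $\Leftrightarrow$ (wye). But there are two genuine gaps.

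First, your dispatch of Axiom~3 --- ``follows from \lemref{full and faithful yoneda embedding} and the full and faithfulness of $\yon_A$'' --- is wrong, and it skips what is by far the longest part of the paper's proof of (wye) $\Rightarrow$ (ys). Axiom~3 of \cite{Street-Walters78} has two clauses. The first, that $\id_{\yon_A}$ exhibits $\id_{\ps A}$ as the left Kan extension of $\yon_A$ along $\yon_A$, follows from weak density of $\yon_A$ (the cartesian cell defining $\yon_{A*}$ is weakly left Kan) together with \propref{weak left Kan extensions along companions}; it has nothing to do with full and faithfulness (\lemref{full and faithful yoneda embedding} relates full and faithfulness to unitality of $A$, a different matter). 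The second clause --- that for composable admissible $f$ and $g$ the pasting $\chi^{\yon_B \of f} \hc (\ps B(\yon_B \of f, 1) \of \chi^g \of f)$ exhibits $\ps B(\yon_B \of f,1) \of C(g, 1)$ as the left Kan extension of $\yon_A$ along $g \of f$ --- requires a substantial argument: one factors through the companion $(\yon_B \of f)_*$, shows that the induced cell $(f_*, \yon_{B*}) \Rar (\yon_B \of f)_*$ is pointwise right cocartesian (via \auglemref{8.1} and \auglemref{9.7}), and then combines the vertical and horizontal pasting lemmas with \propref{weak left Kan extensions along companions}. Nothing in your sketch produces this.

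Second, your repeated appeal to Theorem~5.3 of \cite{Weber07} for the correspondence between absolute left liftings and (pointwise) left Kan extensions is both misplaced and circular: that theorem concerns $2$\mbox{-}toposes, and any statement of the form ``in the presence of a Yoneda structure, absolute left liftings are pointwise left extensions'' cannot be invoked while you are still establishing that the $\yon_A$ form a Yoneda structure. The tool the paper uses instead is \auglemref{5.17} (with \augpropref{7.12}): a vertical cell $\psi$ exhibits $f$ as an absolute left lifting in $V(\K)$ precisely if its factorisation $\psi'$ through $f_*$ is cartesian in $\K$. This is also the mechanism behind Axiom~3* and the second axiom of a good Yoneda structure --- cartesian implies (pointwise) weakly left Kan by (weak) density, and \propref{weak left Kan extensions along companions} or \propref{pointwise left Kan extensions along companions} transfers this to $V(\K)$ --- not the decomposition of \lemref{horizontal morphism as composite of conjoint and companion} that you propose for Axiom~3*. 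Your sketch of (gys) $\Rightarrow$ (ye) is closer to the mark (density via \propref{pointwise left Kan extensions along companions} applied to Weber's Proposition~3.4(1), and the Yoneda axiom via $\cur J \dfn C(f,1) \of g$ and the pasting lemma), but there too the cartesianness of the factorised cell $\chi'$ must come from \auglemref{5.17} and \augpropref{7.12}, not from a comma-object reinterpretation.
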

	\begin{proof}
		That (gys) $\Rightarrow$ (ys*) and (ye) $\Rightarrow$ (wye) is easily checked. Proposition~11 of \cite{Street-Walters78} proves (ys*) $\Rightarrow$ (ys). Suppose that $\K$ has restrictions on the right. To show (wye) $\Rightarrow$ (ys) we have to show that the family of weak Yoneda embeddings $\yon_A$ satisfies Axioms~1, 2 and 3 of \cite{Street-Walters78}. Axioms~1 and 2 ask, for each $\map fAB$ with both $A$ and $f$ admissible, a morphism $\map{B(f, 1)}B{\ps A}$ equipped with a cell $\cell{\chi^f}{\yon_A}{B(f, 1) \of f}$ that simultaneously defines, in $V(\K)$, $B(f, 1)$ as the left Kan extension of $\yon_A$ along $f$ and $f$ as the absolute left lifting of $\yon_A$ along $B(f, 1)$. By assumption $f_*$ exists and we can take $B(f, 1) \dfn \cur{f_*}$ as supplied by the Yoneda axiom (\defref{yoneda embedding}); let $\chi^f$ be the composite
		\begin{displaymath}
			\chi^f \dfn \quad \begin{tikzpicture}[textbaseline]
    		\matrix(m)[math35, column sep={1.75em,between origins}]{& A & \\ A & & B. \\ & \ps A & \\};
    		\path[map]	(m-1-2) edge[transform canvas={xshift=2pt}] node[right] {$f$} (m-2-3)
    								(m-2-1) edge[barred] node[below, inner sep=2pt] {$f_*$} (m-2-3)
    												edge[transform canvas={xshift=-2pt}] node[left] {$\yon_A$} (m-3-2)
    								(m-2-3)	edge[transform canvas={xshift=2pt}] node[right] {$B(f, 1)$} (m-3-2);
    		\path				(m-1-2) edge[eq, transform canvas={xshift=-2pt}] (m-2-1);
    		\draw				([yshift=-0.5em]$(m-1-2)!0.5!(m-2-2)$) node[font=\scriptsize] {$\cocart$}
    								([yshift=0.25em]$(m-2-2)!0.5!(m-3-2)$) node[font=\scriptsize] {$\cart$};
  		\end{tikzpicture} 
		\end{displaymath}
		It follows from weak density of $\yon_A$ (\defref{density definition}) and \propref{weak left Kan extensions along companions} that $\chi^f$ defines $B(f, 1)$ as the left Kan extension of $\yon_A$ along $f$ in $V(\K)$; that it defines $f$ as the absolute left lifting of $\yon_A$ along $B(f, 1)$ too follows from \auglemref{5.17}. Notice that Axioms~1 and 2 hold without requiring that $\K$ has restrictions on the right. 
		
		The first part of Axiom~3 requires that the identity cell $\cell{\id_{\yon_A}}{\yon_A}{\id_{\ps A} \of \yon_A}$ defines $\id_A$ as the left Kan extension of $\yon_A$ along $\yon_A$. By assumption $\yon_A$ is admissible so that $\yon_{A*}$ exists and, by weak density of $\yon_A$ (\defref{density definition}), it follows that the defining cell $\cell{\cart}{\yon_{A*}}{\ps A}$ is weakly left Kan so that $\id_{\yon_A} = \cart \of \cocart$ (\lemref{companion identities lemma}) defines $\id_{\ps A}$ as the required left Kan extension by \propref{weak left Kan extensions along companions}. Next consider a composable pair of morphisms $\map fAB$ and $\map gBC$ with $A$, $B$ and $g$ admissible; notice that admissibility of $B$ implies that of $f$. The second part of Axiom~3 requires that the composite $\chi^{\yon_B \of f} \hc (\ps B(\yon_B \of f, 1) \of \chi^g \of f)$, which by definition of $\chi^{\yon_B \of f}$ and $\chi^g$ is the left"/hand side of the equality below, defines $\ps B(\yon_B \of f,1) \of C(g, 1)$ as the left Kan extension of $\yon_A$ along $g \of f$ in $V(\K)$.
		
		In the middle composite below the cell $\cart'$ denotes the factorisation of the cartesian cell that defines $C(g,1)$ through the cartesian cell that defines $\yon_{B*}$ while the cell $\phi$ is the composite of the cocartesian cell defining $(\yon_B \of f)_*$ with the cartesian cells defining $f_*$ and $\yon_{B*}$. Notice that the first identity below follows from the companion identity for $f_*$ (\lemref{companion identities lemma}) and the definition of $\cart'$, and that the latter is again cartesian by the pasting lemma for cartesian cells (\lemref{pasting lemma for cartesian cells}). Moreover it follows easily from the companion identities that, together with the cell $\cell\psi{(\yon_B \of f)_*}{\yon_{B*}}$ obtained by composing the cartesian cell defining $(\yon_B \of f)_*$ with the cocartesian cell defining $\yon_{B*}$, the cell $\phi$ satisfies both identities of \auglemref{8.1}, so that $\phi$ is pointwise right cocartesian by \auglemref{9.7} and \remref{right pointwise cocartesian and pointwise right cocartesian comparison}. By weak density of $\yon_A$ (\defref{density definition}) the cartesian cell $\cart$ defining $\ps B(\yon_B \of f, 1)$ in the middle composite below is pointwise weakly left Kan, so that by the vertical pasting lemma (\lemref{vertical pasting lemma}) the composite $\cart \of \phi$ is pointwise weakly left Kan too. By \defref{pointwise left Kan extension} it follows that the composite $\eta \dfn \cart \of \phi \of (\id, \cart')$ of the bottom three rows of the middle composite below is weakly left Kan. Finally consider, as in the second identity below, the unique factorisation $\eta'$ of $\eta$ through the cocartesian cell $(f_*, g_*) \Rar (g \of f)_*$ that is defined analogously to the cocartesian cell $\phi$. By the vertical pasting lemma $\eta'$ is weakly left Kan. By definition of $\cell\cocart{(f_*, g_*)}{(g \of f)_*}$ the composite of the top three rows in the right"/hand side below is the cocartesian cell that defines the companion $(g \of f)_*$. Applying \propref{weak left Kan extensions along companions} we conclude that the right"/hand side, and hence the left"/hand side, defines $\ps B(\yon_B \of f, 1) \of C(g, 1)$ as the left Kan extension of $\yon_A$ along $g \of f$ in $V(\K)$. This completes the proof of (wye) $\Rightarrow$ (ys).
		
		\begin{displaymath}
			\begin{tikzpicture}[textbaseline]
				\matrix(m)[math35, column sep={1.75em,between origins}]
					{ & & & A & & \\ & & A & & B & \\ & & & B & & C \\ A & & & & \ps B & \\ & & \ps A & & & & \\};
				\path[map]	(m-1-4) edge[transform canvas={xshift=2pt}] node[right] {$f$} (m-2-5)
										(m-2-3) edge node[left] {$f$} (m-3-4)
										(m-2-5) edge[transform canvas={xshift=2pt}] node[right] {$g$} (m-3-6)
										(m-3-4) edge[barred] node[below, inner sep=2pt] {$g_*$} (m-3-6)
														edge[transform canvas={xshift=-2pt}] node[left, inner sep=1pt] {$\yon_B$} (m-4-5)
										(m-3-6) edge[transform canvas={xshift=2pt}] node[right] {$C(g, 1)$} (m-4-5)
										(m-4-1) edge[barred] node[above, inner sep=1pt] {$(\yon_B \of f)_*$} (m-4-5)
														edge[transform canvas={xshift=-2pt}] node[below left] {$\yon_A$} (m-5-3)
										(m-4-5) edge[transform canvas={xshift=2pt}] node[below right, inner sep=1.5pt] {$\ps B(\yon_B \of f, 1)$} (m-5-3);
				\path				(m-1-4) edge[eq, transform canvas={xshift=-1pt}] (m-2-3)
										(m-2-3) edge[eq, transform canvas={xshift=-1pt}] (m-4-1)
										(m-2-5) edge[eq, transform canvas={xshift=-2pt}] (m-3-4);
				\draw[font=\scriptsize]	([yshift=-0.5em]$(m-2-3)!0.5!(m-4-3)$) node {$\cocart$}
										([yshift=-0.5em]$(m-2-5)!0.5!(m-3-5)$) node {$\cocart$}
										([yshift=0.25em]$(m-3-5)!0.5!(m-4-5)$) node {$\cart$}
										([yshift=0.25em]$(m-4-3)!0.5!(m-5-3)$) node {$\cart$};
			\end{tikzpicture} = \quad \begin{tikzpicture}[textbaseline]
    		\matrix(m)[math35, column sep={1.75em,between origins}]
    			{ & & A & & \\ & A & & B & \\ A & & B & & C \\ A & & B & & \ps B \\ A & & & & \ps B \\ & & \ps A & & \\};
    		\path[map]	(m-1-3) edge[transform canvas={xshift=2pt}] node[right] {$f$} (m-2-4)
    								(m-2-2) edge[barred] node[below] {$f_*$} (m-2-4)
    								(m-2-4) edge[transform canvas={xshift=2pt}] node[right] {$g$} (m-3-5)
    								(m-3-1) edge[barred] node[below] {$f_*$} (m-3-3)
    								(m-3-3) edge[barred] node[below] {$g_*$} (m-3-5)
    								(m-3-5) edge[ps] node[right] {$C(g,1)$} (m-4-5)
    								(m-4-1) edge[barred] node[below] {$f_*$} (m-4-3)
    								(m-4-3) edge[barred] node[below] {$\yon_{B*}$} (m-4-5)
    								(m-5-1) edge[barred] node[below, inner sep=1pt] {$(\yon_B \of f)_*$} (m-5-5)
    												edge[transform canvas={xshift=-2pt}] node[below left] {$\yon_A$} (m-6-3)
    								(m-5-5) edge[transform canvas={xshift=2pt}] node[below right, inner sep=1.5pt] {$\ps B(\yon_B \of f,1)$} (m-6-3);
    		\path				(m-1-3) edge[eq, transform canvas={xshift=-2pt}] (m-2-2)
    								(m-2-2) edge[eq, transform canvas={xshift=-2pt}] (m-3-1)
    								(m-2-4) edge[eq, transform canvas={xshift=-2pt}] (m-3-3)
    								(m-3-1) edge[eq] (m-4-1)
    								(m-3-3) edge[eq] (m-4-3)
    								(m-4-1) edge[eq] (m-5-1)
    								(m-4-5) edge[eq, ps] (m-5-5)
    								(m-4-3) edge[cell] node[right] {$\phi$} (m-5-3);
    		\draw[font=\scriptsize]	([yshift=-0.5em]$(m-1-3)!0.5!(m-2-3)$) node {$\cocart$}
    								([yshift=-0.5em]$(m-2-4)!0.5!(m-3-4)$) node {$\cocart$}
    								($(m-3-3)!0.5!(m-4-5)$) node {$\cart'$}
    								($(m-5-3)!0.5!(m-6-3)$) node {$\cart$};    								
    	\end{tikzpicture} \quad = \quad \begin{tikzpicture}[textbaseline]
    		\matrix(m)[math35, column sep={1.75em,between origins}]
    			{ & & A & & \\ & A & & B & \\ A & & B & & C \\ & A & & C & \\ & & & & \\ & & \ps A & & \\};
    		\draw				($(m-4-4)!0.5!(m-6-3)$) node (psB) {$\ps B$};
    		\path[map]	(m-1-3) edge[transform canvas={xshift=2pt}] node[right] {$f$} (m-2-4)
    								(m-2-2) edge[barred] node[below] {$f_*$} (m-2-4)
    								(m-2-4) edge[transform canvas={xshift=2pt}] node[right] {$g$} (m-3-5)
    								(m-3-1) edge[barred] node[below] {$f_*$} (m-3-3)
    								(m-3-3) edge[barred] node[below] {$g_*$} (m-3-5)
    								(m-4-4) edge[transform canvas={xshift=1pt}] node[right] {$C(g,1)$} (psB)
    								(m-4-2) edge[barred] node[below, inner sep=2pt, xshift=1pt] {$(g \of f)_*$} (m-4-4)
    												edge[transform canvas={xshift=-1pt},ps] node[left] {$\yon_A$} (m-6-3)
    								(psB) edge[transform canvas={xshift=1pt},ps] node[right, inner sep=1.5pt] {$\ps B(\yon_B \of f,1)$} (m-6-3);
    		\path				(m-1-3) edge[eq, transform canvas={xshift=-2pt}] (m-2-2)
    								(m-2-2) edge[eq, transform canvas={xshift=-2pt}] (m-3-1)
    								(m-2-4) edge[eq, transform canvas={xshift=-2pt}] (m-3-3)
    								(m-3-1) edge[eq] (m-4-2)
    								(m-3-5) edge[eq] (m-4-4);
    		\path[transform canvas={yshift=-0.666em}]		(m-4-3) edge[cell] node[right, inner sep=2pt] {$\eta'$} (m-5-3);
    		\draw[font=\scriptsize]	([yshift=-0.5em]$(m-1-3)!0.5!(m-2-3)$) node {$\cocart$}
    								([yshift=-0.5em]$(m-2-4)!0.5!(m-3-4)$) node {$\cocart$}
    								($(m-3-3)!0.5!(m-4-3)$) node {$\cocart$};    								
    	\end{tikzpicture}
		\end{displaymath}
				
		Next, assume that $\K$ has weakly cocartesian paths of $(0,1)$"/ary cells (\defref{cocartesian path of (0,1)-ary cells}). To show (wye) $\Rightarrow$ (ys*) we have to prove that the weak Yoneda embeddings $\yon_A$ satisfy Axioms~1, 2 and 3* of \cite{Street-Walters78} in $V(\K)$. That they satisfy Axioms~1 and 2 follows from the argument given above. Axiom~3* is equivalent to the assertion that any cell $\psi$ as on the left below, with $A$ and $f$ admissible, defines $g$ as a left Kan extension of $\yon_A$ along $f$ in $V(\K)$ as soon as it defines $f$ as an absolute left lifting of $\yon_A$ along $g$. Considering the factorisation $\psi'$ of $\psi$ through $f_*$ as shown, this is a consequence of the assumption on $\K$ as follows: $\psi$ defining $f$ as an absolute left lifting in $V(\K)$ means that $\psi'$ is cartesian in $\K$, by \auglemref{5.17} and \augpropref{7.12}, which implies that $\psi'$ is weakly left Kan by weak density of $\yon_A$, so that $\psi$ defines $g$ as a left Kan extension in $V(\K)$ by \propref{weak left Kan extensions along companions}.
		\begin{displaymath}
			\begin{tikzpicture}[textbaseline]
				\matrix(m)[math35, column sep={1.75em,between origins}]{& A & \\ & & B \\ & \ps A & \\};
				\path[map]	(m-1-2) edge[bend left = 18] node[above right] {$f$} (m-2-3)
														edge[bend right = 45] node[left] {$\yon_A$} (m-3-2)
										(m-2-3) edge[bend left = 18] node[below right] {$g$} (m-3-2);
				\path[transform canvas={yshift=-1.625em}]	(m-1-2) edge[cell] node[right] {$\psi$} (m-2-2);
			\end{tikzpicture} \quad = \quad \begin{tikzpicture}[textbaseline]
    		\matrix(m)[math35, column sep={1.75em,between origins}]{& A & \\ A & & B \\ & \ps A & \\};
    		\path[map]	(m-1-2) edge[transform canvas={xshift=2pt}] node[right] {$f$} (m-2-3)
    								(m-2-1) edge[barred] node[below, inner sep=2pt] {$f_*$} (m-2-3)
    												edge[transform canvas={xshift=-2pt}] node[left] {$\yon_A$} (m-3-2)
    								(m-2-3)	edge[transform canvas={xshift=2pt}] node[right] {$g$} (m-3-2);
    		\path				(m-1-2) edge[eq, transform canvas={xshift=-2pt}] (m-2-1);
    		\path				(m-2-2) edge[cell, transform canvas={yshift=0.1em}]	node[right, inner sep=3pt] {$\psi'$} (m-3-2);
    		\draw				([yshift=-0.5em]$(m-1-2)!0.5!(m-2-2)$) node[font=\scriptsize] {$\cocart$};
  		\end{tikzpicture} \qquad\qquad\qquad\qquad\qquad\qquad \begin{tikzpicture}[textbaseline]
  			\matrix(m)[math35, column sep={1.75em,between origins}]{A & & B \\ A & & C \\ & \ps A & \\};
  			\path[map]	(m-1-1) edge[barred] node[above] {$J$} (m-1-3)
  									(m-1-3) edge node[right] {$g$} (m-2-3)
  									(m-2-1) edge[barred] node[below, inner sep=2pt] {$f_*$} (m-2-3)
  													edge[transform canvas={xshift=-2pt}] node[left] {$\yon_A$} (m-3-2)
  									(m-2-3) edge[transform canvas={xshift=2pt}] node[below right] {$C(f, 1)$} (m-3-2);
  			\path				(m-1-1) edge[eq] (m-2-1)
  									(m-2-2) edge[cell, transform canvas={yshift=0.2em}] node[right] {$\chi'$} (m-3-2);
  			\draw				($(m-1-2)!0.5!(m-2-2)$) node[font=\scriptsize] {$\cart'$};
  		\end{tikzpicture}
		\end{displaymath}
		
		Next we will show that (ye) $\Leftrightarrow$ (wye) $\Rightarrow$ (gys) whenever $\K$ has left nullary"/cocartesian tabulations and restrictions on the right. By \cororef{cocartesian tabulations imply cocartesian paths of (0,1)-ary cells} $\K$ has left nullary"/cocartesian paths of $(0,1)$"/ary cells so that by \thmref{pointwise Kan extensions in terms of pointwise weak Kan extensions} the notions of pointwise weak Kan extension and pointwise Kan extension coincide in $\K$. As remarked after \defref{density definition} this implies (ye) $\Leftrightarrow$ (wye). Assuming (ye) we will show that the Yoneda embeddings $\yon_A$, together with the cells $\chi^f$ defined above, satisfy the two axioms of a good Yoneda structure on $V(\K)$, in the sense of Definition~3.1 of \cite{Weber07}, which proves (gys). The first axiom of a good Yoneda structure coincides with Axiom~2 of a Yoneda structure, which we proved previously. The second axiom strengthens Axiom~3* as follows: any cell $\psi$ as on the left above, with $A$ and $f$ admissible, defines $g$ as a pointwise left Kan extension in $V(\K)$ as soon as it defines $f$ as an absolute left lifting. To see this notice that, as before, the assumption on $\psi$ implies that its factorisation $\psi'$ is cartesian in $\K$, so that $\psi'$ is pointwise left Kan by density of $\yon_A$ (\defref{density definition}) and hence $\psi$ defines $g$ as a pointwise left Kan extension in $V(\K)$ by \propref{pointwise left Kan extensions along companions}.
		
		For the converse (gys) $\Rightarrow$ (ye) assume that, besides left cocartesian tabulations and restrictions on the right, $\K$ has cartesian nullary cells for all $\hmap JAB$ with $A$ admissible, in the sense described in the statement. By \cororef{cocartesian tabulations imply cocartesian paths of (0,1)-ary cells} $\K$ has all left cocartesian paths of $(0,1)$"/ary cells. That each $\yon_A$ is dense follows easily from Proposition~3.4(1) of \cite{Weber07}, which shows that the identity cell $\id_{\yon_A}$ defines $\id_{\ps A}$ as the pointwise left Kan extension of $\yon_A$ along $\yon_A$ in $V(\K)$: applying \propref{pointwise left Kan extensions along companions} to the companion identity $\id_{\yon_A} = \cart \of \cocart$ for $\yon_{A*}$ it follows that $\cart$ is pointwise left Kan in $\K$ so that $\yon_A$ is dense by \defref{density definition}. It remains to show that $\yon_A$ satisfies the Yoneda axiom (\defref{yoneda embedding}), that is for each $\hmap JAB$ there exists a morphism $\map{\cur J}B{\ps A}$ such that $J$ is the restriction $\ps A(\yon_A, \cur J)$. Consider a cartesian nullary cell $\cell\cart JC$ as in the statement, with admissible vertical source $\map fAC$ and vertical target $\map gBC$, and set $\cur J \dfn C(f, 1) \of g$, where $\map{C(f, 1)}C{\ps A}$ is supplied by the good Yoneda structure (Definition~3.1 of \cite{Weber07}). To show that $J$ is the restriction $\ps A(\yon_A, \cur J)$ consider the composite cell on the right above, where $\cart'$ is the factorisation of $\cell\cart JC$ through $f_*$ and $\chi'$ is the factorisation of the vertical cell $\cell{\chi^f}{\yon_A}{C(f, 1) \of f}$, that defines $C(f, 1)$, through $f_*$. By the pasting lemma (\lemref{pasting lemma for cartesian cells}) $\cart'$ is cartesian and, since $\chi^f$ defines $f$ as an absolute left lifting, $\chi'$ is cartesian by \auglemref{5.17} and \augpropref{7.12}. Applying the pasting lemma again we find that $\chi' \of \cart'$ is cartesian, thus defining $J$ as $\ps A(\yon_A, \cur J)$. This completes the proof.
	\end{proof}
	
	\begin{example} \label{no good yoneda structure on (Cat, Cat')-Prof}
		Applying the theorem to the Yoneda embeddings of the augmented virtual equipment $\enProf{(\V, \V')}$ (\exref{enriched yoneda embedding summary}) we recover the Yoneda structure on the $2$"/category $\enCat{\V'}$ of (large) $\V'$"/categories, as described in Section~7 of \cite{Street-Walters78}, whose admissible morphisms are the $\V'$"/functors $\map fAC$ for which all hom objects $C(fx, z)$ are (isomorphic to) $\V$"/objects (\augexref{5.6}).
		
		Instantiating $\V \subset \V'$ by $\Cat \subset \Cat'$ (\augexref{2.7}) recall that the augmented virtual equipment $\enProf{(\Cat, \Cat')}$ of $2$"/profunctors does not have cocartesian tabulations (see \exref{tabulations of 2-profunctors}), so that we cannot apply the previous theorem to obtain a good Yoneda structure on the $2$"/category $\enCat{\Cat'} = V\bigpars{\enProf{(\Cat, \Cat')}}$ of (locally large) $2$"/categories, in the sense of \cite{Weber07}. In fact it is shown in Remark~9 of \cite{Walker18} that the Yoneda embedding $\map\yon 1\inCat{\Set}$ for the terminal $2$"/category~$1$ (\exref{yoneda embedding for unit V-category}), with presheaf object $\inCat{\Set}$ the $2$"/category of small categories (\augexref{2.9}), cannot be part of a Yoneda structure on the $2$"/category $\enCat{\Cat'}$ that satisfies Axiom~3* of \cite{Street-Walters78}.
	\end{example}
	
	\begin{example} \label{yoneda structure from weak yoneda embeddings}
		In an augmented virtual equipment $\K$ (\defref{augmented virtual equipment}) call a morphism $\map fAC$ admissible if its companion $\hmap{f_*}AC$ exists. Notice that the class $\mathcal A$ of admissible morphisms in $V(\K)$ is a right ideal: for a composite $g \of f$ with $g$ admissible the companion $(g \of f)_* \iso g_*(f, \id)$ (\auglemref{5.11}) exists as well because $\K$ has restrictions on the left, so that $g \of f$ is admissible too. An object $A \in \K$ is admissible precisely if it is unital, that is its horizontal unit $I_A \iso (\id_A)_*$ (\defref{cartesian cells}) exists. Using the assumption that $\K$ has restrictions on the right the previous theorem shows that choosing an admissible weak Yoneda embedding $\map{\yon_A}A{\ps A}$ in $\K$, for each unital object $A$, induces a Yoneda structure on $V(\K)$, in the sense of \cite{Street-Walters78}.
	\end{example}
	
	\begin{example} \label{yoneda embeddings in Q(C)}
		Consider a right ideal $\mathcal A$ of admissible morphisms in a $2$"/category $\mathcal C$. Recall from \augexsref{6.3}{7.9} the strict double category $Q(\mathcal C)$ of quintets in $\mathcal C$, whose vertical and horizontal morphisms both are the morphisms of $\mathcal C$. Let \mbox{$Q_{\mathcal A}(\mathcal C) \subseteq Q(\mathcal C)$} be the full sub"/augmented virtual double category generated by those horizontal morphisms $\hmap jAB$ that are admissible. Supplying a family of admissible weak Yoneda embeddings $\map\yon A{\ps A}$ in $Q_{\mathcal A}(\mathcal C)$, one for each admissible object $A$, is equivalent to equipping $\mathcal C = V\bigpars{Q_{\mathcal A}(\mathcal C)}$ with a Yoneda structure that satisfies Axiom~3* of \cite{Street-Walters78}. Indeed notice that all horizontal morphisms of $Q_{\mathcal A}(\mathcal C)$, like those of $Q(\mathcal C)$, are companions, so that they admit cocartesian $(0,1)$"/ary cells as well as restrictions on the left. It follows from the pasting lemma for cocartesian paths of $(0,1)$"/ary cells (\lemref{pasting lemma for cocartesian paths of (0,1)-ary cells}) that $Q_{\mathcal A}(\mathcal C)$ has right cocartesian paths of $(0,1)$"/ary cells so that (wye)~$\Rightarrow$~(ys*) holds in the previous theorem. Using \auglemref{5.17} and \augpropref{7.12} it is straightforward to check that the converse (ys*) $\Rightarrow$ (wye) holds as well.
	\end{example}

	\begin{example}
		Let $\mathcal A$ be a right ideal of admissible morphisms in a finitely complete $2$"/category $\mathcal C$ and consider the unital virtual equipment $\dFib{\mathcal C}$ of discrete two"/sided fibrations in $\mathcal C$ (\exref{discrete two-sided fibrations}). Write $\mathsf{dFib}_{\mathcal A}(\mathcal C) \subseteq \dFib{\mathcal C}$ for the full sub"/augmented virtual double category generated by those discrete two"/sided fibrations \mbox{$\hmap JAB$} that admit a cartesian nullary cell $\cell\psi JC$ (\defref{cocartesian path of (0,1)-ary cells}) whose vertical source $A \to C$ is admissible. Using the pasting lemma for cartesian cells (\lemref{pasting lemma for cartesian cells}) and the fact that $\mathcal A$ is a right ideal notice that $\mathsf{dFib}_{\mathcal A}(\mathcal C)$ is closed under taking restrictions so that it is an augmented virtual equipment by \auglemref{4.5}. By \lemref{full and faithful functors reflect tabulations} $\mathsf{dFib}_{\mathcal A}(\mathcal C)$ has all cocartesian tabulations too, which are created as in $\dFib{\mathcal C}$ (\exref{tabulations of discrete two-sided fibrations}), so that by \cororef{cocartesian tabulations imply cocartesian paths of (0,1)-ary cells} $\mathsf{dFib}_{\mathcal A}(\mathcal C)$ has left cocartesian paths of $(0,1)$"/ary cells. Applying the previous theorem we find firstly that equipping $\mathcal C = V\bigpars{\mathsf{dFib}_{\mathcal A}(\mathcal C)}$ with a good Yoneda structure, in the sense of \cite{Weber07}, is equivalent to supplying a family of admissible Yoneda embeddings $\map{\yon_A}A\ps A$ in $\mathsf{dFib}_{\mathcal A}(\mathcal C)$, in the sense of \defref{yoneda embedding}, one for each admissible object $A$. Secondly the theorem shows that a family of admissible weak Yoneda embeddings $\yon_A$ in $\mathsf{dFib}_{\mathcal A}(\mathcal C)$, one for each admissible object $A$, induces a Yoneda structure on $\mathcal C$ that satisfies Axiom~3* of \cite{Street-Walters78}.
	\end{example}
	
	Finally consider an equipment $\K$ (\augpropref{7.8}), that is $\K$ is a pseudo double category that has all restrictions. The horizontal dual of Definition~11.4 of \cite{Lambert22} defines $\K$ to have \emph{powers} whenever each object $A \in \K$ admits a horizontal morphism $\hmap{\in_A}A{PA}$ satisfying the following property: for each $\hmap JAB$ there is a unique $\map fB{PA}$ equipped with a cartesian cell of the form below. Having powers is one of several conditions required for $\K$ to be equivalent to the equipment $\Rel(\E)$ of relations (\augthreeexref{2.10}{5.8}{7.4}) internal in some topos $\E$; this is proved in Theorem~11.5 of \cite{Lambert22}.
	\begin{displaymath}
		\begin{tikzpicture}
			\matrix(m)[math35]{A & B \\ A & PA \\};
			\path[map]	(m-1-1) edge[barred] node[above] {$J$} (m-1-2)
									(m-1-2) edge node[right] {$f$} (m-2-2)
									(m-2-1) edge[barred] node[below] {$\in_A$} (m-2-2);
			\path	(m-1-1) edge[eq] (m-2-1);
			\draw[font=\scriptsize] ($(m-1-1)!0.5!(m-2-2)$) node {$\cart$};
		\end{tikzpicture}
	\end{displaymath}
	Let us call the vertical $2$"/category $V(\K)$ (\augexref{1.5}) \emph{locally skeletal} whenever any isomorphic pair $f \iso g$ of morphisms in $V(\K)$ is equal: $f = g$. Notice that in that case the morphisms $\map{\cur J}B{\ps A}$ supplied by the Yoneda axiom (\defref{yoneda embedding}) are uniquely determined by the horizontal morphisms $\hmap JAB$ that induce them. The following result is a straightforward consequence of the pasting lemma for cartesian cells (\lemref{pasting lemma for cartesian cells}).
	\begin{proposition} \label{powers from weak yoneda morphisms}
		Let $\K$ be an equipment with $V(\K)$ locally skeletal. If $\K$ has weak Yoneda morphisms $\map{\yon_A}A{\ps A}$ (\defref{yoneda embedding}) for all objects $A$ then it has all powers $PA \dfn \ps A$, defined as such by the companions $\hmap{\in_A \dfn \yon_{A*}}A{\ps A}$.
	\end{proposition}	

	\section{Exact cells} \label{exact cells}
	In this sections we use the formal notions of left Kan extension and Yoneda morphism to consider in augmented virtual double categories the classical notions of exact squares, as studied by Guitart \cite{Guitart80}. In \secref{totality section} we likewise consider total morphisms and objects, originally introduced by Street and Walters \cite{Street-Walters78}, and in \secref{cocompleteness section} we consider cocomplete objects.
	
	\subsection{Left exact paths of cells}
	The following definition generalises Guitart's notion of `carr\'e exact' \cite{Guitart80} to various notions of exactness for paths of cells in augmented virtual double categories. In \exref{left Beck-Chevalley condition in (Set,Set')-Prof} we will see that, when considered in the augmented virtual double category $\enProf{(\Set, \Set')}$ (\augexref{2.6}), the notion of pointwise left exactness below coincides with the classical notion of exactness. Analogous to the classical situation \thmref{left exactness and Beck-Chevalley} below characterises exactness in terms of a `Beck"/Chevalley condition', in the sense of \defref{left Beck-Chevalley condition}. The latter condition is used in \thmref{left Beck-Chevalley condition and absolute left Kan extensions} to characterise absolute left Kan extensions (\defref{absolutely left Kan}).
	\begin{definition} \label{left exact}
		Consider a path $\ul \phi = (\phi_1, \dotsc, \phi_n)$ as in the composite below, with $n \geq 1$, and let $\map d{C_0}M$ be any vertical morphism. The path $\ul \phi$ is called \emph{(weakly) left $d$-exact} if for any (weakly) left Kan cell $\eta$ (\defsref{weak left Kan extension}{left Kan extension}) as below, with $d$ as vertical source, the composite $\eta \of \ul\phi$ is again (weakly) left Kan. If $\ul \phi$ is (weakly) left $d$-exact for all morphisms $\map d{C_0}M$, where $M$ varies, then it is called \emph{(weakly) left exact}.
		\begin{displaymath}
			\begin{tikzpicture}
				\matrix(m)[math35]
					{	A_{10} & A_{11} & A_{1m'_1} & A_{1m_1} &[4em] A_{n0} & A_{n1} & A_{nm'_n} & A_{nm_n} \\
						C_0 & & & C_1 & C_{n'} & & & C_n \\};
				\draw				([yshift=-3.25em]$(m-2-1)!0.5!(m-2-8)$) node (M) {$M$};
				\path[map]	(m-1-1) edge[barred] node[above] {$J_{11}$} (m-1-2)
														edge node[left] {$f_0$} (m-2-1)
										(m-1-3) edge[barred] node[above] {$J_{1m_1}$} (m-1-4)
										(m-1-4) edge node[right] {$f_1$} (m-2-4)
										(m-1-5)	edge[barred] node[above] {$J_{n1}$} (m-1-6)
														edge node[left] {$f_{n'}$} (m-2-5)
										(m-1-7) edge[barred] node[above] {$J_{nm_n}$} (m-1-8)
										(m-1-8) edge node[right] {$f_n$} (m-2-8)
										(m-2-1) edge[barred] node[below] {$\ul K_1$} (m-2-4)
														edge[transform canvas={yshift=-2pt}] node[below left] {$d$} (M)
										(m-2-5)	edge[barred] node[below] {$\ul K_n$} (m-2-8)
										(m-2-8)	edge[transform canvas={yshift=-2pt}] node[below right] {$l$} (M);
				\path				($(m-1-1.south)!0.5!(m-1-4.south)$) edge[cell] node[right] {$\phi_1$} ($(m-2-1.north)!0.5!(m-2-4.north)$)
										($(m-1-5.south)!0.5!(m-1-8.south)$) edge[cell] node[right] {$\phi_n$} ($(m-2-5.north)!0.5!(m-2-8.north)$)
										($(m-2-1.south)!0.5!(m-2-8.south)$) edge[cell] node[right] {$\eta$} (M);
				\draw[transform canvas={xshift=-1pt}]	($(m-1-2)!0.5!(m-1-3)$) node {$\dotsb$}
										($(m-1-6)!0.5!(m-1-7)$) node {$\dotsb$};
				\draw				($(m-1-4)!0.5!(m-2-5)$) node {$\dotsb$};
			\end{tikzpicture}
		\end{displaymath}
				
		Analogously $\ul \phi$ is called \emph{pointwise} (weakly) left $d$-exact if for any pointwise (weakly) left Kan cell $\eta$ (\defref{pointwise left Kan extension}) as above the composite above is pointwise (weakly) left Kan too. A path that is pointwise (weakly) left $d$-exact for all $\map d{C_0}M$ is called \emph{pointwise} (weakly) left exact.
	\end{definition}
	
	\begin{example} \label{cocartesian paths are exact}
		Consider a (pointwise) right (respectively weakly) nullary"/cocartesian path $\ul\phi = (\phi_1, \dotsc, \phi_n)$ (\defsref{cocartesian path}{pointwise right cocartesian path}) such that the vertical target of $\phi_n$ is trivial. By the vertical pasting lemma (\lemref{vertical pasting lemma}) $\ul\phi$ is (pointwise) (weakly) left exact.
	\end{example}
	
	\subsection{Cells that are left exact with respect to Yoneda morphisms}
	Given a Yoneda morphism $\map\yon A{\ps A}$ the following two results concern left $\yon$"/exact cells with a trivial vertical target. The first of these is used in \exref{presheaf objects are total} below to show that, under mild conditions, presheaf objects $\ps A$ are `total', in the sense of \defref{totality} below. In \thmref{presheaf objects as free cocompletions} it is also used, to give a condition that ensures the `cocompleteness' (\defref{cocompletion}) of presheaf objects. The second result below is used in \thmref{left exactness and Beck-Chevalley}.
	\begin{proposition} \label{left Kan extensions along a yoneda embedding in terms of left y-exact cells}
		Let $\map\yon A{\ps A}$ be a Yoneda morphism (\defref{yoneda embedding}) and let $\phi$ and $\psi$ be cells that correspond under the bijection of \lemref{bijection between cells induced by Yoneda morphisms}. The cell $\phi$ is (pointwise) (weakly) left $\yon$"/exact if and only if the cell $\psi$ is (pointwise) (weakly) left Kan.
		
		In particular consider a morphism $\map dB{\ps A}$ such that the restriction $\ps A(\yon, d)$ exists as well as a path $\hmap{\ul H}B{B_n}$. If there exists a (pointwise) (weakly) horizontal left $\yon$-exact cell $\phi$ of the form below then the (pointwise) (weak) left Kan extension $\map l{B_n}{\ps A}$ of $d$ along $\ul H$ exists. The converse holds whenever the restriction $\ps A(\yon, l)$ exists.
		\begin{displaymath}
			\begin{tikzpicture}
				\matrix(m)[math35]{ A & B & B_1 & B_{n'} & B_n \\ & A & & B_n & \\ };
				\path[map]	(m-1-1) edge[barred] node[above] {$\ps A(\yon, d)$} (m-1-2)
										(m-1-2) edge[barred] node[above] {$H_1$} (m-1-3)
										(m-1-4) edge[barred] node[above] {$H_n$} (m-1-5)
										(m-2-2) edge[barred] node[below] {$K$} (m-2-4);
				\path				(m-1-1) edge[eq] (m-2-2)
										(m-1-5) edge[eq] (m-2-4)
										(m-1-3) edge[cell] node[right] {$\phi$} (m-2-3);
				\draw				($(m-1-3)!0.5!(m-1-4)$) node {$\dotsb$};
			\end{tikzpicture}
		\end{displaymath}
	\end{proposition}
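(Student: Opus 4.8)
The strategy is to combine the bijection of \lemref{bijection between cells induced by Yoneda morphisms} with the horizontal pasting lemma (\lemref{horizontal pasting lemma}). Write $\cart_J$ and $\cart_K$ for the cartesian cells defining the morphisms $\cur J$ and $\cur K$ supplied by the Yoneda axiom (\defref{yoneda embedding}), where $\hmap JAB$ and $\hmap KAD$ denote the horizontal source and target of $\phi$. With this notation the equation of \lemref{bijection between cells induced by Yoneda morphisms} reads $\cart_K \of \phi = \cart_J \hc \psi$.

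First I would reduce the claim ``$\phi$ is (pointwise) (weakly) left $\yon$"/exact'' to the claim ``$\cart_K \of \phi$ is (pointwise) (weakly) left Kan''. Since $\phi$ has the single horizontal morphism $K$ as target, the cells tested against $\phi$ in \defref{left exact} are precisely the (pointwise) (weakly) left Kan cells with horizontal source $(K)$ and vertical source $\yon$; by \lemref{weak left Kan extensions of yoneda embeddings} every such cell is cartesian, and hence isomorphic to $\cart_K$ up to composition with an invertible vertical cell on its vertical target. (Note $\cart_K$ is itself among these cells, being left Kan by density of $\yon$; see \lemref{density axioms}(a) and \lemref{weak left Kan extensions of yoneda embeddings}.) Since composition with an invertible vertical cell preserves and reflects every flavour of left Kan cell (a routine consequence of \exref{vertical cells defining left Kan extensions} and the uniqueness statement following \defref{weak left Kan extension}), it suffices to test $\phi$ against the single cell $\cart_K$, which gives the reduction.

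Next I would observe that $\cart_J$ is left Kan: being cartesian with vertical source $\yon$ it is pointwise left Kan by \lemref{weak left Kan extensions of yoneda embeddings} — here I use that $\yon$ is a genuine Yoneda morphism — and a pointwise left Kan cell is in particular left Kan. The horizontal pasting lemma (\lemref{horizontal pasting lemma}) therefore applies with $\eta \dfn \cart_J$ and $\zeta \dfn \psi$, showing that $\cart_J \hc \psi$ is (pointwise) (weakly) left Kan precisely if $\psi$ is. Combining this with the reduction of the previous paragraph and the identity $\cart_K \of \phi = \cart_J \hc \psi$ yields the first assertion.

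For the ``in particular'' part I would specialise \lemref{bijection between cells induced by Yoneda morphisms} to $J \dfn \ps A(\yon, d)$, $D \dfn B_n$ and $f \dfn \id_{B_n}$; then $\cur J \iso d$, since the cartesian cell defining $\ps A(\yon, d)$ has $d$ as its right vertical boundary and $\cur{(\dash)}$ is determined up to isomorphism by density of $\yon$. If a (pointwise) (weakly) left $\yon$"/exact cell $\phi$ of the displayed form exists, the corresponding $\psi$ is (pointwise) (weakly) left Kan by the first assertion; composing its vertical source with the isomorphism $d \iso \cur J$ exhibits $\cur K$ as the desired (pointwise) (weak) left Kan extension of $d$ along $\ul H$. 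Conversely, if the (pointwise) (weak) left Kan extension $\map l{B_n}{\ps A}$ of $d$ along $\ul H$ exists and $\ps A(\yon, l)$ exists, take $K \dfn \ps A(\yon, l)$, so that $\cur K \iso l$; transporting the left Kan cell exhibiting $l$ along the isomorphisms $d \iso \cur J$ and $l \iso \cur K$ produces a cell $\psi$ of the shape occurring in \lemref{bijection between cells induced by Yoneda morphisms}, whose pre-image $\phi$ under that bijection is (pointwise) (weakly) left $\yon$"/exact by the first assertion. The one delicate point is the reduction of the second paragraph to the single cell $\cart_K$; the remaining bookkeeping of the four ``(pointwise) (weakly)'' variants is uniform, precisely because \lemref{weak left Kan extensions of yoneda embeddings} collapses all these notions of left Kan cell, for cells with vertical source $\yon$, to cartesianness.
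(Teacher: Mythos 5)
Your proposal is correct and follows essentially the same route as the paper: the identity $\cart_{\cur K}\of\phi=\cart_{\cur J}\hc\psi$ from \lemref{bijection between cells induced by Yoneda morphisms}, the observation that both cartesian cells are (pointwise) left Kan by density of $\yon$, and the horizontal pasting lemma, followed by the specialisations $J\dfn\ps A(\yon,d)$ and $K\dfn\ps A(\yon,l)$ for the second assertion. The only difference is that you spell out the reduction of left $\yon$-exactness to testing against the single cell $\cart_{\cur K}$ (via \lemref{weak left Kan extensions of yoneda embeddings} and invariance under invertible vertical cells), which the paper compresses into the phrase ``by definition''.
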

	\begin{proof}
		We only treat the left $y$"/exact and left Kan case; the proofs for the (pointwise) (weakly) cases are the same. Recall from \lemref{bijection between cells induced by Yoneda morphisms} that the unary cell \mbox{$\cell\phi{J \conc \ul H}K$} and the nullary cell $\cell\psi{\ul H}{\ps A}$ correspond via the equality
		\begin{displaymath}
			\cart_{\cur K} \of \phi = \cart_{\cur J} \hc \psi,
		\end{displaymath}
		where the cartesian cells define $\cur K$ and $\cur J$ (\defref{yoneda embedding}). By density of $\yon$ (\defref{density definition}) these cartesian cells define $\cur K$ and $\cur J$ as pointwise left Kan extensions of $\yon$. Hence, by definition, $\phi$ is left $\yon$"/exact if and only if the left"/hand side above, and hence both sides, is left Kan which, by the horizontal pasting lemma (\lemref{horizontal pasting lemma}), is equivalent to $\psi$ being left Kan.
		
		For the second assertion take $J \dfn \ps A(\yon, d)$ so that $\cur J \iso d$ by uniqueness of $\cur J$ (\defref{yoneda embedding}). By the main assertion any horizontal left $\yon$"/exact cell \mbox{$\cell\phi{\ps A(\yon, d) \conc \ul H}K$} corresponds to a nullary cell $\cell\psi{\ul H}{\ps A}$ that defines $\cur K$ as the left Kan extension of $d$ along $\ul H$. For the converse set $K \dfn \ps A(\yon, l)$ so that $\cur K \iso l$. By the main assertion any nullary cell $\cell\psi{\ul H}{\ps A}$ that defines $l$ as the left Kan extension of $\ul H$ along $d$ corresponds to a horizontal left $\yon$"/exact cell $\cell\phi{\ps A(\yon, d) \conc \ul H}{\ps A(\yon, l)}$.
	\end{proof}
	
	\begin{proposition} \label{left exactness and right unary-cocartesianness}
		Let $\map\yon C{\ps C}$ be a (weak) Yoneda morphism (\defref{yoneda embedding}) such that the restrictions $L(h, \id)$ exist for all morphisms $\hmap LXY$ and \mbox{$\map hCX$}. Consider the cell $\phi$ below. Assuming that either $f = \id_C$ or the conjoint $\hmap{f^*}C{A_0}$ exists, $\phi$ is (weakly) $\yon$"/left exact if and only if $\phi$ is right (respectively weakly) unary"/cocartesian (\defref{cocartesian path}). If moreover the restrictions $K(\id, g)$ exist for all $\map gXD$ then the analogous assertion for the pointwise (weak) case (\defref{pointwise right cocartesian path}) holds too.
		\begin{displaymath}
			\begin{tikzpicture}
				\matrix(m)[math35]{A_0 & A_1 & A_{n'} & D \\ C & & & D \\};
				\path[map]	(m-1-1) edge[barred] node[above] {$J_1$} (m-1-2)
														edge node[left] {$f$} (m-2-1)
										(m-1-3) edge[barred] node[above] {$J_n$} (m-1-4)
										(m-2-1) edge[barred] node[below] {$K$} (m-2-4);
				\path[transform canvas={xshift=1.75em}]	(m-1-2) edge[cell] node[right] {$\phi$} (m-2-2);
				\path				(m-1-4) edge[eq] (m-2-4);
				\draw				($(m-1-2)!0.5!(m-1-3)$) node[xshift=-1.5pt] {$\dotsb$};
			\end{tikzpicture}
		\end{displaymath}
	\end{proposition}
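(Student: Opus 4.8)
The plan is to first reduce to the case $f = \id_C$ and then prove the equivalence there by reformulating left $\yon$-exactness as the left-Kan property of a single cell.

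\emph{Reduction.} If instead the conjoint $\hmap{f^*}C{A_0}$ exists, let $\cart_{f^*}\colon f^* \Rar C$ be the nullary cartesian cell defining it (\cororef{restriction as left Kan extension}) and consider $\cart_{f^*} \hc \phi\colon (f^*, J_1, \dotsc, J_n) \Rar K$, which has vertical source $\id_C$. By \cororef{non-horizontal cocartesian cells} the cell $\phi$ is right (respectively weakly) unary"/cocartesian precisely if $\cart_{f^*} \hc \phi$ is; and, using that $\cart_{f^*}$ is absolutely pointwise left Kan (\cororef{restriction as left Kan extension}) together with the pasting lemmas and \cororef{Kan extension and conjoints}, one checks that $\phi$ is (weakly) $\yon$-left exact precisely if $\cart_{f^*} \hc \phi$ is. So it suffices to treat $f = \id_C$, in which case $\hmap{J_1}C{A_1}$ and $\hmap KCD$.

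\emph{Reformulation.} Since $\yon$ is a (weak) Yoneda morphism, \lemref{weak left Kan extensions of yoneda embeddings} together with the uniqueness of restrictions shows that the (weakly) left Kan cells $\eta\colon K \Rar \ps C$ with vertical source $\yon$ are, up to composition with an invertible vertical cell, exactly the cartesian cell $\cart_{\cur K}\colon K \Rar \ps C$ furnished by the Yoneda axiom. Hence $\phi$ is (weakly) $\yon$-left exact if and only if the single composite $\cart_{\cur K} \of \phi$ is (weakly) left Kan. Writing $J_1 \iso \ps C(\yon, \cur{J_1})$, the cell $\phi$ now has the shape treated in the ``in particular'' part of \propref{left Kan extensions along a yoneda embedding in terms of left y-exact cells}, so this holds precisely if the cell $\psi\colon (J_2, \dotsc, J_n) \Rar \ps C$ corresponding to $\phi$ under \lemref{bijection between cells induced by Yoneda morphisms} (so that $\cart_{\cur K} \of \phi = \cart_{\cur{J_1}} \hc \psi$) exhibits $\cur K$ as the (pointwise) (weak) left Kan extension of $\cur{J_1}$ along $(J_2, \dotsc, J_n)$.

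\emph{Core equivalence.} It remains to match ``$\cart_{\cur K} \of \phi$ is (weakly) left Kan'' with ``$\phi$ is right (respectively weakly) unary"/cocartesian'', which I would do by transporting factorisation problems back and forth between cells valued in horizontal morphisms and cells valued in $\ps C$. For the forward direction: given a path $\ul H$ from $D$ and a unary cell $\omega\colon (J_1, \dotsc, J_n) \conc \ul H \Rar L$, the hypothesis that $L(h, \id)$ exists (its cartesian cell is available since $h$ has source $C$) reduces $\omega$ to vertical source $\id_C$; transporting along the cartesian cell $\cart_{\cur L}\colon L \Rar \ps C$ of the Yoneda axiom yields a cell into $\ps C$ with vertical source $\yon$, which factors uniquely through $\cart_{\cur K} \of \phi$; transporting that factorisation back along $\cart_{\cur L}$ (and, when $\ul H$ is non"/empty, along $\cart_{\cur K}$, which is left Kan by density of $\yon$), using interchange and uniqueness of factorisations through cartesian cells, produces the required factorisation of $\omega$ through $(\phi, \id_{\ul H})$. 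The converse reverses this transport, and the pointwise statement follows by applying the same reasoning to the composites $(\cart_{\cur K} \of \phi) \of (\id, \dotsc, \id, \cart)$ with $\cart$ defining $K(\id, g)$ — which is exactly where the extra hypothesis that all $K(\id, g)$ exist enters. The main obstacle is precisely this core equivalence: carrying the universal property of $\cart_{\cur K} \of \phi$ through the cartesian cells of the Yoneda axiom and the left"/hand restrictions so that it matches the unary"/cocartesian universal property of $\phi$; here both the density of $\yon$ and the left"/restriction hypothesis are essential, and a secondary nuisance is handling the weak, strong, and pointwise variants uniformly.
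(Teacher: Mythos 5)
Your proposal is correct and follows essentially the same route as the paper's proof: reduce to $f = \id_C$ via the conjoint (using \cororef{non-horizontal cocartesian cells} and \cororef{Kan extension and conjoints}), identify (weak) left $\yon$-exactness with the (weak) left-Kan property of the single composite $\cart_{\cur K} \of \phi$, and transport the unary factorisation problems into nullary cells valued in $\ps C$ through the cartesian cells defining $L(h, \id)$ and $\cur{L(h,\id)}$, with density of $\yon$ making the comparison map $\cart_{\cur K} \hc \dash$ a bijection. Two cosmetic points only: the detour through $\psi$ and \lemref{bijection between cells induced by Yoneda morphisms} is unnecessary (and in the weak case that bijection is only available for empty $\ul H$, so it could not carry the argument for $n > 1$ anyway), and in the pointwise step the precomposed cartesian cell should define $J_n(\id, g)$ rather than $K(\id, g)$ --- the hypothesis that all $K(\id, g)$ exist enters when forming the factorisation $\phi'_g$ of $\phi$ through the cartesian cell defining $K(\id, g)$.
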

	Notice that together with \exref{cocartesian paths are exact} this proposition implies that, under the conditions above, if the cell $\phi$ above is (pointwise) right (respectively weakly) nullary"/cocartesian then it is (pointwise) right (respectively weakly) cocartesian.
	\begin{proof}
		We will first prove the non-weak case in which $f = \id_C$. Afterwards we generalise to the case where $f^*$ exists as well as consider the pointwise case. For the weak case take $\ul H$ to be empty in the following. Given any path \mbox{$\hmap{\ul H}D{B_m}$} of horizontal morphisms consider the assignments below of cells of the forms as shown, that are given by composition with the cartesian cells defining respectively the restriction $L(h, \id)$ and the morphism $\cur{L(h, \id)}$ (\defref{yoneda embedding}). By definition $\phi$ is right unary"/cocartesian if all unary cells $\ul J \conc \ul H \Rightarrow L$ on the left below factor uniquely through the path $\phi \conc \id_{\ul H}$, where $\id_{\ul H} \dfn (\id_{H_1}, \dotsc, \id_{H_q})$.
		\begin{displaymath}
			\begin{tikzpicture}
				\matrix(m)[math35, xshift=-13em]{C & B_m \\ X & Y \\};
				\path[map]	(m-1-1) edge[barred] node[above] {$\ul J \conc \ul H$} (m-1-2)
														edge node[left] {$h$} (m-2-1)
										(m-1-2) edge node[right] {$k$} (m-2-2)
										(m-2-1) edge[barred] node[below] {$L$} (m-2-2);
				\path[transform canvas={xshift=1.75em}]	(m-1-1) edge[cell] (m-2-1);
				
				\matrix(m)[math35]{C & B_m \\ C & Y \\};
				\path[map]	(m-1-1) edge[barred] node[above] {$\ul J \conc \ul H$} (m-1-2)
										(m-1-2) edge node[right] {$k$} (m-2-2)
										(m-2-1) edge[barred] node[below] {$L(h, \id)$} (m-2-2);
				\path				(m-1-1) edge[eq] (m-2-1);
				\path[transform canvas={xshift=1.75em}]	(m-1-1) edge[cell] (m-2-1);
				
				\matrix(m)[math35, column sep={0.875em,between origins}, xshift=13em]
					{C & & & & B_m \\ & & & \mspace{6mu} Y & \\ & & \ps C & & \\};
				\path[map]	(m-1-1) edge[barred] node[above] {$\ul J \conc \ul H$} (m-1-5)
														edge[transform canvas={xshift=-1pt}, ps] node[left] {$\yon$} (m-3-3)
										(m-1-5) edge[transform canvas={xshift=1pt}] node[right] {$k$} (m-2-4)
										(m-2-4) edge[transform canvas={xshift=1pt}, ps] node[right, inner sep=1pt] {$\cur{L(h, \id)}$} (m-3-3);
				\path[transform canvas={yshift=-0.7em}]	(m-1-3) edge[cell] (m-2-3);
					
				\draw[font=\Large]	(-16.0em, 0) node {$\lbrace$}
										(-10.2em,0) node {$\rbrace$}
										(-2.9em,0) node {$\lbrace$}
										(2.8em, 0) node {$\rbrace$}
										(10.3em,0) node {$\lbrace$}
										(16.4em,0) node {$\rbrace$};
				\path[map]	(-4.9em,0) edge node[above] {$\cart \of \dash$} (-8.2em,0)
										(4.8em,0) edge node[above] {$\cart \of \dash$} (8.3em,0);
			\end{tikzpicture}
		\end{displaymath}
		By the uniqueness of factorisations through cartesian cells the assignments above are bijections, so that the latter is equivalent to all nullary cells $\ul J \conc \ul H \Rightarrow \ps C$ on the right above factoring uniquely through $\phi \conc \id_{\ul H}$. Abbreviating $M \dfn L(h, \id)$, spelled out this means that $\phi$ is right unary"/cocartesian if and only if the second assignment below is a bijection. The first assignment below is given by horizontal composition with the cartesian cell $\cart$ defining $\cur K$; it is a bijection because $\cart$ is pointwise left Kan, by the density of $\yon$ (\defref{density definition}). We conclude that $\phi$ is right unary"/cocartesian if and only if the composite assignment below, given by horizontal composition with $\cart \of \phi$, is a bijection. But the latter means that $\cart \of \phi$ again left Kan which, since $\cart$ has $\yon$ as vertical source, by definition is equivalent to $\phi$ being left $\yon$"/exact.
		\begin{displaymath}
			\begin{tikzpicture}
				\matrix(m)[math35, column sep={0.875em,between origins}, xshift=-14em]
					{D & & & & B_m \\ & & & \mspace{6mu} Y & \\ & & \ps C & & \\};
				\path[map]	(m-1-1) edge[barred] node[above] {$\ul H$} (m-1-5)
														edge[transform canvas={xshift=-1pt}, ps] node[left] {$\cur K$} (m-3-3)
										(m-1-5) edge[transform canvas={xshift=1pt}] node[right] {$k$} (m-2-4)
										(m-2-4) edge[transform canvas={xshift=1pt}, ps] node[right] {$\cur M$} (m-3-3);
				\path[transform canvas={yshift=-0.7em}]				(m-1-3) edge[cell] (m-2-3);
				
				\matrix(m)[math35, column sep={0.875em,between origins}]
					{C & & & & B_m \\ & & & \mspace{6mu} Y & \\ & & \ps C & & \\};
				\path[map]	(m-1-1) edge[barred] node[above] {$K \conc \ul H$} (m-1-5)
														edge[transform canvas={xshift=-1pt}, ps] node[left] {$\yon$} (m-3-3)
										(m-1-5) edge[transform canvas={xshift=1pt}] node[right] {$k$} (m-2-4)
										(m-2-4) edge[transform canvas={xshift=1pt}, ps] node[right] {$\cur M$} (m-3-3);
				\path[transform canvas={yshift=-0.7em}]	(m-1-3) edge[cell] (m-2-3);
				
				\matrix(m)[math35, column sep={0.875em,between origins}, xshift=14em]
					{C & & & & B_m \\ & & & \mspace{6mu} Y & \\ & & \ps C & & \\};
				\path[map]	(m-1-1) edge[barred] node[above] {$\ul J \conc \ul H$} (m-1-5)
														edge[transform canvas={xshift=-1pt}, ps] node[left] {$\yon$} (m-3-3)
										(m-1-5) edge[transform canvas={xshift=1pt}] node[right] {$k$} (m-2-4)
										(m-2-4) edge[transform canvas={xshift=1pt}, ps] node[right] {$\cur M$} (m-3-3);
				\path[transform canvas={yshift=-0.7em}]	(m-1-3) edge[cell] (m-2-3);
					
				\draw[font=\Large]	(-17.0em, 0) node {$\lbrace$}
										(-11.6em,0) node {$\rbrace$}
										(-2.6em,0) node {$\lbrace$}
										(2.5em, 0) node {$\rbrace$}
										(11.4em,0) node {$\lbrace$}
										(16.5em,0) node {$\rbrace$};
				\path[map]	(-9.6em,0) edge node[above] {$\cart \hc \dash$} (-4.6em,0)
										(4.5em,0) edge node[above] {$\dash \of (\phi \conc \id_{\ul H})$} (9.4em,0);
			\end{tikzpicture}
		\end{displaymath}
		This completes the proof of the non"/pointwise case with $f = \id_C$. In the general case, where the conjoint $f^*$ exists, apply the previous to the composite horizontal cell $\cell{\cart \hc \phi}{f^* \conc \ul J}K$ where $\cart$ defines $f^*$. The proof follows by noticing that $\phi$ is (weakly) left $\yon$"/exact if and only if $\cart \hc \phi$ is so, by \cororef{Kan extension and conjoints}, and likewise that $\phi$ is right (respectively weakly) unary"/cocartesian if and only if $\cart \hc \phi$ is so, by \cororef{non-horizontal cocartesian cells}.
		
		Finally assume that the restrictions $K(\id, g)$ exist for all $\map gXD$. Restricting to those $g$ for which the restriction $J_n(\id, g)$ holds too, consider the unique factorisations $\phi'_g$ in
		\begin{displaymath}
			\phi \of (\id_{J_1}, \dotsc, \id_{J_{n'}}, \cart_{J_n(\id, g)}) = \cart_{K(\id, g)} \of \phi'_g
		\end{displaymath}
		as in \defref{pointwise right cocartesian path}, where the cartesian cells define the restrictions $J_n(\id, g)$ and $K(\id, g)$ respectively. Still denoting the cartesian cell defining $\cur K$ by $\cart$, recall that it is pointwise (weakly) left Kan. It follows that $\phi$ is pointwise (weakly) $\yon$"/exact precisely if for each $g$ the left"/hand side above, and thus either side, postcomposed with $\cart$ is (weakly) left Kan. Since the composites $\cart \of \cart_{K(\id, g)}$ are cartesian by the pasting lemma (\lemref{pasting lemma for cartesian cells}), and thus (weakly) left Kan by (weak) density of $\yon$ (\defref{density definition}), the latter is equivalent to the factorisations $\phi'_g$ being (weakly) $\yon$"/exact. By the main assertion this is equivalent to $\phi'_g$ being right (respectively weakly) unary"/cocartesian, for each $g$ such that both $J_n(\id, g)$ and $K(\id, g)$ exist. But that means precisely that $\phi$ is pointwise right (respectively weakly) unary cocartesian. This completes the proof.
	\end{proof}
	
	Combining \propsref{cocartesian paths are exact}{left exactness and right unary-cocartesianness} we obtain the following.
	\begin{corollary} \label{left Kan extension, exact cells, cocartesian cells}
		In an augmented virtual double category $\K$ consider a horizontal morphism $\hmap JAB$ as well as a path $\hmap{\ul H}B{B_n}$ of length $n \geq 1$, and assume that the Yoneda morphism $\map{\yon_A}A{\ps A}$ exists (\defref{yoneda embedding}). Among the following conditions the implications \textup{(a)}~$\Rar$~\textup{(b)}~$\Rar$~\textup{(c)} and \textup{(b)} $\Rar$ \textup{(d)} $\Rar$ \textup{(e)} hold. If $\K$ is unital (\defref{augmented virtual equipment}) then \textup{(b)}~$\Leftrightarrow$~\textup{(c)}. If $\K$ has restrictions on the left (\defref{augmented virtual equipment}) then \textup{(c)}~$\Leftrightarrow$~\textup{(d)}. If $\yon_A$ admits nullary restrictions (\defref{yoneda embedding}) then \textup{(d)} $\Leftrightarrow$ \textup{(e)}. The same implications hold for the pointwise variants of the conditions (\defsref{pointwise left Kan extension}{pointwise right cocartesian path} and \augdefref{9.1}), except for \textup{(a)} $\Rightarrow$ \textup{(b)} and \textup{(c)}~$\Leftrightarrow$~\textup{(d)} which moreover require the restrictions $K(\id, g)$ to exist for all $\map gX{B_n}$.
		\begin{enumerate}[label=\textup{(\alph*)}]
			\item The horizontal composite $(J \hc H_1 \hc \dotsb \hc H_n)$ exists (\defref{pointwise right cocartesian path});
			\item a right (respectively weakly) cocartesian cell of the form below exists (\defref{cocartesian path});
			\item a right (respectively weakly) unary"/cocartesian cell of the form below exists;
			\item a (weakly) left $\yon_A$"/exact cell of the form below exists;
			\item the (weak) left Kan extension of $\map{\cur J}B{\ps A}$ (\defref{yoneda embedding}) along $\ul H$ exists (\defsref{weak left Kan extension}{left Kan extension}).
		\end{enumerate}
		
		Finally restrict to paths $\ul H = (H_1)$ of length $n = 1$ and assume that the Yoneda morphism $\map{\yon_B}B{\ps B}$ exists. If the pointwise (weak) left Kan extension of \mbox{$\map{\cur J}B{\ps A}$} along $\yon_{B*}$ exists then the pointwise variant of condition \textup{(e)} is satisfied.
		\begin{displaymath}
			\begin{tikzpicture}
				\matrix(m)[math35]{ A & B & B_1 & B_{n'} & B_n \\ & A & & B_n & \\ };
				\path[map]	(m-1-1) edge[barred] node[above] {$J$} (m-1-2)
										(m-1-2) edge[barred] node[above] {$H_1$} (m-1-3)
										(m-1-4) edge[barred] node[above] {$H_n$} (m-1-5)
										(m-2-2) edge[barred] node[below] {$K$} (m-2-4);
				\path				(m-1-1) edge[eq] (m-2-2)
										(m-1-5) edge[eq] (m-2-4)
										(m-1-3) edge[cell] node[right] {$\phi$} (m-2-3);
				\draw				($(m-1-3)!0.5!(m-1-4)$) node {$\dotsb$};
			\end{tikzpicture}		\end{displaymath}
	\end{corollary}
	Notice that if $\yon_A$ admits nullary restrictions then condition (e) above also applies to (pointwise) (weak) left Kan extensions along $\ul H$ of any morphism $\map dB{\ps A}$, by taking $J \dfn \ps A(\yon_A, d)$ so that, by uniqueness of $\cur J$ (\defref{yoneda embedding}), $\cur J \iso d$.
	\begin{proof}
		(a) $\Rar$ (b) follows from \exref{cocartesian paths are right cocartesian}, \defref{cocartesian path} and \remref{right pointwise cocartesian and pointwise right cocartesian comparison}. (b)~$\Rar$~(c) and (c) $\Rar$ (b) follow from \defsref{cocartesian path}{pointwise right cocartesian path}, and \exref{cocartesian paths in the presence of horizontal units or (1,0)-ary cartesian cells}. (b) $\Rar$ (d) follows from \exref{cocartesian paths are exact}. \propref{left exactness and right unary-cocartesianness} shows that (c)~$\Leftrightarrow$~(d). Applying \propref{left Kan extensions along a yoneda embedding in terms of left y-exact cells} to $\map{d \dfn \cur J}B{\ps A}$, so that $\ps A(\yon_A, \cur J) \iso J$ by the Yoneda axiom (\defref{yoneda embedding}), shows that (d) $\Rar$ (e) and (e) $\Rar$ (d). For the final assertion write $\cell\eta {\yon_{B*}}{\ps A}$ for the nullary cell that defines the pointwise (weak) left Kan extension of $\cur J$ along $\yon_{B*}$, and write $\cell{\cart'}{H_1}{\yon_{B*}}$ for the factorisation of the cartesian cell defining $\map{\cur H_1}{B_1}{\ps B}$ (\defref{yoneda embedding}) through the cartesian cell defining the companion $\yon_{B*}$. Since $\cart'$ is cartesian by the pasting lemma (\lemref{pasting lemma for cartesian cells}), the composite $\cell{\eta \of \cart'}{H_1}{\ps A}$ defines the pointwise (weak) left Kan extension of $\cur J$ along $H_1$, by \defref{pointwise left Kan extension}.
	\end{proof}
	
	In an augmented virtual equipment (\defref{augmented virtual equipment}) let us for a moment call an object $A$ `admissible' if it admits a Yoneda morphism $\map{\yon_A}A{\ps A}$ that has nullary restrictions (\defref{yoneda embedding}). Fixing an object $B$ it follows from the previous corollary that the following conditions are equivalent where, for condition (c) only, we assume that $B$ itself is admissible.
	\begin{enumerate}[label=\textup{(\alph*)}]
		\item For any path $A \xbrar J B \xbrar H E$ with $A$ admissible a horizontal pointwise right unary"/cocartesian cell (\defref{pointwise right cocartesian path}) with horizontal source $(J, H)$ exists;
		\item for any admissible object $A$ and any morphisms $\ps A \xlar d B \xbrar H E$ the pointwise left Kan extension (\defref{pointwise left Kan extension}) of $d$ along $H$ exists;
		\item for any admissible object $A$ and any morphism $\map dB\ps A$ the pointwise left Kan extension of $d$ along $\yon_{B*}$ exists.
	\end{enumerate}
	The next example shows that for a locally small category $B$ condition (a) implies essential smallness. This is why we think of the conditions above as ``restricting the size of the object $B$''.
	
	\begin{example}
		Consider a locally small category $B$ in the augmented virtual equipment $\enProf{(\Set, \Set')}$ (\augexref{2.6}) and assume that for any path $1 \xbrar J B \xbrar H 1$ of $\Set$"/profunctors a weakly unary"/cocartesian horizontal cell (\defref{cocartesian path}) with horizontal source $(J, H)$ exists. Using an argument similar to the one used in \augexref{4.18}, together with the fact that the embedding $\Set \hookrightarrow \Set'$ preserves colimits, this assumption implies that the coend $\int^{y \in B} J(*, y) \times H(y, *)$ is small. We can use the latter to show that the category $\ps B$ of presheaves $\map p{\op B}\Set$ is locally small too, so that $B$ is essentially small by \cite{Freyd-Street95}. To do so let $p$ and $q$ be any presheaves on $B$ and consider the composite below, where the injection is induced by the unit of the double"/powerset monad $2^{(2^{\dash})}$ on $\Set$ (see e.g. Example~1 of \cite{Tholen09}). Applying the assumption to $J = 2^q$ and $H = p$ we find that the right"/hand side below is small, so that the hom"/set $\ps B(p, q)$ on the left"/hand side is small too, as required. 
		\begin{displaymath}
			\ps B(p, q) = \int\limits_{y \in B} (qy)^{py} \hookrightarrow \int\limits_{y \in B} \bigpars{2^{\pars{2^{qy}}}}^{py} \iso \int\limits_{y \in B} 2^{2^{qy} \times py} \iso 2^{\int\limits^{y \in B} 2^{qy} \times py}
		\end{displaymath}
	\end{example}
	
	\subsection{The left Beck"/Chevalley condition}
	In \exref{cocartesian paths are exact}, we saw that nullary"/cocartesian paths with trivial vertical target are exact. The following definition, of the `left Beck"/Chevalley condition', allows us to use this to obtain exact paths with any vertical target (\cororef{left Beck-Chevalley condition implies exactness}). In \exref{left Beck-Chevalley condition in (Set,Set')-Prof} we will see that this condition recovers the classical notion of exactness for transformations of functors, as used by Guitart in \cite{Guitart80}.
	\begin{definition} \label{left Beck-Chevalley condition}
		Consider a non"/empty path $\ul \phi = (\phi_1, \dotsc, \phi_n)$ of cells. Denote the (possibly empty) horizontal target of each $\phi_i$ by $\hmap{\ul K_i}{C_{i'}}{C_i}$ and assume that $\phi_n$ is of the form as on the left"/hand side below. We say that $\ul \phi$ satisfies the \emph{(weak) left Beck-Chevalley condition} if the restriction $\ul K_n(\id, f_n)$ exists and the path $\ul \phi' \dfn (\phi_1, \dotsc, \phi_{n'}, \phi_n')$ is right (respectively weakly) nullary"/cocartesian (\defref{cocartesian path}), where $\phi_n'$ is the unique factorisation below. Here if $\ul K_n = (C_{n'})$ is empty then $\ul K_{n}(\id, f_n) = f_n^*$ is the conjoint of $f_n$.
		\begin{displaymath}
			\begin{tikzpicture}[textbaseline]
				\matrix(m)[math35]{A_{n0} & A_{n1} & A_{nm_n'} & A_{nm_n} \\ C_{n'} & & & C_n \\};
				\path[map]	(m-1-1) edge[barred] node[above] {$J_{n1}$} (m-1-2)
														edge node[left] {$f_{n'}$} (m-2-1)
										(m-1-3) edge[barred] node[above] {$J_{nm_n}$} (m-1-4)
										(m-2-1) edge[barred] node[below] {$\ul K_n$} (m-2-4)
										(m-1-4) edge node[right] {$f_n$} (m-2-4);
				\path[transform canvas={xshift=2em}]	(m-1-2) edge[cell] node[right] {$\phi_n$} (m-2-2);
				\draw				($(m-1-2)!0.5!(m-1-3)$) node[xshift=-1.5pt] {$\dotsb$};
			\end{tikzpicture} \quad = \quad \begin{tikzpicture}[textbaseline]
				\matrix(m)[math35]{A_{n0} & A_{n1} & A_{nm_n'} & A_{nm_n} \\ C_{n'} & & & A_{nm_n} \\ C_{n'} & & & C_n \\};
				\path[map]	(m-1-1) edge[barred] node[above] {$J_{n1}$} (m-1-2)
														edge node[left] {$f_{n'}$} (m-2-1)
										(m-1-3) edge[barred] node[above] {$J_{nm_n}$} (m-1-4)
										(m-2-1) edge[barred] node[below] {$\ul K_n(\id, f_n)$} (m-2-4)
										(m-2-4) edge node[right] {$f_n$} (m-3-4)
										(m-3-1) edge[barred] node[below] {$\ul K_n$} (m-3-4);
				\draw				([xshift=-1pt]$(m-1-2)!0.5!(m-1-3)$) node {$\dotsc$}
										([yshift=-0.333em]$(m-2-1)!0.5!(m-3-4)$) node[font=\scriptsize] {$\cart$};
				\path				(m-1-4) edge[eq] (m-2-4)
										(m-2-1) edge[eq] (m-3-1);
				\path[transform canvas={xshift=2em}]	(m-1-2) edge[cell] node[right] {$\phi_n'$} (m-2-2);
			\end{tikzpicture}
		\end{displaymath}
		Likewise we say that $\ul \phi$ satisfies the \emph{pointwise (weak) left Beck"/Chevalley condition} if $\ul\phi'$ is pointwise right (respectively weakly) nullary"/cocartesian (\defref{pointwise right cocartesian path}). A single cell $\phi$ is said to satisfy the (pointwise) (weak) left Beck"/Chevalley condition whenever the single path $(\phi)$ does so.
	\end{definition}
	Notice that if the vertical target $f_n = \id_{C_n}$ of $\phi_n$ is trivial then the left Beck"/Chevalley conditions above reduce to $\ul\phi$ being (pointwise) right (respectively weakly) nullary"/cocartesian.
	
	\begin{example} \label{left Beck-Chevalley condition in (Set,Set')-Prof}
		Let $\V \subset \V'$ be a universe enlargement and consider the cell $\phi$ below in the augmented virtual equipment $\enProf{(\V, \V')}$ of $\V$"/profunctors between $\V'$"/categories (\augexref{2.7}). Assume that the conjoint $\hmap{f^*}CA$ exists (\augexref{4.6}).
		\begin{displaymath}
			\begin{tikzpicture}
				\matrix(m)[math35]{A & B \\ C & D \\};
				\path[map]	(m-1-1) edge[barred] node[above] {$J$} (m-1-2)
														edge node[left] {$f$} (m-2-1)
										(m-1-2) edge node[right] {$g$} (m-2-2)
										(m-2-1) edge[barred] node[below] {$K$} (m-2-2);
				\path[transform canvas={xshift=1.75em}]	(m-1-1) edge[cell] node[right] {$\phi$} (m-2-1);
			\end{tikzpicture}
		\end{displaymath}
		Combining \cororef{non-horizontal cocartesian cells}, \augexref{9.2} and \remref{right pointwise cocartesian and pointwise right cocartesian comparison} we find that $\phi$ satisfies the pointwise left Beck"/Chevalley condition whenever the morphisms
		\begin{displaymath}
			C(z, fx) \tens J(x, y) \xrar{\id \tens \phi} C(z, fx) K(fx, gy) \xrar\lambda K(z, gy)
		\end{displaymath}
		define each $K(z, gy)$ as the coend $\int^{x \in A} C(z, fx) \tens J(x, y)$ in $\V'$ (or, equivalently, in $\V$, provided that $\V \subset \V'$ preserves large colimits). If the latter coend, which exists in $\V'$, is known to be (isomorphic to) a $\V$"/object then by \thmref{left exactness and Beck-Chevalley} below the afore"/mentioned sufficient condition is equivalent to $\phi$ satisfying the pointwise left Beck"/Chevalley condition, as well as to $\phi$ being pointwise left exact (\defref{left exact}).
		
		In particular, specialising to $\Set \subset \Set'$ and assuming that $A$ is small, if $J = j_*$ and $K = k_*$ for functors $\map jAB$ and $\map kCD$, so that $\phi$ corresponds to a transformation $k \of f \Rar g \of j$ of functors, we find that our notion of pointwise left exactness coincides with the notion of `carr\'e exact' given in Definition~1.1 of \cite{Guitart80}; see condition BC' of Theorem~1.2 therein.
	\end{example}
	
	Paths satisfying the left Beck-Chevalley condition can be concatenated as follows.
	\begin{lemma} \label{concatenation of paths satisfying the Beck-Chevalley condition}
		Consider the path $\ul \phi$ of \defref{left Beck-Chevalley condition} and assume that it satisfies the left Beck"/Chevalley condition. A concatenation $\ul\phi \conc \ul\psi$ of paths satisfies the (pointwise) (weak) left Beck"/Chevalley condition if and only if the path \mbox{$\id_{\ul K'} \conc \cart \conc \ul\psi$} does so, where $\ul K' \dfn \ul K_1 \conc \dotsb \conc \ul K_{n'}$ and $\cart$ defines the restriction $\ul K_n(\id, f_n)$.
	\end{lemma}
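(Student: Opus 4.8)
The plan is to peel off the Beck"/Chevalley factorisation on each side and then invoke the pasting lemma for nullary"/cocartesian paths, \lemref{pasting lemma for cocartesian paths}. First I would dispose of the degenerate case in which $\ul\psi = (C_n)$ is empty: there $\ul\phi \conc \ul\psi = \ul\phi$ satisfies the left Beck"/Chevalley condition by hypothesis, while the last cell of $\id_{\ul K'} \conc \cart$ is $\cart$ itself, whose factorisation through the cartesian cell defining $\ul K_n(\id, f_n)$ — namely $\cart$ again — is an identity; hence the path to be tested is $\id_{\ul K' \conc \ul K_n(\id, f_n)}$, a path of identity cells, which is (pointwise) right (respectively weakly) nullary"/cocartesian by \defref{cocartesian path}, so $\id_{\ul K'} \conc \cart$ satisfies the condition too. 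From now on $\ul\psi = (\psi_1, \dotsc, \psi_m)$ is non"/empty, with $\psi_m$ of the shape required in \defref{left Beck-Chevalley condition}, having horizontal target $\ul L_m$ and vertical target $g_m$.

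If $\ul L_m(\id, g_m)$ does not exist then neither $\ul\phi \conc \ul\psi$ nor $\id_{\ul K'} \conc \cart \conc \ul\psi$ satisfies the (pointwise) (weak) left Beck"/Chevalley condition, so the equivalence holds trivially; otherwise write $\psi_m = \cart_{\ul L_m(\id, g_m)} \of \psi_m'$ for the factorisation of \defref{left Beck-Chevalley condition}, and set $\ul\psi' \dfn (\psi_1, \dotsc, \psi_{m'}, \psi_m')$. Unwinding \defref{left Beck-Chevalley condition}, the path $\ul\phi \conc \ul\psi$ satisfies the (pointwise) (weak) left Beck"/Chevalley condition precisely if $\ul\phi \conc \ul\psi'$ is (pointwise) right (respectively weakly) nullary"/cocartesian, and $\id_{\ul K'} \conc \cart \conc \ul\psi$ satisfies it precisely if $\id_{\ul K'} \conc (\cart) \conc \ul\psi'$ is; here the standing requirement ``$\ul L_m(\id, g_m)$ exists'' is the same on both sides. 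It thus remains to show that $\ul\phi \conc \ul\psi'$ and $\id_{\ul K'} \conc (\cart) \conc \ul\psi'$ are simultaneously (pointwise) right (respectively weakly) nullary"/cocartesian.

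For this I would apply \lemref{pasting lemma for cocartesian paths} to the path $\id_{\ul K'} \conc (\cart) \conc \ul\psi' = (\id_{K_1}, \dotsc, \id_{K_{n'}}, \cart, \psi_1, \dotsc, \psi_{m'}, \psi_m')$, decomposing $\id_{K_i}$ as $\id_{K_i} \of (\phi_i)$ for $1 \leq i < n$, decomposing $\cart$ as $\cart \of (\phi_n')$ (where $\phi_n = \cart \of \phi_n'$ is the factorisation witnessing that $\ul\phi$ satisfies the condition), and equipping each remaining cell $\psi_j$ with the empty group. The resulting ``top'' path is then exactly $\ul\phi' \dfn (\phi_1, \dotsc, \phi_{n'}, \phi_n')$, which is (pointwise) right (respectively weakly) nullary"/cocartesian by the hypothesis on $\ul\phi$, and the associated path of composites is $(\phi_1, \dotsc, \phi_{n'}, \phi_n, \psi_1, \dotsc, \psi_{m'}, \psi_m') = \ul\phi \conc \ul\psi'$, using $\phi_n = \cart \of \phi_n'$. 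Since the last cells $\phi_n'$ and $\psi_m'$ of the top and bottom paths have trivial vertical target — because the cartesian cells $\cart$ and $\cart_{\ul L_m(\id, g_m)}$ through which they are factored have the identity as vertical source — the provisos of parts (a) and (b) of \lemref{pasting lemma for cocartesian paths}, which ask that the restrictions $K(h_n \of f_{nm_n}, \id)$ and $K(h_n, \id)$ exist together, are vacuous, as there $h_n \of f_{nm_n} = h_n = \id$. The pasting lemma then delivers the equivalence in the right and weak cases. For the pointwise case, both $\ul\phi \conc \ul\psi'$ and $\id_{\ul K'} \conc (\cart) \conc \ul\psi'$ share the last cell $\psi_m'$, so ``restricting along a morphism $f$'' (\defref{pointwise right cocartesian path}) means the same for both, and each restricted pair is again of the form just treated, with $\psi_m'$ replaced by its $f$"/restriction, which still has trivial vertical target; applying the right/weak equivalence for each such $f$ gives the pointwise equivalence.

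I do not expect a deep obstacle here; the content is entirely in choosing the decomposition so that the side conditions of the pasting lemma disappear. The one point that must be made carefully is the observation that the cells $\phi_n'$ and $\psi_m'$ produced by the Beck"/Chevalley factorisations have trivial vertical target — immediate from the fact that the cartesian cells defining $\ul K_n(\id, f_n)$ and $\ul L_m(\id, g_m)$ have vertical source the identity — since this is exactly what makes parts (a) and (b) of \lemref{pasting lemma for cocartesian paths} applicable without any further hypotheses on the existence of restrictions.
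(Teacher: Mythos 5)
Your argument is essentially the paper's own: factor the last cells via their Beck--Chevalley factorisations, unwind \defref{left Beck-Chevalley condition} on both sides, and apply \lemref{pasting lemma for cocartesian paths} to a top path built from $\ul\phi' = (\phi_1, \dotsc, \phi_{n'}, \phi_n')$ and the bottom path $\id_{\ul K'} \conc (\cart) \conc \ul\psi'$, whose composites recover $\ul\phi \conc \ul\psi'$. One formal point needs repair: you cannot equip the cells $\psi_1, \dotsc, \psi_{m'}, \psi_m'$ with \emph{empty} groups, since a composite $\psi_j \of \ul\phi_j$ only makes sense when the horizontal targets of the cells in $\ul\phi_j$ concatenate to the (non-empty) horizontal source $\ul H_j$ of $\psi_j$; as in the paper's sketch you should take $\ul\phi_j = \id_{\ul H_j}$, so that the top path is $(\phi_1, \dotsc, \phi_{n'}, \phi_n', \id_{\ul H_1}, \dotsc, \id_{\ul H_m})$ rather than $\ul\phi'$ alone. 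Its (right, hence weak) nullary-cocartesianness is then not literally ``the hypothesis on $\ul\phi$'' but does follow from it: because the vertical target of $\phi_n'$ is an identity, appending identity cells is (up to the invertible comparison $H(\id, \id) \iso H$) an instance of the concatenations allowed in the definition of a right nullary-cocartesian path, and this is precisely where the non-weak Beck--Chevalley hypothesis on $\ul\phi$ is used --- mere weak nullary-cocartesianness of $\ul\phi'$ would not suffice here. With that adjustment the remainder of your argument --- the vacuity of the provisos in parts (a) and (b) of \lemref{pasting lemma for cocartesian paths} because the relevant vertical targets are identities, and the pointwise case via the shared last cell $\psi_m'$ --- goes through as you describe and matches the paper's intended proof.
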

	\begin{proof}[(Sketch)]
		Factorise $\phi_n = \cart \of \phi_n'$ and $\psi_m = \cart \of \psi_m'$ as in \defref{left Beck-Chevalley condition} and apply the pasting lemma for cocartesian paths (\lemref{pasting lemma for cocartesian paths}) to $(\phi_1, \dotsc, \phi_n', \id_{\ul H_1}, \dotsc, \id_{\ul H_m})$ and $\id_{\ul K'} \conc (\cart, \psi_1, \dotsc, \psi_m')$, where each $\ul H_j$ denotes the horizontal source of $\psi_j$ and $\cart$ defines $\ul K_n(\id, f_n)$.
	\end{proof}
	
	The following result, which is a straightforward consequence of the vertical pasting lemma for left Kan extensions (\lemref{vertical pasting lemma}), shows that the left Beck"/Chevalley conditions imply left exactness. Recall from \defref{pointwise left Kan extension} the notion of a left Kan extension restricting along a vertical morphism.
	\begin{corollary} \label{left Beck-Chevalley condition implies exactness}
		Consider the path $\ul\phi$ of \defref{left Beck-Chevalley condition} and a morphism \mbox{$\map d{C_0}M$}. The path $\ul \phi$ is (weakly) left $d$"/exact (\defref{left exact}) if $\ul \phi$ satisfies the (weak) left Beck-Chevalley condition and the (weak) left Kan extension of $d$ along \mbox{$\ul K_1 \conc \dotsb \conc \ul K_n$} restricts along $f_n$ (and $\lns{\ul K_n} \geq 1$).
		
		Likewise $\ul\phi$ is pointwise (weakly) left exact if $\ul\phi$ satisfies the pointwise (weak) left Beck"/Chevalley condition (and $\lns{\ul K_n} \geq 1$).
	\end{corollary}
	\begin{proof}
		Write $\ul K \dfn \ul K_1 \conc \dotsb \conc \ul K_n$ and consider the factorisation $\phi_n = \cart \of \phi_n'$ and the path $\ul\phi' = (\phi_1, \dotsc, \phi_{n'}, \phi_n')$ of \defref{left Beck-Chevalley condition}, where $\cart$ defines the restriction $\ul K_n(\id, f_n)$. We first consider the case where $\ul K_n$ is non"/empty. Given a (weakly) left Kan cell $\cell\eta{\ul K} M$ that defines the (weak) left Kan extension of $d$ along $\ul K$ we have to show that \mbox{$\eta \of \ul \phi = \bigpars{\eta \of (\id, \dotsc, \id, \cart)} \of \ul\phi'$} is again (weakly) left Kan, assuming that $\ul\phi'$ is right (respectively weakly) nullary"/cocartesian and that $\eta$ restricts along $f_n$. By \defref{pointwise left Kan extension} the latter means that the composite $\eta \of (\id, \dotsc, \id, \cart)$ is (weakly) left Kan, so that the result follows by applying the vertical pasting lemma (\lemref{vertical pasting lemma}) to the nullary"/cocartesian path $\ul\phi'$.
		
		For the second assertion assume that $\cell\eta{\ul K}M$ is pointwise (weakly) left Kan (\defref{pointwise left Kan extension}). By \lemref{restrictions of pointwise left Kan extensions} the composite $\eta \of (\id, \dotsc, \id, \cart)$ is pointwise (weakly) left Kan as well so that $\eta \of \ul\phi = \bigpars{\eta \of (\id, \dotsc, \id, \cart)} \of \ul \phi'$ is so too by applying the vertical pasting lemma to the path $\ul\phi'$, which is pointwise right (respectively weakly) nullary"/cocartesian by assumption.
		
		Finally consider the non"/weak case where $\ul K_n = (C_{n'})$ is empty, so that $\cart$ in \mbox{$\phi_n = \cart \of \phi_n'$} defines the conjoint $\hmap{f_n^*}{C_{n'}}{A_{nm_n}}$ of $f_n$. The same arguments apply except for the claim that $\eta \of (\id, \dotsc, \id, \cart)$ is (pointwise) left Kan, which in this case follows from the assumption that $\eta$ is left Kan by \cororef{Kan extension and conjoints}.
	\end{proof}
	Combining \propref{left exactness and right unary-cocartesianness} and \cororef{left Beck-Chevalley condition implies exactness} the following theorem relates the notions of nullary- and unary"/cocartesianness, the notion of left exactness and the left Beck"/Chevalley condition.
	\begin{theorem} \label{left exactness and Beck-Chevalley}
		Consider the factorisation below. Assume that the (weak) Yoneda morphism $\map\yon C{\ps C}$ exists and that either $f = \id_C$ or the conjoint $f^*$ exists.
		\begin{displaymath}
			\begin{tikzpicture}[textbaseline]
				\matrix(m)[math35]{A_0 & A_1 & A_{n'} & A_n \\ C & & & D \\};
				\path[map]	(m-1-1) edge[barred] node[above] {$J_1$} (m-1-2)
														edge node[left] {$f$} (m-2-1)
										(m-1-3) edge[barred] node[above] {$J_n$} (m-1-4)
										(m-2-1) edge[barred] node[below] {$K$} (m-2-4)
										(m-1-4) edge node[right] {$g$} (m-2-4);
				\path[transform canvas={xshift=2em}]	(m-1-2) edge[cell] node[right] {$\phi$} (m-2-2);
				\draw				($(m-1-2)!0.5!(m-1-3)$) node[xshift=-1.5pt] {$\dotsb$};
			\end{tikzpicture} \quad = \quad \begin{tikzpicture}[textbaseline]
				\matrix(m)[math35]{A_0 & A_1 & A_{n'} & A_n \\ C & & & A_n \\ C & & & D \\};
				\path[map]	(m-1-1) edge[barred] node[above] {$J_1$} (m-1-2)
														edge node[left] {$f$} (m-2-1)
										(m-1-3) edge[barred] node[above] {$J_n$} (m-1-4)
										(m-2-1) edge[barred] node[below] {$K(\id, g)$} (m-2-4)
										(m-2-4) edge node[right] {$g$} (m-3-4)
										(m-3-1) edge[barred] node[below] {$K$} (m-3-4);
				\draw				([xshift=-1pt]$(m-1-2)!0.5!(m-1-3)$) node {$\dotsc$}
										([yshift=-0.333em]$(m-2-1)!0.5!(m-3-4)$) node[font=\scriptsize] {$\cart$};
				\path				(m-1-4) edge[eq] (m-2-4)
										(m-2-1) edge[eq] (m-3-1);
				\path[transform canvas={xshift=2em}]	(m-1-2) edge[cell] node[right] {$\phi'$} (m-2-2);
			\end{tikzpicture}
		\end{displaymath}
		Among the following conditions the implications \textup{(a)} $\Leftrightarrow$ \textup{(b)} $\Rightarrow$ \textup{(c)} $\Rightarrow$ \textup{(d)} $\Leftrightarrow$ \textup{(e)} hold. All conditions are equivalent whenever a right (respectively weakly) cocartesian cell with horizontal source $\ul J$ and vertical morphisms $f$ and $\id_{A_n}$ is known to exist, or when they are considered in a unital virtual double category (\defref{augmented virtual equipment}).
		\begin{enumerate}[label=\textup{(\alph*)}]
			\item $\phi'$ is right (respectively weakly) nullary"/cocartesian (\defref{cocartesian path});
			\item $\phi$ satisfies the (weak) left Beck"/Chevalley condition (\defref{left Beck-Chevalley condition});
			\item $\phi$ is (weakly) left $d$"/exact (\defref{left exact}) for any morphism $\map dCM$ whose (weak) left Kan extension along $K$ restricts along $g$ (\defref{pointwise left Kan extension});
			\item $\phi$ is (weakly) left $\yon$"/exact;
			\item $\phi'$ is right (respectively weakly) unary"/cocartesian.
		\end{enumerate}
		
		If moreover the restrictions $K(\id, g \of h)$ exist for all $\map hX{A_n}$ then the same implications hold among the pointwise variants (\defref{pointwise right cocartesian path}) of the conditions above, with the pointwise variant of \textup{(c)} being
		\begin{enumerate}
			\item[\textup{(c')}] $\phi$ is pointwise (weakly) left exact.
		\end{enumerate}
	\end{theorem}
	\begin{proof}
		(a) $\Leftrightarrow$ (b) by \defref{left Beck-Chevalley condition}. (b) $\Rightarrow$ (c) follows from \cororef{left Beck-Chevalley condition implies exactness}. Let $\cell{\cart_{\cur K}}K{\ps C}$ denote the cartesian cell that defines $\map{\cur K}D{\ps C}$ (\defref{yoneda embedding}). By (weak) density of $\yon$ (\defref{density definition}) it defines $\cur K$ as the pointwise (weak) left Kan extension of $\yon$ along $K$, proving (c) $\Rightarrow$ (d). Next consider the equality $\cart_{\cur K} \of \phi = \cart_{\cur K} \of \cart \of \phi'$ where both $\cart_{\cur K}$ and $\cart_{\cur K} \of \cart$ define pointwise left Kan extensions of $\yon$, the latter by \lemref{restrictions of pointwise left Kan extensions}. It follows that condition (d) is equivalent to $\phi'$ being (pointwise) (weakly) left $\yon$"/exact which, by \propref{left exactness and right unary-cocartesianness}, is equivalent to (e). Finally assume either that all horizontal units exist or that a (pointwise) right (respectively weakly) cocartesian cell with horizontal source $\ul J$ and vertical morphisms $f$ and $\id_{A_n}$ exists. In both cases (e)~$\Rightarrow$~(a) follows, either by \exref{cocartesian paths in the presence of horizontal units or (1,0)-ary cartesian cells} or by using that weakly unary"/cocartesian cells uniquely determine their horizontal targets up to isomorphism.
	\end{proof}
	
	\subsection{The left Beck"/Chevalley condition and absolute left Kan extension}
	The second theorem of this section describes the relation between absolutely left Kan cells (\defref{absolutely left Kan}) and the left Beck"/Chevalley condition for nullary cells. Specialising (c) $\Leftrightarrow$ (d) to $\enProf\Set$ (\augexref{2.4}) recovers Example~1.14(9) of \cite{Guitart80}; specialising (a) $\Leftrightarrow$ (b) to a locally thin equipment (see \augpropref{7.8} and \augexref{2.5}) recovers Theorem~2.6 of \cite{Koudenburg18}. When considered in a pseudo double category (\augpropref{7.8}) condition (f) below is equivalent to the nullary cell $\eta$ exhibiting $l$ as an `absolute left Kan extension' in the sense of Section~2.2 of \cite{Grandis-Pare08}. The latter notion is more general however as it allows for unary cells $\eta$ as well.
	\begin{theorem} \label{left Beck-Chevalley condition and absolute left Kan extensions}
		Consider the cell $\eta$ on the left"/hand side below.
		\begin{displaymath}
			\begin{tikzpicture}[textbaseline]
				\matrix(m)[math35, column sep={1.75em,between origins}]{A_0 & & A_n \\ & M & \\};
				\path[map]	(m-1-1) edge[barred] node[above] {$\ul J$} (m-1-3)
														edge[transform canvas={xshift=-2pt}] node[left] {$d$} (m-2-2)
										(m-1-3) edge[transform canvas={xshift=2pt}] node[right] {$l$} (m-2-2);
				\path[transform canvas={yshift=0.25em}]	(m-1-2) edge[cell] node[right, inner sep=3pt] {$\eta$} (m-2-2);
			\end{tikzpicture} \quad = \quad \begin{tikzpicture}[textbaseline]
  			\matrix(m)[math35, column sep={1.75em,between origins}]{A_0 & & A_n \\ M & & A_n \\ & M & \\};
  			\path[map]	(m-1-1) edge[barred] node[above] {$\ul J$} (m-1-3)
  													edge node[left] {$d$} (m-2-1)
  									(m-2-1) edge[barred] node[below] {$l^*$} (m-2-3)
  									(m-2-3)	edge[transform canvas={xshift=2pt}] node[right] {$l$} (m-3-2);
  			\path				(m-1-3) edge[eq] (m-2-3)
  									(m-2-1) edge[eq, transform canvas={xshift=-2pt}] (m-3-2);
  			\path[transform canvas={xshift=1.75em}]
        		        (m-1-1) edge[cell] node[right] {$\eta'$} (m-2-1);
        \draw       ([yshift=0.25em]$(m-2-2)!0.5!(m-3-2)$) node[font=\scriptsize] {$\cart$};
  		\end{tikzpicture}
		\end{displaymath}
		Among the conditions below the implication \textup{(a)} $\Rightarrow$ \textup{(d)} holds. If the conjoint $l^*$ exists then the implications \textup{(a)} $\Leftrightarrow$ \textup{(b)} $\Rightarrow$ \textup{(c)} $\Leftrightarrow$ \textup{(d)} hold. If moreover all cartesian nullary cells (\defref{cocartesian path of (0,1)-ary cells}) exist then all four conditions are equivalent.
		\begin{enumerate}[label=\textup{(\alph*)}]
			\item Any cell $\phi$ as on the left"/hand side below (with $\ul H$ empty) factors uniquely as shown;
			\item the factorisation $\eta'$ in the right"/hand side above is right (respectively weakly) cocartesian (\defref{cocartesian path});
			\item $\eta$ satisfies the (weak) left Beck"/Chevalley condition (\defref{left Beck-Chevalley condition});
			\item $\eta$ is absolutely (weakly) left Kan (\defref{absolutely left Kan}).
		\end{enumerate}
		\begin{displaymath}
			\begin{tikzpicture}[textbaseline]
				\matrix(m)[math35, column sep={0.875em,between origins}]
					{	A_0 & & & & A_n & & & & B_m \\
					 	& M & & & & & & & \\
					 	& & N & & & & K & & \\ };
				\path[map]	(m-1-1) edge[barred] node[above] {$\ul J$} (m-1-5)
														edge node[left] {$d$} (m-2-2)
										(m-1-5) edge[barred] node[above] {$\ul H$} (m-1-9)
										(m-1-9) edge node[right] {$h$} (m-3-7)
										(m-2-2) edge node[left] {$f$} (m-3-3)
										(m-3-3) edge[barred] node[below] {$\ul L$} (m-3-7);
				\path[transform canvas={yshift=-1.625em}]	(m-1-5) edge[cell] node[right] {$\phi$} (m-2-5);
			\end{tikzpicture} \quad = \quad \begin{tikzpicture}[textbaseline]
				\matrix(m)[math35, column sep={0.875em,between origins}]
					{	A_0 & & & & A_n & & & & B_m \\
					 	& & M & & & & & & \\
					 	& & & N & & & & K & \\ };
				\path[map]	(m-1-1) edge[barred] node[above] {$\ul J$} (m-1-5)
														edge[transform canvas={xshift=-2pt}] node[left] {$d$} (m-2-3)
										(m-1-5) edge[barred] node[above] {$\ul H$} (m-1-9)
														edge[transform canvas={xshift=2pt}] node[right] {$l$} (m-2-3)
										(m-1-9) edge[transform canvas={xshift=1pt}] node[right] {$h$} (m-3-8)
										(m-2-3) edge[transform canvas={xshift=-1pt}] node[left] {$f$} (m-3-4)
										(m-3-4) edge[barred] node[below] {$\ul L$} (m-3-8);
				\path[transform canvas={yshift=0.25em}]	(m-1-3) edge[cell] node[right, inner sep=3pt] {$\eta$} (m-2-3);
				\path[transform canvas={yshift=-1.625em}]	(m-1-6) edge[cell] node[right] {$\phi'$} (m-2-6);
			\end{tikzpicture}
		\end{displaymath}
		If the (weak) Yoneda morphism $\map\yon M{\ps M}$ exists then, among the conditions above and those below, the implications \textup{(a)} $\Rightarrow$ \textup{(d)} $\Rightarrow$ \textup{(e)} $\Rightarrow$ \textup{(f)} $\Rightarrow$ \textup{(g)} hold. If all companions exist then \textup{(g)} $\Rightarrow$ \textup{(d)}.
		\begin{enumerate}[label=\textup{(\alph*)}, resume]
			\item $\eta$ is a (weakly) left Kan cell that is preserved by $\yon$ (\defref{absolutely left Kan});
			\item every cell $\phi$ above with $f = \id_M$ (and with $\ul H$ empty) factors as shown;
			\item $\eta$ is a (weakly) left Kan cell that is preserved by any $\map gMN$ whose companion $g_*$ exists.
		\end{enumerate}
		
		If the conjoint $(l \of k)^*$ exists for all $\map kX{A_n}$ then the same implications hold for the analogous conditions in the pointwise (weak) case where, in \textup{(a)} and \textup{(f)}, the unique factorisations are through composites of the form $f \of \eta \of (\id_{J_1}, \dotsc, \id_{J_{n'}}, \cart)$, with $\cart$ defining any restriction of the form $J_n(\id, k)$. 
	\end{theorem}
	\begin{proof}
		For the weak case take $\ul H = (A_n)$ empty in the following. The implication (a)~$\Rightarrow$~(d) simply follows from the fact that restricting the unique factorisation above to cells $\phi$ that are nullary, that is $\ul L = (N)$ empty, gives the universal property of $\eta$ asserted by (d). (b) $\Rightarrow$ (c) immediately follows from \defsref{left Beck-Chevalley condition}{cocartesian path} while  (c)~$\Rightarrow$~(b) follows from \exref{cocartesian paths in the presence of horizontal units or (1,0)-ary cartesian cells} provided that all cartesian nullary cells exist. To see that (a)~$\Leftrightarrow$~(b) consider the assignments below, of cells of the form as shown; here $\cart$ denotes the cartesian cell defining the conjoint $l^*$. It follows from the conjoint identities (\lemref{companion identities lemma}) that the left assignment below is a bijection. Hence the assignment on the right below is a bijection, that is (b) holds, precisely if the composite assignment $(f \of \eta) \hc \dash$ below is so, that is (a) holds. This proves (a)~$\Leftrightarrow$~(b). That (c) $\Leftrightarrow$ (d) is similar: restricting the assignments below to nullary cells, that is $\ul L = (N)$ empty, it follows that the assignment on the right is a bijection, that is $\eta'$ is right (respectively weakly) nullary"/cocartesian or, equivalently, (c) holds, if and only if the composite is a bijection, that is (d) holds.
		\begin{displaymath}
			\begin{tikzpicture}
				\matrix(m)[math35]{M & B_m \\ N & K \\};
				\path[map]	(m-1-1) edge[barred] node[above] {$l^* \conc \ul H$} (m-1-2)
														edge node[left] {$f$} (m-2-1)
										(m-1-2) edge node[right] {$h$} (m-2-2)
										(m-2-1) edge[barred] node[below] {$\ul L$} (m-2-2);
				\path[transform canvas={xshift=1.75em}] (m-1-1) edge[cell] (m-2-1);
				
				\matrix(m)[math35, xshift=-15em]{A_n & B_m \\ M & \\ N & K \\};
				\path[map]	(m-1-1) edge[barred] node[above] {$\ul H$} (m-1-2)
														edge node[left] {$l$} (m-2-1)
										(m-1-2) edge node[right] {$h$} (m-3-2)
										(m-2-1) edge node[left] {$f$} (m-3-1)
										(m-3-1) edge[barred] node[below] {$\ul L$} (m-3-2);
				\path[transform canvas={xshift=1.75em,yshift=-1.625em}] (m-1-1) edge[cell] (m-2-1);

				\matrix(m)[math35, xshift=15em]{A_0 & B_m \\ M & \\ N & K \\};
				\path[map]	(m-1-1) edge[barred] node[above] {$\ul J \conc \ul H$} (m-1-2)
														edge node[left] {$d$} (m-2-1)
										(m-1-2) edge node[right] {$h$} (m-3-2)
										(m-2-1) edge node[left] {$f$} (m-3-1)
										(m-3-1) edge[barred] node[below] {$\ul L$} (m-3-2);
				\path[transform canvas={xshift=1.75em,yshift=-1.625em}] (m-1-1) edge[cell] (m-2-1);
				
				\draw[font=\Large]	(-17.9em, 0) node {$\lbrace$}
										(-12.2em,0) node {$\rbrace$}
										(-3.0em,0) node {$\lbrace$}
										(2.8em, 0) node {$\rbrace$}
										(12.1em,0) node {$\lbrace$}
										(17.8em,0) node {$\rbrace$};
				\path[map]	(-10.2em,0) edge node[above] {$(f \of \cart) \hc \dash$} (-5em,0)
										(4.8em,0) edge node[above] {$\dash \of (\eta' \conc \id_{\ul H})$} (10.1em,0);
			\end{tikzpicture}
		\end{displaymath}
		
		Next assume that the (weak) Yoneda morphism $\map\yon M{\ps M}$ exists. Clearly (d)~$\Rightarrow$~(e). To prove (e) $\Rightarrow$ (f) first notice that the required unique factorisation for unary cells $\cell\phi{\ul J \conc \ul H}L$, as in the statement and where $f = \id_M$, reduces to that of the corresponding nullary cells $\cell{\cart \of \phi}{\ul J \conc \ul H}{\ps M}$, with $f = \yon$ and where $\cart$ defines $\map{\cur L}K{\ps M}$ (\defref{yoneda embedding}); here we use the uniqueness of factorisations through $\cart$. Thus it suffices to prove that nullary cells of the form either $\ul J \conc \ul H \Rightarrow M$, with $f = \id_M$, or $\ul J \conc \ul H \Rightarrow \ps M$, with $f = \yon$, factorise uniquely as in the statement. But that is precisely the assertion that $\eta$ and $\yon \of \eta$ are (weakly) left Kan, which is condition~(e). To see that (f) $\Rightarrow$ (g) first notice that the universal property of $\eta$ being (weakly) left Kan is the unique factorisation of nullary cells $\ul J \conc \ul H \Rightarrow M$ as in the statement, with $f = \id_M$, which is asserted by (f). Next consider any $\map gMN$ whose companion $g_*$ exists. The unique factorisations that exhibit $g \of \eta$ as being left Kan factor precisely, through the cartesian cell $g_* \Rightarrow N$ that defines $g_*$, as the unique factorisations of unary cells $\ul J \conc \ul H \Rightarrow g_*$ as in the statement, with $f = \id_M$: these too exist by (f). This proves (f) $\Rightarrow$ (g). Clearly (g)~$\Rightarrow$~(d) if all companions exist.
		
		Notice that the pointwise (weak) variants of (a)~$\Rightarrow$~(d)~$\Rightarrow$~(e)~$\Rightarrow$~(f)~$\Rightarrow$~(g) follow from applying these implications to the composites $\eta \of (\id_{J_1}, \dotsc, \id_{J_{n'}}, \cart_{J_n(\id, k)})$, where $\cart_{J_n(\id, k)}$ defines any restriction of the form $J_n(\id, k)$. The pointwise (weak) variants of (b) $\Rightarrow$ (c) and (c) $\Rightarrow$ (b) follow from \defsref{left Beck-Chevalley condition}{pointwise right cocartesian path}. For the pointwise (weak) variants of (a) $\Leftrightarrow$ (b) and (c) $\Leftrightarrow$ (d) consider, for each \mbox{$\map kX{A_n}$} such that the restriction $J_n(\id, k)$ exists and instead of the two assignments above, the analogous assignments \mbox{$(f \of \cart \of \cart_{(l \of k)^*}) \hc \dash$}, where \mbox{$\cell{\cart_{(l \of k)^*}}{(l \of k)^*}{l^*}$} defines \mbox{$(l \of k)^*$} as a restriction of $l^*$ (see \auglemref{5.11}), and $\dash \of (\eta'' \conc \id_{\ul H})$, where $\eta''$ is the unique factorisation in
		\begin{displaymath}
			\eta' \of (\id_{J_1}, \dotsc, \id_{J_{n'}}, \cart_{J_n(\id, k)}) = \cart_{(l \of k)^*} \of \eta'',
		\end{displaymath}
		as in \defref{pointwise right cocartesian path}. Notice that the composite of these two assignments is
		\begin{align*}
			(f \of \cart \of \cart_{(l \of k)^*} \of \eta'') \hc \dash &= \bigpars{f \of \cart \of \eta' \of (\id_{J_1}, \dotsc, \id_{J_{n'}}, \cart_{J_n(\id, k)})} \hc \dash \\
				&= \bigpars{f \of \eta \of (\id_{J_1}, \dotsc, \id_{J_{n'}}, \cart_{J_n(\id, k)})} \hc \dash.
		\end{align*}
		Using arguments similar to those used above one then concludes that, for each morphism $\map kX{A_n}$ such that the restriction $J_n(\id, k)$ exists, the cell $\eta''$ is right (respectively weakly) nullary"/cocartesian if and only if unique factorisations through the composite \mbox{$f \of \eta \of (\id_{J_1}, \dotsc, \id_{J_{n'}}, \cart_{J_n(\id, k)})$} exist, analogous to the factorisations of condition (a). From this the pointwise (weak) analogue of (a) $\Leftrightarrow$ (b) follows and, as before, that of (c)~$\Leftrightarrow$~(d) follows by restricting the factorisations to nullary cells.
	\end{proof}
	
	\begin{example} \label{left Beck-Chevalley condition in ModRel}
		In the locally thin strict equipment $\ModRel$ of modular relations between preorders (see Example~1.9 of \cite{Koudenburg18}) consider an order preserving map $\map dAM$ and a modular relation $\hmap JAB$. The left Kan extension \mbox{$\map lBM$} of $d$ along $J$ in $\ModRel$, if it exists, is given by the suprema
		\begin{displaymath}
			ly = \sup_{x \in \rev Jy}dx \quad\qquad \text{where} \quad\qquad \rev Jy = \set{x \in A \mid xJy};
		\end{displaymath}
		see Example~2.3 of \cite{Koudenburg18}. Example~2.7 therein describes the left Beck"/Chevalley condition for the left Kan cell defining $l$: it requires that for every $y \in B$ there is $x \in \rev Jy$ with $dx = ly$, that is the suprema above are attained as maxima.
	\end{example}
	
	As an immediate corollary of the previous theorem we find that an absolute left Kan extension $l$ is preserved by any functor of augmented virtual double categories (\augdefref{3.1}) that preserves right cocartesian cells, provided that the conjoint of $l$ exists, as follows.
	\begin{corollary}
		Let $\map F\K\L$ be a functor of augmented double categories. In $\K$ consider the factorisation $\eta = \cart \of \eta'$ of \thmref{left Beck-Chevalley condition and absolute left Kan extensions} and assume that $\eta$ satisfies the (pointwise) (weak) left Beck"/Chevalley condition (\defref{left Beck-Chevalley condition}). The following are equivalent:
		\begin{enumerate}[label=\textup{(\alph*)}]
			\item $F$ preserves $\eta$, that is $F\eta$ satisfies the (pointwise) (weak) left Beck"/Chevalley condition;
			\item $F$ preserves $\eta'$, that is $F\eta'$ is (pointwise) right (respectively weakly) cocartesian.
		\end{enumerate}
	\end{corollary}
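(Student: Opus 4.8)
The plan is to read everything off the factorisation $\eta = \cart \of \eta'$ together with the fact that functors of augmented virtual double categories preserve conjoints. Recall from \thmref{left Beck-Chevalley condition and absolute left Kan extensions} that $\cart$ is the cartesian cell defining the conjoint $l^*$ of the vertical target $\map l{A_n}M$ of $\eta$ --- so $l^*$ exists, as it figures in the given factorisation --- and that $\eta'$ is the unique cell with $\eta = \cart \of \eta'$. Since $F$, like any functor of augmented virtual double categories, preserves the cartesian cell defining a conjoint (\augcororef{5.5}), the image $F\cart$ is the cartesian cell defining the conjoint $(Fl)^* \iso Fl^*$ of $Fl$ in $\L$. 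Because $F$ preserves horizontal composition of cells, $F\eta = F\cart \of F\eta'$, and by uniqueness of factorisations through the cartesian cell $F\cart$ this exhibits $F\eta'$ as the factorisation of $F\eta$ in the sense of \defref{left Beck-Chevalley condition}. Unwinding that definition for the single cell $F\eta$ (whose horizontal target is empty, so that the relevant restriction is the conjoint $(Fl)^*$, which exists), one obtains: \emph{$F\eta$ satisfies the \textup{(pointwise) (weak)} left Beck--Chevalley condition if and only if $F\eta'$ is \textup{(pointwise)} right \textup{(respectively weakly)} nullary-cocartesian.}

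With this in hand, \textup{(b)}~$\Rightarrow$~\textup{(a)} is immediate, since a \textup{(pointwise)} right \textup{(weakly)} cocartesian cell is in particular \textup{(pointwise)} right \textup{(weakly)} nullary-cocartesian. For \textup{(a)}~$\Rightarrow$~\textup{(b)} the remaining task is to upgrade ``nullary-cocartesian'' to ``cocartesian'' for the cell $F\eta'$. Here I would invoke that the horizontal target of $F\eta'$ is the conjoint $(Fl)^*$, hence a nullary restriction, which by the remark following \defref{cocartesian path of (0,1)-ary cells} admits a cartesian nullary cell --- namely $F\cart$ itself. As in the argument of \exref{cocartesian paths in the presence of horizontal units or (1,0)-ary cartesian cells}, composing with this cartesian nullary cell converts unary factorisation problems for $F\eta'$ into nullary ones, so that for a cell with conjoint target the notions of \textup{(pointwise)} right \textup{(weakly)} nullary-cocartesian and \textup{(pointwise)} right \textup{(weakly)} cocartesian agree. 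The same observation, applied to $\eta$ and $\eta'$ themselves, shows that the hypothesis that $\eta$ satisfies the \textup{(pointwise) (weak)} left Beck--Chevalley condition already makes $\eta'$ \textup{(pointwise)} right \textup{(weakly)} cocartesian, so that \textup{(b)} is genuinely the statement that $F$ carries the \textup{(pointwise)} right \textup{(weakly)} cocartesian cell $\eta'$ to one of the same kind. Assembling the equivalences gives \textup{(a)}~$\Leftrightarrow$~\textup{(b)}.

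The main obstacle is precisely this last reduction: justifying, without assuming that $\L$ has \emph{all} cartesian nullary cells, that for the cell $F\eta'$ --- whose horizontal target is a conjoint and therefore does admit a cartesian nullary cell --- right nullary-cocartesianness forces right cocartesianness. One resolves it either by adapting the proof of \exref{cocartesian paths in the presence of horizontal units or (1,0)-ary cartesian cells} to a single cell whose target admits a cartesian nullary cell, using the prequel's results on composing conjoints with paths (in the spirit of \auglemref{8.1} and \cororef{non-horizontal cocartesian cells}), or, if one is content to phrase condition \textup{(b)} with ``nullary-cocartesian'' in place of ``cocartesian'', the equivalence is then immediate from the first paragraph. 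Everything else --- preservation of conjoints and of horizontal composition, and the uniqueness of factorisations through $\cart$ and $F\cart$ --- is routine, and the argument runs uniformly over the plain, weak, pointwise and pointwise-weak variants.
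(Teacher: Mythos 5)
Your first paragraph is, in substance, exactly the paper's proof: since functors of augmented virtual double categories preserve conjoints (\augcororef{5.5}) and vertical identities, the image $F\eta = F\cart \of F\eta'$ is again a factorisation of the form considered in \thmref{left Beck-Chevalley condition and absolute left Kan extensions}, and the corollary is then read off directly from \defref{left Beck-Chevalley condition}. The paper's proof goes no further than this.

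The genuine gap is your bridge from ``right nullary-cocartesian'' to ``right cocartesian'' for $F\eta'$. In the reduction of \exref{cocartesian paths in the presence of horizontal units or (1,0)-ary cartesian cells}, the cartesian nullary cell that is needed is one on the horizontal target $L$ of each \emph{test} cell $\chi$, so that $\chi$ can be composed down to a nullary cell and the resulting factorisation lifted back through that cartesian cell; a cartesian nullary cell on the target $(Fl)^*$ of the path itself, such as $F\cart$, plays no role in that argument. Hence your claim that ``for a cell with conjoint target, right nullary-cocartesian and right cocartesian agree'' is unsupported, and the paper's own \thmref{left Beck-Chevalley condition and absolute left Kan extensions} indicates it fails in general: there $\eta'$ already has the conjoint $l^*$ as horizontal target, yet the implication \textup{(c)} $\Rightarrow$ \textup{(b)} --- which is precisely this upgrade --- is asserted only under the extra hypothesis that \emph{all} cartesian nullary cells exist; if a conjoint target sufficed, that hypothesis would be redundant. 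The same unsupported step underlies your remark that the stated hypothesis already makes $\eta'$ itself cocartesian. Your fallback --- reading condition \textup{(b)} in the nullary-cocartesian sense, in which case the equivalence is immediate from your first paragraph --- is in effect how the paper's one-line proof treats the statement.
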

	\begin{proof}
		This follows from \defref{left Beck-Chevalley condition} and the fact that the image \mbox{$F\eta = F\cart \of F\eta'$} of $\eta = \cart \of \eta'$ under $F$ is again of the form as that of \thmref{left Beck-Chevalley condition and absolute left Kan extensions}. This is because functors of augmented virtual double categories preserve conjoints (\augcororef{5.5}) and vertical identity cells.
	\end{proof}

	\section{Totality} \label{totality section}
	In this section we study the notions of total morphism and total object in augmented virtual double categories, which are introduced in the definition below. It follows from \exsref{weighted colimits}{enriched left Kan extension} that, when applied to the unital virtual equipment $\enProf\V$ of profunctors enriched in a closed symmetric monoidal category $\V$ (\augexref{2.4}), the notion of total object below coincides with that of total $\V$"/category as considered in \cite{Day-Street86} and \cite{Kelly86}. In contrast to Street and Walters' original $2$"/categorical notion of total morphism given in Section~6 of \cite{Street-Walters78}, which assumes a Yoneda structure, the notion of (weak) total morphism below does not require the existence of (weak) Yoneda morphisms. 
	
	In order to make precise the relation between our notion of weakly total object below and that of total object in the sense of Street and Walters, consider an augmented virtual equipment $\K$ (\defref{augmented virtual equipment}) that has weak Yoneda embeddings $\map{\yon_A}A{\ps A}$ admitting nullary restrictions (\defref{yoneda embedding}) for all unital objects $A$. We will see in \exref{weak totality in an augmented virtual equipment with yoneda embeddings} below that a unital object $A$ is weakly total in the vertical $2$"/category $V(\K)$ (\augexref{1.5}) in the sense of \cite{Street-Walters78}, that is the weak Yoneda embedding $\yon_A$ admits a left adjoint, if and only if $A$ is weakly total in $\K$ in our sense.
	\pagebreak
	\subsection{Total morphisms and objects}
	\begin{definition} \label{totality}
		A morphism $\map fMN$ is called \emph{(weakly) total} if, for any horizontal morphism $\hmap JMB$, the pointwise (weak) left Kan extension of $f$ along $J$ (\defref{pointwise left Kan extension}) exists. An object $M$ is called \emph{(weakly) total} if its identity morphism $\id_M$ is (weakly) total. 
	\end{definition}
	
	\begin{example} \label{presheaf objects are total}
		In an augmented virtual equipment $\K$ (\defref{augmented virtual equipment}) any presheaf object $\ps M$, defined by a Yoneda morphism $\map\yon M{\ps M}$ (\defref{yoneda embedding}), is total provided that the companion $\yon_*$ exists. To see this let $\hmap J{\ps M}B$ be any horizontal morphism. Since the restriction $J(\yon, \id)$ exists in $\K$ so does the pointwise horizontal composite \mbox{$(\yon_* \hc J)$}, by \auglemsref{8.1}{9.7}. By \remref{right pointwise cocartesian and pointwise right cocartesian comparison} the cell defining $(\yon_* \hc J)$ is pointwise right cocartesian so that, by \exref{cocartesian paths are exact}, it is pointwise left exact. Hence the existence of the pointwise left Kan extension of $\id_{\ps M}$ along $J$ follows from \propref{left Kan extensions along a yoneda embedding in terms of left y-exact cells}.
		
		Notice that besides assuming that $\yon_*$ exists we do not need to require any size restriction on $\ps M$. This is in contrast to Corollary~14 of \cite{Street-Walters78} which, in order to prove that a presheaf object $\mathcal PA$, given by a Yoneda structure on a $2$"/category, is total, in the sense of \cite{Street-Walters78}, requires $\mathcal PA$ to be admissible.
	\end{example}
	
	The following result is analogous to the first assertion of Proposition~27 of \cite{Street-Walters78} for Yoneda structures.
	\begin{proposition} \label{totality and adjunctions}
		Consider an adjunction $f \ladj \map gMN$ with $g$ full and faithful (\defref{full and faithful morphism}). If $g$ is (weakly) total then so is $M$.
	\end{proposition}
	\begin{proof}
		The counit $\cell\eps{f \of g}{\id_M}$ of the adjunction is invertible because $g$ is full and faithful, by \auglemref{4.14} and (b) $\Rightarrow$ (c) of Proposition~10 of \cite{Street-Walters78}. We have to show that the pointwise (weak) left Kan extension of any $\hmap JMB$ along $\id_M$ exists. If $g$ is (weakly) total then the pointwise (weak) left Kan extension of $g$ along $J$ exists; let $\cell\eta JN$ denote its defining nullary cell. Using the fact that the left adjoint $f$ is cocontinuous (\lemref{left adjoints are cocontinuous}), $f \of \eta$ defines the pointwise (weak) left Kan extension of $f \of g$ along $J$ so that, composing on the left with the inverse $\cell{\inv\eps}{\id_M}{f \of g}$, we obtain a nullary cell defining the pointwise (weak) left Kan extension of $\id_M$ along $J$, as required.
	\end{proof}
	
	The following formalises Corollary~6.2 of \cite{Kelly86}, which is recovered by specialising to the unital virtual equipment $\enProf\V$ (\augexref{2.4}).
	\begin{proposition}
		Consider an adjunction $f \ladj \map gMN$ with $g$ full and faithful. Assume that the companion $\hmap{f_*}NM$ exists as well as all restrictions of the form $J(f, \id)$, for all $\hmap JMB$. If $N$ is (weakly) total then so are $g$ and $M$. 
	\end{proposition}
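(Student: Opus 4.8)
The plan is to reduce the statement about $M$ to the one about $g$, and then to prove the latter by feeding the totality of $N$ through the companion/restriction hypotheses. Since $g$ is full and faithful, \propref{totality and adjunctions} shows that once $g$ is (weakly) total so is $M$; hence it suffices to prove that $g$ is (weakly) total.

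So fix a horizontal morphism $\hmap JMB$ and aim to produce the pointwise (weak) left Kan extension of $\map gMN$ along $J$. First I would mimic the proof of \exref{presheaf objects are total}: as the companion $\hmap{f_*}NM$ and the restriction $\hmap{J(f, \id)}NB$ are assumed to exist, by \auglemsref{8.1}{9.7} the pointwise horizontal composite $(f_* \hc J)$ exists and coincides with $J(f, \id)$, while by \remref{right pointwise cocartesian and pointwise right cocartesian comparison} its defining cell $\cell\phi{(f_*, J)}{J(f, \id)}$ is pointwise right (respectively weakly) nullary"/cocartesian. To invoke that remark one checks its hypothesis: whenever $J(\id, k)$ exists, for $\map kXB$, the restriction $J(f, \id)(\id, k) \iso J(\id, k)(f, \id)$ exists too, by the pseudofunctoriality of restrictions (\lemref{taking restrictions is pseudofunctorial}) together with the assumption that all restrictions $J'(f, \id)$ exist.

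Next, since $N$ is (weakly) total there is a nullary cell $\cell\eta{J(f, \id)}N$ defining the pointwise (weak) left Kan extension of $\id_N$ along $J(f, \id) \iso (f_* \hc J)$. Applying the vertical pasting lemma (\lemref{vertical pasting lemma}) to the pointwise right (respectively weakly) nullary"/cocartesian cell $\phi$ shows that the composite $\cell{\eta \of \phi}{(f_*, J)}N$ is again pointwise (weak) left Kan, that is, the pointwise (weak) left Kan extension of $\id_N$ along the path $(f_*, J)$ exists. Finally \exref{left Kan extension along the companion of a left adjoint}, applied to the adjunction $f \ladj g$ --- whose left adjoint $f$ carries the companion $f_*$ --- with the constant morphism $d \dfn \id_N$, turns this into the existence of the pointwise (weak) left Kan extension of $\id_N \of g = g$ along $J$. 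Thus $g$ is (weakly) total, and the assertion about $M$ follows from the first paragraph.

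I expect the delicate point to be the ``pointwise'' bookkeeping in the middle steps: the vertical pasting lemma only transfers pointwise"/ness when $\phi$ is \emph{pointwise} right (weakly) nullary"/cocartesian rather than merely right (weakly) nullary"/cocartesian, and it is precisely here that the hypothesis that \emph{all} restrictions $J'(f, \id)$ exist gets consumed, via \remref{right pointwise cocartesian and pointwise right cocartesian comparison}. A secondary check is that \exref{left Kan extension along the companion of a left adjoint} indeed applies to the degenerate data $d = \id_N$ and the length"/one path $(J)$, and that the weak and non"/weak variants are carried along in parallel throughout.
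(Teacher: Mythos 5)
Your proof is correct and follows essentially the same route as the paper: reduce to the totality of $g$ via \propref{totality and adjunctions}, exhibit $J(f,\id)$ as the pointwise composite $(f_*\hc J)$ using \auglemsref{8.1}{9.7} and \remref{right pointwise cocartesian and pointwise right cocartesian comparison} (the paper verifies the needed restrictions $J(f,\id)(\id,h)\iso J(\id,h)(f,\id)$ via the pasting lemma \auglemref{4.15} rather than \lemref{taking restrictions is pseudofunctorial}, but this is the same point), then apply the vertical pasting lemma and \exref{left Kan extension along the companion of a left adjoint}. The only cosmetic difference is that the paper also records the invertibility of the counit, which is what powers the cited \propref{totality and adjunctions}.
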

	\begin{proof}
		The counit $\cell\eps{f \of g}{\id_M}$ of the adjunction is invertible because $g$ is full and faithful, by \auglemref{4.14} and (b) $\Rightarrow$ (c) of Proposition~10 of \cite{Street-Walters78}. Notice that (weak) totality of $M$ follows from that of $g$ by the previous proposition. To show that $g$ is total we have to show that the pointwise (weak) left Kan extension of $g$ along any $\hmap JMB$ exists. To this end consider the composite below where $\eta$ defines $\map lBN$ as the pointwise (weak) left Kan extension of $\id_N$ along $J(f, \id)$, which exists because $N$ is (weakly) total, and where $\cocart$ defines $J(f, \id)$ as the pointwise horizontal composite of $(f_*, J)$; see \auglemsref{8.1}{9.7}.
		\begin{displaymath}
			\begin{tikzpicture}
				\matrix(m)[math35, column sep={1.75em,between origins}]
					{ N & & M & & B \\
						& N & & B & \\
						& & N & & \\  };
				\path[map]	(m-1-1) edge[barred] node[above] {$f_*$} (m-1-3)
										(m-1-3) edge[barred] node[above] {$J$} (m-1-5)
										(m-2-2) edge[barred] node[above, inner sep=2pt] {$J(f, \id)$} (m-2-4)
										(m-2-4) edge[transform canvas={xshift=2pt}] node[right] {$l$} (m-3-3);
				\path				(m-1-1) edge[eq, transform canvas={xshift=-1pt}] (m-2-2)
										(m-1-5) edge[eq, transform canvas={xshift=1pt}] (m-2-4)
										(m-2-2) edge[eq, transform canvas={xshift=-1pt}] (m-3-3)
										(m-2-3) edge[cell, transform canvas={yshift=0.25em}] node[right] {$\eta$} (m-3-3);
				\draw[font=\scriptsize]	($(m-1-3)!0.5!(m-2-3)$) node[yshift=0.25em] {$\cocart$};
			\end{tikzpicture}
		\end{displaymath}
		 Notice that for each morphism $\map hXB$, if the restriction $J(\id, h)$ exists then so does $J(f, \id)(\id, h) \iso J(\id, h)(f, \id)$ (\lemref{pasting lemma for cartesian cells}), by assumption. Hence $\cocart$ is pointwise right cocartesian by \remref{right pointwise cocartesian and pointwise right cocartesian comparison}, so that the composite above defines $l$ as the pointwise (weak) left Kan extension of $\id_N$ along $(f_*, J)$ by the vertical pasting lemma (\lemref{vertical pasting lemma}). By \exref{left Kan extension along the companion of a left adjoint} $l$ is the pointwise (weak) left Kan extension of $g$ along $J$, as required.
	\end{proof}
	
	The following `adjoint functor theorem' is an immediate corollary of \propref{adjunctions in terms of left Kan extensions}. The analogous result for Yoneda structures is mentioned on page~372 of \cite{Street-Walters78}.
	\begin{corollary}
		Consider a morphism $\map fMN$ whose companion $f_*$ exists and whose source $M$ is weakly total, so that the weak left Kan extension $\map lNM$ of $\id_M$ along $f_*$ exists. If $l$ is preserved by $f$ (\defref{absolutely left Kan}) then $f \ladj l$.
	\end{corollary}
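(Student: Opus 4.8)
The plan is to deduce this directly from \propref{adjunctions in terms of left Kan extensions}, taking $A \dfn M$, $C \dfn N$ and $g \dfn l$ there. Write $\cell\eta{f_*}M$ for the nullary cell that exhibits $\map lNM$ as the weak left Kan extension of $\id_M$ along $\hmap{f_*}MN$; such a cell is available because weak totality of $M$ (\defref{totality}), applied to the horizontal morphism $f_*$, produces the pointwise weak left Kan extension of $\id_M$ along $f_*$, which in particular is a weak left Kan extension. Note that $\eta$ has vertical source $\id_M$, horizontal source $f_*$ and vertical target $l$, so it has exactly the shape of the cell $\iota'$ appearing in \propref{adjunctions in terms of left Kan extensions}.

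First I would set $\iota \dfn \eta \of \cocart$, where $\cocart$ denotes the cocartesian $(0,1)$-ary cell defining the companion $f_*$ (\augsecref 5); this is a vertical cell $\id_M \Rar l \of f$. Since $\cocart$ is cocartesian, the unique factorisation of $\iota$ through $\cocart$ is $\iota' = \eta$ itself, so we are precisely in the situation of \propref{adjunctions in terms of left Kan extensions} with this $\iota$ and its factorisation $\iota'$.

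It then remains only to verify condition~(c) of that proposition for $\iota' = \eta$, namely that $\eta$ is weakly left Kan and is preserved by $f$. The first holds by construction. The second is exactly the hypothesis that $l$ --- equivalently, the defining cell $\eta$ --- is preserved by $f$ in the sense of \defref{absolutely left Kan}(a), i.e.\ that $f \of \eta$ is again weakly left Kan. Hence condition~(c) holds, so condition~(a) of \propref{adjunctions in terms of left Kan extensions} holds as well: $\iota$ is the unit of an adjunction $f \ladj l$ in $V(\K)$, which is the assertion. There is no genuine obstacle here; the only points needing care are matching the shapes of the cells and reading the hypothesis `$l$ is preserved by $f$' as the statement that the weak left Kan cell $\eta$ defining $l$ is preserved by $f$.
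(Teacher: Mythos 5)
Your proof is correct and is exactly the route the paper intends: the corollary is stated there as an immediate consequence of \propref{adjunctions in terms of left Kan extensions}, via the implication (c) $\Rightarrow$ (a) with $g = l$ and $\iota' = \eta$, just as you argue. Your identification of the hypothesis ``$l$ is preserved by $f$'' with the second half of condition (c), and of $\eta$ with the unique factorisation of $\iota = \eta \of \cocart$ through the cocartesian cell, is precisely the bookkeeping needed.
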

	
	\subsection{Totality in the presence of Yoneda morphisms}
	The following theorem is the main result of this section. Its implication (c)~$\Rightarrow$~(d) is analogous to Lemma~3.18 of \cite{Weber07} for Yoneda structures. The latter requires $\ps M$ to be admissible; here only the existence of the companion $\hmap{\yon_*}M\ps M$ is assumed.
	\begin{theorem} \label{total morphisms}
		For a morphism $\map fMN$ the following are equivalent:
		\begin{enumerate}[label=\textup{(\alph*)}]
			\item $f$ is (weakly) total;
			\item if a pointwise (weakly) left $f$"/exact cell (\defref{left exact}) of the form below exists then so does the pointwise (weak) left Kan extension of $f \of d$ along $J$ (\defref{pointwise left Kan extension}).
				\begin{displaymath}
					\begin{tikzpicture}
						\matrix(m)[math35]{A & B \\ M & B \\};
						\path[map]	(m-1-1) edge[barred] node[above] {$J$} (m-1-2)
																edge node[left] {$d$} (m-2-1)
												(m-2-1) edge[barred] node[below] {$K$} (m-2-2);
						\path				(m-1-2) edge[eq] (m-2-2);
						\path[transform canvas={xshift=1.75em}]	(m-1-1) edge[cell] node[right] {$\phi$} (m-2-1);
					\end{tikzpicture}
				\end{displaymath}
		\end{enumerate}
		If the (weak) Yoneda morphism $\map\yon M{\ps M}$ (\defref{yoneda embedding}) as well as the companions $\yon_*$ and $f_*$ exist then the conditions above and those below are all equivalent.
		\begin{enumerate}[label=\textup{(\alph*)}, resume]
			\item The weak left Kan extension of $f$ along $\yon_*$ exists and restricts along \mbox{$\map{\cur f_*}N{\ps M}$} (\defthreeref{weak left Kan extension}{pointwise left Kan extension}{yoneda embedding});
			\item $\cur f_*$ admits a left adjoint $\map z{\ps M}N$.
		\end{enumerate}
		Moreover in that case the weak left Kan extension of condition \textup{(c)} and the left adjoint $z$ of condition \textup{(d)} coincide.
	\end{theorem}
	\begin{proof}
		(a) $\Rightarrow$ (b) follows from applying the \defref{left exact} to the composites as on the left below, where the cell $\eta$ defines $\map lBN$ as the pointwise left Kan extension of $f$ along $K$. (b) $\Rightarrow$ (a) simply follows from applying (b) to the identity cell $\id_J$. (a) $\Rightarrow$ (c) is clear, but notice that the restriction $\yon_*(\id, \cur f_*)$ exists and is equal to $f_*$. Indeed this follows from the fact that the unique factorisation $\cell{\cart'}{f_*}{y_*}$, of the cartesian cell defining $\cur f_*$ (\defref{yoneda embedding}) through that defining $y_*$, is cartesian by the pasting lemma (\lemref{pasting lemma for cartesian cells}).
		
		(d) $\Rightarrow$ (a). Consider the composite in the middle below, of the cartesian cell that defines $\cur f_*$ and the counit $\eps$ of $z \ladj \cur f_*$. It is cartesian by \auglemref{4.17}, showing that $(z \of \yon)_* \iso f_*$. Since taking companions extends to an equivalence $(\dash)_*$ of augmented virtual double categories (\augthmref{6.5}) we conclude that $z \of \yon \iso f$. The pointwise (weak) left Kan extension of $f$ along any $\hmap JMB$ can now be constructed as $z \of \cur J$, with the composite on the right below as the defining pointwise (weakly) left Kan cell. Here we use the fact that the left adjoint $z$ is cocontinuous; see \propref{left adjoints are cocontinuous}.
		\begin{displaymath}
			\begin{tikzpicture}[textbaseline]
  			\matrix(m)[math35, column sep={1.75em,between origins}]{A & & B \\ M & & B \\ & N & \\};
  			\path[map]	(m-1-1) edge[barred] node[above] {$J$} (m-1-3)
  													edge node[left] {$d$} (m-2-1)
  									(m-2-1) edge[barred] node[below, inner sep=1.5pt] {$K$} (m-2-3)
  													edge[transform canvas={xshift=-2pt}] node[left] {$f$} (m-3-2)
  									(m-2-3)	edge[transform canvas={xshift=2pt}] node[right] {$l$} (m-3-2);
  			\path				(m-1-3) edge[eq] (m-2-3)
  									(m-2-2) edge[cell] node[right] {$\eta$} (m-3-2)
  									(m-1-2) edge[cell] node[right] {$\phi$} (m-2-2);
  		\end{tikzpicture} \qquad\qquad\qquad\qquad \begin{tikzpicture}[textbaseline]
				\matrix(m)[math35, column sep={1.75em,between origins}]{M & & N \\ & \ps M & \\ & & N\\};
				\path[map]	(m-1-1) edge[barred] node[above] {$f_*$} (m-1-3)
														edge[ps, transform canvas={xshift=-1pt}] node[left] {$\yon$} (m-2-2)
										(m-1-3) edge[ps, transform canvas={xshift=1pt}] node[right, inner sep=1pt] {$\cur f_*$} (m-2-2)
										(m-2-2) edge[bend right = 18] node[left] {$z$} (m-3-3);
				\path				(m-1-3) edge[eq, bend left = 42] (m-3-3)
														edge[cell, transform canvas={yshift=-1.625em}] node[right] {$\eps$} (m-2-3);
				\draw				([yshift=0.333em]$(m-1-2)!0.5!(m-2-2)$) node[font=\scriptsize] {$\cart$};
			\end{tikzpicture}	\qquad\qquad\qquad\qquad \begin{tikzpicture}[textbaseline]
				\matrix(m)[math35, column sep={1.75em,between origins}]{M & & B \\ & \ps M & \\ N & & \\};
				\path[map]	(m-1-1) edge[barred] node[above] {$J$} (m-1-3)
														edge[ps, transform canvas={xshift=-1pt}] node[left] {$\yon$} (m-2-2)
														edge[bend right = 45] node[left] {$f$} (m-3-1)
										(m-1-3) edge[ps, transform canvas={xshift=1pt}] node[right] {$\cur J$} (m-2-2)
										(m-2-2) edge[bend left = 18] node[right] {$z$} (m-3-1);
				\draw				([yshift=0.333em]$(m-1-2)!0.5!(m-2-2)$) node[font=\scriptsize] {$\cart$}
										(m-2-1) node {$\iso$};
			\end{tikzpicture}
		\end{displaymath}
		
		(c) $\Rightarrow$ (d). Let $\map z{\ps M}N$ denote the weak left Kan extension of $f$ along $\yon_*$ and let $\eta$ denote its defining cell, as in the left-hand side of the identity on the left below; we claim that $z$ forms the left adjoint of $\cur f_*$.
		\begin{displaymath}
			\begin{tikzpicture}[textbaseline]
				\matrix(m)[math35, column sep={1.75em,between origins}]
					{ & M & & \ps M \\
						M & & N & \\
						& \ps M & & \\ };
				\path[map]	(m-1-2) edge[transform canvas={xshift=1pt}] node[left, yshift=1pt] {$f$} (m-2-3)
														edge[barred] node[above] {$\yon_*$} (m-1-4)
										(m-1-4) edge[transform canvas={xshift=2pt}] node[right] {$z$} (m-2-3)
										(m-2-3) edge[transform canvas={xshift=2pt}, ps] node[right] {$\cur f_*$} (m-3-2)
										(m-2-1)	edge[barred] node[below, inner sep=1.5pt] {$f_*$} (m-2-3)
														edge[transform canvas={xshift=-2pt}, ps] node[left] {$\yon$} (m-3-2);
				\path				(m-1-3) edge[cell, transform canvas={yshift=0.25em}] node[right] {$\eta$} (m-2-3)
										(m-1-2) edge[eq, transform canvas={xshift=-2pt}] (m-2-1);
    		\draw				([yshift=0.25em]$(m-2-2)!0.5!(m-3-2)$) node[font=\scriptsize] {$\cart$}
    								([yshift=-0.55em]$(m-1-2)!0.5!(m-2-2)$) node[font=\scriptsize] {$\cocart$};
			\end{tikzpicture} \mspace{9mu} = \mspace{9mu} \begin{tikzpicture}[textbaseline]
				\matrix(m)[math35, column sep={1.75em,between origins}]{M & & \ps M & \\ & \ps M & & N \\ & & \ps M & \\ };
				\path[map]	(m-1-1) edge[barred] node[above] {$\yon_*$} (m-1-3)
														edge[ps, transform canvas={xshift=-1pt}] node[left] {$\yon$} (m-2-2)
										(m-1-3) edge[bend left=18] node[above right] {$z$} (m-2-4)
										(m-2-4) edge[bend left=18] node[below right] {$\cur f_*$} (m-3-3);
				\path				(m-1-3) edge[eq, ps, transform canvas={xshift=1pt}] (m-2-2)
										(m-2-2) edge[eq, bend right=18] (m-3-3)
										(m-1-3) edge[cell, transform canvas={yshift=-1.625em}] node[right] {$\iota$} (m-2-3);
				\draw				([yshift=0.25em]$(m-1-2)!0.5!(m-2-2)$) node[font=\scriptsize] {$\cart$};
			\end{tikzpicture} \qquad\qquad \begin{tikzpicture}[textbaseline]
					\matrix(m)[math35, column sep={1.75em,between origins}]{M & & N \\ & N & \\};
					\path[map]	(m-1-1) edge[barred] node[above] {$f_*$} (m-1-3)
															edge[transform canvas={xshift=-2pt}] node[left] {$f$} (m-2-2);
					\path				(m-1-3) edge[eq, transform canvas={xshift=2pt}] (m-2-2);
					\draw				([yshift=0.333em]$(m-1-2)!0.5!(m-2-2)$) node[font=\scriptsize] {$\cart$};
				\end{tikzpicture} \mspace{9mu} = \mspace{9mu} \begin{tikzpicture}[textbaseline]
  			\matrix(m)[math35]{M & N \\ M & \ps M \\ & N \\};
  			\path[map]	(m-1-1) edge[barred] node[above] {$f_*$} (m-1-2)
  									(m-1-2)	edge[ps] node[right, inner sep=1pt] {$\cur f_*$} (m-2-2)
  									(m-2-1) edge[barred] node[below, inner sep=1.5pt] {$y_*$} (m-2-2)
  													edge[transform canvas={xshift=-2pt}] node[below left, inner sep=1pt] {$f$} (m-3-2)
  									(m-2-2)	edge node[right, inner sep=1pt] {$z$} (m-3-2);
  			\path				(m-1-1) edge[eq] (m-2-1)
  									(m-1-2.east) edge[eq, bend left=65] (m-3-2.east)
  									(m-2-1) edge[cell, transform canvas={xshift=2.5em,yshift=0.25em}] node[right, inner sep=2pt] {$\eta$} (m-3-1)
  									(m-1-2) edge[cell, transform canvas={xshift=1.25em,yshift=-1.625em}] node[right] {$\eps$} (m-2-2);
				\path				($(m-1-1)!0.5!(m-2-2)$) node[font=\scriptsize] {$\cart'$};
  		\end{tikzpicture}
		\end{displaymath}
		As the unit cell $\cell\iota{\id_{\ps M}}{\cur f_* \of z}$ we take the unique factorisation as in the identity on the left above; recall that the cartesian cell defining $\yon_*$ is weakly left Kan because $\yon$ is weakly dense (\defref{density definition}). The counit $\cell\eps{z \of \cur f_*}{\id_N}$ we take to the be unique factorisation in the identity on the right above, where $\cart'$ is the cartesian cell defining $f_*$ as the restriction $\yon_*(\id, \cur f_*)$, as described previously, so that $\eta \of \cart'$ is weakly left Kan because $\eta$ restricts along $\cur f_*$ by assumption.
		
		To prove the triangle identity $(\iota \of \cur f_*) \hc (\cur f_* \of \eps) = \id_{\cur f_*}$ consider the equality of composite cells that are drawn schematically below where, for ease of drawing, vertical and nullary cells are drawn as rectangles. Here the cartesian cells defining $\cur f_*$, $\yon_*$ and $f_*$ are denoted by `c', the cocartesian cells corresponding to the latter two cartesian cells by `cc', and the cartesian cell $\cell{\cart'}{f_*}{\yon_*}$ by `$\textup c'$'. The identities follow from the definitions of $\cart'$, $\iota$ and $\eps$, as well as the horizontal companion identity for $f_*$ (\lemref{companion identities lemma}). Notice that the left and right"/hand side below are the two sides of the triangle identity composed with the cartesian cell that defines $\cur f_*$. Since the latter is weakly left Kan, by the weak density of $\yon$, and because factorisations through weakly left Kan cells are unique, the triangle identity itself follows.
		\begin{displaymath}
			\begin{tikzpicture}[scheme]
  			\draw	(2,2) -- (0,2) -- (0,3) -- (1,3) -- (1,0) -- (2,0) -- (2,3) -- (3,3) -- (3,1) -- (2,1);
  			\draw	(0.5,2.5) node {c}
  						(1.5,1) node {$\iota$}
  						(2.5,2) node {$\eps$};
  		\end{tikzpicture} \quad = \quad \begin{tikzpicture}[scheme]
  			\draw	(1,0) -- (1,3) -- (0,3) -- (0,0) -- (2,0) -- (2,3) -- (3,3) -- (3,1) -- (2,1) (0,2) -- (2,2);
  			\draw	(0.5,2.5) node {$\textup c'$}
  						(0.5,1) node {c}
  						(1.5,1) node {$\iota$}
  						(2.5,2) node {$\eps$};
  		\end{tikzpicture} \quad = \quad \begin{tikzpicture}[scheme]
  			\draw	(2,2) -- (0,2) -- (0,0) -- (1,0) -- (1,3) -- (3,3) -- (3,1) -- (0,1) (2,1) -- (2,3);
  			\draw	(0.5,1.5) node {cc}
  						(0.5,0.5) node {c}
  						(1.5,2.5) node {$\textup c'$}
  						(1.5,1.5) node {$\eta$}
  						(2.5,2) node {$\eps$};
  		\end{tikzpicture} \quad = \quad \begin{tikzpicture}[scheme, yshift=0.8em]
  			\draw	(1,2) -- (1,0) -- (0,0) -- (0,2) -- (2,2) -- (2,1) -- (0,1);
  			\draw	(0.5,1.5) node {cc}
  						(0.5,0.5) node {c}
  						(1.5,1.5) node {c};
  		\end{tikzpicture} \quad = \quad \begin{tikzpicture}[scheme, yshift=1.6em]
  			\draw	(1,0) -- (1,1) -- (0,1) -- (0,0) -- (1,0);
  			\draw	(0.5,0.5) node {c};
  		\end{tikzpicture}
		\end{displaymath}
		The schematically drawn equality below likewise shows that the sides of the second triangle identity $(z \of \iota) \hc (\eps \of z) = \id_z$ coincide after composition on the left with $\eta$. Since $\eta$ is weakly left Kan the triangle identity itself follows.
		\begin{displaymath}
			\begin{tikzpicture}[scheme]
  			\draw	(2,1) -- (0,1) -- (0,0) -- (1,0) -- (1,3) -- (2,3) -- (2,0) -- (3,0) -- (3,2) -- (2,2);
  			\draw	(0.5,0.5) node {$\eta$}
  						(1.5,2) node {$\iota$}
  						(2.5,1) node {$\eps$};
  		\end{tikzpicture} \mspace{13mu} = \mspace{13mu} \begin{tikzpicture}[scheme]
  			\draw	(1,3) -- (1,0) -- (0,0) -- (0,3) -- (3,3) -- (3,0) -- (4,0) -- (4,2) -- (3,2) (0,1) -- (3,1) (2,3) -- (2,1);
  			\draw	(0.5,2) node {cc}
  						(0.5,0.5) node {$\eta$}
  						(1.5,2) node {c}
  						(2.5,2) node {$\iota$}
  						(3.5,1) node {$\eps$};
  		\end{tikzpicture} \mspace{13mu} = \mspace{13mu} \begin{tikzpicture}[scheme]
  			\draw	(3,2) -- (0,2) -- (0,0) -- (1,0) -- (1,3) -- (3,3) -- (3,0) -- (2,0) -- (2,3) (0,1) -- (2,1);
  			\draw	(0.5,1.5) node {cc}
  						(0.5,0.5) node {$\eta$}
  						(1.5,2.5) node {cc}
  						(1.5,1.5) node {c}
  						(2.5,2.5) node {$\eta$}
  						(2.5,1) node {$\eps$};
  		\end{tikzpicture} \mspace{13mu} = \mspace{13mu} \begin{tikzpicture}[scheme]
  			\draw	(1,3) -- (1,0) -- (0,0) -- (0,3) -- (1,3) -- (2,3) -- (2,0) -- (1,0) (0,1) -- (1,1) (0,2) -- (2,2);
  			\draw	(0.5,2.5) node {cc}
  						(0.5,1.5) node {$\textup c'$}
  						(0.5,0.5) node {$\eta$}
  						(1.5,2.5) node {$\eta$}
  						(1.5,1) node {$\eps$};
  		\end{tikzpicture} \mspace{13mu} = \mspace{13mu} \begin{tikzpicture}[scheme, yshift=0.8em]
  			\draw	(1,2) -- (1,0) -- (0,0) -- (0,2) -- (2,2) -- (2,1) -- (0,1);
  			\draw	(0.5,1.5) node {cc}
  						(0.5,0.5) node {c}
  						(1.5,1.5) node {$\eta$};
  		\end{tikzpicture} \mspace{13mu} = \mspace{13mu} \begin{tikzpicture}[scheme, yshift=1.6em]
  			\draw	(1,0) -- (1,1) -- (0,1) -- (0,0) -- (1,0);
  			\draw	(0.5,0.5) node {$\eta$};
  		\end{tikzpicture}
		\end{displaymath}
		The third identity above follows from the claim that $\cocart_{\yon_*} \hc \cart_{\cur f_*} = \cart'$, where the subscripts in the left"/hand side denote the companions defined by the cocartesian and cartesian cells. Indeed notice that the two sides of the latter identity coincide after composing them with the cartesian cell corresponding to $\cocart_{\yon_*}$, by the vertical companion identity for $\yon_*$ and the definition of $\cart'$. The claim then follows from the uniqueness of factorisations through cartesian cells. The other identities above follow from the horizontal companion identity for $\yon_*$, the definition of $\iota$, that of $\eps$, and the vertical companion identity for $f_*$. This completes the proof.
	\end{proof}
	
	\begin{example} \label{weak totality in an augmented virtual equipment with yoneda embeddings}
		In an augmented virtual equipment $\K$ (\defref{augmented virtual equipment}) consider chosen weak Yoneda embeddings $\map{\yon_A}A{\ps A}$ that admit nullary restrictions (\defref{yoneda embedding}), one for each unital object $A$. By \exref{yoneda structure from weak yoneda embeddings} they induce a Yoneda structure on $V(\K)$ whose admissible morphisms are those morphisms admitting companions. Fixing a unital object $M$ notice that $\cur{(\id_M)_*} \iso \cur{I_M} \iso \yon_M$ by \propref{equivalence from yoneda embedding} and \lemref{full and faithful yoneda embedding}. Together with (a) $\Leftrightarrow$ (d) above it follows that $M$ is total in the sense of Section~6 of \cite{Street-Walters78}, that is $\yon_M$ admits a left adjoint, if and only if $M$ is weakly total in our sense of \defref{totality}.
		
		Similarly an admissible morphism $\map fMN$, with $M$ unital, is total in the sense of \cite{Street-Walters78} if and only if it is weakly total in our sense such that the left adjoint $\map z{\ps M}N$ of (d) above admits a companion.
	\end{example}
	Applying the previous theorem to an identity morphism gives the following corollary. Specialising (a) $\Leftrightarrow$ (b) below to a $\V$"/category $M$ in the augmented virtual equipment $\enProf{(\V, \V')}$ (\augexref{2.7}) recovers Theorem~5.2 of \cite{Kelly86}, while (a)~$\Leftrightarrow$~(c) is similar to its Theorem~5.3.
	\begin{corollary} \label{total objects}
		Consider the following conditions for a (weak) Yoneda embedding \mbox{$\map\yon M{\ps M}$} (\defref{yoneda embedding}). The implication \textup{(b)} $\Rightarrow$ \textup{(a)} holds. If the companion $\yon_*$ and the horizontal unit $I_M$ exist then all conditions are equivalent. In either case the counit $\cell\eps{z \of \yon}{\id_M}$ of the adjunction of condition \textup{(b)} is invertible.
		\begin{enumerate}[label=\textup{(\alph*)}]
			\item $M$ is (weakly) total;
			\item $\yon$ admits a left adjoint $\map z{\ps M}M$;
			\item if a pointwise (weakly) left $\id_M$"/exact (\defref{left exact}) of the form below exists then so does the pointwise (weak) left Kan extension of $d$ along $J$ (\defref{pointwise left Kan extension}).
		\end{enumerate}
		\begin{displaymath}
					\begin{tikzpicture}
						\matrix(m)[math35]{A & B \\ M & B \\};
						\path[map]	(m-1-1) edge[barred] node[above] {$J$} (m-1-2)
																edge node[left] {$d$} (m-2-1)
												(m-2-1) edge[barred] node[below] {$K$} (m-2-2);
						\path				(m-1-2) edge[eq] (m-2-2);
						\path[transform canvas={xshift=1.75em}]	(m-1-1) edge[cell] node[right] {$\phi$} (m-2-1);
					\end{tikzpicture}
				\end{displaymath}
	\end{corollary}
	\begin{proof}
		The proof of (b) $\Rightarrow$ (a) is similar to that of the implication (d)~$\Rightarrow$~(a) of the previous theorem, as follows. Write $\map z{\ps M}M$ for the left adjoint and \mbox{$\cell\eps{z \of \yon}{\id_M}$} for the counit of the adjunction. That $\yon$ is full and faithful implies that $\eps$ is invertible, by \auglemref{4.14} and (b) $\Rightarrow$ (c) of Proposition~10 of \cite{Street-Walters78}. Using the fact that $z$ is (weakly) cocontinuous (\propref{left adjoints are cocontinuous}) we can now show that $M$ is (weakly) total: for any $\hmap JMB$ the pointwise (weak) left Kan extension of $\id_M$ along $J$ is $z \of \cur J$, defined as such by the composite of the inverse of $\eps$ with the cartesian cell that defines $\cur J$ (\defref{yoneda embedding}), the latter of which is pointwise (weakly) left Kan by (weak) density of $\yon$ (\defref{density definition}).
		
		If the companion $\yon_*$ and the horizontal unit $I_M$ exist then the equivalence of the conditions above follows from applying the previous theorem to $f = \id_M$: remember that $(\id_M)_* \iso I_M$ is the horizontal unit of $M$ so that $\cur{(\id_M)_*} \iso \cur{I_M} \iso \yon$ by \propref{equivalence from yoneda embedding} and \lemref{full and faithful yoneda embedding}.
	\end{proof}
	
	The following is analogous to the first assertion of Proposition~25 of \cite{Street-Walters78} for Yoneda structures.
	\begin{corollary}
		Let $\map fMN$ be a morphism and let $\map\yon N{\ps N}$ be a (weak) Yoneda embedding such that the companion $\yon_*$ exists and $N$ is unital. If $N$ and $\yon \of f$ are (weakly) total then so is $f$.
	\end{corollary}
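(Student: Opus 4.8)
The plan is to follow the pattern of the implication (d) $\Rightarrow$ (a) in \thmref{total morphisms} (equivalently (b) $\Rightarrow$ (a) in \cororef{total objects}): use a left adjoint to $\yon$, which is cocontinuous and ``inverts'' $\yon$ up to isomorphism, to transport pointwise left Kan extensions into $\ps N$ back down to $N$. First I would apply \cororef{total objects} to $N$: since $N$ is unital, the companion $\yon_*$ exists and $N$ is (weakly) total, the (weak) Yoneda embedding $\map\yon N{\ps N}$ admits a left adjoint $\map z{\ps N}N$, and the counit $\cell\eps{z \of \yon}{\id_N}$ of this adjunction is invertible.

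Next, to prove that $f$ is (weakly) total, I would fix an arbitrary horizontal morphism $\hmap JMB$ and produce the pointwise (weak) left Kan extension of $f$ along $J$. Since $\yon \of f$ is (weakly) total (\defref{totality}) the pointwise (weak) left Kan extension of $\yon \of f$ along $J$ exists; write $\map lB{\ps N}$ for it and let $\cell\eta J{\ps N}$ be its defining nullary cell (\defref{pointwise left Kan extension}). By \propref{left adjoints are cocontinuous} the left adjoint $z$ is (weakly) cocontinuous, and, as noted after \defref{absolutely left Kan}, (weakly) cocontinuous morphisms preserve pointwise (weakly) left Kan cells as well; hence $z \of \eta$ defines $z \of l$ as the pointwise (weak) left Kan extension of $z \of (\yon \of f) = (z \of \yon) \of f$ along $J$. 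Composing $z \of \eta$ on the left with the inverse of the invertible vertical cell $\cell{\eps \of f}{(z \of \yon) \of f}f$ then yields a nullary cell with vertical source $f$ that, since horizontal composition with invertible vertical cells preserves the universal property of (pointwise) (weak) left Kan cells, again defines $z \of l$ as the pointwise (weak) left Kan extension of $f$ along $J$. As $J$ was arbitrary, $f$ is (weakly) total.

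The argument is essentially formal, so I do not expect a genuine obstacle; the only point needing (routine) care is the last step — that precomposing a pointwise (weak) left Kan cell with an invertible vertical cell on its source again gives a pointwise (weak) left Kan cell — which is the same bookkeeping device used in the proofs of (d) $\Rightarrow$ (a) of \thmref{total morphisms} and (b) $\Rightarrow$ (a) of \cororef{total objects}, and follows immediately from the uniqueness up to invertible vertical cells of (pointwise) (weak) left Kan extensions.
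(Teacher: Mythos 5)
Your proposal is correct and matches the paper's proof essentially verbatim: the paper likewise invokes the preceding corollary to obtain the left adjoint $z$ with invertible counit $\eps$, uses cocontinuity of $z$ (\propref{left adjoints are cocontinuous}) to push the pointwise left Kan cell $\eta$ for $\yon \of f$ down to $N$, and then composes with $\inv\eps \of f$ to obtain the defining cell for the extension of $f$. No gaps.
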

	\cororef{left adjoint to ps f} below gives conditions ensuring that $\yon \of f$ is (weakly) total.
	\begin{proof}
		By the previous corollary $\yon$ has a left adjoint $\map z{\ps N}N$ with invertible counit $\cell\eps{z \of \yon}{\id_N}$. To prove that $f$ is (weakly) total we have to show that the pointwise (weak) left Kan extension of $f$ along any $\hmap JMB$ exists. If $\yon \of f$ is (weakly) total then the pointwise (weakly) left Kan extension of $\yon \of f$ along $J$ exists; let $\cell\eta J{\ps N}$ denote its defining nullary cell. Using that the left adjoint $z$ is cocontinuous (\propref{left adjoints are cocontinuous}), it follows that the composite $(\inv\eps \of f) \hc (z \of \eta)$ defines the pointwise (weakly) left Kan extension of $f$ along $J$, as required.
	\end{proof}
	
	\subsection{Formal restriction of presheaves}
	Given a morphism $\map fAC$, the following definition introduces the induced morphism $\map{\ps f}{\ps C}{\ps A}$ that formalises the restriction of presheaves along $f$; this is analogous to the definition of `$\map{\mathcal Pf}{\mathcal PC}{\mathcal PA}$' of Section~2 of \cite{Street-Walters78} and that of `$\map{\textup{res}_f}{\ps C}{\ps A}$' of Section~3 of \cite{Weber07}, for (good) Yoneda structures.
	\begin{definition} \label{restriction}
		Consider weak Yoneda morphisms $\map{\yon_A}A\ps A$ and $\map{\yon_C}C\ps C$ and let $\map fAC$ be a morphism such that the companion $\hmap{(\yon_C \of f)_*}A\ps C$ exists. We set $\ps f \dfn \cur{(\yon_C \of f)_*}$, as defined by the cartesian cell below (\defref{yoneda embedding}).
		\begin{displaymath}
			\begin{tikzpicture}
					\matrix(m)[math35, column sep={2em,between origins}]{A & & \ps C \\ & \ps A & \\};
					\path[map]	(m-1-1) edge[barred] node[above] {$(\yon_C \of f)_*$} (m-1-3)
															edge[transform canvas={xshift=-2pt}] node[left] {$\yon_A$} (m-2-2)
											(m-1-3) edge[transform canvas={xshift=2pt}] node[right] {$\ps f$} (m-2-2);
					\draw				([yshift=0.333em]$(m-1-2)!0.5!(m-2-2)$) node[font=\scriptsize] {$\cart$};
				\end{tikzpicture} 
		\end{displaymath}
	\end{definition}
	
	Recall that $(\yon_C \of f)_* \iso \yon_{C*}(f, \id)$ by \auglemref{5.11}, so that in an augmented virtual double category with restrictions on the left (\defref{augmented virtual equipment}) all companions $(\yon_C \of f)_*$ exist, for any $\map fAC$, as soon as the companion $\yon_{C*}$ exists. Recall that the cell above defines $\ps f$ uniquely up to isomorphism (\defref{yoneda embedding}).
	\begin{example} \label{restriction of enriched presheaves}
		As in \exref{enriched yoneda embedding summary} consider $\V$"/categories $A$ and $C$ as well as their $\V'$"/enriched presheaf categories $\brks{\op A, \V}'$ and $\brks{\op C, \V}'$. For any $\V$-functor \mbox{$\map fAC$} the $\V'$-functor $\map{\ps f}{\brks{\op C, \V}'}{\brks{\op A, \V}'}$ exists in $\enProf{(\V, \V')}$ and can be taken to be given by precomposition with $f$, that is $\ps f(p) = p \of \op f$ for every $\V$-presheaf \mbox{$p \in \brks{\op C, \V}'$}.
	\end{example}
	
	Proposition~12 of \cite{Street-Walters78}, for Yoneda structures, is analogous to the second assertion below.
	\begin{proposition} \label{restriction and curry}
		Let $\map{\yon_A}A{\ps A}$ be a weak Yoneda morphism and $\map{\yon_C}C{\ps C}$ a (weak) Yoneda morphism. Consider morphisms $\map fAC$ and $\hmap KCD$ as well as $\map{\cur K}D{\ps C}$, as defined by the cartesian cell below (\defref{yoneda embedding}). If the companions $\yon_{C*}$, $f_*$ and $(\yon_C \of f)_*$ exist, as well as the restriction $\ps C(\yon_C \of f, \cur K)$, then $\ps f \of \cur K \iso \cur{K(f, \id)}$.
		
		If moreover $\yon_A$ is a Yoneda morphism and $C$ is unital (\defref{cartesian cells}) then $\map{\ps f}{\ps C}{\ps A}$ preserves the (weak) left Kan cell below (\defsref{density definition}{absolutely left Kan}). If the restrictions $\ps C(\yon_C, g)$ and $\ps C(\yon_C \of f, g)$ exist, for all $\map gX{\ps C}$, then $\ps f$ preserves the pointwise (weak) left Kan cell below.
		\begin{displaymath}
			\begin{tikzpicture}
				\matrix(m)[math35, column sep={1.75em,between origins}]{C & & D \\ & \ps C & \\};
				\path[map]	(m-1-1) edge[barred] node[above] {$K$} (m-1-3)
														edge[transform canvas={xshift=-2pt}] node[left] {$\yon_C$} (m-2-2)
										(m-1-3) edge[transform canvas={xshift=2pt}] node[right] {$\cur K$} (m-2-2);
				\draw				([yshift=0.333em]$(m-1-2)!0.5!(m-2-2)$) node[font=\scriptsize] {$\cart$};
			\end{tikzpicture}
	  \end{displaymath}
	\end{proposition}
	Applying the first assertion above to the companion $K = h_*$ of a morphism $\map hCE$ we obtain $\ps f \of \cur{h_*} \iso \cur{(h \of f)_*}$ (use \auglemref{5.11}). Analogous isomorphisms are required to exist in a Yoneda structure; see Axiom~3 of \cite{Street-Walters78}. If $C$ is unital, so that $\cur I_C \iso \yon_C$ by \lemref{full and faithful yoneda embedding}, then taking $K = I_C$ gives  $\ps f \of \yon_C \iso \cur{f_*}$ (use \augcororef{4.16}).
	\begin{proof}
		Write $\cell{\cart'}K{\yon_{C*}}$ for the factorisation of the cartesian cell defining $\cur K$ above through the cartesian cell defining $\yon_{C*}$, as in the left"/hand side below; $\cart'$ is cartesian by the pasting lemma (\lemref{pasting lemma for cartesian cells}). Next notice that $(\yon_C \of f)_* \iso \yon_{C*}(f, \id)$ (\auglemref{5.11}) so that, by (\auglemsref{8.1}{9.7}), there exists a pointwise horizontal cocartesian cell $\cell\cocart{(f_*, \yon_{C*})}{(\yon_C \of f)_*}$. Since the restrictions $\yon_{C*}(\id, \cur K) \iso K$ and \mbox{$(\yon_C \of f)_*(\id, \cur K) \iso \ps C(\yon_C \of f, \cur K)$} (\lemref{pasting lemma for cartesian cells}) exist, it follows from \augdefref{9.1} that $\cocart$ is a right cocartesian cell that restricts along $\cur K$ (\defref{pointwise right cocartesian path}). Hence by \cororef{left Kan extensions of yoneda embeddings along paths} there exists a weakly left Kan cell $\eta$ as in the composites below that restricts along $\cur K$, so that the composite on the left"/hand side is weakly left Kan too. Similarly, by factorising the latter composite through the cocartesian cell $(f_*, K) \Rar K(f, \id)$ (\auglemref{8.1}) we obtain a nullary cell $K(f, \id) \Rar \ps A$ that defines $\ps f \of \cur K$ as the weak left Kan extension of $\yon_A$ along $K(f, \id)$, by the vertical pasting lemma (\lemref{vertical pasting lemma}). Since $\cur{K(f, \id)}$ too is the weak left Kan extension of $\yon_A$ along $K(f, \id)$ (\defsref{density definition}{yoneda embedding}), the assertion $\ps f \of \cur K \iso \cur{K(f, \id)}$ follows from the uniqueness of Kan extensions.
		\begin{displaymath}
			\begin{tikzpicture}[textbaseline]
				\matrix(m)[math35, column sep={2em,between origins}]
					{ A & & C & & D \\
						A & & C & & \ps C \\
						& & \ps A & & \\ };
				\path[map]	(m-1-1) edge[barred] node[above] {$f_*$} (m-1-3)
										(m-1-3) edge[barred] node[above] {$K$} (m-1-5)
										(m-1-5) edge[ps] node[right] {$\cur K$} (m-2-5)
										(m-2-1) edge[barred] node[above] {$f_*$} (m-2-3)
														edge[transform canvas={xshift=-2pt}] node[below left] {$\yon_A$} (m-3-3)
										(m-2-3) edge[barred] node[below, inner sep=2.5pt] {$\yon_{C*}$} (m-2-5)
										(m-2-5)	edge[transform canvas={xshift=2pt}] node[below right] {$\ps f$} (m-3-3);
				\path				(m-1-1) edge[eq] (m-2-1)
										(m-1-3) edge[eq] (m-2-3)
										(m-2-3) edge[cell] node[right] {$\eta$} (m-3-3);
				\draw[font=\scriptsize]	($(m-1-3)!0.5!(m-2-5)$) node {$\cart'$};
			\end{tikzpicture} \quad = \quad \begin{tikzpicture}[textbaseline]
				\matrix(m)[math35, column sep={2em,between origins}]
					{ & A & & C & & D \\
						A & & C & & \ps C & \\
						& & \ps A & & & \\ };
								\path[map]	(m-1-2) edge[barred] node[above] {$f_*$} (m-1-4)
										(m-1-4) edge[barred] node[above] {$K$} (m-1-6)
														edge node[left, inner sep=1pt, yshift=1pt] {$\yon_C$} (m-2-5)
										(m-1-6) edge[transform canvas={xshift=2pt}] node[right] {$\cur K$} (m-2-5)
										(m-2-1) edge[barred] node[above] {$f_*$} (m-2-3)
														edge[transform canvas={xshift=-2pt}] node[below left] {$\yon_A$} (m-3-3)
										(m-2-3) edge[barred] node[below, inner sep=2.5pt] {$\yon_{C*}$} (m-2-5)
										(m-2-5)	edge[transform canvas={xshift=2pt}] node[below right] {$\ps f$} (m-3-3);
				\path				(m-1-2) edge[eq, transform canvas={xshift=-1pt}] (m-2-1)
										(m-1-4) edge[eq, transform canvas={xshift=-1pt}] (m-2-3)
										(m-2-3) edge[cell] node[right] {$\eta$} (m-3-3);
				\draw[font=\scriptsize]	($(m-1-4)!0.7!(m-2-4)$) node {$\cocart$}
																([yshift=0.333em]$(m-1-5)!0.5!(m-2-5)$) node {$\cart$};
			\end{tikzpicture}
		\end{displaymath}
		
		Next assume that $\yon_A$ is a Yoneda morphism so that, by the same argument as above, the cell $\eta$  and the left"/hand side above are left Kan. Consider the right"/hand side above, obtained by substituting $\cart' = \cocart \hc \cart$, which follows immediately from the definition of $\cart'$ and the vertical companion identity for $\yon_{C*}$ (\lemref{companion identities lemma}). To prove that $\ps f \of \cart$ is again left Kan it suffices by the horizontal pasting lemma (\lemref{horizontal pasting lemma}) to show that the composite $f_* \Rar \ps A$ of the first column of the right"/hand side above is left Kan. Assuming that $C$ is unital, so that $\yon_C$ is full and faithful and the restriction $\ps C(\yon_C, \yon_C)$ exists by \lemref{full and faithful yoneda embedding}, this follows from \propref{pointwise left Kan extension along full and faithful map}. Finally if for all $\map gX{\ps C}$ the restrictions $\ps C(\yon_C, g)$ and $\ps C(\yon_C \of f, g)$ exist then $\cell\cocart{(f_*, \yon_{C*})}{(\yon_C \of f)_*}$ is pointwise right cocartesian by \remref{right pointwise cocartesian and pointwise right cocartesian comparison}, so that $\eta$ and the left"/hand side above are pointwise left Kan by \cororef{left Kan extensions of yoneda embeddings along paths} and \lemref{restrictions of pointwise left Kan extensions}. Applying the horizontal pasting lemma we conclude that $\ps f \of \cart$ is pointwise left Kan as well. This completes the proof.
	\end{proof}
	
	The previous proposition can be used to describe the uniqueness of (weak) Yoneda embeddings, as follows. In an augmented virtual double category $\K$ consider weak Yoneda embeddings $\map\yon AP$ and $\map{\yon'}A{P'}$ that admit nullary restrictions (\defref{yoneda embedding}), for the same object $A$. Given any horizontal morphism $\hmap JAB$ we denote by $\map{\cur J}BP$ and $\map{J^{\lambda'}}B{P'}$ the morphisms associated to $J$ by the Yoneda axiom (\defref{yoneda embedding}) for $\yon$ and $\yon'$ respectively. Consider the morphisms $\map{(\yon'_*)^\lambda}{P'}P$ and \mbox{$\map{(\yon_*)^{\lambda'}}P{P'}$}.
	\begin{corollary} \label{uniqueness of yoneda embeddings}
		The morphisms $(\yon'_*)^\lambda$ and $(\yon_*)^{\lambda'}$ defined above form an equivalence $P \simeq P'$ in $V(\K)$. Moreover the pointwise weakly left Kan cartesian cells that define the morphisms $\map{J^{\lambda'}}B{P'}$, as defined above, are preserved by postcomposition with $\map{(\yon'_*)^\lambda}{P'}P$ (\defref{absolutely left Kan}). Finally $\yon$ is dense (\defref{density definition}) if and only if $\yon'$ is so.
	\end{corollary}	
	\begin{proof}
		Using weak density of $\yon$ and $\yon'$ (\defref{density definition}) the cartesian cells defining $(\yon'_*)^\lambda$ and $(\yon_*)^{\lambda'}$ are pointwise weakly left Kan so that, applying \propref{pointwise left Kan extension along full and faithful map}, we obtain isomorphisms \mbox{$(\yon'_*)^\lambda \of \yon' \iso \yon$} and \mbox{$(\yon_*)^{\lambda'} \of \yon \iso \yon'$}. Next notice that $\map{\ps{\id_A} = (\yon'_*)^\lambda}{P'}P$ by \defref{restriction} so that $(\yon'_*)^\lambda \of (\yon_*)^{\lambda'} = \ps{\id_A} \of (\yon_*)^{\lambda'} \iso (\yon_*)^\lambda \iso \id_P$ where the first isomorphism follows from the proposition above; for the second one see \defref{yoneda embedding}. By symmetry $(\yon_*)^{\lambda'} \of (\yon'_*)^{\lambda} \iso \id_{P'}$ too, and we conclude that $(\yon'_*)^\lambda$ and $(\yon_*)^{\lambda'}$ form an equivalence $P \simeq P'$ in $V(\K)$.
		
		To prove the second assertion, for any $\hmap JAB$ we denote by $\cell{\eta_J}J{P'}$ the pointwise weakly left Kan cartesian cell that defines $\map{J^{\lambda'}}B{P'}$ (\defsref{density definition}{yoneda embedding}). Using that both $(\yon'_*)^\lambda$ and $(\yon_*)^{\lambda'}$ are left adjoints (see e.g.\ Proposition~1.5.7 of \cite{Leinster04}) and \propref{left adjoints are cocontinuous} it follows that $(\yon'_*)^\lambda \of \eta_J$ is pointwise weakly left Kan. Composing $(\yon'_*)^\lambda \of \eta_J$ with $\yon \iso (\yon'_*)^\lambda \of \yon'$ and using \lemref{weak left Kan extensions of yoneda embeddings} we find that $(\yon'_*)^\lambda \of \eta_J$ is cartesian too, proving the second assertion. Finally notice that if $\yon'$ is dense (\defref{density definition}) then the latter composite defines $(\yon'_*)^\lambda \of J^{\lambda'}$ as the pointwise left Kan extension of $\yon$ along $J$, by combining \propref{left adjoints are cocontinuous}, \lemref{vertical cells defining left Kan extensions} the horizontal pasting lemma (\lemref{horizontal pasting lemma}). Using the uniqueness of left Kan extensions we conclude that $\yon$ is dense too.
	\end{proof}
	
	\subsection{Adjoints of $\ps f$}
	The remaining two results of this section ensure the existence of left and right adjoints to the induced morphism $\map{\ps f}{\ps C}{\ps A}$ of \defref{restriction}. In both cases the existence of the adjoint is a consequence of a related morphism being (weakly) total; in particular both results are corollaries of \thmref{total morphisms}. The first of these, below, ensures the existence of the right adjoint. It is analogous to Proposition~13 of \cite{Street-Walters78} for Yoneda structures.
	\begin{corollary} \label{right adjoint to ps f}
		Let $\map{\yon_A}A{\ps A}$ be a Yoneda morphism, $\map{\yon_C}C{\ps C}$ a (weak) Yoneda morphism and $\map fAC$ a morphism. If the companions $\yon_{C*}$, $f_*$, $(\yon_C \of f)_*$ and $(\cur f_*)_*$ exist as well as the restriction $\ps C\bigpars{\yon_C \of f, \cur{(\cur f_*)}_*}$ then $\map{\cur f_*}C{\ps A}$ is (weakly) total and $\ps f \ladj \map{\cur{(\cur f_*)}_*}{\ps A}{\ps C}$.
	\end{corollary}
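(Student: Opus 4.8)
The plan is to deduce this from \thmref{total morphisms}, applied to the vertical morphism $\map{\cur f_*}C{\ps A}$ with $\yon_C$ playing the role of the (weak) Yoneda morphism for the source $C$ and with $\yon_{C*}$ and $(\cur f_*)_*$ the two companions that theorem requires (both assumed to exist here). Under those hypotheses \thmref{total morphisms} makes its conditions (a), (c) and (d) equivalent and identifies the left adjoint of (d) with the weak left Kan extension of (c), so the whole statement reduces to verifying condition (c): that the weak left Kan extension of $\cur f_*$ along $\yon_{C*}$ exists and restricts, in the sense of \defref{pointwise left Kan extension}, along $\map g{\ps A}{\ps C}$, where $g$ denotes the curry $\cur{(\cur f_*)}_*$ of the companion $(\cur f_*)_*$. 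Once (c) holds, part (a) of the theorem gives that $\cur f_*$ is (weakly) total, while part (d) together with the theorem's final clause gives that $g$ admits a left adjoint which coincides with the weak left Kan extension of (c); so it only remains to see that this weak left Kan extension is $\ps f$.

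For the identification I would essentially replay the relevant part of the proof of \propref{restriction and curry}. Recall $\ps f = \cur{(\yon_C \of f)_*}$ is defined by a cartesian cell $\chi$ with vertical source $\yon_A$ and target $\ps f$, which is pointwise left Kan by density of $\yon_A$ (\lemref{density axioms}). Using the isomorphism $(\yon_C \of f)_* \iso \yon_{C*}(f, \id)$ (\auglemref{5.11}) and the cocartesian cell $\cell\cocart{(f_*, \yon_{C*})}{(\yon_C \of f)_*}$ of \auglemref{8.1}, the vertical pasting lemma (\lemref{vertical pasting lemma}) shows $\chi \of \cocart$ is left Kan, exhibiting $\ps f$ as the left Kan extension of $\yon_A$ along $(f_*, \yon_{C*})$; factoring $\chi \of \cocart$ through the cartesian cell that defines $\cur f_*$ (itself left Kan by density of $\yon_A$) and applying the horizontal pasting lemma (\lemref{horizontal pasting lemma}) yields a left Kan cell $\cell\zeta{\yon_{C*}}{\ps A}$ with vertical source $\cur f_*$ and target $\ps f$, so $\ps f$ is the weak left Kan extension of $\cur f_*$ along $\yon_{C*}$. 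It then remains to check that $\zeta$ restricts along $g$. The restriction $\yon_{C*}(\id, g)$ exists, being isomorphic to $(\cur f_*)_*$ by the pasting lemma argument already used in the proof of \thmref{total morphisms}; and I would obtain the rest by tracing $g$ back through the two factorisations above: $\chi$ restricts along $g$ because it is pointwise left Kan and $(\yon_C \of f)_*(\id, g) \iso \ps C(\yon_C \of f, g)$ exists by hypothesis; $\cocart$ restricts along $g$ by \augdefref{9.1}, again using $\ps C(\yon_C \of f, g)$ together with $\yon_{C*}(\id, g)$, as in \propref{restriction and curry}; hence $\chi \of \cocart$ restricts along $g$ by the restriction clause of the vertical pasting lemma; and finally $\zeta$ restricts along $g$ by the restriction clause of the horizontal pasting lemma, since $\chi \of \cocart$ is the horizontal composite of the left Kan cell defining $\cur f_*$ with $\zeta$. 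This establishes condition (c) of \thmref{total morphisms}, and with it the corollary, the left adjoint being $z = \ps f$.

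The argument is almost entirely formal, so the one thing needing care is bookkeeping: tracking, at each application of the two pasting lemmas, exactly which restriction's existence is invoked, and confirming it is among the four companions and one restriction hypothesised in the statement. In particular the hypothesis that $\ps C(\yon_C \of f, \cur{(\cur f_*)}_*)$ exists is used precisely to make $\chi$ and $\cocart$ — and therefore $\zeta$ — restrict along $\cur{(\cur f_*)}_*$, which is the only piece of condition (c) of \thmref{total morphisms} with non-formal content in this setting; everything else follows from that theorem and from \defref{restriction}.
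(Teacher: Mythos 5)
Your proposal is correct and follows essentially the same route as the paper: reduce to condition (c) of \thmref{total morphisms} applied to $\cur f_*$, then construct the required weak left Kan extension of $\cur f_*$ along $\yon_{C*}$ by replaying the proof of \propref{restriction and curry} with $K = (\cur f_*)_*$ (cocartesian cell $(f_*, \yon_{C*}) \Rar (\yon_C \of f)_*$, vertical pasting lemma, factorisation through the cartesian cell defining $\cur f_*$, horizontal pasting lemma), identifying the extension as $\ps f$ and tracking that it restricts along $\cur{(\cur f_*)}_*$ via the hypothesised restriction $\ps C\bigpars{\yon_C \of f, \cur{(\cur f_*)}_*}$. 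The only difference is that you spell out explicitly the restriction bookkeeping that the paper delegates to ``apply the proof of \propref{restriction and curry}'', and your accounting matches the paper's.
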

	\begin{proof}
		Using (c) $\Rightarrow$ (a) of \thmref{total morphisms} applied to $\cur f_*$ it suffices to construct the weak left Kan extension of $\cur f_*$ along $\yon_{C*}$ so that it restricts along $\map{\cur{(\cur f_*)}_*}{\ps A}{\ps C}$. To do so apply the proof of the first assertion of \propref{restriction and curry} to $\map fAC$ and $\hmap{K \dfn (\cur f_*)_*}C{\ps A}$, thus obtaining a weakly left Kan cell $\eta$ as on the left"/hand side below that restricts along $\cur K = \cur{(\cur f_*)}_*$.
		\begin{displaymath}
			\begin{tikzpicture}[textbaseline]
				\matrix(m)[math35, column sep={1.8em,between origins}]
					{ A & & C & & \ps C \\
						& & \ps A & & \\};
				\path[map]	(m-1-1) edge[barred] node[above] {$f_*$} (m-1-3)
														edge[transform canvas={xshift=-2pt}] node[below left] {$\yon_A$} (m-2-3)
										(m-1-3) edge[barred] node[above] {$\yon_{C*}$} (m-1-5)
										(m-1-5) edge[transform canvas={xshift=2pt}] node[below right, inner sep=1pt] {$\ps f$} (m-2-3);
				\path				(m-1-3) edge[cell] node[right] {$\eta$} (m-2-3);
			\end{tikzpicture} \quad = \quad \begin{tikzpicture}[textbaseline]
				\matrix(m)[math35, column sep={4em,between origins}]{A & C & \ps C \\ & \ps A & \\};
				\path[map]	(m-1-1) edge[barred] node[above] {$f_*$} (m-1-2)
														edge[transform canvas={xshift=-2pt}] node[below left] {$\yon_A$} node[sloped, above, inner sep=6.5pt, font=\scriptsize, pos=0.5] {$\cart$} (m-2-2)
										(m-1-2) edge[barred] node[above] {$\yon_{C*}$} (m-1-3)
														edge[ps] node[left, inner sep=0.3pt, yshift=2pt] {$\cur f_*$} (m-2-2)
										(m-1-3) edge[transform canvas={xshift=1pt}] node[below right, inner sep=1pt] {$\ps f$} (m-2-2);
				\path[transform canvas={xshift=1.1em, yshift=3pt}]	(m-1-2) edge[cell] node[right] {$\zeta$} (m-2-2);
			\end{tikzpicture}
		\end{displaymath}
		Next consider the unique factorisation $\zeta$ in the right"/hand side above, through the cartesian cell that defines $\cur f_*$, which is left Kan by the density of $\yon_A$ (\defref{density definition}). Applying the horizontal pasting lemma (\lemref{horizontal pasting lemma}) we find that $\zeta$ defines $\ps f$ as the weak left Kan extension of $\cur f_*$ along $\yon_{C*}$, which restricts along $\cur{(\cur{f_*})}_*$ as required.
	\end{proof}

	The following example is a variation of Corollary~14 of \cite{Street-Walters78}.	
	\begin{example}
		Let $\map{\yon_M}M{\ps M}$ and $\map{\yon_{\ps M}}{\ps M}{\psps M}$ be Yoneda morphisms. Assume that $\ps M$ is unital and that the companions $\yon_{M*}$, $\yon_{\ps M*}$ and $(\yon_{\ps M} \of \yon_M)_*$ exist, as well as the restriction $\psps M(\yon_{\ps M} \of \yon_M, \yon_{\ps M})$. Using that $\cur{\yon_{M*}} \iso \id_{\ps M}$ (\defref{yoneda embedding}), so that \mbox{$\cur{(\cur{\yon_{M*}})}_* \iso \cur{I_{\ps M}} \iso \yon_{\ps M}$} (\lemref{full and faithful yoneda embedding}), it follows from the previous corollary that $\ps{\yon_M} \ladj \yon_{\ps M}$. As an alternative to \exref{presheaf objects are total}, by \cororef{total objects} it now follows that $\ps M$ is total.
	\end{example}
	
	The following result ensures the existence of a left adjoint to $\map{\ps f}{\ps C}{\ps A}$; its implication (b) $\Rightarrow$ (a) is analogous to Theorem~3.20(2) of \cite{Weber07} for good Yoneda structures.
	\begin{corollary} \label{left adjoint to ps f}
		Let $\map{\yon_A}A{\ps A}$ be a (weak) Yoneda morphism and let \mbox{$\map{\yon_C}C{\ps C}$} be a weak Yoneda morphism. Given a morphism $\map fAC$ assume that the pointwise weak left Kan extension $\map{f^\sharp}{\ps A}{\ps C}$ of \mbox{$\yon_C \of f$} along $\yon_{A*}$ exists (\defref{pointwise left Kan extension}). Consider the conditions below. If \textup{(b)} holds then $\yon_C \of f$ is (weakly) total and \mbox{$f^\sharp \ladj \map{\ps f}{\ps C}{\ps A}$}, so that \textup{(a)} holds too.
		
		If $\yon_A$ admits nullary restrictions (\defref{yoneda embedding}) and is full and faithful (\defref{full and faithful morphism}) then \textup{(a)} $\Rightarrow$ \textup{(b)}; if moreover all restrictions on the right exist (\defref{augmented virtual equipment}) then \textup{(b)} $\Rightarrow$ \textup{(c)} as well. If the horizontal composites $(J \hc \yon_{C*})$ (\defref{pointwise right cocartesian path}) exist for all $\hmap JAC$ then \textup{(c)} $\Rightarrow$ \textup{(b)}.
		\begin{enumerate}[label=\textup{(\alph*)}]
			\item $f^\sharp$ admits a right adjoint;
			\item the companion $\hmap{(\yon_C \of f)_*}A{\ps C}$ exists;
			\item the companion $\hmap{f_*}AC$ exists.
		\end{enumerate}
	\end{corollary}
	\begin{proof}
		Assuming that the companion $(\yon_C \of f)_*$ exists it follows from (c) $\Rightarrow$ (d) of \thmref{total morphisms} that $f^\sharp$ has $\ps f \dfn \cur{(\yon_C \of f)_*}$ as a right adjoint, while $\yon_C \of f$ is (weakly) total by (c)~$\Rightarrow$~(a) of the same theorem. This proves the first assertion.
		
		To prove (a) $\Rightarrow$ (b) denote the right adjoint to $f^\sharp$ by $\map r{\ps C}{\ps A}$; we claim that the restriction $\hmap{\ps A(\yon_A, r)}A{\ps C}$, which exists because $y_A$ admits nullary restrictions, forms the companion of $y_C \of f$. To see this consider the composite below where $\eps$ is the counit of $f^\sharp \ladj r$; it is cartesian by \auglemref{4.17}. Because $f^\sharp$ is the pointwise weak Kan extension along the companion of the Yoneda embedding $\yon_A$ we have $\yon_C \of f \iso f^\sharp \of \yon_A$ by \propref{pointwise left Kan extension along full and faithful map}. Composing the composite with this isomorphism we obtain a cartesian cell that defines $\ps A(\yon_A, r)$ as the companion of $\yon_C \of f$.
		\begin{displaymath}
			\begin{tikzpicture}
				\matrix(m)[math35, column sep={1.75em,between origins}]{A & & \ps C & \\ & \ps A & & \\ & & \ps C & \\ };
				\path[map]	(m-1-1) edge[barred] node[above] {$\ps A(\yon_A, r)$} (m-1-3)
														edge[ps, transform canvas={xshift=-1pt}] node[left] {$\yon_A$} (m-2-2)
										(m-1-3) edge[ps, transform canvas={xshift=1pt}] node[right] {$r$} (m-2-2)
										(m-2-2) edge[bend right=18] node[below left] {$f^\sharp$} (m-3-3);
				\path				(m-1-3) edge[eq, ps, bend left=45] (m-3-3)
										(m-1-3) edge[cell, transform canvas={yshift=-1.625em}] node[right] {$\eps$} (m-2-3);
				\draw				([yshift=0.25em]$(m-1-2)!0.5!(m-2-2)$) node[font=\scriptsize] {$\cart$};
			\end{tikzpicture}
		\end{displaymath}
		
		If all restrictions on the right exist then the existence of $(\yon_C \of f)_*$ implies that of $(\yon_C \of f)_*(\id, \yon_C) \iso \ps C(\yon_C \of f, \yon_C) \iso f_*$, where the isomorphisms follow from \lemref{pasting lemma for cartesian cells} and \auglemref{5.12}; this proves (b) $\Rightarrow$ (c). Using \auglemsref{8.1}{5.11} the converse follows from $(f_* \hc \yon_{C*}) \iso \yon_{C*}(f, \id) \iso (\yon_C \of f)_*$.
	\end{proof}
	
	\begin{example}
		Let $\V$ be a closed symmetric monoidal category that is small complete and small cocomplete. In the pseudo double category $\ensProf\V$ of small $\V$"/profunctors (\augexsref{2.8}{9.3}) consider a $\V$"/functor $\map fAC$ as well as the Yoneda embeddings $\map{\yon_A}A{\brks{\op A, \V}_\textup s}$ and $\map{\yon_C}C{\brks{\op C, \V}_\textup s}$, as in \exref{yoneda embedding for small enriched profunctors}. The $\V$"/functor \mbox{$\map{f^\sharp}{\brks{\op A, \V}_\textup s}{\brks{\op C, \V}_\textup s}$} of the corollary exists and is given by left Kan extending small $\V$"/presheaves on $A$ along $f$: to see this notice that, using \exref{cocartesian paths are exact} and \propref{left Kan extensions along a yoneda embedding in terms of left y-exact cells}, $f^\sharp$ corresponds to the pointwise horizontal composite $\bigpars{\inhom{\op C, \V}_\textup s(\yon_C, \yon_C \of f) \hc \yon_{A*}} \iso (f^* \hc \yon_{A*})$ (\auglemref{5.12}). All assumptions of the corollary are satisfied and we conclude that $f^\sharp$ has a right adjoint if and only if the companion $\hmap{f_*}AC$ exists in $\ensProf\V$, that is $f_*$ is a small $\V$"/profunctor (\augexref{2.8}). This recovers Proposition~3.3 of \cite{Day-Lack07}.
	\end{example}
	
	\section{Cocompleteness} \label{cocompleteness section}
	In this section we study the sense in which a Yoneda embedding $\map\yon M{\ps M}$ (\defref{yoneda embedding}) exhibits the presheaf object $\ps M$ as the free `small' cocompletion of $M$. As described at the end of the \overref\ and as is clear from \exrref{enriched free cocompletion}{left diagrams for lifted algebraic yoneda embeddings} below, the right notion of smallness here depends on the augmented virtual double category under consideration; it is defined in terms of `left diagrams' as follows. Recall that every augmented virtual double category $\K$ contains a vertical $2$"/category $V(\K)$ (\augexref{1.5}).
	
	\subsection{Cocompleteness, cocontinuity and free cocompletion}
	\begin{definition} \label{cocompletion}
		Let $\K$ be an augmented virtual double category. By a \emph{left diagram} in $\K$ we mean a span of the form $M \xlar d A \xbrar J B$. A collection $\mathcal S$ of left diagrams is called an \emph{ideal} if $(f \of d, J) \in \catvar S$ for all $(d, J) \in \catvar S$ and $f \in \K$ composable with $d$; given such an ideal and an object $N \in \K$ we write $\mathcal S(N) \subset \mathcal S$ for the subcollection of spans of the form $N \xlar d A \xbrar J B$. We make the following definitions.
		\begin{enumerate}[label=-]
			\item An object $N$ is called \emph{$\mathcal S$-cocomplete} if for any $(d, J) \in \mathcal S(N)$ the pointwise left Kan extension of $d$ along $J$ exists (\defref{pointwise left Kan extension});
			\item a morphism $\map fMN$ is called \emph{$\mathcal S$-cocontinuous} if, for any $(d, J) \in \mathcal S(M)$ and any pointwise left Kan cell $\eta$ that defines the pointwise left Kan extension of $d$ along $J$, the composite $f \of \eta$ is again pointwise left Kan;
			\item a morphism $\map wM{\bar M}$ is said to define $\bar M$ as the \emph{free $\mathcal S$-cocompletion} of $M$ if $\bar M$ is $\mathcal S$"/cocomplete and, for any $\mathcal S$-cocomplete $N$, the composite
			\begin{displaymath}
				V_\textup{$\mathcal S$-cocts}(\K)(\bar M, N) \hookrightarrow V(\K)(\bar M, N) \xrar{V(\K)(w, N)} V(\K)(M, N)
			\end{displaymath}
			is an equivalence, where $V_\textup{$\mathcal S$-cocts}(\K) \subseteq V(\K)$ denotes the locally full sub-$2$-category of $\mathcal S$"/cocontinuous morphisms.
		\end{enumerate}
	\end{definition}
	
	The following result describes the relation between cocompleteness and totality; it is a direct consequence of the definitions involved.
	\begin{corollary} \label{totality and cocompleteness}
		Let $\mathcal S$ be an ideal of left diagrams and $M$ an object. If $(\id_M, J) \in \mathcal S$ for all $\hmap JMB$ then $M$ is total (\defref{totality}) whenever it is $\mathcal S$"/cocomplete. The converse holds if, for each $(d, J) \in \mathcal S(M)$, there exists a pointwise left $\id_M$"/exact cell (\defref{left exact})
		\begin{displaymath}
			\begin{tikzpicture}
				\matrix(m)[math35]{A & B \\ M & B. \\};
				\path[map]	(m-1-1) edge[barred] node[above] {$J$} (m-1-2)
														edge node[left] {$d$} (m-2-1)
										(m-2-1) edge[barred] node[below] {$K$} (m-2-2);
				\path				(m-1-2) edge[eq] (m-2-2);
				\path[transform canvas={xshift=1.75em}]	(m-1-1) edge[cell] node[right] {$\phi$} (m-2-1);
			\end{tikzpicture}
		\end{displaymath}
	\end{corollary}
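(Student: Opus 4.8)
The plan is to deduce both implications directly from the definitions involved, namely \defref{totality}, \defref{cocompletion} and the notion of pointwise left $\id_M$-exactness from \defref{left exact}; no substantial machinery beyond these is needed.

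For the first implication, suppose $M$ is $\mathcal S$-cocomplete and let $\hmap JMB$ be any horizontal morphism. By hypothesis $(\id_M, J) \in \mathcal S$, and since $\id_M$ has $M$ as target we have $(\id_M, J) \in \mathcal S(M)$. The $\mathcal S$-cocompleteness of $M$ (\defref{cocompletion}) therefore provides the pointwise left Kan extension of $\id_M$ along $J$. As $J$ was arbitrary this is precisely the statement that $\id_M$, and hence $M$, is total (\defref{totality}).

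For the converse, assume $M$ is total and let $(d, J) \in \mathcal S(M)$, say with $\hmap JAB$ and $\map dAM$; I must produce the pointwise left Kan extension of $d$ along $J$. By hypothesis there is a pointwise left $\id_M$-exact cell $\phi$ of the displayed form, with horizontal source $J$, vertical source $d$, vertical target $\id_B$ and horizontal target some $\hmap KMB$. Since $M$ is total and $\hmap KMB$, the pointwise left Kan extension of $\id_M$ along $K$ exists; let $\eta$ be a pointwise left Kan cell defining it, with vertical source $\id_M$ and vertical target $\map lBM$ (\defref{weak left Kan extension}). Because $\phi$ is pointwise left $\id_M$-exact and $\eta$ has $\id_M$ as its vertical source, \defref{left exact} guarantees that the composite $\eta \of \phi$ is again pointwise left Kan.

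It remains to read off the boundary of $\eta \of \phi$: it is a nullary cell with horizontal source $J$, vertical source $\id_M \of d = d$ and vertical target $l \of \id_B = l$, so it exhibits $l$ as the pointwise left Kan extension of $d$ along $J$. As $(d, J) \in \mathcal S(M)$ was arbitrary, $M$ is $\mathcal S$-cocomplete. I do not expect any genuine obstacle here; the only point requiring a little care is the identification of the vertical source and target of $\eta \of \phi$ with $d$ and $l$ respectively, which is immediate from the unit laws for vertical composition of cells, so the argument is essentially bookkeeping around the definitions.
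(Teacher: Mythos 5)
Your proof is correct and is exactly the argument the paper intends: the paper gives no proof beyond noting the corollary "is a direct consequence of the definitions involved," and your unpacking — specialising $\mathcal S$-cocompleteness to $(\id_M,J)$ for one direction, and composing a pointwise left $\id_M$-exact cell with the pointwise left Kan cell for $\id_M$ along $K$ for the other — is precisely that direct consequence.
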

	
	The proof of the following result is similar to that of \propref{totality and adjunctions}.
	\begin{proposition}
		Let $\mathcal S$ be an ideal of left diagrams and $f \ladj \map gMN$ an adjunction with $g$ full and faithful (\defref{full and faithful morphism}). If $N$ is $\mathcal S$"/cocomplete then so is $M$.
	\end{proposition}
	
	\subsection{Presheaf objects as free cocompletions}
	The following theorem is the main result of this section. Using the notion of left exactness (\defref{left exact}) it gives conditions ensuring that a Yoneda embedding $\map\yon M{\ps M}$ defines $\ps M$ as the free $\mathcal S$"/cocompletion of $M$. Recall that, for condition~(e) below to be satisfied, it suffices that the cell $\phi$ defines $K$ as the pointwise right composite $\bigpars{\ps M(\yon, d) \hc J}$ (\defref{pointwise right cocartesian path}); see \exref{cocartesian paths are exact}.
	\begin{theorem} \label{presheaf objects as free cocompletions}
		Let $\map\yon M{\ps M}$ be a Yoneda embedding in an augmented virtual double category $\K$ and let $\mathcal S$ be an ideal of left diagrams in $\K$. Assume that $\yon$ admits nullary restrictions (\defref{yoneda embedding}). The presheaf object $\ps M$ is $\mathcal S$"/cocomplete if and only if
		\begin{enumerate}
			\item[\textup{(e)}] for every $(d, J) \in \mathcal S(\ps M)$ there exists a pointwise left $\yon$-exact cell
				\begin{displaymath}
					\begin{tikzpicture}
						\matrix(m)[math35]{ M & A & B \\ M & & B \\ };
						\path[map]	(m-1-1) edge[barred] node[above] {$\ps M(\yon, d)$} (m-1-2)
												(m-1-2) edge[barred] node[above] {$J$} (m-1-3)
												(m-2-1) edge[barred] node[below] {$K$} (m-2-3);
						\path				(m-1-1) edge[eq] (m-2-1)
												(m-1-3) edge[eq] (m-2-3)
												(m-1-2) edge[cell] node[right] {$\phi$} (m-2-2);
					\end{tikzpicture}
				\end{displaymath}
		\end{enumerate}
		and, in that case, $\yon$ defines $\ps M$ as the free $\mathcal S$"/cocompletion of $M$ if moreover 
		\begin{enumerate}
			\item[\textup{(y)}] $(f, \yon_*) \in \mathcal S$ for all $\map fMN$.
		\end{enumerate}
	\end{theorem}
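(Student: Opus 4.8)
début**Proof plan for Theorem (presheaf objects as free cocompletions).**

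The plan is to prove the two assertions separately. For the first — that $\ps M$ is $\mathcal S$-cocomplete iff condition (e) holds — I would use \cororef{left Kan extension, exact cells, cocartesian cells} (or directly \propref{left Kan extensions along a yoneda embedding in terms of left y-exact cells}) applied to the Yoneda embedding $\yon$. Given $(d, J) \in \mathcal S(\ps M)$, since $\yon$ admits nullary restrictions the restriction $\ps M(\yon, d)$ exists, and the pointwise left Kan extension of $d$ along $J$ exists precisely when there is a pointwise left $\yon$-exact cell $\phi$ with horizontal source $\ps M(\yon, d) \conc J$; this is exactly the content of the pointwise case of \propref{left Kan extensions along a yoneda embedding in terms of left y-exact cells} (using that $\cur{\bigpars{\ps M(\yon, d)}} \iso d$ by uniqueness in \defref{yoneda embedding}, and that $\ps M(\yon, l)$ exists for the resulting $l$ because $\yon$ admits nullary restrictions). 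So the first equivalence is essentially a repackaging of that proposition.

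For the second assertion — that under (e) and (y), $\yon$ defines $\ps M$ as the free $\mathcal S$-cocompletion — I would first note that $\ps M$ is $\mathcal S$-cocomplete by the first part. Then, for any $\mathcal S$-cocomplete object $N$, I need the composite functor
\begin{displaymath}
	V_\textup{$\mathcal S$-cocts}(\K)(\ps M, N) \hookrightarrow V(\K)(\ps M, N) \xrar{V(\K)(\yon, N)} V(\K)(M, N)
\end{displaymath}
to be an equivalence. The key tool is \propref{equivalence from yoneda embedding}: since $\yon$ admits nullary restrictions, precomposition with $\yon$ fits into the equivalence $H(\K)(M, N') \simeq V(\K)(N', \ps M)$ in the target slot, but here I want the other variable, so instead I would argue directly. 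Given $\map fMN$ with $N$ $\mathcal S$-cocomplete, I would construct a morphism $\map{\bar f}{\ps M}N$ by left Kan extending: the companion $\hmap{\yon_*}M{\ps M}$ exists (as $\yon$ admits nullary restrictions, \defref{yoneda embedding}), and condition (y) guarantees $(f, \yon_*) \in \mathcal S$, so the pointwise left Kan extension $\map{\bar f}{\ps M}N$ of $f$ along $\yon_*$ exists because $N$ is $\mathcal S$-cocomplete. By \propref{pointwise left Kan extension along full and faithful map} (applied since $\yon$ is full and faithful, with $B(\yon, \yon) \iso I_M$ the horizontal unit when $M$ is unital — and $M$ is unital by \lemref{full and faithful yoneda embedding}) we get $\bar f \of \yon \iso f$. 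I then need: (i) $\bar f$ is $\mathcal S$-cocontinuous; (ii) any $\mathcal S$-cocontinuous $\map gM N$ — wait, $\map g{\ps M}N$ — is recovered, up to isomorphism, as the left Kan extension of $g \of \yon$ along $\yon_*$, and uniqueness of such extensions gives essential surjectivity and full-faithfulness of the composite.

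The main obstacle I expect is step (i), showing $\bar f$ is $\mathcal S$-cocontinuous, and the uniqueness half of the correspondence. For (i): given $(d', J') \in \mathcal S(\ps M)$ and a pointwise left Kan cell $\eta$ defining the extension of $d'$ along $J'$, I must show $\bar f \of \eta$ is again pointwise left Kan. Here condition (e) is crucial: it produces a pointwise left $\yon$-exact cell $\phi$ with source $\ps M(\yon, d') \conc J'$, and by \propref{left Kan extensions along a yoneda embedding in terms of left y-exact cells} the extension $\eta$ factors as (cartesian cell defining $\cur K$) composed with things, so $\bar f \of \eta$ can be analyzed via the horizontal pasting lemma (\lemref{horizontal pasting lemma}) together with the fact that $\bar f$ preserves the relevant left Kan cell — which in turn follows because $\bar f$ is itself built as a left Kan extension along the companion $\yon_*$ and one can use \cororef{left Kan extensions of yoneda embeddings along paths} and the vertical pasting lemma (\lemref{vertical pasting lemma}) to propagate pointwise-left-Kan-ness through the composite $\yon_* \conc \ps M(\yon, d') \conc J'$, invoking the ideal property of $\mathcal S$ to know $(f, \yon_*) \in \mathcal S$ forces the needed extensions to exist in $N$. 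For the uniqueness half: given $\mathcal S$-cocontinuous $\map g{\ps M}N$, the density of $\yon$ (\lemref{density axioms}) means the cartesian cell defining $\yon_*$ is pointwise left Kan, so $g$ preserving it (by $\mathcal S$-cocontinuity, using $(g\of\yon, \yon_*) \in \mathcal S$ via (y) and the ideal property) exhibits $g$ as the pointwise left Kan extension of $g \of \yon$ along $\yon_*$; uniqueness of pointwise left Kan extensions then shows $g \iso \overline{g \of \yon}$, giving both that every object of $V(\K)(M,N)$ lifts and that two $\mathcal S$-cocontinuous morphisms agreeing after precomposition with $\yon$ are isomorphic, with the $2$-cell bijection following the same way from the $2$-dimensional universal property of the left Kan extension. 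I would close by checking naturality of these equivalences, which is routine. I expect the bookkeeping around which instances of $\mathcal S$ are invoked (always reducing to $(f, \yon_*)$-type diagrams and their images under composition, legitimised by the ideal axiom) to be the fiddliest part, but not conceptually hard.
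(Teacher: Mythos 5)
Your proposal is correct and follows essentially the same route as the paper: the first equivalence is exactly \propref{left Kan extensions along a yoneda embedding in terms of left y-exact cells}, and the second assertion is proved by left Kan extending $\map fMN$ along $\yon_*$ (using (y) and $\mathcal S$-cocompleteness of $N$), deducing $\bar f \of \yon \iso f$ from \propref{pointwise left Kan extension along full and faithful map}, establishing $\mathcal S$-cocontinuity of such extensions from condition (e) via the pasting lemmas (the paper isolates this as \lemref{pointwise left Kan extensions along yoneda embeddings}), and recovering any $\mathcal S$-cocontinuous $g$ from $g \of \yon$ by density (the paper's \lemref{cocompletion equivalence}). The only slip is that the left diagram you should feed to the $\mathcal S$-cocontinuity of $g$ is $(\yon, \yon_*) \in \mathcal S(\ps M)$, not $(g \of \yon, \yon_*)$, which lies in $\mathcal S(N)$ and is irrelevant there.
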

	It follows from \lemref{left exactness and right unary-cocartesianness} that if $\K$ is an augmented virtual equipment (\defref{augmented virtual equipment}) then pointwise left $\yon$"/exactness of the cell $\phi$ in condition (e) is equivalent to pointwise right unary"/cocartesianness (\defref{pointwise right cocartesian path}). We think of condition (y) as ``restricting the size of the object $M$'', as explained after \cororef{left Kan extension, exact cells, cocartesian cells}.
	\begin{proof}
		That condition (e) is equivalent to $\ps M$ being $\mathcal S$"/cocomplete follows from \propref{left Kan extensions along a yoneda embedding in terms of left y-exact cells}. That, for any $\mathcal S$-cocomplete $N$, precomposition with $\yon$ induces an equivalence $V_\textup{$\mathcal S$-cocts}(\K)(\ps M, N) \simeq V(\K)(M, N)$ is shown in \lemref{cocompletion equivalence} below.
	\end{proof}
	
	Before proving the main lemma used in its proof, in the remark below we compare \thmref{presheaf objects as free cocompletions} to an analogous result for Yoneda structures and, in \exrref{enriched free cocompletion}{left diagrams for lifted algebraic yoneda embeddings}, describe some of its applications.
	\begin{remark} \label{differences to Weber's result}
		The above theorem is similar to Theorem~3.20(1) of \cite{Weber07} which, given a small object $C$ in a $2$"/category $\mathcal C$ equipped with a good Yoneda structure, asserts the existence of equivalences $\mathcal C_\textup{cocts}(\ps C, X) \simeq \mathcal C(C, X)$ given by precomposition with the Yoneda embedding $\map{y_C}C{\ps C}$, one for each admissible and cocomplete object $X$. The latter result differs from the theorem above in the following ways.
		\begin{enumerate}[label=-]
			\item It assumes that $C$ is small, that is both $C$ and $\ps C$ are admissible; the size restrictions that we require are that $\map\yon M{\ps M}$ admits nullary restrictions and that it is full and faithful, which imply that $M$ is unital by \lemref{full and faithful yoneda embedding}.
			\item It proves the existence of the equivalences only for cocomplete objects $X$ that are admissible. In contrast our notion of free $\mathcal S$"/cocompletion (\defref{cocompletion}) does not restrict the size of the $\mathcal S$"/cocomplete objects $N$.
			\item It assumes that the presheaf object $\ps C$ itself is cocomplete. In contrast, the theorem above asserts that $\mathcal S$"/cocompleteness of $\ps M$ is equivalent to its condition (e). In some cases the latter is trivially satisfied, e.g.\ when $\K$ is a pseudo double category with all restrictions on the right (\defref{augmented virtual equipment}); see \exref{all left diagrams} below.
		\end{enumerate}
	\end{remark}
	
	\subsection{Examples of free cocompletions}	
	\begin{example} \label{enriched free cocompletion}
		In the augmented virtual equipment $\enProf{(\V, \V')}$ of $\V$"/profunctors between $\V'$"/categories (\augexref{2.7}) consider the ideal of left diagrams
		\begin{displaymath}
			\mathcal S = \set{(d, J) \mid \textup{$\hmap JAB$ is a $\V$-profunctor with $A$ a small $\V$-category}}.
		\end{displaymath}
		Assuming that $\V \subset \V'$ is symmetric in the sense of \exref{enriched yoneda embedding summary}, consider the Yoneda embedding $\map\yon M{\brks{\op M, \V}'}$ for a $\V$"/category $M$ as described there. If moreover $\V$ is small cocomplete then the pointwise composites $\ps M(\yon, d) \hc J$ exist in $\enProf{(\V, \V')}$ for all $(d, J) \in \mathcal S(\ps M)$ by \augexref{9.2}, so that condition (e) above is satisfied by \remref{right pointwise cocartesian and pointwise right cocartesian comparison}. Condition (y) is satisfied if moreover $M$ is small, so that in that case $\yon$ defines the $\V'$"/category $\brks{\op M, \V}'$ of $\V$"/presheaves on $M$ as the free $\mathcal S$"/cocompletion of $M$ in $\enProf{(\V, \V')}$. 
		
		Next assume that $\V$ is closed symmetric monoidal and both small complete and small cocomplete, and that the embedding $\V \subset \V'$ is a closed symmetric monoidal functor (\augexref{2.7}). From \exsref{weighted colimits}{enriched left Kan extension} it follows that for $\V$-categories $\mathcal S$"/cocompleteness coincides with the classical notion of small cocompleteness, the latter in the sense of e.g.\ of Section 3.2 of \cite{Kelly82}. The theorem in this case recovers the fact that, for a small $\V$-category $M$, the $\V$-category of $\V$"/presheaves on $M$ forms the free small cocompletion of $M$; see Theorem 4.51 of \cite{Kelly82}.
	\end{example}
	
	\begin{example} \label{all left diagrams}
		In any augmented virtual double category $\K$ consider the ideal $\mathcal A$ of all left diagrams below. By \cororef{totality and cocompleteness} $\mathcal A$"/cocompleteness implies totality.
		\begin{displaymath}
			\mathcal A = \set{(d, J) \mid \textup{$(d, J)$ is any left diagram}}
		\end{displaymath}
		If $\K$ is a pseudo double category (\augpropref{7.8}) that has restrictions on the right (\defref{augmented virtual equipment}) then any Yoneda embedding $\map\yon M{\ps M}$ in $\K$ satisfies the conditions of the theorem above, and thus defines $\ps M$ as the free $\mathcal A$"/cocompletion of $M$, as long as its companion $\yon_*$ exists. Indeed condition (e) follows from the fact that $\K$ has all pointwise right composites (\remref{right pointwise cocartesian and pointwise right cocartesian comparison}). Moreover by combining \cororef{totality and cocompleteness} and \cororef{total objects} we find that the following are equivalent: $M$ is $\mathcal A$"/cocomplete; $M$ is total (\defref{totality}); $\map\yon M{\ps M}$ admits a left adjoint. Indeed we can use \augcororef{8.5}, \auglemref{9.8} and \remref{right pointwise cocartesian and pointwise right cocartesian comparison} to construct pointwise right cocartesian cells $\phi$ of the form as in \cororef{totality and cocompleteness}.
	\end{example}
	
	\begin{example} \label{small enriched free cocompletion}
		In the unital virtual double category $\ensProf\V$ of small $\V$"/profunctors (\augexref{2.8}) consider the ideal $\mathcal A$ of all left diagrams as in the previous example. We claim that if $\V$ is closed symmetric monoidal then $\mathcal A$"/cocompleteness coincides with the classical notion of small cocompleteness for $\V$"/categories, in the sense of e.g.\ of Section~3.2 of \cite{Kelly82}, as we will show below. If moreover $\V$ is small complete then for any (possibly large) $\V$"/category $M$ the Yoneda embedding $\map\yon M{\brks{\op M, \V}_\textup s}$ exists in $\ensProf\V$, with $\brks{\op M, \V}_\textup s$ the $\V$"/category of small $\V$"/presheaves on $M$; see \exref{yoneda embedding for small enriched profunctors}. If $\V$ is also small cocomplete, so that $\ensProf\V$ is a pseudo double category (\augexref{9.3}), then by the previous example $\yon$ defines $\brks{\op M, \V}_\textup s$ as the free $\mathcal A$"/cocompletion of $M$; this recovers Theorem~2.11 of \cite{Lindner74}. As in the previous example we find that the following are equivalent: $M$ is small cocomplete, in the classical sense; $M$ is $\mathcal A$"/cocomplete (\defref{cocompletion}); $M$ is total in $\ensProf\V$ (\defref{totality}); the Yoneda embedding $\map\yon M{\inhom{\op M, \V}_\textup s}$ admits a left adjoint.
		
		 To prove the claim first recall from \exref{weighted colimits} that small $\V$"/weighted colimits can be equivalently defined as left Kan extensions in $\ensProf\V$ along $\V$"/profunctors of the form $\hmap JAI$ with $A$ small: this shows that $\mathcal A$"/cocompleteness implies small cocompleteness. For the converse consider any left diagram \mbox{$M \xlar d A \xbrar J B$} with $J$ small and $M$ small cocomplete: we will show that the left Kan extension of $d$ along $J$ exists in $\enProf\V$ and hence in $\ensProf\V$ (\exref{enriched left Kan extension}), so that it is pointwise by \remref{pointwise left Kan extension in the presence of horizontal units and restrictions on the right}. Smallness of $J$ (\augexref{2.8}) can be rephrased as follows: for every $y \in B$ there exists a small sub"/$\V$"/category $\iota_y \colon A_y \subseteq A$ and a cocartesian cell \mbox{$\cell\cocart{\bigpars{\iota_y^*, J(\iota_y, y)}}{J(\id, y)}$} in $\enProf\V$. For each $y \in B$ let $ly \in M$ denote the $J(\iota_y, y)$"/weighted colimit of $d \of \iota_y$ (\exref{weighted colimits}), which exists by assumption. We assert that each $ly$ coincides with the $J(\id, y)$"/weighted colimit of $d$, so that together the $ly$ combine into a $\V$"/functor $\map lBM$ that forms the left Kan extension of $d$ along $J$, as described in \exref{enriched left Kan extension}. To prove this assertion denote by $\cell{\eta_y}{J(\iota_y, y)}M$ the nullary cell in $\enProf\V$ that defines $ly$ and consider the composite $\cell{(d \of \cart) \hc \eta_y}{\bigpars{\iota_y^*, J(\iota_y, y)}}M$, where $\cart$ defines $\iota_y^*$. The latter is left Kan by \cororef{Kan extension and conjoints} so that, using the vertical pasting lemma (\lemref{vertical pasting lemma}), by factorising it through $\cell\cocart{\bigpars{\iota_y^*, J(\iota_y, y)}}{J(\id, y)}$ we obtain a left Kan cell that defines $ly$ as the $J(\id, y)$"/weighted colimit of $d$. This completes the proof of the claim.
	\end{example}
	
	\begin{example}
		Let $\E$ be a cartesian closed regular category, so that $\ModRel(\E)$ is an equipment (\augpropref{7.8}) by \exref{internal modular relations}. \exref{all left diagrams} applies to the Yoneda embedding $\map\yon M{\inhom{\dl M, \ps 1}}$ of an internal preorder $M$, as constructed in \exref{yoneda embeddings for internal preorders}, so that it defines $\inhom{\dl M, \ps 1}$ as the $\mathcal A$"/cocompletion of $M$.
	\end{example}
	
	\begin{example} \label{cocompleteness for closed-odered closure spaces}
		\exref{all left diagrams} applies to the Yoneda embedding $\map\yon M{\Dnp M}$ of a closed"/ordered closure space $M$ (\exref{closed-ordered closure space yoneda embedding}) in the locally thin strict double category $\ClModRel$ of closed modular relations, so that $\yon$ defines $\Dnp M$ as the free $\mathcal A$"/cocompletion of $M$ therein. Likewise if $M$ is a modular closure space (\exref{closed modular relations}) then $\yon$ defines $\Dnp M$ as the free $\mathcal A$"/completion in the full sub"/double category $\ClModRel_\textup m \subset \ClModRel$ generated by modular closure spaces.
		
		We claim that any $\mathcal A$"/cocomplete closed"/ordered closure space $N$ has all suprema. Indeed consider the singleton closed"/ordered closure space $* \dfn \set *$ with closed subsets $\Cl * \dfn \set{\emptyset, \set *}$ and notice that downsets $X \subseteq N$ correspond precisely to closed modular relations $\hmap XN*$. In fact it is straightforward to see that $l \in N$ is a supremum of $X$ if and only if the morphism $\map l*N$, that picks out $l$, forms the pointwise left Kan extension of $\hmap XN*$ along $\id_N$ in $\ClModRel$, so that the claim follows. Having all suprema is however unlikely to be sufficient for $\mathcal A$"/cocompleteness in general, as follows. Consider the full sub"/augmented virtual double category $\CptClModRel_\textup m \subset \ClModRel_\textup m$ generated by those closed modular relations $\hmap JAB$ for which, for each $y \in B$, the preimage $\dl Jy$ is both compact and up"/directed in $A$ in the sense of Section~8 of \cite{Koudenburg18}. Combining Theorem~8.1 therein with \exref{continuous left Kan extensions} it follows that a modular closure space $N$ is $\mathcal A$"/cocomplete in $\CptClModRel_\textup m$ whenever $N$ has all suprema and is \emph{normalised}, that is $\overline{\set x} = \upset x$ for all $x \in N$, where $\overline{\set x}$ denotes the closure of the singleton set; see Section~4 of \cite{Koudenburg18}.
	\end{example}
	
	\begin{example} \label{left composable horizontal morphisms}
		To give a useful ideal $\mathcal C$ of left diagrams in a general augmented virtual double category $\K$ let us call a horizontal morphism $\hmap JAB$ \emph{left composable} when the pointwise right composite of $(H, J)$ exists for any $\hmap HCA$ (\defref{pointwise right cocartesian path}); we set
		\begin{displaymath}
			\mathcal C = \set{(d, J) \mid \textup{$J$ is left composable}}.
		\end{displaymath}
		Using \exref{cocartesian paths are exact} a Yoneda embedding $\map\yon M{\ps M}$ in $\K$ satisfies the conditions of the theorem, so that $\ps M$ forms the free $\mathcal C$-cocompletion of $M$, as soon as $\yon$ admits nullary restrictions and its companion $\yon_*$ is left composable. Notice that if $\K$ has restrictions on the right (\defref{augmented virtual equipment}) then the latter is equivalent to the existence of the pointwise right composites of all $\hmap HAM$ and $\hmap JMB$, which follows from \lemref{coherence of pointwise cocartesian paths} and the fact that $J \iso \yon_*(\id, \cur J)$ (\defref{yoneda embedding}).
		
		Consider a `monoidal augmented virtual double category' $(\K, \tens, I)$ in the sense of \defref{monoidal augmented virtual double category} below. In \thmref{free cocompletion of the monoidal unit} we will apply the previous to obtain conditions ensuring that the Yoneda embedding $\map\yon I{\ps I}$ for the monoidal unit defines $\ps I$ as the free $\mathcal C$"/cocompletion of $I$.
	\end{example}
	
	\begin{example} \label{left diagrams for lifted algebraic yoneda embeddings}
		Let $T$ be a monad on an augmented virtual double category $\K$, in the sense of Section~6 of \cite{Koudenburg15b}. As explained there `colax $T$"/algebras', `lax vertical $T$"/morphisms' and `horizontal $T$"/morphisms' in $\K$ form an augmented virtual double category $\wAlg\clx\lax T$ that comes equipped with a forgetful functor $\map U{\wAlg\clx\lax T}\K$. Given a colax $T$"/algebra $A$ and a Yoneda embedding $\map\yon{UA}{\ps A}$ in $\K$ one of the main results of \cite{Koudenburg15b}, its Theorem~8.1, gives conditions ensuring that $\yon$ can be lifted along $U$ as an ``algebraic'' Yoneda embedding in $\wAlg\clx\lax T$. This formalises equipping the category $\brks{\op A, \Set}$ of presheaves on a monoidal category $(A, \tens)$ with the monoidal structure given by \emph{Day"/convolution} induced by $\tens$, as introduced in \cite{Day70}.
		
		Next consider any ideal $\mathcal S$ of left diagrams in $\K$ and assume that $\map\yon{UA}{\ps A}$ satisfies the conditions of the theorem above, so that $\ps A$ is the free $\mathcal S$"/cocompletion of $UA$ in $\K$. As described in Section~7.4 of \cite{Koudenburg15b} the ideal $\mathcal S$ induces an ideal $\mathcal S_{(\clx, \lax)}$ of left diagrams in $\wAlg\clx\lax T$, and its Theorem~8.5 gives conditions on $\mathcal S$ ensuring that the lift of $\yon$ in $\wAlg\clx\lax T$ satisfies the theorem above with respect to $\mathcal S_{(\clx, \lax)}$, so that it defines the free $\mathcal S_{(\clx, \lax)}$"/cocompletion of $A$ in $\wAlg\clx\lax T$. Theorems~8.1 and~8.5 of \cite{Koudenburg15b} also treat the lifting of Yoneda embeddings along the forgetful functor for the augmented virtual double category $\lbcwAlg\clx T$ of colax $T$"/algebras, `pseudo vertical $T$"/morphisms' and horizontal $T$"/morphisms satisfying a `left Beck"/Chevalley condition'.
	\end{example}
	
	\subsection{Lemmas used in the proof of \thmref{presheaf objects as free cocompletions}}
	In the proof of \lemref{cocompletion equivalence} below the following lemma is used, which is a weakening of the implication (c) $\Rightarrow$ (d) of \thmref{total morphisms}. Given a Yoneda morphism $\map\yon M{\ps M}$ the latter asserts that a left Kan extension of a morphism $\map fMN$ along the companion $\yon_*$ is a left adjoint but, unlike the result below, it requires the companion $f_*$ to exist. The result below allows us to prove in \thmref{presheaf objects as free cocompletions} that $\ps M$ is the free $\mathcal S$"/cocompletion of $M$ among all $\mathcal S$"/cocomplete objects $N$ (see \defref{cocompletion}), without having to restrict to those $\mathcal S$"/cocomplete objects that are unital.
	\begin{lemma} \label{pointwise left Kan extensions along yoneda embeddings}
		Let $\map\yon M{\ps M}$ be a Yoneda morphism that admits all nullary restrictions (\defref{yoneda embedding}) and let $\mathcal S$ be an ideal of left diagrams that satisfies condition \textup{(e)} of \thmref{presheaf objects as free cocompletions}. Any pointwise left Kan extension along the companion $\yon_*$ is $\mathcal S$-cocontinuous.
	\end{lemma}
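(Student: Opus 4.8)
The plan is to imitate the $\textup{(c)}\Rightarrow\textup{(d)}$ part of \thmref{total morphisms}, but to extract only pointwise $\mathcal S$-cocontinuity of the extension rather than a full adjunction, so that the companion of the extended morphism is never needed. Write $\map f{\ps M}N$ for the pointwise left Kan extension of $\map dMN$ along $\yon_*$, with defining pointwise left Kan cell $\cell\theta{\yon_*}N$ (vertical source $d$, vertical target $f$). Fix $(d', J) \in \mathcal S(\ps M)$ and a pointwise left Kan cell $\eta$ defining the pointwise left Kan extension of $d'$ along $J$; we must show $f \of \eta$ is pointwise left Kan. By condition~(e) there is a pointwise left $\yon$-exact cell $\phi$ with horizontal source $\pars{\ps M(\yon, d'), J}$ and trivial vertical boundary; by \propref{left Kan extensions along a yoneda embedding in terms of left y-exact cells} together with \lemref{bijection between cells induced by Yoneda morphisms} (and uniqueness of $\cur{\bigpars{\ps M(\yon, d')}} \iso d'$) the cell $\psi$ corresponding to $\phi$ is pointwise left Kan and defines $\cur K$ as the pointwise left Kan extension of $d'$ along $J$. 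Since pointwise left Kan extensions are unique up to an invertible vertical cell and composition with such cells preserves being pointwise left Kan, it suffices to prove that $f \of \psi$ is pointwise left Kan.

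The next step is to move everything onto $\yon_*$ so that $\theta$ can be used. Since $\yon$ admits all nullary restrictions, $\ps M(\yon, g) \iso \yon_*(\id, g)$ for every $\map gX{\ps M}$ by the pasting lemma for cartesian cells (\auglemref{4.15}); thus the nullary cartesian cells defining $\ps M(\yon, d')$ and $K = \ps M(\yon, \cur K)$ factor as $\cart_{\yon_*} \of \chi_{d'}$ and $\cart_{\yon_*} \of \chi_{\cur K}$ through the cartesian cell $\cart_{\yon_*}$ defining $\yon_*$, with $\chi_{d'}$ and $\chi_{\cur K}$ cartesian. Because $\yon_*(\id, g) = \ps M(\yon, g)$ exists for all such $g$, the pointwise left Kan cell $\theta$ restricts along every $\map gX{\ps M}$ (\defref{pointwise left Kan extension}); in particular $\theta \of \chi_{d'}$ defines $f \of d'$ as the pointwise left Kan extension of $d$ along $\ps M(\yon, d')$, and $\theta \of \chi_{\cur K}$ is pointwise left Kan with horizontal source $(K)$ and vertical source $d$. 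The defining equality $\cart_{\cur K} \of \phi = \cart_{d'} \hc \psi$ of \lemref{bijection between cells induced by Yoneda morphisms}, combined with the factorisations above and cancellation of the cartesian cell $\cart_{\yon_*}$, yields $\chi_{\cur K} \of \phi = \chi_{d'} \hc \psi$ as cells $\pars{\ps M(\yon, d'), J} \Rar \yon_*$; post-composing with $\theta$ and using that $f$ (acting by post-composition) commutes with horizontal composition of cells together with the interchange axioms (\auglemref{1.3}) gives
\begin{displaymath}
	\bigpars{\theta \of \chi_{d'}} \hc (f \of \psi) \;=\; \theta \of (\chi_{d'} \hc \psi) \;=\; \bigpars{\theta \of \chi_{\cur K}} \of \phi.
\end{displaymath}

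Since $\theta \of \chi_{d'}$ is pointwise left Kan, the horizontal pasting lemma (\lemref{horizontal pasting lemma}) reduces the goal to showing that $\bigpars{\theta \of \chi_{\cur K}} \of \phi$ is pointwise left Kan; as $\theta \of \chi_{\cur K}$ is pointwise left Kan with vertical source $d$, by the vertical pasting lemma (\lemref{vertical pasting lemma}) this holds as soon as $\phi$ is pointwise left $d$-exact — equivalently, as soon as $\phi$ is pointwise right nullary-cocartesian (\exref{cocartesian paths are exact}). Upgrading the given pointwise left $\yon$-exactness of $\phi$ to this stronger property is the step I expect to require the most care. The plan is to use that, since the Yoneda morphism $\yon_M$ exists, every horizontal morphism of $\K$ admits a nullary cartesian cell into $\ps M$ (the Yoneda axiom, cf.\ the example following \defref{cocartesian path of (0,1)-ary cells}), so that the notions of right unary- and right nullary-cocartesian path coincide (\exref{cocartesian paths in the presence of horizontal units or (1,0)-ary cartesian cells}), and then to invoke \propref{left exactness and right unary-cocartesianness} (whose hypothesis on $\phi$ reduces, since $\phi$ has identity vertical source, to the existence of certain restrictions on the left) — or, when those restrictions are unavailable, to argue directly that the universal factorisation property witnessing pointwise left $\yon$-exactness of $\phi$ transports, through the cartesian and pointwise left Kan cells $\cart_{\cur K}$, $\cart_{d'}$ (pointwise left Kan by density of $\yon$, \lemref{density axioms}) and $\psi$, into the factorisation property defining $\phi$ as pointwise right nullary-cocartesian. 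Granting this, $\bigpars{\theta \of \chi_{\cur K}} \of \phi$ is pointwise left Kan, hence so is $f \of \psi$, and therefore so is $f \of \eta$, which completes the proof.
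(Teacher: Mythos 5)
Your argument tracks the paper's own proof closely for most of its length: the reduction to $f \of \psi$ via condition (e) and \propref{left Kan extensions along a yoneda embedding in terms of left y-exact cells}, the factorisations $\chi_{d'}$ and $\chi_{\cur K}$ through the cartesian cell defining $\yon_*$, the observation that $\theta \of \chi_{d'}$ and $\theta \of \chi_{\cur K}$ are pointwise left Kan because $\theta$ restricts along every morphism into $\ps M$ (\lemref{restrictions of pointwise left Kan extensions}), the interchange identity $(\theta \of \chi_{d'}) \hc (f \of \psi) = (\theta \of \chi_{\cur K}) \of \phi$, and the concluding appeal to the horizontal pasting lemma are all exactly the paper's steps. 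The divergence, and the gap, is in how you propose to see that $(\theta \of \chi_{\cur K}) \of \phi$ is pointwise left Kan.

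You want to upgrade ``$\phi$ is pointwise left $\yon$-exact'' to ``$\phi$ is pointwise right nullary-cocartesian'' so that the vertical pasting lemma applies to the arbitrary-source cell $\theta \of \chi_{\cur K}$, and neither route you sketch delivers this. The Yoneda axiom supplies nullary cartesian cells $J \Rar \ps M$ only for horizontal morphisms $J$ with source $M$, not for every horizontal morphism of $\K$; and even where it applies, the second paragraph of \exref{cocartesian paths in the presence of horizontal units or (1,0)-ary cartesian cells} gives nullary-cocartesian $\Rightarrow$ unary-cocartesian, the opposite of the direction you need (the direction unary $\Rightarrow$ nullary is its first paragraph, which requires horizontal units for the target objects of the nullary cells being factored, and these range over all of $\K$). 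Likewise \propref{left exactness and right unary-cocartesianness} assumes all restrictions $L(h, \id)$ along morphisms $h$ out of $M$, which is not among the lemma's hypotheses, and in any case only yields \emph{unary}-cocartesianness. Your parenthetical ``equivalently'' is also false: pointwise right nullary-cocartesianness implies pointwise left $d$-exactness for every $d$ (\exref{cocartesian paths are exact}) but not conversely, and the remark following \thmref{presheaf objects as free cocompletions} identifies condition (e), even in an equipment, with unary-cocartesianness only --- so if nullary-cocartesianness were genuinely required, the theorem's hypothesis would be too weak to begin with. The paper does not take this detour: having identified $\theta \of \chi_{\cur K}$ as the restriction of $\theta$ along $\cur K$, hence pointwise left Kan, it applies the exactness hypothesis on $\phi$ directly to the composite $(\theta \of \chi_{\cur K}) \of \phi$ at that point, rather than passing through cocartesianness and the vertical pasting lemma. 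As written, the final step of your proposal does not go through; you need either to reproduce the paper's direct use of the exactness hypothesis or to supply an argument for the exactness of $\phi$ with respect to the specific source $d$ that exploits the existence of $\theta$, rather than an upgrade of $\phi$ to a cocartesianness property it need not have.
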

	\begin{proof}
		Suppose that the cell $\zeta$ in the composite below defines $k$ as the pointwise left Kan extension of $e$ along $\yon_*$. We have to show that, for any left diagram \mbox{$(d, J) \in \mathcal S(\ps M)$} and any pointwise left Kan cell $\eta$ as in the composite $\theta$ below, the composite $k \of \eta$ is again pointwise left Kan.
		\begin{displaymath}
			\theta \dfn \quad \begin{tikzpicture}[textbaseline]
				\matrix(m)[math35, column sep={1.75em,between origins}]
					{ M & & A & & B \\
						& M & & \ps M & \\
						& & N & & \\ };
				\path[map]	(m-1-1) edge[barred] node[above] {$\ps M(\yon, d)$} (m-1-3)
										(m-1-3) edge[barred] node[above] {$J$} (m-1-5)
														edge[ps] node[left] {$d$} (m-2-4)
										(m-1-5) edge[ps, transform canvas={xshift=2pt}] node[right] {$l$} (m-2-4)
										(m-2-2) edge[barred] node[below, inner sep=2pt] {$\yon_*$} (m-2-4)
														edge[transform canvas={xshift=-2pt}] node[left] {$e$} (m-3-3)
										(m-2-4) edge[transform canvas={xshift=2pt}] node[right] {$k$} (m-3-3);
				\path				(m-1-1) edge[eq, transform canvas={xshift=-1pt}] (m-2-2)
										(m-2-3) edge[cell, transform canvas={yshift=0.25em}] node[right] {$\zeta$} (m-3-3)
										(m-1-4) edge[cell, transform canvas={yshift=0.25em}] node[right] {$\eta$} (m-2-4);
				\draw				($(m-1-1)!0.5!(m-2-4)$) node[font=\scriptsize] {$\cart'$};
			\end{tikzpicture}
		\end{displaymath}
		To see this consider the pointwise left $\yon$"/exact cell $\cell\phi{\bigpars{M(\yon, d), J}}K$ that exists by condition (e) of \thmref{presheaf objects as free cocompletions}. By the uniqueness of Kan extensions we may without loss of generality assume that $l = \map{\cur K}B{\ps M}$ (\defref{yoneda embedding}) and that $\eta$ corresponds to $\phi$ as in \propref{left Kan extensions along a yoneda embedding in terms of left y-exact cells}; that is
		\begin{displaymath}
			\cart_{\cur K} \of \phi = \cart_{\ps M(\yon, d)} \hc \eta
		\end{displaymath}
		as in \lemref{bijection between cells induced by Yoneda morphisms}, where the cartesian cells define $\cur K$ and $\ps M(\yon, d)$ respectively. We may take the top row of $\theta$ above to be the factorisation of the right"/hand side immediately above through the cartesian cell that defines $\yon_*$ so that, by the equation above, \mbox{$\theta = \zeta \of \cart'_{\cur K} \of \phi$} where $\cart'_{\cur K}$ is the factorisation of $\cart_{\cur K}$ through $\yon_*$. The factorisations $\cart'$ and $\cart'_{\cur K}$ are cartesian by the pasting lemma (\lemref{pasting lemma for cartesian cells}) so that, using \lemref{restrictions of pointwise left Kan extensions} and the assumption that $\phi$ is pointwise left $\yon$-exact, we conclude that both $\theta$ and $\zeta \of \cart'$ are pointwise left Kan. By the horizontal pasting lemma (\lemref{horizontal pasting lemma}) it follows that the second column $k \of \eta$ of $\theta$ is also pointwise left Kan, as required.
	\end{proof}
	
	\begin{lemma} \label{cocompletion equivalence}
		Let $\map\yon M{\ps M}$ and $\mathcal S$ be as in \thmref{presheaf objects as free cocompletions}. For any object $N$ the top leg of the diagram
		\begin{displaymath}
			\begin{tikzpicture}
				\matrix(m)[math35, column sep=3em, row sep=2em]
					{	V(\K)(\ps M, N) & V(\K)(M, N) \\
						V_\textup{$\mathcal S$-cocts}(\K)(\ps M, N) & V(\K)(M, N)' \\ };
				\path[map]	(m-1-1) edge node[above] {$V(\K)(\yon, N)$} (m-1-2)
										(m-2-1) edge[rinj] (m-1-1)
														edge[dashed] node[below] {$\eq$} (m-2-2)
										(m-2-2) edge[rinj] (m-1-2);
			\end{tikzpicture}
		\end{displaymath}
			factors through the full subcategory $V(\K)(M, N)'$ of $V(\K)(M, N)$, that is generated by all $\map gMN$ whose pointwise left Kan extension along $\yon_*$ exists, as an equivalence as shown.
		
		In particular if $N$ is $\mathcal S$-cocomplete then the factorisation above reduces to an equivalence $V_\textup{$\mathcal S$-cocts}(\K)(\ps M, N) \eq V(\K)(M, N)$.
	\end{lemma}
	\begin{proof}
		Firstly, for the final assertion, simply notice that condition (y) of \thmref{presheaf objects as free cocompletions} ensures that $V(\K)(M, N)' = V(K)(M, N)$ for $\mathcal S$-cocomplete $N$. Next to see that the top leg of the diagram above factors through $V(\K)(M, N)'$ consider any $\mathcal S$"/cocontinuous $\map f{\ps M}N$; we have to show that the pointwise left Kan extension of $f \of \yon$ along $\yon_*$ exists. Since $(\yon, \yon_*) \in \mathcal S$ by the same condition it clearly does: it is $f$ itself, defined by the composite on the left below, where $\cart$ is pointwise left Kan by the density of $\yon$ (\defref{density definition}).
		
		To prove that the factorisation is an equivalence we will show that it is essentially surjective and full and faithful. For the former consider any $g \in V(\K)(M, N)'$, so that the pointwise left Kan extension $\map l{\ps M}N$ of $g$ along $\yon_*$ exists; we denote its defining cell by $\eta$, as in the middle below. By \lemref{pointwise left Kan extensions along yoneda embeddings} $l$ is $\mathcal S$-cocontinuous while, by precomposing $\eta$ with the weakly cocartesian cell defining $\yon_*$, we obtain a vertical isomorphism $g \iso l \of \yon$, as follows from the assumption that $\yon$ is full and faithful and \propref{pointwise left Kan extension along full and faithful map}. This shows essential surjectivity.
		
		To prove full and faithfulness, consider any vertical cell $\cell\phi{f \of \yon}{g \of \yon}$ with $f$ and $g$ $\mathcal S$-cocontinuous. We have to show that there exists a unique vertical cell $\cell{\phi'}fg$ such that $\phi = \phi' \of \yon$.
		\begin{displaymath}
			\begin{tikzpicture}[textbaseline]
				\matrix(m)[math35, column sep={1.75em,between origins}]{M & & \ps M \\ & \ps M & \\ & N & \\};
				\path[map]	(m-1-1) edge[barred] node[above] {$\yon_*$} (m-1-3)
														edge[transform canvas={xshift=-2pt}] node[left] {$\yon$} (m-2-2)
										(m-2-2) edge node[left] {$f$} (m-3-2);
				\path				(m-1-3) edge[ps, eq, transform canvas={xshift=2pt}] (m-2-2);
				\draw				([yshift=0.333em]$(m-1-2)!0.5!(m-2-2)$) node[font=\scriptsize] {$\cart$};
			\end{tikzpicture} \qquad\qquad \begin{tikzpicture}[textbaseline]
				\matrix(m)[math35, column sep={1.75em,between origins}]{M & & \ps M \\ & N & \\};
				\path[map]	(m-1-1) edge[barred] node[above] {$\yon_*$} (m-1-3)
														edge[transform canvas={xshift=-2pt}] node[left] {$g$} (m-2-2)
										(m-1-3) edge[transform canvas={xshift=2pt}] node[right] {$l$} (m-2-2);
				\path[transform canvas={yshift=0.25em}]	(m-1-2) edge[cell] node[right, inner sep=3pt] {$\eta$} (m-2-2);
			\end{tikzpicture} \qquad\qquad\qquad \begin{tikzpicture}[textbaseline]
				\matrix(m)[math35, column sep={1.75em,between origins}]{& M & & \ps M \\ \ps M & & \ps M \\ & N & \\};
				\path[map]	(m-1-2) edge[barred] node[above] {$\yon_*$} (m-1-4)
														edge[ps, transform canvas={xshift=-1pt}] node[left] {$\yon$} (m-2-3)
														edge[ps, bend right = 18] node[left] {$\yon$} (m-2-1)
										(m-2-1) edge[bend right = 18] node[left] {$f$} (m-3-2)
										(m-2-3) edge[bend left = 18] node[right] {$g$} (m-3-2);
				\path				(m-1-2) edge[cell, transform canvas={yshift=-1.625em}] node[right] {$\phi$} (m-2-2)
										(m-1-4) edge[ps, eq, transform canvas={xshift=2pt}] (m-2-3);
				\draw				([yshift=0.333em]$(m-1-3)!0.5!(m-2-3)$) node[font=\scriptsize] {$\cart$};
			\end{tikzpicture} \quad = \quad \begin{tikzpicture}[textbaseline]
				\matrix(m)[math35, column sep={1.75em,between origins}]{M & & \ps M \\ & \ps M & \\ & N & \\};
				\path[map]	(m-1-1) edge[barred] node[above] {$\yon_*$} (m-1-3)
														edge[ps, transform canvas={xshift=-2pt}] node[left] {$\yon$} (m-2-2)
										(m-2-2) edge[bend right=45] node[left] {$f$} (m-3-2)
														edge[bend left=45] node[right] {$g$} (m-3-2);
				\path				(m-1-3) edge[ps, eq, transform canvas={xshift=2pt}] (m-2-2)
										(m-2-2) edge[cell] node[right, inner sep=2.5pt] {$\phi'$} (m-3-2);
				\draw				([yshift=0.333em]$(m-1-2)!0.5!(m-2-2)$) node[font=\scriptsize] {$\cart$};
			\end{tikzpicture}
		\end{displaymath}
		Since $\yon$ is dense, the cartesian cell in the right"/hand side of the equation above is pointwise left Kan by \defref{density definition}. It follows that its composition with $f$ is too, by $\mathcal S$-cocontinuity of $f$ and condition (y) of \thmref{presheaf objects as free cocompletions}, so that the composite on the left"/hand side above factors uniquely as a cell $\phi'$ as shown. Composing both sides with the weakly cocartesian cell corresponding to the cartesian cell $\cart$ above, using the vertical companion identity (\lemref{companion identities lemma}) we conclude that $\phi'$ is unique such that $\phi = \phi' \of \yon$, as required. This completes the proof.
	\end{proof}
	
	\section{Yoneda embeddings in monoidal augmented virtual double categories} \label{yoneda embeddings in a monoidal augmented virtual double category}
	Let $\E$ be a finitely complete category with a subobject classifier $\Omega$. Recall from \exref{generic subobjects are yoneda embeddings} that $\Omega$ induces a Yoneda embedding $\map\yon 1\Omega$ in $\ModRel(\E)$. Also recall, from e.g.\ Section~A2 of \cite{Johnstone02}, that the following are equivalent for $\E$: (a)~$\E$~has power objects; (b) $\E$ has all exponentials of the form $\Omega^A$; (c) $\E$ is cartesian closed. In this final section we generalise the implications (a) $\Leftrightarrow$ (b) $\Leftarrow$ (c) to augmented virtual double categories as follows. Given a monoidal augmented virtual double category $\K = (\K, \tens, I)$ in the sense below consider a Yoneda embedding $\map\yon I{\ps I}$ for the monoidal unit as well as a unital object $A$ in $\K$. The main result of this section, \thmref{presheaf object equivalent to iota-small internal hom}, shows that under mild conditions the Yoneda embedding \mbox{$\map{\yon_A}A{\ps A}$} exists in $\K$ if and only if the `internal hom' $\inhom{\dl A, \ps I}$ (\defref{internal iota-small hom}) does, where $\dl A$ denotes the `horizontal dual' of $A$ (\defref{horizontal dual}), and, in that case, $\ps A \iso \inhom{\dl A, \ps I}$. This result was used in \exsref{enriched yoneda embedding summary}{yoneda embeddings for internal preorders} to obtain Yoneda embeddings in $\enProf\V$ and $\ModRel(\E)$, with the latter example recovering the classical implication (b) $\Rightarrow$ (a) above.
	
	Instead of proving the main result \thmref{presheaf object equivalent to iota-small internal hom} directly we will prove a generalisation in \thmsref{universal morphism from a yoneda embedding}{yoneda embedding from universal morphisms theorem} as follows; this generalisation is more widely applicable and allows for simpler diagrams in its proof. Given a functor \mbox{$\map F\K\L$} of augmented virtual double categories, a Yoneda embedding $\map{\yon_A}A{\ps A}$ in $\L$, a `locally universal horizontal morphism' $\hmap\iota A{FA'}$ from $A$ to $F$ in $\L$ (\defref{universal horizontal morphism}) and an object $P \in \K$, the generalisation asserts the equivalence of the existence of a Yoneda embedding $\map{\yon_{A'}}{A'}P$ in $\K$ and that of a relative universal morphism $\map\eps{FP}{\ps A}$ in $\L$ (\defref{universal vertical morphism}). Applying  the generalisation to the endofunctor $\map{F \dfn \dl A \tens \dash}\K\K$ recovers \thmref{presheaf object equivalent to iota-small internal hom}.
	
	The main result and its generalisation only depend on \secrref{Kan extension section}{yoneda embeddings section}. \thmref{free cocompletion of the monoidal unit} below, which describes conditions ensuring that the Yoneda embedding $\map\yon I{\ps I}$ defines $\ps I$ as a free cocompletion of $I$, depends on \secref{cocompleteness section}.
	
	\subsection{Monoidal augmented virtual double categories}
	We start by introducing the notion of monoidal augmented virtual double category. Notice that the $2$"/category $\AugVirtDblCat$ of augmented virtual double categories, their functors and the transformations between them (\augsecref 3) has finite products. In particular the square $\K \times \K$ of an augmented virtual double category $\K$ can be taken to have as objects, vertical morphisms, horizontal morphisms and cells, ordered pairs of those in $\K$ where, in the case of a pair of cells $(\phi, \phi')$, the arities of $\phi$ and $\phi'$ coincide. Likewise the terminal object in $\AugVirtDblCat$ is the strict double category $1$ (see \augpropref{7.8}) generated by a single object $*$. Analogous to the fact that monoidal categories are pseudomonoids (see Section~3 of \cite{Day-Street97}) in the $2$"/category of categories, monoidal augmented virtual double categories are pseudomonoids in the $2$"/category $\AugVirtDblCat$ as follows.
	\begin{definition} \label{monoidal augmented virtual double category}
		A \emph{monoidal augmented virtual double category} is an augmented virtual double category $\K$ equipped with a \emph{monoidal product} $\map\tens{\K \times \K}\K$ and a \emph{monoidal unit} object $I \in \K$, with specified horizontal unit $\hmap{I_I}II$, as well as invertible \emph{associator} and \emph{unitor} transformations
		\begin{displaymath}
			\as\colon \tens \of (\tens \times \id) \xRar\iso \tens \of (\id \times \tens), \quad \lu\colon \tens \of (I \times \id) \xRar\iso \id \quad\text{and}\quad \ru\colon \tens \of (\id \times I) \xRar\iso \id
		\end{displaymath}
		satisfying the usual axioms.
		
		We call $(\K, \tens, I)$ \emph{cartesian monoidal} if $\map\tens{\K \times \K}\K$ and $\map I1\K$ form right adjoints to the diagonal functor $\map\Delta\K{\K \times \K}$ and the terminal functor $\map !\K1$ respectively, in the $2$"/category $\AugVirtDblCat$ (see \exref{precartesian pseudo double category} below).
	\end{definition}
	
	By the definition of functor of augmented virtual double categories and that of transformation of such functors (\augsecref 3), a monoidal structure on an augmented virtual double category $\K$ restricts to a monoidal structure $(\tens_\textup v, I, \as_\textup v, \lu_\textup v, \ru_\textup v)$ on the category $\K_\textup v$ of its objects and vertical morphisms. Notice that the invertible cells of the associator and unitor transformations are of the forms below, where $\hmap JAB$, $\hmap{J'}{A'}{B'}$ and $\hmap{J''}{A''}{B''}$ are any horizontal morphisms in $\K$. To prevent confusing the monoidal unit \emph{object} $I$ with horizontal unit \emph{morphisms} $\hmap{I_A}AA$, throughout this section the latter are consistently denoted with their object $A$ as subscript.
	\begin{displaymath}
		\begin{tikzpicture}[textbaseline]
			\matrix(m)[math35, column sep={9em,between origins}]{A \tens (A' \tens A'') & B \tens (B' \tens B'') \\ (A \tens A') \tens A'' & (B \tens B') \tens B'' \\};
			\path[map]	(m-1-1) edge[barred] node[above, inner sep=6pt] {$J \tens (J' \tens J'')$} (m-1-2)
													edge node[left] {$\as$} (m-2-1)
									(m-1-2) edge node[right] {$\as$} (m-2-2)
									(m-2-1) edge[barred] node[below, inner sep=6pt] {$(J \tens J') \tens J''$} (m-2-2);
			\path[transform canvas={xshift=4.5em}]	(m-1-1) edge[cell] node[right] {$\as$} (m-2-1);
		\end{tikzpicture}\qquad\qquad
		\begin{tikzpicture}[textbaseline]
			\matrix(m)[math35, column sep={4em,between origins}]{I \tens A & I \tens B \\ A & B \\};
			\path[map]	(m-1-1) edge[barred] node[above, inner sep=6pt] {$I_I \tens J$} (m-1-2)
													edge node[left] {$\lu$} (m-2-1)
									(m-1-2) edge node[right] {$\lu$} (m-2-2)
									(m-2-1) edge[barred] node[below] {$J$} (m-2-2);
			\path[transform canvas={xshift=2em}]	(m-1-1) edge[cell] node[right] {$\lu$} (m-2-1);
		\end{tikzpicture}\qquad
		\begin{tikzpicture}[textbaseline]
			\matrix(m)[math35, column sep={4em,between origins}]{A \tens I & B \tens I \\ A & B \\};
			\path[map]	(m-1-1) edge[barred] node[above, inner sep=6pt] {$J \tens I_I$} (m-1-2)
													edge node[left] {$\ru$} (m-2-1)
									(m-1-2) edge node[right] {$\ru$} (m-2-2)
									(m-2-1) edge[barred] node[below] {$J$} (m-2-2);
			\path[transform canvas={xshift=2em}]	(m-1-1) edge[cell] node[right] {$\ru$} (m-2-1);
		\end{tikzpicture}
	\end{displaymath}
	Given a unital object $X$ and a path $\hmap{\ul J = (J_1, \dotsc, J_n)}{A_0}{A_n}$ in $\K$ it will be useful to abbreviate $\hmap{X \tens \ul J \dfn (I_X \tens J_1, \dotsc, I_X \tens J_n)}{X \tens A_0}{X \tens A_n}$.
	
	\begin{example} \label{V-Prof is monoidal}
		A symmetry $\sm\colon X \tens Y \xrar\iso Y \tens X$ for a monoidal category $\V$ induces a monoidal structure on the unital virtual equipment $\enProf\V$ of $\V$"/profunctors, with the usual monoidal product $A \tens A'$ of $\V$"/categories $A$ and $A'$ (see e.g.\ Section~1.4 of \cite{Kelly82}) and the monoidal product $\hmap{J \tens J'}{A \tens A'}{B \tens B'}$ of $\V$"/profunctors $J$ and $J'$ defined by $(J \tens J')\bigpars{(x, x'), (y, y')} = J(x, y) \tens J'(x', y')$. Likewise a symmetric universe enlargement $\V \subset \V'$ (\exref{enriched yoneda embedding summary}) induces a monoidal structure on the augmented virtual equipment $\enProf{(\V, \V')}$ of $\V$"/profunctors between $\V'$"/categories.
	\end{example}
	
	\begin{example} \label{V-sProf is monoidal}
		As long as the monoidal product $\tens$ of a symmetric monoidal category $\V$ preserves small colimits on either side the monoidal structure on $\enProf\V$ restricts to the unital virtual double category $\ensProf\V \subset \enProf\V$ of small $\V$"/profunctors. Indeed if $\hmap JAB$ and $\hmap{J'}{A'}{B'}$ are small $\V$"/profunctors, with each $J(\dash, y)$ and $J'(\dash, y')$ ``generated'' by their actions on small sub"/$\V$"/categories $A_y \subseteq A$ and $A'_{y'} \subseteq A'$ in the sense of \augexref{2.8}, then by using the `Fubini formula' for coends (see Section~2.1 of \cite{Kelly82} for its dual) one easily sees that each $(J \tens J')\bigpars{\dash, (y, y')}$ is generated by its action on $A_y \tens A_{y'}$, regarded as a small sub"/$\V$"/category of $A \tens A'$.
	\end{example}
	
	\begin{example} \label{monoidal pseudo double category}
		Let $\nlDblCat$ denote the $2$"/category of pseudo double categories, normal lax double functors and transformations; see e.g.\ Section~6 of \cite{Shulman08}, and let $\DblCat$ denote its locally full sub"/$2$"/category generated by pseudo double functors. Let $\VirtDblCat_\textup u$ denote the $2$"/category of unital virtual double categories, normal functors and transformations; see \augsecref{10}. We have embeddings of $2$"/categories
		\begin{displaymath}
			\DblCat \hookrightarrow \nlDblCat \hookrightarrow \VirtDblCat_\textup u \xrar[\simeq]N \AugVirtDblCat_\textup u \hookrightarrow \AugVirtDblCat;
		\end{displaymath}
		see Section~2 of \cite{Dawson-Pare-Pronk06} (where virtual double categories are called `lax double categories') for $\nlDblCat \hookrightarrow \VirtDblCat_\textup u$ and \augsecref{10} for the $2$"/equivalence $N$. Under this composite a pseudo double category $\K$, with horizontal composition denoted $\hc$, is mapped to the augmented virtual double category $N(\K)$ with the same objects and morphisms as $\K$, whose unary cells $(J_1, \dotsc, J_n) \Rar K$ are cells \mbox{$J_1 \hc \dotsb \hc J_n \Rar K$} in $\K$ and whose nullary cells $(J_1, \dotsc, J_n) \Rar C$ are cells \mbox{$J_1 \hc \dotsb \hc J_n \Rar I_C$} in $\K$, where $I_C$ denotes the horizontal unit of the object $C$. Except for $\DblCat \hookrightarrow \nlDblCat$ the embeddings above are full and faithful.
		
		Analogous to the definition above, in Definition~2.9 of \cite{Shulman10} a \emph{monoidal pseudo double category} is defined to be a pseudomonoid $(\K, \tens, I)$ in the $2$"/category $\DblCat$. Since the composite above preserves finite products any monoidal pseudo double category $\K$ can be regarded as a monoidal augmented virtual double category $N(\K)$ in our sense.
	\end{example}
	
	\begin{example} \label{precartesian pseudo double category}
		In Definition~4.2.1 of \cite{Aleiferi18} a pseudo double category $\K$ is defined to be \emph{cartesian} if it forms a \emph{cartesian object} in the $2$"/category $\DblCat$, that is the diagonal pseudo double functor $\map\Delta\K{\K \times \K}$ and the terminal pseudo double functor $\map\term\K1$ admit right adjoints $\map\times{\K \times \K}\K$ and $\map 11\K$ in $\DblCat$. The pseudo double category $\Span\E$ of a spans in a category $\E$ with finite limits (\augexref{2.9}) is cartesian under the cartesian product of objects and spans; see Proposition~4.2.7 of \cite{Aleiferi18}.
		
		Using that the embedding $\DblCat \hookrightarrow \AugVirtDblCat$ above preserves finite products, any cartesian pseudo double category can be regarded as a cartesian object in $\AugVirtDblCat$ and thus, since cartesian objects canonically are pseudomonoids (see Remark~2.11 of \cite{Shulman10}), as a cartesian monoidal augmented virtual double category in the sense of \defref{monoidal augmented virtual double category}. We conclude that $\Span\E$, as a unital virtual equipment, admits a cartesian monoidal structure $(\times, 1)$. Notice that the latter restricts to a cartesian monoidal structure on the unital virtual equipment $\Rel(\E)$ of relations in $\E$ (\exref{internal modular relations}).
	\end{example}
	
	\begin{example} \label{ModRel is monoidal}
		Recall the $2$"/functor $\map\Mod\VirtDblCat\VirtDblCat_\textup u$ that maps a virtual double category $\K$ to the unital virtual double category $\Mod(\K)$ of monoids and bimodules in $\K$; see \augdefref{2.1}. $\Mod$ preserves finite products and hence the endo"/$2$"/functor
		\begin{displaymath}
			\AugVirtDblCat_\textup u \xrar U \VirtDblCat \xrar\Mod \VirtDblCat_\textup u \xrar[\simeq]N \AugVirtDblCat_\textup u,
		\end{displaymath}
		where $U$ forgets the nullary cells (\augpropref{3.3}), preserves (cartesian) monoidal structures on unital virtual double categories. Hence the cartesian monoidal structure on $\Rel(\E)$ of the previous example induces a cartesian monoidal structure $(\times, 1)$ on the unital virtual equipment $\ModRel(\E) \dfn (N \of \Mod)\bigpars{\Rel(\E)}$ of internal modular relations in $\E$ (\exref{internal modular relations}).
	\end{example}
	
	\subsection{Free cocompletion of the monoidal unit}
	Having introduced the notion of monoidal augmented virtual double category $\K = (\K, \tens, I)$, we now pause to describe conditions ensuring that the Yoneda embedding $\map\yon I\ps I$ for the monoidal unit defines $\ps I$ as the free cocompletion of $I$, in the sense of \defref{cocompletion}. More precisely we will show that $\ps I$ is the free $\mathcal C$"/cocompletion, with $\mathcal C$ the ideal of left diagrams
	\begin{displaymath}
		\mathcal C = \set{(d, J) \mid J \text{ is left composable}}
	\end{displaymath}
	where $\hmap JAB$ is left composable in the sense of \exref{left composable horizontal morphisms}: the pointwise right composites $(H \hc J)$ (\defref{pointwise right cocartesian path}) exist for all $\hmap HCB$. In order to state the conditions consider the nullary cartesian cell $\cell\cart{I_I}I$ on the left below that defines the horizontal unit $I_I$ (\defref{cartesian cells}) and, for any pair of horizontal morphisms $\hmap JAI$ and $\hmap HIB$, the monoidal product cell $\chi_{(J, H)} \dfn (\id_J \hc \cart) \tens (\cart \hc \id_H)$ on the right below. Because $\cell\cart{I_I}I$ is cocartesian (\lemref{companion identities lemma}), so are $\id_J \hc \cart$ and $\cart \hc \id_H$ by the pasting lemma (\auglemref{7.7}).
	\begin{displaymath}
		\begin{tikzpicture}[textbaseline]
				\matrix(m)[math35, column sep={1.75em,between origins}]{I & & I \\ & I & \\};
				\path[map]	(m-1-1) edge[barred] node[above] {$I_I$} (m-1-3);
				\path				(m-1-3) edge[eq, transform canvas={xshift=2pt}] (m-2-2)
										(m-1-1) edge[eq, transform canvas={xshift=-2pt}] (m-2-2);
				\draw				([yshift=0.333em]$(m-1-2)!0.5!(m-2-2)$) node[font=\scriptsize] {$\cart$};				
			\end{tikzpicture} \qquad\qquad\qquad \chi_{(J, H)} \dfn \begin{tikzpicture}[textbaseline]
						\matrix(m)[math35, column sep={5.2em,between origins}]{ A \tens I & I \tens I & I \tens B \\ A \tens I & & I \tens B \\ };
						\path[map]	(m-1-1) edge[barred] node[above] {$J \tens I_I$} (m-1-2)
												(m-1-2) edge[barred] node[above] {$I_I \tens H$} (m-1-3)
												(m-2-1) edge[barred] node[below] {$J \tens H$} (m-2-3);
						\path				(m-1-1) edge[eq] (m-2-1)
												(m-1-3) edge[eq] (m-2-3)
												(m-1-2) edge[cell, transform canvas={xshift=-4.2em}] node[right] {$(\id_J \hc \cart) \tens (\cart \hc \id_H)$} (m-2-2);
					\end{tikzpicture}
	\end{displaymath}
	\begin{theorem} \label{free cocompletion of the monoidal unit}
		Let $\K$ and $\map\yon I{\ps I}$ be as above. Assume that $\K$ has restrictions on the right as well as restrictions along vertical isomorphisms on the left (\defref{augmented virtual equipment}). If the companion $\yon_*$ exists then $\yon$ defines $\ps I$ as the free $\mathcal C$"/cocompletion of $I$ (\defref{cocompletion}) whenever the following conditions hold for any triple of morphisms $\hmap JAI$, $\hmap HIB$ and $\map fXB$:
		\begin{enumerate}[label=\textup{(\alph*)}]
			\item the cell $\chi_{(J, H)}$ on the right above is cocartesian;
			\item the restriction $H(\id, f)$ is preserved by the assignment $\id_J \tens \dash$.
		\end{enumerate}		
	\end{theorem}
	Notice that condition (a) means that $\chi_{(J, H)}$ defines $J \tens H$ as the horizontal composite $\pars{(J \tens I_I) \hc (I_I \tens H)}$ (\defref{pointwise right cocartesian path}).
	\begin{proof}
		By \exref{left composable horizontal morphisms} it suffices to prove that the pointwise right composite $(J \hc H)$ exists for any pair $\hmap JAI$ and $\hmap HIB$. We will do so by showing that the unique factorisation $\xi$ in the right"/hand side below is right pointwise cocartesian (\augdefref{9.1}), thus defining the restriction \mbox{$\hmap{K \dfn (J \tens H)(\inv{\mathfrak r}, \inv{\mathfrak l})}AB$} as the required pointwise right composite $(J \hc H)$ by \remref{right pointwise cocartesian and pointwise right cocartesian comparison}. Notice that the top row of cells in the left"/hand side below is composable because (in any monoidal category) $\map{\mathfrak r = \mathfrak l}{I \tens I}I$; see e.g.\ Section~VII.1 of \cite{MacLane98}.
		\begin{displaymath}
			\begin{tikzpicture}[textbaseline]
				\matrix(m)[math35, column sep={2.2em,between origins}]
					{ A & & I & & B \\
						A \tens I & & I \tens I & & I \tens B \\
						& A \tens I & & I \tens B & \\ };
				\path[map]	(m-1-1) edge[barred] node[above] {$J$} (m-1-3)
														edge node[left] {$\inv{\mathfrak r}$} (m-2-1)
										(m-1-3) edge[barred] node[above] {$H$} (m-1-5)
														edge node[right, inner sep=1pt] {$\inv{\mathfrak l}$} (m-2-3)
										(m-1-5) edge node[right] {$\inv{\mathfrak l}$} (m-2-5)
										(m-2-1) edge[barred] node[below] {$J \tens I_I$} (m-2-3)
										(m-2-3) edge[barred] node[below, xshift=-1pt] {$I_I \tens H$} (m-2-5)
										(m-3-2) edge[barred] node[below, xshift=-2pt] {$J \tens H$} (m-3-4);
				\path				(m-1-2) edge[cell] node[right] {$\inv{\mathfrak r}$} (m-2-2)
										(m-1-4) edge[cell] node[right] {$\inv{\mathfrak l}$} (m-2-4)
										(m-2-1) edge[transform canvas={xshift=-1pt}, eq] (m-3-2)
										(m-2-3) edge[cell] node[right] {$\chi_{(J, H)}$} (m-3-3)
										(m-2-5) edge[transform canvas={xshift=1pt}, eq] (m-3-4);										
			\end{tikzpicture} \quad = \quad \begin{tikzpicture}[textbaseline]
				\matrix(m)[math35, column sep={2.2em,between origins}]
					{ A & & I & & B \\
						& A & & B & \\
						& A \tens I & & I \tens B & \\ };
				\path[map]	(m-1-1) edge[barred] node[above] {$J$} (m-1-3)
										(m-1-3) edge[barred] node[above] {$H$} (m-1-5)
										(m-2-2) edge[barred] node[below] {$K$} (m-2-4)
														edge node[left] {$\inv{\mathfrak r}$} (m-3-2)										
										(m-2-4)	edge node[right] {$\inv{\mathfrak l}$} (m-3-4)
										(m-3-2) edge[barred] node[below, xshift=-2pt] {$J \tens H$} (m-3-4);
				\path				(m-1-3) edge[cell] node[right] {$\xi$} (m-2-3)
										(m-1-1) edge[transform canvas={xshift=-1pt}, eq] (m-2-2)
										(m-1-5) edge[transform canvas={xshift=1pt}, eq] (m-2-4);
				\draw				($(m-2-3)!0.5!(m-3-3)$) node[font=\scriptsize] {$\cart$};
			\end{tikzpicture}
		\end{displaymath}
		
		We will start by showing that $\xi$ is cocartesian (\defref{cocartesian path}). First notice that the cartesian cell in the right"/hand side above admits an inverse $\cell{\inv{\cart}}{J \tens H}K$, which can be seen by factorising the identity cell $\id_{J \tens H}$ through $\cart$. Next notice that the path of cells $(\inv{\mathfrak r}, \inv{\mathfrak l})$, that makes up the top row of the left"/hand side, is cocartesian: this follows from the fact that $\cell{\inv{\mathfrak r}}J{J \tens I_I}$, $\cell{\inv{\mathfrak l}}H{I_I \tens H}$, as well as cartesian cells that define restrictions along $\map{\inv{\mathfrak r}}A{A \tens I}$ or $\map{\inv{\mathfrak l}}B{B \tens I}$, are invertible cells. Likewise $\inv{\cart}$ is cocartesian so that, if $\chi_{(J, H)}$ is cocartesian then $\xi = \inv{\cart} \of \chi_{(J, H)} \of (\inv{\mathfrak r}, \inv{\mathfrak l})$ is so too by the pasting lemma (\auglemref{7.7}).
		
		To show that $\xi$ is in fact right pointwise cocartesian fix a vertical morphism $\map fXB$ and consider the unique factorisation $\cell{\xi'}{\pars{J, H(\id, f)}}{K(\id, f)}$ in $\xi \of (\id_J, \cart_{H(\id, f)}) = \cart_{K(\id, f)} \of \xi'$, as in \augdefref{9.1} and where the cartesian cells define the restrictions $H(\id, f)$ and $K(\id, f)$; we have to show that $\xi'$ is again cocartesian. To do so consider the equality below, whose identities follow from the functoriality of $\tens$, the naturality of the cells $\inv{\mathfrak l}$ (\augdefref{3.2}), and the definitions of $\xi$ and $\xi'$.
		\begin{align*}
			\pars{\id_J \tens \cart_{H(\id, f)}} &\of \chi_{\pars{J, H(\id, f)}} \of \pars{\inv{\mathfrak r}, \inv{\mathfrak l}} \\
			&= \chi_{(J, H)} \of \bigpars{\id_{J \tens_{I_I}}, \id_{I_I} \tens \cart_{H(\id, f)}} \of (\inv{\mathfrak r}, \inv{\mathfrak l}) \\
			&= \chi_{(J, H)} \of \pars{\inv{\mathfrak r}, \inv{\mathfrak l}} \of \pars{\id_J, \cart_{H(\id, f)}} \\
			&= \cart \of \xi \of \pars{\id_J, \cart_{H(\id, f)}} = \cart \of \cart_{K(\id, f)} \of \xi'
		\end{align*}
		Now write $\cell{\cart'}{K(\id, f)}{J \tens H(\id, f)}$ for the factorisation of $\cart \of \cart_{K(\id, f)}$, in the right"/hand side above, through the cartesian cell $\id_J \tens \cart_{H(\id, f)}$; $\cart'$ is cartesian by the pasting lemma (\lemref{pasting lemma for cartesian cells}). We thus obtain
		\begin{displaymath}
			\pars{\id_J \tens \cart_{H(\id, f)}} \of \chi_{\pars{J, H(\id, f)}} \of \pars{\inv{\mathfrak r}, \inv{\mathfrak l}} = \pars{\id_J \tens \cart_{H(\id, f)}} \of \cart' \of \xi',
		\end{displaymath}
		so that $\chi_{(J, H(\id, f))} \of (\inv{\mathfrak r}, \inv{\mathfrak l}) = \cart' \of \xi'$ by the uniqueness of factorisations through cartesian cells. Since $\cart'$ is cartesian the previous argument, proving the cocartesianness of $\xi$, applies to the latter identity so that, because $\chi_{(J, H(\id, f))}$ is cocartesian by assumption, $\xi'$ is cocartesian too. This completes the proof.
	\end{proof}
	
	\subsection{Horizontal duals}
	In order to explain the notion of `weak horizontal right dual' defined below consider the dual $\dl A$ of a category $A$ enriched in a symmetric monoidal category $\V'$, with hom"/objects $\dl A(x, y) = A(y, x)$ (see Section~1.4 of \cite{Kelly82}). Assuming that $\V'$ has large colimits preserved by $\tens'$ on both sides, so that $\enProf{\V'}$ is an equipment (\augexref{9.2}), it is shown in Theorem~5.1 of \cite{Shulman10} that the monoidal structure on $\enProf{\V'}$ restricts to a monoidal structure on the horizontal bicategory $H(\enProf{\V'})$ that it contains, consisting of $\V'$"/categories, $\V'$"/profunctors and the horizontal cells (i.e.\ transformations) between them. In the monoidal bicategory $H(\enProf{\V'})$ $\dl A$ forms the `right bidual' of $A$, in the sense of Definition~6 of \cite{Day-Street97}, defined as such by the `exact copairing' $\hmap\iota I{\dl A \tens' A}$ given by $\iota\bigpars{*, (x,y)} = A(x, y)$ which, for any $\V'$"/categories $B$ and $C$, induces an equivalence of categories
	\begin{displaymath}
		\map{\iota^\flat}{H(\enProf{\V'})(A \tens' B, C)}{H(\enProf{\V'})(B, \dl A \tens' C)}
	\end{displaymath}
	given by $\iota^\flat(J) \dfn (\iota \tens' I_B) \hc (I_{\dl A} \tens' J)$ where $I_B$ and $I_{\dl A}$ are unit profunctors. Using the `Yoneda isomorphisms' (see e.g.\ Formula~3.71 of \cite{Kelly82}) we find that $\iota^\flat(J)$ can simply be defined as $\iota^\flat(J)\bigpars{y, (x, z)} \dfn J\bigpars{(x, y), z}$. It follows that, for a symmetric universe enlargement $\V \subset \V'$ (\exref{enriched yoneda embedding summary}), the equivalences above restrict to equivalences \mbox{$H\bigpars{\enProf{(\V, \V')}}(A \tens' B, C) \simeq H\bigpars{\enProf{(\V, \V')}}(B, \dl A \tens' C)$} of categories of $\V$"/profunctors between $\V'$"/categories.  Taking $B = I$ and restricting $A$ and $C$ to be $\V$"/categories, we obtain an equivalence between $\V$"/profunctors $A \brar C$ and $\V$"/profunctors $I \brar \dl A \tens C$. Notice that the latter (trivially) are small $\V$"/profunctors (\augexref{2.8}) while the former are not in general, so that these equivalences do not restrict to analogous equivalences for small $\V$"/profunctors. They do however restrict to full and faithful functors between categories of small $\V$"/profunctors, and we conclude that the copairing $\iota$ induces full and faithful functors \mbox{$\map{\iota^\flat}{H(\K)(A, C)}{H(\K)(I, \dl A \tens' C)}$} in each of the cases $\K = \enProf{\V'}$, $\K = \enProf{(\V, \V')}$ and $\K = \ensProf\V$. Since $\enProf{\V'}$ is a pseudo double category (\augexref{9.2}) it is straightforward to see that, in the cases $\K = \enProf{\V'}$ and $\K = \enProf{(\V, \V')}$, the full and faithfulness of $\iota^\flat$ is equivalent to condition (b) below. The same holds true for $\K = \ensProf\V$ whenever $\V$ is small cocomplete and $\tens$ preserves colimits on both sides, so that $\ensProf\V$ is a pseudo double category (\augexref{9.3}).
	\begin{definition} \label{horizontal dual}
		Let $A$ be an object of a monoidal augmented virtual double category $\K = (\K, \tens, I)$. A \emph{weak horizontal right dual} of $A$ (shortly \emph{weak horizontal dual} of $A$) is a unital object $\dl A$ equipped with a horizontal \emph{copairing} $\hmap\iota I{\dl A \tens A}$ satisfying the following conditions:
		\begin{enumerate}[label=\textup{(\alph*)}]
			\item for each $\hmap JAB$ the pointwise right composite (\defref{pointwise right cocartesian path}) below exists, called the \emph{adjunct} of $J$;
			\begin{displaymath}
				\hmap{\flad J \dfn \iota \hc (\dl A \tens J)}I{\dl A \tens B}
			\end{displaymath}
			\item the assignments below, between collections of horizontal cells in $\K$ as shown and where the cell $\cocart$ defines $\flad J$, is a bijection.
		\end{enumerate}
		\begin{displaymath}
			\begin{tikzpicture}
				\matrix(m)[math35, xshift=-14em]{A & B \\ A & B \\};
				\path[map]	(m-1-1) edge[barred] node[above] {$\ul H$} (m-1-2)
										(m-2-1) edge[barred] node[below] {$J$} (m-2-2);
				\path				(m-1-1) edge[eq] (m-2-1)
										(m-1-2) edge[eq] (m-2-2);
				\path[transform canvas={xshift=1.75em}]	(m-1-1) edge[cell] (m-2-1);
			\matrix(m)[math35, column sep={2.3em,between origins}, xshift=8.5em]{I & & \dl A \tens A & & \dl A \tens B\\ & I & & \dl A \tens B & \\};
				\path[map]	(m-1-1) edge[barred] node[above] {$\iota$} (m-1-3)
										(m-1-3) edge[barred] node[above, inner sep=6pt] {$\dl A \tens \ul H$} (m-1-5)
										(m-2-2) edge[barred] node[below] {$\flad J$} (m-2-4);
				\path				(m-1-1) edge[eq] (m-2-2)
										(m-1-5) edge[eq] (m-2-4)
										(m-1-3) edge[cell] (m-2-3);
				
				\draw[font=\Large] (-16.6em,0) node {$\lbrace$}
										(-11.4em,0) node {$\rbrace$}
										(2.6em,0) node {$\lbrace$}
										(14.2em,0) node {$\rbrace$};
				\path[map]	(-9.4em,0) edge node[above] {$\cocart \of (\id_\iota, \dl A \tens \dash)$} (0.6em,0);
			\end{tikzpicture}
		\end{displaymath}
		If moreover for every $\hmap KI{\dl A \tens B}$ in $\K$ there exists a morphism $\hmap JAB$ with $\flad J \iso K$ then we call $\dl A$ the \emph{horizontal dual} of $A$.
	\end{definition}
	
	\begin{example} \label{horizontal dual of internal preorders}
		The horizontal dual $\dl A$ of an internal preorder $A = (A, \alpha)$ in $\ModRel(\E)$ (\exref{ModRel is monoidal}) is $\dl A \dfn (A, \rev\alpha)$, where $\rev\alpha = (A \xlar{\alpha_1} \alpha \xrar{\alpha_0} A)$ is the reverse of $\alpha$. Its horizontal copairing $\hmap\iota 1{\dl A \times A}$ is the modular subobject $\alpha \xrightarrow{(\alpha_0, \alpha_1)} A \times A$ (\exref{yoneda embedding for the terminal object in ModRel(E)}), whose right action is induced by the multiplication $\cell{\bar\alpha}{(\alpha, \alpha)}\alpha$ of $A$; similarly the adjunct $\hmap{\flad J}1{\dl A \times B}$ of an internal modular relation $\hmap JAB$ is $J$ itself considered as the modular subobject $\flad J \dfn \bigbrks{J \xrar{(j_0, j_1)} A \times B}$. Indeed consider the cell $(\iota, \dl A \times J) \Rar \flad J$ induced by the action of $A$ on $J$. It is a split epimorphism whose section is induced by the unit cell $\cell{\tilde\alpha}A\alpha$ of $A$, so that it is (pointwise) cocartesian in $\Rel(\E)$ (\augexref{7.4}) and hence pointwise right cocartesian by \exref{internal modular relations} and \remref{right pointwise cocartesian and pointwise right cocartesian comparison}; this proves condition (a) above. Recall that $\ModRel(\E)$ is locally thin (\exref{internal modular relations}) so that, to prove (b), it suffices to show that the assignments above are surjective. But that follows immediately from the fact that there exists a cell $H_1 \hc \dotsb \hc H_n \Rar \iota \hc (\dl A \times H_1) \hc \dotsb \hc (\dl A \times H_n)$ in $\Span\E$, induced by the unit cell $\tilde\alpha$, combined with the fact that the forgetful functor $\map U{\ModRel(\E)}{\Span\E}$ is locally full and faithful.
	\end{example}
	
	\subsection{Internal homs}
	If besides a weak horizontal dual $\dl A$ the Yoneda embedding $\map\yon I{\ps I}$ for the monoidal unit $I$ exists then we can compose the assignment $\flad{(\dash)}$ of \defref{horizontal dual} with the assignment $\cur{(\dash)}$ given by the Yoneda axiom (\defref{yoneda embedding}), thus obtaining a composite assignment of morphisms of the form
	\begin{displaymath}
		\set{A \brar B} \quad \xrar{\flad{(\dash)}} \quad \set{I \brar \dl A \tens B} \quad \xrar{\cur{(\dash)}} \quad \set{\dl A \tens B \to \ps I}
	\end{displaymath}
	which, using \propref{equivalence from yoneda embedding}, is essentially surjective if and only if $\dl A$ is a horizontal dual and $\yon$ admits nullary restrictions (\defref{yoneda embedding}). In the definition below we define to be `$\iota$"/small' those morphisms $\dl A \tens B \to \ps I$ in its essential image, besides defining the internal hom for such morphisms. Alternative conditions equivalent to that of $\iota$"/smallness are given in \lemref{iota-small axioms} below.
	
	\begin{definition} \label{internal iota-small hom}
		Let $\K = (\K, \tens, I)$ be a monoidal augmented virtual double category. Assume that the Yoneda embedding $\map\yon I{\ps I}$ exists and let $A$ be any object equipped with a weak horizontal dual $\dl A$, defined by a copairing $\hmap\iota I{\dl A \tens A}$.
		\begin{enumerate}[label=-]
			\item A morphism $\map f{\dl A \tens B}{\ps I}$ is called \emph{$\iota$"/small} if there exists a morphism $\hmap JAB$ such that $f \iso \cur{(\flad J)}$.
			\item We denote by $(\dl A \tens \dash \vs \ps I)_\iota \subseteq \dl A \tens \dash \vs \ps I$ (\defref{universal vertical morphism}) the full subcategory generated by all $\iota$"/small morphisms $\dl A \tens B \to \ps I$.
			\item The \emph{internal $\iota$"/small hom} $\iotahom$, if it exists, is an object $\iotahom$ of $\K$ equipped with a universal morphism $\map\ev{\dl A \tens \iotahom}{\ps I}$ from $\dl A \tens \dash$ to $\ps I$ relative to $(\dl A \tens \dash \vs \ps I)_\iota$ (\defref{universal vertical morphism}); $\ev$ is called \emph{evaluation}.
		\end{enumerate}
	\end{definition}
	
	The next result follows from the discussion preceding the definition; it can also be obtained by instantiating the functor $F$ of \propref{universality and iota-smallness} below by $\dl A \tens \dash$.
	\begin{proposition} \label{iota-smallness and the horizontal dual}
		Let $\map\yon I{\ps I}$ and $\hmap\iota I{\dl A \tens A}$ be as above. Every morphism of the form $\dl A \tens B \to \ps I$ is $\iota$"/small if and only if $\dl A$ is the horizontal dual of $A$ (\defref{horizontal dual}) and $\yon$ admits nullary restrictions (\defref{yoneda embedding}).
	\end{proposition}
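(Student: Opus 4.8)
The plan is to turn the proposition into a routine statement about essentially surjective assignments, leaning on the two full-and-faithfulness results already available, \defref{horizontal dual}(b) and \propref{equivalence from yoneda embedding}. First I would record the reformulation that makes it nearly tautological: by \defref{internal iota-small hom} a morphism $\map f{\dl A \tens B}{\ps I}$ is $\iota$-small exactly when $f \iso \cur{(\flad J)}$ for some $\hmap JAB$, where $\flad{(\dash)}$ is the assignment of \defref{horizontal dual} and $\cur{(\dash)}$ the one supplied by the Yoneda axiom (\defref{yoneda embedding}). Hence ``every morphism of the form $\dl A \tens B \to \ps I$ is $\iota$-small'' says exactly that, for each object $B$, the composite assignment
\begin{displaymath}
	H(\K)(A, B) \xrar{\flad{(\dash)}} H(\K)(I, \dl A \tens B) \xrar{\cur{(\dash)}} V(\K)(\dl A \tens B, \ps I)
\end{displaymath}
is essentially surjective on objects.

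Next I would bring in the two structural facts about the factors. The assignment $\flad{(\dash)}$ is full and faithful by \defref{horizontal dual}(b), and by the definition of \emph{horizontal} dual (as opposed to merely weak horizontal dual) it is essentially surjective for every $B$ precisely when $\dl A$ is the horizontal dual of $A$. The assignment $\cur{(\dash)} \colon H(\K)(I, C) \to V(\K)(C, \ps I)$ is full and faithful for every object $C$ by \propref{equivalence from yoneda embedding}, and it is essentially surjective for every $C$ if and only if $\yon$ admits nullary restrictions: if $\yon$ does, then $\cur{(\dash)}$ is an equivalence by \propref{equivalence from yoneda embedding}, hence essentially surjective; conversely, if every $\map fC{\ps I}$ is isomorphic to some $\cur J$ with $\hmap JIC$, then $\ps I(\yon, f) \iso \ps I(\yon, \cur J) \iso J$ exists, the isomorphism of restrictions being \auglemref{4.15}.

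Given these, the ``if'' direction is immediate, since a composite of essentially surjective assignments is essentially surjective. For the ``only if'' direction I would argue: assume the composite is essentially surjective for each $B$. Applying it to $\cur K$, for an arbitrary $\hmap KI{\dl A \tens B}$, produces $\hmap JAB$ with $\cur{(\flad J)} \iso \cur K$; since $\cur{(\dash)}$ is full and faithful it reflects isomorphisms, so $\flad J \iso K$, which is essential surjectivity of $\flad{(\dash)}$, i.e.\ $\dl A$ is the horizontal dual of $A$. Moreover, the composite factoring through $\cur{(\dash)}$, its essential surjectivity forces $\cur{(\dash)} \colon H(\K)(I, \dl A \tens B) \to V(\K)(\dl A \tens B, \ps I)$ to be essentially surjective, so by the previous paragraph the nullary restriction $\ps I(\yon, f)$ exists for every $\map f{\dl A \tens B}{\ps I}$; to upgrade this to ``$\yon$ admits nullary restrictions'' for all domains one may appeal to the general \propref{universality and iota-smallness}, the present statement being its instance $F \dfn \dl A \tens \dash$.

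The step I expect to be the real obstacle is precisely that last upgrade in the ``only if'' direction: the hypothesis constrains only morphisms whose domain has the shape $\dl A \tens B$, whereas ``$\yon$ admits nullary restrictions'' quantifies over all domains. The clean route is therefore to prove \propref{universality and iota-smallness} first --- where the analogous statement is phrased for a general functor $\map F\K\L$, so that this quantification is handled once and for all --- and then specialise; short of that, one must see directly, from the monoidal structure (and the unitor $\ru$, giving $\dl A \iso \dl A \tens I$) together with the now-established fact that $\dl A$ is the horizontal dual, that the restrictions $\ps I(\yon, g)$ needed in the sequel all arise. Everything else --- the reformulation, the reflection of isomorphisms along the full and faithful assignments, and the elementary composite/factor juggling of essential surjectivity --- is formal.
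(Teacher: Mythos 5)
Your proposal is correct and is essentially the paper's own proof: the paper derives this proposition from exactly the reformulation you give --- $\iota$-smallness of every morphism $\dl A \tens B \to \ps I$ means essential surjectivity of the composite assignment $\flad{(\dash)}$ followed by $\cur{(\dash)}$, which is then analysed factor by factor via \propref{equivalence from yoneda embedding} and the definition of horizontal dual --- and it offers, as an alternative, precisely your fallback of instantiating \propref{universality and iota-smallness} at $F = \dl A \tens \dash$. Concerning the obstacle you flag in the `only if' direction: the paper does not contain the upgrade you are worried about either; its proof of \propref{universality and iota-smallness} simply asserts that $\iota$-smallness of all morphisms $FB \to \ps A$ ``immediately implies'' that $\yon_A$ admits nullary restrictions, which, read literally against \defref{yoneda embedding}, only produces the restrictions $\ps A(\yon_A, f)$ for $f$ whose domain has the form $FB$ (here $\dl A \tens B$). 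So deferring to that proposition, as you suggest, reproduces the paper's argument rather than supplying a genuinely missing step, and your hedge points at an imprecision in how the conclusion is phrased in the paper (only the restrictions along morphisms out of objects $\dl A \tens B$ are actually obtained, which is also all that the `if' direction and the subsequent applications use) rather than at a gap in your reasoning relative to the paper's.
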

	Assume that every morphism of the form $\dl A \tens B \to \ps I$ is $\iota$"/small, that is we have \mbox{$(\dl A \tens \dash \vs \ps I)_\iota = \dl A \tens \dash \vs \ps I$} in \defref{internal iota-small hom}, and that the functor $\map{\dl A \tens \dash}\K\K$ admits a right adjoint $\brks{\dl A, \dash}$. It then follows from \exref{counit components are universal} that the evaluation \mbox{$\map{\eps_{\ps I}}{\dl A \tens \brks{\dl A, \ps I}}{\ps I}$} defines $\brks{\dl A, \ps I}$ as the internal $\iota$"/small hom $\iotahom$. We are thus interested in monoidal augmented virtual double categories that are `closed' in the following sense.
	\begin{definition} \label{closed monoidal augmented virtual double categories}
		Let $\K = (\K, \tens, I)$ be a (cartesian) monoidal augmented virtual double category (\defref{monoidal augmented virtual double category}). We call $\K$ \emph{closed (cartesian) monoidal} if, for every unital object $A \in \K$, the functor $\map{A \tens \dash}\K\K$ admits a right adjoint $\inhom{A, \dash}$ in the $2$"/category $\AugVirtDblCat$ (\augsecref 3).
	\end{definition}
	\begin{example}
		Let $\K$ be a cartesian pseudo double category (\exref{precartesian pseudo double category}). Using that the embedding $\map N\nlDblCat\AugVirtDblCat$ of $2$"/categories of \exref{monoidal pseudo double category} is full and faithful, it follows that $N(\K)$ is closed cartesian monoidal in the above sense if and only if $\K$ is \emph{precartesian closed} in the sense of Definition~4.2 of \cite{Niefield20} such that, for each $A \in \K$, the right adjoint $(\dash)^A$ to $\map{\dash \times A}\K\K$ is a normal lax functor (see e.g.\ Definition~6.1 of \cite{Shulman08}).
	\end{example}
	
	\begin{example} \label{internal enriched hom}
		Let $\V' = (\V', \tens', I')$ be a closed symmetric monoidal category that is large complete and let $A$ be any (large) $\V'$"/category. Recall from Sections~2.2 and 2.3 of \cite{Kelly82} that, restricted to the $2$"/category $\enCat{\V'} = V(\enProf{\V'})$ of $\V'$"/categories, the endo"/$2$"/functor $A \tens' \dash$ (\exref{V-Prof is monoidal}) admits a right adjoint $\map{\inhom{A, \dash}'}{\enCat{\V'}}{\enCat{\V'}}$, whose image $\brks{A, C}'$ of a $\V'$"/category $C$ is the $\V'$"/category of $\V'$"/functors $\map pAC$.
		
		It is straightforward to check that the adjoint functor pairs $A \tens' \dash \ladj \inhom{A, \dash}'$ on $\enCat{\V'}$ extend to adjoint functor pairs $A \tens' \dash \ladj \inhom{A, \dash}'$ on $\enProf{\V'}$, thus making $\enProf{\V'}$ into a closed monoidal unital virtual equipment. The extension of $\inhom{A, \dash}'$ maps a $\V'$"/profunctor $\hmap KCD$ to the $\V'$"/profunctor $\hmap{\inhom{A, K}'}{\inhom{A, C}'}{\inhom{A, D}'}$ given by the ends
		\begin{displaymath}
			\inhom{A, K}'(p, q) \dfn \int_{x \in A} K(px, qx) \qquad\qquad\qquad (p \in \inhom{A, C}', q \in \inhom{A, D}'),
		\end{displaymath}
		where $K$ is regarded as a $\V'$"/profunctor $\map K{\op C \tens' D}{\V'}$; these ends exist because $\V'$ is assumed to be large complete. Notice that if $K = I_C$ is a unit $\V'$"/profunctor then $\inhom{A, I_C}'$ consists of the hom objects of the functor $\V'$"/category $\brks{A, C}'$.
		
		Taking $\V' = \Set'$ in previous we recover Corollary~4.8 of \cite{Niefield20}, which proves that the pseudo double category $\enProf{\Set'}$ (\augexref{2.4}) of $\Set'$"/profunctors between (locally large) categories is precartesian closed; see the previous example.
	\end{example}
	
	\begin{example}	\label{ModRel(E) is closed cartesian monoidal}
		Let $\E$ be a cartesian closed category with pullbacks. As is shown in Section~2 of \cite{Carboni-Street86}, the locally thin $2$"/category $\PreOrd(\E) = V\bigpars{\ModRel(\E)}$ of internal preorders in $\E$ (\exref{internal modular relations}) inherits a closed cartesian monoidal structure from that of $\E$ as follows. The exponential $\inhom{A, C}$ of internal preorders $A = (A, \alpha)$ and $C = (C, \gamma)$ is constructed as the pullback on the left below, and its internal ordering is obtained by pulling the internal ordering $(C^A \xlar{\gamma_0^A} \gamma^A \xrar{\gamma_1^A} C^A)$ on $C^A$ back along \mbox{$\inhom{A, C} \times \inhom{A, C} \rightarrowtail C^A \times C^A$}. Notice that $\inhom{A, C}$ is an internal partial order (\exref{internal modular relations}) whenever $C$ is so and that $\inhom{A, C} \iso C^A$ whenever $A = (A, I_A)$ is discrete. Underlying the order preserving evaluation $\map\eps{A \times \inhom{A, C}}C$ in $\PreOrd(\E)$ is the composite $A \times \inhom{A, C} \rightarrowtail A \times C^A \xrar\eps C$ in $\E$.
		\begin{displaymath}
			\begin{tikzpicture}[baseline]
				\matrix(m)[math35, column sep={5.5em,between origins}]{\inhom{A, C} & C^A \\ \gamma^\alpha & C^\alpha \times C^\alpha \\};
				\path[map]	(m-1-1)	edge[mono] (m-1-2)
														edge (m-2-1)
										(m-1-2) edge node[right] {$(C^{\alpha_0}, C^{\alpha_1})$} (m-2-2)
										(m-2-1) edge[mono] node[below] {$(\gamma_0^\alpha, \gamma_1^\alpha)$} (m-2-2);
				\coordinate (hook) at ($(m-1-1)+(0.6,-0.4)$);
				\draw (hook)+(0,0.17) -- (hook) -- +(-0.17,0);
			\end{tikzpicture} \qquad \qquad \qquad \begin{tikzpicture}[baseline]
				\matrix(m)[math35, column sep={7.0em,between origins}]{\inhom{A, K} & K^A \\ \inhom{A, C} \times \inhom{A, D} & C^A \times D^A \\};
				\path[map]	(m-1-1)	edge[mono] (m-1-2)
														edge[mono] node[left] {$\bigpars{\inhom{A, K}_0, \inhom{A, K}_1}$} (m-2-1)
										(m-1-2) edge[mono] node[right] {$(k_0^A, k_1^A)$} (m-2-2)
										(m-2-1) edge[mono] (m-2-2);
				\coordinate (hook) at ($(m-1-1)+(0.6,-0.4)$);
				\draw (hook)+(0,0.17) -- (hook) -- +(-0.17,0);
			\end{tikzpicture}
		\end{displaymath}
		
		It is straightforward to show that the adjoint functor pairs $A \times \dash \ladj \inhom{A, \dash}$ on $\PreOrd(\E)$ extend to adjoint functor pairs $A \times \dash \ladj \inhom{A, \dash}$ on $\ModRel(\E)$, thus making $\ModRel(\E)$ into a closed cartesian monoidal unital virtual equipment. The image $\hmap{\inhom{A, K}}{\inhom{A, C}}{\inhom{A, D}}$ of an internal modular relation $\hmap KCD$ is given by the left morphism in the pullback square on the right above.
	\end{example}
	
	\subsection{Yoneda embeddings and internal homs}
	The following theorem is the main result of this section. As promised in its introduction, for an object $A$ in a monoidal augmented virtual double category $\K = (\K, \tens, I)$ it relates the existence of a Yoneda embedding $\map{\yon_A}A{\ps A}$ to that of the internal $\iota$"/small hom $\iotahom$.
	\begin{theorem} \label{presheaf object equivalent to iota-small internal hom}
	 	In a monoidal augmented virtual double category $\K = (\K, \tens, I)$ that has restrictions on the right (\defref{augmented virtual equipment}) let $\map\yon I{\ps I}$ be a Yoneda embedding (\defref{yoneda embedding}). Let $A$ be a unital object (\defref{cartesian cells}) whose weak horizontal dual $\dl A$ exists, with copairing $\hmap\iota I{\dl A \tens A}$ (\defref{horizontal dual}), such that the functor $\map{\dl A \tens \dash}\K\K$ preserves restrictions on the right. Consider any object $P$ equipped with morphisms $\map{\yon_A}AP$ and $\map\ev{\dl A \tens P}{\ps I}$. If the companion $\hmap{\yon_{A*}}AP$ exists then the following are equivalent:
	 	\begin{enumerate}
	 		\item[\textup{(ye)}] $\map{\yon_A}AP$ is a Yoneda embedding that defines $P$ as the object of presheaves on $A$ (\defref{yoneda embedding}) and there exists a cartesian cell as on the left below;
	 		\item[\textup{(ih)}] $\map\ev{\dl A \tens P}{\ps I}$ is the evaluation that defines $P$ as the internal $\iota$"/small hom $\iotahom$ (\defref{internal iota-small hom}) and there exists a cartesian cell as on the right below.
	 	\end{enumerate}
	 	\begin{displaymath}
			\begin{tikzpicture}[baseline]
				\matrix(m)[math35, column sep={1.8em,between origins}]{I & & \dl A \tens P \\ & \ps I & \\};
				\path[map]	(m-1-1) edge[barred] node[above] {$\flad \yon_{A*}$} (m-1-3)
														edge[transform canvas={xshift=-2pt}] node[left] {$\yon$} (m-2-2)
										(m-1-3) edge[transform canvas={xshift=2pt}] node[right] {$\ev$} (m-2-2);
				\draw				([yshift=0.333em]$(m-1-2)!0.5!(m-2-2)$) node[font=\scriptsize] {$\cart$};
			\end{tikzpicture} \qquad \qquad \qquad \qquad \qquad \qquad \begin{tikzpicture}[baseline]
				\matrix(m)[math35, column sep={0.9em,between origins}]
					{I & & & & \dl A \tens A \\ & & & \mspace{11mu} \dl A \tens P & \\ & & \ps I & & \\};
				\path[map]	(m-1-1) edge[barred] node[above] {$\iota$} (m-1-5)
														edge[transform canvas={xshift=-1pt}, ps] node[left] {$\yon$} (m-3-3)
										(m-1-5) edge[transform canvas={xshift=1pt}, ps] node[right] {$\dl A \tens \yon_A$} (m-2-4)
										(m-2-4) edge[transform canvas={xshift=1pt}, ps] node[right] {$\ev$} (m-3-3);
				\draw				([yshift=1.333em]$(m-1-3)!0.5!(m-3-3)$) node[font=\scriptsize] {$\cart$};
			\end{tikzpicture}
		\end{displaymath}
	\end{theorem}
	
	For reference we summarise the main definitions used in the proof of the implication (ih) $\Rar$ (ye) above; for further details apply \defref{yoneda embedding from universal morphisms} below to the endofunctor $F \dfn \dl A \tens \dash$. Let $\K$, $\map\yon I{\ps I}$, $A$ and $\hmap\iota I{\dl A \tens A}$ be as in the theorem, and let $\map\ev{\dl A \tens \iotahom}{\ps I}$ be the evaluation defining the $\iota$"/small hom $\iotahom$ (\defref{internal iota-small hom}). Like any functor $\dl A \tens \dash$ preserves the horizontal unit $I_A$ (\augcororef{5.5}), that is $\dl A \tens I_A \iso I_{\dl A \tens A}$. It follows that $\flad I_A \iso \iota$ so that $\map{\cur{(\flad I_A)} \iso \cur\iota}{\dl A \tens A}{\ps I}$ by the functoriality of $\cur{(\dash)}$ (\propref{equivalence from yoneda embedding}); hence $\cur\iota$ is $\iota$"/small. Using the universality of the evaluation we obtain a morphism $\map{\yon_A}A\iotahom$ such that $\ev \of (\dl A \tens \yon_A) \iso \cur\iota$. Composing the latter isomorphism with the cartesian cell defining $\cur\iota$ (\defref{yoneda embedding}) we obtain a nullary cartesian cell $\iota \Rar \ps I$ as on the right above, so that condition~(ih) above is satisfied. Next assume that the companion $\yon_{A*}$ exists so that we can apply the theorem; we conclude that $\yon_A$ forms a Yoneda embedding. The existence of $\yon_{A*}$ also means that $\yon_A$ has nullary restrictions (\defref{yoneda embedding}) so that it induces an equivalence between horizontal morphisms $A \brar B$ and vertical morphisms $B \to \iotahom$ (\propref{equivalence from yoneda embedding}). Under this equivalence a horizontal morphism $\hmap JAB$ corresponds to the vertical morphism $\map{\cur J \dfn \shad{\bigpars{\cur{(\flad J)}}}}B{\iotahom}$, that is $\cur{(\flad J)} \iso \ev \of \bigpars{\dl A \tens \cur J}$ where $\map{\cur{(\flad J)}}{\dl A \tens B}{\ps I}$ is induced by $\hmap{\flad J}I{\dl A \tens B}$, using the Yoneda axiom for $\map\yon I{\ps I}$ (\defref{yoneda embedding}).
	
	\begin{proof}[of \thmref{presheaf object equivalent to iota-small internal hom}]
		As described in the introduction to this section, instead of giving a direct proof we will prove a generalisation of the present theorem in \thmsref{universal morphism from a yoneda embedding}{yoneda embedding from universal morphisms theorem} below. Applied to $\map{F = \dl A \tens \dash}\K\K$ the latter imply the equivalence (ye)~$\Leftrightarrow$~(ih) above as follows. Applying \lemref{simplified locally universal axiom} to the copairing $\hmap\iota I{\dl A \tens A}$ shows that $\iota$ is a locally universal morphism from $I$ to the endofunctor $F \dfn \dl A \tens \dash$, in the sense of \defref{universal horizontal morphism} below. Assuming (ye), so that $\map{\yon_A}AP$ is a Yoneda embedding, we can use \thmref{universal morphism from a yoneda embedding} to obtain an evaluation $\map{\eps}{FP}{\ps I}$ that defines $P$ as the internal $\iota$"/small hom $\iotahom$. From the definition of $\eps$, the existence of the cartesian cell on the left above and the uniqueness of the cartesian cells provided by the Yoneda axiom (\defref{yoneda embedding}), we conclude that $\eps \iso \ev$. Using that $A$ is assumed to be unital, \thmref{universal morphism from a yoneda embedding} also supplies the cartesian cell on the right above. This proves (ye)~$\Rightarrow$~(ih). For the converse (ih)~$\Rightarrow$~(ye) notice that, similarly to the previous argument, the existence of the cartesian cell on the right above implies that $\ev \of (\dl A \tens \yon_A) \iso \cur\iota$. It follows that we can take $\shad{(\cur\iota)} \dfn \yon_A$ in \defref{yoneda embedding from universal morphisms}, so that the (ih) $\Rightarrow$ (ye) follows from \thmref{yoneda embedding from universal morphisms theorem}.
	\end{proof}
	
	\begin{remark}
		The theorem above is reminiscent of Weber's construction of a good Yoneda structure on any $2$"/topos $\mathcal C = (\mathcal C, \dl{(\dash)}, \tau)$, as given in Section~5 of \cite{Weber07}; see also \exref{yoneda embeddings in 2-topoi} above. Like the Yoneda embeddings $\map{\yon_A}A{\iotahom}$ obtained above, the Yoneda embeddings $\map{y_A}A{\brks{\dl A, \Omega}}$ of Weber's Yoneda structure on $\mathcal C$ also map into an inner hom-object, where $\Omega$ is the target of the classifying discrete opfibration $\tau$. Another similarity is the importance of the equivalence of the discrete two"/sided fibrations $A \brar B$ in $\mathcal C$ and those of the form $1 \brar \dl A \times B$, analogous to our notion of horizontal dual (\defref{horizontal dual}).
		
		Differences include our construction requiring the monoidal unit $I$ to be unital, while Weber's construction applies regardless of whether the cartesian unit $1$ is admissible in $\mathcal C$; see Example~8.3 of \cite{Weber07}. Weber's construction applies to $2$"/topoi only; unlike our approach it cannot, for instance, be used to obtain enriched Yoneda embeddings (see \exref{no good yoneda structure on (Cat, Cat')-Prof} and \exref{enriched yoneda embedding}). Our result, finally, generalises in the form of \thmsref{universal morphism from a yoneda embedding}{yoneda embedding from universal morphisms theorem} below, as described in the introduction to this section.
	\end{remark}
	
	\begin{example} \label{enriched yoneda embedding}
		Let $\V \subset \V'$ be a symmetric universe enlargement (\exref{enriched yoneda embedding summary}) and consider the Yoneda embedding $\map\yon I\V$ (\exref{yoneda embedding for unit V-category}) in the monoidal augmented virtual equipment $\enProf{(\V, \V')}$ (\exref{V-Prof is monoidal}). Let $A$ be a $\V$"/category and let the horizontal dual $\dl A \dfn \op A$ and the copairing $\hmap\iota I{\op A \tens' A}$ in $\enProf{(\V, \V')}$ be as defined preceding \defref{horizontal dual}. Recall that we can take the image $\hmap{\flad J}I{\op A \tens' B}$ of a $\V$"/profunctor $\hmap JAB$, under the assignment $J \mapsto \flad J$ of \defref{horizontal dual}, to be given by \mbox{$\flad J\bigpars{*, (x, y)} \dfn J(x, y)$} for all $x \in A$ and $y \in B$. By \propref{iota-smallness and the horizontal dual} any $\V'$"/functor $\op A \tens' B \to \V$ is $\iota$"/small (\defref{internal iota-small hom}).
		
		Consider the adjunction $\op A \tens' \dash \ladj \inhom{\op A, \dash}'$ of endofunctors on $\enProf{\V'}$ of \exref{internal enriched hom} and notice that, because $A$ is a $\V$"/category, $\op A \tens' \dash$ restricts to an endofunctor on $\enProf{(\V, \V')}$. As discussed before \defref{closed monoidal augmented virtual double categories} the component $\map{\eps_\V}{\op A \tens' \inhom{\op A, \V}'}\V$ of the evaluation defines $\inhom{\op A, \V}'$ as the internal $\iota$"/small hom in $\enProf{\V'}$. Moreover, while $\inhom{\op A, \dash}'$ is unlikely to restrict along the embedding $\enProf{(\V, \V')} \subset \enProf{\V'}$, the fact that this embedding is full (\augdefref{3.6}) implies that $\eps_\V$ is universal too from $\op A \tens' \dash$, as an endofunctor on $\enProf{(\V,\V')}$, to $\V$, which follows easily from the unpacking of \defref{universal vertical morphism}. We conclude that  $\map{\ev \dfn \eps_\V}{\op A \tens' \brks{\op A, \V}'}\V$ defines the functor"/$\V'$"/category $\brks{\op A, \V}'$ as an internal $\iota$"/small hom in $\enProf{(\V, \V')}$ too, in the sense of \defref{internal iota-small hom}.
		
		The universality of $\ev$ induces a $\V'$"/functor $\map{\yon_A}A{\brks{\op A, \V}'}$ as described after the statement of \thmref{presheaf object equivalent to iota-small internal hom}. Using that $\map{\cur\iota}{\op A \tens' A}\V$ is given by $\cur\iota (x,y) = A(x,y)$, and that $\yon_A \dfn \shad{(\cur\iota)} = \brks{\op A, \cur\iota}' \of \coev_A$ by \exref{counit components are universal}, where \mbox{$\map{\coev_A}A{\inhom{\op A, \op A \tens' A}'}$} is the coevaluation, it follows that $\map{\yon_A}A{\brks{\op A, \V}'}$ recovers the classical enriched Yoneda embedding given by $\yon_A x = A(\dash, x)$; see e.g.\ Section~2.4 of \cite{Kelly82}.
		
		Finally notice that the companion $\hmap{\yon_{A*}}A{\brks{\op A, \V}'}$ exists in $\enProf{(\V, \V')}$: this follows from the well known isomorphisms $\yon_*(x, p) = \brks{\op A, \V}'(\yon_A x, p) \iso px \in \V$, for any $x \in A$ and $p \in \brks{\op A, \V}'$, that are given by the `strong Yoneda lemma'; see e.g.\ Formula~2.31 of \cite{Kelly82}. Applying \thmref{presheaf object equivalent to iota-small internal hom} we conclude that, for each $\V$"/category $A$, the classical enriched Yoneda embedding \mbox{$\map{\yon_A}A{\brks{\op A, \V}'}$} forms a Yoneda embedding that admits nullary restrictions in the augmented virtual equipment $\enProf{(\V, \V')}$, in the sense of \defref{yoneda embedding}.
	\end{example}
	
	\begin{example} \label{yoneda embeddings for internal preorders summary}
		Let $\E$ be a cartesian closed category with finite limits. Let $\map\yon 1{\ps 1}$ be a Yoneda embedding in $\ModRel(\E)$ (\exref{yoneda embedding for the terminal object in ModRel(E)}) and let $A = (A, \alpha)$ be any internal preorder in $\E$. The modular subobject $\alpha$ of $\dl A \times A$, that defines the horizontal dual $\dl A$ (\exref{horizontal dual of internal preorders}), corresponds to an order preserving morphism $\dl A \times A \to \ps 1$ (\exref{yoneda embedding for the terminal object in ModRel(E)}) which, under the closed cartesian monoidal structure on $\ModRel(\E)$ (\exref{ModRel(E) is closed cartesian monoidal}), corresponds to an order preserving morphism \mbox{$\map{\yon_A}A{\inhom{\dl A, \ps 1}}$}. That condition (ih) of the theorem above holds follows from \propref{iota-smallness and the horizontal dual} and the discussion following it, so that $\yon_A$ forms a Yoneda embedding in $\ModRel(\E)$ by condition (ye).
	\end{example}
	
	\subsection{Universal horizontal morphisms}
	In the remainder of this section we state and prove a generalisation of \thmref{presheaf object equivalent to iota-small internal hom}, as described in the introduction to this section. We start by generalising the notion of horizontal dual (\defref{horizontal dual}) as a horizontal variant of the notion of locally universal vertical morphism (\defref{universal vertical morphism}).
	\begin{definition} \label{universal horizontal morphism}
		Let $\map F\K\L$ be a functor of augmented virtual double categories (\augdefref{3.1}) and let $A \in \L$ be an object. We call a morphism $\hmap\iota A{FA'}$ \emph{locally universal from $A$ to $F$} if the following conditions are satisfied:
		\begin{enumerate}[label=\textup{(\alph*)}]
			\item for each $\hmap J{A'}B$ the pointwise right composite (\defref{pointwise right cocartesian path}) below exists, called the \emph{adjunct} of $J$;
			\begin{displaymath}
				\hmap{\flad J \dfn \iota \hc FJ}A{FB}
			\end{displaymath}
			\item for each $\map hX{A'}$ the restriction $\iota(\id, Fh)$ exists;
			\item the assigments below, between collections of cells in $\K$ and cells in $\L$ as shown and where the cells $\cocart$ and $\cart$ define $\flad J$ and $\iota(\id, Fh)$, are bijections.
		\end{enumerate}
		\begin{displaymath}
			\begin{tikzpicture}
				\matrix(m)[math35, xshift=-14em]{X_0 & X_n \\ A' & B \\};
				\path[map]	(m-1-1) edge[barred] node[above] {$\ul H$} (m-1-2)
														edge node[left] {$h$} (m-2-1)
										(m-1-2) edge node[right] {$k$} (m-2-2)
										(m-2-1) edge[barred] node[below] {$J$} (m-2-2);
				\path[transform canvas={xshift=1.75em}]	(m-1-1) edge[cell] (m-2-1);
			\matrix(m)[math35, column sep={2em,between origins}, xshift=9em]{A & & FX_0 & & FX_n\\ & A & & FB & \\};
				\path[map]	(m-1-1) edge[barred] node[above, inner sep=4pt, xshift=2pt] {$\iota(\id, Fh)$} (m-1-3)
										(m-1-3) edge[barred] node[above, inner sep=4pt] {$F\ul H$} (m-1-5)
										(m-1-5) edge node[right] {$Fk$} (m-2-4)
										(m-2-2) edge[barred] node[below] {$\flad J$} (m-2-4);
				\path				(m-1-1) edge[eq] (m-2-2)
										(m-1-3) edge[cell] (m-2-3);
				
				\draw[font=\Large] (-17em,0) node {$\lbrace$}
										(-11em,0) node {$\rbrace$}
										(3.8em,0) node {$\lbrace$}
										(14.0em,0) node {$\rbrace$};
				\path[map]	(-8.5em,0) edge node[above] {$\cocart \of (\cart, F\dash)$} (1.3em,0);
			\end{tikzpicture}
		\end{displaymath}
		If moreover every $\hmap HA{FB}$ in $\L$ admits a morphism $\hmap J{A'}B$ in $\K$ with $\flad J \iso H$ then we say that $\hmap\iota A{FA'}$ is \emph{universal from $A$ to $F$}. 
	\end{definition}
	
	The assignment $J \mapsto \flad J$ preserves restrictions as follows.
	\begin{lemma} \label{adjuncts and restrictions}
		Consider $\hmap J{A'}B$ and $\map kYB$ such that the restrictions $J(\id, k)$ and $\flad J(\id, Fk)$ exist. If $J(\id, k)$ is preserved by $F$ then $\flad{\bigpars{J(\id, k)}} \iso \flad J(\id, Fk)$.
	\end{lemma}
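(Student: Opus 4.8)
The plan is to exploit that, by condition~(a) of \defref{universal horizontal morphism}, both adjuncts are \emph{pointwise right composites}. Write $\sigma\colon (\iota, FJ) \Rightarrow \flad J$ for a pointwise right cocartesian cell defining $\flad J$, write $\sigma_0\colon (\iota, F(J(\id, k))) \Rightarrow \flad{(J(\id, k))}$ for one defining $\flad{(J(\id, k))}$, and let $c$ denote the cartesian cell defining the restriction $J(\id, k)$ in $\K$.

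First I would observe that, since $F$ preserves $c$, the cell $Fc\colon F(J(\id, k)) \Rightarrow FJ$ is cartesian in $\L$; we may therefore take it as the cartesian cell defining the restriction $(FJ)(\id, Fk)$, so that this restriction exists with $(FJ)(\id, Fk) \iso F(J(\id, k))$. Together with the hypothesis that $\flad J(\id, Fk)$ exists, this supplies exactly the data that \defref{pointwise right cocartesian path}(a) requires in order for the pointwise right cocartesian cell $\sigma$ to \emph{restrict along $Fk$}. The resulting factorisation of $\sigma \of (\id_\iota, Fc)$ through the cartesian cell defining $\flad J(\id, Fk)$ is then a right cocartesian cell $\sigma'\colon (\iota, F(J(\id, k))) \Rightarrow \flad J(\id, Fk)$.

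Finally I would compare $\sigma'$ with $\sigma_0$: these are right cocartesian cells with the same horizontal source $(\iota, F(J(\id, k)))$. As remarked after \defref{cocartesian path}, the universal property of a right (unary-)cocartesian cell determines its horizontal target up to isomorphism, so $\sigma'$ and $\sigma_0$ factor through one another as invertible vertical cells, giving $\flad{(J(\id, k))} \iso \flad J(\id, Fk)$ as required. The only point needing care is the bookkeeping in the middle step: lining up the two existence hypotheses of the lemma with the two restrictions that \defref{pointwise right cocartesian path}(a) asks to exist before $\sigma$ may be restricted along $Fk$, and choosing $Fc$ as the defining cartesian cell of $(FJ)(\id, Fk)$ so that the horizontal source of $\sigma'$ is literally $(\iota, F(J(\id, k)))$ rather than merely isomorphic to it. With that set up, the remainder is entirely formal and presents no substantial obstacle.
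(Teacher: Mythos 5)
Your proof is correct and follows essentially the same route as the paper's: both exploit that the cell defining $\flad J$ is pointwise right cocartesian, restrict it along $Fk$ (using that $F$-preservation of $J(\id,k)$ supplies the restriction $(FJ)(\id,Fk) \iso F\bigpars{J(\id,k)}$), and conclude by the uniqueness of right composites. Your write-up merely makes explicit the bookkeeping that the paper's one-line proof leaves implicit.
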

	\begin{proof}
		Write $\cart_{J(\id, k)}$, $\cart_{\flad J(\id, Fk)}$ and $\cocart_{\flad J}$ for the (co)cartesian cells defining $J(\id, k)$, $\flad J(\id, Fk)$ and $\flad J$. Since $\flad J$ is a pointwise right composite, by \defref{pointwise right cocartesian path} there exists a cocartesian cell $\cell{\cocart_{\flad J(\id, Fk)}}{(\iota, FJ(\id, k))}{\flad J(\id, Fk)}$ unique such that
		\begin{displaymath}
			\cocart_{\flad J} \of (\id_\iota, F\cart_{J(\id, k)}) = \cart_{\flad J(\id, Fk)} \of \cocart_{\flad J(\id, Fk)},
		\end{displaymath}
		so that the assertion follows from the uniqueness of horizontal composites.
	\end{proof}
	
	Under mild conditions condition (c) above simplifies as follows; compare the condition satisfied by a locally universal vertical morphism (\defref{universal vertical morphism}).
	\begin{lemma} \label{simplified locally universal axiom}
		If $A'$ is unital and both $\K$ and $\L$ admit restrictions on the right, with those of $\K$ preserved by $F$, then condition \textup{(c)} of \defref{universal horizontal morphism} simplifies to the assignments below, between collections of horizontal cells in $\K$ and in $\L$ and where $\cocart$ defines $\flad L$, being bijections.
		\begin{displaymath}
			\begin{tikzpicture}
				\matrix(m)[math35, xshift=-16em]{A' & B \\ A' & B \\};
				\path[map]	(m-1-1) edge[barred] node[above] {$\ul K$} (m-1-2)
										(m-2-1) edge[barred] node[below] {$L$} (m-2-2);
				\path				(m-1-1) edge[eq] (m-2-1)
										(m-1-2) edge[eq] (m-2-2);
				\path[transform canvas={xshift=1.75em}]	(m-1-1) edge[cell] (m-2-1);
			\matrix(m)[math35, column sep={1.75em,between origins}, xshift=4.0em]{A & & FA' & & FB\\ & A & & FB & \\};
				\path[map]	(m-1-1) edge[barred] node[above] {$\iota$} (m-1-3)
										(m-1-3) edge[barred] node[above, inner sep=6pt] {$F\ul K$} (m-1-5)
										(m-2-2) edge[barred] node[below] {$\flad L$} (m-2-4);
				\path				(m-1-1) edge[eq] (m-2-2)
										(m-1-5) edge[eq] (m-2-4)
										(m-1-3) edge[cell] (m-2-3);
				
				\draw[font=\Large] (-18.6em,0) node {$\lbrace$}
										(-13.4em,0) node {$\rbrace$}
										(-0.4em,0) node {$\lbrace$}
										(8.2em,0) node {$\rbrace$};
				\path[map]	(-10.9em,0) edge node[above] {$\cocart \of (\id_\iota, F\dash)$} (-2.9em,0);
			\end{tikzpicture}
		\end{displaymath}
	\end{lemma}
	\begin{proof}
		Since $A'$ is unital and $\K$ has restrictions on the right the conjoint $h^*$ of any morphism $\map h{X_0}{A'}$ exists by \augcororef{4.16}; we write $\cart_{h^*}$ for the defining cartesian cell. By \augcororef{5.5} the functor $F$ preserves $\cart_{h^*}$ and, because $\iota(\id, Fh)$ exists, by \auglemref{8.1} there exists a cocartesian horizontal cell $\cell{\cocart_{\iota(\id, Fh)}}{(\iota, Fh^*)}{\iota(\id, Fh)}$ such that
		\begin{displaymath}
			\id_\iota \hc F\cart_{h^*} = \cart_{\iota(\id, Fh)} \of \cocart_{\iota(\id, Fh)},
		\end{displaymath}
		where $\cart_{\iota(\id, Fh)}$ defines $\iota(\id, Fh)$. It follows that postcomposing the assignment of condition (c) of \defref{universal horizontal morphism} with the assignment $\dash \of (\cocart_{\iota(\id, Fh)}, \id, \dotsc, \id)$ coincides with precomposing the assignment below with $\cart_{h^*} \hc \dash$. Since the assignments given by $\cocart_{\iota(\id, Fh)}$ and $\cart_{h^*}$ are bijections, by \defref{cocartesian path} and by using the conjoint identities for $h^*$ (\lemref{companion identities lemma}) respectively, it follows that condition~(c) is equivalent to the assignments below being bijections.
		\begin{displaymath}
			\begin{tikzpicture}
				\matrix(m)[math35, column sep={1.75em,between origins}, xshift=-12em]{A' & & X_0 & & X_n\\ & A' & & B & \\};
				\path[map]	(m-1-1) edge[barred] node[above] {$h^*$} (m-1-3)
										(m-1-3) edge[barred] node[above] {$\ul H$} (m-1-5)
										(m-1-5) edge node[right] {$k$} (m-2-4)
										(m-2-2) edge[barred] node[below] {$J$} (m-2-4);
				\path				(m-1-1) edge[eq] (m-2-2)
										(m-1-3) edge[cell] (m-2-3);
				\matrix(m)[math35, xshift=12em]{A & FA' & FX_0 & FX_n \\ & A & FB & \\};
				\path[map]	(m-1-1) edge[barred] node[above] {$\iota$} (m-1-2)
										(m-1-2) edge[barred] node[above] {$Fh^*$} (m-1-3)
										(m-1-3) edge[barred] node[above] {$F\ul H$} (m-1-4)
										(m-1-4) edge node[below right] {$Fk$} (m-2-3)
										(m-2-2) edge[barred] node[below] {$\flad J$} (m-2-3);
				\path				(m-1-1) edge[eq] (m-2-2);
				\path[transform canvas={xshift=1.75em}]	(m-1-2) edge[cell] (m-2-2);
				
				\draw[font=\Large] (-16.4em,0) node {$\lbrace$}
										(-7.8em,0) node {$\rbrace$}
										(5.8em,0) node {$\lbrace$}
										(18.1em,0) node {$\rbrace$};
				\path[map]	(-5.5em,0) edge node[above] {$\cocart \of (\id, F\dash)$} (3.5em,0);
			\end{tikzpicture}
		\end{displaymath}
		
		Next it follows from the identity derived in the proof of the previous lemma that precomposing the assignment above with $\cart_{J(\id, k)} \of \dash$ coincides with postcomposing the assignment below, between horizontal cells in $\K$ and in $\L$, with the assignment $\cart_{\flad J(\id, Fk)} \of \dash$. Since vertical composition with cartesian cells gives bijective assignments we conclude that condition (c) is equivalent to the assignments below being bijections.
		\begin{displaymath}
			\map{\cocart_{\flad J(\id, Fk)} \of (\id, F\dash)}{\bigbrcs{h^* \conc \ul H \Rightarrow J(\id, k)}}{\bigbrcs{(\iota, Fh^*) \conc F\ul H \Rightarrow \flad J(\id, Fk)}}
		\end{displaymath}
		To complete the proof notice that, under the isomorphism $\flad J(\id, Fk) \iso \flad{\bigpars{J(\id, k)}}$, the assignments above are of the form as in the statement, with $\ul K = h^* \conc \ul H$ and $L = J(\id, k)$.
	\end{proof}
	
	\begin{definition} \label{iota-small}
		Let $\map F\K\L$ be a functor between augmented virtual double categories (\augdefref{3.1}) and let $\hmap\iota A{FA'}$ be a morphism in $\L$ that satisfies conditions (a) and (b) of \defref{universal horizontal morphism}. Given a Yoneda morphism $\map{\yon_A}A{\ps A}$ (\defref{yoneda embedding}) we make the following definitions.
		\begin{enumerate}[label=-]
			\item A morphism $\map f{FB}{\ps A}$ in $\L$ is called \emph{$\iota$"/small} if there exists $\hmap J{A'}B$ in $\K$ such that $f$ is the left Kan extension (\defref{left Kan extension}) of $\yon_A$ along $\hmap{\flad J}A{FB}$.
			\item We denote by $(F \vs \ps A)_\iota \subseteq F \vs \ps A$ (\defref{universal vertical morphism}) the full subcategory generated by all $\iota$"/small morphisms $FB \to \ps A$.
		\end{enumerate}
	\end{definition}
	
	\thmref{universal morphism from a yoneda embedding} below shows that, under mild conditions, the existence of a Yoneda morphism $\map{\yon_{A'}}{A'}{\ps{A'}}$ induces a universal morphism $\map\eps{F\ps{A'}}{\ps A}$ from $F$ to $\ps A$ relative to $(F \vs \ps A)_\iota$ (\defref{universal vertical morphism}). The lemma below lists conditions equivalent to $\iota$"/smallness: notice that the original notion above is condition (c) below for some $\hmap J{A'}B$. In \propref{universality and iota-smallness} we use this to relate the universality of $\iota$ (\defref{universal horizontal morphism}) to the $\iota$"/smallness of all morphisms in $\L$ of the form $FB \to \ps A$.
	\begin{lemma} \label{iota-small axioms}
		Let $\map F\K\L$, $\hmap\iota A{FA'}$ and $\map{\yon_A}A{\ps A}$ be as in \defref{iota-small}. The following conditions are equivalent for morphisms $\map f{FB}{\ps A}$ in $\L$ and $\hmap J{A'}B$ in $\K$ and, in that case, the left Kan extensions of conditions \textup{(c)}, \textup{(d)} and \textup{(e)} are pointwise (\defref{pointwise left Kan extension}).
		\begin{enumerate}[label=\textup{(\alph*)}]
			\item $f \iso \cur{(\flad J)}$ (\defref{yoneda embedding});
			\item $\hmap{\flad J}A{FB}$ (\defref{universal horizontal morphism}) is the restriction of $\ps A$ along $\yon_A$ and $f$;
			\item $f$ is the left Kan extension of $\yon_A$ along $\hmap{\flad J}A{FB}$;
			\item $f$ is the left Kan extension of $\yon_A$ along $(\iota, FJ)$;
			\item $f$ is the left Kan extension of $\map{\cur\iota}{FA'}{\ps A}$ along $FJ$.
		\end{enumerate}
	\end{lemma}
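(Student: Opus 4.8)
The plan is to prove the cycle of implications (a) $\Rightarrow$ (b) $\Rightarrow$ (c) $\Rightarrow$ (d) $\Rightarrow$ (e) $\Rightarrow$ (a), together with the pointwise claim, using the machinery already assembled: the Yoneda axiom (\defref{yoneda embedding}) as the source of cartesian cells, \lemref{weak left Kan extensions of yoneda embeddings} to freely translate between ``cartesian'', ``left Kan'' and ``pointwise left Kan'' for cells with source $\yon_A$, the vertical pasting lemma (\lemref{vertical pasting lemma}) to absorb (pointwise right) cocartesian paths, and \cororef{left Kan extensions of yoneda embeddings along paths} for cells of the form ``Yoneda cartesian cell precomposed with a right nullary"/cocartesian cell''.

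First, (a) $\Leftrightarrow$ (b): by definition $\cur{(\flad J)}$ is the morphism supplied by the Yoneda axiom for $\yon_A$ applied to $\hmap{\flad J}A{FB}$, equipped with a cartesian cell exhibiting $\flad J$ as the restriction $\ps A(\yon_A, \cur{(\flad J)})$. So $f \iso \cur{(\flad J)}$ exactly says $f$ comes with such a cartesian cell, which is (b); uniqueness of $\cur{(\flad J)}$ up to vertical isomorphism (from density of $\yon_A$) closes the loop. Then (b) $\Leftrightarrow$ (c) is immediate from \lemref{weak left Kan extensions of yoneda embeddings}, which says that for a cell with source $\yon_A$ and target of the shape displayed there, cartesianness, weak"/left"/Kan"/ness and pointwise"/left"/Kan"/ness all coincide; in particular this already yields the ``pointwise'' addendum for (c). Next (c) $\Leftrightarrow$ (d): here I would use that $\hmap{\flad J}A{FB}$ is by definition the pointwise right composite of $(\iota, FJ)$ (condition (a) of \defref{universal horizontal morphism}), so there is a pointwise right cocartesian cell $\cell{\cocart}{(\iota, FJ)}{\flad J}$; applying the vertical pasting lemma (\lemref{vertical pasting lemma}) to $\cocart$ shows a cell with source $(\iota, FJ)$ is (pointwise) left Kan precisely when its factorisation through $\cocart$, a cell with source $\flad J$, is (pointwise) left Kan — and the factorisation of the candidate cell is exactly the cell witnessing (c). (Because $\cocart$ is pointwise right cocartesian and the relevant restrictions exist, the vertical pasting lemma also transfers pointwiseness in both directions.)

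The step (d) $\Leftrightarrow$ (e) is where horizontal pasting enters. The adjunct $\hmap{\flad J}A{FB} = \iota \hc FJ$, and I expect the cleanest route is: the Yoneda axiom for $\yon_A$ applied to $\iota$ gives $\map{\cur\iota}{FA'}{\ps A}$ with a cartesian cell $\cell\eta\iota{\ps A}$, which is pointwise left Kan by density of $\yon_A$ (\lemref{weak left Kan extensions of yoneda embeddings}). By the horizontal pasting lemma (\lemref{horizontal pasting lemma}) — with $\eta$ in the role of the left Kan cell on the left and a cell $\zeta$ with source $FJ$ on the right — the horizontal composite $\eta \hc \zeta$ is (pointwise) left Kan precisely when $\zeta$ is, and $\eta \hc \zeta$ is a cell with source $(\iota, FJ)$; matching this against the cell of (d) (using that $\ps A$ is unital, or more precisely using \cororef{left Kan extensions of yoneda embeddings along paths} / the uniqueness of factorisations through $\eta$) identifies ``$f$ is the left Kan extension of $\yon_A$ along $(\iota, FJ)$'' with ``$f$ is the left Kan extension of $\cur\iota$ along $FJ$'', i.e.\ (e). The horizontal pasting lemma simultaneously delivers the pointwiseness transfer for this equivalence. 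I anticipate the main obstacle being bookkeeping rather than conceptual: making the identifications between the various cartesian cells precise (which cell is ``the'' factorisation, in which direction), and checking that the side hypotheses needed to invoke the vertical and horizontal pasting lemmas in their pointwise forms — existence of the restrictions $FJ(\id, k)$, $\flad J(\id, Fk)$, $\iota(\id, Fh)$ and their preservation by $F$, which are built into \defref{universal horizontal morphism}(a),(b) and \lemref{adjuncts and restrictions} — are genuinely in force; once the bijections of the two pasting lemmas are lined up correctly the equivalences, and the pointwiseness of the Kan extensions in (c),(d),(e), fall out uniformly.
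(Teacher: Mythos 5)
Your proposal is correct and follows essentially the same route as the paper's proof: \lemref{weak left Kan extensions of yoneda embeddings} for the link between (b) and (c), the vertical pasting lemma applied to the pointwise right cocartesian cell $(\iota, FJ) \Rar \flad J$ for (c) $\Leftrightarrow$ (d), the horizontal pasting lemma applied to the cartesian cell defining $\cur\iota$ (left Kan by density of $\yon_A$) for (d) $\Leftrightarrow$ (e), and density plus the pointwise right cocartesianness of the cell defining $\flad J$ for the pointwise addendum. The only cosmetic difference is that you connect (a) to (b) via the Yoneda axiom and uniqueness of $\cur{(\flad J)}$, whereas the paper links (a) directly to (c) by density and uniqueness of Kan extensions; both amount to the same argument.
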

	\begin{proof}
		(a) $\Leftrightarrow$ (c) follows from the fact that $\cur{(\flad J)}$ is the left Kan extension of $\yon_A$ along $\flad J$, by the density of $\yon_A$ (\defref{density definition}), and the uniqueness of left Kan extensions. (b)~$\Leftrightarrow$~(c) follows from \lemref{weak left Kan extensions of yoneda embeddings}. (c) $\Leftrightarrow$ (d) follows from applying the vertical pasting lemma (\lemref{vertical pasting lemma}) to the pointwise right cocartesian cell $(\iota, FJ) \Rar \flad J$ and (d) $\Leftrightarrow$ (e) follows from applying the horizontal pasting lemma (\lemref{horizontal pasting lemma}) to the cartesian cell that defines $\cur\iota$ (\defref{yoneda embedding}), which is left Kan by density of $\yon_A$. Finally notice that $f \iso \cur{(\flad J)}$ is the pointwise left Kan extension of $\yon_A$ along $\flad J$ by density of $\yon_A$ so that, because $\flad J$ is defined by a pointwise right cocartesian cell and by the vertical and horizontal pasting lemmas, the previous arguments in fact obtain $f$ as a pointwise left Kan extension in each of the conditions (c), (d) and (e). 
	\end{proof}
	
	\begin{proposition} \label{universality and iota-smallness}
		Let $\map F\K\L$, $\hmap\iota A{FA'}$ and $\map{\yon_A}A{\ps A}$ be as in \defref{iota-small}. Every morphism in $\L$ of the form $FB \to \ps A$ is $\iota$"/small if and only if $\iota$ is a universal morphism from $A$ to $F$ (\defref{universal horizontal morphism}) and $\yon_A$ admits nullary restrictions (\defref{yoneda embedding}).
	\end{proposition}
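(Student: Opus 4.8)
The proof is a biconditional, so I would split it into the two implications and, in each direction, reduce everything to the characterisation of $\iota$-smallness recorded in \lemref{iota-small axioms}. Recall that by that lemma a morphism $\map f{FB}{\ps A}$ is $\iota$-small precisely if there exists $\hmap J{A'}B$ in $\K$ with $f$ the (automatically pointwise) left Kan extension of $\yon_A$ along $\flad J$, equivalently with $\flad J$ the restriction of $\ps A$ along $\yon_A$ and $f$; and recall that $\yon_A$ admits nullary restrictions exactly when every restriction $\ps A(\yon_A, f)$ exists for $\map fB{\ps A}$ (\defref{yoneda embedding}). The key bookkeeping device is the assignment $J \mapsto \flad J$ of \defref{universal horizontal morphism}, which is essentially surjective onto morphisms $A \brar FB$ exactly when $\iota$ is universal (not merely locally universal), and which preserves restrictions by \lemref{adjuncts and restrictions}.

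For the ``only if'' direction, assume every $\map f{FB}{\ps A}$ is $\iota$-small. First I would show $\yon_A$ admits nullary restrictions: given any $\map fB{\ps A}$, by hypothesis $f$ is $\iota$-small, so by \lemref{iota-small axioms}(b) the restriction $\ps A(\yon_A, f)$ exists (it is $\flad J$ for the witnessing $J$). Next I would show $\iota$ is universal, i.e.\ that every $\hmap HA{FB}$ in $\L$ is $\flad J$ for some $\hmap J{A'}B$ in $\K$. Given such an $H$, form the left Kan extension $\cur H$ of $\yon_A$ along $H$ — this exists because $\yon_A$ is a Yoneda morphism (\defref{yoneda embedding} supplies the cartesian, hence left Kan, cell defining $\cur H$; here one uses that $H$ is a single horizontal morphism into $FB$ and the Yoneda axiom applies to \emph{all} horizontal morphisms out of $A$). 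Now $\map{\cur H}{FB}{\ps A}$ is by hypothesis $\iota$-small, so there is $\hmap J{A'}B$ with $\cur H \iso \cur{(\flad J)}$; but then both $H$ and $\flad J$ are restrictions of $\ps A$ along $\yon_A$ and $\cur H$ (using \lemref{iota-small axioms}(b) for $\flad J$ and the defining cartesian cell of $\cur H$ for $H$), and restrictions are unique up to isomorphism, so $H \iso \flad J$ as required.

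For the ``if'' direction, assume $\iota$ is universal and $\yon_A$ admits nullary restrictions; let $\map f{FB}{\ps A}$ be arbitrary. By universality of $\iota$, the assignment $J \mapsto \flad J$ hits $H \dfn \ps A(\yon_A, f)$ — which exists by the nullary-restriction hypothesis — so there is $\hmap J{A'}B$ with $\flad J \iso \ps A(\yon_A, f)$. Then $f$ is the left Kan extension of $\yon_A$ along $\flad J$ by \lemref{weak left Kan extensions of yoneda embeddings} (the cartesian cell defining $\ps A(\yon_A, f)$ is left Kan), which is exactly condition (c) of \lemref{iota-small axioms}, so $f$ is $\iota$-small.

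The argument is essentially a matching-up of universal properties, so I do not expect a serious obstacle; the one point that needs care is making sure that in the ``only if'' direction the left Kan extension $\cur H$ of $\yon_A$ along an arbitrary $\hmap HA{FB}$ genuinely exists before one can even speak of it being $\iota$-small. This is where the full strength of ``$\yon_A$ is a Yoneda morphism'' (as opposed to weak, or merely dense) enters: the Yoneda axiom produces, for \emph{every} horizontal $\hmap HAC$, a morphism $\cur H$ together with a cartesian cell exhibiting $H$ as $\ps A(\yon_A, \cur H)$, and density (\lemref{density axioms}) upgrades that cartesian cell to a left Kan cell — so $\cur H$ is the left Kan extension of $\yon_A$ along $H$. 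I would state this observation explicitly at the start of the proof and then invoke it; everything else is the uniqueness of restrictions and \lemsref{iota-small axioms}{adjuncts and restrictions}.
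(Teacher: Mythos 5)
Your proof is correct and follows essentially the same route as the paper's: both directions reduce to the characterisation of $\iota$-smallness in \lemref{iota-small axioms} as the existence of $\hmap J{A'}B$ with $\flad J \iso \ps A(\yon_A, f)$, with the ``only if'' direction realising an arbitrary $\hmap HA{FB}$ as $\ps A(\yon_A, \cur H)$ via the Yoneda axiom and concluding $H \iso \flad J$ by uniqueness of restrictions, and the ``if'' direction hitting $\ps A(\yon_A, f)$ by universality of $\iota$. The only difference is cosmetic: you detour through conditions (a) and (c) of \lemref{iota-small axioms} (and \lemref{weak left Kan extensions of yoneda embeddings}) where the paper works directly with condition (b) throughout.
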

	\begin{proof}
		Using the equivalence (b) $\Leftrightarrow$ (c) of the previous lemma, here we take $\iota$"/smallness of any morphism $\map f{FB}{\ps A}$ in $\L$ to mean the existence of a horizontal morphism \mbox{$\hmap J{A'}B$} in $\K$ such that $\flad J \iso \ps A(\yon_A, f)$. For the `if'"/part, assume that for any \mbox{$\map f{FB}{\ps A}$} the restriction $\hmap{\ps A(\yon_A, f)}A{FB}$ exists. Assuming moreover that $\iota$ is universal from $A$ to $F$ there exists $\hmap J{A'}B$ such that $\flad J \iso \ps A(\yon_A, f)$, proving that $f$ is $\iota$"/small. For the `only if'"/part assume that all morphisms of the form $\map f{FB}{\ps A}$ are $\iota$"/small. This immediately implies that $\yon_A$ admits all restrictions $\ps A(\yon_A, f)$. To show that $\iota$ is universal too let $\hmap HA{FB}$ be any horizontal morphism in $\L$. We have $H \iso \ps A(\yon_A, \cur H)$ by the Yoneda axiom (\defref{yoneda embedding}) so that, using that $\map{\cur H}{FB}{\ps A}$ is $\iota$"/small, there exists $\hmap J{A'}B$ such that $\flad J \iso \ps A(\yon_A, \cur H) \iso H$ as required.
	\end{proof}
	
	\subsection{Universal vertical morphisms from Yoneda morphisms}
	The following theorem generalises the implication (ye) $\Rightarrow$ (ih) of \thmref{presheaf object equivalent to iota-small internal hom}.
	\begin{theorem} \label{universal morphism from a yoneda embedding}
		Let $\map F\K\L$ be a functor of augmented virtual double categories (\augdefref{3.1}) and $\hmap\iota A{FA'}$ a locally universal morphism from $A$ to $F$ (\defref{universal horizontal morphism}). Consider Yoneda morphisms $\map{\yon_A}A{\ps A}$ and $\map{\yon_{A'}}{A'}{\ps{A'}}$ (\defref{yoneda embedding}) such that, for any $\map fB{\ps{A'}}$, the restrictions $\ps{A'}(\yon_{A'}, f)$ and $\flad{\yon_{A'*}}(\id, Ff)$ exist.
		\begin{displaymath}
			\begin{tikzpicture}[baseline]
				\matrix(m)[math35, column sep={1.75em,between origins}]{A & & F \ps{A'} \\ & \ps A & \\};
				\path[map]	(m-1-1) edge[barred] node[above] {$\flad \yon_{A'*}$} (m-1-3)
														edge[transform canvas={xshift=-2pt}] node[left] {$\yon_A$} (m-2-2)
										(m-1-3) edge[transform canvas={xshift=2pt}] node[right] {$\eps$} (m-2-2);
				\draw				([yshift=0.333em]$(m-1-2)!0.5!(m-2-2)$) node[font=\scriptsize] {$\cart$};
			\end{tikzpicture} \qquad \qquad \qquad \qquad \qquad \qquad \qquad \begin{tikzpicture}[baseline]
				\matrix(m)[math35, column sep={0.875em,between origins}]
					{A & & & & FA' \\ & & & \mspace{6mu} F\ps{A'} & \\ & & \ps A & & \\};
				\path[map]	(m-1-1) edge[barred] node[above] {$\iota$} (m-1-5)
														edge[transform canvas={xshift=-1pt}, ps] node[left] {$\yon_A$} (m-3-3)
										(m-1-5) edge[transform canvas={xshift=1pt}, ps] node[right] {$F\yon_{A'}$} (m-2-4)
										(m-2-4) edge[transform canvas={xshift=1pt}, ps] node[right, inner sep=1pt] {$\eps$} (m-3-3);
				\draw				([yshift=1.25em]$(m-1-3)!0.5!(m-3-3)$) node[font=\scriptsize] {$\cart$};
			\end{tikzpicture}
		\end{displaymath}
		
		If $F$ preserves all $\ps{A'}(\yon_{A'}, f)$ then the morphism $\map{\eps \dfn \cur{(\flad\yon_{A'*})}}{F\ps{A'}}{\ps A}$, that is given by the Yoneda axiom (\defref{yoneda embedding}) and that comes equipped with a cartesian cell as on the left above, is universal from $F$ to $\ps A$ relative to $(F \vs \ps A)_\iota$ (\defsref{universal vertical morphism}{iota-small}). If moreover $A'$ is unital then there exists a cartesian cell as on the right.
	\end{theorem}
	\begin{proof}
		The proof consists of three steps: we need to show that the functor (\defref{universal vertical morphism})
		\begin{displaymath}
			\map{\eps \of F\dash}{\K \vs \ps{A'}}{F \vs \ps A}
		\end{displaymath}
		(1) factors through $(F \vs \ps A)_\iota \hookrightarrow F \vs \ps A$ (\defref{iota-small}), (2) is essentially surjective onto $(F \vs \ps A)_\iota$ and (3) is full and faithful. To prove (1) consider any \mbox{$\map f B{\ps{A'}}$}; we have to prove that $\eps \of Ff$ is $\iota$"/small. To do so write $\cell{\cart'}{\ps{A'}(\yon_{A'}, f)}{\yon_{A'*}}$ for the unique factorisation of the cartesian cell defining $\ps{A'}(\yon_{A'}, f)$ through that defining the companion $\yon_{A'*}$. By the pasting lemma (\lemref{pasting lemma for cartesian cells}) $\cart'$ is the cartesian cell that defines $\ps{A'}(\yon_{A'}, f)$ as the restriction of $\yon_{A'*}$ along $f$. Since $F$ preserves the cartesian cells defining $\ps{A'}(\yon_{A'}, f)$ and $\yon_{A'*}$ it follows that it preserves $\cart'$, again by the pasting lemma. We can thus apply \lemref{adjuncts and restrictions} to find that
		\begin{displaymath}
			\ps A(\yon_A, \eps \of Ff) \iso \ps A(\yon_A, \eps)(\id, Ff) \iso \flad{\yon_{A'*}}(\id, Ff) \iso \flad{\pars{\yon_{A'*}(\id, f)}},
		\end{displaymath}
		where the first isomorphism follows from the pasting lemma and the second from the cartesian cell defining $\eps$, on the left above. Using \lemref{iota-small axioms}(b) we conclude that $\eps \of Ff$ is $\iota$"/small.
		
		In proving (2) and (3) we will use composites $\zeta$ of the form below, where \mbox{$\map fB{\ps{A'}}$} is any morphism and where the cell $\eta$ defines $\eps$ as the pointwise left Kan extension of $\yon_A$ along $(\iota, F\yon_{A'*})$. That $\eta$ exists follows the fact that $\eps$ is $\iota$"/small and \lemref{iota-small axioms}(d). Notice that the composite $\zeta$ is pointwise left Kan too, because $F\cart'$ is cartesian and by \defref{pointwise left Kan extension}. To prove the final assertion for the moment assume that $A'$ is unital so that both $\yon_{A'}$ and $F\yon_{A'}$ are full and faithful, by \lemref{full and faithful yoneda embedding} and \augcororef{5.15} respectively. Precomposing $\eta$ with the $F$"/image of the cocartesian cell that defines $\yon_{A'*}$ (\lemref{companion identities lemma}), which is again cocartesian by \augcororef{5.5}, we obtain a nullary cell of the form as on the right above, that is left Kan by \propref{pointwise left Kan extension along full and faithful map} and hence cartesian by \lemref{weak left Kan extensions of yoneda embeddings}, as required.
		
		\begin{displaymath}
			\zeta \dfn \quad \begin{tikzpicture}[textbaseline]
				\matrix(m)[math35]{A & FA' & FB\\ A & FA' & F\ps{A'}\\ & \ps A & \\};
				\path[map]	(m-1-1) edge[barred] node[above] {$\iota$} (m-1-2)
										(m-1-2) edge[barred] node[above, inner sep=6pt] {$F\ps{A'}(\yon_{A'}, f)$} (m-1-3)
										(m-1-3) edge[ps] node[right] {$Ff$} (m-2-3)
										(m-2-1) edge[barred] node[above] {$\iota$} (m-2-2)
														edge[transform canvas={xshift=-3pt, yshift=-2pt}] node[below left] {$\yon_A$} (m-3-2)
										(m-2-2) edge[barred] node[below] {$F\yon_{A'*}$} (m-2-3)
										(m-2-3) edge[transform canvas={xshift=3pt, yshift=-2pt}] node[below right] {$\eps$} (m-3-2);
				\path				(m-1-1) edge[eq] (m-2-1)
										(m-1-2) edge[eq] (m-2-2)
										(m-2-2) edge[cell] node[right] {$\eta$} (m-3-2);
				\draw				($(m-1-2)!0.5!(m-2-3)$) node[font=\scriptsize] {$F\cart'$};								
			\end{tikzpicture}
		\end{displaymath}
		
		To show (2) consider any $\iota$"/small morphism $\map g{FB}{\ps A}$. By \lemref{iota-small axioms}(d) $g$ is the pointwise left Kan extension of $\yon_A$ along $(\iota, FJ)$ for some $\hmap J{A'}B$. Taking \mbox{$\map{f \dfn \cur J}B{\ps{A'}}$}, as supplied by the Yoneda axiom (\defref{yoneda embedding}), in the composite above and composing it with $FJ \iso F\ps{A'}(\yon_{A'}, \cur J)$ we find that $\eps \of F\cur J$ too is the pointwise left Kan extension of $\yon_A$ along $(\iota, FJ)$. By uniqueness of Kan extensions we conclude that $g \iso \eps \of F\cur J$, which proves (2).
						
		To prove (3) consider the diagram of assignments between collections of cells in $\K$ and $\L$ below, where $\cart'$ is the cartesian cell $\ps{A'}(\yon_{A'}, f) \Rar \yon_{A'*}$; $\cocart$ denotes the cocartesian cell defining $\flad\yon_{A'*}$ (\defref{universal horizontal morphism}); $\zeta$ is the composite above and $\cart$ is the cartesian cell defining $\eps = \cur{(\flad{\yon_{A'*}})}$. That this diagram commutes follows from the equality $\eta = \cart \of \cocart$ in definition of $\zeta$ above (see the proof of (b) $\Rar$ (d) in \lemref{iota-small axioms}), the axioms satisfied by horizontal composition (\auglemref{1.3}) and the fact that $F$, like any functor of augmented virtual double categories, preserves horizontal compositions of cells.
		\begin{displaymath}
			\begin{tikzpicture}
				\matrix(m)[math35, column sep={1.75em,between origins}]{X_0 & & X_n \\ & \ps{A'} & \\};
				\path[map]	(m-1-1) edge[barred] node[above] {$\ul H$} (m-1-3)
														edge[transform canvas={xshift=-2pt}] node[left] {$f$} (m-2-2)
										(m-1-3) edge[transform canvas={xshift=2pt}, ps] node[right] {$g$} (m-2-2);
				\path[transform canvas={yshift=0.333em}]				(m-1-2) edge[cell] (m-2-2);
				
				\matrix(m)[math35, column sep={1.75em,between origins}, xshift=-14em]{A' & & X_0 & & X_n\\ & A' & & \ps{A'} & \\};
				\path[map]	(m-1-1) edge[barred] node[above, inner sep=6pt] {$\ps{A'}(\yon_{A'}, f)$} (m-1-3)
										(m-1-3) edge[barred] node[above] {$\ul H$} (m-1-5)
										(m-1-5) edge[ps] node[right] {$g$} (m-2-4)
										(m-2-2) edge[barred] node[below] {$\yon_{A'*}$} (m-2-4);
				\path				(m-1-1) edge[eq] (m-2-2)
										(m-1-3) edge[cell] (m-2-3);
				
				\matrix(m)[math35, column sep={0.875em,between origins}, xshift=13em]{FX_0 & & & & FX_n \\ & F\ps{A'}\mspace{18mu} & & \mspace{18mu}F\ps{A'} & \\ & & \ps A & & \\};
				\path[map]	(m-1-1) edge[barred] node[above] {$F\ul H$} (m-1-5)
														edge[transform canvas={xshift=-2pt}, ps] node[left] {$Ff$} (m-2-2)
										(m-1-5) edge[transform canvas={xshift=2pt}, ps] node[right] {$Fg$} (m-2-4)
										(m-2-2) edge[transform canvas={xshift=-2pt}] node[left] {$\eps$} (m-3-3)
										(m-2-4) edge[transform canvas={xshift=2pt}] node[right] {$\eps$} (m-3-3);
				\path[transform canvas={yshift=-0.8em}]				(m-1-3) edge[cell] (m-2-3);
				
				\matrix(m)[math35, column sep={3.5em,between origins}, yshift=-12em, xshift=-12em]{A & FA' & FX_0 & FX_n\\ & A & F\ps{A'} & \\};
				\path[map]	(m-1-1) edge[barred] node[above] {$\iota$} (m-1-2)
										(m-1-2) edge[barred] node[above, inner sep=6pt] {$F\ps{A'}(\yon_{A'}, f)$} (m-1-3)
										(m-1-3) edge[barred] node[above] {$F\ul H$} (m-1-4)
										(m-1-4) edge[transform canvas={xshift=2pt}, ps] node[below right] {$Fg$} (m-2-3)
										(m-2-2) edge[barred] node[below] {$\flad{\yon_{A'*}}$} (m-2-3);
				\path				(m-1-1) edge[eq] (m-2-2);
				\path[transform canvas={xshift=1.75em}]	(m-1-2) edge[cell] (m-2-2);
				
				\matrix(m)[math35, column sep={1.75em,between origins}, yshift=-12em, xshift=9em]{A & & FA' & & FX_0 & & FX_n\\ & & & & & & \\ & & & \ps A & & & \\};
				\draw				($(m-1-7)!0.5!(m-3-4)$) node(r)[font=\scriptsize] {$F\ps{A'}$};
				\path[map]	(m-1-1) edge[barred] node[above] {$\iota$} (m-1-3)
														edge[transform canvas={xshift=-2pt}] node[below left] {$\yon_A$} (m-3-4)
										(m-1-3) edge[barred] node[above, inner sep=6pt] {$F\ps{A'}(\yon_{A'}, f)$} (m-1-5)
										(m-1-5) edge[barred] node[above] {$F\ul H$} (m-1-7)
										(m-1-7) edge[transform canvas={xshift=2pt}] node[below right] {$Fg$} (r)
										(r) edge[transform canvas={xshift=2pt}] node[below right] {$\eps$} (m-3-4);
				\path[transform canvas={yshift=-0.7em}]	(m-1-4) edge[cell] (m-2-4);
				
				\draw[font=\LARGE]	(-2.7em,0) node {$\lbrace$}
										(2.5em,0) node {$\rbrace$}
										(-9.9em,0) node {$\rbrace$}
										(-18.3em,0) node {$\lbrace$}
										(16.3em,0em) node {$\rbrace$}
										(9.8em,0em) node {$\lbrace$}
										(-6.0em,-12em) node {$\rbrace$}
										(-18.1em,-12em) node {$\lbrace$}
										(14.9em,-12em) node {$\rbrace$}
										(2.8em,-12em) node {$\lbrace$};
				\path[map]	(-4.7em,0) edge node[above] {$\cart' \hc \dash$} (-7.9em,0)
										(4.5em,0) edge node[above] {$\eps \of F\dash$} (7.8em, 0)
										(-13.0em,-3.25em) edge node[below left, yshift=4pt] {$\cocart \of (\id, F\dash)$} (-11.75em,-7.75em)
										(11.0em,-4.25em) edge node[right, yshift=-1pt] {$\zeta \hc \dash$} (10.0em,-6.25em)
										(-4em,-12em) edge node[below] {$\cart \of \dash$} (0.8em,-12em);
										
			\end{tikzpicture}
		\end{displaymath}
		Notice that showing (3), that is $\eps \of F\dash$ is full and faithful, amounts to showing that the assignment $\eps \of F\dash$ in the right leg above is a bijection. Since $\zeta$ is left Kan the assignment $\zeta \hc \dash$ is a bijection; hence it suffices to prove that the composite assignment of the left leg above, and thus that of the right leg as well, is a bijection. In the left leg $\cart \of \dash$ is a bijection by definition (\defref{cartesian cells}) and \mbox{$\cocart \of (\id, F\dash)$} is a bijection by \defref{universal horizontal morphism}(c) (where we take $h = \id_{A'}$), which leaves the top assignment $\cart' \hc \dash$.
		\begin{displaymath}
			\begin{tikzpicture}
				\matrix(m)[math35]{A' & X_0 & X_n \\ & \ps{A'} & \\};
				\path[map]	(m-1-1) edge[barred] node[above, inner sep=6pt] {$\ps{A'}(\yon_{A'}, f)$} (m-1-2)
														edge[transform canvas={xshift=-2pt}] node[below left] {$\yon_{A'}$} (m-2-2)
										(m-1-2) edge[barred] node[above] {$\ul H$} (m-1-3)
										(m-1-3) edge[transform canvas={xshift=2pt}] node[below right] {$g$} (m-2-2);
				\path[transform canvas={yshift=0.25em}]				(m-1-2) edge[cell] (m-2-2);
			\end{tikzpicture}
		\end{displaymath}
		 To see that it too is a bijection notice that $\cart \of (\cart' \hc \dash) = (\cart \of \cart') \hc \dash$, where $\cart$ denotes the cartesian cell defining $\yon_{A'*}$, is a bijective assignment onto the collection of nullary cells of the form above, because the cartesian cell $\cart \of \cart'$ is left Kan by the density of $\yon_{A'}$ (\defref{density definition}). Since $\cart \of \dash$ is a bijection too, from the collection of cells in the top left in the diagram above onto that of the cells of the form above, we conclude that $\cart' \hc \dash$ is a bijection as well. This concludes the proof.
	\end{proof}
	
	\subsection{Lifting Yoneda embeddings along universal morphisms}
	The final theorem of this section --- \thmref{yoneda embedding from universal morphisms theorem} below --- is, roughly, a converse to the previous theorem: given a functor $\map F\K\L$ and, in $\L$, a locally universal morphism $\hmap\iota A{FA'}$, a Yoneda morphism $\map{\yon_A}A{\ps A}$ and a universal morphism $\map\eps{F\ps A'}{\ps A}$, it gives conditions ensuring that the morphism \mbox{$\map{\yon_{A'}}{A'}{\ps A'}$} in $\K$ constructed below is a Yoneda embedding.
	\begin{definition} \label{yoneda embedding from universal morphisms}
		Let $\map F\K\L$ be a functor of augmented virtual double categories (\augdefref{3.1}) and $\map{\yon_A}A{\ps A}$ a Yoneda morphism in $\L$ (\defref{yoneda embedding}). Let $A'$ be a unital object in $\K$, with horizontal unit $I_{A'}$ (\defref{cartesian cells}), and $\hmap\iota A{FA'}$ a morphism satisfying conditions (a) and (b) of \defref{universal horizontal morphism}. Given a morphism $\map\eps{F\ps A'}{\ps A}$ that is universal from $F$ to $\ps A$ relative to $(F \vs \ps A)_\iota$ (\defsref{universal vertical morphism}{iota-small}) we make the following definitions.
		\begin{enumerate}[label=-]
			\item The morphism $\map{\cur\iota}{FA'}{\ps A}$, given by the Yoneda axiom (\defref{yoneda embedding}) and defined by the cartesian cell in the right-hand side below, is $\iota$"/small (\defref{iota-small}) because $\flad I_{A'} \iso \iota$ by \auglemref{8.1}. We define $\map{\yon_{A'}}{A'}{\ps A'}$ in $\K$ to be $\yon_{A'} \dfn \shad{(\cur\iota)}$, i.e.\ the chosen morphism $\map{\yon_{A'}}{A'}{\ps A'}$ such that $\eps \of F\yon_{A'} \iso \cur\iota$ (\defref{universal vertical morphism}).
			\item For every $\hmap J{A'}B$ in $\K$ we define $\map{\curp J}B{\ps A'}$ to be $\curp J \dfn \shad{\bigpars{\cur{(\flad J)}}}$, where $\map{\cur{(\flad J)}}{FB}{\ps A}$ is defined by the cartesian cell in the left"/hand side below, so that it is $\iota$"/small by \lemref{iota-small axioms}(b).
			\begin{displaymath}
				\begin{tikzpicture}[textbaseline]
					\matrix(m)[math35, column sep={1.75em,between origins}]
						{ A & & FA' & & FB \\
							& A & & FB & \\
							& & \ps A & & \\};
					\path[map]	(m-1-1) edge[barred] node[above] {$\iota$} (m-1-3)
											(m-1-3) edge[barred] node[above, inner sep=5pt] {$FJ$} (m-1-5)
											(m-2-2) edge[barred] node[above, inner sep=2pt] {$\flad J$} (m-2-4)
															edge[transform canvas={xshift=-2pt}] node[below left] {$\yon_A$} (m-3-3)
											(m-2-4) edge[transform canvas={xshift=2pt}] node[below right, inner sep=1pt] {$\cur{(\flad J)}$} (m-3-3);
					\path				(m-1-1) edge[eq, transform canvas={xshift=-1pt}] (m-2-2)
											(m-1-5) edge[eq, transform canvas={xshift=1pt}] (m-2-4);
					\draw[font=\scriptsize]	($(m-1-3)!0.5!(m-2-3)$) node {$\cocart$}
											([yshift=0.333em]$(m-2-3)!0.5!(m-3-3)$) node {$\cart$};
				\end{tikzpicture} \quad = \quad \begin{tikzpicture}[textbaseline]
					\matrix(m)[math35, column sep={4em,between origins}]{A & FA' & FB \\ & \ps A & \\};
					\path[map]	(m-1-1) edge[barred] node[above] {$\iota$} (m-1-2)
															edge[transform canvas={xshift=-2pt}] node[below left] {$\yon_A$} node[sloped, above, inner sep=6.5pt, font=\scriptsize, pos=0.55] {$\cart$} (m-2-2)
											(m-1-2) edge[barred] node[above, inner sep=5pt] {$FJ$} (m-1-3)
															edge[ps] node[left, inner sep=0.5pt, yshift=2pt] {$\cur\iota$} (m-2-2)
											(m-1-3) edge[transform canvas={xshift=2pt}] node[below right, inner sep=1pt] {$\cur{(\flad J)}$} (m-2-2);
					\path[transform canvas={xshift=1.1em, yshift=3pt}]	(m-1-2) edge[cell] node[right, yshift=2pt] {$\chi^J$} (m-2-2);
				\end{tikzpicture}
			\end{displaymath}
			\item For every $\hmap J{A'}B$ in $\K$ we denote by $\chi^J$ the unique factorisation in the right-hand side above, which exists because the cartesian cell defining $\cur\iota$ is left Kan (\defref{density definition}). Because $\eps \of F\dash$ is full and faithful onto $F \vs \ps A$ (\defref{universal vertical morphism}) there exists a unique cell $\chi^{J\sharp}$, of the form as on the left below, that satisfies the identity on the right.
			\begin{displaymath}
				\begin{tikzpicture}[textbaseline]
					\matrix(m)[math35, column sep={1.75em,between origins}]{A' & & B \\ & \ps A' & \\};
					\path[map]	(m-1-1) edge[barred] node[above] {$J$} (m-1-3)
															edge[transform canvas={xshift=-2pt}, ps] node[left] {$\yon_{A'}$} (m-2-2)
											(m-1-3) edge[transform canvas={xshift=2pt}, ps] node[right] {$\curp J$} (m-2-2);
					\path[transform canvas={yshift=0.333em, xshift=-4.5pt}]	(m-1-2) edge[cell] node[right, inner sep=3pt, yshift=2pt] {$\chi^{J\sharp}$} (m-2-2);
				\end{tikzpicture} \qquad \qquad \qquad \qquad \qquad \chi^J = \quad \begin{tikzpicture}[textbaseline]
					\matrix(m)[math35, column sep={3.5em,between origins}]{FA' & & FB \\ & F\ps A' & \\ & \ps A & \\};
					\path[map]	(m-1-1) edge[barred] node[above] {$FJ$} (m-1-3)
															edge[transform canvas={xshift=-1pt}] node[left, inner sep=1pt] {$F\yon_{A'}$} (m-2-2)
											(m-1-1.240)	edge[bend right=38] node[below left] {$\cur\iota$} (m-3-2)
											(m-1-3) edge[transform canvas={xshift=1pt}, ps] node[right, inner sep=2pt] {$F\curp J$} (m-2-2)
											(m-1-3.-60)	edge[bend left=38] node[below right] {$\cur{(\flad J)}$} (m-3-2)
											(m-2-2) edge[ps] node[right, ps] {$\eps$} (m-3-2);
					\path[transform canvas={xshift=-7pt, yshift=0.333em}] (m-1-2) edge[cell] node[right, yshift=2pt] {$F\chi^{J\sharp}$} (m-2-2);
					\draw				($(m-1-2)!0.57!(m-3-1)$) node[rotate=45, xshift=-4pt] {$\iso$}
											($(m-1-2)!0.57!(m-3-3)$) node[rotate=-45, xshift=4pt] {$\iso$};
				\end{tikzpicture}
			\end{displaymath}
		\end{enumerate}
	\end{definition}
	
	\begin{theorem} \label{yoneda embedding from universal morphisms theorem}
		The morphism $\map{\yon_{A'}}{A'}\ps A'$ defined above is a Yoneda embedding, with each of the cells $\chi^{J\sharp}$ being cartesian, if the following conditions are satisfied:
		\begin{enumerate}[label=\textup{(\alph*)}]
			\item the restrictions $\yon_{A'*}(\id, f)$ and $\flad\yon_{A'*}(\id, Ff)$ exist for any $\map fB{\ps A'}$ in $\K$ and $F$ preserves all $\yon_{A'*}(\id, f)$;
			\item $\hmap\iota A{FA'}$ is locally universal from $A$ to $F$ (\defref{universal horizontal morphism}).
		\end{enumerate}
		Moreover, in that case $\hmap{\flad \yon_{A'*}}A{F\ps A'}$ is the restriction of $\ps A$ along $\yon_A$ and $\eps$.
	\end{theorem}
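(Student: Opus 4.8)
The plan is to run the proof of \thmref{yoneda embedding from a vertical universal morphism} almost verbatim, replacing its hypothesis ``$\eps \of F\dash$ preserves and reflects the cartesian cells defining restrictions of the form $\ps A'(\yon_{A'}, g)$'' by the observation that condition~(a) already forces preservation of these cells and that (a) together with (b) forces reflection of them. So I would first establish: \emph{for every cartesian cell $\eta$ with horizontal target $\ps A'$ and vertical source $\yon_{A'}$, the cell $\eps \of F\eta$ is cartesian in $\L$, and conversely if $\eps \of F\eta$ is cartesian then so is $\eta$.} Preservation is immediate: such an $\eta$ defines $K \iso \ps A'(\yon_{A'}, g) \iso \yon_{A'*}(\id, g)$, which $F$ preserves by~(a), whereupon $\eps \of F\eta$ is obtained by pasting a cartesian cell with the morphism $\eps$. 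For reflection, factor $\eta = \psi \of \bar\eta$ through the cartesian cell $\psi$ defining $\yon_{A'*}(\id, g)$; then $F\bar\eta$ is the factorisation of $F\eta$ through the cartesian cell $F\psi$ (again using~(a)), so $\eps \of F\eta = (\eps \of F\psi) \of F\bar\eta$ with $\eps \of F\psi$ cartesian, whence $F\bar\eta$ is cartesian by the pasting lemma for cartesian cells (\auglemref{4.15}); being a horizontal cell it is invertible, and by~(b) --- which says precisely that the adjunct operation $\flad{(\dash)}$ is full and faithful on cells in the sense of \defref{universal horizontal morphism}(c) and therefore reflects isomorphisms of horizontal morphisms --- $\bar\eta$ is invertible, so $\eta = \psi \of \bar\eta$ is cartesian.

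Granting this, density of $\yon_{A'}$, the Yoneda axiom (with $\cur J \dfn \curp J$), cartesianness of the $\chi^{J\sharp}$, and the ``moreover'' are obtained by the same manipulations as in \thmref{yoneda embedding from a vertical universal morphism}, with one extra ingredient needed because the Yoneda morphism here sits on $A$ rather than on $FA'$: wherever the sibling argument uses ``$\sigma \hc (\eps \of F\eta)$ is left Kan by density of $\yon_{FA}$'', our $\eps \of F\eta$ has vertical source $\cur\iota$ and not $\yon_A$, so I would first feed the defining identity $\cart_{\cur\iota} \hc \chi^J = \cart_{\cur{(\flad J)}} \of \cocart_{\flad J}$ of \defref{yoneda embedding from universal morphisms} through the horizontal and vertical pasting lemmas (\lemref{horizontal pasting lemma} and \lemref{vertical pasting lemma}) --- using that $(\iota, FJ) \Rightarrow \flad J$ is pointwise right cocartesian and that $\cart_{\cur{(\flad J)}}$ is pointwise left Kan by density of $\yon_A$ --- to conclude, in agreement with \lemref{iota-small axioms}(e), that $\chi^J$, and likewise $\eps \of F\cart$ with $\cart$ the companion cell of $\yon_{A'*}$, exhibits its target as the \emph{pointwise} left Kan extension of $\cur\iota$ along the relevant morphism; then \propref{taking adjuncts preserves left Kan cells} carries this back to $\K$. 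Concretely: the ``moreover'' comes from $\eps$ being $\iota$-small (it lies in $(F\slash\ps A)_\iota$), so $\eps \iso \cur{(\flad J_0)}$ for some $\hmap{J_0}{A'}{\ps A'}$, together with \lemref{iota-small axioms}(a) $\Leftrightarrow$ (b) and the identification $J_0 \iso \yon_{A'*}$ obtained from $\curp{J_0} \iso \id_{\ps A'}$, \lemref{adjuncts and restrictions} and (a)--(b); density then follows from \lemref{density axioms}(c) once the companion cell of $\yon_{A'*}$ is shown pointwise left Kan by the above; the Yoneda axiom and cartesianness of $\chi^{J\sharp}$ follow from the reflection fact of the first paragraph; and full and faithfulness follows from \lemref{full and faithful yoneda embedding}, since $A'$ is unital and $\ps A'(\yon_{A'},\yon_{A'}) \iso \yon_{A'*}(\id,\yon_{A'})$ exists by~(a).

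The main obstacle is getting the logical order right, since the ``moreover'', density, and the Yoneda axiom are mutually entangled: density is what makes $\cur{(\dash)}$ well defined up to isomorphism, the ``moreover'' is what lets one recognise the restrictions $\flad K \iso \ps A(\yon_A, \eps \of Fk)$ that appear in checking the Yoneda axiom, and cartesianness of $\chi^{J\sharp}$ feeds back into density. I expect the correct sequencing is to establish the ``moreover'' first --- purely from the universality of $\eps$ and the local universality of $\iota$ --- and only then density, the Yoneda axiom, and full and faithfulness in that order; finding a sequencing that breaks the apparent circularity is the delicate part. The other point to watch, which recurs, is that passing from ``$F$ of a cell is cartesian or invertible'' to ``the cell itself is cartesian or invertible'' requires the full strength of the \emph{local} universality of $\iota$ (condition~(b)), not merely parts~(a)--(b) of \defref{universal horizontal morphism} --- this is exactly the role that ``$\eps \of F\dash$ preserves and reflects cartesian cells'' plays in \thmref{yoneda embedding from a vertical universal morphism}.
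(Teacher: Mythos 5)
The reduction to \thmref{yoneda embedding from a vertical universal morphism} breaks at its very first step. You assert that for a cartesian cell $\cell\eta K{\ps A'}$ with vertical source $\yon_{A'}$ and vertical target $g$ the composite $\eps \of F\eta$ is again cartesian, ``obtained by pasting a cartesian cell with the morphism $\eps$''. Condition (a) does give that $F\eta$ is cartesian, but vertically composing a cartesian cell with the vertical morphism $\eps$ does not yield a cartesian cell: your claim amounts to $FK \iso \ps A\bigpars{\eps \of F\yon_{A'},\, \eps \of Fg} \iso \ps A(\cur\iota, \eps \of Fg)$, which already fails in the motivating example $F = \dl A \tens \dash$ in $\enProf{(\V,\V')}$, where $FK$ has components $A(x',x) \tens g(b)(a)$ while the restriction has components $\V\bigpars{A(x,a), g(b)(x')}$. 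The same unjustified cartesianness of $\eps \of F\psi$ underpins your reflection argument, so the whole preservation/reflection bridge collapses. The point is that in this theorem the comparison between $\K$ and $\L$ runs through the adjunct operation $\flad{(\dash)} = \iota \hc F(\dash)$, not through $\eps \of F(\dash)$ applied to cartesian cells; what is true --- and is exactly the ``moreover'' clause --- is that $\flad{\yon_{A'*}}$ is the restriction $\ps A(\yon_A, \eps)$. In the paper this is the technical heart of the proof: the composite of the cartesian cell defining $\cur\iota \iso \eps \of F\yon_{A'}$ with $\eps \of F\cart$ (where $\cart$ defines $\yon_{A'*}$) is factorised through the pointwise right cocartesian cell defining $\flad{\yon_{A'*}}$ as a nullary cell $\xi$, and $\xi$ is shown cartesian by a bijection-of-assignments argument combining density of $\yon_A$, full and faithfulness of $\map{\eps \of F\dash}{\K \slash \ps A'}{F \slash \ps A}$ and condition (b); density of $\yon_{A'}$ then follows from condition (a), the pasting lemmas and \propref{taking adjuncts preserves left Kan cells}, while the Yoneda axiom (cartesianness of all $\chi^{J\sharp}$) is proved by a separate bijection diagram (\lemref{yoneda axiom and local universality}) showing it is \emph{equivalent} to (b) --- not by a preservation/reflection reduction.

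A secondary gap is in your route to the ``moreover'': from $\eps \iso \cur{(\flad{J_0})}$ and $\curp{J_0} \iso \id_{\ps A'}$ you infer $J_0 \iso \yon_{A'*}$, but that inference needs $J_0 \iso \yon_{A'*}(\id, \curp{J_0})$, i.e.\ precisely the Yoneda axiom for $\yon_{A'}$ (equivalently the correspondence of \propref{equivalence from yoneda embedding}), which in your sequencing is only obtained downstream of the reflection fact, itself resting on the failed preservation claim. So the ordering ``moreover first'' cannot be carried out along the lines you sketch; the cartesianness of $\xi$ has to be proved directly, as above, from the universality of $\eps$, the local universality of $\iota$ and the density of $\yon_A$ in $\L$.
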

	\begin{proof}
		To show that $\yon_{A'}$ is a Yoneda embedding we have to prove that it is dense (\defref{density definition}) and that it satisfies the Yoneda axiom (\defref{yoneda embedding}); that $\yon_{A'}$ is full and faithful then follows from \lemref{full and faithful yoneda embedding} as $A'$ is assumed to be unital (\defref{yoneda embedding from universal morphisms}). That $\yon_{A'}$ satisfies the Yoneda axiom is proved in \lemref{yoneda axiom and local universality} below; here we prove the density of $\yon_{A'}$. To do so consider the equation below, where the left cartesian cell in the left"/hand side defines $\cur\iota \iso \eps \of F\yon_{A'}$ (\defref{yoneda embedding from universal morphisms}) and where the nullary cell $\cell\xi{\flad \yon_{A'*}}{\ps A}$ is the unique factorisation through the pointwise right cocartesian cell that defines $\flad \yon_{A'*}$ (\defref{universal horizontal morphism}). We claim that $\xi$ is cartesian, and hence pointwise left Kan by density of $\map{\yon_A}A{\ps A}$. Before proving this claim let us use it to prove the density of $\yon_{A'}$. Notice also that the final assertion above simply follows from the fact that $\xi$ is cartesian.
		\begin{displaymath}
				\begin{tikzpicture}[textbaseline]
					\matrix(m)[math35, column sep={1.75em,between origins}]
						{ A & & FA' & & F\ps A' \\
							& & & F\ps A' & \\
							& & \ps A & & \\ };
					\path[map]	(m-1-1) edge[barred] node[above] {$\iota$} (m-1-3)
															edge[transform canvas={xshift=-2pt}] node[below left] {$\yon_A$} (m-3-3)
											(m-1-3) edge[barred] node[above, inner sep=5pt] {$F\yon_{A'*}$} (m-1-5)
															edge[transform canvas={xshift=-4pt}] node[left, inner sep=0.5pt] {$F\yon_{A'}$} (m-2-4)
											(m-2-4) edge[transform canvas={xshift=2pt}] node[right] {$\eps$} (m-3-3);
					\path				(m-1-5) edge[transform canvas={xshift=2pt}, eq, ps] (m-2-4);
					\draw[font=\scriptsize]	($(m-1-4)!0.5!(m-2-4)$) node[yshift=0.45em, xshift=-2pt] {$F\cart$}
											($(m-1-3)!0.4!(m-3-1)$) node[xshift=8pt] {$\cart$};
				\end{tikzpicture} \quad = \quad \begin{tikzpicture}[textbaseline]
					\matrix(m)[math35, column sep={1.75em,between origins}]
						{ A & & FA' & & F\ps A' \\
							& A & & F\ps A' & \\
							& & \ps A & & \\ };
					\path[map]	(m-1-1) edge[barred] node[above] {$\iota$} (m-1-3)
											(m-1-3) edge[barred] node[above, inner sep=5pt] {$F\yon_{A'*}$} (m-1-5)
											(m-2-2) edge[barred] node[above, inner sep=2pt] {$\flad\yon_{A'*}$} (m-2-4)
															edge[transform canvas={xshift=-2pt}] node[below left] {$\yon_A$} (m-3-3)
											(m-2-4) edge[transform canvas={xshift=2pt}] node[below right] {$\eps$} (m-3-3);
					\path				(m-1-1) edge[transform canvas={xshift=-1pt}, eq] (m-2-2)
											(m-1-5) edge[transform canvas={xshift=1pt}, eq, ps] (m-2-4)
											(m-2-3) edge[transform canvas={yshift=0.333em}, cell] node[right] {$\xi$} (m-3-3);
					\draw[font=\scriptsize]	($(m-1-3)!0.5!(m-2-3)$) node {$\cocart$};
				\end{tikzpicture}
			\end{displaymath}
		
		Using that the restrictions $(F\yon_{A'*})(\id, Ff) \iso F\bigpars{\yon_{A'*}(\id, f)}$ and $\flad\yon_{A'*}(\id, Ff)$ exist for any $\map fB{\ps A'}$ by condition (a), it follows from the vertical pasting lemma (\lemref{vertical pasting lemma}) that the right"/hand side above, and thus both sides, are left Kan cells that restrict along any $F$"/image $Ff$. Since the left cartesian cell in the left"/hand side is left Kan too, again by the density of $\yon_A$, it follows from the horizontal pasting lemma (\lemref{horizontal pasting lemma}) that the composite $\eps \of F\cart$ in the left"/hand side is a left Kan cell that restricts along any $Ff$ too. Using \propref{taking adjuncts preserves left Kan cells} we conclude that the adjunct of $\eps \of F\cart$, which is the cartesian cell $\cart$ defining $\yon_{A'*}$ itself, is pointwise left Kan. By \defref{density definition} this means that $\yon_{A'}$ is dense as required.
		
		It remains to prove the claim that the nullary cell $\xi$ in the right"/hand side above is cartesian. Notice that $\eps = \eps \of F(\id_{\ps A'})$ is $\iota$"/small because $\eps \of F\dash$ factors through $(F \vs \ps A)_\iota$ (\defsref{universal vertical morphism}{iota-small}). Hence, by \lemref{iota-small axioms}(b), the restriction $\hmap{\ps A(\yon_A, \eps)}A{F\ps A'}$ exists and $\ps A(\yon_A, \eps) \iso \flad J$ for some $\hmap J{A'}{\ps A'}$. To prove the claim we will show that the cartesian cell $\flad J \Rar \ps A$, that defines $\ps A(\yon_A, \eps)$, factors through $\xi$ as an invertible horizontal cell $\flad J \iso \flad \yon_{A'*}$. For this it suffices to prove that for any $\hmap H{A'}{\ps A'}$ in $\K$ the assignment $\xi \of \dash$ on the left below, between collections of cells in $\L$ as shown, is a bijection. Precomposing with the right cocartesian cells $(\iota, FH) \Rar \flad H$ (\defref{universal horizontal morphism}) and using the uniqueness of factorisations through cocartesian cells, we may equivalently prove that the assigment $\xi \of \dash$ on the right below is a bijection.
		\begin{displaymath}
			\begin{tikzpicture}
				\matrix(m)[math35, yshift=6em, xshift=-12em]{A & F\ps A' \\ A & F\ps A' \\};
				\path[map]	(m-1-1) edge[barred] node[above] {$\flad H$} (m-1-2)
										(m-2-1) edge[barred] node[below] {$\flad\yon_{A'*}$} (m-2-2);
				\path				(m-1-1) edge[eq] (m-2-1)
										(m-1-2) edge[eq, ps] (m-2-2)
										(m-1-1) edge[transform canvas={xshift=1.625em},cell] (m-2-1);
				
				\matrix(m)[math35, column sep={1.75em,between origins}, yshift=-6em, xshift=-12em]{A & & F\ps A' \\ & \ps A & \\};
				\path[map]	(m-1-1) edge[barred] node[above] {$\flad H$} (m-1-3)
														edge[transform canvas={xshift=-2pt}] node[left, xshift=2pt] {$\yon_A$} (m-2-2)
										(m-1-3) edge[transform canvas={xshift=2pt}, ps] node[right] {$\eps$} (m-2-2);
				\path[transform canvas={yshift=0.333em}]				(m-1-2) edge[cell] (m-2-2);
				
				\matrix(m)[math35, yshift=6em]{A' & \ps A' \\ A' & \ps A' \\};
				\path[map]	(m-1-1) edge[barred] node[above] {$H$} (m-1-2)
										(m-2-1) edge[barred] node[below] {$\yon_{A'*}$} (m-2-2);
				\path				(m-1-1) edge[eq] (m-2-1)
										(m-1-2) edge[eq, ps] (m-2-2)
										(m-1-1) edge[transform canvas={xshift=1.625em},cell] (m-2-1);
										
				\matrix(m)[math35, column sep={1.625em,between origins}, yshift=6em, xshift=17em]
					{ A & & FA' & & F\ps A' \\ & A & & F\ps A' & \\};
				\path[map]	(m-1-1) edge[barred] node[above] {$\iota$} (m-1-3)
										(m-1-3) edge[barred] node[above] {$FH$} (m-1-5)
										(m-2-2) edge[barred] node[below] {$\flad \yon_{A'*}$} (m-2-4);
				\path				(m-1-1) edge[eq] (m-2-2)
										(m-1-5) edge[eq, ps] (m-2-4)
										(m-1-3) edge[cell] (m-2-3);
				
				\matrix(m)[math35, column sep={0.8125em,between origins}, yshift=-6em]{FA' & & & & F\ps A' \\ & F\ps A' \mspace{9mu} & & & \\ & & \ps A & & \\};
				\path[map]	(m-1-1) edge[barred] node[above] {$FH$} (m-1-5)
														edge[transform canvas={xshift=-1pt}] node[left, inner sep=0pt] {$F\yon_{A'}$} (m-2-2)
										(m-1-5) edge[transform canvas={xshift=1pt}, ps] node[right] {$\eps$} (m-3-3)
										(m-2-2) edge[transform canvas={xshift=1pt}, ps] node[left] {$\eps$} (m-3-3);
				\path[transform canvas={yshift=-0.7em}]	(m-1-3) edge[cell] (m-2-3);
				
				\matrix(m)[math35, yshift=-6em, xshift=17em]{A & FA' & F\ps A' \\ & \ps A & \\};
				\path[map]	(m-1-1) edge[barred] node[above] {$\iota$} (m-1-2)
														edge[transform canvas={xshift=-1pt}] node[left] {$\yon_A$} (m-2-2)
										(m-1-2) edge[barred] node[above] {$FH$} (m-1-3)
										(m-1-3) edge[transform canvas={xshift=1pt}] node[right] {$\eps$} (m-2-2);
				\path				(m-1-2) edge[cell] (m-2-2);
				
				\draw[font=\LARGE]	(-14.7em,6em) node {$\lbrace$}
										(-9.4em,6em) node {$\rbrace$}
										(-14.8em,-6em) node {$\lbrace$}
										(-9.5em,-6em) node {$\rbrace$}
										(-2.5em,6em) node {$\lbrace$}
										(2.6em,6em) node {$\rbrace$}
										(-2.9em,-6em) node {$\lbrace$}
										(2.4em,-6em) node {$\rbrace$}
										(12.8em,6em) node {$\lbrace$}
										(20.9em,6em) node {$\rbrace$}
										(12.8em,-6em) node {$\lbrace$}
										(20.9em,-6em) node {$\rbrace$};
				
				\path[map]	(-12em,1.5em) edge node[left] {$\xi \of \dash$} (-12em,-2.0em)
										(4.6em,6em) edge node[above] {$\cocart \of (\id_\iota, F\dash)$} (10.8em,6em)
										(4.4em,-6em) edge node[below] {$\cart \hc \dash$} (10.8em,-6em)
										(0,1.5em) edge node[left] {$\eps \of F(\cart \of \dash)$} (0,0)
										(17em,1.5em) edge node[right] {$\xi \of \dash$} (17em,-2em);
			\end{tikzpicture}
		\end{displaymath}
		To do so, consider the whole diagram of assignments on the right above, where the cocartesian cell defines $\flad \yon_{A'*}$ and where the cartesian cells are those in the left"/hand side of equality that defines $\xi$ above; that the diagram commutes follows from the same equality and the interchange axioms (\auglemref{1.3}). We claim that, besides $\xi \of \dash$ on the right, all assignments above are bijections, so that $\xi \of \dash$ is a bijection as well. Indeed, $\cart \hc \dash$ is a bijection since $\cart$ is left Kan, by the density of $\yon_A$ (\defref{density definition}); $\eps \of F(\cart \of \dash)$ is a bijection because of the companion identities for $\cart$ (\lemref{companion identities lemma}) and the fact that $\map{\eps \of F\dash}{\K \vs\ps A'}{F \vs \ps A}$ is full and faithful (\defref{universal vertical morphism}); $\cocart \of (\id_\iota, F\dash)$ is a bijection by \defref{universal horizontal morphism}(c), where $h = \id_{A'}$ and $k = \id_{\ps A'}$. This completes the proof.
	\end{proof}
	
	\begin{lemma} \label{yoneda axiom and local universality}
		In \defref{yoneda embedding from universal morphisms} the morphism $\hmap\iota A{FA'}$ satisfies condition~\textup{(c)} of \defref{universal horizontal morphism} (i.e.\ $\iota$ is locally universal) precisely if, for each $\hmap J{A'}B$ in $\K$, the cell $\chi^{J\sharp}$ is cartesian (i.e.\ $\map{\yon_{A'}}{A'}{\ps A'}$ satisfies the Yoneda axiom of \defref{yoneda embedding}).
	\end{lemma}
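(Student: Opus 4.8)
The plan is to reduce both condition~\textup{(c)} of \defref{universal horizontal morphism} (which, given that conditions~\textup{(a)} and~\textup{(b)} already hold by \defref{yoneda embedding from universal morphisms}, is exactly local universality of $\iota$) and the cartesianness of the cells $\chi^{J\sharp}$ to the bijectivity of one and the same family of assignments, by transporting along a fixed bijection built from the standing data of \defref{yoneda embedding from universal morphisms} alone. Fix $\hmap J{A'}B$, a path $\hmap{\ul H}{X_0}{X_n}$ and morphisms $\map h{X_0}{A'}$ and $\map k{X_n}B$. The condition~\textup{(c)} assignment sends a cell $\cell\theta{\ul H}J$ with vertical boundary $(h, k)$ to $\cocart \of (\cart, F\theta)$, a cell $\bigpars{\iota(\id, Fh), F\ul H} \Rar \flad J$ in $\L$ with vertical boundary $(\id_A, Fk)$; here $\cart$ defines $\iota(\id, Fh)$ (which exists by condition~\textup{(b)}) and $\cocart$ defines $\flad J$. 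On the other hand, unwinding the universal property of nullary restrictions, the cell $\chi^{J\sharp}$ is cartesian exactly when, for all such $\ul H, h, k$, the assignment $\theta \mapsto \chi^{J\sharp} \of \theta$ onto cells $\ul H \Rar \ps{A'}$ with vertical boundary $(\yon_{A'} \of h, \curp J \of k)$ is a bijection. As these two assignments have the same domain, it suffices to produce a bijection $\gamma$ between their codomains with $\gamma(\chi^{J\sharp} \of \theta) = \cocart \of (\cart, F\theta)$ for every $\theta$.

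I would build $\gamma$ as a composite of four bijections. First, since $\eps$ is locally universal the functor $\map{\eps \of F\dash}{\K \slash \ps{A'}}{F \slash \ps A}$ is full and faithful, so $\theta' \mapsto \eps \of F\theta'$ is a bijection onto cells $F\ul H \Rar \ps A$ with vertical boundary $\bigpars{(\eps \of F\yon_{A'}) \of Fh,\ (\eps \of F\curp J) \of Fk}$; whiskering with the isomorphisms $\cur\iota \iso \eps \of F\yon_{A'}$ and $\eps \of F\curp J \iso \cur{(\flad J)}$ that come with the definitions $\yon_{A'} \dfn \shad{(\cur\iota)}$ and $\curp J \dfn \shad{\bigpars{\cur{(\flad J)}}}$ re-reads these as cells $F\ul H \Rar \ps A$ of vertical boundary $\bigpars{\cur\iota \of Fh,\ \cur{(\flad J)} \of Fk}$. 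Second --- and this is the crucial step --- the vertical composite of $\cart$ followed by the cartesian cell that defines $\cur\iota$ (via the Yoneda axiom for $\yon_A$) is again cartesian by the pasting lemma for cartesian cells (\auglemref{4.15}), hence \emph{left Kan} because $\yon_A$ is, in particular, dense (\lemref{density axioms}(a)); its universal property as a left Kan cell along $\iota(\id, Fh)$ then turns the cells of the previous step into cells $\bigpars{\iota(\id, Fh), F\ul H} \Rar \ps A$ with vertical source $\yon_A$ and vertical target $\cur{(\flad J)} \of Fk$. Third, since $\flad J$ is the restriction of $\ps A$ along $\yon_A$ and $\cur{(\flad J)}$ (\lemref{iota-small axioms}(b)), factoring the latter cells through that cartesian cell lands precisely in the codomain of the condition~\textup{(c)} assignment. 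The fourth step is the identity. Each step is a genuine bijection depending only on the standing hypotheses --- on $\yon_A$ being a Yoneda morphism, on condition~\textup{(b)}, and on the local universality of $\eps$ --- and never on condition~\textup{(c)} or on cartesianness of the $\chi^{J\sharp}$.

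It then remains to check that $\gamma$ intertwines the two assignments. Tracing $\chi^{J\sharp} \of \theta$ through steps one and two, the identity in \defref{yoneda embedding from universal morphisms} expressing $\chi^J$ as the image of $\chi^{J\sharp}$ under $\eps \of F\dash$, whiskered by the defining isomorphisms, turns it into $\chi^J \of F\theta$, and the third step prepends the left Kan cell of step two; so, after post-composing the condition~\textup{(c)} cell $\cocart \of (\cart, F\theta)$ with the cartesian cell defining $\flad J$, it suffices to verify that the result equals $(\text{left Kan cell of step two}) \hc (\chi^J \of F\theta)$. This follows from the associativity and interchange axioms (\auglemref{1.3}) together with the other identity from \defref{yoneda embedding from universal morphisms}, namely $(\text{cart defining }\flad J) \of \cocart = (\text{cart defining }\cur\iota) \hc \chi^J$: one rewrites the composite as $\bigpars{(\text{cart defining }\flad J) \of \cocart} \of (\cart, F\theta) = \bigpars{(\text{cart defining }\cur\iota) \hc \chi^J} \of (\cart, F\theta)$ and distributes the horizontal composition over the path $(\cart, F\theta)$. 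Since $\gamma$ is a bijection, this yields condition~\textup{(c)} if and only if $\chi^{J\sharp}$ is cartesian for every $\hmap J{A'}B$, which is the Yoneda axiom for $\yon_{A'}$ realised by the chosen $\curp J$ and $\chi^{J\sharp}$. I expect the only real difficulty to be organisational: keeping the vertical boundaries of the intermediate cells aligned throughout, and confirming that no step of $\gamma$ or of the final identity secretly invokes what is being proved.
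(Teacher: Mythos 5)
Your proposal is correct and matches the paper's own proof in essence: the paper likewise reduces both statements to the bijectivity of the same assignment by exhibiting a commuting square whose other legs are bijections built from $\sigma_h \hc (\eps \of F\dash) \hc \tau_k$, horizontal composition with the left Kan cell $\cart_{\cur\iota} \of \cart$ (density of $\yon_A$), and factorisation through the cartesian cell defining $\cur{(\flad J)}$, with commutativity checked via the defining identities for $\chi^J$ and $\chi^{J\sharp}$ and the interchange axioms. Your $\gamma$ is exactly the composite of those legs, so the argument is the same up to presentation.
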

	\begin{proof}
	Consider the diagram of assignments between collections of cells in $\K$ and $\L$ below, where $\cocart$ denotes the cocartesian cell definining $\flad J$ (\defref{universal horizontal morphism}) and the cartesian cells denoted by $\cart$, $\cart_{\cur{(\flad J)}}$ and $\cart_{\cur\iota}$ define the morphisms $\iota(\id, Fh)$, $\cur{(\flad J)}$ (\defref{yoneda embedding}) and $\cur\iota$ respectively. The vertical isomorphisms $\sigma_h$ and $\tau_k$ in the middle assignment of the right leg denote the composites $\sigma_h \dfn \sigma \of Fh$ and $\tau_k \dfn \tau \of Fk$, where $\sigma\colon \cur\iota \iso \eps \of F\yon_{A'}$ and $\tau\colon \eps \of F\curp J \iso \cur{(\flad J)}$ are the isomorphisms that equip the chosen morphisms $\map{\yon_{A'}}{A'}{\ps A'}$ and $\map{\curp J}B{\ps A'}$ (\defref{yoneda embedding from universal morphisms}).
		\begin{displaymath}
			\begin{tikzpicture}
				\matrix(m)[math35, xshift=-12em]{X_0 & X_n \\ A' & B \\};
				\path[map]	(m-1-1) edge[barred] node[above] {$\ul H$} (m-1-2)
														edge node[left] {$h$} (m-2-1)
										(m-1-2) edge node[right] {$k$} (m-2-2)
										(m-2-1) edge[barred] node[below] {$J$} (m-2-2);
				\path[transform canvas={xshift=1.625em}]				(m-1-1) edge[cell] (m-2-1);
				
				\matrix(m)[math35, column sep={1.625em,between origins}, yshift=-12em, xshift=-9em]{A & & FX_0 & & FX_n\\ & A & & FB & \\};
				\path[map]	(m-1-1) edge[barred] node[above, inner sep=6pt] {$\iota(\id, Fh)$} (m-1-3)
										(m-1-3) edge[barred] node[above] {$F\ul H$} (m-1-5)
										(m-1-5) edge[transform canvas={xshift=-2pt}] node[right] {$Fk$} (m-2-4)
										(m-2-2) edge[barred] node[below] {$\flad J$} (m-2-4);
				\path				(m-1-1) edge[eq] (m-2-2)
										(m-1-3) edge[cell] (m-2-3);
				
				\matrix(m)[math35, column sep={0.8125em,between origins}]{X_0 & & & & X_n \\ & A' \mspace{6mu} & & \mspace{6mu} B & \\ & & \ps A' & & \\};
				\path[map]	(m-1-1) edge[barred] node[above] {$\ul H$} (m-1-5)
														edge[transform canvas={xshift=-1pt}] node[left] {$h$} (m-2-2)
										(m-1-5) edge[transform canvas={xshift=1pt}] node[right] {$k$} (m-2-4)
										(m-2-2) edge[transform canvas={xshift=-1pt}, ps] node[left] {$\yon_{A'}$} (m-3-3)
										(m-2-4) edge[transform canvas={xshift=1pt}, ps] node[right] {$\curp J$} (m-3-3);
				\path[transform canvas={yshift=-0.7em}]				(m-1-3) edge[cell] (m-2-3);
				
				\matrix(m)[math35, column sep={0.8125em,between origins}, xshift=16em]{FX_0 & & & & FX_n \\ & FA' \mspace{12mu} & & \mspace{12mu} FB & \\ & & \ps A & & \\};
				\path[map]	(m-1-1) edge[barred] node[above] {$F\ul H$} (m-1-5)
														edge[transform canvas={xshift=-1pt}] node[left] {$Fh$} (m-2-2)
										(m-1-5) edge[transform canvas={xshift=1pt}] node[right] {$Fk$} (m-2-4)
										(m-2-2) edge[transform canvas={xshift=-1pt}, ps] node[left] {$\cur\iota$} (m-3-3)
										(m-2-4) edge[transform canvas={xshift=1pt}, ps] node[right] {$\cur{(\flad J)}$} (m-3-3);
				\path[transform canvas={yshift=-0.7em}]				(m-1-3) edge[cell] (m-2-3);
				
				\matrix(m)[math35, column sep={1.625em,between origins}, yshift=-12em, xshift=11em]
						{ A & & FX_0 & & FX_n \\
							& & & FB & \\
							& & \ps A & & \\};
					\path[map]	(m-1-1) edge[barred] node[above, inner sep=6pt] {$\iota(\id, Fh)$} (m-1-3)
															edge[transform canvas={xshift=-2pt}] node[below left] {$\yon_A$} (m-3-3)
											(m-1-3) edge[barred] node[above] {$F\ul H$} (m-1-5)
											(m-1-5)	edge[transform canvas={xshift=2pt}] node[right] {$Fk$} (m-2-4)
											(m-2-4) edge[transform canvas={xshift=2pt}] node[right] {$\cur{(\flad J)}$} (m-3-3);
					\path[transform canvas={yshift=-0.7em}]			(m-1-3) edge[cell] (m-2-3);
				
				\draw[font=\LARGE]	(-14.9em,0) node {$\lbrace$}
										(-9.2em,0) node {$\rbrace$}
										(-2.6em,0) node {$\lbrace$}
										(2.4em,0) node {$\rbrace$}
										(-13.2em,-12em) node {$\lbrace$}
										(-5em,-12em) node {$\rbrace$}
										(13.0em,0em) node {$\lbrace$}
										(19.0em,0em) node {$\rbrace$}
										(6.8em,-12em) node {$\lbrace$}
										(15.0em,-12em) node {$\rbrace$};
				\path[map]	(-7.2em,0) edge node[above] {$\chi^{J\sharp} \of \dash$} (-4.6em,0)
										(-11.5em,-4em) edge node[left, yshift=-3pt] {$\cocart \of (\cart, F\dash)$} (-10.25em,-7.5em)
										(4.4em,0) edge node[above] {$\sigma_h \hc (\eps \of F\dash) \hc \tau_k$} (11.0em,0)
										(-3em,-12em) edge node[below] {$\cart_{\cur{(\flad J)}} \of \dash$} (4.8em,-12em)
										(13.75em,-4.25em) edge node[right, yshift=-3pt] {$(\cart_{\cur\iota} \of \cart) \hc \dash$} (12.75em,-6.25em);
			\end{tikzpicture}
		\end{displaymath}
		
		Notice that $\cart_{\cur{(\flad J)}} \of \dash$ is a bijection by the definition of cartesian cell, and that $(\cart_{\cur\iota} \of \cart) \hc \dash$ is so too because $\cart_{\cur\iota}$ is pointwise left Kan, by the density of $\yon_A$ (\defref{density definition}). Since $\map{\eps \of F\dash}{\K \vs \ps A'}{F \vs \ps A}$ is full and faithful, by \defref{universal vertical morphism}, and because $\sigma_h$ and $\tau_k$ are isomorphisms, it follows that \mbox{$\sigma_h \hc \bigpars{\eps \of F\dash} \hc \tau_k$}, in the right leg above, is a bijection too. Finally notice that condition (c) of \defref{universal horizontal morphism} states that the assignment $\cocart \of (\cart, F\dash)$, in the left leg above, is a bijection for all $\hmap J{A'}B$, while the cells $\chi^{J\sharp}$ being cartesian amounts to the top assignment being a bijection for all $\hmap J{A'}B$. Hence the proof follows by showing that the diagram above commutes. That it does is shown by the equality below, whose left"/hand side is the composite of the right leg above and whose right"/hand side is that of the left leg.
		\begin{align*}
			(\cart&_{\cur\iota} \of \cart) \hc \sigma_h \hc \bigpars{\eps \of F(\chi^{J\sharp} \of \dash)} \hc \tau_k = (\cart_{\cur\iota} \of \cart) \hc \sigma_h \hc \bigpars{\eps \of F\chi^{J\sharp} \of F\dash} \hc \tau_k \\
			&= (\cart_{\cur\iota} \of \cart) \hc \bigbrks{\bigpars{\sigma \hc (\eps \of F\chi^{J\sharp}) \hc \tau} \of F\dash} = (\cart_{\cur\iota} \of \cart) \hc (\chi^J \of F\dash) \\
			&= (\cart_{\cur\iota} \hc \chi^J) \of (\cart, F\dash) = \cart_{\cur{(\flad J)}} \of \cocart \of (\cart, F\dash)
		\end{align*}
		The identities follow from the functoriality of $F$, the definitions of $\sigma_h$ and $\tau_k$ together with the interchange axiom (\auglemref{1.3}), the definition of $\chi^{J\sharp}$ (\defref{yoneda embedding from universal morphisms}), the interchange axiom, and the definition of $\chi^J$ (\defref{yoneda embedding from universal morphisms}).
	\end{proof}	
	

\end{document}